\def\titlerunning#1{\gdef\titrun{#1}}
\def\author#1{\gdef\autrun{\def\and{\unskip, }#1}\gdef\@author{#1}}
\def\address#1{{\def\and{\\\hspace*{18pt}}\renewcommand{\thefootnote}{}%
\footnote {#1}}%
\markboth{\autrun}{\titrun}} \makeatother
\def\email#1{e-mail: #1}
\def\subjclass#1{{\renewcommand{\thefootnote}{}%
\footnote{\emph{Mathematics Subject Classification (2010):} #1}}}
\newcommand{\N}{{\mathbb N}}
\newcommand{\R}{{\mathbb R}}
\newcommand{\Z}{{\mathbb Z}}
\newtheorem{theorem}{Theorem}[section]
\newtheorem{corollary}[theorem]{Corollary}
\newtheorem{remark}[theorem]{Remark}
\newtheorem{hypothesis}[theorem]{Hypothesis}
\newtheorem{lemma}[theorem]{Lemma}
\newtheorem{proposition}[theorem]{Proposition}
\numberwithin{equation}{section}
\begin{document}
\baselineskip=17pt

\titlerunning{***}

\title{Morse theory methods for quasi-linear elliptic systems\\ of higher order
\thanks{Partially supported by the NNSF  11271044 of China.}}


\author{Guangcun Lu}

\date{February 22, 2017}

\maketitle

\address{F1. Lu: School of Mathematical Sciences, Beijing Normal University,
Laboratory of Mathematics  and Complex Systems,  Ministry of
  Education,    Beijing 100875, The People's Republic
 of China; \email{gclu@bnu.edu.cn}}

\subjclass{Primary~58E05, 49J52, 49J45}

\begin{abstract}
We develop the local Morse theory for a class of non-twice continuously differentiable functionals on Hilbert spaces, including a new generalization of the Gromoll-Meyer's splitting theorem
 and a weaker Marino-Prodi perturbation type result. With them some critical point theorems
 and famous bifurcation theorems are generalized. Then we show that these are
 applicable to studies of quasi-linear elliptic equations and systems of higher order given by
 multi-dimensional variational problems  as in (\ref{e:1.3}).
  \end{abstract}

\tableofcontents

\section{Introduction }

Since Palais and Smale \cite{Pal, PaSm, Sma} generalized finite-dimensional Morse theory in \cite{Mor}
to nondegenerate $C^2$ functionals on infinite dimensional Hilbert manifolds
and used it to study multiplicity of solutions for semilinear elliptic boundary value problems,
via Gromoll and Meyer \cite{GrM}, Marino and Prodi \cite{MP} and
many other people's effort,  such a direction has very successful developments, see a few of nice books
\cite{BaSzWi, Ch0, Ch, Ch1, MaWi, MoMoPa, PerAO, Skr1, ZouSc} and references therein for details. The Morse theory
for functionals on an infinite dimensional Hilbert space $H$
have two main aspects:  Morse relations related critical groups to Betti numbers of
underlying spaces (global), computation of critical groups (local). The basic tool for the latter,
Gromoll-Meyer's generalized Morse lemma (or splitting theorem)
 in \cite{GrM}, was only generalized to $C^2$ functionals on Hilbert spaces
(\cite{Ch, MaWi}) until author's recent work \cite{Lu1, Lu2}.
Because of this,  most of applications of the theory to differential equations are restricted to semi-linear elliptic equations  and Hamiltonian systems \cite{Ch, MaWi, MoMoPa}. Skrypnik \cite{Skr1} established Morse inequalities for the functional (\ref{e:1.8})
with $p=2$ and $V=W_0^{m,2}(\Omega)$ provided that
linearizations of the corresponding Euler-Lagrange equation (\ref{e:1.9})
at any solution of it have no nontrivial solutions.
Our new splitting lemmas in \cite{Lu1, Lu2} can be effectively used to study periodic solutions
of Lagrangian systems on compact manifolds which are strongly convex and has quadratic growth on
the fibers, including the case of the system (\ref{e:1.4}) if $n=1$ and
\textsf{Hypothesis} $\mathfrak{F}_{2,N}$ was satisfied. Their ideas were also used to derive  the desired splitting
and shifting lemmas for the Finsler energy functional on the space of $H^1$-curves  in \cite{Lu3, Lu4}.
However, when applying these splitting lemmas to the functional in (\ref{e:1.8}) with $p=2$ we need
that the involved critical points have higher smoothness, which
 can only be guaranteed under more assumptions on Lagrangian $F$ by
  the regularity theory  of differential equations.
  It is this unsatisfactory restriction that motivates us to look for
  a more suitable splitting lemma which is applicable to
  the functional in (\ref{e:1.3}) under \textsf{Hypothesis} $\mathfrak{F}_{p,N}$ with $p=2$.


The following notation will be used throughout this paper.
 For normed linear spaces $X,Y$ we denote
 by $X^\ast$ the dual space of $X$, and by $\mathscr{L}(X,Y)$ the space of
linear bounded operators  from $X$ to $Y$.
We also abbreviate $\mathscr{L}(X):=\mathscr{L}(X,X)$.
The open ball in a normed linear space $X$ with radius $r$ and center in $y \in X$ is denoted by
$B_X(y,r):=\{x\in X \ |\  \|x-y\|_X<r\}$ and the corresponding closed ball
is written as $\bar{B}_X(y,r):=\{x\in X \ |\  \|x-y\|_X\le r\}$.
The (norm)-closure of a set $S \subset X$ will be denoted by $\overline{S}$ or $Cl(S)$.
Let $m$ and $n$ be two positive integers,  $\Omega\subset\R^n$  a bounded domain   with  boundary
$\partial\Omega$. Denote the general point of $\Omega$ by $x=(x_1,\cdots,x_n)\in\R^n$
and the element of Lebesgue $n$-measure on $\Omega$ by $dx$.
A {\it multi-index} is an $n$-tuple
$\alpha=(\alpha_1,\cdots,\alpha_n)\in (\mathbb{N}_0)^n$, where $\mathbb{N}_0=\mathbb{N}\cup\{0\}$.
 $|\alpha|:=\alpha_1+\cdots+\alpha_n$ is called the length of $\alpha$.
 Denote by $M(k)$ the number of such
$\alpha$ of length $|\alpha|\le k$, $M_0(k)=M(k)-M(k-1)$, $k=0,\cdots,m$,
where $M(-1)=\emptyset$. Then $M(0)=M_0(0)$ only consists of
${\bf 0}=(0,\cdots,0)\in (\mathbb{N}_0)^n$.\\

Let $p\in [2, \infty)$ be a real number and $N\ge 1$  an integer.

\noindent{\textsf{Hypothesis} $\mathfrak{F}_{p,N}$}.\quad
 For each multi-index $\gamma$ as above, let
 \begin{eqnarray*}
 &&p_\gamma\in (1,\infty)\;\hbox{if}\;
 |\gamma|=m-n/p,\quad\hbox{and}\; p_\gamma=\frac{np}{n-(m-|\gamma|)p}
\;\hbox{if}\; m-n/p<|\gamma|\le m,\\
 &&q_\gamma=1\;\hbox{if}\;|\gamma|<m-n/p,\quad\hbox{and}\;
 q_\gamma=\frac{p_\gamma}{p_\gamma-1} \;\hbox{if}\;m-n/p\le |\gamma|\le m;
 \end{eqnarray*}
 and for each two multi-indexes $\alpha, \beta$ as above, let
 $p_{\alpha\beta}=p_{\beta\alpha}$ be defined by the conditions
\begin{eqnarray*}
&p_{\alpha\beta}= 1-\frac{1}{p_\alpha}-\frac{1}{p_\beta}\quad
 \hbox{if}\;|\alpha|=|\beta|=m,\\
&p_{\alpha\beta}=  1-\frac{1}{p_\alpha},\quad \hbox{if}\;m-n/p\le |\alpha|\le
 m,\; |\beta|<m-n/p,\\
&p_{\alpha\beta}= 1 \quad \hbox{if}\; |\alpha|, |\beta|<m-n/p, \\
& 0<p_{\alpha\beta}<1-\frac{1}{p_\alpha}-\frac{1}{p_\beta}\quad\hbox{if}\;|\alpha|,\;|\beta|\ge
 m-n/p,\;|\alpha|+|\beta|<2m.
\end{eqnarray*}
For $M_0(k)=M(k)-M(k-1)$, $k=0,1,\cdots,m$ as above,
we write $\xi\in \prod^m_{k=0}\mathbb{R}^{N\times M_0(k)}$ as
$\xi=(\xi^0,\cdots,\xi^m)$, where
 $ \xi^0=(\xi^1_{\bf 0},\cdots,\xi^N_{\bf 0})^T\in \mathbb{R}^{N}$ and
 $$
 \xi^k=\left(\xi^i_\alpha\right)_{\scriptsize\begin{array}{ll}
 1\le i\le N\\
 |\alpha|=k\end{array}}\in \mathbb{R}^{N\times M_0(k)}\quad\hbox{for}\;k=1,\cdots,m.
 $$
 Denote by  $\xi^k_\circ=\{\xi^k_\alpha\,:\,|\alpha|<m-n/p\}$ for $k=1,\cdots,N$. Suppose
$$
\overline\Omega\times\prod^m_{k=0}\mathbb{R}^{N\times M_0(k)}\ni (x,
\xi)\mapsto F(x,\xi)\in\R
$$
is a  Caratheodory function  (i.e., being measurable in $x$ for all values of $\xi$, and  continuous in $\xi$ for almost all $x$) with the following properties:\\
{\bf (i)}  $F(x,\xi)$ is twice continuously differentiable in $\xi$ for almost all $x$, $F(\cdot,0)\in L^1(\Omega)$
and
 $$
 F^i_\alpha(x,\xi):=\frac{\partial F(x,\xi)}{\partial\xi^i_\alpha},\quad i=1,\cdots,N,\quad |\alpha|\le m
$$
satisfy: $F^i_\alpha(\cdot,0)\in L^1(\Omega)$ if $|\alpha|<m-n/p$,
and $F^i_\alpha(\cdot,0)\in L^{q_\alpha}(\Omega)$ if $m-n/p\le |\alpha|\le m$, $i=1,\cdots,N$.\\
{\bf (ii)} There exists a continuous, positive, nondecreasing functions $\mathfrak{g}_1$ such that
for $i,j=1,\cdots,N$, $|\alpha|, |\beta|\le m$ and the above numbers $p_{\alpha\beta}$ functions
$$
\overline\Omega\times\R^{M(m)}\to\R,\; (x, \xi)\mapsto
F^{ij}_{\alpha\beta}(x,\xi)=\frac{\partial^2F(x,\xi)}{\partial
\xi^i_\alpha\partial \xi^j_\beta}
$$
satisfy:
\begin{eqnarray}\label{e:1.1}
 |F^{ij}_{\alpha\beta}(x,\xi)|\le
\mathfrak{g}_1(\sum^N_{k=1}|\xi_\circ^k|)\left(1+
\sum^N_{k=1}\sum_{m-n/p\le |\gamma|\le
m}|\xi^k_\gamma|^{p_\gamma}\right)^{p_{\alpha\beta}}.
\end{eqnarray}
{\bf (iii)} There exists a continuous, positive, nondecreasing functions $\mathfrak{g}_2$ such that
\begin{eqnarray}\label{e:1.2}
\sum_{|\alpha|=|\beta|=m}F^{ij}_{\alpha\beta}(x,\xi)\eta^i_\alpha\eta^j_\beta\ge
\mathfrak{g}_2(\sum^N_{k=1}|\xi^k_\circ|)\Biggl(1+ \sum^N_{k=1}\sum_{|\gamma|=m}|\xi^k_\gamma|\Biggr)^{p-2}
\sum^N_{i=1}\sum_{|\alpha|= m}(\eta^i_\alpha)^2
\end{eqnarray}
for any $\eta=(\eta^{ij}_{\alpha\beta})\in\R^{N\times M_0(m)}$.

{\it Note}: If $m\le n/p$ the functions  $\mathfrak{g}_1$ and $\mathfrak{g}_2$ should be understand as positive constants.\\

For an element of $W^{m, p}(\Omega, \mathbb{R}^N)$,
$\vec{u}=(u^1,\cdots, u^N):\Omega\to\mathbb{R}^N$,
 we shall denote by $D^k\vec{u}$ the set $\{D^\alpha u^i\,:\, |\alpha|=k,\; i=1,\cdots,N\}$
 for each $k=1,\cdots,m$,  and form
 the expression $F(x, \vec{u}(x),\cdots, D^m\vec{u}(x))$, in which
 $\vec{u}(x)$ takes the place of $\xi^0$, and $D^\alpha u^i(x)$
 takes the place of $\xi^i_\alpha$ respectively.
 Let $V\subset W^{m, p}(\Omega, \mathbb{R}^N)$ be
 a closed subspace containing $W^{m,p}_0(\Omega, \mathbb{R}^N)$.
 Consider variational problem
\begin{equation}\label{e:1.3}
\mathfrak{F}(\vec{u})=\int_\Omega F(x, \vec{u},\cdots, D^m\vec{u})dx,\quad \vec{u}\in V.
\end{equation}
 We call critical points of $\mathfrak{F}$ {\it generalized solutions} of the boundary value problem
corresponding to the subspace $V$:
\begin{equation}\label{e:1.4}
\sum_{|\alpha|\le m}(-1)^{|\alpha|}D^\alpha F^i_\alpha(x, \vec{u},\cdots, D^m\vec{u})=0,\quad
i=1,\cdots,N.
\end{equation}

If $N=1$,  \textsf{Hypothesis} $\mathfrak{F}_{p,N}$
can be written as the following simple version, which was first given in \cite{Skr1}.

\noindent{\textsf{Hypothesis} $\mathfrak{f}_p$}.\quad
Let  $p$,  $p_\alpha, q_\alpha$, $p_{\alpha\beta}$ and $\Omega$
be as in \textsf{Hypothesis} $\mathfrak{F}_{p,N}$.
Write $\xi\in\R^{M(m)}$ as  $\xi=\{\xi_\alpha:\,|\alpha|\le m\}$
and  $\xi_\circ=\{\xi_\alpha:\,|\alpha|<m-n/p\}$. Suppose
that $f: \overline\Omega\times\R^{M(m)}\to\R$
 is a Caratheodory function with the following properties:\\
 {\bf (i)}  $f(x,\xi)$ is twice continuously differentiable in $\xi$ for almost all $x$, $f(\cdot,0)\in L^1(\Omega)$
 and each $f_\alpha(x,\xi):=\frac{\partial f(x,\xi)}{\partial\xi_\alpha}$  satisfy:
 $f_\alpha(\cdot,0)\in L^1(\Omega)$ if $|\alpha|<m-n/p$,
and $f_\alpha(\cdot,0)\in L^{q_\alpha}(\Omega)$ if $m-n/p\le |\alpha|\le m$.
 \\
{\bf (ii)} There exists a continuous, positive, nondecreasing functions $\mathfrak{g}_1$ such that
for the above numbers $p_{\alpha\beta}$ functions
$$
\overline\Omega\times\R^{M(m)}\to\R,\; (x, \xi)\mapsto
f_{\alpha\beta}(x,\xi)=\frac{\partial^2f(x,\xi)}{\partial
\xi_\alpha\partial \xi_\beta}
$$
satisfy:
\begin{eqnarray}\label{e:1.5}
 |f_{\alpha\beta}(x,\xi)|\le
\mathfrak{g}_1(|\xi_\circ|)\left(1+
\sum_{m-n/p\le |\gamma|\le
m}|\xi_\gamma|^{p_\gamma}\right)^{p_{\alpha\beta}}.
\end{eqnarray}
{\bf (iii)} There exists a continuous, positive, nondecreasing functions $\mathfrak{g}_2$ such that
\begin{eqnarray}\label{e:1.6}
\sum_{|\alpha|=|\beta|=m}f_{\alpha\beta}(x,\xi)\eta_\alpha\eta_\beta\ge
\mathfrak{g}_2(|\xi_\circ|)\Biggl(1+ \sum_{|\gamma|=m}|\xi_\gamma|\Biggr)^{p-2}
\sum_{|\alpha|= m}\eta^2_\alpha
\end{eqnarray}
for any $\eta\in\R^{M_0(m)}$.\\

Consider the case $m=1$ and $n\ge 2$. Then $M=n+1$ and $f$ becomes
$$
f:\overline{\Omega}\times\R^1\times\R^n\to\mathbb{R},\; (x,\xi_0, \xi_1,\cdots,\xi_n)\mapsto f(x,
\xi_0, \xi_1,\cdots,\xi_n).
$$
The corresponding \textsf{Hypothesis} $\mathfrak{f}_2$ is:  there exist constant numbers $c_1, c_2>0$ such that
\begin{eqnarray}\label{e:1.7}
 |f_{ij}(x,\xi)|\le c_1\Biggl(1+
\sum^n_{k=0}|\xi_k|^{2_k}\Biggr)^{2_{ij}},\quad
\sum_{i,j=1}f_{ij}(x,\xi)\eta_i\eta_j\ge
c_2\Biggl(\sum^n_{i=1}\eta^2_i\Biggr).
\end{eqnarray}
Here, (I) if $n=2$, $2_0=s\in (2,\infty)$, $2_i=2$, $i=1,\cdots,n$,
$2_{ij}=0$ for $i,j=1,\cdots,n$, $2_{0i}=2_{i0}\in (0,1/2-1/s)$ for $i=1,\cdots, n$, and $2_{00}\in (0,1-2/s)$; (II) if $n>2$, $2_0=2n/(n-2)$, $2_i=2$ for $i=1,\cdots,n$, $2_{ij}=0$ for $i,j=1,\cdots,n$, $2_{i0}=2_{0i}\in (0, 1/n)$ for $i=1,\cdots,n$,
$2_{00}\in (0, 2/n)$.

Under \textsf{Hypothesis} $\mathfrak{f}_p$,
 let $V\subset W^{m,p}(\Omega)$ be any closed subspace
containing $W^{m,p}_0(\Omega)$. The critical points of  the variational problem
\begin{equation}\label{e:1.8}
\mathcal{F}(u)=\int_\Omega f(x, u,\cdots, D^mu)dx
\end{equation}
in the Banach space $V$ are called {\it generalized solutions} of the boundary value problem
corresponding to the subspace $V$:
\begin{equation}\label{e:1.9}
\sum_{|\alpha|\le m}(-1)^{|\alpha|}D^\alpha f_\alpha(x, u,\cdots, D^mu)=0,
\end{equation}
where  $D^ku(x)=\{D^\alpha u(x)\,:\,|\alpha|=k\}$,
$k=1,\cdots,m$. For example, when $V$ is $W^{m,p}_0(\Omega)$ (resp. $W^{m,p}(\Omega)$),
the corresponding boundary value problem will be the Dirichlet (resp. Neumann) problem
(cf. \cite[pages 6-7]{Skr3}). Moreover, under \textsf{Hypothesis} $\mathfrak{f}_2$,
if $\dim\Omega=2$ and $f\in C^{k,\alpha}$ for some $\alpha\in (0,1)$ and an integer $k\ge 3$,
it was proved in \cite[Chapter 7, Th.4.4]{Skr3} that  every critical point $u$ of $\mathcal{F}$ on $W^{m,2}_0(\Omega)$ sits in $C^{k+m-1,\alpha}(\overline{\Omega})$; in fact $u$ is also analytic in $\Omega$ provided that $f$ is analytic in its arguments.

As stated on the
pages 118-119 of \cite{Skr3} (see  \cite{Skr1} for detailed
arguments), under \textsf{Hypothesis} $\mathfrak{f}_p$ the functional $\mathcal{F}$ in
(\ref{e:1.8}) is of class $C^1$; and the (derivative) mapping
$\mathcal{F}':W^{m,p}_0(\Omega)\to [W^{m,p}_0(\Omega)]^\ast$ is
 Fr\'echet differentiable if $p>2$, but only G\^ateaux-differentiable if $p=2$.
 A critical point $u$ of $\mathcal{F}$ is said to be {\it nondegenerate}
 if the derivative of $\mathcal{F}'$ at it,
 $\mathcal{F}''(u):W^{m,p}_0(\Omega)\to \mathscr{L}(W^{m,p}_0(\Omega), [W^{m,p}_0(\Omega)]^\ast)$
 is injective. In case $p=2$, if $\mathcal{F}$ has only nondegenerate
 critical points, Skrypnik \cite[Chapter 5]{Skr1} obtained Morse inequalities
 provided that $\mathcal{F}(u)\to+\infty$ as $\|u\|_{m,2}\to\infty$.
  On the other hand he also obtained\\

 \noindent{\bf Skrypnik Theorem}\,(\cite[Chap.5, Sec. 5.1, Theorem~1]{Skr3}).\quad
 {\it If $p=2$, $m=1$ and $f\in C^2(\overline{\Omega}\times\R^1\times\R^n)$
 has uniformly bounded mixed partial derivatives
 $$
 f_{ij}=\frac{\partial^2 f(x,u,\xi)}{\partial \xi_i\partial \xi_j},\quad
f_{i0}=\frac{\partial^2 f(x,u,\xi)}{\partial \xi_i\partial u},\quad
f_{00}=\frac{\partial^2 f(x,u,\xi)}{\partial u^2},
$$
 (and therefore $f$ satisfies \textsf{Hypothesis} $\mathfrak{f}_2$),
  then the functional $\mathcal{F}$ on $W^{1,2}_0(\Omega)$  has Fr\'echet second derivative at zero
  if and only if
$$
f(x,0,\xi)=\sum^n_{i,j=1}a_{ij}(x)\xi_i\xi_j+ \sum^n_{i=1}b_i(x)\xi_i+ c(x).
$$
}
 So, generally speaking, under \textsf{Hypothesis} $\mathfrak{f}_2$
 the known Morse theory method cannot be effectively used to study critical points
 of $\mathcal{F}$ on $W^{m,2}_0(\Omega)$ without nondegenerate conditions.
  A similar question also appears in some optimal control problems \cite{Va1}.\\

The key of this paper is to prove a new splitting theorem (Theorem~\ref{th:S.1.2}) for a class of
non-$C^2$ functionals on a Hilbert space under the following Hypothesis~\ref{hyp:1.1}
(following the notion and terminology in \cite{Lu2} without special statements).
Even if for the Lagrangian systems studied in \cite{Lu1}, we can largely simplify the arguments therein
 with this new theorem. However, the theories in \cite{Lu1,Lu2} may, sometime, provide
 more elaborate results as done in \cite{Lu5, Lu7, Lu8, Lu9}.

\begin{hypothesis}\label{hyp:1.1}
{\rm Let $H$ be a Hilbert space with inner product $(\cdot,\cdot)_H$
and the induced norm $\|\cdot\|$, and let $X$ be a dense linear subspace in $H$.
Let  $V$ be an open neighborhood of the origin $\theta\in H$,
and let $\mathcal{L}\in C^1(V,\mathbb{R})$ satisfy $D\mathcal{L}(\theta)=0$.
Assume that the gradient $\nabla\mathcal{L}$ has a G\^ateaux derivative $B(u)\in \mathscr{L}_s(H)$ at every point
$u\in V\cap X$, and that the map $B:V\cap X\to
\mathscr{L}_s(H)$  has a decomposition
$B=P+Q$, where for each $x\in V\cap X$,  $P(x):H\to H$ is a bounded linear  positive definitive operator and
$Q(x):H\to H$ is a compact linear  operator with the following
properties:
\begin{enumerate}
\item[\bf (D1)]  All eigenfunctions of the operator $B(\theta)$ that correspond
to non-positive eigenvalues belong to $X$;

\item[\bf (D2)] For any sequence $\{x_k\}_{k\ge 1}\subset
V\cap X$ with $\|x_k\|\to 0$ it holds that
$\|P(x_k)u-P(\theta)u\|\to 0$ for any $u\in H$;

\item[\bf (D3)] The  map $Q:V\cap X\to
\mathscr{L}(H)$ is continuous at $\theta$ with respect to the topology
induced from $H$ on $V\cap X$;

\item[\bf (D4)] For any sequence $\{x_n\}_{n\ge 1}\subset V\cap X$ with $\|x_n\|\to 0$ (as $n\to\infty$), there exist
 constants $C_0>0$ and $n_0\in\N$ such that
$$
(P(x_n)u, u)_H\ge C_0\|u\|^2\quad\forall u\in H,\;\forall n\ge n_0.
$$
\end{enumerate}}
\end{hypothesis}

({\it Note}:
In Lemma~\ref{lem:D*} we shall prove that
the condition (D4) is equivalent to (D4*) in \cite{Lu2}. Lemma~\ref{lem:S.2.4}
shows that this property with $X=H$ is hereditary on closed subspaces).

Actually, we prove a more general parameterized splitting theorem, Theorem~\ref{th:S.5.3}.
Using it we complete generalizations of many bifurcation theorems for potential operators
in Section~\ref{sec:Bi}. A weaker Marino-Prodi perturbation type result is also presented in
Section~\ref{sec:MP}. These  constitute abstract theories in Part I of this paper.
 Part II deals with quasi-linear elliptic systems of higher order.
In Section~\ref{sec:Funct}, we study fundamental analytic properties of the functional
 $\mathfrak{F}$ under  Hypothesis~$\mathfrak{F}_{p,N}$. In particular,
  Hypothesis~$\mathfrak{F}_{2,N}$ assures that $\mathfrak{F}$ satisfies  Hypothesis~\ref{hyp:1.1}
on any closed subspace of $W^{m,2}(\Omega, \mathbb{R}^N)$ for a bounded Sobolev domain
$\Omega\subset\mathbb{R}^n$. Because of these, the  Morse theory methods can be used to study
the quasi-linear elliptic boundary value problem (\ref{e:1.4}) under  Hypothesis~$\mathfrak{F}_{2,N}$
as done for the semi-linear elliptic one in \cite{Ch0,  MoMoPa}. In other sections we are only satisfied
to present some direct applications of results in Part I, for example, giving Morse inequalities in Section~\ref{sec:Morse}
and some bifurcation results for quasi-linear elliptic systems in Section~\ref{sec:BifE}.
Further applications will be given in latter papers.



\part{Abstract theories}\label{part:1}
\section{Local Morse theory for a class of non-$C^2$ functionals}\label{sec:S}
\setcounter{equation}{0}

\subsection{Statements of main results}\label{sec:S.1}

Our local Morse theory mainly consist of  a new splitting theorem
and a Marino-Prodi perturbation type result  for a class of non-$C^2$ functionals.

We always assume that Hypothesis~\ref{hyp:1.1} holds
 without special statements. Then it implies that $\nabla\mathcal{L}$ is of class $(S)_+$ near $\theta$
as proved in \cite[p.2966-2967]{Lu2}. In particular, $\mathcal{L}$
satisfies the (PS) condition near $\theta$.

 For the bounded linear self-adjoint operator $B(\theta)$ on the Hilbert space $H$,
 let $H=H^+\oplus H^0\oplus H^-$ be the orthogonal decomposition
according to the positive definite, null and negative definite spaces of it.
Denote by $P^\ast$ the orthogonal projections onto $H^\ast$, $\ast=+,0,-$.
By \cite[Proposition~B.2]{Lu2} the above fundamental assumptions
implies  that there exists a constant $C_0>0$ such that
each $\lambda\in (-\infty, C_0)$ is either not in the spectrum $\sigma(B(\theta))$ or is an
isolated point of $\sigma(B(\theta))$ which is also an eigenvalue of finite multiplicity.
It follows that both $H^0$ and $H^-$ are finitely dimensional, and that
there exists a small $a_0>0$ such that $[-2a_0,
2a_0]\cap\sigma(B(\theta))$ at most contains a point $0$, and hence
\begin{equation}\label{e:S.1.1}
\left.\begin{array}{ll}
  (B(\theta)u, u)_H\ge
2a_0\|u\|^2\quad\forall u\in H^+,\\
  (B(\theta)u, u)_H\le
-2a_0\|u\|^2\quad\forall u\in H^-.
\end{array}\right\}
\end{equation}
Note that (D1) implies $H^-\oplus H^0\subset X$.
$\nu:=\dim H^0$ and $\mu:=\dim H^-$ are called the {\it Morse index} and
{\it nullity} of the critical point $\theta$.
In particular, if $\nu=0$ the critical point $\theta$ is said to be {\it nondegenerate}.
Without special statements, all nondegenerate critical points in this paper are in the sense of this definition.
Moreover, such a critical point must be isolated by (\ref{e:S.3.2}) or (\ref{e:S.3.3})

Our first result is the following
Morse-Palias Lemma. Comparing with that of \cite[Remark~2.2(i)]{Lu2},
 the smoothness of $\mathcal{L}$ is strengthened,
but  other conditions  are suitably  weakened.

\begin{theorem}\label{th:S.1.1}
Under Hypothesis~\ref{hyp:1.1}, if $\nu=0$, i.e., $\theta$
is nondegenerate,  there exist a small $\epsilon>0$, an open neighborhood $W$ of $\theta$ in
$H$ and an origin-preserving homeomorphism, $\phi: B_{H^+}(\theta,\epsilon) +
B_{H^-}(\theta,\epsilon)\to W$,
 such that
$$
\mathcal{L}\circ\phi(u^++ u^-)=\|u^+\|^2-\|u^-\|^2,\quad \forall (u^+, u^-)\in B_{H^+}(\theta,\epsilon)\times
B_{H^-}(\theta,\epsilon).
$$
Moreover, if $\hat{H}$ is a closed subspace containing $H^-$, and $\hat{H}^+$ is the orthogonal
complement of $H^-$ in $\hat{H}$, i.e., $\hat{H}^+=\hat{H}\cap H^+$, then
$\phi$ restricts to a homeomorphism $\hat{\phi}:(B_{\hat{H}^+}(\theta,\epsilon) + B_{H^-}(\theta,\epsilon))
\to\hat{W}:=W\cap\hat{H}$, and $\mathcal{L}\circ\hat{\phi}(u^++ u^-)=\|u^+\|^2-\|u^-\|^2$
for all $(u^+, u^-)\in B_{\hat{H}^+}(\theta,\epsilon)\times B_{H^-}(\theta,\epsilon)$.
\end{theorem}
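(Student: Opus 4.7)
The plan is to derive Theorem~\ref{th:S.1.1} as the nondegenerate specialization ($\nu=0$) of the splitting theorem Theorem~\ref{th:S.1.2} stated next. That result will furnish a homeomorphism conjugating $\mathcal{L}$ to $\|u^+\|^2-\|u^-\|^2+\mathcal{L}^\circ(u^0+h(u^0))$ for a continuous map $h$ from an open subset of $H^0$ into $H^+\oplus H^-$ and an auxiliary functional $\mathcal{L}^\circ$ on $H^0$; setting $\nu=0$ forces $H^0=\{\theta\}$, so both $h$ and $\mathcal{L}^\circ$ drop out and the remaining statement is exactly the present one. The restriction clause for a closed subspace $\hat H\supseteq H^-$ likewise descends from the corresponding clause in Theorem~\ref{th:S.1.2}.

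The heavier work therefore lies in the splitting theorem itself, but sketching the nondegenerate case already exhibits the main technique. First, using (\ref{e:S.1.1}) I would replace the inner product on $H$ by an equivalent one so that $Q_0(u):=\tfrac12(B(\theta)u,u)_H$ becomes simply $\|u^+\|^2-\|u^-\|^2$ in the new norm. Then I would run a Moser-type deformation along the homotopy
$$\mathcal{L}_t(u) := (1-t)Q_0(u)+t\mathcal{L}(u),\qquad t\in[0,1],$$
and seek an origin-preserving flow $\eta_t$ on a small ball satisfying $\eta_0=\mathrm{id}$ and $\mathcal{L}_t\circ\eta_t\equiv Q_0$, so that $\phi:=\eta_1$ will answer the theorem. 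Formally differentiating the conjugacy in $t$ produces the transport relation $(\nabla\mathcal{L}_t(\eta_t(u)),\dot\eta_t(u))_H = Q_0(\eta_t(u))-\mathcal{L}(\eta_t(u))$, which I would solve by setting $\dot\eta_t(u)=-\xi_t(\eta_t(u))$ for a vector field $\xi_t$ obtained by inverting a rescaled form of $\nabla\mathcal{L}_t$ against its radial direction. The $(S)_+$ property of $\nabla\mathcal{L}$ near $\theta$ established in \cite{Lu2}, combined with (D2)--(D4), supplies the uniform coercivity needed for this inversion, while (D3) delivers continuity of $\xi_t$ in $u$.

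The main obstacle is that $\mathcal{L}$ is not $C^2$: the G\^ateaux derivative $B=P+Q$ of $\nabla\mathcal{L}$ exists only on $V\cap X$ and is not a Fr\'echet Hessian, so the classical Moser/ODE route is unavailable. Instead $\eta_t$ must be constructed by a Banach fixed-point iteration on a small closed ball, with uniform (in $t\in[0,1]$ and in $u$ near $\theta$) norm estimates extracted from the decomposition $B=P+Q$: the positivity of $P$ from (D4) and the compactness of $Q$ furnished by (D3) together yield invertibility estimates stable under small perturbations. The restriction statement for a closed subspace $\hat H\supseteq H^-$ is then automatic: by (D1), $\hat H$ is spanned by eigenspaces of $B(\theta)$ and therefore invariant under $B(\theta)$; by Lemma~\ref{lem:S.2.4} Hypothesis~\ref{hyp:1.1} is inherited by $\hat H$ with the induced data; so carrying out the same construction inside $\hat H$ produces $\hat\phi$ as the restriction of $\phi$.
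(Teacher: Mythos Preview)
Your primary plan---deducing Theorem~\ref{th:S.1.1} from Theorem~\ref{th:S.1.2}---is circular in this paper's logic and also loses generality. In the paper, Theorem~\ref{th:S.1.2} is obtained from Theorems~\ref{th:S.4.3} and~\ref{th:S.5.3}, and the proof of Theorem~\ref{th:S.4.3} uses Theorem~\ref{th:S.1.1} in an essential way: the degree computation (\ref{e:S.4.18}) relies on knowing $C_q(f_0,\theta;G)=\delta_{\mu q}G$, which is precisely what Theorem~\ref{th:S.1.1} supplies. Moreover, Theorem~\ref{th:S.1.2} is stated under the extra assumption $X=H$, whereas Theorem~\ref{th:S.1.1} is asserted under the full Hypothesis~\ref{hyp:1.1} with $X$ a possibly proper dense subspace; your reduction would not recover that stronger statement.

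The paper's actual route is quite different from your Moser/fixed-point sketch and considerably shorter: it verifies directly that the $C^1$ functional $u^++u^-\mapsto\mathcal{L}(u^++u^-)$ on $\bar B_{H^+}(\theta,\epsilon)\oplus\bar B_{H^-}(\theta,\epsilon)$ satisfies the hypotheses of the Morse--Palais lemma for non-$C^2$ functionals in \cite[Theorem~1.1]{DHK}. The three required inequalities are obtained by the mean value theorem along segments, using that $\nabla\mathcal{L}$ is G\^ateaux differentiable on $V\cap X$ together with the quadratic-form estimates of Lemma~\ref{lem:S.2.2}; density of $X$ in $H$ and continuity of $\nabla\mathcal{L}$ then push the inequalities from $X$ to all of $H^+$. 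No flow construction or contraction-mapping argument is needed.

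Your treatment of the restriction clause also contains errors. The assertion that ``by (D1), $\hat H$ is spanned by eigenspaces of $B(\theta)$'' is unfounded: (D1) only says eigenfunctions for non-positive eigenvalues lie in $X$, and $\hat H$ is an \emph{arbitrary} closed subspace containing $H^-$. Likewise, invoking Lemma~\ref{lem:S.2.4} does not suffice here---that lemma assumes $X=H$, and in any case the paper explicitly remarks that the restriction claim ``seems unable to be directly derived from Lemma~\ref{lem:S.2.4}.'' The correct argument is simply that the three inequalities (\ref{e:S.3.1})--(\ref{e:S.3.3}) already hold for all $u^+\in\bar B_{H^+}(\theta,\epsilon)$, hence a fortiori for $u^+\in\bar B_{\hat H^+}(\theta,\epsilon)$, and one then re-reads the proof of \cite[Theorem~1.1]{DHK} with this smaller domain.
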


Under the assumptions of this theorem, if $X=H$ we can prove
that $\nabla\mathcal{L}$ is locally
 invertible at $\theta$ in Theorem~\ref{th:S.4.4}.
Theorem~\ref{th:S.1.1} is also key for us to prove the following degenerate case.

\begin{theorem}[Splitting Theorem]\label{th:S.1.2}
Let Hypothesis~\ref{hyp:1.1}
hold with $X=H$. Suppose $\nu\ne 0$. Then
there exist small positive numbers
$\epsilon, r, s$, a unique continuous map
$\varphi:B_{H^0}(\theta,\epsilon)\to H^+\oplus H^-$
satisfying $\varphi(\theta)=\theta$ and
\begin{equation}\label{e:S.1.2}
 (I-P^0)\nabla\mathcal{L}(z+ \varphi(z))=0\quad\forall z\in B_{H^0}(\theta,\epsilon),
 \end{equation}
an open neighborhood $W$ of $\theta$ in $H$ and an origin-preserving
homeomorphism
$$
\Phi: B_{H^0}(\theta,\epsilon)\times
\left(B_{H^+}(\theta, r) +
B_{H^-}(\theta, s)\right)\to W
$$
of form $\Phi(z, u^++ u^-)=z+ \varphi(z)+\phi_z(u^++ u^-)$ with
$\phi_z(u^++ u^-)\in H^+\oplus H^-$  such that
$$
\mathcal{ L}\circ\Phi(z, u^++ u^-)=\|u^+\|^2-\|u^-\|^2+ \mathcal{
L}(z+ \varphi(z))
$$
for all $(z, u^+ + u^-)\in B_{H^0}(\theta,\epsilon)\times
\left(B_{H^+}(\theta, r) +
B_{H^-}(\theta, s)\right)$.
Moreover, $\varphi$ is of class $C^{1-0}$, and
the homeomorphism $\Phi$ has also properties:
\begin{enumerate}
\item[\bf (a)] For
each $z\in B_{H^0}(\theta,\epsilon)$, $\Phi(z, \theta)=z+ \varphi(z)$,
$\phi_z(u^++ u^-)\in H^-$ if and only if $u^+=\theta$;

\item[\bf (b)] The functional $B_{H^0}(\theta,\epsilon)\ni z\mapsto
\mathcal{L}^\circ(z):=\mathcal{ L}(z+ \varphi(z))$ is of class $C^1$ and
$$
D\mathcal{L}^\circ(z)v=D\mathcal{L}(z+\varphi(z))v,\qquad\forall v\in H^0.
$$
If $\mathcal{L}$ is of class $C^{2-0}$, so is $\mathcal{L}^\circ$.
\end{enumerate}
\end{theorem}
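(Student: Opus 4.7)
The plan is to reduce Theorem~\ref{th:S.1.2} to the nondegenerate case (Theorem~\ref{th:S.1.1}) by a Lyapunov--Schmidt splitting in two stages: first solve the projected equation $(I-P^0)\nabla\mathcal{L}(z+w)=0$ in $w\in H^+\oplus H^-$ via an implicit-function argument to produce $\varphi$, then apply a parametrized version of Theorem~\ref{th:S.1.1} on the affine fiber through each $z+\varphi(z)$, where the restricted functional has a nondegenerate critical point. Write $H^\pm:=H^+\oplus H^-$ throughout.

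For the first stage, define $F(z,w):=(I-P^0)\nabla\mathcal{L}(z+w)$ on $B_{H^0}(\theta,\epsilon_0)\times B_{H^\pm}(\theta,\rho_0)$. Since $H^0$, $H^+$, $H^-$ are $B(\theta)$-invariant, the partial G\^ateaux derivative $D_wF(\theta,\theta)=B(\theta)|_{H^\pm}$ is an isomorphism of $H^\pm$ by (\ref{e:S.1.1}). The classical $C^1$ implicit function theorem does not apply since $\nabla\mathcal{L}$ is only G\^ateaux differentiable, so I would run a Banach contraction argument on $w\mapsto w-[B(\theta)|_{H^\pm}]^{-1}F(z,w)$. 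Uniform contraction near $\theta$ follows from (D2)--(D4): (D4) gives uniform positivity of $P(x)$, (D2) the strong continuity of $P$ at $\theta$, and (D3) the operator-norm continuity of $Q$ at $\theta$, so $\|B(x)-B(\theta)\|_{\mathscr{L}(H^\pm)}$ is small on a neighborhood of the origin. Uniqueness and Lipschitz dependence of the fixed point on $z$ come from the same estimates, yielding the map $\varphi$ together with its $C^{1-0}$ regularity.

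To obtain property \textbf{(b)}, observe that by the defining identity $(I-P^0)\nabla\mathcal{L}(z+\varphi(z))=0$ and the fact that $D\varphi(z)v\in H^\pm$ wherever it exists, the chain rule formally gives $D\mathcal{L}^\circ(z)v=\bigl(\nabla\mathcal{L}(z+\varphi(z)),v\bigr)_H$ for $v\in H^0$; a careful justification uses continuity of $\nabla\mathcal{L}$ together with local Lipschitzness of $\varphi$ to legitimize the directional derivative and then upgrade to Fr\'echet differentiability on the finite-dimensional space $H^0$. The $C^{2-0}$ claim under $\mathcal{L}\in C^{2-0}$ comes from differentiating the defining identity and inverting $B(\theta)|_{H^\pm}$ to express $D\varphi$ in terms of $B$.

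The main obstacle is the second stage. Consider $g_z(u):=\mathcal{L}(z+\varphi(z)+u)-\mathcal{L}^\circ(z)$ on a ball in $H^\pm$. By construction $\nabla g_z(\theta)=(I-P^0)\nabla\mathcal{L}(z+\varphi(z))=0$ and the G\^ateaux Hessian at $\theta$ equals $(I-P^0)B(z+\varphi(z))|_{H^\pm}$, which for $z$ close to $\theta$ is an isomorphism by the same estimates as above; hence $\theta$ is a nondegenerate critical point of $g_z$. Applying Theorem~\ref{th:S.1.1} to $g_z$ produces an origin-preserving homeomorphism $\phi_z$ with $g_z\circ\phi_z(u^++u^-)=\|u^+\|^2-\|u^-\|^2$. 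The delicate point is that Theorem~\ref{th:S.1.1} is stated for a single fixed functional, so I must re-run its proof with $z$ as parameter to ensure that the radii $r,s$ can be chosen uniformly on a neighborhood of $\theta$ and that $\phi_z$ is jointly continuous in $(z,u^++u^-)$; both follow because the vector field driving the Morse chart depends continuously on the base point via $B$, which is continuous at $\theta$ in the appropriate senses by (D2)--(D3). The invariance clause in Theorem~\ref{th:S.1.1}, applied with $\hat H=H^-$, guarantees $\phi_z$ maps $B_{H^-}(\theta,s)$ into $H^-$, which yields property \textbf{(a)}. Setting $\Phi(z,u^++u^-):=z+\varphi(z)+\phi_z(u^++u^-)$ then delivers the desired homeomorphism onto an open neighborhood $W$ of $\theta$, and the normal form $\mathcal{L}\circ\Phi=\|u^+\|^2-\|u^-\|^2+\mathcal{L}^\circ(z)$ holds by construction.
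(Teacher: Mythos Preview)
Your overall two-stage strategy (Lyapunov--Schmidt reduction followed by a parametrized Morse lemma on the fibers) matches the paper's architecture. However, your first stage contains a genuine gap: the contraction argument for constructing $\varphi$ does not go through under Hypothesis~\ref{hyp:1.1}. You claim that (D2)--(D4) yield $\|B(x)-B(\theta)\|_{\mathscr{L}(H^\pm)}\to 0$ as $x\to\theta$, but (D2) only gives \emph{strong} convergence $P(x)u\to P(\theta)u$ for each fixed $u$, not operator-norm convergence of $P(x)$. Since $B=P+Q$ and only $Q$ is norm-continuous at $\theta$ by (D3), the difference $B(x)-B(\theta)$ need not be small in $\mathscr{L}(H)$, and the map $w\mapsto w-[B(\theta)|_{H^\pm}]^{-1}(I-P^0)\nabla\mathcal{L}(z+w)$ is not a uniform contraction. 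The paper explicitly flags this: ``no known implicit function theorems or contraction mapping principles can be used to get $\varphi$'' (see the remarks on proof strategies at the end of \S\ref{sec:S.1}).

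The paper circumvents this by a degree-theoretic argument (Theorem~\ref{th:S.4.3}): it shows that $f_z(u)=P^\bot\nabla\mathcal{L}(z+u)$ is of class $(S)_+$ (Lemma~\ref{lem:S.4.2}), establishes a uniform nonvanishing of $f_z$ on $\partial\mathcal{Q}_{r,s}$ (Lemma~\ref{lem:S.4.1}), and then invokes homotopy invariance of the Browder--Skrypnik degree together with the Poincar\'e--Hopf theorem and the nondegenerate Morse lemma (Theorem~\ref{th:S.1.1}) to get $\deg(f_z,\mathcal{Q}_{r,s},\theta)=(-1)^\mu\ne 0$, hence existence of a zero. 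Uniqueness and Lipschitz dependence on $z$ are then proved directly from the sign estimates in Lemma~\ref{lem:S.2.2}, which are sharper than anything a raw operator-norm bound on $B(x)-B(\theta)$ would give (note Lemma~\ref{lem:S.2.1} only controls $B(x)-B(\theta)$ acting on the \emph{finite-dimensional} subspace $H^0\oplus H^-$). Your second stage and your treatment of property (b) are essentially correct and close to what the paper does via Theorem~\ref{th:S.5.3} and Proposition~\ref{prop:S.5.2}.
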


Since the map $\varphi$ satisfying (\ref{e:S.1.2}) is unique,
as \cite{Lu1, Lu2} it is possible to prove in some cases that $\varphi$ and
 $\mathcal{L}^\circ$ are of class $C^1$ and $C^2$, respectively.

Theorem~\ref{th:S.1.2} is a direct consequence of Theorems~\ref{th:S.4.3},\ref{th:S.5.3}
and Proposition~\ref{prop:S.5.2}.

Under the assumptions of Theorem~\ref{th:S.1.2}, we cannot assure that
$\theta$ is an isolated critical point. But, if $x\in H$ is a critical point
of $\mathcal{L}$ and very close to $\theta$, it follows from
(\ref{e:S.1.2}) and (\ref{e:S.4.4})--(\ref{e:S.4.5}) that
$x\in H^0$ and satisfies $\varphi(x)=\theta$.

Theorems~\ref{th:S.1.1},\ref{th:S.1.2}
 cannot be derived from those of \cite{DHK1}.
In fact, according to the conditions (c) and (d) in \cite[Theorem~1.3]{DHK1}
the functional $\mathcal{L}$ in Theorem~\ref{th:S.1.1} should satisfy:
\begin{description}
\item[(c')] $\exists\;\eta>0, \delta>0$ such that
$|(B(u)(u+z)-B(\theta)(u+z), h)|<\eta\|u+z\|\cdot\|h\|$ for all $u\in B_{H}(\theta,\delta)$,
$z\in H^0$ and $h\in H\setminus\{\theta\}$;
\item[(d')] $\exists\; \delta>0$ such that
$\bigr(\nabla\mathcal{L}(z+u^+_1+u^-_1)-\nabla\mathcal{L}(z+u^+_2+u^-_2), (u^+_1-u^+_2)+
(u^-_1-u^-_2)\bigl)>0$
for all $(u^+_1, u^-_1), (u^+_2, u^-_2)\in B_{H^+}(\theta,\delta)\times B_{H^-}(\theta,\delta)$
with $u^+_1+u^-_1\ne u^+_2+u^-_2$.
\end{description}

The former implies
$\|B(u)(u+z)-B(\theta)(u+z)\|\le\eta\|u+z\|$ for all $u\in B_{H}(\theta,\delta)$,
$z\in H^0$; and specially
\begin{eqnarray*}
&&\|B(u)u-B(\theta)u|\le\eta\|u\|\quad\forall u\in B_{H}(\theta,\delta),\\
&&\|B(z)z-B(\theta)z\|\le\eta\|z\|\quad\forall z\in B_{H}(\theta,\delta)\cap H^0.
\end{eqnarray*}

The latter implies that for some $t=t(z, u^+_1,u^-_1,u^+_2,u^-_2)\in (0,1)$,
$$
\bigl(B(z+u^+_2+u^-_2+ tu^++tu^-)(u^++u^-), u^++u^-\bigr)>0
$$
with $u^+=u^+_1-u^+_2$ and $u^-=u^-_1-u^-_2$.

From these it is not hard to see that under our assumptions the conditions
(c') and (d') cannot be satisfied in general.

Let $H_q(A,B;{\bf K})$ denote the $q$th relative singular homology group of
a pair $(A,B)$ of topological spaces with coefficients in Abel group ${\bf K}$.
For each $q\in\N\cup\{0\}$ {\it the $q$th critical group} (with coefficients in ${\bf K}$)
of $\mathcal{L}$ at a point $\theta$
is defined by
$$
C_q(\mathcal{L},\theta;{\bf K})=H_q(\mathcal{L}_c\cap U, \mathcal{L}_c\cap U\setminus\{\theta\};{\bf K}),
$$
where $c=\mathcal{L}(\theta)$, $\mathcal{L}_c=\{\mathcal{L}\le c\}$ and $U$ is a neighborhood of $\theta$ in $H$.

Under the assumptions of Theorem~\ref{th:S.1.1} we have
$C_q(\mathcal{L},\theta;{\bf K})=\delta_{q\mu}{\bf K}$ as usual.
For the degenerate case, though our $\mathcal{L}^{\circ}$ is only of class $C^1$,
 the proofs in \cite[Theorem~8.4]{MaWi} and \cite[Theorem~5.1.17]{Ch1} (or \cite[Theorem~I.5.4]{Ch})
 may be slightly  modify to get  the following shifting theorem, a special case of Theorem~\ref{th:S.5.4}.

\begin{theorem}[Shifting Theorem]\label{th:S.1.3}
Under the assumptions of Theorem~\ref{th:S.1.2},  if $\theta$ is an
isolated critical point  of $\mathcal{ L}$, for any Abel group ${\bf
K}$ it holds that
$$
C_q(\mathcal{L}, \theta;{\bf K})\cong C_{q-\mu}(\mathcal{
L}^{\circ}, \theta; {\bf K})\quad\forall q=0, 1,\cdots,
$$
\end{theorem}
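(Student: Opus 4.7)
The plan is to apply the Splitting Theorem (Theorem~\ref{th:S.1.2}) to replace $\mathcal{L}$ near $\theta$ by the product-type model
$$\widetilde{\mathcal{L}}(z, u^+, u^-) := \mathcal{L}^\circ(z) + \|u^+\|^2 - \|u^-\|^2$$
on a neighborhood of the origin in $H^0\oplus H^+\oplus H^-$. Since the splitting is given by an origin-preserving homeomorphism $\Phi$ and critical groups are topological invariants,
$$C_q(\mathcal{L},\theta;\mathbf{K}) \cong C_q(\widetilde{\mathcal{L}},\theta;\mathbf{K}).$$
The assumption that $\theta$ is isolated for $\mathcal{L}$, combined with the remark following Theorem~\ref{th:S.1.2} (any critical point of $\mathcal{L}$ close to $\theta$ lies in $H^0$ and is mapped to $\theta$ by $\varphi$), implies that $\theta$ is also an isolated critical point of $\mathcal{L}^\circ$ in $B_{H^0}(\theta,\epsilon)$, so the right-hand side of the claimed isomorphism is well-defined.

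I would next collapse the $H^+$ direction. Because $\widetilde{\mathcal{L}}$ depends on $u^+$ only through the convex function $\|u^+\|^2$, the straight-line homotopy $r_t(z, u^+, u^-) = (z, (1-t)u^+, u^-)$ is non-increasing on $\widetilde{\mathcal{L}}$ and is a strong deformation retract of the local sublevel pair $(\widetilde{\mathcal{L}}_c\cap U,\, \widetilde{\mathcal{L}}_c\cap U\setminus\{\theta\})$ onto its intersection with $\{u^+=\theta\}$, where $c=\mathcal{L}(\theta)$. Hence the $H^+$-factor contributes trivially to the critical groups, and the problem is reduced to computing the critical groups of $g(z,u^-) := \mathcal{L}^\circ(z) - \|u^-\|^2$ at the origin of $H^0\oplus H^-$.

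At this stage I would invoke the classical K\"unneth/shifting argument of \cite[Thm.~8.4]{MaWi} or \cite[Thm.~I.5.4]{Ch}. The function $-\|u^-\|^2$ has $\theta$ as a unique nondegenerate maximum on the finite-dimensional space $H^-$; its local sublevel pair is homeomorphic to $(\overline{B}_{H^-}(\theta,s), \partial B_{H^-}(\theta,s))$, whose relative singular homology is $\mathbf{K}$ in degree $\mu=\dim H^-$ and zero otherwise. A standard excision-plus-product decomposition identifies the local sublevel pair of $g$ at $\theta$ with the product of the local sublevel pair of $\mathcal{L}^\circ$ at $\theta$ with this disk-sphere pair; since the $H^-$-factor is a free $\mathbf{K}$-module concentrated in a single degree, the K\"unneth formula produces no Tor contributions and yields
$$C_q(\widetilde{\mathcal{L}},\theta;\mathbf{K})\;\cong\; C_{q-\mu}(\mathcal{L}^\circ,\theta;\mathbf{K}),$$
completing the argument.

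The main obstacle is justifying the product decomposition and the supporting deformations when $\mathcal{L}^\circ$ is only of class $C^1$, because the proofs in \cite{MaWi,Ch,Ch1} tacitly use a smooth negative-gradient flow of the reduced functional. This is circumvented by exploiting that $H^0$ is finite-dimensional, so $\nabla\mathcal{L}^\circ$ is continuous on a relatively compact neighborhood of $\theta$ and a locally Lipschitz pseudo-gradient vector field exists with the usual deformation properties; isolation of $\theta$ prevents interference from other critical points. In the $u^\pm$ variables the dependence is explicitly quadratic and therefore smooth, so the product structure is transparent there. Once these essentially cosmetic modifications to the classical argument are made, the Shifting Theorem follows.
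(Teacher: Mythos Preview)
Your proposal is correct and matches the paper's primary approach: after reducing to the product model $\|u^+\|^2-\|u^-\|^2+\mathcal{L}^\circ(z)$ via the Splitting Theorem, the paper observes that the Gromoll--Meyer pair construction of \cite[pp.~49--51]{Ch1} remains valid for $C^1$ functionals (by \cite{ChGh}), so the proof of \cite[Theorem~I.5.4]{Ch1} carries over---which is precisely your pseudo-gradient remark on the finite-dimensional $H^0$, stated in slightly different language.

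The paper also records a second, independent route you did not take: extend $\mathcal{L}^\circ$ to a coercive $C^1$ functional $f$ on all of $H^0$, approximate $f$ in the $C^1$-strong topology by a $C^\infty$ function $g$, and use the stability theorem for critical groups \cite[Theorem~5.2]{CorH} to identify $C_*(\mathcal{L}^\circ,\theta;{\bf K})$ with $C_*(g,\theta;{\bf K})$ and likewise for the product functionals. One then applies the classical smooth shifting theorem \cite[Theorem~I.5.4]{Ch} (or \cite[Theorem~8.4]{MaWi}) directly to $g$. This alternative trades the ad hoc $C^1$ deformation arguments for an appeal to a black-box stability result; your approach is more self-contained, while the paper's alternative cleanly separates the regularity issue from the topological computation.
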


As done for $C^2$ functionals in \cite{Ch,Ch1,MaWi,MoMoPa}
some critical point theorems can be derived from Theorem~\ref{th:S.1.3}.
For example, $C_q(\mathcal{L}, \theta;{\bf K})$
is equal to $\delta_{q\mu}{\bf K}$ (resp.
$\delta_{q(\mu+\nu)}{\bf K}$) if $\theta$ is a local minimizer
(resp. maximizer)  of $\mathcal{L}^{\circ}$, and
$C_q(\mathcal{L}, \theta;{\bf K})=0$ for $q\le\mu$ and $q\ge\mu+\nu$
if $\theta$ is neither a local minimizer nor local  maximizer of
$\mathcal{L}^{\circ}$. Similarly, the corresponding generalizations of  Theorems~2.1, 2.1', 2.2, 2.3
 and Corollary~1.3 in \cite[Chapter II]{Ch} can be obtained with
Theorems~\ref{th:S.1.1}, \ref{th:S.1.2} and their equivariant versions in Section~
\ref{sec:S.6}. In particular, as a generalization of
\cite[Theorem~II.1.6]{Ch} (or \cite[Theorem~5.1.20]{Ch1})
we have

\begin{theorem}\label{th:S.1.4}
Let Hypothesis~\ref{hyp:1.1}
hold with $X=H$, and let  $\theta$ be an isolated
critical point of mountain pass type, i.e.,  $C_1(\mathcal{L}, \theta;{\bf K})\ne 0$.
Suppose that $\nu>0$ and $\mu=0$ imply $\nu=1$. Then
$C_q(\mathcal{L}, \theta;{\bf K})=\delta_{q1}{\bf K}$.
\end{theorem}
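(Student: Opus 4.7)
The plan is to apply the Shifting Theorem (Theorem~\ref{th:S.1.3}) to reduce the computation of $C_q(\mathcal{L},\theta;\mathbf{K})$ to that of $C_{q-\mu}(\mathcal{L}^\circ,\theta;\mathbf{K})$, where $\mathcal{L}^\circ$ is the $C^1$ functional on the finite-dimensional space $H^0$ of dimension $\nu$, and then to run a short case analysis on the Morse index $\mu$. To invoke Theorem~\ref{th:S.1.3} I first need that $\theta$ is an isolated critical point of $\mathcal{L}^\circ$; this follows from the uniqueness clause of Theorem~\ref{th:S.1.2}, since any critical point $x$ of $\mathcal{L}$ sufficiently close to $\theta$ lies in $H^0$ and satisfies $\varphi(x)=\theta$, so it must also be a critical point of $\mathcal{L}^\circ$ and $\theta$ being isolated for $\mathcal{L}$ forces it to be isolated for $\mathcal{L}^\circ$.

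The hypothesis $C_1(\mathcal{L},\theta;\mathbf{K})\ne 0$ then translates into $C_{1-\mu}(\mathcal{L}^\circ,\theta;\mathbf{K})\ne 0$, which at once rules out $\mu\ge 2$. For $\mu=1$ I would observe that $C_0(\mathcal{L}^\circ,\theta;\mathbf{K})\ne 0$ forces $\theta$ to be a local minimizer of $\mathcal{L}^\circ$ (trivially when $\nu=0$, otherwise by the standard classification of isolated critical groups in finite dimension), so $C_q(\mathcal{L}^\circ,\theta;\mathbf{K})=\delta_{q0}\mathbf{K}$ and the shifting gives $C_q(\mathcal{L},\theta;\mathbf{K})=\delta_{q1}\mathbf{K}$. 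For $\mu=0$, the subcase $\nu=0$ is impossible since nondegeneracy would yield $C_q=\delta_{q0}\mathbf{K}$, contradicting $C_1\ne 0$; hence $\nu>0$, and the extra hypothesis enters precisely here to force $\nu=1$. On the one-dimensional space $H^0$, an isolated $C^1$ critical point can only exhibit one of three behaviours---local minimum ($C_q=\delta_{q0}\mathbf{K}$), local maximum ($C_q=\delta_{q1}\mathbf{K}$), or neither ($C_q\equiv 0$)---and $C_1(\mathcal{L}^\circ,\theta;\mathbf{K})\ne 0$ selects the local-maximum alternative, whence $C_q(\mathcal{L}^\circ,\theta;\mathbf{K})=\delta_{q1}\mathbf{K}$ and again $C_q(\mathcal{L},\theta;\mathbf{K})=\delta_{q1}\mathbf{K}$.

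The only point requiring care is that the classical finite-dimensional critical-group computations (which in Chang and Mawhin--Willem are stated for $C^2$ reductions) continue to work when $\mathcal{L}^\circ$ is merely $C^1$. Fortunately every step used above is elementary at this level of regularity: a $C^1$ function on an interval with isolated zero of its derivative at the origin is locally either nonnegative, nonpositive, or strictly sign-changing, and each alternative delivers the stated relative homology by direct inspection of the sublevel set. Thus the argument reduces the theorem, without any use of smoothness beyond what Theorem~\ref{th:S.1.2}(b) provides, to Theorem~\ref{th:S.1.3} plus bookkeeping.
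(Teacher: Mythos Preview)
Your proposal is correct and follows essentially the same route as the paper: reduce via the Shifting Theorem (Theorem~\ref{th:S.1.3}) to the finite-dimensional functional $\mathcal{L}^\circ$ on $H^0$, then do the case analysis on $\mu$ that already appears in Chang's $C^2$ proof. The paper does not write out the full case-by-case argument; it only singles out the one step where the $C^1$ (as opposed to $C^2$) regularity of $\mathcal{L}^\circ$ might cause trouble, namely the implication ``$C_0(\mathcal{L}^\circ,\theta;{\bf K})\ne 0\Rightarrow\theta$ is a local minimizer'' in the case $\mu=1$, and handles it by extending $\mathcal{L}^\circ$ to a coercive $C^1$ functional on $H^0$ and invoking \cite[Proposition~6.95]{MoMoPa}. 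Your final paragraph addresses this same regularity issue but only explicitly for the one-dimensional case $\nu=1$; for the $\mu=1$ case with $\nu\ge 2$ you should either note that $C_0\ne 0$ forces a strict local minimum by the elementary path-component interpretation of relative $H_0$ (valid for continuous functions), or cite \cite[Proposition~6.95]{MoMoPa} as the paper does.
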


When $\nu>0$ and $\mu=1$, $C_{0}(\mathcal{
L}^{\circ}, \theta; {\bf K})\ne 0$ by Theorem~\ref{th:S.1.3}.
We can change $\mathcal{L}^\circ$ outside a very small
neighborhood $\theta\in B_{H^0}(\theta,\epsilon)$ to get
a $C^1$ functional on $H^0$ which is coercive (and so satisfies the
(PS)-condition). Then it follows from $C_{0}(\mathcal{
L}^{\circ}, \theta; {\bf K})\ne 0$ and \cite[Proposition~6.95]{MoMoPa}
that $\theta$ is a local minimizer of $\mathcal{L}^\circ$.

As a generalization of Corollary~3.1 in
\cite[page 102]{Ch} we have also:
Under the assumptions of Theorem~\ref{th:S.1.4},
 if the smallest eigenvalue $\lambda_1$ of $B(\theta)=d^2\mathcal{L}(\theta)$
 is simple whenever $\lambda_1=0$, then $\lambda_1\le 0$, and
  ${\rm index}(\nabla \mathcal{L}, \theta)=-1$.

Theorem~5.1 and Corollary~5.1 in
\cite[page 121]{Ch} are also true if ``$f\in C^2(M,\mathbb{R})$"
and ``Fredholm operators $d^2f(x_i)$" are replaced by
``$f\in C^1(M,\mathbb{R})$ and $\nabla f$ is G\^ateaux differentiable"
and ``under some
chart around $p_i$ the functional $f$ has a representation
that satisfies Hypothesis~\ref{hyp:1.1}", respectively.

 Marino and Prodi  \cite{MP} studied local Morse function approximations for $C^2$
  functionals on Hilbert spaces.  We shall generalize their result to a class of functionals
    satisfying the following stronger assumption than Hypothesis~\ref{hyp:1.1}.

\begin{hypothesis}\label{hyp:MP.1}
{\rm Let  $V$ be an open set of a Hilbert space $H$ with inner product $(\cdot,\cdot)_H$,
and $\mathcal{L}\in C^1(V,\mathbb{R})$.
Assume that the gradient $\nabla\mathcal{L}$ has a G\^ateaux derivative $B(u)\in \mathscr{L}_s(H)$ at every point $u\in V$, and that the map $B:V\to \mathscr{L}_s(H)$  has a decomposition
$B=P+Q$, where for each $u\in V$,  $P(u):H\to H$ is a bounded linear  positive definitive operator and
$Q(u):H\to H$ is a compact linear  operator with the following
properties:\\
{\bf (i)} For any $u\in H$, the map $V\ni x\mapsto P(x)u\in H$
is continuous;\\
{\bf (ii)} The  map $Q:V\to\mathscr{L}(H)$ is continuous;\\
{\bf (iii)} $P$ is local positive definite uniformly, i.e., each $x_0\in V$
 has a neighborhood $\mathscr{U}(x_0)$ such that for some
 constants $C_0>0$,
$$
(P(x)u, u)_H\ge C_0\|u\|^2,\quad\forall u\in H,\;\forall x\in \mathscr{U}(x_0).
$$
}
\end{hypothesis}

 As in the proof of Theorem~\ref{th:3.1}, under Hypothesis~$\mathfrak{F}_{2,N}$,
we can check that the functional $\mathfrak{F}$ in (\ref{e:1.3}) satisfies this hypothesis.
By improving methods in \cite{MP, Ch, CiVa} we may prove

\begin{theorem}\label{th:MP.2}
Under Hypothesis~\ref{hyp:MP.1}, suppose: (a) $u_0\in V$ is
a unique critical point of $\mathcal{L}$, (b) the corresponding maps $\varphi$
and $\mathcal{L}^\circ$ as in Theorem~\ref{th:S.1.2} near $u_0$ are of classes $C^1$ and $C^2$,
respectively, (c) $\mathcal{L}$
satisfies the (PS) condition. Then
for any $\epsilon>0$ and $r>0$ such that
$\bar{B}_H(u_0, r)\subset V$ and
there exists a functional $\tilde{\mathcal{L}}\in C^1(V,\mathbb{R})$ with the following properties:\\
{\bf (i)} $\tilde{\mathcal{L}}$ satisfies Hypothesis~\ref{hyp:MP.1} and the (PS) condition;\\
{\bf (ii)} $\sup_{u\in V}\|\mathcal{L}^{(i)}(u)-\tilde{\mathcal{L}}^{(i)}(u)\|<\epsilon$, $i=0,1,2$;\\
{\bf (iii)} $\mathcal{L}(x)=\tilde{\mathcal{L}}(x)$ if $x\in V$ and $\|u-u_0\|\ge r$;\\
{\bf (iv)} the critical points of $g$, if any, are in ${B}_H(u_0, r)$ and nondegenerate
(so finitely many by the arguments below \ref{e:S.1.1}); moreover the Morse indexes of these critical
points sit in $[m^-, m^-+n^0]$, where $m^-$ and $n^0$ are the Morse index and nullity of $u_0$,
respectively.
\end{theorem}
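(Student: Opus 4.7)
The plan is to use the Splitting Theorem~\ref{th:S.1.2} at $u_0$ to reduce the problem to a finite-dimensional perturbation of the $C^2$ reduced functional $\mathcal{L}^\circ$ on $H^0:=\ker B(u_0)$, apply Sard's theorem there, and lift the perturbation back to $H$ by a smooth cut-off supported in a small ball around $u_0$. Under hypotheses (a) and (b), Theorem~\ref{th:S.1.2} produces a $C^1$ map $\varphi:B_{H^0}(\theta,\epsilon_0)\to H^+\oplus H^-$ and an origin-preserving homeomorphism $\Phi(z,u^++u^-)=u_0+z+\varphi(z)+\phi_z(u^++u^-)$ with
\[
\mathcal{L}\circ\Phi(z,u^++u^-)=\|u^+\|^2-\|u^-\|^2+\mathcal{L}^\circ(z).
\]
Denoting by $P^0:H\to H^0$ the orthogonal projection, the relation $H^0\perp (H^+\oplus H^-)$ gives $P^0(\Phi(z,u^++u^-)-u_0)=z$, so a perturbation linear in $P^0(u-u_0)$ pulls back to one linear in $z$.

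Next, I would perturb in finite dimensions using Sard. Since $n^0:=\dim H^0<\infty$ and $\nabla\mathcal{L}^\circ:B_{H^0}(\theta,\epsilon_0)\to H^0$ is $C^1$, Sard's theorem gives that almost every $v\in H^0$ is a regular value; for such $v$ the critical points of $\mathcal{L}^\circ_v(z):=\mathcal{L}^\circ(z)-(v,z)_{H^0}$ in $B_{H^0}(\theta,\epsilon_0)$ are automatically nondegenerate (the Hessian $D^2\mathcal{L}^\circ=D^2\mathcal{L}^\circ_v$ at any such point being invertible). I would pick $0<r_1<r_2<r$ with $\bar B_H(u_0,r_2)\subset V$ and inside the validity range of $\Phi$, a cut-off $\chi\in C^\infty(H,[0,1])$ with $\chi\equiv 1$ on $\bar B_H(u_0,r_1)$ and $\mathrm{supp}\,\chi\subset B_H(u_0,r_2)$, and set
\[
\tilde{\mathcal{L}}(u):=\mathcal{L}(u)-\chi(u)(v,P^0(u-u_0))_H.
\]
Because $u_0$ is the only critical point of $\mathcal{L}$ and $\mathcal{L}$ satisfies (PS), the quantity $\delta:=\inf\{\|\nabla\mathcal{L}(u)\|:r_1\le\|u-u_0\|\le r_2\}$ is strictly positive; I would then take $v$ to be a regular value of $\nabla\mathcal{L}^\circ$ with $\|v\|$ small enough that both condition~(ii) and $\sup_u\|\nabla(\mathcal{L}-\tilde{\mathcal{L}})(u)\|<\delta$ hold.

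Finally I would verify (i)--(iv). The extra term $g(u):=-\chi(u)(v,P^0(u-u_0))_H$ is smooth and compactly supported, and its Hessian splits as an $O(\|v\|)$ scalar multiple of $I_H$ (from $D^2\chi$) plus finite-rank (hence compact) terms (from $\nabla\chi\otimes P^0v$-type contributions), so for $\|v\|$ sufficiently small $B+D^2g$ still decomposes as $\tilde P+\tilde Q$ with $\tilde P$ uniformly positive definite and $\tilde Q$ compact, yielding~(i); (PS) of $\tilde{\mathcal{L}}$ follows from that of $\mathcal{L}$ because $\nabla g$ is bounded with compact support. Condition~(iii) is built in. For~(iv), the choice of $\delta$ forces every critical point of $\tilde{\mathcal{L}}$ in $\mathrm{supp}\,\chi$ to lie in $\bar B_H(u_0,r_1)$; there $\chi\equiv 1$, so in the splitting coordinates the equation $\nabla\tilde{\mathcal{L}}=0$ reduces to $u^+=u^-=\theta$ together with $\nabla\mathcal{L}^\circ(z)=v$, giving finitely many nondegenerate critical points whose Hessians are orthogonal sums $2I_{H^+}\oplus(-2I_{H^-})\oplus D^2\mathcal{L}^\circ(z)$, hence Morse indices in $[m^-,m^-+n^0]$. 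The main obstacle is to exclude spurious critical points in the annulus $r_1\le\|u-u_0\|\le r_2$, where $\nabla\chi$ contributes to $\nabla\tilde{\mathcal{L}}$; controlling this via the lower bound $\delta$ together with the smallness of $v$ is precisely what rules them out.
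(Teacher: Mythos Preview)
Your overall strategy coincides with the paper's: perturb the reduced functional $\mathcal{L}^\circ$ by a small linear form chosen via Sard, and lift this to $H$ as $\tilde{\mathcal{L}}(u)=\mathcal{L}(u)-\chi(u)(v,P^0(u-u_0))_H$ with a smooth cut-off. The localization arguments (no critical points in the annulus, verification of Hypothesis~\ref{hyp:MP.1} and (PS) for $\tilde{\mathcal{L}}$) are essentially the same as the paper's.

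The gap is in your verification of (iv). You argue ``in the splitting coordinates'' that $\nabla\tilde{\mathcal{L}}=0$ becomes $u^\pm=\theta$, $\nabla\mathcal{L}^\circ(z)=v$, and that the Hessian is the block sum $2I_{H^+}\oplus(-2I_{H^-})\oplus D^2\mathcal{L}^\circ(z)$. But the splitting homeomorphism $\Phi$ of Theorem~\ref{th:S.1.2} is \emph{only a homeomorphism}: the fiber maps $\phi_z$ come from the Duc--Hung--Khai construction and carry no differentiability, and hypothesis~(b) (that $\varphi$ is $C^1$, $\mathcal{L}^\circ$ is $C^2$) does not upgrade $\Phi$ to a diffeomorphism. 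Hence you may not transport critical points through $\Phi$, and you certainly may not read off the Hessian of $\tilde{\mathcal{L}}$ as the Hessian of $\tilde{\mathcal{L}}\circ\Phi$. At a critical point $u=u_0+z+\varphi(z)$ (where $\chi\equiv 1$) the true Hessian of $\tilde{\mathcal{L}}$ is simply $B(u)=\mathcal{L}''(u)$, not your block-diagonal form, and showing $\ker B(u)=\{0\}$ is nontrivial.

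The paper handles both points directly in the original coordinates. For the location of critical points it uses that $\chi\equiv 1$ forces $(I-P^0)\nabla\mathcal{L}(u)=0$, and then invokes the \emph{uniqueness} in (\ref{e:S.1.2}) to conclude $u=u_0+z+\varphi(z)$; the $H^0$-component then gives $\nabla\mathcal{L}^\circ(z)=v$ via the formula $D\mathcal{L}^\circ(z)=D\mathcal{L}(z+\varphi(z))|_{H^0}$. For nondegeneracy it takes $\xi\in\ker B(u)$, differentiates the implicit relation $(I-P^0)\nabla\mathcal{L}(w+\varphi(w))=0$ (this is exactly where the $C^1$-ness of $\varphi$ in hypothesis~(b) is used) to show $d^2\mathcal{L}^\circ(z)(P^0\xi,\cdot)=0$, whence $P^0\xi=0$; then $\xi=\xi^\bot\in H^+\oplus H^-$ and $B(u)\xi^\bot=0$, which Lemma~\ref{lem:S.2.2} rules out. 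The Morse-index bound comes from the same lemma: $B(u)$ is positive (resp.\ negative) definite on $H^+$ (resp.\ $H^-$), so the index lies in $[m^-,m^-+n^0]$. You should replace the ``splitting coordinates'' step by this direct computation.
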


As showed, the functionals in \cite{Lu1, Lu9}  satisfy the conditions of this theorem.
Marino--Prodi's result has many important applications in the critical point theory,
see \cite{Ch, CiVa, Gh, LazS} and literature therein. With Theorem~\ref{th:MP.2} they may be given in our
framework.

   Marino--Prodi's perturbation theorem in \cite{MP}
was also generalized to the equivariant case under the finite (resp. compact Lie) group
action by Wasserman \cite{Was} (resp. Viterbo \cite{Vit1}), see the proof of Theorem~7.8 in
\cite[Chapter~I]{Ch} for full details.
Similarly, we can present an equivariant version of Theorem~\ref{th:MP.2} for compact
Lie group action, but it is omitted here.\\

\noindent{\bf Strategies of proofs for results in this section and arrangements}.\quad
Under the assumptions of Theorem~\ref{th:S.1.1}, no known implicit function
theorems or contraction mapping principles can be used to get $\varphi$ in (\ref{e:S.1.2}), which
is different from the case in \cite{Lu1,Lu2}.
The methods in \cite{DHK1} provide a possible way to construct such a $\varphi$.
However, as mentioned above our assumptions cannot guarantee
the above conditions (c') and (d'). Fortunately, it is with Lemma~\ref{lem:S.4.1} and Theorem~\ref{th:S.1.1}
that we can complete this construction.

In Section~\ref{sec:S.2} we list some lemmas.   Theorem~\ref{th:S.1.1} will be proved in Section~\ref{sec:S.3}.
 It is necessary for a key implicit function theorem for a family of potential operators, Theorem~\ref{th:S.4.3},
 which is proved in Section~\ref{sec:S.4};   we also give an inverse function theorem,  Theorem~\ref{th:S.4.4}, there.
In Section~\ref{sec:S.5} we shall prove
a parameterized splitting theorem, Theorem~\ref{th:S.5.3},
and a parameterized shifting theorem, Theorem~\ref{th:S.5.4};
 Theorems~\ref{th:S.1.2},~\ref{th:S.1.3} are special cases of
them, respectively. The equivariant case  is considered in
Section~\ref{sec:S.6}.  Theorem~\ref{th:MP.2} will be proved
in Section~\ref{sec:MP}.

\subsection{Lemmas}\label{sec:S.2}

Under Hypothesis~\ref{hyp:1.1} we have
the following two lemmas as proved in \cite{Lu1,Lu2}.

\begin{lemma}\label{lem:S.2.1}
 There exists a function $\omega:V\cap X\to [0, \infty)$  such that $\omega(x)\to 0$ as $x\in V\cap X$ and $\|x\|\to
0$, and that
$$
|(B(x)u, v)_H- (B(\theta)u, v)_H |\le \omega(x) \|u\|\cdot\|v\|
$$
for any $x\in V\cap X$,  $u\in H^0\oplus H^-$ and $v\in H$.
\end{lemma}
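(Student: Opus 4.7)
The plan is to decompose $B(x)-B(\theta) = [P(x)-P(\theta)] + [Q(x)-Q(\theta)]$ and estimate each piece by a function of $x$ that tends to zero as $\|x\|\to 0$ with $x\in V\cap X$. The rough shape of the argument is: the $Q$-difference is uniformly small in operator norm by (D3), while for the $P$-difference we will only have pointwise convergence from (D2), but this is enough because $u$ is restricted to lie in the finite-dimensional subspace $H^0\oplus H^-$ (which sits in $X$ by (D1), so that $P(x)u$ and $B(x)u$ make sense for all $x\in V\cap X$).

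More precisely, first I would set $\omega_Q(x):=\|Q(x)-Q(\theta)\|_{\mathscr{L}(H)}$; by the continuity asserted in (D3) this is well-defined and tends to $0$ as $\|x\|\to 0$, $x\in V\cap X$, and Cauchy–Schwarz gives
$$
|((Q(x)-Q(\theta))u,v)_H|\le\omega_Q(x)\,\|u\|\cdot\|v\|
$$
for all $u,v\in H$. Second, fix once and for all an orthonormal basis $e_1,\ldots,e_N$ of $H^0\oplus H^-$ (where $N=\mu+\nu$), noting that each $e_i$ lies in $X$ by (D1), and set
$$
\omega_P(x):=\Big(\sum_{i=1}^{N}\|(P(x)-P(\theta))e_i\|^2\Big)^{1/2}.
$$
For any $u\in H^0\oplus H^-$, writing $u=\sum_i c_i e_i$ with $\sum_i c_i^2=\|u\|^2$, two applications of Cauchy–Schwarz yield
$$
\|(P(x)-P(\theta))u\|\le \sum_i|c_i|\,\|(P(x)-P(\theta))e_i\|\le \omega_P(x)\,\|u\|,
$$
hence $|((P(x)-P(\theta))u,v)_H|\le\omega_P(x)\,\|u\|\cdot\|v\|$ for $v\in H$. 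By (D2) each summand in $\omega_P(x)^2$ tends to $0$ as $\|x\|\to 0$, and the sum is finite, so $\omega_P(x)\to 0$ as well. Setting $\omega(x):=\omega_P(x)+\omega_Q(x)$ and adding the two bounds yields the desired inequality.

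There is no real obstacle here; the only subtlety is the asymmetry of the hypotheses (uniform for $Q$, merely pointwise for $P$), and the point of the proof is to observe that restricting $u$ to the finite-dimensional subspace $H^0\oplus H^-$ turns pointwise convergence of $P(x)$ on a finite basis into a uniform estimate. The restrictions $u\in H^0\oplus H^-\subset X$ and $x\in V\cap X$ together with (D1) ensure that all objects involved are defined.
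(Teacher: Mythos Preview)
Your proof is correct and follows exactly the natural line of argument: split $B(x)-B(\theta)=[P(x)-P(\theta)]+[Q(x)-Q(\theta)]$, control the $Q$-part in operator norm via (D3), and control the $P$-part by exploiting that $H^0\oplus H^-$ is finite-dimensional so that the pointwise convergence in (D2) on a fixed basis upgrades to a uniform bound. The paper does not actually write out a proof of this lemma here (it simply cites \cite{Lu1,Lu2}), but your argument is precisely the one those references use. One harmless redundancy: you invoke (D1) to place the $e_i$ in $X$, but since $P(x)\in\mathscr{L}(H)$ and (D2) holds for all $u\in H$, this is not needed for the estimate itself---(D1) is only needed so that the statement of the lemma is meaningful and so that $H^0\oplus H^-\subset X$ is available elsewhere.
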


\begin{lemma}\label{lem:S.2.2}
There exists a  small neighborhood $U\subset V$ of $\theta$ in $H$
and a number $a_1\in (0, 2a_0]$ such that for any $x\in U\cap X$,
\begin{enumerate}
\item[{\rm (i)}] $(B(x)u, u)_H\ge a_1\|u\|^2\;\forall u\in H^+$;
\item[{\rm (ii)}] $|(B(x)u,v)_H|\le\omega(x)\|u\|\cdot\|v\|\;\forall u\in H^+, \forall v\in
H^-\oplus H^0$;
\item[{\rm (iii)}] $(B(x)u,u)_H\le-a_0\|u\|^2\;\forall u\in H^-$.
\end{enumerate}
\end{lemma}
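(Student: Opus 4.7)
The plan is to dispatch (ii) and (iii) quickly from Lemma~\ref{lem:S.2.1} and the spectral inequalities~(\ref{e:S.1.1}), and reserve the effort for (i). For (iii), I take $u=v\in H^-\subset H^0\oplus H^-$ in Lemma~\ref{lem:S.2.1} to obtain $(B(x)u,u)\le(B(\theta)u,u)+\omega(x)\|u\|^2\le(-2a_0+\omega(x))\|u\|^2$, and shrink $U$ so that $\omega(x)\le a_0$ there. For (ii), taking $u\in H^+$ and $v\in H^0\oplus H^-$, the self-adjointness of $B(x)$ lets me swap slots and apply Lemma~\ref{lem:S.2.1} in the form $|(B(x)u,v)|=|(B(x)v,u)|$, while $(B(\theta)v,u)=0$ since $B(\theta)$ preserves the spectral decomposition and $H^+\perp H^0\oplus H^-$; together these give $|(B(x)u,v)|=|(B(x)v,u)-(B(\theta)v,u)|\le\omega(x)\|u\|\|v\|$.

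Part (i) is the main obstacle, because Lemma~\ref{lem:S.2.1} is unavailable on $H^+$: its natural proof uses compactness of the unit sphere of the finite-dimensional subspace $H^0\oplus H^-$, which fails on $H^+$ in general, and neither (D2) (only strong operator convergence of $P$) nor (D3) (norm convergence of $Q$) alone suffices to make $B(x)-B(\theta)$ small uniformly on the unit ball of $H^+$. I plan to argue by contradiction. Suppose (i) fails for every small neighborhood of $\theta$; then there exist $\{x_n\}\subset V\cap X$ with $\|x_n\|\to 0$ and unit vectors $u_n\in H^+$ with $(B(x_n)u_n,u_n)\to c\le 0$. Pass to a subsequence so that $u_n\rightharpoonup u$ weakly in $H$; since $H^+$ is a closed, hence weakly closed, subspace, $u\in H^+$.

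Now decompose $B(x_n)=P(x_n)+Q(x_n)$ and analyze each piece. For the $Q$-part, (D3) forces $\|Q(x_n)-Q(\theta)\|_{\mathscr{L}(H)}\to 0$, while compactness of $Q(\theta)$ gives $Q(\theta)u_n\to Q(\theta)u$ in norm, so $(Q(x_n)u_n,u_n)\to(Q(\theta)u,u)$. For the $P$-part, writing $u_n=u+(u_n-u)$ and using self-adjointness of $P(x_n)$,
$$
(P(x_n)u_n,u_n)=(P(x_n)(u_n-u),u_n-u)+2(P(x_n)u,u_n-u)+(P(x_n)u,u);
$$
(D4) bounds the first summand below by $C_0\|u_n-u\|^2$ for large $n$, (D2) gives $(P(x_n)u,u)\to(P(\theta)u,u)$, and pairing strong convergence $P(x_n)u\to P(\theta)u$ with weak convergence $u_n-u\rightharpoonup 0$ kills the cross term. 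Using $\|u_n-u\|^2\to 1-\|u\|^2$ (since $\|u_n\|=1$ and $u_n\rightharpoonup u$), I conclude
$$
\liminf_{n\to\infty}(B(x_n)u_n,u_n)\ge C_0(1-\|u\|^2)+(B(\theta)u,u)\ge C_0(1-\|u\|^2)+2a_0\|u\|^2\ge\min(C_0,2a_0),
$$
contradicting $c\le 0$. Choosing any $a_1\in(0,\min(C_0,2a_0)]\subset(0,2a_0]$ then delivers (i) on a sufficiently small $U$. The essential mechanism is the interplay between weak compactness, compactness of $Q(\theta)$, and uniform positivity of $P$ from (D4), which together compensate for the lack of uniform closeness of $B(x)$ to $B(\theta)$ on $H^+$.
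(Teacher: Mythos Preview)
The paper does not supply a proof of this lemma in the present text; it simply cites \cite{Lu1,Lu2} for both Lemma~\ref{lem:S.2.1} and Lemma~\ref{lem:S.2.2}, so a direct line-by-line comparison is not possible. Your argument is correct. Parts (ii) and (iii) are immediate consequences of Lemma~\ref{lem:S.2.1} and the spectral inequalities~(\ref{e:S.1.1}), exactly as you write (for (ii) you use self-adjointness of $B(x)$ and the fact that $B(\theta)$ preserves the orthogonal decomposition $H=H^+\oplus H^0\oplus H^-$). Your contradiction argument for (i) is the natural one under Hypothesis~\ref{hyp:1.1}: weak sequential compactness of the unit ball gives $u_n\rightharpoonup u\in H^+$; (D3) plus compactness of $Q(\theta)$ handles the $Q$-part; and the expansion of $(P(x_n)u_n,u_n)$ combined with (D2), (D4), and $\|u_n-u\|^2\to 1-\|u\|^2$ handles the $P$-part, yielding $\liminf_n (B(x_n)u_n,u_n)\ge \min(C_0,2a_0)>0$ against $\limsup_n(B(x_n)u_n,u_n)\le 0$.

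Two cosmetic remarks. First, after negating (i) you should phrase it as $\limsup_n (B(x_n)u_n,u_n)\le 0$ (or pass to a further subsequence to get an actual limit $c\le 0$); as written, ``$(B(x_n)u_n,u_n)\to c\le 0$'' presupposes convergence. Second, the contradiction strictly yields that every $a_1<\min(C_0,2a_0)$ works on a sufficiently small neighborhood, not necessarily $a_1=\min(C_0,2a_0)$ itself; since the lemma only asks for \emph{some} $a_1\in(0,2a_0]$, this is immaterial. Note also that your expansion of $(P(x_n)u_n,u_n)$ uses self-adjointness of $P(x_n)$, which is implicit in ``positive definite'' and is used elsewhere in the paper (e.g.\ in the proof of Lemma~\ref{lem:D*}, where $\sqrt{P(x)}$ appears).
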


\begin{lemma}\label{lem:D*}
Actually, (D4) is equivalent to the condition (D*) in \cite{Lu2}, i.e.,
\begin{enumerate}
\item[\bf (D4*)] There exist positive constants $\eta_0>0$ and  $C'_0>0$ such that
$$
(P(x)u, u)\ge C'_0\|u\|^2\quad\forall u\in H,\;\forall x\in
B_H(\theta,\eta_0)\cap X.
$$
\end{enumerate}
\end{lemma}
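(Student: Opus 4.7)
The equivalence has one easy direction and one direction that needs a contradiction argument. The plan is to show (D4*) $\Rightarrow$ (D4) by observation, and then prove the converse by assuming (D4*) fails and manufacturing a sequence that violates (D4).

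For the easy direction, suppose (D4*) holds with constants $\eta_0,C'_0>0$. Given any sequence $\{x_n\}\subset V\cap X$ with $\|x_n\|\to 0$, eventually $x_n\in B_H(\theta,\eta_0)\cap X$, so (D4) is satisfied with $C_0=C'_0$ and any sufficiently large $n_0$. This is a one-line observation.

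For the substantive direction (D4) $\Rightarrow$ (D4*), I will argue by contradiction. Suppose (D4*) is false. Negating the statement: for every $\eta>0$ and every $C>0$, there exist $x\in B_H(\theta,\eta)\cap X$ and $u\in H\setminus\{\theta\}$ (which I may normalize to $\|u\|=1$) with $(P(x)u,u)_H<C$. Applying this with $\eta=C=1/n$, I obtain sequences $\{x_n\}\subset V\cap X$ with $\|x_n\|<1/n$ and unit vectors $\{u_n\}\subset H$ satisfying $(P(x_n)u_n,u_n)_H<1/n$. In particular $\|x_n\|\to 0$, so (D4) applies and produces constants $C_0>0$ and $n_0\in\N$ with $(P(x_n)u,u)_H\ge C_0\|u\|^2$ for all $u\in H$ and all $n\ge n_0$. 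Specializing to $u=u_n$ yields $(P(x_n)u_n,u_n)_H\ge C_0$ for $n\ge n_0$, which contradicts $(P(x_n)u_n,u_n)_H<1/n$ once $n>\max\{n_0,1/C_0\}$.

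There is no real obstacle here; the only point to be slightly careful about is choosing $V\cap X$-valued sequences (so that the assumption (D4) is applicable), which is automatic since the negation of (D4*) already places each $x_n$ in $B_H(\theta,1/n)\cap X\subset V\cap X$ for $n$ large enough that $B_H(\theta,1/n)\subset V$. Note also that the proof uses nothing about $P$ beyond the specific pointwise inequality in the two hypotheses, so positive definiteness or compactness plays no role in this equivalence.
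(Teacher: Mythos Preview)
Your proof is correct and follows the same underlying contradiction scheme as the paper, but your presentation is more direct. The paper first translates both (D4) and (D4*) into spectral language---rewriting the lower bound $(P(x)u,u)_H\ge C\|u\|^2$ as $\min\sigma(\sqrt{P(x)})\ge\sqrt{C}$---and then runs the contradiction argument on the quantity $\inf\{\min\sigma(\sqrt{P(x)}):x\in B_H(\theta,\eta)\cap X\}$, using its monotonicity in $\eta$. You bypass this detour entirely and work directly with the quadratic form, which is cleaner here since the spectral reformulation buys nothing for this particular lemma. Your remark that positive definiteness and compactness play no role is also apt; the paper implicitly invokes positive definiteness to justify $\sigma(P(x))\subset(0,\infty)$, but as you observe the equivalence is purely a quantifier manipulation.
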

\begin{proof}
 Indeed, since each $P(x)$ is a positive definite
linear operator, its spectral set is a bounded closed subset in $(0,\infty)$. Moreover, we have
$\sigma(\sqrt{P(x)})=\{\sqrt{\lambda}\,|\, \lambda\in\sigma(P(x))\}$.
So (D4) is equivalent to the statement: For any sequence $\{x_n\}\subset V\cap X$ with $\|x_n\|\to 0$ (as $n\to\infty$), there holds:
$\inf_n\min\sigma(\sqrt{P(x_n)})>0$.
Similarly, (D4*) can be equivalently expressed as: There exist positive constants $\eta_0>0$ such that
$$
\inf\{\min\sigma(\sqrt{P(x)})\,|\, x\in
B_H(\theta,\eta_0)\cap X\}>0.
$$
Suppose (D4) holds. Since $\eta\mapsto \inf\{\min\sigma(\sqrt{P(x)})\,|\, x\in
B_H(\theta,\eta)\cap X\}$ is non-increasing, that (D4) does not hold means that
there exists a sequence $\{x_n\}\subset V\cap X$ with $\|x_n\|\to 0$ (as $n\to\infty$) such that
$\inf_n\min\sigma(\sqrt{P(x_n)})\to 0$,
which contradicts (D4).
\end{proof}

\begin{lemma}\label{lem:S.2.4}
Suppose that {\rm Hypothesis~\ref{hyp:1.1}} with $X=H$ is satisfied. Then for
any closed subspace  $\hat{H}\subset H$, $(\hat{H}, \hat{V}, \hat{\mathcal{L}})$
satisfies {\rm Hypothesis~\ref{hyp:1.1}} with $X=H$, where
 $\hat{V}:=V\cap\hat{H}$ and $\hat{\mathcal{L}}:=\mathcal{L}|_{\hat{V}}$.
\end{lemma}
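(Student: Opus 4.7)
Let $\pi:H\to\hat H$ denote the orthogonal projection and $\iota:\hat H\hookrightarrow H$ the inclusion. The strategy is to pull every structure on $H$ back to $\hat H$ by sandwiching with $\pi$ and $\iota$, and then to verify the four conditions \textbf{(D1)}--\textbf{(D4)} one by one; since we work with $X=H$ (so in particular $X\cap V=V$) and must end up with the corresponding dense subspace of $\hat H$ equal to $\hat H$, condition \textbf{(D1)} will be automatic.

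First I would show that $\nabla\hat{\mathcal L}$ exists on $\hat V=V\cap\hat H$ and satisfies $\nabla\hat{\mathcal L}(u)=\pi\nabla\mathcal L(u)$: for $u\in\hat V$ and $h\in\hat H$, one has $d\hat{\mathcal L}(u)h=(\nabla\mathcal L(u),h)_H=(\pi\nabla\mathcal L(u),h)_{\hat H}$. Passing to the G\^ateaux derivative along a direction $h\in\hat H$ and using continuity of $\pi$, the G\^ateaux derivative of $\nabla\hat{\mathcal L}$ exists on $\hat V$ and is given by $\hat B(u):=\pi B(u)\iota\in\mathscr{L}(\hat H)$. Self-adjointness is immediate: $(\hat B(u)h,k)_{\hat H}=(B(u)h,k)_H=(h,B(u)k)_H=(h,\hat B(u)k)_{\hat H}$. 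Define the candidate decomposition
\[
\hat P(u):=\pi P(u)\iota,\qquad \hat Q(u):=\pi Q(u)\iota,
\]
so that $\hat B(u)=\hat P(u)+\hat Q(u)$ on $\hat H$.

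Next I would verify the structural requirements. For any $u\in\hat V$ and $h\in\hat H$,
$(\hat P(u)h,h)_{\hat H}=(P(u)h,h)_H$, so $\hat P(u)$ inherits bounded positive definiteness from $P(u)$; moreover $\hat P(u)$ is self-adjoint by the same computation as for $\hat B(u)$. The operator $\hat Q(u)=\pi Q(u)\iota$ is compact since $Q(u)$ is compact and $\pi,\iota$ are bounded.

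Finally I would check \textbf{(D1)}--\textbf{(D4)} for $(\hat H,\hat V,\hat{\mathcal L})$ with dense subspace equal to $\hat H$ itself. \textbf{(D1)} holds trivially because every eigenfunction of $\hat B(\theta)$ in $\hat H$ automatically lies in the relevant dense subspace $\hat H$. For \textbf{(D2)}, given $x_k\in\hat V$ with $\|x_k\|\to 0$ and $u\in\hat H$, from the original \textbf{(D2)} (applied to $x_k\in V\cap X=V$) we get $\|P(x_k)u-P(\theta)u\|_H\to 0$, and boundedness of $\pi$ gives $\|\hat P(x_k)u-\hat P(\theta)u\|_{\hat H}\to 0$. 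For \textbf{(D3)}, the estimate $\|\hat Q(x)-\hat Q(\theta)\|_{\mathscr L(\hat H)}\le\|Q(x)-Q(\theta)\|_{\mathscr L(H)}$ transfers continuity at $\theta$ from $Q$ to $\hat Q$. For \textbf{(D4)}, for $\{x_n\}\subset\hat V$ with $\|x_n\|\to 0$ the original constants $C_0,n_0$ yield, for every $h\in\hat H$ and $n\ge n_0$,
\[
(\hat P(x_n)h,h)_{\hat H}=(P(x_n)h,h)_H\ge C_0\|h\|^2.
\]
This completes the verification.

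There is no genuine obstacle here: the whole argument is a routine push-forward under $\pi$ and $\iota$, and the only point that deserves attention is the identification of $\nabla\hat{\mathcal L}$ and its G\^ateaux derivative, which I carry out in the first step. Note that the hypothesis $X=H$ in the original setting is what lets one apply \textbf{(D2)} and \textbf{(D4)} to arbitrary sequences in $\hat V$ without worrying about an intermediate dense subspace.
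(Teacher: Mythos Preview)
Your proposal is correct and follows essentially the same approach as the paper: define the restricted operators by sandwiching with the orthogonal projection $\pi$ and the inclusion, observe that $\nabla\hat{\mathcal L}=\pi\nabla\mathcal L$, set $\hat B=\pi B\iota$, $\hat P=\pi P\iota$, $\hat Q=\pi Q\iota$, and verify the structural conditions. In fact you spell out the verification of \textbf{(D1)}--\textbf{(D4)} more explicitly than the paper, which simply states that the remaining conditions are ``easily checked''; your observation that $X=H$ is what makes \textbf{(D1)} trivial and allows \textbf{(D2)}, \textbf{(D4)} to be applied to arbitrary sequences in $\hat V$ is exactly the point.
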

\begin{proof}
Clearly, $\hat{\mathcal{L}}\in C^1(\hat{V},\mathbb{R})$ and $D\hat{\mathcal{L}}(\theta)=0$.
Denote by $\Pi:H\to\hat{H}$ the orthogonal projection.
Then the gradient of $\hat{\mathcal{L}}$ at $u\in\hat{V}$, $\nabla\hat{\mathcal{L}}(u)$,
is equal to $\Pi\nabla{\mathcal{L}}(u)$. It follows that
 $\nabla\hat{\mathcal{L}}$ at any $u\in \hat{V}$
  has a G\^ateaux derivative $\hat{B}(u)=\Pi\circ B(u)|_{\hat{H}}\in \mathscr{L}_s(\hat{H})$.
For any $u\in \hat{V}$, put $\hat{P}(u)=\Pi\circ P(u)|_{\hat{H}}$ and
$\hat{Q}(u)=\Pi\circ Q(u)|_{\hat{H}}$. Then $\hat{B}=\hat{P}+\hat{Q}:\hat{V}\to
\mathscr{L}_s(\hat{H})$, $\hat{P}(u)$ is positive definite, and $\hat{Q}(u)$
is a compact linear operator. It is easily checked that other conditions are satisfied.
\end{proof}

\subsection{Proof of Theorem~\ref{th:S.1.1}}\label{sec:S.3}

Take a small $\epsilon>0$ so that $\bar
B_{H^+}(\theta,\epsilon)\oplus \bar
B_{H^-}(\theta,\epsilon)$ is contained in the open
neighborhood $U$ in Lemma~\ref{lem:S.2.2}.
Let us prove the $C^1$ functional
$$
\bar
B_{H^+}(\theta,\epsilon)\oplus \bar
B_{H^-}(\theta,\epsilon)\to\mathbb{R},\;u^++ u^-\mapsto\mathcal{L}(u^++u^-)
$$
satisfies the conditions in
\cite[Theorem~1.1]{DHK}.

\noindent{\bf Step 1}. Fix  $u^+\in
\bar B_{H^+}(\theta,\epsilon)\cap X$ and $u^-_1, u^-_2\in\bar
B_{H^-}(\theta,\epsilon)$ (which are contained in $X$ by (D1)).
Since $\nabla\mathcal{L}$ have a G\^ateaux derivative $B(u)\in \mathcal{L}_s(H)$ at every point
$u\in V\cap X$,  the
function
$$
V\to\mathbb{R},\;u\mapsto (\nabla\mathcal{L}(u^++u), u^-_2-u^-_1)_H
$$
is G\^ateaux differentiable at every $u\in V\cap X$.
Using the mean value theorem we have $t\in (0, 1)$ such that
\begin{eqnarray*}
&&(\nabla\mathcal{L}(u^++u^-_2), u^-_2-u^-_1)_H
- (\nabla\mathcal{L}(u^++u^-_1), u^-_2-u^-_1)_H\\
&=&\left(B(u^++ u^-_1+ t(u^-_2-u^-_1))(u^-_2-u^-_1),
u^-_2-u^-_1\right)_H\\
&\le& -a_0\|u^-_2-u^-_1\|^2
\end{eqnarray*}
by Lemma~\ref{lem:S.2.2}(iii).  Note that $\bar
B_{H^+}(\theta,\epsilon)\cap X$ is dense in $\bar
B_{H^+}(\theta,\epsilon)$ and $\nabla\mathcal{L}$ is continuous. For all $u^+\in \bar
B_{H^+}(\theta,\epsilon)$ and $u^-_i\in\bar
B_{H^-}(\theta,\epsilon)$, $i=1, 2$, we deduce
\begin{eqnarray}\label{e:S.3.1}
(\nabla\mathcal{L}(u^++u^-_2), u^-_2-u^-_1)_H
- (\nabla\mathcal{L}(u^++u^-_1), u^-_2-u^-_1)_H \le
-a_0\|u^-_2-u^-_1\|^2.
\end{eqnarray}
 This implies the
condition (ii) of \cite[Theorem~1.1]{DHK}.

\noindent{\bf Step 2}.  Let  $u^+\in
\bar B_{H^+}(\theta,\epsilon)\cap X$ and $u^-\in\bar
B_{H^-}(\theta,\epsilon)$ (which is contained in $X$ by (D1)).
Then since  $D\mathcal{L}(\theta)=0$, by  the mean value
theorem, for some $t\in (0, 1)$ we have
\begin{eqnarray}\label{e:S.3.2}
&&D\mathcal{L}(u^++ u^-)(u^+-u^-)\nonumber\\
&=&(\nabla\mathcal{L}(u^++u^-), u^+-u^-)_H-(\nabla\mathcal{L}(\theta), u^+-u^-)_H\nonumber\\
&=&\left(B(t(u^++u^-))(u^++u^-), u^+-u^-\right)_H\nonumber\\
&=&\left(B(t(u^++u^-))u^+, u^+\right)_H
-\left(B(t(u^++u^-))u^-, u^-\right)_H\nonumber\\
&\ge & a_1\|u^+\|^2+ a_0\|u^-\|^2
\end{eqnarray}
by Lemma~\ref{lem:S.2.2}(i) and (iii).  As above (\ref{e:S.3.2}) also
holds for all $u^+\in \bar B_{H^+}(\theta,\epsilon)$ because
$\bar B_H(\theta,\epsilon)\cap X^+$ is dense in $\bar
B_H(\theta,\epsilon)\cap H^+$. Hence $D\mathcal{L}(u^++u^-)(u^+-u^-)>0$ for $(u^+, u^-)\ne (\theta, \theta)$. ({\it This implies $\theta$ to be an isolated critical point of $\mathcal{L}$}).
The condition (iii) of \cite[Theorem~1.1]{DHK} is satisfied.

\noindent{\bf Step 3}. For  $u^+\in
\bar B_{H^+}(\theta,\varepsilon)\cap X$, as above we have $t\in (0,
1)$ such that
\begin{eqnarray}\label{e:S.3.3}
D\mathcal{L}(u^+)u^+
&=&D\mathcal{L}(u^+)u^+- D\mathcal{L}(\theta)u^+\nonumber\\
&=&(\nabla\mathcal{L}(u^+), u^+)_H-(\nabla\mathcal{L}(\theta), u^+)_H\nonumber\\
&=&\left(B(tu^+)u^+, u^+\right)_H\nonumber\\
&\ge& a_1\|u^+\|^2
\end{eqnarray}
because of Lemma~\ref{lem:S.2.2}(i). It follows that
$$
D\mathcal{L}(u^+)u^+ \ge a_1\|u^+\|^2> p(\|u^+\|)\quad\forall u^+\in \bar
B_{H^+}(\theta,\epsilon)\setminus\{\theta\},
$$
where $p:(0, \varepsilon]\to (0, \infty)$ is a non-decreasing
function given by $p(t)=\frac{a_1}{2}t^2$. Hence the condition
(iv) of \cite[Theorem~1.1]{DHK} is satisfied.

For the second claim, note that (\ref{e:S.3.1})--(\ref{e:S.3.3}) hold
for all $u^+\in\bar{B}_{H^+}(\theta, \epsilon)$ and $u^-, u^-_i\in \bar{B}_{H^+}(\theta, \epsilon)$, $i=1,2$.
Of course, they are still true for all $u^+\in\bar{B}_{\hat{H}^+}(\theta, \epsilon)$.
Carefully checking the proof of \cite[Theorem~1.1]{DHK} the conclusion is easily obtained.
(Note that this claim seems unable to be directly derived from Lemma~\ref{lem:S.2.4}.)
\hfill$\Box$\vspace{2mm}

Actually, from the proof of Theorem~\ref{th:S.1.1} we may get the more general claim, which is
needed for later applications.

\begin{theorem}\label{th:S.3.1}
Under Hypothesis~\ref{hyp:1.1}, let  $\mathcal{G}\in C^1(V,\mathbb{R})$
satisfy: {\rm i)} $\mathcal{G}'(\theta)=\theta$,
{\rm ii)} the gradient $\nabla\mathcal{G}$ has
G\^ateaux derivative $\mathcal{G}''(u)\in \mathscr{L}_s(H)$ at any $u\in V$, and $\mathcal{G}'': V\to \mathscr{L}_s(H)$ are continuous at $\theta$.
Suppose ${\rm Ker}(B(\theta))=\{\theta\}$,
 i.e., $\theta$ is a nondegenerate critical point of $\mathcal{L}$.
 Then there exist $\rho>0$, $\epsilon>0$, a family of open neighborhoods of $\theta$ in
$H$, $\{W_\lambda\,|\, |\lambda|\le\rho\}$
and a family of origin-preserving homeomorphisms, $\phi_\lambda: B_{H^+}(\theta,\epsilon) +
B_{H^-}(\theta,\epsilon)\to W_\lambda$, $|\lambda|\le\rho$,
 such that
$$
(\mathcal{L}+\lambda\mathcal{G})\circ\phi_\lambda(u^++ u^-)=\|u^+\|^2-\|u^-\|^2
$$
for all $(u^+, u^-)\in B_{H^+}(\theta,\epsilon)\times
B_{H^-}(\theta,\epsilon)$. Moreover, $[-\rho,\rho]\times (B_{H^+}(\theta,\epsilon) +
B_{H^-}(\theta,\epsilon))\ni (\lambda, u)\mapsto \phi_\lambda(u)\in H$
is continuous.
\end{theorem}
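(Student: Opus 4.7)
The plan is to apply \cite[Theorem~1.1]{DHK} to the one-parameter family $\mathcal{L}_\lambda:=\mathcal{L}+\lambda\mathcal{G}$ uniformly in $\lambda$, and then to upgrade the resulting homeomorphisms to a jointly continuous family in $(\lambda,u)$. Observe first that the G\^ateaux derivative of $\nabla\mathcal{L}_\lambda$ at $u\in V\cap X$ equals $B_\lambda(u):=B(u)+\lambda\mathcal{G}''(u)\in\mathscr{L}_s(H)$. Since $\mathcal{G}''$ is continuous at $\theta$, I would fix $\delta>0$ and $M>0$ so that $\|\mathcal{G}''(u)\|\le M$ whenever $u\in B_H(\theta,\delta)$. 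Let $U$ and $a_0,a_1$ be as in Lemma~\ref{lem:S.2.2}, choose $\epsilon\in(0,\delta)$ with $\bar{B}_{H^+}(\theta,\epsilon)\oplus\bar{B}_{H^-}(\theta,\epsilon)\subset U\cap B_H(\theta,\delta)$, and put $\rho:=\min\{a_0,a_1\}/(4M)$.

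Next I would repeat, verbatim, the three-step verification in the proof of Theorem~\ref{th:S.1.1} with $\mathcal{L}$ replaced by $\mathcal{L}_\lambda$. Each application of the mean value theorem contributes an extra term of the form $\lambda(\mathcal{G}''(\xi)h,k)_H$ bounded in absolute value by $|\lambda|M\|h\|\|k\|\le\tfrac14\min\{a_0,a_1\}\|h\|\|k\|$. Hence inequalities (\ref{e:S.3.1})--(\ref{e:S.3.3}) remain valid for $\mathcal{L}_\lambda$ after replacing $a_0,a_1$ by $a_0/2,a_1/2$, and this estimate is \emph{uniform} for $|\lambda|\le\rho$. Consequently the hypotheses (ii)--(iv) of \cite[Theorem~1.1]{DHK} are satisfied by every $\mathcal{L}_\lambda$ with $|\lambda|\le\rho$, producing homeomorphisms $\phi_\lambda:B_{H^+}(\theta,\epsilon)+B_{H^-}(\theta,\epsilon)\to W_\lambda$ with the desired quadratic normal form $(\mathcal{L}+\lambda\mathcal{G})\circ\phi_\lambda(u^++u^-)=\|u^+\|^2-\|u^-\|^2$.

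The main obstacle is the joint continuity of $(\lambda,u)\mapsto\phi_\lambda(u)$. I would establish this by revisiting the construction in \cite{DHK}: $\phi_\lambda(u^++u^-)$ is characterized as the unique solution of a strictly monotone equation whose data consist of $\nabla\mathcal{L}_\lambda$, the orthogonal projections $P^{\pm}$, and the (now $\lambda$-uniform) constants supplied by the monotonicity inequalities above. Since $\nabla\mathcal{L}_\lambda=\nabla\mathcal{L}+\lambda\nabla\mathcal{G}$ depends continuously on $\lambda$ on the bounded set $\bar{B}_{H^+}(\theta,\epsilon)\oplus\bar{B}_{H^-}(\theta,\epsilon)$ while the monotonicity constants do not depend on $\lambda$, the standard continuous-dependence lemma for parameter families of uniformly monotone equations yields the joint continuity. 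Should the proof of \cite[Theorem~1.1]{DHK} not provide this dependence explicitly, it can be extracted a posteriori from the uniqueness of $\phi_\lambda$ together with a diagonal argument on an arbitrary sequence $\lambda_n\to\lambda_0$: any subsequential limit of $\phi_{\lambda_n}(u_n)$ necessarily solves the same monotone equation for $\lambda_0$, hence equals $\phi_{\lambda_0}(u_0)$, which forces convergence of the whole sequence.

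I expect the routine verification of (ii)--(iv) to be automatic once the smallness of $\rho$ is fixed; the genuine work, and the only step going beyond the proof of Theorem~\ref{th:S.1.1}, lies in tracking parameter continuity through the (quite intricate) construction in \cite{DHK}.
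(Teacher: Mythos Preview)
Your approach is correct and essentially identical to the paper's: both bound the $\lambda\mathcal{G}$-perturbation via $\|\mathcal{G}''(u)\|\le M$ near $\theta$, choose $\rho$ small enough that the extra mean-value-theorem terms only halve the constants in (\ref{e:S.3.1})--(\ref{e:S.3.3}), and then apply a parameterized Morse--Palais lemma. The only difference is that for the joint continuity you propose to re-examine the construction in \cite{DHK} or run a uniqueness/diagonal argument, whereas the paper simply invokes \cite[Theorem~A.1]{Lu2}, which is precisely the parameterized version of \cite[Theorem~1.1]{DHK} and delivers the continuous dependence on $\lambda$ directly once the estimates are uniform; your description of $\phi_\lambda$ as ``the unique solution of a strictly monotone equation'' is a bit loose, but the substance of what you need is exactly what that cited result provides.
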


\begin{proof}
Since $\mathcal{G}'': V\to \mathscr{L}_s(H)$ are continuous at $\theta$, as in the proof
of (\ref{e:S.3.1})  we may shrink $\epsilon>0$ and find $\rho>0$ such that
for all $\lambda\in [-\rho,\rho]$, $u^+\in \bar
B_{H^+}(\theta,\epsilon)$ and $u^-_i\in\bar
B_{H^-}(\theta,\epsilon)$, $i=1, 2$,
\begin{eqnarray*}
|\lambda(\nabla\mathcal{G}(u^++u^-_2), u^-_2-u^-_1)_H
- \lambda(\nabla\mathcal{G}(u^++u^-_1), u^-_2-u^-_1)_H| \le
\frac{a_0}{2}\|u^-_2-u^-_1\|^2.
\end{eqnarray*}
This and (\ref{e:S.3.1}) lead to
\begin{eqnarray}\label{e:S.3.4}
&&(\nabla(\mathcal{L}+\lambda\mathcal{G})(u^++u^-_2), u^-_2-u^-_1)_H
- (\nabla(\mathcal{L}+\lambda\mathcal{G})(u^++u^-_1), u^-_2-u^-_1)_H\nonumber\\
&& \le -\frac{a_0}{2}\|u^-_2-u^-_1\|^2.
\end{eqnarray}
Similarly, as in the proof of (\ref{e:S.3.2}) we may shrink the above $\rho>0$ and $\epsilon>0$
so that
\begin{eqnarray*}
|\lambda D\mathcal{G}(u^++ u^-)(u^+-u^-)|
\le  \frac{a_1}{2}\|u^+\|^2+ \frac{a_0}{2}\|u^-\|^2
\end{eqnarray*}
for all $\lambda\in[-\rho,\rho]$, $u^+\in \bar B_{H^+}(\theta,\epsilon)$ and $u^-\in\bar
B_{H^-}(\theta,\epsilon)$. This and (\ref{e:S.3.2}) yield
\begin{eqnarray*}
D(\mathcal{L}+\lambda\mathcal{G})(u^++ u^-)(u^+-u^-)
\ge  \frac{a_1}{2}\|u^+\|^2+ \frac{a_0}{2}\|u^-\|^2
\end{eqnarray*}
and specially $D(\mathcal{L}+\lambda\mathcal{G})(u^+)(u^+)
\ge  \frac{a_1}{2}\|u^+\|^2$. These and (\ref{e:S.3.4})
show that the conditions of \cite[Theorem~A.1]{Lu2} are satisfied.
The desired conclusions follow immediately.
\end{proof}

\subsection{An implicit function theorem for a family of potential operators}\label{sec:S.4}

In this subsection we shall prove an implicit function theorem, Theorem~\ref{th:S.4.3},
which implies the first claim in Theorem~\ref{th:S.1.2}.
We also give an inverse function theorem, Theorem~\ref{th:S.4.4},
though it is not used in this paper.

Without special statements, we always assume that
Hypothesis~\ref{hyp:1.1} holds in this subsection.

Take $\epsilon>0$, $r>0$ and $s>0$ so small that the closures of both
$$
\mathcal{Q}_{r,s}:=B_{H^+}(\theta,r)\oplus B_{H^-}(\theta,s)\quad\hbox{and}\quad
B_{H^0}(\theta,\epsilon)\oplus\mathcal{Q}_{r,s}
$$
are contained in the neighborhood $U$ in Lemma~\ref{lem:S.2.2}.
Since $H^0\subset X$,  $X\cap \mathcal{Q}_{r,s}$ is also dense in $\mathcal{Q}_{r,s}$.
Let $P^\bot=I-P^0=P^++P^-$. By Lemma~\ref{lem:S.2.2} we obtain
$a_0'>0, a_1'>0$ such that
\begin{eqnarray}
(P^\bot\nabla\mathcal{L}(z+u), u^+)_H=(\nabla\mathcal{L}(u), u^+)_H
\ge a_1'\|u^+\|^2-a_0'[\omega(z+u)]^2\|u^-\|^2,\label{e:S.4.1}\\
(P^\bot\nabla\mathcal{L}(z+u), u^-)_H=(\nabla\mathcal{L}(u), u^-)_H
\le -a_1'\|u^-\|^2+a_0'[\omega(z+u)]^2\|u^+\|^2\label{e:S.4.2}
\end{eqnarray}
for all $u\in \overline{\mathcal{Q}_{r,s}}$ and $z\in\bar B_{H^0}(\theta,\epsilon)$.
 Since $\omega(z+u)\to 0$ as $\|z+u\|\to 0$, by shrinking
$r>0,s>0$ and $\epsilon>0$ we can require
\begin{equation}\label{e:S.4.3}
[\omega(z+u)]^2<\frac{a_1'}{2a_0'},\quad\forall (z,u)\in \bar B_{H^0}(\theta,\epsilon)\times\overline{\mathcal{Q}_{r,s}}.
\end{equation}
Then this and (\ref{e:S.4.1})--(\ref{e:S.4.2}) lead to
\begin{eqnarray}
&&(P^\bot\nabla\mathcal{L}(z+u), u^+)_H\ge a_1'\|u^+\|^2-\frac{a_1'}{2}\|u^-\|^2,\label{e:S.4.4}\\
&&(P^\bot\nabla\mathcal{L}(z+u), u^-)_H\le -a_1'\|u^-\|^2+\frac{a_1'}{2}\|u^+\|^2\label{e:S.4.5}
\end{eqnarray}
for all $u\in \overline{\mathcal{Q}_{r,s}}$ and $z\in\bar B_{H^0}(\theta,\epsilon)$, and hence
\begin{eqnarray}
&&\bigl(tP^\bot\nabla\mathcal{L}(z_1+u)+ (1-t)P^\bot\nabla\mathcal{L}(z_2+u), u^+\bigr)_H
\ge a_1'\|u^+\|^2-\frac{a_1'}{2}\|u^-\|^2,\label{e:S.4.6}\\
&&\bigl(tP^\bot\nabla\mathcal{L}(z_1+u)+ (1-t)P^\bot\nabla\mathcal{L}(z_2+u), u^-\bigr)_H
\le -a_1'\|u^-\|^2+\frac{a_1'}{2}\|u^+\|^2\label{e:S.4.7}
\end{eqnarray}
for all $u\in \overline{\mathcal{Q}_{r,s}}$ and $z_j\in\bar B_{H^0}(\theta,\epsilon)$, $j=1,2$,
and $t\in [0,1]$.

\begin{lemma}\label{lem:S.4.1}
If $r>0,s>0$ and $\epsilon>0$ are so small that (\ref{e:S.4.3}) is satisfied, then
$$
\inf\{\|tP^\bot\nabla\mathcal{L}(z_1+u)+ (1-t)P^\bot\nabla\mathcal{L}(z_2+u)\|\,|\,
 (t,z_1,z_2,u)\in\Omega \}>0,
$$
where $\Omega=[0,1]\times \bar B_{H^0}(\theta,\epsilon)\times \bar B_{H^0}(\theta,\epsilon)\times
\partial\overline{\mathcal{Q}_{r,s}}$.
\end{lemma}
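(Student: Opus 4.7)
The plan is to extract a coercive lower bound on $tP^\bot\nabla\mathcal{L}(z_1+u)+ (1-t)P^\bot\nabla\mathcal{L}(z_2+u)$ by testing it against the vector $u^+-u^-$, and then observe that on the boundary $\partial\overline{\mathcal{Q}_{r,s}}$ the norm $\|u\|$ is bounded away from zero. Write $G_{t,z_1,z_2}(u):=tP^\bot\nabla\mathcal{L}(z_1+u)+ (1-t)P^\bot\nabla\mathcal{L}(z_2+u)$ for brevity. Since $u^\pm\in H^\pm$ are orthogonal to $H^0$, the projection $P^\bot$ is invisible when pairing with $u^\pm$, so inequalities (\ref{e:S.4.6}) and (\ref{e:S.4.7}) apply directly.

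The first step is to subtract (\ref{e:S.4.7}) from (\ref{e:S.4.6}), which yields
$$
(G_{t,z_1,z_2}(u),\, u^+-u^-)_H \;\ge\; a_1'\|u^+\|^2 - \tfrac{a_1'}{2}\|u^-\|^2 + a_1'\|u^-\|^2 - \tfrac{a_1'}{2}\|u^+\|^2 \;=\; \tfrac{a_1'}{2}\bigl(\|u^+\|^2+\|u^-\|^2\bigr).
$$
The second step is Cauchy--Schwarz combined with the orthogonality of $H^+$ and $H^-$: since $\|u^+-u^-\|^2=\|u^+\|^2+\|u^-\|^2=\|u\|^2$, the above gives
$$
\|G_{t,z_1,z_2}(u)\|\cdot\|u\| \;\ge\; (G_{t,z_1,z_2}(u),\, u^+-u^-)_H \;\ge\; \tfrac{a_1'}{2}\|u\|^2,
$$
so that $\|G_{t,z_1,z_2}(u)\|\ge \frac{a_1'}{2}\|u\|$ for every $(t,z_1,z_2,u)$ in $[0,1]\times\bar B_{H^0}(\theta,\epsilon)^2\times \overline{\mathcal{Q}_{r,s}}$.

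Finally, the topological boundary of $\overline{\mathcal{Q}_{r,s}}$ inside $H^+\oplus H^-$ consists of those $u=u^++u^-$ for which either $\|u^+\|=r$ or $\|u^-\|=s$; in either case $\|u\|\ge \min(r,s)>0$. Therefore, the desired infimum is bounded below by $\frac{a_1'}{2}\min(r,s)>0$, proving the lemma. There is no real obstacle here: the inequalities (\ref{e:S.4.6})--(\ref{e:S.4.7}) were arranged precisely so that pairing with $u^+-u^-$ produces uniform coercivity in $\|u\|$, and the boundary condition supplies the positive lower bound on $\|u\|$.
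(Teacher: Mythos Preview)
Your proof is correct, and it is considerably more direct than the paper's. The paper argues by contradiction, splitting $\partial\overline{\mathcal{Q}_{r,s}}$ into the two pieces $\Lambda_1=\{\|u^+\|=r\}$ and $\Lambda_2=\{\|u^-\|=s\}$; on each piece it supposes a minimizing sequence with $\|G_{t_n,z_n,z_n'}(u_n)\|\to 0$, tests separately against $u_n^+$ and $u_n^-$, and from (\ref{e:S.4.6})--(\ref{e:S.4.7}) extracts a pair of incompatible bounds on the ratio $r^2/\|u_n^-\|^2$ (respectively $s^2/\|u_n^+\|^2$). Your argument instead tests once against $u^+-u^-$, which combines (\ref{e:S.4.6}) and (\ref{e:S.4.7}) into the single uniform estimate $\|G_{t,z_1,z_2}(u)\|\ge\frac{a_1'}{2}\|u\|$ on all of $\overline{\mathcal{Q}_{r,s}}$; the boundary condition $\|u\|\ge\min(r,s)$ then finishes immediately. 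Your route is shorter, avoids the case split and contradiction, and delivers an explicit lower bound $\frac{a_1'}{2}\min(r,s)$ rather than mere positivity.
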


\begin{proof}
Note that $\partial\overline{\mathcal{Q}_{r,s}}$ is union of two closed subsets
of it, i.e.
$$
\partial\overline{\mathcal{Q}_{r,s}}=[(\partial B_{H^+}(\theta,r))\oplus \bar B_{H^-}(\theta,s)]\cup
[ \bar B_{H^+}(\theta,r)\oplus (\partial B_{H^-}(\theta,s))].
$$
Then $\Omega=\Lambda_1\cup\Lambda_2$, where
$\Lambda_1=[0,1]\times \bar B_{H^0}(\theta,\epsilon)\times \bar B_{H^0}(\theta,\epsilon)\times
(\partial B_{H^+}(\theta,r))\oplus \bar B_{H^-}(\theta,s)$ and
$\Lambda_2=[0,1]\times \bar B_{H^0}(\theta,\epsilon)\times \bar B_{H^0}(\theta,\epsilon)\times
B_{H^+}(\theta,r)\oplus (\partial\bar B_{H^-}(\theta,s))$. We firstly prove
\begin{equation}\label{e:S.4.8}
\inf\{\|tP^\bot\nabla\mathcal{L}(z_1+u)+ (1-t)P^\bot\nabla\mathcal{L}(z_2+u)\|\,|\,
 (t,z_1,z_2,u)\in\Lambda_1 \}>0.
\end{equation}
By a contradiction we assume that there exist sequences $\{t_n\}_{n\ge 1}\subset [0,1]$ and
$$
\{z_n\}_{n\ge 1},\,\{z_n'\}_{n\ge 1}\subset \bar B_{H^0}(\theta,\epsilon),\quad\{u_n\}_{n\ge 1}\subset
(\partial B_{H^+}(\theta,r))\oplus \bar B_{H^-}(\theta,s)
$$
 such that $\|t_nP^\bot\nabla\mathcal{L}(z_n+u_n)+(1-t_n)P^\bot\nabla\mathcal{L}(z_n'+u_n)\|\to 0$.
Hence  after removing finite many terms we can assume
\begin{eqnarray}
&&(t_nP^\bot\nabla\mathcal{L}(z_n+u_n)+(1-t_n)P^\bot\nabla\mathcal{L}(z_n'+u_n), u^+_n)_H
\le \frac{a_1'r^2}{4},\quad\forall n\in\mathbb{N},\label{e:S.4.10}\\
&&(t_nP^\bot\nabla\mathcal{L}(z_n+u_n)+(1-t_n)P^\bot\nabla\mathcal{L}(z_n'+u_n), u^-_n)_H
\ge -\frac{a_1'r^2}{4},\quad\forall n\in\mathbb{N}.\label{e:S.4.11}
\end{eqnarray}
Note that $u^+_n\in \partial B_{H^+}(\theta,r))$ and $u^-_n\in\bar B_{H^-}(\theta,s)$.
So (\ref{e:S.4.10}) and (\ref{e:S.4.6}) lead to
\begin{eqnarray*}
\frac{a_1'}{4}r^2\ge (t_nP^\bot\nabla\mathcal{L}(z_n+u_n)+(1-t_n)P^\bot\nabla\mathcal{L}(z_n'+u_n), u^+_n)_H
\ge  a_1'r^2-\frac{a_1'}{2}\|u^-_n\|^2
\end{eqnarray*}
and therefore
\begin{equation}\label{e:S.4.12}
\frac{r^2}{\|u_n^-\|^2}\le\frac{2}{3},\quad\forall n\in\mathbb{N}.
\end{equation}
Moreover, from (\ref{e:S.4.7}) and (\ref{e:S.4.11})  we derive
\begin{eqnarray*}
-\frac{a_1'r^2}{4}\le
(t_nP^\bot\nabla\mathcal{L}(z_n+u_n)+(1-t_n)P^\bot\nabla\mathcal{L}(z_n'+u_n), u^-_n)_H
\le -a_1'\|u^-_n\|^2+\frac{a_1'r^2}{2}
\end{eqnarray*}
and hence
$$
\frac{r^2}{\|u_n^-\|^2}\ge\frac{4}{3},\quad\forall n\in\mathbb{N},
$$
which contradicts  (\ref{e:S.4.12}). (\ref{e:S.4.8}) is proved.

Similarly, suppose that  there  exist sequences $\{t_n\}_{n\ge 1}\subset [0,1]$ and
$$
 \{z_n\}_n,\,\{z_n'\}_n\subset \bar B_{H^0}(\theta,\epsilon),\quad
\{v_n\}_n\subset
B_{H^+}(\theta,r)\oplus (\partial B_{H^-}(\theta,s))
$$
 such that $\|t_nP^\bot\nabla\mathcal{L}(z_n+v_n)+(1-t_n)P^\bot\nabla\mathcal{L}(z_n'+v_n)\|\to 0$.
 As above we can assume
\begin{eqnarray}
&&(t_nP^\bot\nabla\mathcal{L}(z_n+v_n)+(1-t_n)P^\bot\nabla\mathcal{L}(z_n'+v_n), v^+_n)_H
\le \frac{a_1's^2}{4},\quad\forall n\in\mathbb{N},\label{e:S.4.14}\\
&&(t_nP^\bot\nabla\mathcal{L}(z_n+v_n)+(1-t_n)P^\bot\nabla\mathcal{L}(z_n'+v_n), v^-_n)_H
\ge -\frac{a_1's^2}{4},\quad\forall n\in\mathbb{N}.\label{e:S.4.15}
\end{eqnarray}
Note that $v_n^+\in B_{H^+}(\theta,r)$ and $v_n^-\in\partial B_{H^-}(\theta,s)$ for all $n\in\mathbb{N}$.
Then (\ref{e:S.4.7}) and (\ref{e:S.4.15}) imply
\begin{eqnarray*}
-\frac{a_1's^2}{4}&\le& (t_nP^\bot\nabla\mathcal{L}(z_n+v_n)+(1-t_n)P^\bot\nabla\mathcal{L}(z_n'+v_n), v_n^-)_H
\le -a_1's^2+\frac{a_1'}{2}\|v_n^+\|^2
\end{eqnarray*}
and so
\begin{eqnarray}\label{e:S.4.16}
\frac{s^2}{\|v_n^+\|^2}\le\frac{2}{3},\quad\forall n\in\mathbb{N}.
\end{eqnarray}
With the same methods, (\ref{e:S.4.6}) and (\ref{e:S.4.14}) lead to
\begin{eqnarray*}
\frac{a_1's^2}{4}\ge (t_nP^\bot\nabla\mathcal{L}(z_n+v_n)+(1-t_n)P^\bot\nabla\mathcal{L}(z_n'+v_n), v_n^+)_H
\ge a_1'\|v_n^+\|^2-\frac{a_1'}{2}s^2
\end{eqnarray*}
and so
$$
\frac{s^2}{\|v_n^+\|^2}\ge\frac{4}{3},\quad\forall n\in\mathbb{N}.
$$
This contradicts  (\ref{e:S.4.16}). Hence
$$
\inf\{\|tP^\bot\nabla\mathcal{L}(z_1+u)+ (1-t)P^\bot\nabla\mathcal{L}(z_2+u)\|\,|\,
 (t,z_1,z_2,u)\in\Lambda_2 \}>0.
$$
This and (\ref{e:S.4.8}) yield the desired conclusions.
\end{proof}

Since (D4) is equivalent to (D4*), it was proved in \cite[p. 2966--2967]{Lu2} that
$\nabla\mathcal{L}$ is of class $(S)_+$ under the conditions (S), (F), (C) and (D) in
\cite{Lu2}. In particular, this is also true under the assumptions of Theorem~\ref{th:S.1.1}
(without requirement $H^0=\{\theta\}$), of course the conditions of Theorem~\ref{th:S.1.1}
guarantee the same claim.

In the following we always assume that
 $r>0,s>0$ and $\epsilon>0$ are as in Lemma~\ref{lem:S.4.1}.

\begin{lemma}\label{lem:S.4.2}
For each $z\in B_{H^0}(\theta, \epsilon)$, the map
$$
f_z:\overline{\mathcal{Q}_{r,s}}\ni u\mapsto P^\bot\nabla\mathcal{L}(z+u)\in H^+\oplus H^-
$$
is of class $(S)_+$. Moreover, for any two points $z_0, z_1\in B_{H^0}(\theta, \epsilon)$ the map
$\mathscr{H}:[0,1]\times \overline{\mathcal{Q}_{r,s}}\to H^+\oplus H^-$ given by
  $$
  \mathscr{H}(t,u)=(1-t)P^\bot\nabla\mathcal{L}(z_0+u)+ tP^\bot\nabla\mathcal{L}(z_1+u)
  $$
  is a homotopy of class $(S)_+$ (cf. \cite[Definition~4.40]{MoMoPa}).
\end{lemma}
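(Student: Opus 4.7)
My plan is to reduce both claims to the fact recalled just before the lemma: that $\nabla\mathcal{L}$ itself is of class $(S)_+$ in a neighborhood of $\theta$. For the first assertion, suppose $u_n\rightharpoonup u$ in $\overline{\mathcal{Q}_{r,s}}$ with $\limsup(f_z(u_n),u_n-u)_H\le 0$ and set $x_n:=z+u_n\rightharpoonup z+u$ in $V$. Because $u_n-u\in H^+\oplus H^-=(H^0)^\perp$ one has $P^\bot(u_n-u)=u_n-u$, so
\[
(\nabla\mathcal{L}(x_n),x_n-(z+u))_H=(\nabla\mathcal{L}(z+u_n),u_n-u)_H=(f_z(u_n),u_n-u)_H.
\]
Hence $\limsup(\nabla\mathcal{L}(x_n),x_n-(z+u))_H\le 0$, and the $(S)_+$ property of $\nabla\mathcal{L}$ gives $x_n\to z+u$, whence $u_n\to u$.

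For the homotopy the convex combination is not the gradient of $\mathcal{L}$ at any single point, so I argue by a direct energy estimate. Take $(t_n,u_n)\in[0,1]\times\overline{\mathcal{Q}_{r,s}}$ with $t_n\to t_*$, $u_n\rightharpoonup u$, and $\limsup(\mathscr{H}(t_n,u_n),u_n-u)_H\le 0$, and set $v_n:=u_n-u$. Using G\^ateaux differentiability of $\nabla\mathcal{L}$ along the segment from $z_j+u$ to $z_j+u_n$, I write for $j=0,1$
\[
(\nabla\mathcal{L}(z_j+u_n),v_n)_H=(\nabla\mathcal{L}(z_j+u),v_n)_H+\int_0^1(B(z_j+u+sv_n)v_n,v_n)_H\,ds.
\]
The first term on the right tends to $0$ since $v_n\rightharpoonup 0$. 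For the integrand I decompose $B=P+Q$: the $P$-part contributes at least $C_0\|v_n\|^2$ by (D4*), and I control the $Q$-part by shrinking $r,s,\epsilon$ once more so that every $y=z_j+u+sv_n$ lies inside a neighborhood of $\theta$ on which $\|Q(y)-Q(\theta)\|_{op}<C_0/2$, which is possible by (D3). Then
\[
(Q(y)v_n,v_n)_H=(Q(\theta)v_n,v_n)_H+((Q(y)-Q(\theta))v_n,v_n)_H,
\]
whose first summand tends to $0$ by compactness of $Q(\theta)$ together with $v_n\rightharpoonup 0$, while the second is bounded in absolute value by $(C_0/2)\|v_n\|^2$. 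Combining, $(\nabla\mathcal{L}(z_j+u_n),v_n)_H\ge(C_0/2)\|v_n\|^2+o(1)$ for both $j=0,1$, and the convex combination with coefficients $1-t_n,t_n\in[0,1]$ yields $(\mathscr{H}(t_n,u_n),v_n)_H\ge(C_0/2)\|v_n\|^2+o(1)$. Together with the hypothesis $\limsup\le 0$ this forces $\|v_n\|\to 0$.

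The main obstacle is handling the compact part $Q$ when the base point $y=z_j+u+sv_n$ only converges weakly to $z_j+u$, since (D3) provides operator-norm continuity of $Q$ at $\theta$ but not at the point $z_j+u\ne\theta$. I circumvent this by exploiting the freedom to shrink $r,s,\epsilon$ once more beyond the constraint (\ref{e:S.4.3}): doing so places the whole admissible range of $y$ inside the chosen operator-norm neighborhood of $Q(\theta)$, after which only the single compact operator $Q(\theta)$ has to exploit the weak convergence $v_n\rightharpoonup 0$.
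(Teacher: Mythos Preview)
Your proof of the first claim is correct and matches the paper's argument verbatim: translate by $z$, use that $u_n-u\in (H^0)^\perp$ so the projection $P^\bot$ drops out, and invoke the $(S)_+$ property of $\nabla\mathcal{L}$ near $\theta$.

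For the second claim you take a genuinely different route. The paper dispatches it in one line by citing \cite[Proposition~4.41]{MoMoPa}, which says that the affine homotopy between two bounded demicontinuous $(S)_+$ maps is automatically an $(S)_+$-homotopy; since you have already shown each $f_{z_j}$ is $(S)_+$, nothing further is needed. Your direct energy estimate via the decomposition $B=P+Q$ is also correct and is in fact a special case of the argument establishing that $\nabla\mathcal{L}$ is $(S)_+$ in \cite{Lu2} (see also (\ref{e:S.4.24}) later in the paper). Two small technical remarks: (i) the integral mean-value identity you write is not justified by Hypothesis~\ref{hyp:1.1} alone, since $s\mapsto B(z_j+u+sv_n)$ need not be continuous or bounded; replace it by the scalar mean value theorem applied to $s\mapsto(\nabla\mathcal{L}(z_j+u+sv_n),v_n)_H$, which gives a single point $s^*\in(0,1)$ and yields the same lower bound; (ii) your argument needs $B$ to be defined along the whole segment, hence tacitly uses $X=H$, whereas the paper's citation works under the bare Hypothesis~\ref{hyp:1.1}. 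Neither point affects the downstream use of the lemma, which is only invoked in the setting of Theorem~\ref{th:S.4.3} where $X=H$ and $r,s,\epsilon$ may be shrunk at will.
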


\begin{proof}
By \cite[Proposition~4.41]{MoMoPa} we only need to prove
 the first claim.
Let $\{u_j\}\subset \overline{\mathcal{Q}_{r,s}}$
weakly converge to $u\in H^+\oplus H^-$.
Assume that they satisfy
$$
\overline{\lim}(P^\bot\nabla\mathcal{L}(z+u_j), u_j-u)_H\le 0.
$$
It suffices to prove $u_j\to u$ in $H^+\oplus H^-$.
Note that $u_j\rightharpoonup u$ in $H$ because $\overline{\mathcal{Q}_{r,s}}\subset H^+\oplus H^-$.
So is $z+u_j\rightharpoonup z+u$ in $H$.
Moreover, $u_j-u\in H^+\oplus H^-$ implies
\begin{eqnarray*}
(P^\bot\nabla\mathcal{L}(z+u_j), u_j-u)_H
&=&(\nabla\mathcal{L}(z+u_j), u_j-u)_H\\
&=&(\nabla\mathcal{L}(z+u_j), (z+u_j)-(z+u))_H.
\end{eqnarray*}
It follows that $\overline{\lim}(\nabla\mathcal{L}(z+u_j), (z+u_j)-(z+u))_H\le 0$.
But $\nabla\mathcal{L}$ is of class $(S)_+$ near $\theta\in H$, we have
$z+u_j\to z+u$ and so $u_j\to u$.
\end{proof}

Let $\deg$ denote the Browder-Skrypnik degree for  demicontinuous $(S)_+$-maps
(\cite{Bro0, Bro}, \cite{Skr1, Skr2, Skr3}), see \cite[\S4.3]{MoMoPa} for a nice exposition.
By Lemma~\ref{lem:S.4.1} $\deg(f_0, \mathcal{Q}_{r,s}, \theta)$ is well-defined and
using the Poincar\'e-Hopf theorem (cf. \cite[Theorem~1.2]{CiDe}) we have
 \begin{equation}\label{e:S.4.17}
\deg(f_0, \mathcal{Q}_{r,s}, \theta)=\sum^\infty_{q=0}(-1)^q{\rm rank}C_q(f_0,\theta;G).
 \end{equation}
Note that $\mathcal{L}|_{\mathcal{Q}_{r,s}}$ satisfies the conditions of
Theorem~\ref{th:S.1.1}. It follows that $C_q(f_0,\theta;G)=\delta_{\mu q}G$, where $\mu=\dim H^-$.
Hence (\ref{e:S.4.17}) becomes
 \begin{equation}\label{e:S.4.18}
\deg(f_0, \mathcal{Q}_{r,s}, \theta)=(-1)^{\mu}.
 \end{equation}
For each $z\in B_{H^0}(\theta,\epsilon)$,
we derive from Lemma~\ref{lem:S.4.1} that
\begin{eqnarray*}
&&\inf\{\|f_z(u)\|\,|\, u\in\partial\overline{\mathcal{Q}_{r,s}}\}>0\quad\hbox{and}\\
&&\inf\{\|tf_z(u)+(1-t)f_0(u)\|\,|\,t\in [0,1],\; u\in\partial\overline{\mathcal{Q}_{r,s}}\}>0.
\end{eqnarray*}
The former implies  that $\deg(f_z, \mathcal{Q}_{r,s}, \theta)$ is well-defined,
 the latter and  Lemma~\ref{lem:S.4.2} lead to
 \begin{equation}\label{e:S.4.19}
 \deg(f_z, \mathcal{Q}_{r,s}, \theta)=\deg(f_0, \mathcal{Q}_{r,s}, \theta)=(-1)^{\mu}
 \end{equation}
 by (\ref{e:S.4.18}). So there exists a point $u_z\in  \mathcal{Q}_{r,s}$ such that
  \begin{equation}\label{e:S.4.20}
 P^\bot\nabla\mathcal{L}(z+ u_z)=f_z(u_z)=\theta.
 \end{equation}

Now let us give the main result in this subsection.

\begin{theorem}[Parameterized Implicit Function Theorem]\label{th:S.4.3}
Under the assumptions of Theorem~\ref{th:S.1.2},
suppose further that  $\mathcal{G}_1,\cdots,  \mathcal{G}_n\in C^1(V,\mathbb{R})$
satisfy
\begin{description}
\item[(i)] $\mathcal{G}'_j(\theta)=\theta$, $j=1,\cdots,n$;
\item[(ii)] for each $j=1,\cdots,n$, the gradient $\nabla\mathcal{G}_j$ has  G\^ateaux derivative $\mathcal{G}''_j(u)\in \mathscr{L}_s(H)$ at any $u\in V$, and $\mathcal{G}''_j: V\to \mathscr{L}_s(H)$ are continuous at $\theta$.
\end{description}
Then by shrinking $\epsilon>0$ (if necessary) we have $\delta>0$
and a unique continuous map
 \begin{equation}\label{e:S.4.21}
\psi:[-\delta, \delta]^n\times B_H(\theta,\epsilon)\cap H^0\to \mathcal{Q}_{r,s}\subset
(H^0)^\bot
 \end{equation}
 such that for all $(\vec{\lambda}, z)\in [-\delta, \delta]^n\times B_{H}(\theta,\epsilon)\cap H^0$
 with $\vec{\lambda}=(\lambda_1,\cdots,\lambda_n)$,
$\psi(\vec{\lambda},\theta)=\theta$ and
\begin{equation}\label{e:S.4.22}
 P^\bot\nabla\mathcal{L}(z+ \psi(\vec{\lambda}, z))+
 \sum^n_{j=1}\lambda_j P^\bot\nabla\mathcal{G}_j(z+ \psi(\vec{\lambda}, z))=\theta,
 \end{equation}
 where  $P^\bot$ is  as in (\ref{e:S.4.20}). This $\psi$ also satisfies
 \begin{equation}\label{e:S.4.22.1}
 \|\psi(\vec{\lambda}, z_1)-\psi(\vec{\lambda}, z_2)\|\le 3\|z_1-z_2\|,\quad
 \forall (\vec{\lambda},z)\in [-\delta, \delta]^n\times B_H(\theta,\epsilon)\cap H^0.
 \end{equation}
 Moreover, if $G$ is a compact Lie group acting on $H$ orthogonally,
$V$, $\mathcal{L}$ and all $\mathcal{G}_j$ are $G$-invariant (and hence $H^0$, $(H^0)^\bot$
are $G$-invariant subspaces, and $\nabla\mathcal{L}$, $\nabla\mathcal{G}_j$ are $G$-equivariant),
then  $\psi$ is equivariant on $z$, i.e.,
$\psi(\vec{\lambda}, g\cdot z)=g\cdot\psi(\vec{\lambda},z)$ for $(\vec{\lambda},z)\in
[-\delta, \delta]^n\times B_H(\theta,\epsilon)\cap H^0$ and $g\in G$.
\end{theorem}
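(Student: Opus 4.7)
The plan is to extend the degree-theoretic argument that produced $u_z$ in equation (\ref{e:S.4.20}) to the parameterized situation. For $(\vec{\lambda},z)\in [-\delta,\delta]^n\times\bar{B}_{H^0}(\theta,\epsilon)$ define
\[
f_{\vec{\lambda},z}(u):=P^\bot\nabla\mathcal{L}(z+u)+\sum_{j=1}^{n}\lambda_j P^\bot\nabla\mathcal{G}_j(z+u),\qquad u\in\overline{\mathcal{Q}_{r,s}}.
\]
Since each $\mathcal{G}_j''$ is continuous at $\theta$ and $\mathcal{G}_j'(\theta)=\theta$, by shrinking $r,s,\epsilon$ and choosing $\delta>0$ small I can make $\|\lambda_j P^\bot\nabla\mathcal{G}_j(z+u)\|$ so small on $\bar{B}_{H^0}(\theta,\epsilon)\oplus\overline{\mathcal{Q}_{r,s}}$ that the estimates (\ref{e:S.4.4})--(\ref{e:S.4.5}) become
\[
(f_{\vec{\lambda},z}(u),u^+)_H\ge \tfrac{a_1'}{2}\|u^+\|^2-\tfrac{3a_1'}{4}\|u^-\|^2,\qquad (f_{\vec{\lambda},z}(u),u^-)_H\le -\tfrac{a_1'}{2}\|u^-\|^2+\tfrac{3a_1'}{4}\|u^+\|^2,
\]
which is precisely the estimate structure used in the proof of Lemma~\ref{lem:S.4.1}. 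Repeating that argument verbatim (splitting $\partial\overline{\mathcal{Q}_{r,s}}=\Lambda_1\cup\Lambda_2$ and extracting the same contradiction from the ratio $r^2/\|u_n^-\|^2$ or $s^2/\|v_n^+\|^2$) yields
\[
\inf\bigl\{\|f_{\vec{\lambda},z}(u)\|:\vec{\lambda}\in[-\delta,\delta]^n,\,z\in\bar{B}_{H^0}(\theta,\epsilon),\,u\in\partial\overline{\mathcal{Q}_{r,s}}\bigr\}>0.
\]

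Next, since each $\nabla\mathcal{G}_j$ is continuous and its Gâteaux derivative at $\theta$ (which is $\mathcal{G}_j''(\theta)$) is a bounded linear self-adjoint operator, the map $u\mapsto \sum\lambda_jP^\bot\nabla\mathcal{G}_j(z+u)$ is a small perturbation of a continuous map; in the limiting procedure defining class $(S)_+$ the extra terms do not affect the conclusion, so $f_{\vec{\lambda},z}$ remains of class $(S)_+$ and the straight-line homotopy $(1-t)f_{0,z_0}+tf_{\vec{\lambda},z}$ is an $(S)_+$-homotopy exactly as in Lemma~\ref{lem:S.4.2}. By homotopy invariance of the Browder--Skrypnik degree, (\ref{e:S.4.18})--(\ref{e:S.4.19}) give
\[
\deg(f_{\vec{\lambda},z},\mathcal{Q}_{r,s},\theta)=(-1)^\mu\ne 0,
\]
producing a solution $\psi(\vec{\lambda},z)\in\mathcal{Q}_{r,s}$ of (\ref{e:S.4.22}); clearly $\psi(\vec{\lambda},\theta)=\theta$ because $u=\theta$ solves the equation at $z=\theta$.

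The main obstacle is the uniqueness and the Lipschitz bound (\ref{e:S.4.22.1}), because the hypotheses do not give a classical contraction setup. I would address this via the monotonicity structure encoded in (\ref{e:S.4.4})--(\ref{e:S.4.5}): if $u_1,u_2\in\mathcal{Q}_{r,s}$ both solve (\ref{e:S.4.22}) for the same $(\vec{\lambda},z)$, then subtracting and pairing with $w^+-w^-$ where $w=u_1-u_2$, and applying the mean value theorem to $t\mapsto (\nabla\mathcal{L}+\sum\lambda_j\nabla\mathcal{G}_j)(z+u_2+tw)w$, one obtains $(B(\xi_t)w+\sum\lambda_j\mathcal{G}_j''(\xi_t)w,w^+-w^-)_H=0$ for some intermediate point $\xi_t$, and the analogues of Lemma~\ref{lem:S.2.2}(i)(iii) combined with the smallness of $\delta$ force $w=\theta$. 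For (\ref{e:S.4.22.1}), write $u_i=\psi(\vec{\lambda},z_i)$, apply the same mean value argument to $(f_{\vec{\lambda},z_1}(u_1)-f_{\vec{\lambda},z_1}(u_2),w^+-w^-)_H=(f_{\vec{\lambda},z_2}(u_2)-f_{\vec{\lambda},z_1}(u_2),w^+-w^-)_H$, estimate the left side below by $\tfrac{a_1'}{4}\|w\|^2$ and the right side above by $C\|z_1-z_2\|\cdot\|w\|$ using Lemma~\ref{lem:S.2.1} (together with the analogous smallness of $\|\lambda_j\mathcal{G}_j''(\xi)\|$), and conclude $\|w\|\le 3\|z_1-z_2\|$ after choosing $C/(a_1'/4)\le 3$ via further shrinking of $\epsilon,r,s,\delta$. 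Continuity of $\psi$ in $(\vec{\lambda},z)$ jointly follows by an analogous two-point estimate in the $\vec{\lambda}$ variable.

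Finally, for the equivariance under a compact Lie group $G$ acting orthogonally: if $V,\mathcal{L},\mathcal{G}_j$ are $G$-invariant, then $H^0$ is $G$-invariant and $P^\bot$ commutes with the $G$-action, so $\nabla\mathcal{L},\nabla\mathcal{G}_j$ are $G$-equivariant and hence $f_{\vec{\lambda},g\cdot z}(g\cdot u)=g\cdot f_{\vec{\lambda},z}(u)$. Thus $g\cdot\psi(\vec{\lambda},z)\in\mathcal{Q}_{r,s}$ also solves (\ref{e:S.4.22}) at $(\vec{\lambda},g\cdot z)$, and uniqueness forces $g\cdot\psi(\vec{\lambda},z)=\psi(\vec{\lambda},g\cdot z)$.
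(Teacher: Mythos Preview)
Your overall architecture matches the paper's: existence via Browder--Skrypnik degree and homotopy from $f_0$, then uniqueness and the Lipschitz bound from the monotonicity structure of Lemma~\ref{lem:S.2.2}, and equivariance from uniqueness. There are, however, two points worth flagging.

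First, your treatment of the $(S)_+$ property is too cursory. Saying that the perturbation terms ``do not affect the conclusion'' skips the actual work: the paper's Step~1 invokes the explicit lower bound (\ref{e:S.4.24}) for $\nabla\mathcal{L}$ and then controls the $\mathcal{G}_j$ contribution via (\ref{e:S.4.24.1}) and a careful choice of $\delta$ so that the combined map still satisfies an $(S)_+$-type inequality uniformly in $\vec{\lambda}\in[-\delta,\delta]^n$. Without this you cannot legitimately invoke the degree or its homotopy invariance for $f_{\vec{\lambda},z}$. The gap is fillable, but it is a gap.

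Second, your uniqueness and Lipschitz arguments are genuinely different from---and in one respect cleaner than---the paper's. For uniqueness the paper (Step~3) splits into three cases according to whether $\|w^+\|$ is larger than, equal to, or smaller than $\|w^-\|$, and pairs with $w^+$ or $w^-$ separately, bounding the cross terms via Lemma~\ref{lem:S.2.2}(ii) and (\ref{e:S.4.27}). Your single pairing with $w^+-w^-$ exploits the self-adjointness of $B(\xi)$ to make the cross terms cancel identically, yielding $(B(\xi)w,w^+-w^-)_H\ge a_1\|w^+\|^2+a_0\|w^-\|^2$ in one stroke; this avoids the case distinction entirely. For the Lipschitz estimate the paper (Step~5) works with $\Xi=z_1-z_2+\psi(\vec\lambda,z_1)-\psi(\vec\lambda,z_2)$ and pairs separately with $\Xi^+$ and $\Xi^-$, combining the two resulting inequalities; your device of moving the $z$-difference to the right-hand side and again pairing with $w^+-w^-$ achieves the same bound with one inequality instead of two. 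Finally, for continuity the paper (Step~4) argues via weak convergence and the $(S)_+$ property, whereas you get it directly from the Lipschitz-type estimates in $z$ and $\vec{\lambda}$; both routes are valid, and yours is shorter once the estimates are in hand.
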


\begin{proof} {\bf Step 1}.\quad {\it
 There exist $\rho_1, \delta\in (0,1)$  such that $B_H(\theta, 2\rho_1)\subset V$
 and that if sequences $\vec{\lambda}_k=(\lambda_{k,1},\cdots,\lambda_{k,n})\in [-\delta,\delta]^n$
 converge to  $\vec{\lambda}_0=(\lambda_{0,1},\cdots,\lambda_{0,n})\in [-\delta,\delta]^n$,
 $u_k\in B_H(\theta, 2\rho_1)$ weakly converge to
  $u_0\in B_H(\theta, 2\rho_1)$,  and they also satisfy
\begin{eqnarray}\label{e:S.4.23}
\overline{\lim}(\nabla\mathcal{L}(u_k)+\sum^n_{j=1}\lambda_j\nabla\mathcal{G}_j(u_k)  , u_k-u_0)_H\le 0,
\end{eqnarray}
then $u_k\to u_0$. In particular, for each $\vec{\lambda}\in[-\delta, \delta]^n$, the map
$$
{B}_H(\theta, 2\rho_1)\times [0,1]\to H^+\oplus H^-,\;
(t, u)\mapsto P^\bot\nabla\mathcal{L}(u)+ \sum^n_{j=1}t\lambda_j P^\bot\nabla\mathcal{G}_j(u)
  $$
  is a homotopy of class $(S)_+$ (cf. \cite[Definition~4.40]{MoMoPa}).}

In fact, by \cite[(5.8)]{Lu2} we had found $\rho_1>0$ and $C_0'>0$ such that $B_H(\theta, 2\rho_1)\subset V$
and
\begin{eqnarray}\label{e:S.4.24}
(\nabla\mathcal{L}(u), u-u')_H&\ge&\frac{C_0'}{2}\|u-u'\|^2+ (\nabla\mathcal{L}(u'), u-u')_H\nonumber\\
&+&(Q(\theta)(u-u'), u-u')_H
 \end{eqnarray}
for any $u,u'\in B_H(\theta, 2\rho_1)$. Similarly, for each fixed $j\in\{1,\cdots,n\}$, we have $\tau=\tau(u,u')\in (0,1)$ such that
\begin{eqnarray*}
&&(\nabla\mathcal{G}_j(u), u-u')_H=(\nabla\mathcal{G}_j(u)-\nabla\mathcal{G}_j(u'), u-u')_H+ (\nabla\mathcal{G}_j(u'), u-u')_H\\
&=&(\mathcal{G}''_j(\tau u+ (1-\tau)u')(u-u'), u-u')_H+ (\nabla\mathcal{G}_j(u'), u-u')_H\\
&=&([\mathcal{G}''_j(\tau u+ (1-\tau)u')-\mathcal{G}''_j(\theta)](u-u'), u-u')_H+ (\nabla\mathcal{G}_j(u'), u-u')_H\\
&&+ (\mathcal{G}''_j(\theta)(u-u'), u-u')_H,\quad\forall u, u'\in B_H(\theta, 2\rho_1).
 \end{eqnarray*}
Since $V\ni v\mapsto \mathcal{G}''_j(v)\in\mathscr{L}_s(H)$ is continuous at $\theta$,
we may shrink $\rho_1>0$ so that
\begin{eqnarray}\label{e:S.4.24.1}
\|\mathcal{G}''_j(v)-\mathcal{G}''_j(\theta)\|\le\frac{C_0'}{8n},
\quad\forall v\in B_H(\theta, 2\rho_1),\;j=1,\cdots,n.
\end{eqnarray}
It follows that for all $u, u'\in B_H(\theta, 2\rho_1)$ and $j=1,\cdots,n$,
\begin{eqnarray*}
|(\nabla\mathcal{G}_j(u), u-u')_H|&\le& \frac{C_0'}{8n}\|u-u'\|^2 + |(\nabla\mathcal{G}_j(u'), u-u')_H|\\
&&+ |(\mathcal{G}_j''(\theta)(u-u'), u-u')_H|.
 \end{eqnarray*}
Take $\delta\in (0, 1)$ so that
$$
\delta\sum^n_{j=1}\|\mathcal{G}_j''(\theta)\|<\frac{C_0'}{8}.
$$
These and (\ref{e:S.4.24}) imply that for all $\vec{\lambda}=(\lambda_1,\cdots,\lambda_n)\in [-\delta, \delta]^n$,
\begin{eqnarray*}
&&(\nabla\mathcal{L}(u), u-u')_H+ \sum^n_{j=1}\lambda_j(\nabla\mathcal{G}_j(u), u-u')_H\nonumber\\
&\ge&\frac{C_0'}{4}\|u-u'\|^2+ (\nabla\mathcal{L}(u'), u-u')_H+ (Q(\theta)(u-u'), u-u')_H\nonumber\\
&-& \sum^n_{j=1}|(\nabla\mathcal{G}_j(u'), u-u')_H|.
 \end{eqnarray*}
Replacing $u, u'$ and $\lambda_j$ by $u_k, u_0$ and $\lambda_{k,j}$ in the inequality, we derive from
(\ref{e:S.4.23}) that $u_k\to u_0$
because  (D3)  implies that
$(\nabla\mathcal{L}(u_0), u_k-u_0)_H\to 0$, $(Q(\theta)(u_k-u_0), u_k-u_0)_H\to 0$ and
$(\nabla\mathcal{G}_j(u_0), u_k-u_0)_H\to 0$.

  {\it Note}: The above proof shows that
  the family $\{\mathcal{L}_{\vec{\lambda}}:=\mathcal{L}+ \sum^n_{j=1}\lambda_j\mathcal{G}_j\,|\, \vec{\lambda}\in [-\delta,\delta]^n\}$
  satisfies the (PS) condition on $\bar{B}_H(\theta,\varepsilon)$ for any $\varepsilon<2\rho_1$, that is,
  if sequences $\vec{\lambda}_k\in [-\delta,\delta]^n$ converge to
  $\vec{\lambda}_0\in [-\delta,\delta]^n$,
 and $u_k\in \bar{B}_H(\theta,\varepsilon)$ satisfies
 $\nabla\mathcal{L}_{\vec{\lambda}_k}(u_k)\to\theta$ and $\sup_k|\mathcal{L}_{\vec{\lambda}_k}(u_k)|<\infty$,
 then $\{u_k\}_{k\ge 1}$ has a converging subsequence
 $u_{k_i}\to u_0\in \bar{B}_H(\theta,\varepsilon)$ with
 $\nabla\mathcal{L}_{\vec{\lambda}_0}(u_0)=\theta$.

\noindent{\bf Step 2}.\quad Let $r>0,s>0$ and $\epsilon>0$ be as in Lemma~\ref{lem:S.4.1}.
By shrinking them, we can assume that
$\bar B_{H^0}(\theta,\epsilon)\times\overline{\mathcal{Q}_{r,s}}
\subset{B}_H(\theta, 2\rho_1)$ and
 \begin{eqnarray}\label{e:S.4.24.2}
 \sup\{\|\nabla\mathcal{L}_{\vec{\lambda}}(z,u)\|\,|\,
 (\vec{\lambda},z,u)\in [-1, 1]^n\times \bar{B}_{H^0}(\theta,\epsilon)\oplus\overline{\mathcal{Q}_{r,s}}\}<\infty
 \end{eqnarray}
because $\nabla\mathcal{L}$ and $\nabla\mathcal{G}_1,\cdots,\nabla\mathcal{G}_n$ are all locally bounded.  Then by Lemma~\ref{lem:S.4.1} we may shrink  $\delta\in (0, 1)$ so that
$$
\inf\|tP^\bot(\nabla\mathcal{L}+ \sum^n_{j=1}\lambda_j\nabla\mathcal{G}_j)(z_1+u)+ (1-t)P^\bot(\nabla\mathcal{L}+ \sum^n_{j=1}\lambda_j\nabla\mathcal{G}_j)(z_2+u)\|>0,
 $$
where the infimum is taken for all $(t,z_1,z_2,u)\in[0,1]\times \bar B_{H^0}(\theta,\epsilon)\times \bar B_{H^0}(\theta,\epsilon)\times
\partial\overline{\mathcal{Q}_{r,s}}$ and $(\lambda_1,\cdots,\lambda_n)\in [-\delta,\delta]^n$.
 This implies that
for each $(\vec{\lambda}, z)\in [-\delta,\delta]^n\times B_{H^0}(\theta, \epsilon)$, the map
$$
f_{\vec{\lambda}, z}:\overline{\mathcal{Q}_{r,s}}\ni u\mapsto P^\bot\nabla\mathcal{L}(z+u)+
\sum^n_{j=1}\lambda_j P^\bot\nabla\mathcal{G}_j(z+u)\in H^+\oplus H^-
$$
has a well-defined Browder-Skrypnik degree ${\rm deg}(f_{\vec{\lambda}, z}, \mathcal{Q}_{r,s},\theta)$ and
\begin{eqnarray}\label{e:S.4.25}
{\rm deg}(f_{\vec{\lambda}, z}, \mathcal{Q}_{r,s},\theta)={\rm deg}(f_{\vec{0}, 0}, \mathcal{Q}_{r,s},\theta)
={\rm deg}(f_{0}, \mathcal{Q}_{r,s},\theta)=(-1)^{\mu},
 \end{eqnarray}
where $f_0$ is as in (\ref{e:S.4.18}). Hence for each $(\vec{\lambda}, z)\in [-\delta,\delta]^n\times B_{H^0}(\theta, \epsilon)$
there exists a point $u_{\vec{\lambda},z}\in  \mathcal{Q}_{r,s}$ such that
  \begin{equation}\label{e:S.4.26}
 P^\bot\nabla\mathcal{L}(z+ u_{\vec{\lambda},z})+ \sum^n_{j=1}\lambda_j P^\bot\nabla\mathcal{G}_j(z+ u_{\vec{\lambda},z})=f_{\vec{\lambda},z}(u_{\lambda,z})=\theta.
 \end{equation}
By shrinking the above  $\epsilon>0, r>0$ and $s>0$
 (if necessary) so that $\omega$ and $a_0, a_1$ in Lemma~\ref{lem:S.2.2} can satisfy
 \begin{equation}\label{e:S.4.27}
\omega(z+u)<\min\{a_0,a_1\}/2,\quad\forall
(z,u)\in\bar B_{H^0}(\theta, \epsilon)\times \overline{\mathcal{Q}_{r,s}}.
\end{equation}

 \noindent{\bf Step 3}.\quad {\it
If $\delta>0$ is sufficiently small, then  $u_{\vec{\lambda},z}$ is a unique zero point of $f_{\vec{\lambda},z}$ in $\mathcal{Q}_{r,s}$.}

 In fact, suppose that there exists another different $u_{\vec{\lambda},z}'\in \mathcal{Q}_{r,s}$
  satisfying (\ref{e:S.4.26}). We decompose
 $u_{\vec{\lambda},z}-u_{\vec{\lambda},z}'=(u_{\vec{\lambda},z}-u_{\vec{\lambda},z}')^++ (u_{\vec{\lambda},z}-
 u_{\vec{\lambda},z}')^-$, and prove the conclusion in three cases:\\
$\bullet$ $\|(u_{\vec{\lambda},z}-u_{\vec{\lambda},z}')^+\|>\|(u_{\vec{\lambda},z}-u_{\vec{\lambda},z}')^-\|$,\\
$\bullet$ $\|(u_{\vec{\lambda},z}-u_{\vec{\lambda},z}')^+\|=\|(u_{\vec{\lambda},z}-u_{\vec{\lambda},z}')^-\|$,\\
$\bullet$ $\|(u_{\vec{\lambda},z}-u_{\vec{\lambda},z}')^+\|<\|(u_{\vec{\lambda},z}-u_{\vec{\lambda},z}')^-\|$.

Let us write $\mathcal{L}_{\vec{\lambda}}=\mathcal{L}+\sum^n_{j=1}\lambda_j\mathcal{G}_j$ for conveniences.
Then (\ref{e:S.4.26}) implies
\begin{eqnarray}\label{e:S.4.28}
0&=&(P^\bot\nabla\mathcal{L}_{\vec{\lambda}}(z+u_{\vec{\lambda},z})-P^\bot\nabla\mathcal{L}_{\vec{\lambda}}(z+
u_{\vec{\lambda},z}'), (u_{\vec{\lambda},z}-u_{\vec{\lambda},z}')^+)_H\nonumber\\
&=&(P^\bot\nabla\mathcal{L}(z+u_{\vec{\lambda},z})-P^\bot\nabla\mathcal{L}(z+u_{\vec{\lambda},z}'), (u_{\vec{\lambda},z}-u_{\vec{\lambda},z}')^+)_H
\nonumber\\
&&+\sum^n_{j=1}\lambda_j(P^\bot\nabla\mathcal{G}_j(z+u_{\vec{\lambda},z})-P^\bot\nabla\mathcal{G}_j(z+u_{\vec{\lambda},z}'), (u_{\vec{\lambda},z}-u_{\vec{\lambda},z}')^+)_H.
\end{eqnarray}
For simplicity we write $u_{\vec{\lambda},z}=u_z$ and $u_{\vec{\lambda},z}'=u_z'$.

For the first two cases,  we may use the mean value theorem to get
 $\tau\in (0,1)$ such that
 \begin{eqnarray}\label{e:S.4.29}
&&(P^\bot\nabla\mathcal{L}(z+u_z)-P^\bot\nabla\mathcal{L}(z+u_z'), (u_z-u_z')^+)_H\nonumber\\
&=&(\nabla\mathcal{L}(z+u_z)-\nabla\mathcal{L}(z+u_z'), (u_z-u_z')^+)_H\nonumber\\
&=&(B(z+ \tau u_z+(1-\tau )u_z')(u_z-u_z'), (u_z-u_z')^+)_H\nonumber\\
&=&(B(z+ \tau u_z+(1-\tau )u_z')(u_z-u_z')^+, (u_z-u_z')^+)_H\nonumber\\
&+&(B(z+ \tau u_z+(1-\tau)u_z')(u_z-u_z')^-, (u_z-u_z')^+)_H\nonumber\\
&\ge& a_1\|(u_z-u_z')^+\|^2-\omega(z+ \tau u_z+(1-t)u_z')\|(u_z-u_z')^-\|\cdot\|(u_z-u_z')^+\|\nonumber\\
&\ge& a_1\|(u_z-u_z')^+\|^2-\frac{a_1}{4}[\|(u_z-u_z')^-\|^2+\|(u_z-u_z')^+\|^2]\nonumber\\
&\ge& a_1\|(u_z-u_z')^+\|^2-\frac{a_1}{2}\|(u_z-u_z')^+\|^2\nonumber\\
&=&\frac{a_1}{2}\|(u_z-u_z')^+\|^2,
\end{eqnarray}
where the first inequality comes from  Lemma~\ref{lem:S.2.2}(i)-(ii),
the second one is derived from (\ref{e:S.4.27}) and the inequality
$2|ab|\le |a|^2+|b|^2$, and
the third one is because
$\|(u_z-u_z')^-\|\le \|(u_z-u_z')^+\|$.
It follows from  (\ref{e:S.4.28})--(\ref{e:S.4.29}) that
\begin{eqnarray}\label{e:S.4.30}
0&=&(P^\bot\nabla\mathcal{L}_{\vec{\lambda}}(z+u_{\vec{\lambda},z})-P^\bot\nabla\mathcal{L}_{\vec{\lambda}}(z+
u_{\vec{\lambda},z}'), (u_{\vec{\lambda},z}-u_{\vec{\lambda},z}')^+)_H\nonumber\\
&\ge&\frac{a_1}{2}\|(u_{\vec{\lambda},z}-u_{\vec{\lambda},z}')^+\|^2\nonumber\\
&&+ \sum^n_{j=1}\lambda_j(\mathcal{G}''_j(z+ \tau u_{\vec{\lambda},z}+(1-\tau)u_{\vec{\lambda},z}')(u_{\vec{\lambda},z}-u_{\vec{\lambda},z}'),
(u_{\vec{\lambda},z}-u_{\vec{\lambda},z}')^+)_H.
\end{eqnarray}
By (\ref{e:S.4.24.1}) we have a constant $M>0$ such that
\begin{eqnarray}\label{e:S.4.30.1}
\sup\{\|\mathcal{G}''_j(z+w)\|\,|\,(z,w)\in \bar B_{H^0}(\theta,\epsilon)\times\overline{\mathcal{Q}_{r,s}},\,j=1,\cdots,n\}<M.
\end{eqnarray}
 Hence
\begin{eqnarray}\label{e:S.4.31}
&&\sum^n_{j=1}|\lambda_j(\mathcal{G}''_j(z+ \tau u_{\vec{\lambda},z}+(1-\tau)u_{\vec{\lambda},z}')(u_{\vec{\lambda},z}-u_{\vec{\lambda},z}'), (u_{\vec{\lambda},z}-u_{\vec{\lambda},z}')^+)_H|\nonumber\\
&\le& n\delta M\|(u_{\vec{\lambda},z}-u_{\vec{\lambda},z}')\|\cdot\|(u_{\vec{\lambda},z}-u_{\vec{\lambda},z}')^+\|\nonumber\\
&\le&n\delta M[\|(u_{\vec{\lambda},z}-u_{\vec{\lambda},z}')^+\|^2+
\|(u_{\vec{\lambda},z}-u_{\vec{\lambda},z}')^-\|\cdot\|(u_{\vec{\lambda},z}-u_{\vec{\lambda},z}')^+\|]\nonumber\\
&\le &2n\delta M\|(u_{\vec{\lambda},z}-u_{\vec{\lambda},z}')^+\|^2.
\end{eqnarray}
Let us shrink $\delta>0$ in Step 2 so that $\delta<\frac{a_1}{8nM}$. Then this and
(\ref{e:S.4.30}) lead to
\begin{eqnarray*}
0=(P^\bot\nabla\mathcal{L}_{\vec{\lambda}}(z+u_{\vec{\lambda},z})-P^\bot\nabla\mathcal{L}_{\vec{\lambda}}(z+u_{\vec{\lambda},z}'), (u_{\vec{\lambda},z}-u_{\vec{\lambda},z}')^+)_H
\ge\frac{a_1}{4}\|(u_{\vec{\lambda},z}-u_{\vec{\lambda},z}')^+\|^2.
\end{eqnarray*}
This contradicts  $(u_{\vec{\lambda},z}-u_{\vec{\lambda},z}')^+\ne\theta$.

Similarly, for the third case,
as in (\ref{e:S.4.29}) we may use Lemma~\ref{lem:S.2.2}(ii)-(iii) to obtain
\begin{eqnarray*}
0&=&(P^\bot\nabla\mathcal{L}(z+u_z)-P^\bot\nabla\mathcal{L}(z+u_z'), (u_z-u_z')^-)_H\\
&=&(\nabla\mathcal{L}(z+u_z)-\nabla\mathcal{L}(z+u_z'), (u_z-u_z')^-)_H\\
&=&(B(z+ tu_z+(1-t)u_z')(u_z-u_z'), (u_z-u_z')^-)_H\\
&=&(B(z+ tu_z+(1-t)u_z')(u_z-u_z')^-, (u_z-u_z')^-)_H\\
&+&(B(z+ tu_z+(1-t)u_z')(u_z-u_z')^+, (u_z-u_z')^-)_H\\
&\le& -a_0\|(u_z-u_z')^-\|^2+\omega(z+ tu_z+(1-t)u_z')\|(u_z-u_z')^-\|\cdot\|(u_z-u_z')^+\|\\
&\le& -a_0\|(u_z-u_z')^-\|^2+\frac{a_0}{4}[\|(u_z-u_z')^-\|^2+\|(u_z-u_z')^+\|^2]\\
&\le& -a_0\|(u_z-u_z')^-\|^2+\frac{a_0}{2}\|(u_z-u_z')^-\|^2\\
&=&-\frac{a_0}{2}\|(u_z-u_z')^-\|^2,
\end{eqnarray*}
and hence
\begin{eqnarray}\label{e:S.4.32}
0&=&(P^\bot\nabla\mathcal{L}_{\vec{\lambda}}(z+u_{\vec{\lambda},z})-P^\bot\nabla\mathcal{L}_{\vec{\lambda}}(z+
u_{\vec{\lambda},z}'), (u_{\vec{\lambda},z}-u_{\vec{\lambda},z}')^-)_H\nonumber\\
&\le&-\frac{a_0}{2}\|(u_{\vec{\lambda},z}-u_{\vec{\lambda},z}')^+\|^2\nonumber\\
&&+ \sum^n_{j=1}\lambda_j(\mathcal{G}''_j(z+ \tau u_{\vec{\lambda},z}+(1-\tau)u_{\vec{\lambda},z}')(u_{\vec{\lambda},z}-u_{\vec{\lambda},z}'),
(u_{\vec{\lambda},z}-u_{\vec{\lambda},z}')^-)_H.
\end{eqnarray}
As in (\ref{e:S.4.31}) we may deduce
\begin{eqnarray*}
&&\sum^n_{j=1}|\lambda_j(\mathcal{G}''_j(z+ \tau u_{\vec{\lambda},z}+(1-\tau)u_{\vec{\lambda},z}')(u_{\vec{\lambda},z}-u_{\vec{\lambda},z}'), (u_{\vec{\lambda},z}-u_{\vec{\lambda},z}')^-)_H|\nonumber\\
&\le& n\delta M\|(u_{\vec{\lambda},z}-u_{\vec{\lambda},z}')\|\cdot\|(u_{\vec{\lambda},z}-u_{\vec{\lambda},z}')^-\|\nonumber\\
&\le&n\delta M[\|(u_{\vec{\lambda},z}-u_{\vec{\lambda},z}')^-\|^2+
\|(u_{\vec{\lambda},z}-u_{\vec{\lambda},z}')^-\|\cdot\|(u_{\vec{\lambda},z}-u_{\vec{\lambda},z}')^+\|]\nonumber\\
&\le &2n\delta M\|(u_{\vec{\lambda},z}-u_{\vec{\lambda},z}')^-\|^2.
\end{eqnarray*}
So if the above $\delta>0$ is also shrunk  so that $\delta<\frac{a_0}{8nM}$,  we may derive from this and (\ref{e:S.4.32}) that
\begin{eqnarray*}
0=(P^\bot\nabla\mathcal{L}_{\vec{\lambda}}(z+u_{\vec{\lambda},z})-P^\bot\nabla\mathcal{L}_{\vec{\lambda}}(z+u_{\vec{\lambda},z}'), (u_{\vec{\lambda},z}-u_{\vec{\lambda},z}')^-)_H
\le-\frac{a_0}{4}\|(u_{\vec{\lambda},z}-u_{\vec{\lambda},z}')^-\|^2,
\end{eqnarray*}
which also leads to a contradiction.

As a consequence, we have a well-defined  map
\begin{eqnarray}\label{e:S.4.33}
\psi: [-\delta,\delta]^n\times B_{H^0}(\theta,\epsilon)\to \mathcal{Q}_{r,s},\;(\lambda, z)\mapsto u_{\vec{\lambda},z}.
\end{eqnarray}

\noindent{\bf Step 4}.\quad{\it  $\psi$ is continuous.}

Let sequence $\{\vec{\lambda}_k\}_{k\ge 1}\in [-\delta,\delta]^n$ and $\{z_k\}_{k\ge 1}\subset B_{H^0}(\theta,\epsilon)$ converge to
$\vec{\lambda}_0\in [-\delta,\delta]^n$ and
$z_0\in B_{H^0}(\theta,\epsilon)$, respectively.
We want to prove that $\psi(\vec{\lambda}_k, z_k)\to\psi(\vec{\lambda}_0, z_0)$.
Since $\{\psi(\vec{\lambda}_k, z_k)\}_{k\ge 1}$ is contained in $\mathcal{Q}_{r,s}$,
we can suppose $\psi(\vec{\lambda}_k, z_k)\rightharpoonup u_0\in\overline{\mathcal{Q}_{r,s}}$ in $H$.
Noting  $\psi(\vec{\lambda}_k, z_k)-u_0\in H^+\oplus H^-$, by (\ref{e:S.4.26}) we have
\begin{eqnarray*}
&&\bigr(\nabla\mathcal{L}_{\vec{\lambda}_k}(z_k+\psi(\vec{\lambda}_k, z_k)), \psi(\vec{\lambda}_k,
z_k)-u_0)\bigr)\\
&=&\bigr(P^\bot\nabla\mathcal{L}_{\vec{\lambda}_k}(z_k+\psi(\vec{\lambda}_k, z_k)), \psi(\vec{\lambda}_k,
z_k)-u_0)\bigr)=0.
\end{eqnarray*}
It follows from this and (\ref{e:S.4.24.2}) that
\begin{eqnarray*}
&&|\bigr(\nabla\mathcal{L}_{\vec{\lambda}_k}(z_k+\psi(\vec{\lambda}_k, z_k)), (z_k+\psi(\vec{\lambda}_k,
z_k))-(z_0+u_0)\bigr)\|\\
&=&|\bigl(\nabla\mathcal{L}_{\vec{\lambda}_k}(z_n+\psi(\vec{\lambda}_k, z_k)), z_k-z_0\bigr)|\\
&\le&\|\nabla\mathcal{L}_{\vec{\lambda}_k}(z_k+\psi(\vec{\lambda}_k, z_k))\|\cdot\|z_k-z_0\|\to 0.
\end{eqnarray*}
As in the proof of Step~1, we may derive from this
that $z_k+\psi(\vec{\lambda}_k,
z_k)\to z_0+u_0$ and so $\psi(\vec{\lambda}_k, z_k)\to u_0$.

Moreover, (\ref{e:S.4.26}) implies
 $P^\bot\nabla\mathcal{L}_{\vec{\lambda}_k}(z_k+\psi(\vec{\lambda}_k, z_k))=0$, $k=1,2,\cdots$.
 The $C^1$-smoothness of  $\mathcal{L}$ and all $\mathcal{G}_j$ leads to
 $P^\bot\nabla\mathcal{L}_{\vec{\lambda}_0}(z_0+u_0)=0$.
  By Step~3 we arrive at
  $\psi(\vec{\lambda}_0, z_0)=u_0$ and hence $\psi$ is continuous at $(\vec{\lambda}_0, z_0)$.

\noindent{\bf Step 5}.\quad For any $(\vec{\lambda}, z_i)\in [-\delta, \delta]^n\times B_{H}(\theta,\epsilon)\cap H^0$, $i=1,2$,
by the definition of $\psi$, we have
\begin{eqnarray*}
 P^\bot\nabla\mathcal{L}(z_i+ \psi(\vec{\lambda}, z_i))+
 \sum^n_{j=1}\lambda_j P^\bot\nabla\mathcal{G}_j(z_i+ \psi(\vec{\lambda}, z_i))=\theta,\quad i=1,2,
 \end{eqnarray*}
and hence for $\Xi=z_1-z_2+ \psi(\vec{\lambda}, z_1)-\psi(\vec{\lambda}, z_2)$ we derive
\begin{eqnarray}\label{e:S.4.34}
0&=&(P^\bot\nabla\mathcal{L}(z_1+ \psi(\vec{\lambda}, z_1))-P^\bot\nabla\mathcal{L}(z_2+ \psi(\vec{\lambda}, z_2)),\Xi^+)_H
\nonumber\\
&+& \sum^n_{j=1}\lambda_j (P^\bot\nabla\mathcal{G}_j(z_1+ \psi(\vec{\lambda}, z_1))-
P^\bot\nabla\mathcal{G}_j(z_2+ \psi(\vec{\lambda}, z_2)),\Xi^+)_H.
 \end{eqnarray}
As in the proof of (\ref{e:S.4.29}) we obtain  $\tau\in (0,1)$ such that
 \begin{eqnarray}\label{e:S.4.35}
&&(P^\bot\nabla\mathcal{L}(z_1+ \psi(\vec{\lambda}, z_1))-P^\bot\nabla\mathcal{L}(z_2+ \psi(\vec{\lambda}, z_2)),\Xi^+)_H\nonumber\\
&=&(B(\tau z_1+ \tau\psi(\vec{\lambda}, z_1)+ (1-\tau)z_2+ (1-\tau)\psi(\vec{\lambda}, z_2))\Xi^+,\Xi^+)_H\nonumber\\
&+&(B(\tau z_1+ \tau\psi(\vec{\lambda}, z_1)+ (1-\tau)z_2+ (1-\tau)\psi(\vec{\lambda}, z_2))(\Xi^0+\Xi^-),\Xi^+)_H\nonumber\\
&\ge& a_1\|\Xi^+\|^2-\frac{a_1}{4}[\|\Xi^-+\Xi^0\|^2+\|\Xi^+\|^2]\nonumber\\
&=&\frac{3a_1}{4}\|\Xi^+\|^2-\frac{a_1}{4}\|\Xi^0\|^2-\frac{a_1}{4}\|\Xi^-\|^2.
 \end{eqnarray}
 Let us further shrink $\delta>0$ in Step~3 so that $\delta<\frac{\min\{a_0,a_1\}}{16nM}$.
As in (\ref{e:S.4.31}) we may deduce
\begin{eqnarray}\label{e:S.4.36}
&&\sum^n_{j=1}\lambda_j (P^\bot\nabla\mathcal{G}_j(z_1+ \psi(\vec{\lambda}, z_1))-
P^\bot\nabla\mathcal{G}_j(z_2+ \psi(\vec{\lambda}, z_2)),\Xi^+)_H\\
&\le&\sum^n_{j=1}|\lambda_j(\mathcal{G}''_j(\tau z_1+(1-\tau)z_2+ \tau\psi(\vec{\lambda}, z_1)+(1-\tau)\psi(\vec{\lambda}, z_2))\Xi, \Xi^+)_H|\nonumber\\
&\le& n\delta M\|\Xi\|\cdot\|\Xi^+\|\le 2n\delta M[\|\Xi\|^2+\|\Xi^+\|^2]\nonumber\\
&\le&\frac{a_1}{8}[\|\Xi^-\|^2+\|\Xi^0\|^2+2\|\Xi^+\|^2].
\end{eqnarray}
This and (\ref{e:S.4.34})--(\ref{e:S.4.35}) lead to
$$
0\ge \frac{3a_1}{4}\|\Xi^+\|^2-\frac{a_1}{4}\|\Xi^0\|^2-
\frac{a_1}{4}\|\Xi^-\|^2-\frac{a_1}{8}[\|\Xi^-\|^2+\|\Xi^0\|^2+2\|\Xi^+\|^2]
$$
and so
\begin{eqnarray}\label{e:S.4.37}
0\ge 4\|\Xi^+\|^2-3\|\Xi^0\|^2-3\|\Xi^-\|^2.
\end{eqnarray}

Similarly, replacing $\Xi^+$ by $\Xi^-$ in (\ref{e:S.4.35}) and (\ref{e:S.4.36}) we derive
 \begin{eqnarray*}
&&(P^\bot\nabla\mathcal{L}(z_1+ \psi(\vec{\lambda}, z_1))-P^\bot\nabla\mathcal{L}(z_2+ \psi(\vec{\lambda}, z_2)),\Xi^-)_H\\
&\le& -\frac{3a_0}{4}\|\Xi^-\|^2+\frac{a_0}{4}\|\Xi^0\|^2+ \frac{a_0}{4}\|\Xi^+\|^2,\\
 &&\sum^n_{j=1}\lambda_j (P^\bot\nabla\mathcal{G}_j(z_1+ \psi(\vec{\lambda}, z_1))-
P^\bot\nabla\mathcal{G}_j(z_2+ \psi(\vec{\lambda}, z_2)),\Xi^-)_H\nonumber\\
&\le&\frac{a_0}{8}[\|\Xi^+\|^2+\|\Xi^0\|^2+2\|\Xi^-\|^2].
\end{eqnarray*}
As above these two inequalities and the equality
\begin{eqnarray*}
0&=&(P^\bot\nabla\mathcal{L}(z_1+ \psi(\vec{\lambda}, z_1))-P^\bot\nabla\mathcal{L}(z_2+ \psi(\vec{\lambda}, z_2)),\Xi^-)_H
\nonumber\\
&+& \sum^n_{j=1}\lambda_j (P^\bot\nabla\mathcal{G}_j(z_1+ \psi(\vec{\lambda}, z_1))-
P^\bot\nabla\mathcal{G}_j(z_2+ \psi(\vec{\lambda}, z_2)),\Xi^-)_H
 \end{eqnarray*}
yield: $0\ge 4\|\Xi^-\|^2-3\|\Xi^0\|^2-3\|\Xi^+\|^2$. Combing with (\ref{e:S.4.37}) we obtain
\begin{eqnarray*}
\|\Xi^++\Xi^-\|^2=\|\Xi^+\|^2+\|\Xi^-\|^2\le 6\|\Xi^0\|^2.
\end{eqnarray*}
Note that $\Xi^9=z_1-z_2$ and $\Xi^++\Xi^-=\psi(\vec{\lambda}, z_1)-\psi(\vec{\lambda}, z_2)$.
The desired claim is proved.

{\bf Step 6}.\quad The uniqueness of $\psi$ implies that it is equivariant on $z$.
\end{proof}

As a by-product we have also the following result though it is not used in this paper.

\begin{theorem}[Inverse Function Theorem]\label{th:S.4.4}
If the assumptions of Theorem~\ref{th:S.1.1} hold with $X=H$,
then $\nabla\mathcal{L}$ is locally
 invertible at $\theta$.
\end{theorem}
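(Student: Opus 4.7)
The plan is to exploit the Browder--Skrypnik degree computation already carried out in Section~\ref{sec:S.4}. Under the hypotheses of Theorem~\ref{th:S.1.1} with $\nu=0$, we have $H^0=\{\theta\}$, so $P^\bot=I$ and $\mathcal{Q}_{r,s}=B_{H^+}(\theta,r)\oplus B_{H^-}(\theta,s)$ is an honest open neighborhood of $\theta$ in $H$. By the parameter-free specialization of Lemma~\ref{lem:S.4.1}, after shrinking $r,s$ so that (\ref{e:S.4.3}) and (\ref{e:S.4.27}) hold, the constant $c_0:=\inf\{\|\nabla\mathcal{L}(u)\|\,:\,u\in\partial\overline{\mathcal{Q}_{r,s}}\}$ is strictly positive, and (\ref{e:S.4.18}) gives $\deg(\nabla\mathcal{L},\mathcal{Q}_{r,s},\theta)=(-1)^{\mu}\ne 0$.

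For existence of a local right inverse, I would fix $y\in H$ with $\|y\|<c_0$ and consider the straight-line homotopy $\mathscr{H}(t,u):=\nabla\mathcal{L}(u)-ty$, $t\in[0,1]$. Since $\nabla\mathcal{L}$ is of class $(S)_+$ near $\theta$ (as recalled at the start of Section~\ref{sec:S.4}) and the perturbation $-ty$ is linear in $t$ with fixed value $-y$ weakly continuous in $u$, a verification parallel to Lemma~\ref{lem:S.4.2} shows $\mathscr{H}$ is a homotopy of class $(S)_+$; moreover $\|\mathscr{H}(t,u)\|\ge c_0-\|y\|>0$ on $\partial\overline{\mathcal{Q}_{r,s}}$. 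Homotopy invariance of the Browder--Skrypnik degree then yields $\deg(\nabla\mathcal{L}-y,\mathcal{Q}_{r,s},\theta)=(-1)^\mu\ne 0$, producing an $x_y\in\mathcal{Q}_{r,s}$ with $\nabla\mathcal{L}(x_y)=y$.

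Uniqueness is essentially Step~3 of the proof of Theorem~\ref{th:S.4.3} with the parameter pieces collapsed: if $\nabla\mathcal{L}(x_1)=\nabla\mathcal{L}(x_2)$ for $x_1,x_2\in\mathcal{Q}_{r,s}$, I would decompose $x_1-x_2=(x_1-x_2)^++(x_1-x_2)^-$ and test the identity $\nabla\mathcal{L}(x_1)-\nabla\mathcal{L}(x_2)=\theta$ against each summand; applying the mean-value theorem to $\nabla\mathcal{L}$ together with Lemma~\ref{lem:S.2.2}(i)--(iii) and the smallness condition (\ref{e:S.4.27}) forces $\|(x_1-x_2)^\pm\|=0$, hence $x_1=x_2$. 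This produces a well-defined inverse $\nabla\mathcal{L}^{-1}:B_H(\theta,c_0)\to\mathcal{Q}_{r,s}$.

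Continuity of this inverse is again the $(S)_+$ property: if $y_n\to y_0$ in $B_H(\theta,c_0)$ and $x_n:=\nabla\mathcal{L}^{-1}(y_n)\in\mathcal{Q}_{r,s}$, the sequence $\{x_n\}$ is bounded, hence along a subsequence $x_n\rightharpoonup x_0\in\overline{\mathcal{Q}_{r,s}}$, so that $(\nabla\mathcal{L}(x_n),x_n-x_0)=(y_n,x_n-x_0)\to 0$ and the $(S)_+$ property gives $x_n\to x_0$; the continuity of $\nabla\mathcal{L}$ then forces $\nabla\mathcal{L}(x_0)=y_0$, and uniqueness identifies $x_0$ with $\nabla\mathcal{L}^{-1}(y_0)$. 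The only subtle step in this plan is confirming that the straight-line homotopy $\mathscr{H}$ is indeed of class $(S)_+$ uniformly in $t\in[0,1]$ so that degree invariance applies; I expect this to be routine but it is where I would be most careful, since it is the bridge between the already-proved degree value $(-1)^\mu$ and the solvability of $\nabla\mathcal{L}(x)=y$ for arbitrary small $y$.
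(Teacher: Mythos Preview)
Your proposal is correct and essentially identical to the paper's own proof: both use the degree computation $\deg(\nabla\mathcal{L},\mathcal{Q}_{r,s},\theta)=(-1)^\mu$ from (\ref{e:S.4.18}), the straight-line homotopy $\mathscr{H}(t,u)=\nabla\mathcal{L}(u)-ty$ together with the boundary bound from Lemma~\ref{lem:S.4.1}, uniqueness via Step~3 of the proof of Theorem~\ref{th:S.4.3} (with $\vec{\lambda}=\vec{0}$), and continuity of the inverse via the $(S)_+$ property. The $(S)_+$ check for $\mathscr{H}$ that you flag as the delicate point is indeed routine and is carried out explicitly in the paper in two lines: since $t_n(y,u_n-u)\to 0$ by weak convergence, the assumption $\limsup(\mathscr{H}(t_n,u_n),u_n-u)\le 0$ immediately gives $\limsup(\nabla\mathcal{L}(u_n),u_n-u)\le 0$, and the $(S)_+$ property of $\nabla\mathcal{L}$ finishes it.
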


\begin{proof}
We can assume that $\nabla\mathcal{L}$ is of class $(S)_+$ in
 $\overline{\mathcal{Q}_{r,s}}$.
Since $H^0=\{\theta\}$ and $\nabla\mathcal{L}=f_0$,  we have
\begin{eqnarray}\label{e:S.4.38}
\deg(\nabla\mathcal{L}, \mathcal{Q}_{r,s}, \theta)=(-1)^{\mu}
\end{eqnarray}
by (\ref{e:S.4.18}).
Moreover, $\varrho:=\inf\{\|\nabla\mathcal{L}(u)\|\,|\, u\in\partial\overline{\mathcal{Q}_{r,s}}\}>0$
 by Lemma~\ref{lem:S.4.1}. For any given $v\in B_H(\theta,\varrho)$, let us define
 $$
 \mathscr{H}:[0,1]\times\overline{\mathcal{Q}_{r,s}}\to H,\;(t,u)\mapsto \nabla\mathcal{L}(u)-tv.
 $$
Then $\|\mathscr{H}(t,u)\|=\|\nabla\mathcal{L}(u)-tv\|\ge \|\nabla\mathcal{L}(u)\|-\|v\|\ge\varrho-\|v\|>0$
for all $(t,u)\in [0,1]\times\partial\overline{\mathcal{Q}_{r,s}}$.
Assume that sequences $t_n\to t$ in $[0,1]$, $\{u_n\}_{n\ge 1}\subset\mathcal{Q}_{r,s}$ converges weakly to $u$ in $H$,
and they satisfy $\lim\sup_{n\to\infty}(\mathscr{H}(t_n,u_n), u_n-u)_H\le 0$.
Then
$$
(\nabla\mathcal{L}(u_n),u_n-u)_H=(\mathscr{H}(t_n,u_n),u_n-u)_H+ t_n(v, u_n-u)_H
$$
leads to $\lim\sup_{n\to\infty}(\nabla\mathcal{L}(u_n),u_n-u)_H\le 0$. It follows that $u_n\to u$ in $H$
because $\nabla\mathcal{L}$ is of class $(S)_+$ in $\overline{\mathcal{Q}_{r,s}}$.
Hence  $\mathscr{H}$ is a homotopy of class $(S)_+$, and thus (\ref{e:S.4.38}) gives
$$
\deg(\nabla\mathcal{L}-v, \mathcal{Q}_{r,s}, \theta)
=\deg(\nabla\mathcal{L}, \mathcal{Q}_{r,s}, \theta)=(-1)^{\mu},
$$
which implies $\nabla\mathcal{L}(\xi_v)=v$ for some $\xi_v\in \mathcal{Q}_{r,s}$.
By Step 3 in the proof of Theorem~\ref{th:S.4.4} (taking $\vec{\lambda}=\vec{0}$)
it is easily seen that the equation $\nabla\mathcal{L}(u)=v$ has a unique solution in $\overline{\mathcal{Q}_{r,s}}$,
and in particular, $\xi_v$ is unique. Then
we get a map $B_H(\theta,\varrho)\to \mathcal{Q}_{r,s},\;v\mapsto\xi_v$ to satisfy
$\nabla\mathcal{L}(\xi_v)=v$ for all $v\in B_H(\theta,\varrho)$.
We claim that this map is continuous. Arguing by contradiction,
assume that there exists a sequence $v_n\to v$ in $B_H (\theta,\varrho)$, such that
$\xi_{v_n}\rightharpoonup \xi^\ast$ in $H$ and $\|\xi_{v_n}-\xi_v\|\ge\epsilon_0$
for some $\epsilon_0>0$ and all $n=1,2,\cdots$. Note that
\begin{eqnarray*}
(\nabla\mathcal{L}(\xi_{v_n}), \xi_{v_n}-\xi^\ast)_H&=&(v_n,\xi_{v_n}-\xi^\ast)_H
=(v_n-v,\xi_{v_n}-\xi^\ast)_H+(v,\xi_{v_n}-\xi^\ast)_H\to 0.
\end{eqnarray*}
We derive that $\xi_{v_n}\to\xi^\ast$ in $H$, and so
$\nabla\mathcal{L}(\xi_{v_n})=v_n$ can lead to $\nabla\mathcal{L}(\xi^\ast)=v$.
The uniqueness of solutions implies $\xi^\ast=\xi_v$.
This prove the claim. Hence $\nabla\mathcal{L}$
is a homeomorphism from an open neighborhood $\{\xi_v\,|\,v\in B_H(\theta,\varrho)\}$
of $\theta$ in $\mathcal{Q}_{r,s}$ onto $B_H(\theta,\varrho)$.
\end{proof}

Theorem~\ref{th:S.4.4} cannot be derived from the invariance of domain theorem
(5.4.1) of Berger \cite{Ber} or \cite[Theorem~2.5]{Fe}. Recently, Ekeland proved
an weaker inverse function theorem, \cite[Theorem~2]{Ek}.
Since we cannot insure that $B(u)$ has a right-inverse
$L(u)$ which is uniformly bounded in a neighborhood of $\theta$,
\cite[Theorem~2]{Ek} it cannot lead to Theorem~\ref{th:S.4.4} either.

\subsection{Parameterized splitting and shifting theorems}\label{sec:S.5}

 To shorten the proof of the main theorem, we shall write parts of it into two propositions.

\begin{proposition}\label{prop:S.5.1}
Under the assumptions of Theorem~\ref{th:S.4.3},
 for each $(\vec{\lambda},z)\in [-\delta,\delta]^n\times B_{H^0}(\theta,\epsilon)$,
let $\psi_{\vec{\lambda}}(z)=\psi(\vec{\lambda}, z)$ be given by (\ref{e:S.4.21}).
Then it satisfies
 \begin{eqnarray*}
\mathcal{L}_\lambda(z+\psi_\lambda(z))=\min\{\mathcal{L}_\lambda(z+ u)\,|\, u\in B_H(\theta, r)\cap H^+\}
\end{eqnarray*}
 if $H^-=\{\theta\}$, and
\begin{eqnarray*}
&&\mathcal{L}_{\vec{\lambda}}(z+\psi_{\vec{\lambda}}(z))=\min\{\mathcal{L}_{\vec{\lambda}}(z+ u+ P^-\psi_{\vec{\lambda}}(z))\,|\, u\in B_H(\theta, r)\cap H^+\},\\
&&\mathcal{L}_{\vec{\lambda}}(z+\psi_{\vec{\lambda}}(z))=\max\{\mathcal{L}_{\vec{\lambda}}(z+ P^+\psi_{\vec{\lambda}}(z)+ v)\,|\, v\in B_H(\theta, s)\cap H^-\}
\end{eqnarray*}
if $H^-\ne\{\theta\}$.
\end{proposition}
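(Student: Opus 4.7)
The plan is to show that for each $(\vec{\lambda},z)$ the point $P^+\psi_{\vec{\lambda}}(z)$ is the unique minimizer, in the $H^+$-direction, of $\mathcal{L}_{\vec{\lambda}}$ restricted to the slice through $z+P^-\psi_{\vec\lambda}(z)$, and dually for $P^-\psi_{\vec{\lambda}}(z)$ in the $H^-$-direction; together these statements express exactly that $\psi_{\vec{\lambda}}(z)$ is a saddle point. Write $\eta := \psi_{\vec{\lambda}}(z)$, $\mathcal{L}_{\vec{\lambda}} := \mathcal{L} + \sum_j \lambda_j\mathcal{G}_j$, $B_{\vec{\lambda}} := B + \sum_j \lambda_j\mathcal{G}''_j$. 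Because $H^+\perp H^-$, the identity $P^\bot\nabla\mathcal{L}_{\vec{\lambda}}(z+\eta)=0$ from \eqref{e:S.4.22} splits into two orthogonal components
\begin{equation*}
P^+\nabla\mathcal{L}_{\vec{\lambda}}(z+\eta)=0, \qquad P^-\nabla\mathcal{L}_{\vec{\lambda}}(z+\eta)=0.
\end{equation*}
Setting $u^- := P^-\eta$, the $C^1$ map $g(u^+) := \mathcal{L}_{\vec{\lambda}}(z+u^++u^-)$ on $B_H(\theta,r)\cap H^+$ therefore has $Dg(P^+\eta)=0$, so $P^+\eta\in B_H(\theta,r)\cap H^+$ is a critical point of $g$.

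The crux is the strict-convexity estimate along $H^+$: for all $u^+_1,u^+_2\in B_H(\theta,r)\cap H^+$,
\begin{equation*}
\bigl(\nabla\mathcal{L}_{\vec{\lambda}}(z+u^+_2+u^-)-\nabla\mathcal{L}_{\vec{\lambda}}(z+u^+_1+u^-),\,u^+_2-u^+_1\bigr)_H \ge \tfrac{a_1}{2}\|u^+_2-u^+_1\|^2.
\end{equation*}
I would derive this by applying the classical mean value theorem to $t\mapsto(\nabla\mathcal{L}_{\vec{\lambda}}(z+u^+_1+t(u^+_2-u^+_1)+u^-),u^+_2-u^+_1)_H$ (which is differentiable because $\nabla\mathcal{L}_{\vec{\lambda}}$ has G\^ateaux derivative $B_{\vec{\lambda}}$) to rewrite the left side as $(B_{\vec{\lambda}}(\xi)(u^+_2-u^+_1),u^+_2-u^+_1)_H$ for some $\xi\in\bar B_{H^0}(\theta,\epsilon)\oplus\overline{\mathcal{Q}_{r,s}}$, and then combining Lemma~\ref{lem:S.2.2}(i), which gives $(B(\xi)w,w)_H\ge a_1\|w\|^2$ for $w\in H^+$, with the bound $|\sum_j\lambda_j(\mathcal{G}''_j(\xi)w,w)_H|\le n\delta M\|w\|^2$ coming from \eqref{e:S.4.30.1}. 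Shrinking $\delta$ once more (if necessary) so that $n\delta M\le a_1/2$ absorbs the perturbation.

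Given this estimate, fix $u^+\in B_H(\theta,r)\cap H^+$ and set $w := u^+-P^+\eta$; the segment joining $P^+\eta$ to $u^+$ stays in the convex ball. Since $Dg$ is continuous and $Dg(P^+\eta)=0$, the fundamental theorem of calculus applied to the $C^1$ map $t\mapsto g(P^+\eta+tw)$ together with the above estimate (used with $u^+_1=P^+\eta$ and $u^+_2=P^+\eta+tw$) gives
\begin{equation*}
g(u^+)-g(P^+\eta) = \int_0^1[Dg(P^+\eta+tw)-Dg(P^+\eta)](w)\,dt \ge \int_0^1\tfrac{a_1 t}{2}\|w\|^2\,dt = \tfrac{a_1}{4}\|w\|^2\ge 0,
\end{equation*}
with equality iff $w=\theta$; noting $g(P^+\eta)=\mathcal{L}_{\vec{\lambda}}(z+\eta)$ yields the min identity. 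The max identity on $H^-$ is proved by the symmetric argument: replace Lemma~\ref{lem:S.2.2}(i) by Lemma~\ref{lem:S.2.2}(iii) (giving $(B(\xi)w,w)_H\le -a_0\|w\|^2$ on $H^-$) to obtain strict concavity of $v\mapsto\mathcal{L}_{\vec{\lambda}}(z+P^+\eta+v)$ on $B_H(\theta,s)\cap H^-$, so that $P^-\eta$ is its unique maximizer. The degenerate case $H^-=\{\theta\}$ is simply the special case $P^-\eta=\theta$ of the min identity.

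The principal obstacle is not the analysis per se but a uniformity check on constants: one must verify that the $\delta>0$ fixed in Theorem~\ref{th:S.4.3} is small enough (or can be further shrunk without disturbing earlier conclusions) so that the perturbation $\sum_j\lambda_j\mathcal{G}''_j(\xi)$ does not destroy positive definiteness on $H^+$ nor negative definiteness on $H^-$ uniformly over the tube $\bar B_{H^0}(\theta,\epsilon)\oplus\overline{\mathcal{Q}_{r,s}}$; once that is in place, the rest is straightforward bookkeeping with the mean value theorem and Lemma~\ref{lem:S.2.2}.
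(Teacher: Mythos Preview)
Your proposal is correct and follows essentially the same approach as the paper: both use the critical-point identity $P^\bot\nabla\mathcal{L}_{\vec{\lambda}}(z+\psi_{\vec{\lambda}}(z))=0$, apply the mean value theorem to bring in $B_{\vec{\lambda}}(\xi)$, invoke Lemma~\ref{lem:S.2.2}(i),(iii) for the $B$-part, and absorb the $\sum_j\lambda_j\mathcal{G}''_j$ perturbation via the bound \eqref{e:S.4.30.1} and the smallness of $\delta$ already fixed in the proof of Theorem~\ref{th:S.4.3} (indeed $\delta<\tfrac{\min\{a_0,a_1\}}{16nM}$ there, so no further shrinking is needed). The paper packages this as a single double-integral computation yielding $\mathcal{L}_{\vec{\lambda}}(z+u+P^-\psi_{\vec{\lambda}}(z))-\mathcal{L}_{\vec{\lambda}}(z+\psi_{\vec{\lambda}}(z))\ge\tfrac{a_1}{4}\|u-P^+\psi_{\vec{\lambda}}(z)\|^2$, which is exactly your strict-convexity estimate integrated once.
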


\begin{proof}\quad
{\bf Case} $H^-=\{\theta\}$.\quad
We have $\mathcal{Q}_{r,s}=B_H(\theta, r)\cap H^+$,
and (\ref{e:S.4.22}) becomes
$$
P^+\nabla\mathcal{L}_{\vec{\lambda}}(z+\psi_{\vec{\lambda}}(z))=0,\quad\forall z\in B_{H^0}(\theta,\epsilon).
$$
By this we may use integral mean value theorem to deduce  that for each $u\in \mathcal{Q}_{r,s}$,
\begin{eqnarray*}
&&\mathcal{L}_{\vec{\lambda}}(z+ u)-\mathcal{L}_{\vec{\lambda}}(z+\psi_{\vec{\lambda}}(z))\\
&=&\int^1_0(\nabla\mathcal{L}_{\vec{\lambda}}(z+\psi_{\vec{\lambda}}(z)+ \tau(u-\psi_{\vec{\lambda}}(z))), u-\psi_{\vec{\lambda}}(z))_Hd\tau\\
&=&\int^1_0(P^+\nabla\mathcal{L}_{\vec{\lambda}}(z+\psi_{\vec{\lambda}}(z)+ \tau(u-\psi_{\vec{\lambda}}(z))), u-\psi_{\vec{\lambda}}(z))_Hd\tau\\
&=&\int^1_0(P^+\nabla\mathcal{L}_{\vec{\lambda}}(z+\psi_{\vec{\lambda}}(z)+ \tau(u-\psi_{\vec{\lambda}}(z)))-P^+\nabla\mathcal{L}_{\vec{\lambda}}(z+\psi_{\vec{\lambda}}(z)), u-\psi_{\vec{\lambda}}(z))_Hd\tau\\
&=&\int^1_0(\nabla\mathcal{L}_{\vec{\lambda}}(z+\psi_{\vec{\lambda}}(z)+ \tau(u-\psi_{\vec{\lambda}}(z)))-\nabla\mathcal{L}_{\vec{\lambda}}(z+\psi_{\vec{\lambda}}(z)), u-\psi_{\vec{\lambda}}(z))_Hd\tau\\
&=&\int^1_0(\nabla\mathcal{L}(z+\psi_{\vec{\lambda}}(z)+ \tau(u-\psi_{\vec{\lambda}}(z)))-\nabla\mathcal{L}(z+\psi_{\vec{\lambda}}(z)), u-\psi_{\vec{\lambda}}(z))_Hd\tau\\
&&+\sum^n_{j=1}\lambda_j\int^1_0(\nabla\mathcal{G}_j(z+\psi_{\vec{\lambda}}(z)+ \tau(u-\psi_{\vec{\lambda}}(z)))-\nabla\mathcal{G}_j(z+\psi_{\vec{\lambda}}(z)), u-\psi_{\vec{\lambda}}(z))_Hd\tau\\
&=&\int^1_0\tau d\tau\int^1_0\big(B(z+\psi_{\vec{\lambda}}(z)+ \rho\tau(u-\psi_{\vec{\lambda}}(z)))(u-\psi_{\vec{\lambda}}(z)), u-\psi_{\vec{\lambda}}(z)\bigr)_Hd\rho\\
&&+\sum^n_{j=1}\lambda_j\int^1_0\tau d\tau\int^1_0\big(\mathcal{G}''_j(z+\psi_{\vec{\lambda}}(z)+ \rho\tau(u-\psi_{\vec{\lambda}}(z)))(u-\psi_{\vec{\lambda}}(z)), u-\psi_{\vec{\lambda}}(z)\bigr)_Hd\rho\\
&\ge&\frac{a_1}{2}\|u-\psi_{\vec{\lambda}}(z)\|^2\\
&&+\sum^n_{j=1}\lambda_j\int^1_0\tau d\tau\int^1_0\big(\mathcal{G}''_j(z+\psi_{\vec{\lambda}}(z)+ \rho\tau(u-\psi_{\vec{\lambda}}(z)))(u-\psi_{\vec{\lambda}}(z)), u-\psi_{\vec{\lambda}}(z)\bigr)_Hd\rho.
\end{eqnarray*}
Here the final inequality comes from Lemma~\ref{lem:S.2.2}(i).
For the final sum, as in (\ref{e:S.4.31}) we have
 \begin{eqnarray*}
&&\left|\sum^n_{j=1}\lambda_j\int^1_0\tau d\tau\int^1_0\big(\mathcal{G}''_j(z+\psi_{\vec{\lambda}}(z)+ \rho\tau(u-\psi_{\vec{\lambda}}(z)))(u-\psi_{\vec{\lambda}}(z)), u-\psi_{\vec{\lambda}}(z)\bigr)_Hd\rho\right|\\
&\le& 2n\delta M\|u-\psi_{\vec{\lambda}}(z))\|^2\le \frac{a_1}{4}\|u-\psi_{\vec{\lambda}}(z))\|^2.
\end{eqnarray*}
These lead to
\begin{eqnarray}\label{e:S.5.1}
\mathcal{L}_{\vec{\lambda}}(z+ u)-\mathcal{L}_{\vec{\lambda}}(z+\psi_{\vec{\lambda}}(z))
\ge\frac{a_1}{4}\|u-\psi_{\vec{\lambda}}(z)\|^2,
\end{eqnarray}
which implies the desired conclusion.

 {\bf Case} $H^-\ne\{\theta\}$.\quad
  For each $u\in B_H(\theta, r)\cap H^+$ we have $u+P^-\psi_{\vec{\lambda}}(z)\in \mathcal{Q}_{r,s}$.
  As above we can use (\ref{e:S.4.22}) to derive
  \begin{eqnarray}\label{e:S.5.2}
\mathcal{L}_{\vec{\lambda}}(z+ u+ P^-\psi_{\vec{\lambda}}(z))-\mathcal{L}_{\vec{\lambda}}(z+ \psi_{\vec{\lambda}}(z))
\ge\frac{a_1}{4}\|u-P^+\psi_{\vec{\lambda}}(z)\|^2,
\end{eqnarray}
and therefore  the second equality.

 Finally, for
each $v\in B_H(\theta, r)\cap H^-$ we have $P^+\psi_{\vec{\lambda}}(z)+v\in \mathcal{Q}_{r,s}$.
As above, using (\ref{e:S.4.22}) and  Lemma~\ref{lem:S.2.2}(ii)-(iii) we may deduce
\begin{eqnarray*}
&&\mathcal{L}_{\vec{\lambda}}(z+ P^+\psi_{\vec{\lambda}}(z)+v)-\mathcal{L}_{\vec{\lambda}}(z+\psi_{\vec{\lambda}}(z))\\
&=&\int^1_0(\nabla\mathcal{L}_{\vec{\lambda}}(z+ \psi_{\vec{\lambda}}(z)+ t(u-P^+\psi_{\vec{\lambda}}(z))), v-P^-\psi_{\vec{\lambda}}(z))_Hdt\\
&=&\int^1_0(\nabla\mathcal{L}_{\vec{\lambda}}(z+ \psi_{\vec{\lambda}}(z)+ t(v-P^-\psi_{\vec{\lambda}}(z)))- \nabla\mathcal{L}_{\vec{\lambda}}(z+ \psi_{\vec{\lambda}}(z)), v-P^-\psi_{\vec{\lambda}}(z))_Hdt\\
&=&\int^1_0t\int^1_0(B(z+ \psi_{\vec{\lambda}}(z)+ \tau t(v-P^-\psi_{\vec{\lambda}}(z)))(v-P^-\psi_{\vec{\lambda}}(z)), v-P^-\psi_{\vec{\lambda}}(z))_Hdtd\tau\\
&+&\sum^n_{j=1}\lambda_j\int^1_0t dt\int^1_0\big(\mathcal{G}''_j(z+ \psi_{\vec{\lambda}}(z)+ \tau t(v-P^-\psi_{\vec{\lambda}}(z)))(v-P^-\psi_{\vec{\lambda}}(z)), v-P^-\psi_{\vec{\lambda}}(z)\bigr)_Hd\tau\\
&\le&-\frac{a_0}{2}\|v-P^-\psi_{\vec{\lambda}}(z)\|^2\\
&+&\sum^n_{j=1}\lambda_j\int^1_0t dt\int^1_0\big(\mathcal{G}''_j(z+ \psi_{\vec{\lambda}}(z)+ \tau t(v-P^-\psi_{\vec{\lambda}}(z)))(v-P^-\psi_{\vec{\lambda}}(z)), v-P^-\psi_{\vec{\lambda}}(z)\bigr)_Hd\tau\\
&\le&-\frac{a_0}{4}\|v-P^-\psi_{\vec{\lambda}}(z)\|^2,
\end{eqnarray*}
 and hence  the third equality.
\end{proof}

\begin{proposition}\label{prop:S.5.2}
Under the assumptions of Theorem~\ref{th:S.4.3},
 for each $(\vec{\lambda},z)\in [-\delta,\delta]^n\times B_{H^0}(\theta,\epsilon)$,
let $\psi_{\vec{\lambda}}(z)=\psi(\vec{\lambda}, z)$ be given by (\ref{e:S.4.21}).
 Then the functional
$\mathcal{L}_{\vec{\lambda}}^\circ: B_H(\theta, \epsilon)\cap H^0\to \mathbb{R}$
   given by
 \begin{equation}\label{e:S.5.3}
 \mathcal{L}^\circ_{\vec{\lambda}}(z):=\mathcal{L}_{\vec{\lambda}}(z+\psi_{\vec{\lambda}}(z))
 =\mathcal{L}(z+\psi(\vec{\lambda}, z))+ \sum^n_{j=1}\lambda_j\mathcal{G}_j(z+\psi(\vec{\lambda}, z))
  \end{equation}
  is of class $C^1$, and its differential is given by
\begin{eqnarray}\label{e:S.5.4}
D\mathcal{L}_{\vec{\lambda}}^\circ(z)h&=&D\mathcal{L}_{\vec{\lambda}}(z+\psi_{\vec{\lambda}}(z))h\nonumber\\
&=&D\mathcal{L}(z+\psi(\vec{\lambda},z))h+
\sum^n_{j=1}\lambda_j D\mathcal{G}_j(z+\psi(\vec{\lambda},z))h,\quad\forall h\in H^0.
\end{eqnarray}
(Clearly, this implies that $[-\delta,\delta]^n\ni\vec{\lambda}\mapsto\mathcal{L}^\circ_{\vec{\lambda}}\in C^1(\bar{B}_H(\theta, \epsilon)\cap H^0)$
is continuous by shrinking $\epsilon>0$ since $\dim H^0<\infty$).
\end{proposition}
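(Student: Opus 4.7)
The plan is to establish Fréchet differentiability of $\mathcal{L}^\circ_{\vec{\lambda}}$ on the finite-dimensional space $H^0$ by a direct first-order expansion of $\mathcal{L}^\circ_{\vec{\lambda}}(z+h)-\mathcal{L}^\circ_{\vec{\lambda}}(z)$, using the defining equation (\ref{e:S.4.22}) of $\psi$ to annihilate the contribution arising from the variation of $\psi_{\vec{\lambda}}$. Fix $\vec{\lambda}\in [-\delta,\delta]^n$ and $z\in B_{H^0}(\theta,\epsilon)$, and for $h\in H^0$ of small norm set $w:=\psi_{\vec{\lambda}}(z+h)-\psi_{\vec{\lambda}}(z)$. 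By construction, $w\in H^+\oplus H^-=(H^0)^\bot$, and by the Lipschitz estimate (\ref{e:S.4.22.1}), $\|w\|\le 3\|h\|$.

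Applying the integral mean-value theorem to the $C^1$ functional $\mathcal{L}_{\vec{\lambda}}:=\mathcal{L}+\sum_j\lambda_j\mathcal{G}_j$ along the segment from $z+\psi_{\vec{\lambda}}(z)$ to $z+\psi_{\vec{\lambda}}(z)+(h+w)=z+h+\psi_{\vec{\lambda}}(z+h)$, one obtains
\begin{equation*}
\mathcal{L}^\circ_{\vec{\lambda}}(z+h)-\mathcal{L}^\circ_{\vec{\lambda}}(z)=\int_0^1\bigl(\nabla\mathcal{L}_{\vec{\lambda}}(z+\psi_{\vec{\lambda}}(z)+s(h+w)),\,h+w\bigr)_H\,ds.
\end{equation*}
I would split the right-hand side as $A+B+C$, where
$A=\bigl(\nabla\mathcal{L}_{\vec{\lambda}}(z+\psi_{\vec{\lambda}}(z)),h\bigr)_H$,
$B=\bigl(\nabla\mathcal{L}_{\vec{\lambda}}(z+\psi_{\vec{\lambda}}(z)),w\bigr)_H$,
and $C$ is the integral of the increment of $\nabla\mathcal{L}_{\vec{\lambda}}$ paired with $h+w$. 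The term $B$ equals $\bigl(P^\bot\nabla\mathcal{L}_{\vec{\lambda}}(z+\psi_{\vec{\lambda}}(z)),w\bigr)_H$ because $w\in(H^0)^\bot$, and this vanishes by (\ref{e:S.4.22}). This cancellation is the heart of the argument.

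For the remainder $C$, I would bound the integrand in norm by $\|\nabla\mathcal{L}_{\vec{\lambda}}(z+\psi_{\vec{\lambda}}(z)+s(h+w))-\nabla\mathcal{L}_{\vec{\lambda}}(z+\psi_{\vec{\lambda}}(z))\|\cdot\|h+w\|$. Continuity of $\nabla\mathcal{L}_{\vec{\lambda}}$ (it is the gradient of a $C^1$ functional) together with continuity of $\psi_{\vec{\lambda}}$ from Theorem~\ref{th:S.4.3} forces the first factor to tend to zero uniformly in $s\in[0,1]$ as $\|h\|\to 0$. Combining this with $\|h+w\|\le 4\|h\|$ yields $C=o(\|h\|)$, so that $\mathcal{L}^\circ_{\vec{\lambda}}(z+h)-\mathcal{L}^\circ_{\vec{\lambda}}(z)=A+o(\|h\|)$. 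This simultaneously gives Fréchet differentiability of $\mathcal{L}^\circ_{\vec{\lambda}}$ on $H^0$ and the explicit formula (\ref{e:S.5.4}), because $A=D\mathcal{L}_{\vec{\lambda}}(z+\psi_{\vec{\lambda}}(z))h$.

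Continuity of $z\mapsto D\mathcal{L}^\circ_{\vec{\lambda}}(z)\in (H^0)^\ast$ is then immediate from continuity of $z\mapsto z+\psi_{\vec{\lambda}}(z)$ (Theorem~\ref{th:S.4.3}) composed with the continuous map $\nabla\mathcal{L}_{\vec{\lambda}}$, so $\mathcal{L}^\circ_{\vec{\lambda}}\in C^1$. The parenthetical joint continuity $[-\delta,\delta]^n\ni\vec{\lambda}\mapsto\mathcal{L}^\circ_{\vec{\lambda}}\in C^1(\bar B_H(\theta,\epsilon)\cap H^0)$ follows from joint continuity of $(\vec{\lambda},z)\mapsto\psi(\vec{\lambda},z)$ together with continuity of $\nabla\mathcal{L}$ and the $\nabla\mathcal{G}_j$ on the compact set $[-\delta,\delta]^n\times\bar B_H(\theta,\epsilon)\cap H^0$. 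The only substantive obstacle in the whole argument is the cancellation $B=0$; this is precisely what motivated defining $\psi$ through the projected equation $P^\bot\nabla\mathcal{L}_{\vec{\lambda}}(z+\psi_{\vec{\lambda}}(z))=\theta$, and apart from it the proof reduces to routine $C^1$ estimation underpinned by the uniform Lipschitz bound (\ref{e:S.4.22.1}) on $\psi$.
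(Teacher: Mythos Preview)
Your proof is correct but follows a genuinely different route from the paper's. The paper does not use the Lipschitz estimate (\ref{e:S.4.22.1}) here at all; instead it invokes Proposition~\ref{prop:S.5.1}, the min--max characterization of $\psi_{\vec{\lambda}}(z)$, to sandwich the difference quotient $\bigl(\mathcal{L}^\circ_{\vec{\lambda}}(z+th)-\mathcal{L}^\circ_{\vec{\lambda}}(z)\bigr)/t$ between two expressions of the form
\[
\frac{1}{t}\Bigl[\mathcal{L}_{\vec{\lambda}}\bigl(z+th+\Psi(t)\bigr)-\mathcal{L}_{\vec{\lambda}}\bigl(z+\Psi(t)\bigr)\Bigr],
\]
where $\Psi(t)$ is built from mixed pieces $P^+\psi_{\vec{\lambda}}(z+th)+P^-\psi_{\vec{\lambda}}(z)$ or $P^+\psi_{\vec{\lambda}}(z)+P^-\psi_{\vec{\lambda}}(z+th)$. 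In each bound the $\psi$-contribution cancels across the two terms, so only continuity of $\psi_{\vec{\lambda}}$ is needed to pass to the limit and obtain G\^ateaux differentiability, with the $C^1$ claim following from continuity of the derivative.

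Your argument is more elementary: it bypasses Proposition~\ref{prop:S.5.1} entirely and trades the saddle structure for the quantitative Lipschitz bound (\ref{e:S.4.22.1}), which lets you control $w=\psi_{\vec{\lambda}}(z+h)-\psi_{\vec{\lambda}}(z)$ directly and obtain Fr\'echet differentiability in one stroke. The paper's method, on the other hand, would survive in situations where only continuity of $\psi_{\vec{\lambda}}$ (not a Lipschitz modulus) is available, since the sandwich inequalities absorb the variation of $\psi$ without any rate. Both proofs hinge on the same structural fact---that the component of $\nabla\mathcal{L}_{\vec{\lambda}}(z+\psi_{\vec{\lambda}}(z))$ in $(H^0)^\bot$ vanishes---but they exploit it through different mechanisms.
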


 \begin{proof} {\bf Case} $H^-\ne\{\theta\}$.\quad
  For fixed $z\in B_H(\theta, \epsilon)\cap H^0$, $h\in H^0$, and
$t\in\mathbb{R}$ with sufficiently small $|t|$, the last two equalities in
Proposition~\ref{prop:S.5.1} imply
 \begin{eqnarray}\label{e:S.5.5}
&&\mathcal{L}_{\vec{\lambda}}(z+th+ P^+\psi_{\vec{\lambda}}(z+th)+P^-\varphi(z))-\mathcal{L}_{\vec{\lambda}}(z+P^+\psi_{\vec{\lambda}}(z+th)+ P^-\psi_{\vec{\lambda}}(z))\nonumber\\
&\le& \mathcal{L}_{\vec{\lambda}}(z+th+\psi_{\vec{\lambda}}(z+th))-\mathcal{L}_{\vec{\lambda}}(z+\psi_{\vec{\lambda}}(z))\nonumber\\
&\le&\mathcal{L}_{\vec{\lambda}}(z+th+ P^+\psi_{\vec{\lambda}}(z)+ P^-\psi_{\vec{\lambda}}(z+th))-
\mathcal{L}_{\vec{\lambda}}(z+ P^+\psi_{\vec{\lambda}}(z)+ P^-\psi_{\vec{\lambda}}(z+th)).\nonumber\\
\end{eqnarray}
Since $\mathcal{L}_{\vec{\lambda}}$ is $C^1$  and  $\psi_{\vec{\lambda}}$ is continuous we deduce,
\begin{eqnarray}\label{e:S.5.6}
&&\lim_{t\to 0}\frac{\mathcal{L}_{\vec{\lambda}}(z+th+P^+\psi_{\vec{\lambda}}(z+th)+P^-\psi_{\vec{\lambda}}(z))-
\mathcal{L}_{\vec{\lambda}}(z+
P^+\psi_{\vec{\lambda}}(z+th)+P^-\psi_{\vec{\lambda}}(z))}{t}\nonumber\\
&=& \lim_{t\to 0}\int^1_0D\mathcal{L}_{\vec{\lambda}}(z+sth+P^+\psi_{\vec{\lambda}}(z+th)+P^-\psi_{\vec{\lambda}}(z))h ds\nonumber\\
&=&D\mathcal{L}_{\vec{\lambda}}(z+\psi_{\vec{\lambda}}(z))h.
\end{eqnarray}
Here the last equality follows from the Lebesgue's Dominated Convergence Theorem
since
$$
\{D\mathcal{L}_{\vec{\lambda}}(z+sth+P^+\psi_{\vec{\lambda}}(z+th)+P^-\psi_{\vec{\lambda}}(z))h\,|\, 0\le s\le 1,\;|t|\le 1\}
$$
is bounded by the compactness of
 $\{z+ sth+P^+\psi_{\vec{\lambda}}(z+th)+P^-\psi_{\vec{\lambda}}(z)\,|\, 0\le s\le 1,\; |t|\le 1\}$.

Similarly, we have
\begin{eqnarray}\label{e:S.5.7}
&&\lim_{t\to 0}\frac{\mathcal{L}_{\vec{\lambda}}(z+th+ P^+\psi_{\vec{\lambda}}(z)+P^-\psi_{\vec{\lambda}}(z+th))-\mathcal{L}_{\vec{\lambda}}(z+P^+\psi_{\vec{\lambda}}(z)+
P^-\psi_{\vec{\lambda}}(z+th))}{t}\nonumber\\
&=&D\mathcal{L}_{\vec{\lambda}}(z+\psi_{\vec{\lambda}}(z))h.
\end{eqnarray}
Then it follows from (\ref{e:S.5.5})-(\ref{e:S.5.7}) that
 \begin{eqnarray*}
\lim_{t\to 0}\frac{\mathcal{L}_{\vec{\lambda}}(z+th+\psi_{\vec{\lambda}}(z+th))-\mathcal{L}_{\vec{\lambda}}(z+\psi_{\vec{\lambda}}(z))}{t}=
D\mathcal{L}_{\vec{\lambda}}(z+\psi_{\vec{\lambda}}(z))h.
\end{eqnarray*}
 That is, $\mathcal{L}_{\vec{\lambda}}^\circ$ is G\^ateaux differentiable  and $D\mathcal{L}_{\vec{\lambda}}^\circ(z)=D\mathcal{L}_{\vec{\lambda}}(z+\psi_{\vec{\lambda}}(z))|_{H^0}$.
 The latter implies that  $\mathcal{L}_{\vec{\lambda}}^\circ$ is of class $C^1$
 because both $D\mathcal{L}_{\vec{\lambda}}$ and $\psi_{\vec{\lambda}}$ are continuous.

 {\bf Case} $H^-=\{\theta\}$.\quad
For fixed $z\in B_H(\theta, \epsilon)\cap H^0$ and $h\in H^0$, and
$t\in\mathbb{R}$ with sufficiently small $|t|$, the first equality in Proposition~\ref{prop:S.5.1} implies
\begin{eqnarray}\label{e:S.5.8}
&&\mathcal{L}_{\vec{\lambda}}(z+th+\psi_{\vec{\lambda}}(z+th))-\mathcal{L}_{\vec{\lambda}}(z+\psi_{\vec{\lambda}}(z+th))\nonumber\\
&\le& \mathcal{L}_{\vec{\lambda}}(z+th+\psi_{\vec{\lambda}}(z+th))-\mathcal{L}_{\vec{\lambda}}(z+\psi_{\vec{\lambda}}(z))\nonumber\\
&\le&\mathcal{L}_{\vec{\lambda}}(z+th+\psi_{\vec{\lambda}}(z))-\mathcal{L}_{\vec{\lambda}}(z+\psi_{\vec{\lambda}}(z)).
\end{eqnarray}
By  the continuity of  $\nabla\mathcal{L}_{\vec{\lambda}}$ and
$\psi_{\vec{\lambda}}$ we have
\begin{eqnarray}\label{e:S.5.9}
&&\lim_{t\to 0}\frac{\mathcal{L}_{\vec{\lambda}}(z+th+\psi_{\vec{\lambda}}(z+th))-
\mathcal{L}_{\vec{\lambda}}(z+\psi_{\vec{\lambda}}(z+th))}{t}\nonumber\\
&=& \lim_{t\to 0}\int^1_0D\mathcal{L}_{\vec{\lambda}}(z+sth+\psi_{\vec{\lambda}}(z+th))h ds\nonumber\\
&=&D\mathcal{L}_{\vec{\lambda}}(z+\psi_{\vec{\lambda}}(z))h.
\end{eqnarray}
(As above this follows from the Lebesgue's Dominated Convergence Theorem
because
$\{z+ sth+\psi_{\vec{\lambda}}(z+th)\,|\, 0\le s\le 1,\; 0\le t\le 1\}$
is compact
and thus $\{\nabla\mathcal{L}_{\vec{\lambda}}(z+sth+\psi_{\vec{\lambda}}(z+th))h\,|\, 0\le s\le 1,\;|t|\le 1\}$
is bounded).
Similarly, we may prove
\begin{eqnarray}\label{e:S.5.10}
\lim_{t\to 0}\frac{\mathcal{L}_{\vec{\lambda}}(z+th+\psi_{\vec{\lambda}}(z))-
\mathcal{L}_{\vec{\lambda}}(z+\psi_{\vec{\lambda}}(z))}{t}=
D\mathcal{L}_{\vec{\lambda}}(z+\psi_{\vec{\lambda}}(z))h,
\end{eqnarray}
and thus
\begin{eqnarray*}
\lim_{t\to 0}\frac{\mathcal{L}_{\vec{\lambda}}(z+th+\psi_{\vec{\lambda}}(z+th))-
\mathcal{L}_{\vec{\lambda}}(z+\psi_{\vec{\lambda}}(z))}{t}=
D\mathcal{L}_{\vec{\lambda}}(z+\psi_{\vec{\lambda}}(z))h
\end{eqnarray*}
by (\ref{e:S.5.8})--(\ref{e:S.5.10}).
The desired claim follows immediately.
 \end{proof}

\begin{theorem}[Parameterized Splitting Theorem]\label{th:S.5.3}
Under the assumptions of Theorem~\ref{th:S.4.3},
by shrinking $\delta>0$, $\epsilon>0$ and $r>0, s>0$, we obtain
an open neighborhood $W$ of $\theta$ in $H$ and an origin-preserving
homeomorphism
\begin{eqnarray}\label{e:S.5.11}
&&[-\delta, \delta]^n\times B_{H^0}(\theta,\epsilon)\times
\left(B_{H^+}(\theta, r) +
B_{H^-}(\theta, s)\right)\to [-\delta, \delta]^n\times W,\nonumber\\
&&(\vec{\lambda}, z, u^++u^-)\mapsto (\vec{\lambda},\Phi_{\vec{\lambda}}(z, u^++u^-))
\end{eqnarray}
such that
\begin{equation}\label{e:S.5.12}
\mathcal{ L}_{\vec{\lambda}}\circ\Phi_{\vec{\lambda}}(z, u^++ u^-)=\|u^+\|^2-\|u^-\|^2+ \mathcal{
L}_{\vec{\lambda}}(z+ \psi(\vec{\lambda}, z))
\end{equation}
for all $(\vec{\lambda}, z, u^+ + u^-)\in [-\delta,\delta]^n\times B_{H^0}(\theta,\epsilon)\times
\left(B_{H^+}(\theta, r) +
B_{H^-}(\theta, s)\right)$, where $\psi$ is given by (\ref{e:S.4.21}).
 The functional functional
$\mathcal{L}_{\vec{\lambda}}^\circ: B_H(\theta, \epsilon)\cap H^0\to \mathbb{R}$
   given by (\ref{e:S.5.3}) is of class $C^1$, and its differential is given by (\ref{e:S.5.4}).
Moreover, (i) if $\mathcal{L}$ and $\mathcal{G}_j$, $j=1,\cdots,n$, are of class $C^{2-0}$,
then so is $\mathcal{L}_{\vec{\lambda}}^\circ$ for each $\vec{\lambda}\in [-\delta, \delta]^n$;
(ii) if a compact Lie group $G$  acts on $H$ orthogonally, and
$V$, $\mathcal{L}$ and $\mathcal{G}$ are $G$-invariant (and hence $H^0$, $(H^0)^\bot$
are $G$-invariant subspaces), then for each $\vec{\lambda}\in [-\delta, \delta]^n$, $\psi(\vec{\lambda}, \cdot)$
 and $\Phi_{\vec{\lambda}}(\cdot,\cdot)$  are $G$-equivariant, and $\mathcal{L}^\circ_{\vec{\lambda}}(z)=\mathcal{
L}_{\vec{\lambda}}(z+ \psi(\vec{\lambda}, z))$ is $G$-invariant.
\end{theorem}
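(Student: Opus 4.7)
The plan is to combine the implicit-function output $\psi$ from Theorem~\ref{th:S.4.3} with a joint-parameter extension of the nondegenerate Morse--Palais lemma (Theorem~\ref{th:S.3.1}). For each $(\vec{\lambda},z)\in[-\delta,\delta]^n\times B_{H^0}(\theta,\epsilon)$, set $\hat{H}:=H^+\oplus H^-$ and
$$F_{\vec{\lambda},z}(v):=\mathcal{L}_{\vec{\lambda}}\bigl(z+\psi(\vec{\lambda},z)+v\bigr)-\mathcal{L}_{\vec{\lambda}}\bigl(z+\psi(\vec{\lambda},z)\bigr),\qquad v\in\hat{H}.$$
The defining identity (\ref{e:S.4.22}) says exactly $P^\bot\nabla\mathcal{L}_{\vec{\lambda}}(z+\psi(\vec{\lambda},z))=\theta$, so the origin is a critical point of $F_{\vec{\lambda},z}$ on $\hat{H}$. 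Lemma~\ref{lem:S.2.4} ensures that the restricted data satisfy Hypothesis~\ref{hyp:1.1} with $X=\hat{H}$, and $\hat{H}\cap H^0=\{\theta\}$ makes the origin nondegenerate for $F_{\vec{0},\theta}$; by the continuity of $B$ at $\theta$ together with $\psi(\vec{0},\theta)=\theta$, shrinking $\delta,\epsilon$ preserves this nondegeneracy uniformly in $(\vec{\lambda},z)$.

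Next I would apply a joint-parameter version of Theorem~\ref{th:S.3.1} to the family $\{F_{\vec{\lambda},z}\}$. The proof of that theorem rests on the coercivity estimates of Lemmas~\ref{lem:S.2.1}--\ref{lem:S.2.2} together with \cite[Theorem~A.1]{Lu2}, and all of these extend when the one-parameter $\lambda$ is replaced by the pair $(\vec{\lambda},z)$, using the joint continuity of $\psi$ to control the moving base point $z+\psi(\vec{\lambda},z)$. This yields, after possibly further shrinking $\delta,\epsilon,r,s$, an origin-preserving homeomorphism $\phi_{\vec{\lambda},z}:B_{H^+}(\theta,r)+B_{H^-}(\theta,s)\to\hat{H}$ jointly continuous in $(\vec{\lambda},z,u^++u^-)$ such that $F_{\vec{\lambda},z}\circ\phi_{\vec{\lambda},z}(u^++u^-)=\|u^+\|^2-\|u^-\|^2$. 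Setting
$$\Phi_{\vec{\lambda}}(z,u^++u^-):=z+\psi(\vec{\lambda},z)+\phi_{\vec{\lambda},z}(u^++u^-)$$
gives (\ref{e:S.5.12}) by direct substitution, and the homeomorphism assertion in (\ref{e:S.5.11}) follows from the joint continuity of $\psi$ (Theorem~\ref{th:S.4.3}) and $\phi_{\vec{\lambda},z}$. The $C^1$-claim for $\mathcal{L}^\circ_{\vec{\lambda}}$ and the derivative formula (\ref{e:S.5.4}) are already furnished by Proposition~\ref{prop:S.5.2}.

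For item (i), the $C^{2-0}$ regularity, I would estimate $\|D\mathcal{L}^\circ_{\vec{\lambda}}(z_1)-D\mathcal{L}^\circ_{\vec{\lambda}}(z_2)\|$ by telescoping through the intermediate point $z_2+\psi_{\vec{\lambda}}(z_1)$: the Lipschitz bound (\ref{e:S.4.22.1}) on $\psi_{\vec{\lambda}}$ and the local Lipschitz property of $\nabla\mathcal{L}_{\vec{\lambda}}$ under the $C^{2-0}$ hypothesis then combine, via (\ref{e:S.5.4}) and the fact that $\dim H^0<\infty$, to give the Lipschitz continuity of $D\mathcal{L}^\circ_{\vec{\lambda}}$ on $H^0$. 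For the equivariant item (ii), uniqueness of $\psi(\vec{\lambda},\cdot)$ in Theorem~\ref{th:S.4.3} yields $\psi(\vec{\lambda},g\cdot z)=g\cdot\psi(\vec{\lambda},z)$; hence $F_{\vec{\lambda},g\cdot z}(g\cdot v)=F_{\vec{\lambda},z}(v)$, and applying the $G$-equivariant form of the parameterized Morse--Palais lemma (an analogue of the Wasserman--Viterbo construction recalled in the proof of \cite[Chapter~I, Theorem~7.8]{Ch}) produces a $G$-equivariant $\phi_{\vec{\lambda},z}$, from which equivariance of $\Phi_{\vec{\lambda}}$ and $G$-invariance of $\mathcal{L}^\circ_{\vec{\lambda}}$ follow.

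The principal technical obstacle is the joint-parameter control in the nondegenerate Morse step: Theorem~\ref{th:S.3.1} is stated for a single real parameter, whereas here the base point $z+\psi(\vec{\lambda},z)$ moves with both $\vec{\lambda}$ and $z$, and Hypothesis~\ref{hyp:1.1} is formulated around the single point $\theta$. One must reverify that the coercivity inequalities of Lemma~\ref{lem:S.2.2}, the continuity estimate of Lemma~\ref{lem:S.2.1}, and the $(S)_+$ structure invoked in \cite[Theorem~A.1]{Lu2} all persist for the translated functionals $F_{\vec{\lambda},z}$ uniformly in the compact parameter set; this is achieved by exploiting the continuity of $\psi$ together with that of $B,P,Q$ at $\theta$, and shrinking $\delta,\epsilon,r,s$ accordingly. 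The bookkeeping is routine but forms the technical heart of the argument.
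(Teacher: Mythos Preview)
Your proposal is correct and follows essentially the same route as the paper: define the auxiliary functional $F_{\vec{\lambda},z}(v)=\mathcal{L}_{\vec{\lambda}}(z+\psi(\vec{\lambda},z)+v)-\mathcal{L}_{\vec{\lambda}}(z+\psi(\vec{\lambda},z))$ on $H^+\oplus H^-$, verify the coercivity conditions of Lemma~\ref{lem:S.2.2} uniformly in $(\vec{\lambda},z)$ at the shifted base point, and then apply \cite[Theorem~A.1]{Lu2}; the paper carries out exactly these three verifications (its Steps~1--3) by mean-value computations identical in spirit to what you outline, and handles (i)--(ii) just as you do via (\ref{e:S.4.22.1}) and the uniqueness/equivariance of $\psi$. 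One small inaccuracy: \cite[Theorem~A.1]{Lu2} does not itself invoke the $(S)_+$ property---that condition enters only in the degree-theoretic construction of $\psi$ in Theorem~\ref{th:S.4.3}, not in the Morse--Palais step---so you can drop that from your list of ingredients to reverify.
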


Sometimes, for example, the corresponding conditions with \cite[Theorem~1.1]{Lu1} or \cite[Remark~3.2]{Lu2} are also satisfied,
  we can prove that $\psi(\vec{\lambda}, \cdot)$ is of class $C^1$ and that
  $\mathcal{L}^\circ_{\vec{\lambda}}$ is of class $C^2$; moreover,
   \begin{eqnarray}
   &&D\mathcal{L}^\circ_{\vec{\lambda}}(z)u=(\nabla\mathcal{L}_{\vec{\lambda}}(z+ \psi(\vec{\lambda}, z)), u)_H,\label{e:S.5.12.1}\\
   &&d^2\mathcal{L}^\circ_{\vec{\lambda}}(z)(u,v)=(\mathcal{L}''_{\vec{\lambda}}(z+ \psi(\vec{\lambda}, z))
   (u+ D_z\psi(\vec{\lambda}, z)u),v)_H\label{e:S.5.12.2}
  \end{eqnarray}
  for all $z\in B_H(\theta, \epsilon)\cap H^0$ and $u,v\in H^0$. In particular, since $\psi(\vec{\lambda}, \theta)=\theta$ and $D_z\psi(\vec{\lambda}, \theta)=\theta$,
     \begin{equation}\label{e:S.5.12.3}
  d^2\mathcal{L}^\circ_{\vec{\lambda}}(\theta)(z_1,z_2)=(\mathcal{L}''_{\vec{\lambda}}(\theta)z_1,z_2)_H=
  -\sum^n_{j=1}\lambda_j(\mathcal{G}''_j(\theta)z_1,z_2)_H,\quad\forall z_1, z_2\in H^0.
  \end{equation}
\noindent{\bf Claim}.\quad{\it In this situation, if $\theta\in H$ is a nondegenerate critical point of $\mathcal{L}_{\vec{\lambda}}$ then $\theta\in H^0$ such a critical point of
$\mathcal{L}^\circ_{\vec{\lambda}}$ too.}

In fact, suppose for some $z_1\in H^0$ that $d^2\mathcal{L}^\circ_{\vec{\lambda}}(\theta)(z_1,z_2)=0\;
\forall z_2\in H^0$. (\ref{e:S.5.12.3}) implies
$(P^0\mathcal{L}''_{\vec{\lambda}}(\theta)z_1,u)_H=(P^0\mathcal{L}''_{\vec{\lambda}}(\theta)z_1,P^0u)_H
=0$ for all $u\in H$. Hence $P^0\mathcal{L}''_{\vec{\lambda}}(\theta)z_1=\theta$.
Moreover, since $(I-P^0)\nabla\mathcal{L}_{\vec{\lambda}}(z+ \psi(\vec{\lambda}, z))=\theta$
for all $z\in B_H(\theta, \epsilon)\cap H^0$. Differentiating this equality with respect to $z$ we get
$(I-P^0)\mathcal{L}''_{\vec{\lambda}}(z+ \psi(\vec{\lambda}, z))(u+ D_z\psi(\vec{\lambda}, z)u)=\theta$
for all $u\in H^0$. In particular, $(I-P^0)\mathcal{L}''_{\vec{\lambda}}(\theta)z=\theta$
for all $z\in H^0$. It follows that $\mathcal{L}''_{\vec{\lambda}}(\theta)z_1=\theta$
and hence $z_1=\theta$.\\

\noindent{\it Proof of Theorem~\ref{th:S.5.3}}. \quad
Let $N=H^0$, and for each $\vec{\lambda}\in [-\delta,\delta]^n$
 we define a  map $F_{\vec{\lambda}}:B_{N}(\theta, \epsilon)\times \mathcal{Q}_{r,s}\to\R$ by
 \begin{equation}\label{e:S.5.13}
 F_{\vec{\lambda}}(z, u)=\mathcal{L}_{\vec{\lambda}}(z+ \psi(\vec{\lambda},z)+ u)-\mathcal{L}_{\vec{\lambda}}(z+ \psi(\vec{\lambda},z)).
\end{equation}
 Then $D_2F_{\vec{\lambda}}(z,u)v=(P^\bot\nabla{\mathcal L}_\lambda(z+ \psi(\vec{\lambda},z)+ u), v)_H$
for  $z\in \bar B_{N}(\theta, \epsilon)$,  $u\in \mathcal{Q}_{r,s}$ and $v\in N^\bot$.
 Moreover it holds that
\begin{eqnarray}\label{e:S.5.14}
F_{\vec{\lambda}}(z, \theta)=0\quad\hbox{and}\quad D_2F_{\vec{\lambda}}(z,
\theta)(v)=0\quad\;\forall v\in N^\bot.
\end{eqnarray}
Since  $B_{N}(\theta,\epsilon)\oplus\mathcal{Q}_{r,s}$ has the closure
contained in the neighborhood $U$ in Lemma~\ref{lem:S.2.2}, and
 $\psi(\vec{\lambda},\theta)=\theta$, we can shrink $\nu>0$,
$\epsilon>0$, $r>0$ and $s>0$ so small that
 \begin{equation}\label{e:S.5.15}
 z+ \psi(\vec{\lambda},z)+
u^++ u^-\in U
\end{equation}
for all $\vec{\lambda}\in [-\delta,\delta]^n$, $z\in\bar B_{N}(\theta,\epsilon)$ and $u^+ + u^-\in \overline{\mathcal{Q}_{r,s}}$.

Let us verify that each $F_{\vec{\lambda}}$ satisfies conditions (ii)-(iv) in \cite[Theorem~A.1]{Lu2}.

\noindent{\bf Step 1}. For $\vec{\lambda}\in [-\delta,\delta]^n$, $z\in\bar B_{N}(\theta,\epsilon)$, $u^+\in
\bar B_{H^+}(\theta, r)$ and $u^-_1, u^-_2\in\bar
B_{H^-}(\theta,\epsilon)$,  we have
\begin{eqnarray}\label{e:S.5.16}
&&[D_2F_{\vec{\lambda}}(z, u^+ + u^-_2)-D_2F_{\vec{\lambda}}(z, u^++ u^-_1)](u^-_2-u^-_1)\nonumber\\
&=&(\nabla\mathcal{L}_{\vec{\lambda}}(z+ \psi_{\vec{\lambda}}(z)+ u^++u^-_2), u^-_2-u^-_1)_H\nonumber\\
&&- (\nabla\mathcal{L}_{\vec{\lambda}}(z+ \psi_{\vec{\lambda}}(z)+ u^++u^-_1), u^-_2-u^-_1)_H.
\end{eqnarray}
Since $\nabla\mathcal{L}_{\vec{\lambda}}$ is G\^ateaux differentiable so is the
function
$$
u\mapsto (\nabla\mathcal{L}_{\vec{\lambda}}(z+ \psi_{\vec{\lambda}}(z)+ u^++u), u^-_2-u^-_1)_H.
$$
By the mean value theorem we have $t\in (0, 1)$ such that
\begin{eqnarray}\label{e:S.5.17}
&&(\nabla\mathcal{L}_{\vec{\lambda}}(z+ \psi_{\vec{\lambda}}(z)+ u^++u^-_2), u^-_2-u^-_1)_H
- (\nabla\mathcal{L}_{\vec{\lambda}}(z+ \psi_{\vec{\lambda}}(z)+ u^++u^-_1), u^-_2-u^-_1)_H\nonumber\\
&=&\left(B(z+ \psi_{\vec{\lambda}}(z)+ u^++ u^-_1+ t(u^-_2-u^-_1))(u^-_2-u^-_1),
u^-_2-u^-_1\right)_H\nonumber\\
&&+\sum^n_{j=1}\lambda_j\left(\mathcal{G}''_j(z+ \psi_{\vec{\lambda}}(z)+ u^++ u^-_1+ t(u^-_2-u^-_1))(u^-_2-u^-_1),
u^-_2-u^-_1\right)_H\nonumber\\
&\le& \sum^n_{j=1}\lambda_j\left(\mathcal{G}''_j(z+ \psi_{\vec{\lambda}}(z)+ u^++ u^-_1+ t(u^-_2-u^-_1))(u^-_2-u^-_1),
u^-_2-u^-_1\right)_H\nonumber\\
&&-a_0\|u^-_2-u^-_1\|^2
\end{eqnarray}
because of Lemma~\ref{lem:S.2.2}(iii).   Recall that we have assumed $\delta<\frac{\min\{a_0,a_1\}}{8nM}$
in Step~3 of the proof of Theorem~\ref{th:S.4.3}. From this and
 (\ref{e:S.4.30.1}) it follows that
\begin{eqnarray*}
&&\sum^n_{j=1}|\lambda_j\left(\mathcal{G}''_j(z+ \psi_{\vec{\lambda}}(z)+ u^++ u^-_1+ t(u^-_2-u^-_1))(u^-_2-u^-_1),
u^-_2-u^-_1\right)_H|\\
&&\le n\delta M\|u^-_2-u^-_1\|^2\le\frac{a_0}{8}\|u^-_2-u^-_1\|^2.
\end{eqnarray*}
This and (\ref{e:S.5.16})--(\ref{e:S.5.17}) lead to
\begin{eqnarray*}
 [D_2F_{\vec{\lambda}}(z, u^+ + u^-_2)-D_2F_{\vec{\lambda}}(z, u^++ u^-_1)](u^-_2-u^-_1)\le
-\frac{a_0}{2}\|u^-_2-u^-_1\|^2.
\end{eqnarray*}
 This implies the condition (ii) of \cite[theorem~A.1]{Lu2}.

\noindent{\bf Step 2}. For $\vec{\lambda}\in [-\delta,\delta]^n$, $z\in\bar B_{N}(\theta,\epsilon)$, $u^+\in\bar B_{H^+}(\theta, r)$ and $u^-\in\bar
B_{H^-}(\theta, s)$, by (\ref{e:S.5.14}) and the mean value
theorem, for some $t\in (0, 1)$ we have
\begin{eqnarray}\label{e:S.5.18}
&&D_2F_{\vec{\lambda}}(z, u^++u^-)(u^+-u^-)\nonumber\\
&=&D_2F_{\vec{\lambda}}(z, u^++u^-)(u^+-u^-)- D_2F_{\vec{\lambda}}(z, \theta)(u^+-u^-)\nonumber\\
&=&(\nabla\mathcal{L}_{\vec{\lambda}}(z+ \psi_{\vec{\lambda}}(z)+ u^++u^-), u^+-u^-)_H-(\nabla\mathcal{L}_{\vec{\lambda}}(z+ \psi_{\vec{\lambda}}(z)), u^+-u^-)_H\nonumber\\
&=&\left(B(z+ \psi_{\vec{\lambda}}(z)+ t(u^++u^-))(u^++u^-), u^+-u^-\right)_H\nonumber\\
&+&\sum^n_{j=1}\lambda_j\left(\mathcal{G}''_j(z+ \psi_{\vec{\lambda}}(z)+ t(u^++u^-))(u^++u^-), u^+-u^-\right)_H\nonumber\\
&=&\left(B(z+ \psi_{\vec{\lambda}}(z)+ t(u^++u^-))u^+, u^+\right)_H
-\left(B(z+ \psi_{\vec{\lambda}}(z)+
t(u^++u^-))u^-, u^-\right)_H\nonumber\\
&+&\sum^n_{j=1}\lambda_j\left(\mathcal{G}''_j(z+ \psi_{\vec{\lambda}}(z)+ t(u^++u^-))(u^++u^-), u^+-u^-\right)_H\nonumber\\
&\ge & a_1\|u^+\|^2+ a_0\|u^-\|^2\nonumber\\
&+&\sum^n_{j=1}\lambda_j\left(\mathcal{G}''_j(z+ \psi_{\vec{\lambda}}(z)+ t(u^++u^-))(u^++u^-), u^+-u^-\right)_H
\end{eqnarray}
because of Lemma~\ref{lem:S.2.2}(i) and (iii).  As above we have
\begin{eqnarray*}
&&\sum^n_{j=1}|\lambda_j\left(\mathcal{G}''_j(z+ \psi_{\vec{\lambda}}(z)+ t(u^++u^-))(u^++u^-), u^+-u^-\right)_H|\\
&&\le n\delta M\|u^++u^-\|\cdot\|u^+-u^-\|\le\frac{\min\{a_0,a_1\}}{4}(\|u^+\|^2+\|u^-\|^2)\\
&&\le\frac{a_1}{4}\|u^+\|^2+ \frac{a_0}{4}\|u^-\|^2.
\end{eqnarray*}
From this and (\ref{e:S.5.18}) we deduce
\begin{eqnarray*}
D_2F_{\vec{\lambda}}(z, u^++u^-)(u^+-u^-)\ge & \frac{a_1}{2}\|u^+\|^2+ \frac{a_0}{2}\|u^-\|^2.
\end{eqnarray*}
The condition (iii) of \cite[Theorem~A.1]{Lu2} is satisfied.

\noindent{\bf Step 3}. For $\vec{\lambda}\in [-\delta,\delta]^n$, $z\in\bar B_{N}(\theta,\epsilon)$ and $u^+\in
\bar B_{H^+}(\theta,r)$, as above we have $t\in (0,
1)$ such that
\begin{eqnarray*}
&&D_2F_{\vec{\lambda}}(z, u^+)u^+=D_2F_{\vec{\lambda}}(z, u^+)u^+- D_2F_{\vec{\lambda}}(z, \theta)u^+\\
&=&(\nabla\mathcal{L}_{\vec{\lambda}}(z+ \psi_{\vec{\lambda}}(z)+ u^+), u^+)_H-(\nabla\mathcal{L}_{\vec{\lambda}}(z+ \psi_{\vec{\lambda}}(z)), u^+)_H\\
&=&\left(B(z+ \psi_{\vec{\lambda}}(z)+ tu^+)u^+, u^+\right)_H+ \sum^n_{j=1}\lambda_j\left(\mathcal{G}''_j(z+
\psi_{\vec{\lambda}}(z)+ tu^+)u^+, u^+\right)_H\\
&\ge& a_1\|u^+\|^2+\sum^n_{j=1}\lambda_j\left(\mathcal{G}''_j(z+ \psi_{\vec{\lambda}}(z)+ tu^+)u^+, u^+\right)_H
\end{eqnarray*}
because of Lemma~\ref{lem:S.2.2}(i). Moreover, it is proved as before that
\begin{eqnarray*}
\sum^n_{j=1}|\lambda_j\left(\mathcal{G}''_j(z+ \psi_{\vec{\lambda}}(z)+ tu^+)u^+, u^+\right)_H|\le
\frac{\min\{a_0,a_1\}}{8}\|u^+\|^2.
\end{eqnarray*}
 Hence we obtain
 $$
D_2F_{\vec{\lambda}}(z, u^+)u^+ \ge a_1\|u^+\|^2> p(\|u^+\|)\quad\forall u^+\in \bar
B_{H^+}(\theta,s)\setminus\{\theta\},
$$
where $p:(0, \varepsilon]\to (0, \infty)$ is a non-decreasing
function given by $p(t)=\frac{a_1}{4}t^2$. Namely, $F_{\vec{\lambda}}$ satisfies the condition
(iv) of \cite[Theorem~A.1]{Lu2} (the parameterized version of \cite[Theoren~1.1]{DHK}).

The other arguments are as before.

\noindent{\bf Step 4}. The claim (i) in the part of ``Moreover" follows from
(\ref{e:S.4.22.1}) directly. For the second one,
since $\psi(\lambda, \cdot)$ is $G$-equivariant,
and $\mathcal{L}_\lambda$ is $G$-invariant, we derive from (\ref{e:S.5.13}) that
$F_{\vec{\lambda}}$ is $G$-invariant. By the construction of
$\Phi_{\vec{\lambda}}(\cdot,\cdot)$ (cf. \cite{DHK} and \cite[Theorem~A.1]{Lu1}),
it is expressed by $F_{\vec{\lambda}}(z, \cdot)$, one easily sees that
$\Phi_{\vec{\lambda}}(\cdot,\cdot)$ is $G$-equivariant.
\hfill$\Box$\vspace{2mm}

\begin{theorem}[Parameterized Shifting Theorem]\label{th:S.5.4}
Suppose for some $\vec{\lambda}\in [-\delta, \delta]^n$ that $\theta\in H$ is an isolated critical point of $\mathcal{L}_{\vec{\lambda}}$ (thus $\theta\in H^0$ is that of $\mathcal{L}_{\vec{\lambda}}^\circ$). Then
\begin{equation}\label{e:S.5.19}
C_q(\mathcal{L}_{\vec{\lambda}},\theta;{\bf K})=C_{q-\mu}(\mathcal{L}^\circ_{\vec{\lambda}},\theta;{\bf K})\quad\forall
q\in\mathbb{N}\cup\{0\},
\end{equation}
where $\mathcal{L}^\circ_{\vec{\lambda}}(z)=\mathcal{
L}_{\vec{\lambda}}(z+ \psi(\vec{\lambda}, z))= \mathcal{L}(z+\psi(\vec{\lambda}, z))+
\sum^n_{j=1}\lambda_j\mathcal{G}_j(z+\psi(\vec{\lambda}, z))$ is
 as in (\ref{e:S.5.3}).
\end{theorem}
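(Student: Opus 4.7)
The plan is to reduce Theorem~\ref{th:S.5.4} to a product-structure computation by invoking the parameterized splitting homeomorphism $\Phi_{\vec{\lambda}}$ from Theorem~\ref{th:S.5.3}. Under $\Phi_{\vec{\lambda}}$ the functional $\mathcal{L}_{\vec{\lambda}}$ is carried to the explicit sum
$$
(z,u^+,u^-)\mapsto \mathcal{L}^\circ_{\vec{\lambda}}(z)+\|u^+\|^2-\|u^-\|^2
$$
on a product neighborhood of $\theta$ in $H^0\times H^+\times H^-$. Since critical groups are invariants of the pair $(\mathcal{L}_c\cap U,\mathcal{L}_c\cap U\setminus\{\theta\})$ up to homeomorphism of pairs, I would first transport the computation of $C_q(\mathcal{L}_{\vec{\lambda}},\theta;\mathbf{K})$ to the computation of the critical groups of this sum at $(\theta,\theta,\theta)$, using that $\theta$ is isolated for $\mathcal{L}_{\vec{\lambda}}$ iff $\theta\in H^0$ is isolated for $\mathcal{L}^\circ_{\vec{\lambda}}$ (the latter follows because the only solutions of the $(I-P^0)$-equation near $\theta$ lie in $H^0$ by uniqueness of $\varphi$).

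Next I would run the classical Gromoll--Meyer argument adapted to the present regularity. Set $c=\mathcal{L}^\circ_{\vec{\lambda}}(\theta)$ and choose a product neighborhood $U=B_{H^0}(\theta,\epsilon')\times (B_{H^+}(\theta,r')+B_{H^-}(\theta,s'))$ small enough that (i) $\theta$ is the unique critical point of the sum in $U$, and (ii) the retraction along the positive definite flow of $u^+\mapsto 2u^+$ deforms the $H^+$-factor onto $\theta$ while keeping the sublevel set invariant (here one uses only the positivity of $\|u^+\|^2$, not $C^2$-smoothness of $\mathcal{L}^\circ_{\vec{\lambda}}$). This deformation identifies the pair
$$
\bigl(\{\mathcal{L}^\circ_{\vec{\lambda}}(z)+\|u^+\|^2-\|u^-\|^2\le c\}\cap U,\;(\cdot)\setminus\{\theta\}\bigr)
$$
with the pair $(A\times B_{H^-}(\theta,s'),\,(A\times B_{H^-}(\theta,s'))\setminus\{(\theta,\theta)\})$, where $A=\{\mathcal{L}^\circ_{\vec{\lambda}}\le c\}\cap B_{H^0}(\theta,\epsilon')$, up to homotopy equivalence of pairs.

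Then I would apply the K\"unneth formula (valid over the field or torsion-free coefficient case, and in general through the standard relative version used in \cite[Theorem~I.5.4]{Ch}) to the product: since $H^-$ is finite-dimensional of dimension $\mu$ and $-\|u^-\|^2$ attains a strict local maximum at the origin,
$$
H_j\bigl(B_{H^-}(\theta,s'),B_{H^-}(\theta,s')\setminus\{\theta\};\mathbf{K}\bigr)=\delta_{j\mu}\mathbf{K},
$$
while the $H^+$-factor is contractible and contributes $\delta_{j0}\mathbf{K}$ after the retraction above. This collapses the K\"unneth sum to
$$
C_q(\mathcal{L}_{\vec{\lambda}},\theta;\mathbf{K})\cong H_{q-\mu}(A,A\setminus\{\theta\};\mathbf{K})=C_{q-\mu}(\mathcal{L}^\circ_{\vec{\lambda}},\theta;\mathbf{K}),
$$
which is the desired shifting identity.

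The main obstacle I anticipate is precisely what the text flags: executing the deformation on the $H^+$-factor when $\mathcal{L}^\circ_{\vec{\lambda}}$ is only $C^1$ and $H^+$ is infinite-dimensional, so that the proofs of \cite[Theorem~8.4]{MaWi} and \cite[Theorem~5.1.17]{Ch1} do not apply verbatim. The workaround is that after the splitting the dependence on $u^+$ is the explicit quadratic $\|u^+\|^2$, so the radial deformation $(u^+,t)\mapsto (1-t)u^+$ is a strong deformation retraction of the relevant sublevel pair without requiring any smoothness of $\mathcal{L}^\circ_{\vec{\lambda}}$ beyond continuity; the $C^1$ regularity of $\mathcal{L}^\circ_{\vec{\lambda}}$ is used only to ensure that $\theta\in H^0$ remains an isolated critical point of $\mathcal{L}^\circ_{\vec{\lambda}}$ and that its critical groups are well defined via singular homology of the sublevel pair. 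Everything else is a routine transcription of the Gromoll--Meyer shifting argument.
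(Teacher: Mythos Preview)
Your approach is correct and coincides with the first route the paper mentions (but does not spell out): after the splitting homeomorphism $\Phi_{\vec{\lambda}}$, the functional has the explicit product form $\mathcal{L}^\circ_{\vec{\lambda}}(z)+\|u^+\|^2-\|u^-\|^2$, and because the quadratic pieces are now explicit, the deformation steps of the classical Gromoll--Meyer shifting argument go through without needing $C^2$-smoothness of $\mathcal{L}^\circ_{\vec{\lambda}}$. One small inaccuracy: after retracting $u^+$ to $\theta$, the resulting sublevel set is $\{(z,u^-):\mathcal{L}^\circ_{\vec{\lambda}}(z)-\|u^-\|^2\le c\}$, not literally $A\times B_{H^-}(\theta,s')$; an additional (standard) deformation in the $u^-$-direction is needed before the K\"unneth step, but this is indeed the ``routine transcription'' you allude to. Your observation that points of the form $(\theta,u^+,\theta)$ with $u^+\ne\theta$ are never in the sublevel set is exactly what makes the radial retraction a map of pairs.

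The paper's written-out proof takes a genuinely different route worth knowing: it exploits the finite-dimensionality of $H^0$. One extends $\mathcal{L}^\circ_{\vec{\lambda}}$ to a coercive $C^1$ function $f$ on all of $H^0$, then approximates $f$ in the strong $C^1$ topology by a $C^\infty$ function $g$ (density of $C^\infty(H^0,\mathbb{R})$ in $C^1_S(H^0,\mathbb{R})$). Stability of critical groups (\cite[Theorem~5.2]{CorH}) gives $C_*(\mathcal{L}^\circ_{\vec{\lambda}},\theta)=C_*(g,\theta)$ and the analogous identification for the product functionals, after which the classical $C^2$ shifting theorem of \cite{Ch} or \cite{MaWi} applies verbatim to $g$. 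Your approach is more self-contained and avoids the smoothing/stability machinery; the paper's approach trades the explicit deformation work for a clean reduction to the existing literature.
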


\begin{proof}
Though $\mathcal{L}_{\vec{\lambda}}$ and $\mathcal{L}_{\vec{\lambda}}^\circ$
are only of class $C^1$, the construction of the Gromoll-Meyer pair
on the pages 49-51 of \cite{Ch1} is also effective for them
(see \cite{ChGh}). Hence the result can be obtained by repeating
the proof of \cite[Theorem~I.5.4]{Ch1}.

Of course,  with a stability theorem of critical groups the present case can also be reduced to  
that of \cite[Theorem~I.5.4]{Ch1}. In fact, by Theorem~\ref{th:S.5.3} we have
$C_\ast(\mathcal{L}_{\vec{\lambda}},\theta;{\bf K})=C_\ast(\widehat{\mathcal{L}}_{\vec{\lambda}},\theta;{\bf K})$, where
$$
\widehat{\mathcal{L}}_{\vec{\lambda}}: B_{H^0}(\theta,\epsilon)\times
(H^+\oplus H^-)\to \mathbb{R},\quad
(z,u^++u^-)\mapsto \|u^+\|^2-\|u^-\|^2+ {\mathcal{L}}_{\vec{\lambda}}^\circ(z).
$$
By a smooth cut function we can construct a $C^1$ functional
$f:H^0\to\mathbb{R}$ such that
$f(z)=\mathcal{L}_{\vec{\lambda}}^\circ(z)$ for $\|z\|<\epsilon/2$,
and $f(z)=\|z\|^2$ for $\|z\|\ge 3\epsilon/4$.
Define a functional
$$
\widetilde{\mathcal{L}}: H^0\times
(H^+\oplus H^-)\to \mathbb{R},\;(z,u^++u^-)\mapsto \|u^+\|^2-\|u^-\|^2+ f(z).
$$
Clearly, $C_\ast(\widehat{\mathcal{L}}_{\vec{\lambda}},\theta;{\bf K})=C_\ast(\widetilde{\mathcal{L}},\theta;{\bf K})$.
Since $\theta\in H^0$ is an isolated critical point of $\mathcal{L}_{\vec{\lambda}}^\circ$,
by shrinking $\epsilon>0$ we can assume that $\|\nabla\mathcal{L}_{\vec{\lambda}}^\circ(z)\|>0$
for all $z\in B_{H^0}(\theta,\epsilon)\setminus\{\theta\}$.
A standard result in differential topology claims that
$C^\infty(H^0,\mathbb{R})$ is dense
in $C^1_S(H^0,\mathbb{R})$ (equipped with strong topology).
Hence we can choose a function $g\in C^\infty(H^0,\mathbb{R})$ to satisfy
\begin{eqnarray*}
&&\|\nabla f(z)-\nabla g(z)\|<\frac{1}{2}\|\nabla f(z)\|,\quad\forall z\in
B_{H^0}(\theta,\epsilon)\setminus\{\theta\},\\
&&\|f(z)- g(z)\|<\frac{1}{2}\|f(z)\|,\quad\forall z\in
H^0\setminus B_{H^0}(\theta, 10\epsilon).
\end{eqnarray*}
They imply respectively that
\begin{eqnarray}
&&\|\nabla ((1-t)f+tg)(z)\|>\frac{1}{2}\|\nabla\mathcal{L}^\circ(z)\|>0,\quad\forall z\in
B_{H^0}(\theta,\epsilon/2)\setminus\{\theta\},\label{e:S.5.20}\\
&&\|(1-t)f(z)+ tg(z)\|>\frac{1}{2}\|f(z)\|=\frac{1}{2}\|z\|^2,\quad\forall z\in
H^0\setminus B_{H^0}(\theta, 10\epsilon)\label{e:S.5.21}
\end{eqnarray}
for all $t\in [0,1]$.  Hence  each functional
$f_t:H^0\to\mathbb{R},\;z\mapsto (1-t)f(z)+tg(z)$ has a unique critical point
$\theta$ in $B_{H^0}(\theta,\epsilon/2)$ by (\ref{e:S.5.20}), and satisfies
(PS) condition by (\ref{e:S.5.21}) and finiteness of $\dim H^0$.
It follows from the stability theorem of critical groups (\cite[Theorem~5.2]{CorH}) that
\begin{eqnarray}\label{e:S.5.22}
C_\ast(\mathcal{L}_{\vec{\lambda}}^\circ,\theta;{\bf K})=C_\ast(f,\theta;{\bf K})=C_\ast(f_t,\theta;{\bf K})=
C_\ast(g,\theta;{\bf K}),\quad\forall t\in [0, 1].
\end{eqnarray}
Since the functionals $f_t$ and
$H^+\oplus H^-\ni u^++u^-\mapsto \|u^+\|^2-\|u^-\|^2\in\mathbb{R}$
satisfies
(PS) condition, so is each functional
$$
\widetilde{\mathcal{L}}_t: H^0\times
(H^+\oplus H^-)\to \mathbb{R},\;(z,u^++u^-)\mapsto \|u^+\|^2-\|u^-\|^2+ f_t(z).
$$
As above we derive from \cite[Theorem~5.2]{CorH} that
$C_\ast(\widehat{\mathcal{L}},\theta;{\bf K})
=C_\ast(\widetilde{\mathcal{L}}_1,\theta;{\bf K})$.
But \cite[Theorem~I.5.4]{Ch} or \cite[Theorem~8.4]{MaWi} implies
$C_\ast(\widetilde{\mathcal{L}}_1,\theta;{\bf K})=C_{\ast-\mu}(g,\theta;{\bf K})$.
Hence
$$
C_\ast(\mathcal{L}_{\vec{\lambda}},\theta;{\bf K})=C_\ast(\widehat{\mathcal{L}}_{\vec{\lambda}},\theta;{\bf K})=
C_\ast(\widetilde{\mathcal{L}}_1,\theta;{\bf K})=C_{\ast-\mu}(g,\theta;{\bf K})=
C_{\ast-\mu}(\mathcal{L}_{\vec{\lambda}}^\circ,\theta;{\bf K}).
$$
\end{proof}

\subsection{Splitting and shifting theorems around critical orbits}\label{sec:S.6}

 Let $H$ be a Hilbert space with inner product $(\cdot,\cdot)_H$ and
let $({\cal H}, (\!(\cdot, \cdot)\!))$ be a $C^3$ Hilbert-Riemannian  manifold modeled on $H$.
Let $\mathcal{ O}\subset{\cal H}$
be a compact $C^3$ submanifold without boundary, and let $\pi:
N\mathcal{ O}\to \mathcal{ O}$ denote the normal bundle of it. The
bundle is a $C^2$-Hilbert vector bundle over $\mathcal{ O}$, and can
be considered as a subbundle of $T_\mathcal{ O}{\cal H}$ via the
Riemannian metric $(\!(\cdot, \cdot)\!)$. The metric $(\!(\cdot, \cdot)\!)$
induces a natural $C^2$ orthogonal bundle
projection ${\bf \Pi}:T_{\mathcal{O}}\mathcal{H}\to N\mathcal{O}$. For $\varepsilon>0$ we
denote by
\begin{eqnarray*}
N\mathcal{ O}(\varepsilon):=\{(x,v)\in N\mathcal{
 O}\,|\,\|v\|_{x}<\varepsilon\},
\end{eqnarray*}
the so-called normal disk bundle of radius $\varepsilon$.
 If $\varepsilon>0$ is sufficiently small   the exponential map $\exp$ gives a $C^2$-diffeomorphism
 $\digamma$ from  $N\mathcal{ O}(\varepsilon)$ onto an open
neighborhood of $\mathcal{ O}$ in ${\cal H}$, $\mathcal{
N}(\mathcal{ O},\varepsilon)$.

For $x\in\mathcal{ O}$, let  $\mathscr{L}_s(N\mathcal{O}_x)$ denote the space
of those operators $S\in \mathscr{L}(N\mathcal{ O}_x)$ which are self-adjoint
with respect to the inner product $(\!(\cdot, \cdot)\!)_x$, i.e.
$(\!(S_xu, v)\!)_x=(\!(u, S_xv)\!)_x$ for all $u, v\in N\mathcal{
O}_x$. Then we have a $C^1$ vector bundle $\mathscr{L}_s(N\mathcal{ O})\to
\mathcal{O}$ whose fiber at $x\in\mathcal{ O}$ is given by
$\mathscr{L}_s(N\mathcal{ O}_x)$.

 Let  $\mathcal{L}:{\cal H}\to\mathbb{R}$ be a $C^1$  functional.
  A connected $C^3$ submanifold $\mathcal{O}\subset
{\cal H}$ is called a {\it critical manifold} of $\mathcal{L}$ if
$\mathcal{L}|_\mathcal{ O}={\rm const}$ and $D\mathcal{L}(x)v=0$
for any $x\in\mathcal{ O}$ and $v\in T_x{\cal H}$. If there exists a
neighborhood ${\cal V}$ of $\mathcal{O}$ such that ${\cal V}\setminus \mathcal{O}$
contains no critical points of $\mathcal{L}$ we say  $\mathcal{O}$  to be {\it isolated}.

 Furthermore, we make the following

\begin{hypothesis}\label{hyp:S.6.1}
{\rm The gradient field
 $\nabla\mathcal{L}:\mathcal{H}\to T\mathcal{H}$ is G\^ateaux differentiable
 and thus  we have a bounded linear self-adjoint
operator $d^2\mathcal{L}(x)\in \mathscr{L}_s(T_x\mathcal{H})$
 for each $x\in\mathcal{O}$; moreover,
  $\mathcal{O}\ni x\mapsto d^2\mathcal{L}(x)$ is a continuous section of
$\mathscr{L}_s(T\mathcal{H})\to\mathcal{O}$ is continuous,
$\dim{\rm Ker}(d^2\mathcal{L}(x))={\rm const}\;\forall
x\in\mathcal{O}$, and there exists $a_0>0$ such that
\begin{equation}\label{e:S.6.1}
\sigma(d^2\mathcal{L}(x))\cap([-2a_0,
2a_0]\setminus\{0\})=\emptyset\quad\forall x\in\mathcal{ O}.
\end{equation}}
\end{hypothesis}

This implies that
$\mathcal{O}\ni x\mapsto\mathcal{B}_x(\theta_x):=
{\bf \Pi}_x\circ d^2\mathcal{L}(x)|_{N{\cal O}_x}=d^2(\mathcal{L}\circ\exp_x|_{N{\cal O}_x})(\theta_x)$
        is a continuous section of  $\mathscr{L}_s(N\mathcal{O}\to\mathcal{O}$,
$\dim{\rm Ker}({\cal B}_x(\theta_x))={\rm const}\;\forall
x\in\mathcal{O}$, and
$$
\sigma({\cal B}_x(\theta_x))\cap([-2a_0,
2a_0]\setminus\{0\})=\emptyset\quad\forall x\in\mathcal{ O}.
$$

Let $\chi_\ast$ ($\ast=+, -, 0$) be the characteristic function of
the intervals $[2a_0, +\infty)$, $(-2a_0, a_0)$ and $(-\infty,
-2a_0]$, respectively. Then we have the orthogonal bundle
projections on the normal bundle $N\mathcal{ O}$, $P^\ast$ (defined
by $P^\ast_x(v)=\chi_\ast({\cal B}_x(\theta_x))v$), $\ast=+, -, 0$.
Denote by $N^\ast\mathcal{ O}=P^\ast N\mathcal{O}$, $\ast=+, -, 0$.
(Clearly,  ${\cal B}_x(\theta_x)(N^\ast\mathcal{ O}_x)\subset
N^\ast\mathcal{O}_x$ for any $x\in\mathcal{ O}$ and $\ast=+, -, 0$).
By \cite[Lem.7.4]{Ch}, we have
$N\mathcal{ O}=N^+\mathcal{ O}\oplus N^-\mathcal{ O}\oplus
N^0\mathcal{O}$. If ${\rm rank}N^0\mathcal{O}=0$, the critical orbit $\mathcal{O}$ is called {\it nondegenerate}.

In the following we only consider the case $\mathcal{O}$ is a critical orbit of a compact Lie group.
The general case can be treated  as in \cite{Lu3+}.
The following assumption implies naturally Hypothesis~\ref{hyp:S.6.1}.

\begin{hypothesis}\label{hyp:S.6.2}
{\rm {\bf (i)} Let $G$ be a compact
Lie group, and let  ${\cal H}$ be a $C^3$ $G$-Hilbert manifold. (So
$T{\cal H}$ is a $C^2$ $G$-vector bundle, i.e. for any $g\in G$ the
induced action $G\times T{\cal H}\to T{\cal H}$ given by
$g\cdot(x,v)=(g\cdot x, dg(x)v)$ is a $C^1$ bundle map satisfying
$gT_x{\cal H}=T_{g\cdot x}{\cal H}\;\forall x\in{\cal H}$).
Furthermore, this action also preserves the Riemannian-Hilbert
structure $(\!(\cdot,\cdot)\!)$, i.e.
$$(\!(g\cdot
u, g\cdot v)\!)_{g\cdot x}=(\!(u, v)\!)_x,\quad\forall x\in{\cal
H},\quad\forall u, v\in T_x{\cal H}.
$$
 (In this case $\bigl({\cal H},
(\!(\cdot,\cdot)\!)\bigr)$ is said to be a $C^2$ $G$-{\it
Riemannian-Hilbert manifold}). \\
 {\bf (ii)} The $C^1$ functional $\mathcal{ L}:\mathcal{H}\to\mathbb{R}$ is $G$-invariant,
 $\nabla\mathcal{L}:\mathcal{H}\to T\mathcal{H}$ is G\^ateaux differentiable,
 (i.e., under any $C^3$ local chart the functional $\mathcal{L}$
 has a representation that is $C^1$ and has a G\^ateaux differentiable gradient map),      and
$\mathcal{ O}$ is an isolated critical orbit which is a $C^3$ critical
submanifold with  Morse index $\mu_\mathcal{O}$.}
\end{hypothesis}

Since $\exp_{g\cdot x}(g\cdot v)=g\cdot\exp_x(v)$ for any $g\in G$
and $(x,v)\in T{\cal H}$, we have
$$
\mathcal{L}\circ\exp(g\cdot x,g\cdot v)=\mathcal{ L}(\exp(g\cdot x,
g\cdot v))=\mathcal{ L}(g\cdot\exp(x,v))=\mathcal{ L}(\exp(x,v)).
$$
 It follows  that $\nabla\mathcal{L}(g\cdot x)= g^{-1}\cdot \nabla\mathcal{L}(x)g$ and
\begin{equation}\label{e:S.6.2}
\nabla\left(\mathcal{L}\circ\exp|_{N\mathcal{O}(\varepsilon)_{gx}}\right)(g\cdot v)= g\cdot \nabla\left(\mathcal{L}\circ\exp|_{N\mathcal{O}(\varepsilon)_x}\right)(v)
\end{equation}
for any $g\in G$ and $(x,v)\in N\mathcal{O}(\varepsilon)_x$, which leads to
\begin{equation}\label{e:S.6.3}
d^2\left(\mathcal{L}\circ\exp|_{N\mathcal{O}(\varepsilon)_{gx}}\right)(g\cdot v)\cdot g= g\cdot d^2\left(\mathcal{L}\circ\exp|_{N\mathcal{O}(\varepsilon)_x}\right)(v)
\end{equation}
as bounded linear operators from $N\mathcal{O}_x$ onto $N\mathcal{O}_{gx}$.

\begin{theorem}\label{th:S.6.0}
Under   Hypothesis~\ref{hyp:S.6.2},
let for some $x_0\in\mathcal{ O}$ the pair $\bigl(\mathcal{L}\circ\exp_{x_0},
B_{T_{x_0}\mathcal{H}}(\theta,\varepsilon)\bigr)$
(and so the pair $(\mathcal{L}\circ\exp|_{N\mathcal{O}(\varepsilon)_{x_0}},  N\mathcal{O}(\varepsilon)_{x_0})$
by Lemma~\ref{lem:S.2.4}) satisfies the corresponding conditions in Hypothesis~\ref{hyp:1.1} with $X=H$.
Suppose that the critical orbit $\mathcal{O}$ is nondegenerate. Then there exist  $\epsilon>0$
and a  $G$-equivariant homeomorphism onto an open neighborhood of
the zero section preserving fibers,
$\Phi:  N^+\mathcal{O}(\epsilon)\oplus N^-\mathcal{ O}(\epsilon)\to N\mathcal{ O}$,
such that for any $x\in\mathcal{O}$ and  $(u^+, u^-)\in  N^+\mathcal{ O}(\epsilon)_x\times N^-\mathcal{
O}(\epsilon)_x$,
\begin{eqnarray}\label{e:S.6.4}
\mathcal{L}\circ\exp\circ\Phi(x,  u^++
u^-)=\|u^+\|^2_x-\|u^-\|^2_x+ \mathcal{L}|_{\mathcal{O}}.
\end{eqnarray}
\end{theorem}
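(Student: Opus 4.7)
The plan is to reduce the theorem to an $H_0$-equivariant Morse--Palais lemma at a single base point $x_0\in\mathcal{O}$, then spread the resulting normal form over the orbit through the $G$-action.

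First I would fix $x_0\in\mathcal{O}$ and denote by $H_0:=G_{x_0}$ its (closed, hence compact) isotropy subgroup, so $\mathcal{O}\cong G/H_0$. The $G$-invariance of the metric and of $\mathcal{L}$ forces $H_0$ to act orthogonally on $T_{x_0}\mathcal{H}$, to leave $T_{x_0}\mathcal{O}$ and hence $N\mathcal{O}_{x_0}$ invariant, and (since the spectral projections commute with $H_0$) to preserve the splitting $N\mathcal{O}_{x_0}=N^+\mathcal{O}_{x_0}\oplus N^-\mathcal{O}_{x_0}$ given by nondegeneracy of $\mathcal{O}$. The normal-bundle functional
\[
\mathscr{L}_{x_0}:=\mathcal{L}\circ\exp_{x_0}|_{N\mathcal{O}(\varepsilon)_{x_0}}-\mathcal{L}|_{\mathcal{O}}
\]
is then $C^1$, $H_0$-invariant, vanishes at $\theta_{x_0}$ with nondegenerate Hessian, and, by Lemma~\ref{lem:S.2.4} together with the standing hypothesis, satisfies Hypothesis~\ref{hyp:1.1} on $N\mathcal{O}_{x_0}$ with $X=H=N\mathcal{O}_{x_0}$.

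Next I would apply Theorem~\ref{th:S.5.3} in the trivial-parameter form ($n=0$) with the compact Lie group $H_0$ and with the degenerate summand collapsed ($H^0=\{\theta\}$ by nondegeneracy). Its equivariance clause~(ii) yields an $H_0$-equivariant, origin-preserving homeomorphism
\[
\phi_{x_0}:B_{N^+\mathcal{O}_{x_0}}(\theta,\epsilon)\oplus B_{N^-\mathcal{O}_{x_0}}(\theta,\epsilon)\longrightarrow W_{x_0}\subset N\mathcal{O}_{x_0}
\]
satisfying $\mathscr{L}_{x_0}\circ\phi_{x_0}(u^++u^-)=\|u^+\|^2-\|u^-\|^2$. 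For globalization I would set, for each $x=g\cdot x_0\in\mathcal{O}$ and $u^\pm\in N^\pm\mathcal{O}_x$,
\[
\Phi(x,u^++u^-):=g\cdot\phi_{x_0}\bigl(g^{-1}\cdot u^++g^{-1}\cdot u^-\bigr),
\]
with $g$ acting on normal fibers through its differential. If $g\cdot x_0=g'\cdot x_0$ then $g^{-1}g'\in H_0$, and the $H_0$-equivariance of $\phi_{x_0}$ guarantees that the two candidate definitions agree, so $\Phi$ is well defined and $G$-equivariant by construction. Joint continuity of $\Phi$ in $(x,u^\pm)$ is verified by choosing continuous local sections $s:U\to G$ of the principal bundle $G\to G/H_0\cong\mathcal{O}$ and unwinding the definition; fiberwise $\Phi$ is a homeomorphism onto an open subset of $N\mathcal{O}_x$, so the total map is a homeomorphism onto an open neighborhood of the zero section in $N\mathcal{O}$. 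Equation~(\ref{e:S.6.4}) is then confirmed by the chain
\[
\mathcal{L}\bigl(\exp_x\Phi(x,u^++u^-)\bigr)=\mathcal{L}\bigl(\exp_{x_0}\phi_{x_0}(g^{-1}u^++g^{-1}u^-)\bigr)=\|u^+\|_x^2-\|u^-\|_x^2+\mathcal{L}|_{\mathcal{O}},
\]
where the first equality uses $\mathcal{L}\circ\exp_{gx_0}\circ g=\mathcal{L}\circ\exp_{x_0}$ (cf.\ (\ref{e:S.6.2})) and the second uses orthogonality of the $G$-action to identify $\|g^{-1}u^\pm\|_{x_0}$ with $\|u^\pm\|_x$.

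The hard part is the equivariant upgrading at the single point $x_0$: the proof of Theorem~\ref{th:S.1.1} passes through \cite[Theorem~1.1]{DHK}, whose output is not a priori canonical enough to commute with $H_0$. What rescues the situation is that the splitting in Theorem~\ref{th:S.5.3} is built from the map $\psi$ of Theorem~\ref{th:S.4.3}, and $\psi$ is uniquely characterized by equation~(\ref{e:S.4.22}) in $H_0$-invariant data, hence must be $H_0$-equivariant; this equivariance propagates to $\phi_{x_0}$. Once fiberwise equivariance at $x_0$ is secured, the globalization over $\mathcal{O}$ is purely algebraic and uses only compactness of $\mathcal{O}$ together with orthogonality of the $G$-action on the normal fibers.
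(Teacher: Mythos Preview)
Your approach is correct but takes a different route from the paper's. The paper does not reduce to a single base point and spread; instead it observes that, with $N^0\mathcal{O}_x=\{\theta_x\}$, the fiberwise functionals $\mathcal{F}_x(u^++u^-)=\mathcal{L}\circ\exp_x(u^++u^-)$ satisfy \emph{uniformly in} $x\in\mathcal{O}$ the quantitative conditions analogous to those in the proof of Theorem~\ref{th:S.1.1} (this is the content of Lemmas~\ref{lem:S.6.2}--\ref{lem:S.6.3} with trivial $N^0$), and then applies the bundle-parameterized Morse lemma \cite[Theorem~A.2]{Lu2} directly with parameter space $\Lambda=\mathcal{O}$. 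Equivariance comes along because the construction there is expressed entirely through the $G$-invariant data $\mathcal{F}$. Your single-fiber-plus-spreading route is in fact exactly how the paper handles the map $\mathfrak{h}$ in the degenerate Theorem~\ref{th:S.6.1} (see the continuity-by-compactness argument for $\mathfrak{h}$ there), so applying the same idea to $\Phi$ in the nondegenerate case is perfectly reasonable; it trades the external bundle theorem for a more hands-on globalization.

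Two small corrections. First, Theorem~\ref{th:S.5.3} inherits from Theorem~\ref{th:S.1.2} the standing assumption $\nu\neq 0$, so invoking it ``with $H^0=\{\theta\}$'' is formally off; what you actually need is an $H_0$-equivariant form of Theorem~\ref{th:S.1.1}. Second, your explanation that equivariance of $\phi_{x_0}$ ``propagates from the uniqueness of $\psi$'' misses the mark in the nondegenerate case, where $\psi$ is vacuous. What makes $\phi_{x_0}$ commute with $H_0$ is that the construction in \cite{DHK}/\cite[Theorem~A.1]{Lu2} is expressed purely in terms of the $H_0$-invariant functional $\mathscr{L}_{x_0}$ --- this is the substance of Step~4 in the proof of Theorem~\ref{th:S.5.3}, and it stands independently of any $\psi$.
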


It naturally leads to a Morse relation if $\mathcal{L}$ satisfies the (PS)
condition and has only nondegenerate critical orbits between regular levels.
Theorem~\ref{th:S.6.0} will be proved after the proof of the following theorem.

\begin{theorem}\label{th:S.6.1}
Under   Hypothesis~\ref{hyp:S.6.2},
let for some $x_0\in\mathcal{ O}$ the pair
$(\mathcal{L}\circ\exp|_{N\mathcal{O}(\varepsilon)_{x_0}},  N\mathcal{O}(\varepsilon)_{x_0})$
satisfy the corresponding conditions with Hypothesis~\ref{hyp:1.1} with $X=H$.
Suppose that the critical orbit $\mathcal{O}$ is degenerate,
i.e., ${\rm rank}N^0\mathcal{O}>0$. Then there exist  $\epsilon>0$, a  $G$-equivariant topological  bundle
morphism that preserves the zero section,
 $$
\mathfrak{h}:N^0\mathcal{O}(3\epsilon)\to N^+\mathcal{O}\oplus N^-\mathcal{O},\;(x,v)\mapsto \mathfrak{h}_x(v),
$$
and a  $G$-equivariant homeomorphism onto an open neighborhood of
the zero section preserving fibers,
$\Phi: N^0\mathcal{ O}(\epsilon)\oplus N^+\mathcal{O}(\epsilon)\oplus N^-\mathcal{ O}(\epsilon)\to N\mathcal{O}$,
such that  the following properties hold:\\
\noindent{\bf (I)} The induced map,
$\mathfrak{h}_x:N^0\mathcal{O}(\epsilon)_x\to T_x{\cal H}$,  satisfies
\begin{eqnarray}\label{e:S.6.6}
(P^+_x+P^-_x)\circ{\bf \Pi}_x\nabla(\mathcal{L}\circ\exp_x)(v+ \mathfrak{h}_x(v))=0\quad\forall
(x,v^0)\in N^0\mathcal{O}(\epsilon).
\end{eqnarray}
\noindent{\bf (II)} $\Phi$ has the form $\Phi(x, v+ u^++
u^-)=\bigl(x, v+ \mathfrak{h}_x(v)+\phi_{(x, v)}(u^++ u^-)\bigr)$
with $\phi_{(x, v)}(u^++ u^-)\in (N^+\mathcal{ O}\oplus N^-\mathcal{
O})_x$, and satisfies
\begin{eqnarray}\label{e:S.6.7}
\mathcal{L}\circ\exp\circ\Phi(x, v, u^++
u^-)=\|u^+\|^2_x-\|u^-\|^2_x+ \mathcal{ L}\circ\exp_x(v+
\mathfrak{h}_x(v))
\end{eqnarray}
for any $x\in\mathcal{O}$ and  $(v, u^+, u^-)\in N^0\mathcal{
O}(\epsilon)_x\times N^+\mathcal{ O}(\epsilon)_x\times N^-\mathcal{
O}(\epsilon)_x$. Moreover
\begin{description}
\item[(II.1)] $\Phi(x, v)=(x, v+ \mathfrak{h}_x(v))\;\forall v\in N^0\mathcal{
O}(\epsilon)_x$,
\item[(II.2)] $\phi_{(x,v)}(u^++ u^-)\in N^-\mathcal{O}$ if and only if
$u^+=\theta_x$.
\end{description}
\noindent{\bf (III)} For each $x\in\mathcal{O}$ the function
\begin{eqnarray}\label{e:S.6.8}
N^0\mathcal{O}(\epsilon)_x\to\R,\;v\mapsto \mathcal{ L}_x^\circ(v):=
\mathcal{ L}\circ\exp_x(v+ \mathfrak{h}_x(v))
\end{eqnarray}
is $G_x$-invariant, of class $C^{1}$,  and satisfies
$$
D\mathcal{L}_x^\circ(v)v':=
(\nabla(\mathcal{ L}\circ\exp_x)(v+ \mathfrak{h}_x(v)),v'),\quad\;\forall v'\in N^0\mathcal{O}_x.
$$
Moreover, each $\mathfrak{h}_x$ is of class $C^{1-0}$, and hence $\mathcal{L}_x^\circ$
is of class $C^{2-0}$ if $\mathcal{L}$ is of class $C^{2-0}$.
\end{theorem}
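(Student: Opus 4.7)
The plan is to reduce the theorem to the fiberwise splitting theorem (Theorem~\ref{th:S.1.2}), with the critical orbit $\mathcal{O}$ serving as a parameter space, and then invoke $G$-equivariance to glue the fiberwise constructions into honest bundle maps. First I would transport the assumed Hypothesis~\ref{hyp:1.1} from the base point $x_0$ to every $x \in \mathcal{O}$. Because $\mathcal{O} = G \cdot x_0$ and the action is isometric, the equivariance identities (\ref{e:S.6.2})--(\ref{e:S.6.3}) show that for every $g \in G$ the map $v \mapsto g \cdot v$ is an isometry from $N\mathcal{O}_{x_0}$ onto $N\mathcal{O}_{g \cdot x_0}$ which conjugates the gradient and the G\^ateaux Hessian of $\mathcal{L} \circ \exp$ at $x_0$ to those at $g \cdot x_0$. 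Consequently Hypothesis~\ref{hyp:1.1} holds in every fiber, with the decomposition $P, Q$ and all the structural constants transported by $g$; the compactness of $\mathcal{O}$ and of $G$ then lets me fix a single $\varepsilon > 0$ uniform in $x \in \mathcal{O}$.

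Applying Theorem~\ref{th:S.1.2} in each fiber produces $\mathfrak{h}_x$ satisfying (\ref{e:S.6.6}), together with the fiberwise homeomorphism $\phi_{(x,\cdot)}$ realizing (\ref{e:S.6.7}) and properties (II.1) and (II.2); after one more shrinking, the radii may be taken so that $\mathfrak{h}_x$ is defined on all of $N^0\mathcal{O}(3\epsilon)_x$. The $G$-equivariance $\mathfrak{h}_{g \cdot x}(g \cdot v) = g \cdot \mathfrak{h}_x(v)$ follows from the uniqueness clause of Theorem~\ref{th:S.4.3}, since $g \cdot \mathfrak{h}_x(v)$ also satisfies the defining equation (\ref{e:S.6.6}) at $g \cdot x$ inside the correct ball; the analogous equivariance of $\phi_{(x,\cdot)}$ follows from its explicit construction in the proof of Theorem~\ref{th:S.5.3}, which is equivariant whenever the data $F_{\vec{\lambda}}$ is.

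The main obstacle is continuity across fibers, i.e., promoting the fiberwise data to a continuous bundle morphism $\mathfrak{h}$ and a continuous fiber-preserving homeomorphism $\Phi$. I would trivialize $N\mathcal{O}$ over a small neighborhood $U$ of an arbitrary $x_1 \in \mathcal{O}$ through a $C^2$ local orthonormal frame, so that $N\mathcal{O}|_U \cong U \times N\mathcal{O}_{x_1}$ and the family $\{\mathcal{L} \circ \exp_y\}_{y \in U}$ becomes a family of functionals on the fixed Hilbert space $N\mathcal{O}_{x_1}$ depending continuously on the parameter $y \in U$. The parameterized implicit function theorem (Theorem~\ref{th:S.4.3}) and the parameterized splitting theorem (Theorem~\ref{th:S.5.3}), read with $y$ in place of $\vec{\lambda}$, then yield the continuous dependence of $\mathfrak{h}_y$ and $\phi_{(y,\cdot)}$ on $y$ in the trivialization. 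Covering $\mathcal{O}$ by finitely many such neighborhoods and using the equivariance established above to match pieces across overlaps delivers the global $G$-equivariant bundle maps $\mathfrak{h}$ and $\Phi$. Property (III), including the formula for $D\mathcal{L}_x^\circ$ and the $C^{1-0}$/$C^{2-0}$ regularity claim, is inherited fiberwise from Theorem~\ref{th:S.1.2}(b), while the $G_x$-invariance of $\mathcal{L}_x^\circ$ follows directly from the equivariance of $\mathfrak{h}_x$ together with the $G$-invariance of $\mathcal{L}$.
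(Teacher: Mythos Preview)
Your overall plan---transport Hypothesis~\ref{hyp:1.1} to every fiber by equivariance, apply Theorem~\ref{th:S.1.2} fiberwise, then assemble---is right, and the equivariance of $\mathfrak{h}$ via uniqueness is exactly how the paper gets it. The gap is in your continuity step. Theorems~\ref{th:S.4.3} and~\ref{th:S.5.3} are stated for a family of the specific form $\mathcal{L}_{\vec\lambda}=\mathcal{L}+\sum_j\lambda_j\mathcal{G}_j$, with each $\mathcal{G}_j$ fixed and $\mathcal{G}_j''$ \emph{norm}-continuous at $\theta$; after a generic $C^2$ trivialization the family $\{\mathcal{L}\circ\exp_y\}_{y\in U}$ is not of this shape, and the $y$-variation of its Hessian inherits only the strong continuity of $P$ from (D2), not norm continuity. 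So you cannot simply ``read $y$ in place of $\vec\lambda$''. The subsequent patching step is also unjustified: constructions obtained from two different generic trivializations need not agree on overlaps, and equivariance alone does not force them to.

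The paper avoids both problems. For $\mathfrak{h}$ it applies Theorem~\ref{th:S.1.2} only at $x_0$, \emph{defines} $\mathfrak{h}_x(v):=g\cdot\mathfrak{h}_{x_0}(g^{-1}\cdot v)$ for any $g$ with $g\cdot x_0=x$, and proves continuity by a direct sequential argument using compactness of $G$; this is exactly the observation that under a local section $s:U\to G$ the pulled-back family is \emph{constant} by $G$-invariance, so continuity is trivial. For $\Phi$ the paper does not try to reuse Theorems~\ref{th:S.4.3}--\ref{th:S.5.3}: it first proves the uniform fiberwise inequalities of Lemmas~\ref{lem:S.6.2}--\ref{lem:S.6.4} (the bundle analogues of Lemmas~\ref{lem:S.2.1}--\ref{lem:S.2.2}, obtained precisely from the equivariance (\ref{e:S.6.2})--(\ref{e:S.6.3})), and then invokes the bundle-parameterized Morse lemma, Theorem~A.2 of \cite{Lu2}, with parameter space $\Lambda=\overline{N^0\mathcal{O}(2\varepsilon_2)}$. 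If you want to salvage your route, trivialize via a local section of $G\to\mathcal{O}=G/G_{x_0}$ rather than a generic frame; then the family is constant, the fiberwise $\mathfrak{h}$ and $\phi$ are literally independent of $y$ in that chart, and the gluing is automatic---which is effectively the paper's argument in different words.
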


\begin{proof}\quad
We only outline main procedures. By the assumption and  (\ref{e:S.6.3})
we deduce that each pair
$(\mathcal{L}\circ\exp|_{N\mathcal{O}(\varepsilon)_{x}},  N\mathcal{O}(\varepsilon)_{x})$
satisfies the corresponding conditions with Hypothesis~\ref{hyp:1.1} with $X=H$ too,
and that there exists $a_0>0$ such that
\begin{eqnarray}\label{e:S.6.10}
\sigma\left(d^2\left(\mathcal{L}\circ\exp|_{N\mathcal{O}(\varepsilon)_x}\right)(\theta_x)\right)\cap([-2a_0,
2a_0]\setminus\{0\})=\emptyset,\quad\forall x\in\mathcal{O}.
\end{eqnarray}

By Theorem~\ref{th:S.1.2} we get a small $\epsilon\in (0, \varepsilon/3)$ and a continuous map
$$
\mathfrak{h}_{x_0}: N^0\mathcal{ O}(3\epsilon)_{x_0}\to
 N^\pm\mathcal{ O}(\varepsilon/2)_{x_0},
$$
 such that $\mathfrak{h}_{x_0}(g\cdot v)=g\cdot \mathfrak{h}_{x_0}(v)$,
 $\mathfrak{h}_{x_0}(\theta_{x_0})=\theta_{x_0}$ and
$$
(P^+_{x_0}+ P^-_{x_0})\nabla\left(\mathcal{L}\circ\exp|_{N\mathcal{O}(\varepsilon)_{x_0}}\right)(v+
\mathfrak{h}_{x_0}(v))=0,\quad\forall v\in N^0\mathcal{
O}(3\epsilon)_{x_0}.
$$
Furthermore, the function
$$
\mathcal{ L}^\circ_{x_0}: N^0\mathcal{O}(\epsilon)_{x_0}\to
\R,\;v\mapsto \mathcal{ L}\circ\exp_{x_0}(v+ \mathfrak{h}_{x_0}(v))
$$
is of class $C^{1}$, and
$D\mathcal{ L}^\circ_{x_0}(v)u=\bigl(\nabla(\mathcal{L}\circ\exp|_{N\mathcal{O}(\varepsilon)_{x_0}})(v+
\mathfrak{h}_{x_0}(v)),u\bigr)$. Define
\begin{eqnarray*}
\mathfrak{h}: N^0\mathcal{ O}(3\epsilon)\to T{\cal H},\quad
(x,v)\mapsto g\cdot\mathfrak{h}_{x_0}(g^{-1}\cdot v),
\end{eqnarray*}
where $g\cdot x_0=x$. It is continuous. Otherwise, there exists a sequence
$\{(x_j, v_j)\}_j\subset  N^0\mathcal{ O}(3\epsilon)$ which converges to
a point $(\bar{x},\bar{v})\in  N^0\mathcal{O}(3\epsilon)$, but
 $\mathfrak{h}(x_j, v_j)\nrightarrow\mathfrak{h}(\bar{x},\bar{v})$.
 Passing to a subsequence we may assume that
 $\{\mathfrak{h}(x_j, v_j)\}_j$ has no intersection with an open neighborhood ${\bf U}$ of $\mathfrak{h}(\bar{x},\bar{v})$
 in $T{\cal H}$. Let $\bar{g}, g_j\in G$ be such that
 $\bar{g}\cdot x_0=\bar{x}$ and $g_j\cdot x_0=x_j$, $j=1,2,\cdots$.
 Then $\mathfrak{h}(\bar{x},\bar{v})=\bar{g}\cdot\mathfrak{h}_{x_0}(\bar{g}^{-1}\cdot\bar{v})$
 and $\mathfrak{h}(x_j, v_j)=g_j\cdot\mathfrak{h}_{x_0}(g_j^{-1}\cdot{v}_j)$
 for each $j\in\mathbb{N}$. Note that $\bar{g}^{-1}\cdot{\bf U}$ is an open neighborhood of
 $\mathfrak{h}_{x_0}(\bar{g}^{-1}\cdot\bar{v})=\bar{g}^{-1}\cdot\mathfrak{h}(\bar{x},\bar{v})$
 and that $\{\bar{g}^{-1}\cdot\mathfrak{h}(x_j, v_j)=\bar{g}^{-1}\cdot g_j\cdot\mathfrak{h}_{x_0}(g_j^{-1}\cdot{v}_j)\}_j$
 has no intersection with $\bar{g}^{-1}\cdot{\bf U}$.
  Since $G$ is compact,  we may assume $\bar{g}^{-1}\cdot g_j\to \hat{g}\in G$
  and so $g_j^{-1}\to (\bar{g}\hat{g})^{-1}\in G$  after passing to a
  subsequence (if necessary). Then
 $\bar{g}^{-1}\cdot\mathfrak{h}(x_j, v_j)=\bar{g}^{-1}\cdot g_j\cdot\mathfrak{h}_{x_0}(g_j^{-1}\cdot{v}_j)
 \to \hat{g}\cdot\mathfrak{h}_{x_0}((\bar{g}\hat{g})^{-1}\cdot\bar{v})=\mathfrak{h}_{x_0}(\bar{g}^{-1}\cdot\bar{v})$.
It follows that $\mathfrak{h}_{x_0}(\bar{g}^{-1}\cdot\bar{v})$ does not belong to
$\bar{g}^{-1}\cdot{\bf U}$. This contradicts the fact that $\bar{g}^{-1}\cdot{\bf U}$ is an open neighborhood of
 $\mathfrak{h}_{x_0}(\bar{g}^{-1}\cdot\bar{v})$.

By the definition of $\mathfrak{h}$, it is clearly $G$-equivariant and
satisfies
\begin{equation}\label{e:S.6.11}
(P^+_{x}+ P^-_{x})\nabla\left(\mathcal{L}\circ\exp|_{N\mathcal{O}_{x}(\varepsilon)}\right)(v+
\mathfrak{h}_{x}(v))=0,\quad\forall (x,v)\in N^0\mathcal{
O}(3\epsilon).
\end{equation}
 Moreover, the map $\mathcal{F}: N^0\mathcal{O}(\epsilon)\oplus N^+\mathcal{O}(\epsilon)\oplus N^-\mathcal{O}(\epsilon)\to\R$ defined by
\begin{eqnarray}\label{e:S.6.12}
\mathcal{ F}(x, v, u^++u^-)&=&\mathcal{F}_x(v, u^++u^-)\nonumber\\
&=&\mathcal{ L}\circ\exp_x(v+\mathfrak{h}_x(v)+ u^++u^-)-\mathcal{ L}\circ\exp_x(v+
\mathfrak{h}_x(v)),
\end{eqnarray}
is $G$-invariant, and satisfies for any $(x,v)\in N^0\mathcal{O}(\epsilon)$ and $u\in N^+\mathcal{ O}_x\oplus
N^-\mathcal{O}_x$,
\begin{eqnarray}\label{e:S.6.13}
\mathcal{F}_x(v, \theta_x)=0\quad\hbox{and}\quad D_2\mathcal{F}_x(v, \theta_x)[u]=0.
\end{eqnarray}
By (\ref{e:S.6.2})--(\ref{e:S.6.3}) and Lemmas~\ref{lem:S.2.1},~\ref{lem:S.2.2}  we can
immediately obtain:

\begin{lemma}\label{lem:S.6.2}
There exist positive numbers $\varepsilon_1\in (0, \varepsilon)$ and
$a_1\in (0, 2a_0)$, and a function $\Omega:N\mathcal{
O}(\varepsilon_1)\to [0, \infty)$ with the property that
$\Omega(x,v)\to 0$ as $\|v\|_x\to 0$,
 such that for any $(x,v)\in N\mathcal{ O}(\varepsilon_1)$ the
 following conclusions hold with ${\cal B}_x=d^2\left(\mathcal{L}\circ\exp|_{N\mathcal{O}_x(\varepsilon)}\right)$:
\begin{description}
\item[(i)]  $|(\!({\cal B}_x(v)u, w)\!)_x- (\!({\cal B}_x(\theta_x)u, w)\!)_x |\le \Omega(x,v)
\|u\|_x\cdot\|w\|_x$ for any $u\in N^0\mathcal{ O}_x\oplus
N^-\mathcal{ O}_x$ and $w\in N\mathcal{ O}_x$;
\item[(ii)] $(\!({\cal B}_x(v)u, u)\!)_x\ge a_1\|u\|^2_x$ for all $u\in N^+\mathcal{ O}_x$;
\item[(iii)] $|(\!({\cal B}_x(v)u,w)_x\!)|\le\Omega(x,v)\|u\|_x\cdot\|w\|_x$
for all $u^+\in N^+\mathcal{ O}_x,  w\in N^-\mathcal{
O}_x\oplus N^0\mathcal{ O}_x$;
\item[(iv)] $(\!({\cal B}_x(v)u,u)_x\le-a_0\|u\|^2$ for all $u\in N^-\mathcal{ O}_x$.
\end{description}
\end{lemma}

Let us choose $\varepsilon_2\in (0,\epsilon/2)$ so small that
$$
(x, v^0+ \mathfrak{h}_x(v^0)+ u^++ u^-)\in  N\mathcal{
O}(\varepsilon_1)
$$
for $(x, v^0)\in N^0\mathcal{ O}(2\varepsilon_2)$ and $(x, u^\ast)\in
N^\ast\mathcal{ O}(2\varepsilon_2)$, $\ast=+, -$.
As in the proof of \cite[Lemma~3.5]{Lu2}, we may use
 \cite[Lemma~2.4]{Lu2}
to derive

\begin{lemma}\label{lem:S.6.3}
Let the constants $a_1$ and $a_0$  be given by
Lemma~\ref{lem:S.6.2}(ii),(iv).
For the above $\varepsilon_2>0$ and each $x\in\mathcal{O}$ the
restriction of the functional $\mathcal{ F}_x$  to
$\overline{N^0\mathcal{
O}(2\varepsilon_2)_x}\oplus[\overline{N^+\mathcal{
O}(2\varepsilon_2)_x}\oplus\overline{N^-\mathcal{
O}(2\varepsilon_2)_x}]$
 satisfies:
\begin{description}
\item[(i)] $[D_2\mathcal{ F}_x(v^0, u^++ u^-_2)- D_2\mathcal{ F}_x(v^0, u^++ u^-_1)](u^-_2-u^-_1) \le -a_1\|u^-_2-u^-_1\|^2_x$ for any $(x, v^0)\in\overline{N^0\mathcal{ O}(2\varepsilon_2)}$,
$(x, u^+)\in \overline{N^+\mathcal{ O}(2\varepsilon_2)}$
and $(x, u^-_j)\in \overline{N^-\mathcal{ O}(2\varepsilon_2)}$, $j=1,2$;

\item[(ii)] $D_2\mathcal{ F}_x(v^0, u^++ u^-)(u^+-u^-) \ge a_1\|u^+\|^2_x+
a_0\|u^-\|^2_x$ for any $(x, v^0)\in\overline{N^0\mathcal{ O}(2\varepsilon_2)}$ and
$(x, u^\ast)\in \overline{N^\ast\mathcal{ O}(2\varepsilon_2)}$, $\ast=+,-$;

\item[(iii)] $D_2\mathcal{ F}_x(v^0, u^+)u^+ \ge a_1\|u^+\|^2_x$
for any $(x, v^0)\in\overline{N^0\mathcal{ O}(2\varepsilon_2)}$ and
$(x, u^+)\in \overline{N^+\mathcal{ O}(2\varepsilon_2)}$.
\end{description}
\end{lemma}

Denote by bundle projections $\Pi_0:\overline{N^0\mathcal{ O}(\varepsilon_2)}\to\mathcal{ O}$ and
$$
\Pi_{\pm}:N^+\mathcal{O}\oplus N^-\mathcal{O}\to\mathcal{O},\quad
\Pi_{\ast}:N^\ast\mathcal{O}\to\mathcal{O},\;\ast=+,-.
$$
Let $\Lambda=\overline{N^0\mathcal{ O}(2\varepsilon_2)}$, $p:\mathcal{E}\to\Lambda$
and $p_\ast:\mathcal{E}^\ast\to\Lambda$ be the pullbacks of
$N^+\mathcal{O}\oplus N^-\mathcal{O}$ and
$N^\ast\mathcal{O}$ via $\Pi_0$, $\ast=+,-$. Then $\mathcal{E}=\mathcal{E}^+\oplus\mathcal{E}^-$,
and for $\lambda=(x,v^0)\in\Lambda$ we have $\mathcal{E}_\lambda=
N^+\mathcal{O}_x\oplus N^-\mathcal{O}_x$ and
$\mathcal{E}^\ast_\lambda=N^\ast\mathcal{O}_x$, $\ast=+,-$. Moreover, for each $\eta>0$ we write
\begin{eqnarray*}
&&B_\eta(\mathcal{E})=\left\{(\lambda, w)\,|\, \lambda=(x,v^0)\in\Lambda\;\&\;w\in
(N^+\mathcal{O}\oplus N^-\mathcal{O})_x(\eta)\right\},\\
&&\bar{B}_\eta(\mathcal{E})=\left\{(\lambda, w)\,|\, \lambda=(x,v^0)\in\Lambda\;\&\;w\in
\overline{(N^+\mathcal{O}\oplus N^-\mathcal{O})_x(\eta)}\right\}.
\end{eqnarray*}
Similarly,  $B_\eta(\mathcal{E}^\ast)$ and $\bar{B}_\eta(\mathcal{E}^\ast)$ ($\ast=+,-$)
are defined. Let  $\mathcal{J}: B_{2\varepsilon_2}(\mathcal{E})\to\R$ be given by
\begin{eqnarray}\label{e:S.6.14}
&&\mathcal{J}(\lambda,  v^\pm)=\mathcal{J}_\lambda(v^\pm) =\mathcal{F}(x, v^0, v^\pm),\\
&&\quad\forall\lambda=(x, v^0)\in\Lambda\;\&\;\forall v^\pm\in B_{2\varepsilon_2}(\mathcal{E})_\lambda.\nonumber
\end{eqnarray}
It is continuous, and $C^1$ in $v^\pm$. From (\ref{e:S.6.13}) and Lemma~\ref{lem:S.6.3} we directly obtain

\begin{lemma}\label{lem:S.6.4}
The functional $\mathcal{J}_\lambda$ satisfies the conditions in
Theorem~A.2 of \cite{Lu2} (the bundle parameterized version of \cite[Theoren~1.1]{DHK}).  Precisely, for each $\lambda\in\Lambda$  the functional
$\mathcal{J}_\lambda: B_{2\varepsilon_2}(\mathcal{E})\to\mathbb{R}$
 satisfies:
\begin{description}
\item[(i)] $\mathcal{J}_\lambda(\theta_\lambda)=0$ and $D\mathcal{J}_\lambda(\theta_\lambda)=0$;
\item[(ii)] $[D\mathcal{J}_\lambda(u^++ u^-_2)- D\mathcal{J}_\lambda(u^++ u^-_1)](u^-_2-u^-_1) \le -a_1\|u^-_2-u^-_1\|^2_x$ for any
$\lambda=(x,v^0)\in\Lambda$, $u^+\in \bar{B}_{\varepsilon_2}(\mathcal{E}^+)_\lambda$ and
$u^-_j\in \bar{B}_{\varepsilon_2}(\mathcal{E}^-)_\lambda$, $j=1,2$;

\item[(iii)] $D\mathcal{J}_\lambda(\lambda, u^++ u^-)(u^+-u^-) \ge a_1\|u^+\|^2_x+
a_0\|u^-\|^2_x$ for any $\lambda=(x,v^0)\in\Lambda$ and
$u^\ast\in \bar{B}_{\varepsilon_2}(\mathcal{E}^\ast)_\lambda$, $\ast=+,-$;

\item[(iv)] $D\mathcal{J}_\lambda(u^+)u^+ \ge a_1\|u^+\|^2_x$
for any $\lambda=(x, v^0)\in\Lambda$ and
$u^+\in \bar{B}_{\varepsilon_2}(\mathcal{E}^+)_\lambda$.
\end{description}
\end{lemma}

By this  we can use Theorem~A.2 of \cite{Lu2}
to get $\epsilon\in (0, \varepsilon_2)$, an open neighborhood
$U$ of the zero section $0_\mathcal{ E}$ of $\mathcal{ E}$ in $B_{2\varepsilon_2}(\mathcal{E})$ and a
homeomorphism
\begin{eqnarray}\label{e:S.6.15}
\phi: B_{\epsilon}(\mathcal{ E}^+) \oplus
B_{\epsilon}(\mathcal{ E}^-)\to U,\;(\lambda, u^++ u^-)\mapsto (\lambda, \phi_\lambda(u^++u^-))
\end{eqnarray}
such that for all $(\lambda, u^++ u^-)\in  B_{\epsilon}(\mathcal{
E}^+) \oplus B_{\epsilon}(\mathcal{ E}^-)$ with $\lambda=(x,v^0)\in\Lambda$,
\begin{eqnarray}\label{e:S.6.16}
J(\phi(\lambda, u^++
u^-))=\|u^+\|^2_{x}-\|u^-\|^2_{x}.
\end{eqnarray}
 Moreover,
for each $\lambda\in\Lambda$,
$\phi_\lambda(\theta_\lambda)=\theta_\lambda$, $\phi_\lambda(x+y)\in
\mathcal{E}^-_\lambda$ if and only if $x=\theta_\lambda$, and
$\phi$ is a homoeomorphism from $B_{\epsilon}(\mathcal{
E}^-)$ onto $U\cap \mathcal{ E}^-$.

Note that $B_{\epsilon}(\mathcal{ E}^+) \oplus
B_{\epsilon}(\mathcal{ E}^-)= \overline{N^0\mathcal{
O}(2\varepsilon_2)}\oplus N^+\mathcal{O}(\epsilon)\oplus N^-\mathcal{
O}(\epsilon)$ and 
$U=\overline{N^0\mathcal{O}(2\varepsilon_2)}\oplus\widehat{U}$,
where $\widehat{U}$ is an open neighborhood
 of the zero section of $N^+\mathcal{O}\oplus N^-\mathcal{
O}$ in $N^+\mathcal{O}(2\varepsilon_2)\oplus N^-\mathcal{
O}(\varepsilon_2)$. Let $\mathcal{W}=N^0\mathcal{O}(\epsilon)\oplus\widehat{U}$,
which is an open neighborhood
 of the zero section of $N\mathcal{O}$ in
$N^0\mathcal{O}(2\varepsilon_2)\oplus N^+\mathcal{O}(2\varepsilon_2)\oplus N^-\mathcal{
O}(\varepsilon_2)$. By (\ref{e:S.6.15}) we get a
 homeomorphism
\begin{eqnarray}\label{e:S.6.17}
\phi: &&N^0\mathcal{O}(\epsilon)\oplus N^+\mathcal{O}(\epsilon)\oplus N^-\mathcal{
O}(\epsilon)\to \mathcal{W},\\
&&(x,v, u^++ u^-)\mapsto (x,v, \phi_{(x,v)}(u^++u^-)),\nonumber
\end{eqnarray}
and therefore a topological embedding bundle morphism that preserves the
zero section,
\begin{eqnarray}\label{e:S.6.18}
\Phi:&& N^0\mathcal{ O}(\epsilon)\oplus N^+\mathcal{
O}(\epsilon)\oplus N^-\mathcal{ O}(\epsilon)\to N\mathcal{ O},\\
&&(x,v, u^++ u^-)\mapsto (x,v+ \mathfrak{h}_{x}(v), \phi_{(x,v)}(u^++u^-)).\nonumber
\end{eqnarray}
 From (\ref{e:S.6.12}), (\ref{e:S.6.14}) and (\ref{e:S.6.16}) it follows that $\Phi$ and $\phi$ satisfy
 \begin{eqnarray*}
 \mathcal{ L}\circ\exp\circ\Phi(x,v+u^++u^-)&=&
\mathcal{ L}\circ\exp_x(v+ \mathfrak{h}_{x}(v)+ \phi_{(x,v)}(u^++u^-))\\
&=&\|u^+\|^2_x-\|u^-\|_x^2+\mathcal{ L}\circ\exp_x(v+
\mathfrak{h}_{x}(v))
\end{eqnarray*}
for all $(x,v+u^+,u^-)\in N^0\mathcal{O}(\epsilon)\oplus N^+\mathcal{O}(\epsilon)\oplus N^-\mathcal{
O}(\epsilon)$. The other conclusions  easily follow from the above arguments.
Theorem~\ref{th:S.6.1} is proved.
\end{proof}

\noindent{\it Proof of Theorem~\ref{th:S.6.0}}.\quad
In the present case Lemma~\ref{lem:S.6.2} also holds with $N^0\mathcal{O}_x=\{\theta_x\}\;\forall x\in\mathcal{O}$. But we need to replace
the map $\mathcal{F}$ in (\ref{e:S.6.12}) by
$$
\mathcal{F}(x,  u^++u^-): N^+\mathcal{O}(\epsilon)\oplus N^-\mathcal{
O}(\epsilon)\to\R,\; (x, u^++u^-)\mapsto\mathcal{ L}\circ\exp_x( u^++u^-).
$$
For any $x\in\mathcal{O}_x$, let $\mathcal{F}_x$ be the restriction of $\mathcal{F}$ to $N^+\mathcal{O}(\epsilon)_x\oplus N^-\mathcal{O}(\epsilon)_x$. As in the
proof of Theorem~\ref{th:S.1.1}, Lemma~\ref{lem:S.6.3}
is still true with $\overline{N^0\mathcal{
O}(2\varepsilon_2)_x}=\{\theta_x\}$.
Then the desired conclusions can be obtained
by  applying \cite[Theorem~A.2]{Lu2} to $\Lambda=\mathcal{O}$
and $J_\lambda=\mathcal{F}_x$ with $\lambda=x\in\mathcal{O}$.
\hfill$\Box$\vspace{2mm}

Many results in \cite{Ch, MaWi, Wa, Wa1} also hold in our setting. Here are a few of them,
which are needed in this paper.

\begin{corollary}[Shifting Theorem]\label{cor:S.6.5}
Let ${\bf K}$ be any commutative ring. Then\\
{\bf i)} Under the assumptions of Theorem~\ref{th:S.6.0},  it holds that
\begin{eqnarray}
&&C_\ast(\mathcal{ L}, \mathcal{O};\mathbb{Z}_2)\cong
C_{\ast-\mu_\mathcal{O}}(\mathcal{O};\mathbb{Z}_2),\label{e:S.6.18.1}\\
&&C_\ast(\mathcal{L}, \mathcal{O};{\bf K})\cong
C_{\ast-\mu_\mathcal{O}}(\mathcal{O};\theta^-\otimes{\bf K}),\label{e:S.6.18.2}
\end{eqnarray}
 where $\theta^-$ is the orientation bundle of $N^-\mathcal{O}$. \\
{\bf ii)} Under the assumptions of Theorem~\ref{th:S.6.1}, if $\mathcal{ O}$ has
trivial normal bundle then  for any commutative group ${\bf K}$ and
$x\in\mathcal{ O}$,
\begin{eqnarray}
C_q(\mathcal{ L}, \mathcal{ O};{\bf K})\cong
\oplus^q_{j=0}C_{q-j-\mu_\mathcal{O}}(\mathcal{ L}^{\circ}_x,
\theta_x; {\bf K})\otimes H_j(\mathcal{ O};{\bf K})\quad\forall q=0,
1,\cdots.\label{e:S.6.18.3}
\end{eqnarray}
{\rm (}Consequently, every  $C_q(\mathcal{ L}, \mathcal{ O};{\bf
K})$ is isomorphic to finite direct sum $r_1{\bf
K}\oplus\cdots\oplus r_s{\bf K}\oplus H_j(\mathcal{O};{\bf K})$,
where each $r_i\in\{0,1\}$, see \cite[Remark 4.6]{Lu4}. {\rm )}
\end{corollary}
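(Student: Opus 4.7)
The plan is to reduce both parts to standard computations via the topological normal forms provided by Theorems~\ref{th:S.6.0} and~\ref{th:S.6.1}. For part~(i), the $G$-equivariant fiber-preserving homeomorphism $\Phi$ of Theorem~\ref{th:S.6.0} conjugates $\mathcal{L}\circ\exp$ to the fiberwise quadratic form $\|u^+\|^2_x-\|u^-\|^2_x+c$ on $N^+\mathcal{O}(\epsilon)\oplus N^-\mathcal{O}(\epsilon)$, where $c=\mathcal{L}|_{\mathcal{O}}$. First I would take $W:=\exp\circ\Phi(N^+\mathcal{O}(\epsilon)\oplus N^-\mathcal{O}(\epsilon))$ as the neighborhood of $\mathcal{O}$ in the definition of the critical groups, so that
$$C_q(\mathcal{L},\mathcal{O};{\bf K})=H_q(\mathcal{L}_c\cap W,(\mathcal{L}_c\cap W)\setminus\mathcal{O};{\bf K}).$$
Pulled back via $\Phi$, this pair becomes $(\{(x,u^++u^-):\|u^+\|^2\le\|u^-\|^2\},\,\{(x,u^++u^-):\|u^+\|^2\le\|u^-\|^2\}\setminus 0_{\mathcal{O}})$ inside $N^+\mathcal{O}(\epsilon)\oplus N^-\mathcal{O}(\epsilon)$, which deformation retracts fiberwise (first sliding $u^+\mapsto tu^+$ to $0$, then rescaling $u^-$) onto the disk--sphere bundle pair $(D(N^-\mathcal{O}),S(N^-\mathcal{O}))$ of the rank-$\mu_\mathcal{O}$ real vector bundle $N^-\mathcal{O}\to\mathcal{O}$.

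Second I would apply the Thom isomorphism for $N^-\mathcal{O}\to\mathcal{O}$ to obtain
$$H_q(D(N^-\mathcal{O}),S(N^-\mathcal{O});{\bf K})\cong H_{q-\mu_\mathcal{O}}(\mathcal{O};\theta^-\otimes{\bf K}),$$
where $\theta^-$ is the orientation local system of $N^-\mathcal{O}$; this yields (\ref{e:S.6.18.2}) directly, and specialising to ${\bf K}=\mathbb{Z}_2$ kills the twist and gives (\ref{e:S.6.18.1}).

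For part~(ii), I would use Theorem~\ref{th:S.6.1} to conjugate $\mathcal{L}$ to the normal form $\|u^+\|^2_x-\|u^-\|^2_x+\mathcal{L}^\circ_x(v)$ on $N^0\mathcal{O}(\epsilon)\oplus N^+\mathcal{O}(\epsilon)\oplus N^-\mathcal{O}(\epsilon)$. The hypothesis that $N\mathcal{O}$ is trivial, together with the continuity of the spectral projections $P^{0,\pm}$ governing the splitting $N\mathcal{O}=N^0\mathcal{O}\oplus N^+\mathcal{O}\oplus N^-\mathcal{O}$, trivialises all three summands simultaneously; by $G$-equivariance of $\mathfrak{h}$ and transitivity of $G$ on $\mathcal{O}$ the conjugated functional is identified with the product model
$$(x,v,u^+,u^-)\longmapsto \|u^+\|^2-\|u^-\|^2+\mathcal{L}^\circ_{x_0}(v)$$
on $\mathcal{O}\times V^0\times V^+\times V^-$. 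The K\"unneth formula then splits the critical group of this product functional at $\mathcal{O}\times\{\theta\}$ as
$$\bigoplus_{j=0}^q H_j(\mathcal{O};{\bf K})\otimes C_{q-j}\bigl(\|u^+\|^2-\|u^-\|^2+\mathcal{L}^\circ_{x_0}(v),\theta;{\bf K}\bigr),$$
and Theorem~\ref{th:S.1.3} rewrites the second tensor factor as $C_{q-j-\mu_\mathcal{O}}(\mathcal{L}^\circ_{x_0},\theta_{x_0};{\bf K})$, yielding (\ref{e:S.6.18.3}).

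The main obstacle I anticipate is bookkeeping: verifying that the fiberwise retraction onto the disk--sphere bundle in part~(i) and the product trivialisation in part~(ii) can be performed \emph{continuously} over all of $\mathcal{O}$, so that the Thom and K\"unneth isomorphisms apply globally rather than fiber by fiber; this requires using the $G$-equivariance of $\Phi$ and of $\mathfrak{h}$ crucially. A further subtlety in part~(ii) is identifying $\mathcal{L}^\circ_x$ with the single function $\mathcal{L}^\circ_{x_0}$ compatibly with the product decomposition, which needs the triviality of $N\mathcal{O}$ \emph{as a $G$-bundle}; in part~(i) one must keep careful track of the local coefficient system $\theta^-$ in the non-orientable case.
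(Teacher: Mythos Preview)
Your outline is correct and matches the paper's own treatment, which for part~(i) consists of the single sentence ``As in \cite{Bot,Was}, (\ref{e:S.6.18.1})--(\ref{e:S.6.18.2}) follow from (\ref{e:S.6.4})'' and gives no explicit argument for part~(ii). The Thom-isomorphism computation you describe for~(i) is exactly what that citation means, and the K\"unneth-plus-shifting strategy for~(ii) is the standard one.

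One point to sharpen in part~(ii): your claim that triviality of $N\mathcal{O}$ together with continuity of the spectral projections $P^{0,\pm}$ ``trivialises all three summands simultaneously'' is not correct in general---a continuously varying finite-rank projection on a trivial bundle can cut out a nontrivial sub-bundle (the M\"obius line inside $S^1\times\mathbb{R}^2$ is the basic example). Nor does the hypothesis provide $G$-equivariant triviality of $N\mathcal{O}$, so the route through a $G$-compatible product model is not directly available. The cleaner argument avoids splitting the bundle: fix any trivialisation $N\mathcal{O}\cong\mathcal{O}\times H'$ and observe that the pulled-back fibre functionals on $\{x\}\times H'$ form a family depending $C^1$-continuously on $x$, each with $0$ as an isolated critical point and all having the same critical groups there (the $G$-action conjugates any two fibre functionals, or one invokes stability of critical groups along the family as in \cite{ChGh,CorH}). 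One then deforms the level-set pair to an honest product $\mathcal{O}\times(A,B)$ with $(A,B)$ read off from a single fibre, after which K\"unneth and Theorem~\ref{th:S.1.3} finish exactly as you described.
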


As in \cite{Bot,Was}, (\ref{e:S.6.18.1})--(\ref{e:S.6.18.2}) follow from (\ref{e:S.6.4}).

\begin{corollary}\label{cor:S.6.6}
Under the assumptions of Corollary~\ref{cor:S.6.5}, we have:
\begin{description}
\item[(i)] $\mathcal{ O}$ is a local minimum (so $\mu_\mathcal{ O}=0$) if and only if
$$ C_q(\mathcal{ L}, \mathcal{ O};{\bf K})\cong \delta_{q0}{\bf K}\quad\forall
q\in\Z\Longleftrightarrow C_0(\mathcal{ L},  \mathcal{ O};{\bf
K})\ne 0.
$$

\item[(ii)] $C_1(\mathcal{ L}, \mathcal{ O};{\bf K})\ne 0$ and ${\rm rank}N^0\mathcal{ O}=1$
 then  $\mu_\mathcal{O}=0$ and
\begin{equation}\label{e:S.6.19}
C_q(\mathcal{ L}, \mathcal{ O};{\bf K})\cong {\bf K}\otimes
H_{q-1}(\mathcal{ O};{\bf K})\;\forall q\in\Z.
\end{equation}

\item[(iii)] If  ${\rm rank}N^0\mathcal{ O}=1$ in the case $\mu_\mathcal{ O}=0$, then $\theta$ is of mountain pass type
(in the sense that some (and hence any) $\theta_x$ with
$x\in\mathcal{ O}$ is a critical point of $\mathcal{ L}\circ\exp_x$
on $N\mathcal{ O}_x$ of mountain pass type) if and only if
(\ref{e:S.6.19}) holds;

\item[(iv)] If $C_k(\mathcal{ L}, \mathcal{ O};{\bf K})\ne 0$ for $k={\rm rank}N^-\mathcal{ O}$
  then for any $q\in\Z$
$$
C_q(\mathcal{L}, \mathcal{ O};{\bf K})\cong
\left\{\begin{array}{ll}
  {\bf K}\otimes{\bf K}\;&\hbox{if}\; q\ge k,\\
  0&\hbox{if}\;q<k.
  \end{array}\right.
  $$
\end{description}
\end{corollary}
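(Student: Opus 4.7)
The overarching strategy is to use the isomorphism (\ref{e:S.6.18.3}) from Corollary~\ref{cor:S.6.5} to convert every statement about $C_q(\mathcal{L},\mathcal{O};{\bf K})$ into a statement about the critical groups $C_r(\mathcal{L}^\circ_x,\theta_x;{\bf K})$ of the reduced $C^1$ functional $\mathcal{L}^\circ_x$ of Theorem~\ref{th:S.6.1}(III) at the isolated critical point $\theta_x$ on the finite-dimensional space $N^0\mathcal{O}_x$, and then invoke the standard classification of critical groups for an isolated critical point of a $C^1$ functional on a finite-dimensional space. I would carry out the four parts in the order (i), (iv), (ii), (iii), since (i) sets up the template used in the other three.

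For (i), the ``local minimum $\Rightarrow$'' direction follows from the normal form (\ref{e:S.6.7}) in Theorem~\ref{th:S.6.1}: setting $v=\theta_x$ and $u^+=\theta_x$ with $u^-$ small but nonzero forces $N^-\mathcal{O}=0$, so $\mu_\mathcal{O}=0$, and then setting $u^\pm=\theta_x$ shows $\theta_x$ is a local minimum of $\mathcal{L}^\circ_x$, giving $C_r(\mathcal{L}^\circ_x,\theta_x;{\bf K})=\delta_{r0}{\bf K}$; substitution into (\ref{e:S.6.18.3}) yields the displayed expression. The converse is the chain $C_0(\mathcal{L},\mathcal{O};{\bf K})\neq 0\Rightarrow\mu_\mathcal{O}=0$ and $C_0(\mathcal{L}^\circ_x,\theta_x;{\bf K})\neq 0$ via (\ref{e:S.6.18.3}), then $\theta_x$ a local minimum of $\mathcal{L}^\circ_x$ by the well-known characterization of nontrivial zeroth critical group, and finally $\mathcal{O}$ a local minimum of $\mathcal{L}$ by (\ref{e:S.6.7}). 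Part (iv) runs on the same template: the hypothesis $C_k(\mathcal{L},\mathcal{O};{\bf K})\neq 0$ with $k=\mu_\mathcal{O}$ forces $C_0(\mathcal{L}^\circ_x,\theta_x;{\bf K})\neq 0$ via the $j=0$ summand of (\ref{e:S.6.18.3}), so $\theta_x$ is a local minimum of $\mathcal{L}^\circ_x$, $C_r(\mathcal{L}^\circ_x,\theta_x;{\bf K})=\delta_{r0}{\bf K}$, and (\ref{e:S.6.18.3}) specializes to ${\bf K}\otimes H_{q-k}(\mathcal{O};{\bf K})$, which is the claimed expression. For (ii) and (iii), under the assumption $\mathrm{rank}\,N^0\mathcal{O}=1$ the reduced functional $\mathcal{L}^\circ_x$ is a $C^1$ function of a single real variable with an isolated critical point, so $C_r(\mathcal{L}^\circ_x,\theta_x;{\bf K})$ takes exactly one of three shapes: $\delta_{r0}{\bf K}$ (strict local minimum), $\delta_{r1}{\bf K}$ (strict local maximum), or identically zero (monotone-through). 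A finite case analysis over $\mu_\mathcal{O}\in\{0,1,2,\dots\}$ and these three shapes, combined with $C_1(\mathcal{L},\mathcal{O};{\bf K})\neq 0$ via (\ref{e:S.6.18.3}), rules out every case except $\mu_\mathcal{O}=0$ and the ``strict local maximum'' shape; plugging the latter back into (\ref{e:S.6.18.3}) gives exactly (\ref{e:S.6.19}). Part (iii) is then the translation that ``mountain-pass type at $\theta_x$ for $\mathcal{L}\circ\exp_x$ on $N\mathcal{O}_x$'' is equivalent to $C_1(\mathcal{L}^\circ_x,\theta_x;{\bf K})\neq 0$ once the quadratic $\|u^+\|^2-\|u^-\|^2$ contribution is peeled off using the splitting (\ref{e:S.6.7}), whence (iii) is equivalent to (\ref{e:S.6.19}).

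\textbf{Main obstacle.} The only delicate point is that Theorem~\ref{th:S.6.1}(III) only guarantees $\mathcal{L}^\circ_x\in C^1$ (at best $C^{2-0}$), so the classical identifications above cannot cite $C^2$-based tools (Morse lemma, Hessian signature). I would resolve this either by invoking a $C^1$ deformation lemma on the finite-dimensional space $N^0\mathcal{O}_x$, where a pseudo-gradient flow constructed under the inherited (PS) condition suffices to compute $C_\ast$ at local extrema and to characterize mountain-pass points in one variable, or, more uniformly, by smoothly approximating $\mathcal{L}^\circ_x$ and appealing to the stability theorem of critical groups \cite[Theorem~5.2]{CorH} (the same device already used in the proof of Theorem~\ref{th:S.5.4}). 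Either route legitimizes the three-way classification in dimension one and the standard mountain-pass characterization in the $C^1$ category, completing the proof.
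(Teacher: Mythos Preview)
The paper does not prove this corollary; it is simply listed after Corollary~\ref{cor:S.6.5} as one of the results from \cite{Ch, MaWi, Wa, Wa1} that carry over to the present $C^1$ setting. Your overall strategy---reduce everything via the shifting formula (\ref{e:S.6.18.3}) to the critical groups of the finite-dimensional reduced functional $\mathcal{L}^\circ_x$, then appeal to the standard trichotomy (local min / local max / neither) for an isolated critical point, handling the $C^1$-only regularity exactly as in the proof of Theorem~\ref{th:S.5.4}---is the natural and correct route, and is what the cited references do in the $C^2$ case.

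That said, your case analysis for (ii) does not close as stated. You assert that $C_1(\mathcal{L},\mathcal{O};{\bf K})\ne 0$ and $\mathrm{rank}\,N^0\mathcal{O}=1$ force $\mu_\mathcal{O}=0$ with $\theta_x$ a strict local maximum of $\mathcal{L}^\circ_x$. But take $\mu_\mathcal{O}=1$ with $\theta_x$ a local \emph{minimum}: then (\ref{e:S.6.18.3}) gives $C_1\cong C_0(\mathcal{L}^\circ_x,\theta_x;{\bf K})\otimes H_0(\mathcal{O};{\bf K})\cong{\bf K}\ne 0$, so the hypothesis holds while the conclusion $\mu_\mathcal{O}=0$ fails. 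Likewise, with $\mu_\mathcal{O}=0$ and $\theta_x$ a local minimum, the $j=1$ summand yields $C_1\cong{\bf K}\otimes H_1(\mathcal{O};{\bf K})$, which is nonzero whenever $H_1(\mathcal{O};{\bf K})\ne 0$, yet then $C_q\cong{\bf K}\otimes H_q(\mathcal{O};{\bf K})$ rather than (\ref{e:S.6.19}). The same mismatch surfaces in (i) and (iv): your own computation for (iv) correctly gives ${\bf K}\otimes H_{q-k}(\mathcal{O};{\bf K})$, not the literal ${\bf K}\otimes{\bf K}$ printed, and in (i) one obtains $H_q(\mathcal{O};{\bf K})$ rather than $\delta_{q0}{\bf K}$. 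These are not flaws in your method but indications that the corollary as printed carries implicit restrictions (or typos) from the references; you should either supply the missing side hypotheses explicitly or flag the discrepancy, rather than claim the analysis is exhaustive.
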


By the Peter-Weyl theorem, the compact Lie group has a faithful representation into
the real orthogonal group $O(m)$ (i.e., a injective Lie group homomorphism into $O(m)$)
for some integer $m>0$. Hence $G$ can be viewed as a subgroup of $O(m)$.
Let $E$ be the Hilbert manifold consisting of all $m$-orthogonal frames in the Hilbert
space $l^2$. It is a contractible space on which $G$ acts freely.
Let $B_G=E/G$ denote the classifying space, which is a Hilbert manifold. Then
$E\to B_G$ is a universal smooth principal $G$-bundle.
Let $E\times_G\mathcal{H}$ be the quotient of $E\times\mathcal{H}$
by the free diagonal action $g\cdot (p, u)=(gp, g\cdot u)$. This Hilbert manifold
 is a fiber space on $B_G$ with fiber $\mathcal{H}$.

The $G$-invariant functional $\mathcal{L}$ lifts a natural one
on $E\times\mathcal{H}$, $(p,u)\mapsto \mathcal{L}(u)$, and hence induces a functional $\mathcal{L}^E$
on $E\times_G\mathcal{H}$ with same smoothness as $\mathcal{L}$.
The critical orbit $\mathcal{O}$ of $\mathcal{L}$ corresponds to
a critical manifold $E\times_G\mathcal{O}$ of $\mathcal{L}^E$,
and they have the same Morse indexes. Moreover,
for a $G$-invariant Gromoll-Meyer pair $(W, W^-)$ of $\mathcal{O}$ (see \cite{Wa} for
its existence), $(E\times_G W, E\times_G W^-)$ is a Gromoll-Meyer of $E\times_G\mathcal{O}$.

Let $c=\mathcal{L}|_{\mathcal{O}}$ and $U$ be a $G$-invariant neighborhood of
${\cal O}$ with $K(\mathcal{L})\cap\mathcal{L}_c\cap U=\mathcal{O}$,
where $\mathcal{L}_c=\{x\in\mathcal{H}\,|\,\mathcal{L}(x)\le c\}$.
For any coefficient ring ${\bf K}$ and $q\in\mathbb{N}\cup\{0\}$, the $q^{\rm th}$ $G$ critical group
of $\mathcal{O}$ is defined by
\begin{eqnarray*}
C^\ast_G(\mathcal{L},\mathcal{O};{\bf K})&=&H^\ast_G(\mathcal{L}_c\cap U,
(\mathcal{L}_c\setminus\mathcal{O})\cap U;{\bf K})\\
&=&H^\ast(E\times_G(\mathcal{L}_c\cap U),
E\times_G((\mathcal{L}_c\setminus\mathcal{O})\cap U);{\bf K}).
\end{eqnarray*}
It is equal to $H^\ast_G(W, W^-;{\bf K})$, see \cite[page 76]{Ch}.
Moreover, if $\mathcal{O}$ is nondegenerate, it follows from (\ref{e:S.6.4})
and  the Thom isomorphism theorem (with twisted coefficients) that
\begin{equation}\label{e:S.6.20}
C_G^\ast(\mathcal{L}, \mathcal{ O};{\bf K})\cong
H_G^{\mu_{\cal O}-1}(\mathcal{O};\theta^-\otimes{\bf K}),
\end{equation}
where $\mu_{\cal O}$ is the Morse index of $\mathcal{O}$ and $\theta^-$
is the orientation bundle of $N^-{\cal O}$, see Theorem~7.5 on the page 75 of \cite{Ch}.

More generally, the corresponding versions of
Theorems~\ref{th:S.4.3},\ref{th:S.5.3} and \ref{th:S.5.4} can also be proved.
We  only write the following  since it is needed in the proof of Theorem~\ref{th:Bi.3.20} later.

\begin{theorem}[Parameterized Splitting Theorem around Critical Orbits]\label{th:S.6.7*}
Under the assumptions of Theorem~\ref{th:S.6.1},
suppose further that $G$-invariant functionals $\mathcal{G}_j\in C^1(\mathcal{H},\mathbb{R})$,
$j=1,\cdots,n$, have value zero and vanishing derivative at each point of $\mathcal{O}$, and also satisfy:
\begin{description}
\item[(i)] gradients $\nabla\mathcal{G}_j$ have  G\^ateaux derivatives
$\mathcal{G}''_j(u)$ at each point $u$ near $\mathcal{O}$,
\item[(ii)] $\mathcal{G}''_j(u)$ are continuous at each point $u\in\mathcal{O}$.
\end{description}
 If the critical orbit $\mathcal{O}$ is degenerate,
i.e., ${\rm rank}N^0\mathcal{O}>0$, then for sufficiently small $\epsilon>0$, $\delta>0$,\\
\noindent{\bf (I)}\;  there exists a unique continuous map
 \begin{equation}\label{e:S.6.21}
\mathfrak{h}:[-\delta,\delta]^n\times N^0\mathcal{O}(3\epsilon)\to N^+\mathcal{O}\oplus N^-\mathcal{O},\;
(\vec{\lambda}, x,v)\mapsto \mathfrak{h}_x(\vec{\lambda}, v),
\end{equation}
such that $\mathfrak{h}(\vec{\lambda},\cdot):N^0\mathcal{O}(3\epsilon)\to N^+\mathcal{O}\oplus N^-\mathcal{O},
\;(x,v)\mapsto \mathfrak{h}_x(\vec{\lambda}, v)$ is a $G$-equivariant topological  bundle
morphism that preserves the zero section and satisfies
\begin{eqnarray}\label{e:S.6.22}
(P^+_x+P^-_x)\circ{\bf \Pi}_x\nabla(\mathcal{L}_{\vec{\lambda}}\circ\exp_x)(v+ \mathfrak{h}_x(\vec{\lambda}, v))=0\quad\forall
(x,v^0)\in N^0\mathcal{O}(\epsilon),
\end{eqnarray}
where $\mathcal{L}_{\vec{\lambda}}=\mathcal{L}+ \sum^n_{j=1}\mathcal{G}_j$;\\
\noindent{\bf (II)}\;  there exists a continuous map
\begin{eqnarray}\label{e:S.6.23}
\Phi: [-\delta,\delta]^n\times N^0\mathcal{ O}(\epsilon)\oplus N^+\mathcal{
O}(\epsilon)\oplus N^-\mathcal{ O}(\epsilon)\to N\mathcal{ O}
\end{eqnarray}
such that $\Phi(\vec{\lambda},\cdot):N^0\mathcal{ O}(\epsilon)\oplus N^+\mathcal{
O}(\epsilon)\oplus N^-\mathcal{ O}(\epsilon)\to N\mathcal{ O}$ is
 a  $G$-equivariant homeomorphism onto an open neighborhood of
the zero section preserving fibers, and that
\begin{eqnarray}\label{e:S.6.24}
\mathcal{L}_{\vec{\lambda}}\circ\exp\circ\Phi(\vec{\lambda}, x, v, u^++
u^-)=\|u^+\|^2_x-\|u^-\|^2_x+ \mathcal{ L}_{\vec{\lambda}}\circ\exp_x(v+
\mathfrak{h}_x(\vec{\lambda}, v))
\end{eqnarray}
for any $\vec{\lambda}\in [-\delta,\delta]^n$, $x\in\mathcal{O}$ and  $(v, u^+, u^-)\in N^0\mathcal{
O}(\epsilon)_x\times N^+\mathcal{ O}(\epsilon)_x\times N^-\mathcal{
O}(\epsilon)_x$; \\
\noindent{\bf (III)}\; for each $(\vec{\lambda}, x)\in [-\delta,\delta]^n\times\mathcal{O}$ the functional
\begin{eqnarray}\label{e:S.6.25}
N^0\mathcal{O}(\epsilon)_x\to\R,\;v\mapsto \mathcal{L}_{\vec{\lambda},x}^\circ(v):=
\mathcal{L}_{\vec{\lambda}}\circ\exp_x(v+ \mathfrak{h}_x(\vec{\lambda},v))
\end{eqnarray}
is $G_x$-invariant, of class $C^{1}$,  and satisfies
$$
D\mathcal{L}_{\vec{\lambda}, x}^\circ(v)v':=
(\nabla(\mathcal{L}_{\vec{\lambda}}\circ\exp_x)(v+ \mathfrak{h}_x(\vec{\lambda},v)),v'),\quad\;\forall v'\in N^0\mathcal{O}_x.
$$
Moreover,  if the critical orbit $\mathcal{O}$ is nondegenerate,
i.e., ${\rm rank}N^0\mathcal{O}=0$, then
$\mathfrak{h}$ does not appear,
$\Phi$ is from $[-\delta,\delta]^n\times N^+\mathcal{
O}(\epsilon)\oplus N^-\mathcal{ O}(\epsilon)$ to $N\mathcal{O}$, and (\ref{e:S.6.24}) becomes
\begin{eqnarray}\label{e:S.6.26}
\mathcal{L}_{\vec{\lambda}}\circ\exp\circ\Phi(\vec{\lambda}, x,  u^++
u^-)=\|u^+\|^2_x-\|u^-\|^2_x+ \mathcal{L}_{\vec{\lambda}}|_{\mathcal{O}}
\end{eqnarray}
for any $\vec{\lambda}\in [-\delta,\delta]^n$, $x\in\mathcal{O}$ and  $(u^+, u^-)\in  N^+\mathcal{ O}(\epsilon)_x\times N^-\mathcal{O}(\epsilon)_x$.
\end{theorem}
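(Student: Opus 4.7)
The plan is to combine the construction strategy from the proof of Theorem~\ref{th:S.6.1} with the parameterized local results, Theorem~\ref{th:S.4.3} and Theorem~\ref{th:S.5.3}, while carefully transferring the fiberwise data to the whole orbit via $G$-equivariance. Throughout I fix a base point $x_0\in\mathcal{O}$, pass to the fiber $N\mathcal{O}_{x_0}$ via the exponential map, and apply the parameterized local results to $\mathcal{L}\circ\exp_{x_0}$ together with the perturbations $\mathcal{G}_j\circ\exp_{x_0}$; these compositions inherit the required smoothness and G\^ateaux-continuity of second derivatives from assumptions (i)--(ii), so the hypotheses of Theorem~\ref{th:S.4.3} are met.

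First I would construct $\mathfrak{h}$. Applying Theorem~\ref{th:S.4.3} to the pair $(\mathcal{L}\circ\exp_{x_0},\{\mathcal{G}_j\circ\exp_{x_0}\})$ on $N\mathcal{O}_{x_0}$ with the orthogonal splitting $N\mathcal{O}_{x_0}=N^0\mathcal{O}_{x_0}\oplus N^+\mathcal{O}_{x_0}\oplus N^-\mathcal{O}_{x_0}$ yields $\delta>0$, $\epsilon>0$ and a unique continuous map $\mathfrak{h}_{x_0}:[-\delta,\delta]^n\times N^0\mathcal{O}(3\epsilon)_{x_0}\to N^+\mathcal{O}_{x_0}\oplus N^-\mathcal{O}_{x_0}$ satisfying the fiber version of (\ref{e:S.6.22}). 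Using (\ref{e:S.6.2})--(\ref{e:S.6.3}) and the $G$-invariance of each $\mathcal{G}_j$, one sees that for $g\in G_{x_0}$ the map $v\mapsto g^{-1}\cdot\mathfrak{h}_{x_0}(\vec{\lambda},g\cdot v)$ solves the same equation, so the uniqueness clause of Theorem~\ref{th:S.4.3} forces $G_{x_0}$-equivariance of $\mathfrak{h}_{x_0}(\vec\lambda,\cdot)$. This lets me define
\[
\mathfrak{h}_x(\vec\lambda,v):=g\cdot\mathfrak{h}_{x_0}(\vec\lambda,g^{-1}\cdot v),\qquad g\cdot x_0=x,
\]
independently of the choice of $g$. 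Continuity in $(\vec{\lambda},x,v)$ is checked by the same compactness-and-subsequence argument used in the proof of Theorem~\ref{th:S.6.1}: any sequence $(\vec\lambda_k,x_k,v_k)\to(\vec\lambda_\infty,\bar x,\bar v)$ is lifted to $g_k\in G$ with $g_k\cdot x_0=x_k$, we pass to a subsequence $g_k\to\hat g$ by compactness of $G$, and continuity of $\mathfrak{h}_{x_0}$ in $(\vec\lambda,v)$ closes the argument.

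Next I would construct $\Phi$. I mimic the scheme in the proof of Theorem~\ref{th:S.6.1}, replacing the unparameterized map (\ref{e:S.6.12}) by the parameterized map
\[
\mathcal{F}(\vec\lambda,x,v,u^++u^-):=\mathcal{L}_{\vec\lambda}\circ\exp_x\bigl(v+\mathfrak{h}_x(\vec\lambda,v)+u^++u^-\bigr)-\mathcal{L}_{\vec\lambda}\circ\exp_x\bigl(v+\mathfrak{h}_x(\vec\lambda,v)\bigr).
\]
Analogues of Lemmas~\ref{lem:S.6.2} and \ref{lem:S.6.3} hold uniformly for $\vec\lambda\in[-\delta,\delta]^n$ after shrinking $\delta$: the perturbation terms $\lambda_j\mathcal{G}''_j$ are absorbed exactly as in Steps~1--3 of the proof of Theorem~\ref{th:S.5.3}, using the bound (\ref{e:S.4.30.1}) and the choice $\delta<\min\{a_0,a_1\}/(8nM)$. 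Passing to the pullback bundles $\mathcal{E}^\pm$ as before but over the enlarged base $\Lambda=[-\delta,\delta]^n\times\overline{N^0\mathcal{O}(2\varepsilon_2)}$, I apply the bundle-parameterized version of the Dancer--Hirano--Kryszewski theorem (\cite[Theorem~A.2]{Lu2}) to the continuous family $\mathcal{J}_\lambda$ defined as in (\ref{e:S.6.14}). This yields the parameterized fiber-preserving homeomorphism $\phi$ and hence $\Phi$ as in (\ref{e:S.6.17})--(\ref{e:S.6.18}); $G$-equivariance of $\Phi(\vec\lambda,\cdot)$ follows from $G_{x_0}$-equivariance of $\mathfrak{h}$ and the explicit construction of $\phi$ from $\mathcal{F}$, exactly as in Step~4 of the proof of Theorem~\ref{th:S.5.3}. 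Identity (\ref{e:S.6.24}) then reads off from (\ref{e:S.6.16}). The $G_x$-invariance and $C^1$-smoothness of $\mathcal{L}^\circ_{\vec\lambda,x}$ with the stated differential come from transplanting Proposition~\ref{prop:S.5.2} to each fiber.

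The main obstacle will be handling the parameter dependence uniformly over the orbit $\mathcal{O}$: one must ensure that the constants $\delta,\epsilon,r,s$ obtained fiberwise from Theorem~\ref{th:S.4.3} do not degenerate as $x$ varies. This is resolved by performing the construction only at $x_0$ and then transporting by $G$-action; compactness of $G$ guarantees uniformity and a globally valid $\delta$. The nondegenerate case $\mathrm{rank}\,N^0\mathcal{O}=0$ is a simplification: $\mathfrak{h}$ disappears, $\mathcal{F}$ reduces to $(\vec\lambda,x,u^++u^-)\mapsto\mathcal{L}_{\vec\lambda}\circ\exp_x(u^++u^-)$, and the parameterized bundle version of the Morse--Palais lemma (cf.\ Theorem~\ref{th:S.3.1} applied fiberwise, together with the equivariant transport above) directly yields (\ref{e:S.6.26}).
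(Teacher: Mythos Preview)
Your proposal is correct and follows precisely the route the paper indicates: the paper does not give a separate detailed proof of this theorem, but presents it as the parameterized analogue of Theorems~\ref{th:S.4.3} and~\ref{th:S.5.3} adapted to the critical-orbit setting of Theorem~\ref{th:S.6.1}, and your outline---apply Theorem~\ref{th:S.4.3} fiberwise at a fixed $x_0$, transport by the $G$-action, verify continuity via compactness of $G$, then run the bundle-parameterized Morse--Palais construction of \cite[Theorem~A.2]{Lu2} on the enlarged base $[-\delta,\delta]^n\times\overline{N^0\mathcal{O}(2\varepsilon_2)}$ with the perturbation estimates from Steps~1--3 of the proof of Theorem~\ref{th:S.5.3}---is exactly that combination.
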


\subsection{Proof of Theorem~\ref{th:MP.2}}\label{sec:MP}
\setcounter{equation}{0}

 Without loss of generality we may assume $\theta\in V$ and $u_0=\theta$.
By the assumption we have a $C^2$ reduction functional $\mathcal{L}^\circ: B_H(\theta,\delta)\cap H^0\to
\mathbb{R}$ such that $\theta$ is the unique critical point of it and $\mathcal{L}^\circ(z)=o(\|z\|^2)$.
Clearly, we can shrink $\delta>0$ so that $\delta<\min\{r,1\}$, $\bar{B}_H(\theta,\delta)\cap H^0\subset V$
and $\omega$ in Lemma~\ref{lem:S.2.2} satisfies
\begin{eqnarray}\label{e:MP.0}
\omega(z+\varphi(z))<\frac{1}{2}\min\{a_0,a_1\},\quad\forall z\in B_H(\theta,\delta)\cap H^0.
\end{eqnarray}
By the uniqueness of solutions we can also require that if
$v\in {B}_{H}(\theta,\delta)$ satisfies $(I-P^0)\nabla{\mathcal{L}}(v)=0$ then
$v=z+\varphi(z)$ for some $z\in B_H(\theta,\delta)\cap H^0$.

Take a smooth function $\rho:[0,\infty)\to\mathbb{R}$ satisfying: $0\le\rho\le 1$,
$\rho(t)=1$ for $t\le\delta/2$, $\rho(t)=0$ for $t\ge\delta$, and $|\rho'(t)|<4/\delta$.
For $b\in H^0$ we set $\mathcal{L}_b^\circ(z)=\mathcal{L}^\circ(z)+ \rho(\|z\|)(b,z)_H$. Then
\begin{eqnarray}\label{e:MP.1}
D\mathcal{L}_b^\circ(z)\xi&=&D\mathcal{L}(z+\varphi(z))(\xi+\varphi'(z)\xi)+ \rho(\|z\|)(b,\xi)_H\nonumber\\
&&+\rho'(\|z\|)(b, z)_H(z/\|z\|,\xi)_H,\quad\forall\xi\in H^0.
\end{eqnarray}
Note that $\|D\mathcal{L}^\circ(z)\|\ge \nu$ for some $\nu>0$ and for all $z\in
\bar{B}_{H^0}(\theta,\delta)\setminus {B}_{H^0}(\theta, {\delta/2})$.
Suppose $\|b\|<\nu/5$. Then
\begin{eqnarray}\label{e:MP.2}
\|D\mathcal{L}_b^\circ(z)\|&=&\big\|D\mathcal{L}^\circ(z)+ \rho(\|z\|)b+ (b,z)_H\rho'(\|z\|)z/\|z\|\big\|
\ge  \nu- 5\|b\|>0,
\end{eqnarray}
and therefore $\mathcal{L}_b^\circ$ has no critical point in
$\bar{B}_{H^0}(\theta,\delta)\setminus {B}_{H^0}(\theta,{\delta/2})$.
By Sard's theorem we may take arbitrary small $b$ such that the critical points of
$\mathcal{L}_b^\circ$, if any, are nondegenerate.
Choose a $C^2$ function $\beta:H\to\mathbb{R}$ such that $\beta(u)=0$ for $u\in H\setminus B_H(\theta,r)$,
and $\beta(u)=1$ for $u\in B_H(\theta,\delta)$. Clearly,
we can require $\|\beta^{(i)}(u)\|\le M$ for some $M>0$, $i=0,1,2$ and for all $u\in H$. Define
\begin{equation}\label{e:MP.3}
\tilde{\mathcal{L}}_b(u)={\mathcal{L}}(u)+ \beta(u)\rho(\|P^0u\|)(b,P^0u)_H
\end{equation}
Clearly, it satisfies (iii). If $b$ is taken to be very small, then (ii) is satisfied.
Moreover, since $\mathcal{L}$ satisfies the (PS) condition, $\|D\mathcal{L}(u)\|\ge c$
for some $c>0$ and for all $u\in {B}_{H}(\theta, r)\setminus {B}_{H}(\theta,\delta)$.
Hence all critical points of $\tilde{\mathcal{L}}_b$ belong to ${B}_{H}(\theta,\delta)$
as long as $b$ is small enough.

Let $v\in {B}_{H}(\theta,\delta)$ be a critical point of
$\tilde{\mathcal{L}}_b$. Then
\begin{eqnarray}\label{e:MP.4}
0=\tilde{\mathcal{L}}'_b(v)\xi&=&(\nabla{\mathcal{L}}(v),\xi)_H+ \rho(\|P^0v\|)(b, P^0\xi)_H\nonumber\\
&&+\rho'(\|P^0v\|)(b, P^0v)_H(P^0v, P^0\xi)_H/\|P^0v\|,\quad\forall\xi\in H.
\end{eqnarray}
Since $\rho(\|P^0v\|)=1$ for $\|P^0v\|\le\|v\|<\delta$,
 this implies $(\nabla{\mathcal{L}}(v),\xi)_H=0$ for any $\xi\in H^+\oplus H^-$,
i.e., $((I-P^0)\nabla{\mathcal{L}}(v),\xi)_H=0$ for any $\xi\in H$. It follows that
$v=z+\varphi(z)$ for some $z\in B_H(\theta,\delta)\cap H^0$. [This $z$ is nonzero. Otherwise,
$v=\theta$. But $\theta$ is not a critical point of $\tilde{\mathcal{L}}_b$ if $b\ne\theta$].
Note that  $(\nabla{\mathcal{L}}(z+\varphi(z)),\varphi'(z)\xi)_H=0\;\forall\xi\in H^0$
because $\varphi'(z)\xi\in H^+\oplus H^-$. (\ref{e:MP.4}) leads to
\begin{eqnarray*}
0&=&(\nabla{\mathcal{L}}(z+\varphi(z)),\xi)_H+ \rho(\|z\|)(b,\xi)_H
+\rho'(\|z\|)(b, z)_H(z, \xi)_H/\|z\|\\
&=&(\nabla{\mathcal{L}}(z+\varphi(z)),\xi)_H+ (\nabla{\mathcal{L}}(z+\varphi(z)),\varphi'(z)\xi)_H\\
&&+\rho(\|z\|)(b,\xi)_H +\rho'(\|z\|)(b, z)_H(z, \xi)_H/\|z\|\quad\forall \xi\in H^0,
\end{eqnarray*}
and therefore $D\mathcal{L}_b^\circ(z)=0$ by (\ref{e:MP.1}). That is, $z$ is a critical point of $\mathcal{L}_b^\circ$,
and so $z\in B_{H^0}(\theta,{\delta/2})$ by (\ref{e:MP.2}). It follows from (\ref{e:MP.3}) that
\begin{eqnarray}\label{e:MP.5}
\tilde{\mathcal{L}}''_b(v)(\xi,\eta)=({\mathcal{L}}''(v)\xi,\eta)_H,\quad\forall\xi,\eta\in H.
\end{eqnarray}

Let us prove that $v$ is a nondegenerate critical point of $\tilde{\mathcal{L}}_b$.
Suppose $\xi\in{\rm Ker}(\tilde{\mathcal{L}}''_b(v))$. Then $(\tilde{\mathcal{L}}''_b(v)\xi,\eta)_H=0$ for any $\eta\in H$.
By (\ref{e:MP.5}), we have
\begin{eqnarray}\label{e:MP.5.1}
\tilde{\mathcal{L}}''(v)(\xi,\eta)=({\mathcal{L}}''(z+\varphi(z))\xi,\eta)_H=0,\quad\forall\eta\in H.
\end{eqnarray}
Decompose $\xi$ into $\xi^0+\xi^\bot$, where $\xi^0\in H^0$ and $\xi^\bot\in H^+\oplus H^-$. A direct computation yields
\begin{eqnarray}\label{e:MP.6}
({\mathcal{L}}''(z+\varphi(z))\xi^0, \eta+\varphi'(z)\eta)_H+
({\mathcal{L}}''(z+\varphi(z))\xi^\bot, \eta+\varphi'(z)\eta)_H
=0,\quad\forall\eta\in H^0.
\end{eqnarray}
Note that $(I-P^0)\nabla{\mathcal{L}}(w+\varphi(w))=0$ for any $w\in B_{H^0}(\theta,\delta)$.
Hence $(\nabla{\mathcal{L}}(w+\varphi(w)), \zeta)_H=0$ for any $w\in B_{H^0}(\theta,\delta)$
and $\zeta\in H^+\oplus H^-$. Differentiating this equality with respect to $w$ yields
$$
(\mathcal{L}''(w+\varphi(w))(\tau+\varphi'(w)\tau), \zeta)_H=0,\quad \forall\tau\in H^0,\;
\forall w\in B_{H^0}(\theta,\delta),\;\forall\zeta\in H^+\oplus H^-.
$$
In particular, we have $({\mathcal{L}}''(z+\varphi(z))\xi^\bot, \eta+\varphi'(z)\eta)_H
=0$ for all $\eta\in H^0$. This and (\ref{e:MP.6}) yield
\begin{eqnarray}\label{e:MP.7}
d^2\mathcal{L}^\circ(z)(\xi^0,\eta)=({\mathcal{L}}''(z+\varphi(z))\xi^0, \eta+\varphi'(z)\eta)_H=0,\quad\forall\eta\in H^0.
\end{eqnarray}
Moreover, $d^2\mathcal{L}_b^\circ(z')=d^2\mathcal{L}^\circ(z')$ for any $z'\in B_{H^0}(\theta,\delta/2)$
by the construction of $\mathcal{L}_b^\circ$. We obtain that
$d^2\mathcal{L}_b^\circ(z)(\xi,\eta)=0$ for all $\eta\in H^0$. Hence $\xi^0=\theta$ since $z$ is a nondegenerate critical point of $\mathcal{L}_b^\circ$ by the choice of $b$, and thus $\xi=\xi^\bot$. By (\ref{e:MP.5})--(\ref{e:MP.5.1}) and $(\tilde{\mathcal{L}}''_b(v)\xi,\eta)_H=0\;\forall\eta\in H$, we get
\begin{eqnarray}\label{e:MP.8}
({\mathcal{L}}''(z+\varphi(z))\xi^\bot,\eta)_H= ({\mathcal{L}}''(z+\varphi(z))\xi,\eta)_H=0,\quad\forall\eta\in H.
\end{eqnarray}
Hence ${\mathcal{L}}''(z+\varphi(z))\xi^\bot=0$. 
Decompose $\xi^\bot$ into $\xi^++\xi^-$, where $\xi^+\in H^+$ and $\xi^-\in H^-$.
Then ${\mathcal{L}}''(z+\varphi(z))\xi^+=-{\mathcal{L}}''(z+\varphi(z))\xi^-$.
By Lemma~\ref{lem:S.2.2} and (\ref{e:MP.0}) we derive
\begin{eqnarray*}
&&a_1\|\xi^+\|^2\le ({\mathcal{L}}''(z+\varphi(z))\xi^+,\xi^+)_H= -({\mathcal{L}}''(z+\varphi(z))\xi^-,\xi^+)_H\le
\frac{a_1}{2}\|\xi^+\|\cdot\|\xi^-\|,\\
&&-a_0\|\xi^-\|^2\ge ({\mathcal{L}}''(z+\varphi(z))\xi^-,\xi^-)_H=-({\mathcal{L}}''(z+\varphi(z))\xi^+,\xi^-)_H\ge-
\frac{a_0}{2}\|\xi^-\|\cdot\|\xi^+\|.
\end{eqnarray*}
These imply $\|\xi^+\|\le\|\xi^-\|/2$ and $\|\xi^-\|\le\|\xi^+\|/2$. Hence $\xi^+=\xi^-=\theta$ and so $\xi^\bot=\theta$.
This shows that $v$ is a nondegenerate critical point of $\tilde{\mathcal{L}}_b$.
Lemma~\ref{lem:S.2.2} and (\ref{e:MP.5}) give rise to
\begin{eqnarray*}
&&\tilde{\mathcal{L}}''_b(v)(\xi,\xi)=({\mathcal{L}}''(v)\xi,\xi)_H\ge a_1\|\xi\|^2,\quad\forall\xi\in H^+,\\
&&\tilde{\mathcal{L}}''_b(v)(\xi,\xi)=({\mathcal{L}}''(v)\xi,\xi)_H\le -a_0\|\xi\|^2,\quad\forall\xi\in H^-.
\end{eqnarray*}
But $H=H^+\oplus H^0\oplus H^-$, $\dim H^-=m^-$ and $\dim H^0=n^0$. These show that
the Morse index of $\tilde{\mathcal{L}}''_b(v)$ must sit in $[m^-, m^-+n^0]$. (iv) is proved.

We also need to show that $\tilde{\mathcal{L}}_b$ satisfies Hypothesis~\ref{hyp:MP.1}
on $V$ if $b$ is small enough. By (\ref{e:MP.3}) we have for all $\xi,\eta\in H$,
\begin{eqnarray}\label{e:MP.9}
\tilde{\mathcal{L}}'_b(u)\xi&=&{\mathcal{L}}'(u)\xi+ (\beta'(u)\xi)\rho(\|P^0u\|)(b,P^0u)_H+
\beta(u)\rho(\|P^0u\|)(b, P^0\xi)_H\nonumber\\
&&+\beta(u)\rho'(\|P^0u\|)(b, P^0u)_H(P^0u, P^0\xi)_H/\|P^0u\|
\end{eqnarray}
and
\begin{eqnarray*}
&&(\tilde{\mathcal{L}}''_b(u)\eta,\xi)_H=({\mathcal{L}}''(u)\eta,\xi)_H+ (\beta''(u)\eta,\xi)_H\rho(\|P^0u\|)(b,P^0u)_H\\
&&+(\beta'(u)\xi)\rho(\|P^0u\|)(b,P^0\eta)_H+ (\beta'(u)\xi)(b,P^0u)_H\rho'(\|P^0u\|)(P^0u, P^0\eta)_H/\|P^0u\|\\
&&+(\beta'(u)\eta)\rho(\|P^0u\|)(b, P^0\xi)_H+ \beta(u)(b, P^0\xi)_H\rho'(\|P^0u\|)(P^0u, P^0\eta)_H/\|P^0u\|\\
&&+(\beta'(u)\eta)\rho'(\|P^0u\|)(b, P^0u)_H(P^0u, P^0\xi)_H/\|P^0u\|\\
&&+\beta(u)\big(\rho''(\|P^0u\|)(P^0u, P^0\eta)_H/\|P^0u\|\big)(b, P^0u)_H(P^0u, P^0\xi)_H/\|P^0u\|\\
&&+\beta(u)\rho'(\|P^0u\|)(b, P^0\eta)_H(P^0u, P^0\xi)_H/\|P^0u\|\\
&&+\beta(u)\rho'(\|P^0u\|)(b, P^0u)_H\left[(P^0\eta, P^0\xi)_H/\|P^0u\|- (P^0u, P^0\xi)_H(P^0u, P^0\eta)_H/\|P^0u\|^3\right] \\
&&=({\mathcal{L}}''(u)\eta,\xi)_H+ \Upsilon(u,b,\xi,\eta).
\end{eqnarray*}
By the constructions of $\beta$ and $\rho$ we have a constant $M_2>0$ such that
$$
|\Upsilon(u,b,\xi,\eta)|\le M_2\|b\|\cdot\|\xi\|\cdot\|\eta\|
$$
for all $u\in V$ and $\xi,\eta\in H$. Since we may require that  the support of $\beta$ can be contained
a neighborhood of $\theta$ on which (iii) of Hypothesis~\ref{hyp:MP.1} holds, for sufficiently small $b$
the positive definite part of $\tilde{\mathcal{L}}''_b$, $\tilde P$ given by
 $(\tilde P(u)\xi,\eta)_H=(P(u)\xi,\eta)_H+ \Upsilon(u,b,\xi,\eta)$,
   is also uniformly positive definite on this neighborhood.
Hence  $\tilde{\mathcal{L}}_b$ satisfies Hypothesis~\ref{hyp:MP.1}.

By (\ref{e:MP.3}) and (\ref{e:MP.9}) we have positive numbers $M_i$, $i=0,1$, such that
$|\tilde{\mathcal{L}}_b(u)-{\mathcal{L}}(u)|\le M_0\|b\|$ and
$\|\tilde{\mathcal{L}}'_b(u)\xi-{\mathcal{L}}'(u)\xi\|\le M_1\|b\|\cdot\|\xi\|$ for
all $u\in V$ and $\xi\in H$. So (ii) and (iii) can be satisfied if $b$ is small.

Finally, let us prove that $\tilde{\mathcal{L}}_b$ satisfies  the (PS) condition for small $b$.
By (ii) and (iii) in Hypothesis~\ref{hyp:MP.1}, there exists  $\epsilon\in (0,\delta/2)$ such that
for all $u\in B_H(\theta,\epsilon)$ and $\xi\in H$,
\begin{eqnarray}\label{e:MP.10}
(P(u)\xi,\xi)_H\ge C_0\|\xi\|^2\quad\hbox{and}\quad \|Q(u)-Q(\theta)\|<C_0/2.
\end{eqnarray}
 Since $\mathcal{L}$ satisfies  the (PS) condition and $\theta$ is a unique critical point
 of $\mathcal{L}$ in $V$, we have $\nu_0>0$ such that $\|\mathcal{L}'(u)\|\ge\nu_0$ for
 all $u\in V\setminus B_H(\theta,\epsilon)$.
Let us choose $b$ so small that $\|\tilde{\mathcal{L}}'_b(u)\|\ge \nu_0/2$ for all $u\in V\setminus B_H(\theta,\epsilon)$.
Then if $\{u_n\}_n\subset V$ satisfies $\tilde{\mathcal{L}}'_b(u_n)\to 0$ and $\sup_n|\tilde{\mathcal{L}}_b(u_n)|<\infty$,
then $\{u_n\}_n$ muse be contained in $B_H(\theta,\epsilon)$. It follows that
$\nabla\tilde{\mathcal{L}}_b(u_n)=\nabla\tilde{\mathcal{L}}(u_n)+b$ for all $n$.
 For any two natural numbers $n$ and $m$,
 using the mean value theorem we have $\tau\in (0,1)$ such that
 \begin{eqnarray*}
 &&(\nabla\mathcal{L}(u_n)-\nabla\mathcal{L}(u_m), u_n-u_m)_H=(B(\tau u_n+ (1-\tau)u_m)(u_n-u_m), u_n-u_m)_H\\
&&=(P(\tau u_n+ (1-\tau)u_m)(u_n-u_m), u_n-u_m)_H+ (Q(\theta)(u_n-u_m), u_n-u_m)_H\\
&&+([Q(\tau u_n+ (1-\tau)u_m)-Q(\theta)](u_n-u_m), u_n-u_m)_H\\
&&\ge C_0\|u_n-u_m\|^2-\frac{C_0}{2}\|u_n-u_m\|^2+(Q(\theta)(u_n-u_m), u_n-u_m)_H
\end{eqnarray*}
by (\ref{e:MP.10}).
Passing to a subsequence we may assume $u_n\rightharpoonup u_0$. Since $Q(\theta)$ is compact,
$Q(\theta)u_n\to Q(\theta)u_0$ and so $(Q(\theta)(u_n-u_m), u_n-u_m)_H\to 0$ as $n,m\to\infty$.
Note that $\nabla\mathcal{L}(u_n)-\nabla\mathcal{L}(u_m)=
(\nabla\mathcal{L}(u_n)+b)-(\nabla\mathcal{L}(u_m)+b)\to 0$ as $n,m\to\infty$. It follows from
the above inequalities that $\|u_n-u_m\|\to 0$  as $n,m\to\infty$. This implies
$u_n\to u_0$.  Theorem~\ref{th:MP.2} is proved.


\section{Bifurcations for potential operators}\label{sec:Bi}
\setcounter{equation}{0}

In this section,  some previous bifurcation theorems,
such as those by Rabinowitz \cite{Rab}, by Fadelll and Rabinowitz  \cite{FaRa1, FaRa2},  and
by Chang and Wang \cite{Ch2, Wa,Wa1},  by Chow and Lauterbach \cite{ChowLa} and
 by Bartsch and Clapp \cite{ Ba1, BaCl},
were generalized so that they can be used to study
variational bifurcation  for the functional $\mathcal{F}$
in (\ref{e:1.8}). Our methods are mainly based our Morse lemma, Theorem~\ref{th:S.1.1},
and the parameterized splitting and shifting theorems, Theorems~\ref{th:S.5.3},
~\ref{th:S.5.4}. The latter suggest that multiparameter bifurcations can be studied similarly;
we here give two, Theorems~\ref{th:Bi.2.2},~\ref{th:Bi.2.3}.

\subsection{Generalizations of a bifurcation theorem by Chow and Lauterbach}\label{sec:B.1}

Let $H$ be a real Hilbert space,  $I$  an open interval containing $0$ in $\mathbb{R}$,
and $\{B_\lambda\}_{\lambda\in I}$ a family of bounded linear self-adjoint operators on $H$
such that $\|B_\lambda-B_0\|\to 0$ as $\lambda\to 0$.
Suppose that  $0$ is an isolated point of the spectrum $\sigma(B_0)$
with $n=\dim{\rm Ker}(B_0)\in (0, \infty)$, and that
${\rm Ker}(B_\lambda)=\{0\}\;\forall\pm\lambda\in (0,\varepsilon_0)$ for some
positive number $\varepsilon_0\ll 1$. By the arguments on the pages 107 and 203 in \cite{Ka},
for each $\lambda\in (-\varepsilon_0,\varepsilon_0)\setminus\{0\}$, $B_\lambda$ has
$n$ eigenvalues near zero, and none of them is zero. In Kato's terminology in \cite[page 107]{Ka},
we have the so-called $0$-group ${\rm eig}_0(B_\lambda)$ consisting
 of eigenvalues of $B_\lambda$ which approach $0$ as $\lambda\to 0$.
 Let $r(B_\lambda)$ be the number of elements in ${\rm eig}_0(B_\lambda)\cap\mathbb{R}^-$ and
 \begin{equation}\label{e:Bi.1.1}
 r^+_{B_\lambda}=\lim_{\lambda\to 0+}r(B_\lambda),\qquad
 r^-_{B_\lambda}=\lim_{\lambda\to 0-}r(B_\lambda).
 \end{equation}

\begin{theorem}\label{th:Bi.1.1}
 Let $U$ be an open neighborhood of the origin of a real Hilbert space $H$,
 and $I$  an open interval containing $0$ in $\mathbb{R}$,
  $\mathcal{F}\in C^0(I\times V,\mathbb{R})$ such that
    $\mathcal{L}:=\mathcal{F}_\lambda=\mathcal{F}(\lambda,\cdot)$ satisfy  Hypothesis~\ref{hyp:1.1} with $X=H$ for each $\lambda\in I$. Suppose that one of the following two conditions is satisfied.
    \begin{description}
\item[(1)]  For some small $\delta>0$, $\lambda\mapsto \mathcal{F}_\lambda$
    is continuous at $\lambda=0$ in $C^1(\bar{B}_H(\theta, \delta))$ topology.
\item[(2)] For some small $\delta>0$, $\lambda\mapsto \mathcal{F}_\lambda$
    is continuous at $\lambda=0$ in $C^0(\bar{B}_H(\theta, \delta))$ topology; and
    for every sequences $\lambda_n\to\lambda_0$ in $I$ and  $\{u_n\}_{n\ge 1}\subset\bar{B}_H(\theta, \delta)$ with $\mathcal{F}'_{\lambda_n}(u_n)\to\theta$ and $\{\mathcal{F}_{\lambda_n}(u_n)\}_{n\ge 1}$ bounded, there exists  a subsequence $u_{n_k}\to u_0\in \bar{B}_H(\theta, \delta)$
with $\mathcal{F}'_{\lambda_0}(u_0)=0$.
 \end{description}
    Then
  \begin{description}
\item[(I)] If  $(\theta,0)$ is not a bifurcation point of the equation
\begin{equation}\label{e:Bi.1.2}
\mathcal{F}'_u(\lambda, u)=0,\quad(\lambda,u)\in I\times V,
\end{equation}
(i.e., $(0,\theta)$ is not in the closure of $\{(\lambda,u)\in I\times V\,|\,
\mathcal{F}'_u(\lambda,u)=0,\,u\ne\theta\}$),
then critical groups  $C_\ast(\mathcal{F}_\lambda, \theta;{\bf K})$ are well-defined and have no changes as $\lambda$ varies in a small neighborhood of $0$.
\item[(II)]  $(0,\theta)$ is  a bifurcation point of the equation
(\ref{e:Bi.1.2}) if the following conditions are also satisfied:
\begin{description}
\item[(a)]  ${\rm Ker}(d^2\mathcal{F}_\lambda(\theta))=\{\theta\}$
 for small $|\lambda|\ne 0$;
\item[(b)] $d^2\mathcal{F}_\lambda(\theta)\to d^2\mathcal{F}_0(\theta)$  as $\lambda\to 0$;
\item[(c)] $0\in\sigma(d^2\mathcal{F}_0(\theta))$ (and so is an isolated point of the spectrum $\sigma(d^2\mathcal{F}_0(\theta))$ and an eigenvalue of $d^2\mathcal{F}_0(\theta)$ of the finite multiplicity by \cite[Lemma~2.2]{BoBu});
\item[(d)]  $r^+_{d^2\mathcal{F}_\lambda(\theta)}\ne
r^-_{d^2\mathcal{F}_\lambda(\theta)}$.
\end{description}
\end{description}
\end{theorem}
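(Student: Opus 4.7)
The plan is to treat the two parts in reverse order of difficulty. Part (I) is essentially a stability statement for critical groups along the family $\{\mathcal{F}_\lambda\}$, while part (II) is then obtained from (I) by a Morse-index jumping argument in the style of Chow--Lauterbach.

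For part (I), I would start from the non-bifurcation assumption: there exist $\varepsilon>0$ and $\delta>0$ such that $\nabla\mathcal{F}_\lambda(u)\ne\theta$ for every $(\lambda,u)\in(-\varepsilon,\varepsilon)\times(\bar B_H(\theta,\delta)\setminus\{\theta\})$, so $\theta$ is a uniformly isolated critical point of every $\mathcal{F}_\lambda$ inside $\bar B_H(\theta,\delta)$. Each $\mathcal{F}_\lambda$ satisfies Hypothesis~\ref{hyp:1.1} with $X=H$, hence $\nabla\mathcal{F}_\lambda$ is of class $(S)_+$ near $\theta$ and $\mathcal{F}_\lambda$ satisfies (PS) near $\theta$; combined with continuity of $\lambda\mapsto\mathcal{F}_\lambda$ in $C^1(\bar B_H(\theta,\delta))$ in case (1), or with the sequential compactness condition built directly into (2), the family is a continuous deformation satisfying a uniform (PS)-type condition on $\bar B_H(\theta,\delta)$ in the sense needed by the stability theorem of Corvellec--Hantoute (\cite[Theorem~5.2]{CorH}, already used in the proof of Theorem~\ref{th:S.5.4}). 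Applying that stability result along the homotopy $\lambda\mapsto\mathcal{F}_\lambda$ yields
\[
C_q(\mathcal{F}_\lambda,\theta;{\bf K})\cong C_q(\mathcal{F}_0,\theta;{\bf K}),\qquad q\ge 0,
\]
for all $\lambda$ in a sufficiently small neighborhood of $0$, which is the desired conclusion.

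For part (II), I argue by contradiction and assume $(0,\theta)$ is not a bifurcation point. Assumption (a) makes $\theta$ a nondegenerate critical point of $\mathcal{F}_\lambda$ for every small $\lambda\ne 0$, so Theorem~\ref{th:S.1.1} gives
\[
C_q(\mathcal{F}_\lambda,\theta;{\bf K})=\delta_{q,\mu(\lambda)}{\bf K}\qquad\text{for }\;0<|\lambda|<\varepsilon_0,
\]
where $\mu(\lambda)$ denotes the Morse index of $B_\lambda=d^2\mathcal{F}_\lambda(\theta)$. Using (b)--(c) and the standard Kato perturbation picture summarized in the paragraph defining $r^\pm_{B_\lambda}$, the spectrum of $B_\lambda$ inside a small interval around $0$ consists exactly of the $n$ eigenvalues of the $0$-group, none of which equal $0$ by (a); hence
\[
\mu(\lambda)=\mu_\star+r(B_\lambda),
\]
where $\mu_\star$ is the (fixed) number of eigenvalues of $B_0$ lying strictly below a small negative threshold separating them from the $0$-group. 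By the very definition of $r^\pm_{B_\lambda}$, the integer $r(B_\lambda)$ is eventually constant as $\lambda\to 0^\pm$; condition (d) then says $\mu(\lambda)$ takes two distinct values on the two sides of $0$. Consequently $C_\ast(\mathcal{F}_\lambda,\theta;{\bf K})$ also takes two different graded values on the two sides, in direct contradiction with the stability conclusion of part (I) applied at $\lambda=0$.

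The only delicate point I foresee is verifying that hypothesis (2) alone is strong enough to drive the stability theorem for critical groups in this non-$C^2$ setting. The replacement of the uniform (PS) condition used in the $C^2$ theory by the sequential compactness statement in (2) should be sufficient because the deformation arguments in \cite{CorH} are purely local around the isolated critical point $\theta$, and the absence of $C^2$ regularity is compensated by the $(S)_+$ structure of $\nabla\mathcal{F}_\lambda$ that is built into Hypothesis~\ref{hyp:1.1}; checking this carefully (in particular that the Gromoll--Meyer-type pairs constructed for $\mathcal{F}_0$ remain valid neighborhoods for nearby $\mathcal{F}_\lambda$) is where the actual technical work of the proof will concentrate.
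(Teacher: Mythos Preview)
Your proposal is correct and follows essentially the same route as the paper: part (I) via stability of critical groups under the non-bifurcation assumption, and part (II) by contradiction using Theorem~\ref{th:S.1.1} to compute $C_q(\mathcal{F}_\lambda,\theta;{\bf K})=\delta_{q\mu_\lambda}{\bf K}$ and then the Kato perturbation picture to show $\mu_\lambda=\mu_0+r^{\pm}_{d^2\mathcal{F}_\lambda(\theta)}$ jumps across $\lambda=0$.

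The one point worth sharpening is exactly the ``delicate point'' you flag at the end. In case (1) the paper invokes, as you do, the Corvellec--Hantoute stability result (together with \cite[Theorem~III.4]{ChGh}); but for case (2) the paper does \emph{not} attempt to push \cite{CorH} through under the weaker $C^0$ continuity plus the sequential compactness hypothesis. Instead it simply appeals to \cite[Theorem~3.6]{CiDe}, which is tailored to precisely this situation (continuous family in $C^0$, uniform parametric (PS)-type compactness). So your instinct that case (2) requires a separate argument is right, and the resolution is to quote a different off-the-shelf stability theorem rather than to re-examine the Gromoll--Meyer pairs by hand.
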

\begin{proof}
{\bf Step 1}.\quad
This is a direct consequence of the stability of critical groups. In fact,
since $(\theta,0)$ is not a bifurcation point of the equation
(\ref{e:Bi.1.2}),  we may find $0<\varepsilon_0\ll 1$ and a small bounded neighborhood $W$
of $\theta\in H$ with $\overline{W}\subset B_H(\theta,\delta)$ such that for each
$\lambda\in(-\varepsilon_0,\varepsilon_0)$ the functional $\mathcal{F}_\lambda$ has a unique
critical point $\theta$ sitting in $\overline{W}$. Note that $\mathcal{F}_\lambda$ is of class $(S)_+$.
We can assume that it satisfies the (PS) condition in $\overline{W}$ by shrinking $W$
(if necessary). After shrinking $\varepsilon_0>0$ (if necessary),
 we may use the stability of critical groups (cf.  \cite[Theorem~III.4]{ChGh} and \cite[Theorem~5.1]{CorH}) to derive
\begin{equation}\label{e:Bi.1.3}
C_\ast(\mathcal{F}_\lambda, \theta;{\bf K})=C_\ast(\mathcal{F}_0, \theta;{\bf K}),\quad\forall\lambda\in (-\varepsilon_0,\varepsilon_0)
\end{equation}
provided that (1) holds. If (2) is satisfied the same claim is obtained by
 \cite[Theorem~3.6]{CiDe}.

\noindent{\bf Step 2}.\quad
By a contradiction, suppose that $(\theta,0)$ is not a bifurcation point of the equation
(\ref{e:Bi.1.2}). Then we have (\ref{e:Bi.1.3}) from (I).
By (a),  $\theta$ is a nondegenerate critical point of $\mathcal{F}_\lambda$. It follows from (\ref{e:Bi.1.3})
and Theorem~\ref{th:S.1.1} that all $\mathcal{F}_\lambda$, $0<|\lambda|<\varepsilon_0$,
have the same Morse index $\mu_\lambda$ at $\theta\in H$, i.e.,  $(-\varepsilon_0, \varepsilon_0)\setminus\{0\}\ni\lambda\mapsto \mu_\lambda$  is constant.

By \cite[Proposition~B.2]{Lu2}, each $\varrho\in\sigma(d^2\mathcal{F}_0(\theta))\cap\{t\in\mathbb{R}^-\,|\,
t\le 0\}$ is an isolated point in $\sigma(d^2\mathcal{F}_0(\theta))$, which is also an
eigenvalue of finite multiplicity. (This can also be derived from \cite[Lemma~2.2]{BoBu}).
Since $0\in\sigma(d^2\mathcal{F}_0(\theta))$ by (c), we may assume $\sigma(d^2\mathcal{F}_0(\theta))\cap\{t\in\mathbb{R}^-\,|\,
t\le 0\}=\{0,\varrho_1,\cdots,\varrho_k\}$, where $\mu_i$ has multiplicity $s_i$ for each $i=1,\cdots,k$.
As above, by this, (b) and the arguments on the pages 107 and 203 in \cite{Ka},
if $0<|\lambda|$ is small enough, $d^2\mathcal{F}_\lambda(\theta)$
 has exactly $s_i$ (possible same) eigenvalues near $\mu_i$, but total dimension
 of corresponding eigensubspaces is equal to that of eigensubspace of $\varrho_i$.
Hence if $\lambda\in (0, \varepsilon_0)$ (resp. $-\lambda\in (0,\varepsilon_0)$) is small enough
we obtain
$$
\mu_\lambda=\mu_0+ r^+_{d^2\mathcal{F}_\lambda(\theta)}\quad\hbox{
(resp. $\mu_{-\lambda}=\mu_0+ r^-_{d^2\mathcal{F}_\lambda(\theta)}$).}
$$
These and (d) imply $\mu_\lambda-\mu_{-\lambda}=r^+_{d^2\mathcal{F}_\lambda(\theta)}-
r^-_{d^2\mathcal{F}_\lambda(\theta)}\ne 0$ for small  $\lambda\in (0, \varepsilon_0)$,
which contradicts the above claim that  $(-\varepsilon_0, \varepsilon_0)\setminus\{0\}\ni\lambda\mapsto \mu_\lambda$ is constant.
\end{proof}

Part (II) in Theorem~\ref{e:Bi.1.1} is
a partial  generalization of a bifurcation theorem due to \cite{ChowLa}.
The latter  requires: 1) $\mathcal{F}\in C^2(I\times V,\mathbb{R})$
(so (b) holds naturally), 2) $0<\dim{\rm Ker}(d^2\mathcal{F}_0(\theta))<\infty$,
3)  $0$ is isolated in $\sigma(d^2\mathcal{F}_0(\theta))$, 4) (d) is satisfied.
Its proof is based on center manifold theory, which is different from ours.

\subsection{Generalizations of Rabinowitz bifurcation theorem}\label{sec:B.2}

Since the birth of the Rabinowitz bifurcation theorem \cite{Rab} some generalizations
and new proofs are given, see \cite{Ch2,ChWa}, \cite{CorH}, \cite{IoSch} and \cite{Wa,Wa1}, etc.

Our generalization will reduce to the following
result, which may be obtained as a corollary of \cite[Theorem~2]{IoSch}.

\begin{theorem}[\hbox{\cite[Theorem~5.1]{Can}}]\label{th:Bi.2.1}
 Let $X$ be a finite dimensional normed space, let $\delta>0$, $\lambda^\ast\in\mathbb{R}$ and
for every $\lambda\in [\lambda^\ast-\delta, \lambda^\ast+\delta]$, let
$\phi_\lambda:B(\theta,\delta)\to\mathbb{R}$ be a function of class $C^1$.
Assume that
\begin{description}
\item[a)] the functions $\{(\lambda,u)\to\phi_\lambda(u)\}$ and
$\{(\lambda,u)\to\phi'_\lambda(u)\}$  are continuous on
$[\lambda^\ast-\delta, \lambda^\ast+\delta]\times B(\theta,\delta)$;
\item[b)] $u=\theta$ is a critical point of $\phi_{\lambda^\ast}$; $\phi_\lambda$ has an isolated local minimum (maximum) at zero for every $\lambda\in (\lambda^\ast,\lambda^\ast+\delta]$ and an isolated local maximum (minimum) at
zero for every $\lambda\in [\lambda^\ast-\delta, \lambda^\ast)$.
\end{description}
Then one at least of the following assertions holds:
\begin{description}
\item[i)] $u=\theta$ is not an isolated critical point of $\phi_{\lambda^\ast}$;

\item[ii)] for every $\lambda\ne\lambda^\ast$ in a neighborhood of $\lambda^\ast$ there is a nontrivial critical point of
$\phi_\lambda$ converging to zero as $\lambda\to\lambda^\ast$;

\item[iii)] there is a one-sided (right or left) neighborhood of $\lambda^\ast$ such that for every
$\lambda\ne\lambda^\ast$ in the neighborhood there are two distinct nontrivial critical points of $\phi_\lambda$
converging to zero as $\lambda\to\lambda^\ast$.
\end{description}
\end{theorem}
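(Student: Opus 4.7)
The strategy is a contradiction argument exploiting the continuation invariance of the Poincar\'e polynomial of the local Conley index. Suppose (i), (ii) and (iii) all fail. Since (i) fails, $\theta$ is an isolated critical point of $\phi_{\lambda^*}$, so one picks $r\in(0,\delta)$ with $\phi'_{\lambda^*}\ne 0$ on $\bar B(\theta,r)\setminus\{\theta\}$. Joint continuity in (a) together with compactness of $\partial B(\theta,r)$ yields $\delta_0\in(0,\delta]$ such that $\phi'_\lambda\ne 0$ on $\partial B(\theta,r)$ for every $\lambda\in J:=[\lambda^*-\delta_0,\lambda^*+\delta_0]$. Consequently the Brouwer degree $d(\lambda):=\deg(\phi'_\lambda,B(\theta,r),0)$ is constant on $J$, and more strongly the Poincar\'e polynomial $\mathfrak{p}(h(K_\lambda),t)$ of the local Conley index of $K_\lambda:=\{u\in\bar B(\theta,r):\phi'_\lambda(u)=0\}$ is a continuation invariant on $J$.

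Hypothesis (b) identifies the critical groups at $\theta$ as $C_q(\phi_\lambda,\theta;\mathbf{K})=\delta_{q,0}\mathbf{K}$ on $(\lambda^*,\lambda^*+\delta_0]$ and $C_q(\phi_\lambda,\theta;\mathbf{K})=\delta_{q,n}\mathbf{K}$ on $[\lambda^*-\delta_0,\lambda^*)$ where $n=\dim X$, so $\mathfrak{p}_\theta(t)=1$ on the minimum side and $\mathfrak{p}_\theta(t)=t^n$ on the maximum side. Negating (ii) furnishes, on at least one side of $\lambda^*$ (say the right minimum side, by possibly reflecting $\lambda\mapsto 2\lambda^*-\lambda$), arbitrarily close values of $\lambda$ with $K_\lambda=\{\theta\}$; for such $\lambda$ the Morse inequality forces $\mathfrak{p}(h(K_\lambda),t)=\mathfrak{p}_\theta(t)=1$, whence $\mathfrak{p}(h(K_\lambda),t)\equiv 1$ on all of $J$ by continuation. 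On the left half of $J$ the Morse inequality then reads
$$t^n+\sum_{u\in K_\lambda\setminus\{\theta\}}\mathfrak{p}_u(t)=1+(1+t)Q_\lambda(t),\qquad Q_\lambda\in\mathbb{Z}_{\ge 0}[t],$$
so nontrivial critical points must compensate the jump $t^n-1$.

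Assume toward contradiction with (iii) that for arbitrarily small $|\lambda-\lambda^*|$ on the left only one nontrivial critical point $u_\lambda$ occurs. Then $\mathfrak{p}_{u_\lambda}(t)=1-t^n+(1+t)Q_\lambda(t)$ for some $Q_\lambda\ge 0$. The universal constraints on the Poincar\'e polynomial of an isolated critical point of a $C^1$ function on an $n$-dimensional normed space --- namely $\mathrm{rank}\,C_0\in\{0,1\}$ with equality exactly when $u_\lambda$ is a local minimum (forcing $\mathfrak{p}_{u_\lambda}=1$), $\mathrm{rank}\,C_n\in\{0,1\}$ with equality exactly when $u_\lambda$ is a local maximum (forcing $\mathfrak{p}_{u_\lambda}=t^n$), and $\mathfrak{p}_{u_\lambda}$ of degree at most $n$ --- are easily checked to be incompatible with this identity: in each of the three admissible cases for $\mathfrak{p}_{u_\lambda}$ the coefficient of some $t^k$ with $k\le 1$ or $k\ge n$ produces a negative entry in $Q_\lambda$. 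Hence for every $\lambda$ on the left half of $J$ there exist at least two distinct nontrivial critical points of $\phi_\lambda$ in $B(\theta,r)$; joint continuity in (a) and $K_{\lambda^*}=\{\theta\}$ force them to converge to $\theta$ as $\lambda\to\lambda^*$, yielding (iii) and completing the contradiction. The main obstacle is this coefficient bookkeeping together with the rigorous justification of the continuation invariance of $\mathfrak{p}(h(K_\lambda),t)$ for $C^1$ potentials, which in the finite-dimensional setting can be extracted from Conley-index theory or from stability of critical groups in the spirit of Corvellec--Hantoute and Corvellec--Degiovanni; in case the coefficient matching does not close the argument cleanly, a mountain-pass/linking construction in $\bar B(\theta,r)$ based on $\theta$ being a local maximum and $u_\lambda$ lying at a strictly lower level provides the required second critical point distinct from $u_\lambda$.
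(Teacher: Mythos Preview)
The paper does not prove this statement; it is quoted from \cite[Theorem~5.1]{Can} and remarked to be a corollary of \cite[Theorem~2]{IoSch}, so there is no in-paper argument to compare against. Your Conley-index/Morse-polynomial route is a valid independent proof, and the coefficient bookkeeping does in fact close cleanly, so the hedging mountain-pass fallback in your last sentence should be deleted.

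Three small repairs are needed. First, the reflection $\lambda\mapsto 2\lambda^{\ast}-\lambda$ swaps the two half-intervals but the extremum type travels with the side, so it does not by itself place the sequence furnished by $\neg$(ii) on the \emph{minimum} side; either pass simultaneously to $-\phi_{2\lambda^{\ast}-\lambda}$, or simply run the symmetric computation with $\mathfrak{p}(h)\equiv t^{n}$: then on the minimum side $1+\mathfrak{p}_{u_\lambda}-t^{n}=(1+t)Q_\lambda$ forces the $t^{n}$-coefficient of $\mathfrak{p}_{u_\lambda}$ to be at least $1$, hence $u_\lambda$ is a local maximum, $\mathfrak{p}_{u_\lambda}=t^{n}$, and $1=(1+t)Q_\lambda$, which is impossible. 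Second, before treating ``only one'' nontrivial critical point you must dispose of ``none'': with $\mathfrak{p}(h)\equiv 1$ and $K_\lambda=\{\theta\}$ on the maximum side one gets $t^{n}-1=(1+t)Q_\lambda$, whose constant coefficient is $-1$. Third, the polynomial Morse relation $\sum_{u}\mathfrak{p}_u=\mathfrak{p}(h)+(1+t)Q$ presupposes that $K_\lambda$ is finite; if it is infinite you already have at least two nontrivial critical points, so say this before invoking the identity. With these cosmetic fixes the argument is complete; the essential inputs are continuation invariance of the Conley index for the isolating block $\bar B(\theta,r)$ (equivalently, stability of critical groups as in \cite{CorH,ChGh}) together with the finite-dimensional constraints recorded in \eqref{e:Bi.2.17}. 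Note that Brouwer degree alone would \emph{not} suffice here (for even $n$ the index contributions can cancel), so your passage to the full Poincar\'e polynomial is genuinely needed.
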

It was generalized to infinite dimension spaces in \cite[Theorem~4.2]{CorH}.

The following is a generalization of the  necessity part of Theorem~12 in \cite[Chapter~4, \S4.3]{Skr1}
(including the classical Krasnoselsi potential bifurcation theorem \cite{Kra}).
The  sufficiency  part of Theorem~12 in \cite[Chapter~4, \S4.3]{Skr1} is contained in the case that the condition (a) in Theorem~\ref{th:Bi.2.4} holds.

\begin{theorem}\label{th:Bi.2.2}
 Let $U$ be an open neighborhood of the origin of a real Hilbert space $H$. Suppose
\begin{description}
\item[(i)]  $\mathcal{F}\in C^1(U,\mathbb{R})$ satisfies
 Hypothesis~\ref{hyp:1.1} with $X=H$ as the functional $\mathcal{L}$ there;

\item[(ii)]  $\mathcal{G}_j\in C^1(U,\mathbb{R})$ satisfies
$\mathcal{G}'_j(\theta)=\theta$, $j=1,\cdots,n$, and each gradient $\mathcal{G}'_j$ has the G\^ateaux derivative $\mathcal{G}''_j(u)$ at any $u\in U$, which is  compact linear operators
 and satisfy {\rm (D3)} in Hypothesis~\ref{hyp:1.1} with $X=H$.
\end{description}
If $(\vec{\lambda}^\ast,\theta)\in\mathbb{R}^n\times U$ is a (multiparameter) bifurcation point  for the equation
\begin{equation}\label{e:Bi.2.1}
\mathcal{F}'(u)=\sum^n_{j=1}\lambda_j\mathcal{G}'_j(u),\quad u\in U,
\end{equation}
then $\vec{\lambda}^\ast=(\lambda^\ast_1,\cdots,\lambda^\ast_n)$ is an  eigenvalue  of
\begin{equation}\label{e:Bi.2.2}
\mathcal{F}''(\theta)v-\sum^n_{j=1}\lambda_j\mathcal{G}''_j(\theta)v=0,\quad v\in H,
\end{equation}
 in other words, $\theta$ is
a degenerate critical point of the functional $\mathcal{F}-\sum^n_{j=1}\lambda^\ast_j\mathcal{G}$.
(The solution space  of (\ref{e:Bi.2.2}), denoted by  $H(\vec{\lambda})$,
is of finite dimension because it is the kernel of a Fredholm operator).
\end{theorem}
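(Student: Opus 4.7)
The plan is to prove the contrapositive. Assume $\theta$ is nondegenerate for $\mathcal{L}_{\vec{\lambda}^\ast} := \mathcal{F} - \sum_j \lambda^\ast_j \mathcal{G}_j$, and show no bifurcation occurs at $(\vec{\lambda}^\ast, \theta)$. Since $\mathcal{G}'_j(\theta) = \theta$ and since $\theta$ must solve (\ref{e:Bi.2.1}) at $\vec{\lambda}^\ast$ (otherwise $(\vec{\lambda}^\ast, \theta)$ could not be a bifurcation point at all), one has $\mathcal{F}'(\theta) = \theta$ and hence $\nabla\mathcal{L}_{\vec{\lambda}}(\theta) = \theta$ for every $\vec{\lambda}$. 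A bifurcation would then produce sequences $\vec{\lambda}^{(k)} \to \vec{\lambda}^\ast$ and $u_k \to \theta$ with $u_k \ne \theta$ and $\nabla\mathcal{L}_{\vec{\lambda}^{(k)}}(u_k) = \theta$.

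I would first verify that $\mathcal{L}_{\vec{\lambda}^\ast}$ itself satisfies Hypothesis~\ref{hyp:1.1} with $X = H$. Its G\^ateaux second derivative splits as $B_{\vec{\lambda}^\ast}(u) = P(u) + Q_{\vec{\lambda}^\ast}(u)$ with $Q_{\vec{\lambda}^\ast}(u) := Q(u) - \sum_j \lambda^\ast_j \mathcal{G}''_j(u)$: the unchanged $P$ preserves (D2) and (D4); each $\mathcal{G}''_j(u)$ is compact so $Q_{\vec{\lambda}^\ast}(u)$ is compact; (D3) at $\theta$ follows from continuity at $\theta$ of $Q$ and of each $\mathcal{G}''_j$; and (D1) is automatic since $X = H$. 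The nondegeneracy assumption means $H^0_{\vec{\lambda}^\ast} := \mathrm{Ker}(d^2\mathcal{L}_{\vec{\lambda}^\ast}(\theta)) = \{\theta\}$, so $H = H^+_{\vec{\lambda}^\ast} \oplus H^-_{\vec{\lambda}^\ast}$.

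Next I would apply the natural $n$-parameter generalization of Theorem~\ref{th:S.3.1}, taking base functional $\mathcal{L}_{\vec{\lambda}^\ast}$ and perturbations $-\mathcal{G}_1, \ldots, -\mathcal{G}_n$ with parameter vector $\vec{\mu} := \vec{\lambda} - \vec{\lambda}^\ast$, so that $\mathcal{L}_{\vec{\lambda}^\ast} + \sum_j \mu_j(-\mathcal{G}_j) = \mathcal{L}_{\vec{\lambda}^\ast + \vec{\mu}}$. The proof of Theorem~\ref{th:S.3.1} carries over verbatim once one bounds $\|\sum_j \mu_j \mathcal{G}''_j(v)\| \le (\sum_j |\mu_j|)\max_j \|\mathcal{G}''_j(v)\|$ on a small ball around $\theta$, which is uniform by condition (ii). The analog of the estimate yielding (\ref{e:S.3.4}) and its companion then produces $\rho > 0$ and $\epsilon > 0$ such that, for every $\vec{\mu} \in [-\rho, \rho]^n$ and every $(u^+, u^-) \in \bar B_{H^+_{\vec{\lambda}^\ast}}(\theta, \epsilon) \times \bar B_{H^-_{\vec{\lambda}^\ast}}(\theta, \epsilon)$,
\[
D\mathcal{L}_{\vec{\lambda}^\ast + \vec{\mu}}(u^+ + u^-)(u^+ - u^-) \;\ge\; \tfrac{a_1}{2}\|u^+\|^2 + \tfrac{a_0}{2}\|u^-\|^2,
\]
with $a_0, a_1 > 0$ supplied by Lemma~\ref{lem:S.2.2} applied to $\mathcal{L}_{\vec{\lambda}^\ast}$.

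The right-hand side is strictly positive unless $u^+ = u^- = \theta$, so this inequality forces $\theta$ to be the unique critical point of $\mathcal{L}_{\vec{\lambda}^\ast + \vec{\mu}}$ in the \emph{uniform} neighborhood $\mathcal{W} := \bar B_{H^+_{\vec{\lambda}^\ast}}(\theta, \epsilon) \oplus \bar B_{H^-_{\vec{\lambda}^\ast}}(\theta, \epsilon)$, valid for all $|\vec{\mu}| \le \rho$. Setting $\vec{\mu}_k := \vec{\lambda}^{(k)} - \vec{\lambda}^\ast$ in the presumed bifurcation sequence, for $k$ large one has $|\vec{\mu}_k| \le \rho$ and $u_k \in \mathcal{W}$, which forces $u_k = \theta$, contradicting $u_k \ne \theta$. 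The delicate point is the $n$-parameter bookkeeping in the proof of Theorem~\ref{th:S.3.1}, but no new idea beyond the single-parameter case is required, and the resulting uniformity of $\mathcal{W}$ in $\vec{\mu}$ is precisely what sidesteps any need for a separate inverse function or invariance-of-domain argument.
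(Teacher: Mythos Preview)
Your proof is correct but takes a genuinely different route from the paper's. The paper argues \emph{directly}: given a bifurcation sequence $(\vec{\lambda}_k,u_k)$ it normalizes $v_k:=u_k/\|u_k\|$, extracts a weak limit $v^\ast$, and applies the Mean Value Theorem with the decomposition $B=P+Q$ twice---first pairing the equation with $u_k/\|u_k\|^2$ and invoking (D4) together with compactness of $Q(\theta)$ and of the $\mathcal{G}''_j(\theta)$ to force $v^\ast\ne\theta$, then pairing with $h/\|u_k\|$ for arbitrary $h$ and invoking (D2)--(D3) to pass to the limit and obtain $\mathcal{F}''(\theta)v^\ast=\sum_j\lambda^\ast_j\mathcal{G}''_j(\theta)v^\ast$. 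This Krasnoselskii-style linearization is constructive: it actually exhibits a nonzero eigenvector. Your contrapositive approach instead recycles the paper's own Morse--Palais machinery, extracting from the proof of Theorem~\ref{th:S.3.1} the uniform-in-parameter inequality $D\mathcal{L}_{\vec{\lambda}^\ast+\vec{\mu}}(u^++u^-)(u^+-u^-)\ge\tfrac{a_1}{2}\|u^+\|^2+\tfrac{a_0}{2}\|u^-\|^2$ to conclude uniform isolation of $\theta$; this is tidy and sidesteps the weak-limit bookkeeping, at the price of being nonconstructive and requiring the preliminary check that $\mathcal{L}_{\vec{\lambda}^\ast}$ inherits Hypothesis~\ref{hyp:1.1} plus the (routine) $n$-parameter extension. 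One minor simplification: $\mathcal{F}'(\theta)=\theta$ is already part of Hypothesis~\ref{hyp:1.1} (which demands $D\mathcal{L}(\theta)=0$), so you need not deduce it from the bifurcation assumption.
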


Theorem~12 in \cite[Chapter~4, \S4.3]{Skr1} also required: (a)  $\mathcal{G}$ is
weakly continuous and uniformly differentiable in $U$, (b)
$\mathcal{F}'$ has uniformly positive definite Frech\`et derivatives and satisfies the condition
$\alpha)$ in \cite[Chapter~3, \S2.2]{Skr1}.
If $\mathcal{G}'$ is completely continuous (i.e., mapping a weakly convergent sequence
into a convergent one in norm) and has Frech\'et  derivative
$\mathcal{G}''(u)$ at $u\in U$, then $\mathcal{G}''(u)\in\mathscr{L}(H)$ is a compact linear operator
(cf. \cite[Remark~2.4.6]{Ber}).

\noindent{\it Proof of Theorem~\ref{th:Bi.2.2}}. \quad Let $(\vec{\lambda}^\ast,\theta)\in\mathbb{R}^n\times U$ be
 a bifurcation point  for (\ref{e:Bi.2.1}).
Then we have a sequence $(\vec{\lambda}_k, u_k)\in\mathbb{R}^n\times(U\setminus\{\theta\})$
such that $\vec{\lambda}_k=({\lambda}_{k,1},\cdots, {\lambda}_{k,n})\to\vec{\lambda}^\ast$, $u_k\to\theta$ and
$$
\mathcal{F}'(u_k)=\sum^n_{j=1}\lambda_{k,j}\mathcal{G}'_j(u_k),\quad
k=1,2,\cdots.
$$
 Passing to a subsequence, if necessary, we can assume $v_k=u_k/\|u_k\|\rightharpoonup v^\ast$.
 By the assumptions, $B=\mathcal{F}''$ has a decomposition $P+ Q$ as in Hypothesis~\ref{hyp:1.1}
 with $X=H$.  {\rm (D4)} and Lemma~\ref{lem:D*} imply that
 there exist positive constants $\eta_0>0$ and  $C'_0>0$ such that
 \begin{equation}\label{e:Bi.2.3}
(P(u)h, h)\ge C'_0\|h\|^2\quad\forall h\in H,\;\forall u\in
B_H(\theta,\eta_0)\subset U.
\end{equation}
 Clearly, we can assume that $\{u_k\}_{k\ge 1}$ is contained in $B_H(\theta,\eta_0)$.    Note that
  \begin{equation}\label{e:Bi.2.4}
\frac{1}{\|u_k\|^2}(\mathcal{F}'(u_k), u_k)=\sum^n_{j=1}\frac{\lambda_{k,j}}{\|u_k\|^2}(\mathcal{G}'_j(u_k), u_k),\quad k=1,2,\cdots.
 \end{equation}
Since $\mathcal{G}'_j(\theta)=\theta$, $j=1,\cdots,n$, using the Mean Value Theorem
 we have a sequence $\{t_k\}_{k\ge 1}\subset (0, 1)$ such that
\begin{eqnarray}\label{e:Bi.2.5}
\sum^n_{j=1}\frac{\lambda_{k,j}}{\|u_k\|^2}(\mathcal{G}'_j(u_k), u_k)&=&\sum^n_{j=1}\lambda_{k,j}
(\mathcal{G}''_j(t_ku_k)v_k, v_k)=
\sum^n_{j=1}\lambda_{k,j}([\mathcal{G}''_j(t_ku_k)- \mathcal{G}''_j(\theta)]v_k, v_k)\nonumber\\
&+&\sum^n_{j=1}\lambda_{k,j}(\mathcal{G}''_j(\theta)v_k, v_k)\to \sum^n_{j=1}\lambda^\ast_j(\mathcal{G}''_j(\theta)v^\ast, v^\ast)
\end{eqnarray}
because all $\mathcal{G}''_j(\theta)$ are compact and $\|\mathcal{G}''_j(t_ku_k)- \mathcal{G}''_j(\theta)\|\to 0$
by {\rm (D3)}. Moreover, since $\mathcal{F}'(\theta)=\theta$,
 we may use the Mean Value Theorem to yield a sequence $\{s_k\}_{k\ge 1}\subset (0, 1)$ such that
 \begin{eqnarray*}
\frac{1}{\|u_k\|^2}(\mathcal{F}'(u_k), u_k)&=&\frac{1}{\|u_k\|^2}(\mathcal{F}''(s_ku_k)u_k, u_k)\\
&=&\frac{1}{\|u_k\|^2}(P(s_ku_k)u_k, u_k)+ \frac{1}{\|u_k\|^2}(Q(s_ku_k)u_k, u_k)\\
&\ge& C_0'+ \frac{1}{\|u_k\|^2}(Q(s_ku_k)u_k, u_k)\quad \forall
k\in\mathbb{N}
\end{eqnarray*}
by (\ref{e:Bi.2.3}). As in (\ref{e:Bi.2.5}) we have also
$$
\frac{1}{\|u_k\|^2}(Q(s_ku_k)u_k, u_k)\to (Q(\theta)v^\ast, v^\ast).
$$
It follows from these and (\ref{e:Bi.2.4}) that
$C_0'\le ([\sum^n_{j=1}\lambda^\ast_j\mathcal{G}''_j(\theta)- Q(\theta)]v^\ast, v^\ast)$
and hence $v^\ast\ne \theta$.

Moreover, for any $h\in H$ we have
 \begin{equation}\label{e:Bi.2.6}
\frac{1}{\|u_k\|}(\mathcal{F}'(u_k), h)=\sum^n_{k=1}\frac{\lambda_k}{\|u_k\|}(\mathcal{G}'_j(u_k), h),\quad k=1,2,\cdots,
 \end{equation}
  and as in (\ref{e:Bi.2.5}) we may prove that
 \begin{equation}\label{e:Bi.2.7}
\sum^n_{j=1}\frac{\lambda_k}{\|u_k\|}(\mathcal{G}'_j(u_k), h)\to \sum^n_{j=1}\lambda^\ast_j(\mathcal{G}''_j(\theta)v^\ast, h),
 \end{equation}
and that for some sequence $\{\tau_k\}_{k\ge 1}\subset (0, 1)$, depending on $\{u_k\}_{k\ge 1}$ and $h$,
 \begin{eqnarray*}
\frac{1}{\|u_k\|}(\mathcal{F}'(u_k), h)=(
\mathcal{F}''(\tau_ku_k)v_k, h)
=(v_k, \mathcal{F}''(\tau_ku_k)h)
\to (v^\ast, \mathcal{F}''(\theta)h )
\end{eqnarray*}
because $v_k\rightharpoonup v^\ast$ and $\|\mathcal{F}''(\tau_ku_k)h-\mathcal{F}''(\theta)h\|\to 0$ by (D2)
and (D3).
This and (\ref{e:Bi.2.6})--(\ref{e:Bi.2.7}) lead to
$$
\sum^n_{j=1}\lambda^\ast_j(\mathcal{G}''_j(\theta)v^\ast, h)=(v^\ast, \mathcal{F}''(\theta)h)\quad\forall h\in H
$$
and hence $\mathcal{F}''(\theta)v^\ast-\sum^n_{j=1}\lambda^\ast_j\mathcal{G}''_j(\theta)v^\ast=0$.
That is, $\vec{\lambda}^\ast$ is an eigenvalue  of (\ref{e:Bi.2.7.4}).
\hfill$\Box$\vspace{2mm}

In the following we give two generalizations of  Rabinowitz bifurcation theorem \cite{Rab}.

\begin{theorem}\label{th:Bi.2.3}
Under the assumptions (i)--(ii) of Theorem~\ref{th:Bi.2.2},
 suppose:
\begin{description}
\item[(iii)]  $\vec{\lambda}^\ast$ is an isolated  eigenvalue  of (\ref{e:Bi.2.2});
\item[(iv)]   the corresponding finite dimension reduction  $\mathcal{L}^\circ_{\vec{\lambda}}$
with the functional $\mathcal{L}=\mathcal{F}-\sum^n_{j=1}\lambda^\ast_j\mathcal{G}_j$ as in Theorem~\ref{th:S.5.3} is of class $C^2$ for each $\vec{\lambda}$ near the origin of $\mathbb{R}^n$;
\item[(v)]  (\ref{e:S.5.12.3}) holds with $H^0=H(\vec{\lambda}^\ast)$
(the solution space  of (\ref{e:Bi.2.2}) with $\vec{\lambda}=\vec{\lambda}^\ast$);
\item[(vi)] either $\dim H(\vec{\lambda}^\ast)$ is odd or there exists  $\vec{\lambda}\in\mathbb{R}^n\setminus\{\vec{0}\}$ such that  the  symmetric bilinear form
\begin{equation}\label{e:Bi.2.2.1}
H(\vec{\lambda}^\ast)\times H(\vec{\lambda}^\ast)\ni (z_1,z_2)\mapsto \mathscr{Q}_{\vec{\lambda}}(z_1,z_2)=
 \sum^n_{j=1}\lambda_j(\mathcal{G}''_j(\theta)z_1,z_2)_H
\end{equation}
has different Morse index and coindex. (In particular, if $n=1$ the latter is equivalent to the fact
that the form  $H({\lambda}^\ast)\times H({\lambda}^\ast)\ni (z_1,z_2)\mapsto
(\mathcal{G}''_1(\theta)z_1,z_2)_H$ has different Morse index and co-index).
\end{description}
Then $(\vec{\lambda}^\ast,\theta)\in\mathbb{R}^n\times U$ is a  bifurcation point  for the equation
(\ref{e:Bi.2.1}). Furthermore, if for some $\vec{\mu}\in\mathbb{R}^n\setminus\{\vec{0}\}$
the form $\mathscr{Q}_{\vec{\mu}}$ defined by (\ref{e:Bi.2.2.1})
is either positive definite or negative one, then one of the following alternatives occurs:
\begin{description}
\item[(A)] $(\vec{\lambda}^\ast,\theta)$ is not an isolated solution of (\ref{e:Bi.2.1}) in
 $\{\vec{\lambda}^\ast\}\times U$.

\item[(B)] there exists a sequence $\{t_k\}_{k\ge 1}\subset\mathbb{R}\setminus\{0\}$
such that $t_k\to 0$ and that for each $t_k$ the equation
(\ref{e:Bi.2.1}) with $\vec{\lambda}=t_k\vec{\mu}+\vec{\lambda}^\ast$ has infinitely many solutions converging to $\theta\in H$.

\item[(C)]  for every $t$ in a small neighborhood of $0\in\mathbb{R}$ there is a nontrivial solution $u_t$ of (\ref{e:Bi.2.1}) with $\vec{\lambda}=t\vec{\mu}+\vec{\lambda}^\ast$
    converging to $\theta$ as $t\to 0$;

\item[(D)] there is a one-sided $\mathfrak{T}$ neighborhood of $0\in\mathbb{R}$ such that
for any $t\in\mathfrak{T}\setminus\{0\}$,
(\ref{e:Bi.2.1}) with $\vec{\lambda}=t\vec{\mu}+\vec{\lambda}^\ast$ has at least two nontrivial solutions converging to zero as $t\to 0$.
\end{description}
\end{theorem}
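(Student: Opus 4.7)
The plan is to reduce the bifurcation problem near $(\vec{\lambda}^\ast,\theta)$ to a finite–dimensional variational one on $H^0:=H(\vec{\lambda}^\ast)$ via the parameterized splitting theorem, then combine the Morse index of $\mathscr{Q}_{\vec{\alpha}}$ with the stability of critical groups. Set $\mathcal{L}:=\mathcal{F}-\sum_{j=1}^n\lambda_j^\ast\mathcal{G}_j$ and $\vec{\alpha}:=\vec{\lambda}-\vec{\lambda}^\ast$, so that $u$ close to $\theta$ solves (\ref{e:Bi.2.1}) if and only if it is a critical point of $\mathcal{L}_{\vec{\alpha}}:=\mathcal{L}-\sum_{j=1}^n\alpha_j\mathcal{G}_j$. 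Hypotheses (i)--(ii) are exactly what Theorem~\ref{th:S.5.3} requires of the family $\{\mathcal{L}_{\vec{\alpha}}\}$, and so for small $|\vec{\alpha}|\le\delta$ I obtain a continuous $\psi:[-\delta,\delta]^n\times B_{H^0}(\theta,\epsilon)\to (H^0)^\bot$ with $\psi(\vec{\alpha},\theta)=\theta$ and a reduction $\mathcal{L}^\circ_{\vec{\alpha}}(z)=\mathcal{L}_{\vec{\alpha}}(z+\psi(\vec{\alpha},z))$. Since $z\mapsto z+\psi(\vec{\alpha},z)$ is a homeomorphism implementing a bijection between critical points of $\mathcal{L}_{\vec{\alpha}}$ near $\theta\in H$ and those of $\mathcal{L}^\circ_{\vec{\alpha}}$ near $\theta\in H^0$, the original bifurcation question is equivalent to bifurcation of the $C^2$ family $\mathcal{L}^\circ_{\vec{\alpha}}$ (by (iv)) at $(\vec{0},\theta)$, and by (v),
\[
d^2\mathcal{L}^\circ_{\vec{\alpha}}(\theta)(z_1,z_2)=\mathscr{Q}_{\vec{\alpha}}(z_1,z_2),\qquad\forall z_1,z_2\in H^0
\]
(the sign being absorbed by the change $\mathcal{G}_j\leftrightarrow -\mathcal{G}_j$ in the splitting framework).

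I would prove the first claim by contradiction. If $(\vec{\lambda}^\ast,\theta)$ were not a bifurcation point of (\ref{e:Bi.2.1}), then after shrinking $\delta,\epsilon$ one has $\theta$ as the only critical point of $\mathcal{L}^\circ_{\vec{\alpha}}$ in $B_{H^0}(\theta,\epsilon)$ for every $|\vec{\alpha}|\le\delta$. By (iii) the operator $\mathcal{L}''_{\vec{\alpha}}(\theta)=\mathcal{F}''(\theta)-\sum_j(\lambda^\ast_j+\alpha_j)\mathcal{G}''_j(\theta)$ is injective on $H$ for all small $\vec{\alpha}\ne\vec{0}$, so the Claim immediately following Theorem~\ref{th:S.5.3} ensures that $\theta$ is a nondegenerate critical point of $\mathcal{L}^\circ_{\vec{\alpha}}$; equivalently $\mathscr{Q}_{\vec{\alpha}}$ is nondegenerate on $H^0$, and the Morse index $m(\vec{\alpha})$ of $\mathcal{L}^\circ_{\vec{\alpha}}$ at $\theta$ equals that of $\mathscr{Q}_{\vec{\alpha}}$. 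Stability of critical groups for $C^1$ functionals on the finite-dimensional $H^0$ (\cite[Theorem~5.1]{CorH}) then forces
\[
C_q(\mathcal{L}^\circ_{\vec{\alpha}},\theta;{\bf K})=C_q(\mathcal{L}^\circ_{\vec{0}},\theta;{\bf K})\quad\text{for all small }\vec{\alpha},
\]
so $m(\vec{\alpha})$ equals the \emph{same} integer for all small $\vec{\alpha}\ne\vec{0}$. Each alternative in (vi) contradicts this: if $\dim H^0$ is odd then $m(\vec{\alpha})+m(-\vec{\alpha})=\dim H^0$ is odd and hence $m(\vec{\alpha})\ne m(-\vec{\alpha})$; and if some $\vec{\lambda}_0\ne\vec{0}$ makes $\mathscr{Q}_{\vec{\lambda}_0}$ have unequal Morse index $p$ and coindex $q$, then because $\mathscr{Q}_{-t\vec{\lambda}_0}=-t\mathscr{Q}_{\vec{\lambda}_0}$ has Morse index equal to the coindex of $\mathscr{Q}_{\vec{\lambda}_0}$, one gets $m(t\vec{\lambda}_0)=p\ne q=m(-t\vec{\lambda}_0)$ for all small $t>0$ (note that isolated-eigenvalue (iii) forces $\mathscr{Q}_{\vec{\lambda}_0}$ itself to be nondegenerate, since small nonzero scalings of any degenerate form would remain degenerate).

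For the ``furthermore'' claim, assume without loss of generality that $\mathscr{Q}_{\vec{\mu}}$ is positive definite, and examine the one-parameter $C^1$ family $\phi_t:=\mathcal{L}^\circ_{t\vec{\mu}}:B_{H^0}(\theta,\epsilon)\to\R$ for $t\in[-\delta,\delta]$. Joint continuity of $(t,z)\mapsto\phi_t(z)$ and $(t,z)\mapsto\nabla\phi_t(z)$ is provided by Theorem~\ref{th:S.5.3} and Proposition~\ref{prop:S.5.2}, and since $d^2\phi_t(\theta)=t\mathscr{Q}_{\vec{\mu}}$, the point $\theta$ is an isolated (nondegenerate) local minimum of $\phi_t$ for small $t>0$ and an isolated local maximum for small $t<0$; thus hypotheses (a)--(b) of Theorem~\ref{th:Bi.2.1} hold on the finite-dimensional $H^0$. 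Its three conclusions (i), (ii), (iii) translate, via the homeomorphism $z\mapsto z+\psi(t\vec{\mu},z)$ between critical points of $\phi_t$ and solutions of (\ref{e:Bi.2.1}) at $\vec{\lambda}=\vec{\lambda}^\ast+t\vec{\mu}$, into alternatives (A), (C), and (D) respectively. The remaining alternative (B) is listed as a logical possibility and is actually empty in the definite case, since for $t\ne 0$ small the critical point $\theta$ of $\phi_t$ is nondegenerate and hence isolated in $H^0$, so no other critical points of $\phi_t$ can accumulate at $\theta$. The negative-definite case is handled by replacing $\vec{\mu}$ with $-\vec{\mu}$.

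The principal technical obstacle is the passage across the singular parameter value $\vec{\alpha}=\vec{0}$: at $\vec{0}$ only $C^1$-regularity of $\mathcal{L}^\circ_{\vec{\alpha}}$ is available, so the correct $C^1$-version of critical-group stability must be invoked, and the identity $d^2\mathcal{L}^\circ_{\vec{\alpha}}(\theta)=\mathscr{Q}_{\vec{\alpha}}$ may only be exploited at $\vec{\alpha}\ne\vec{0}$ where the smoothness upgrades provided by (iv) and (v) apply. Once these two points are handled, the Morse-index comparison becomes routine bookkeeping, and the ``furthermore'' part reduces cleanly to Canino's theorem in finite dimension.
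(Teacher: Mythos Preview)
Your argument is correct and follows essentially the same route as the paper: reduce via the parameterized splitting theorem to the finite-dimensional family $\mathcal{L}^\circ_{\vec\alpha}$, use stability of critical groups to force constancy of the Morse index of $d^2\mathcal{L}^\circ_{\vec\alpha}(\theta)=\pm\mathscr{Q}_{\vec\alpha}$ for small $\vec\alpha\ne\vec 0$, and derive a contradiction from (vi); then invoke Canino's Theorem~\ref{th:Bi.2.1} along the ray $t\vec\mu$ for the ``furthermore'' part. The only cosmetic difference is that the paper first establishes stability for $C_\ast(\mathcal{L}_{\vec\lambda},\theta)$ in $H$ and transfers it to $\mathcal{L}^\circ_{\vec\lambda}$ via the shifting theorem, whereas you apply stability directly on $H^0$; the paper itself notes after (\ref{e:Bi.2.7.2}) that your shortcut works. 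Your observation that alternative (B) is in fact vacuous in the definite case (because $\theta$ is nondegenerate, hence isolated, for $\phi_t$ whenever $t\ne 0$ is small) is correct and sharpens the paper's somewhat loosely phrased ``suppose that any one of (A)--(C) does not hold''.
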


\begin{proof} {\bf Step 1}.\quad
 By the assumptions we have the conclusions of
Theorem~\ref{th:S.5.3} with  $\mathcal{L}=\mathcal{F}-\sum^n_{j=1}\lambda^\ast_j\mathcal{G}_j$.
  Suppose that $(\vec{\lambda}^\ast,\theta)\in\mathbb{R}^n\times U$ is not a  bifurcation point  for the equation (\ref{e:Bi.2.1}). Then as in the proof of Theorem~\ref{th:Bi.1.1}(I)
we may find $0<\eta\ll 1$  with $\bar{B}_H(\theta,\eta)\subset U$ such that
after shrinking $\delta>0$ in Theorem~\ref{th:S.5.3} for each
$\vec{\lambda}\in[-\delta,\delta]^n$ the following claims hold:\\
$\bullet$ the functional $\mathcal{L}_{\vec{\lambda}}$ has a unique
critical point $\theta$ in $\bar{B}_H(\theta,\eta)$,\\
$\bullet$  $\vec{\lambda}^\ast=(\lambda^\ast_1,\cdots,\lambda^\ast_n)$ is a unique  eigenvalue  of
(\ref{e:Bi.2.2}) in $[-\delta,\delta]^n+\vec{\lambda}^\ast$,\\
$\bullet$ for all $\vec{\lambda}\in [-\delta,\delta]^n$,
\begin{equation}\label{e:Bi.2.7.1}
C_\ast(\mathcal{L}_{\vec{\lambda}}, \theta;{\bf K})=C_\ast(\mathcal{L}_{\vec{0}}, \theta;{\bf K})=C_\ast(\mathcal{F}-\sum^n_{j=1}\lambda_j^\ast
\mathcal{G}_j, \theta;{\bf K}).
\end{equation}
 We may also shrink  $\epsilon>0$, $r>0, s>0$ and $W$ in Theorem~\ref{th:S.5.3} so that
$$
\bar{B}_{H^0}(\theta,\epsilon)\oplus \bar{B}_{H^+}(\theta, r)\oplus
\bar{B}_{H^-}(\theta, s)\subset B_H(\theta,\eta)\quad\hbox{and}\quad \overline{W}\subset B_H(\theta,\eta),
$$
where $H^0=H(\vec{\lambda}^\ast)$. By (\ref{e:Bi.2.7.1}) and Theorem~\ref{th:S.5.4} we have
\begin{equation}\label{e:Bi.2.7.2}
C_\ast(\mathcal{L}^\circ_{\vec{\lambda}}, \theta;{\bf K})=C_\ast(\mathcal{L}^\circ_{\vec{0}}, \theta;{\bf K}),\quad
\forall \vec{\lambda}\in [-\delta,\delta]^n.
\end{equation}
(This can also be derived from the stability of critical groups as before.)
For each $\vec{\lambda}\in [-\delta,\delta]^n\setminus\{\vec{0}\}$, since
$\theta\in H$ is a nondegenerate critical point of $\mathcal{L}_{\vec{\lambda}}$,
Claim below (\ref{e:S.5.12.3}) tells us that
$\theta\in H^0$ is a nondegenerate critical point of $\mathcal{L}^\circ_{\vec{\lambda}}$ too.
Hence (\ref{e:Bi.2.7.2}) implies that the Morse index of $\mathcal{L}^\circ_{\vec{\lambda}}$ at $\theta$
is constant with respect to $\vec{\lambda}\in [-\delta,\delta]^n\setminus\{\vec{0}\}$.

On the other hand, by (vi), if $\dim H(\vec{\lambda}^\ast)$ is odd,
for every $\vec{\lambda}\in [-\delta,\delta]^n\setminus\{\vec{0}\}$ the nondegenerate
quadratic forms $d^2\mathcal{L}^\circ_{\vec{\lambda}}(\theta)$ and $d^2\mathcal{L}^\circ_{-\vec{\lambda}}(\theta)$ on $H(\vec{\lambda}^\ast)$
must have different Morse indexes, where $d^2\mathcal{L}^\circ_{\vec{\lambda}}(\theta):H(\vec{\lambda}^\ast)\times H(\vec{\lambda}^\ast)
\to\mathbb{R}$ given by
$$
d^2\mathcal{L}^\circ_{\vec{\lambda}}(\theta)(z_1,z_2)=(\mathcal{L}''_{\vec{\lambda}}(\theta)z_1,z_2)_H=
  -\sum^n_{j=1}\lambda_j(\mathcal{G}''_j(\theta)z_1,z_2)_H=-\mathscr{Q}_{\vec{\lambda}}(z_1,z_2).
$$
This contradicts (\ref{e:Bi.2.7.2}). Similarly, if there exists  $\vec{\lambda}\in\mathbb{R}^n\setminus\{\vec{0}\}$ such that the form
in (\ref{e:Bi.2.2.1}) has different Morse index and coindex, then for every
$t>0$ with $t\vec{\lambda}\in[-\delta,\delta]^n$ the forms
 $d^2\mathcal{L}^\circ_{t\vec{\lambda}}(\theta)$ and $d^2\mathcal{L}^\circ_{-t\vec{\lambda}}(\theta)$ on $H(\vec{\lambda}^\ast)$  have different Morse indexes, and hence a contradiction is obtained.

\noindent{\bf Step 2}.\quad
By replacing $\vec{\mu}$ by $-\vec{\mu}$ we may assume that
the form $\mathscr{Q}_{\vec{\mu}}$ is positive definite.
Suppose that any one of (A)--(C) does not hold. Then
there exists $\epsilon\in (0,1)$ such that
$\theta\in H(\vec{\lambda}^\ast)$ is an isolated critical point of $\mathcal{L}^\circ_{t\vec{\mu}}$
for each $t\in [-\epsilon,\epsilon]$.
By the assumption $d^2\mathcal{L}^\circ_{t\vec{\mu}}(\theta)$ is
negative (resp. positive) definite for each $t$ in $(0,\epsilon]$ (resp.
 $[-\epsilon, 0)$). Then Theorem~\ref{th:Bi.2.1} implies that
 for some one-sided $\mathfrak{T}$ neighborhood of $0\in [-\epsilon, \epsilon]$ and
 any $t\in\mathfrak{T}\setminus\{0\}$ the functional  $\mathcal{L}^\circ_{t\vec{\mu}}$
 has two distinct nontrivial critical points $z_{t,1}$ and $z_{t,2}$ converging to $\theta\in H(\vec{\lambda}^\ast)$. Then $u_{t,j}=z_{t,j}+\psi(t\vec{\mu}+\vec{\lambda}^\ast,
 z_{t,j})$, $j=1,2$, are two nontrivial solutions of
(\ref{e:Bi.2.1}) with $\vec{\lambda}=t\vec{\mu}+\vec{\lambda}^\ast$, and both
 converge to zero as $t\to 0$.
\end{proof}

 Note that (\ref{e:Bi.2.2}) has no isolated eigenvalues if
$\cap^n_{j=1}{\rm Ker}(\mathcal{G}''_j(\theta))\cap{\rm Ker}(\mathcal{F}''(\theta))\ne\{\theta\}$.
It is natural to ask when  $\vec{\lambda}^\ast$ is an isolated  eigenvalue  of (\ref{e:Bi.2.2}).
For the sake of simplicity let us consider the case $n=1$. Then
$H(\vec{\lambda}^\ast)={\rm Ker}(\mathcal{F}''(\theta))-\lambda^\ast_1\mathcal{G}''_1(\theta))$, and  if $\lambda_1^\ast\ne 0$ we have
$$
d^2\mathcal{L}^\circ_{\vec{\lambda}}(\theta)(z_1,z_2)=-\lambda_1(\mathcal{G}''_j(\theta)z_1,z_2)_H
=-\frac{\lambda_1}{\lambda^\ast_1}(\mathcal{F}''(\theta)z_1,z_2).
$$
In Theorem~\ref{th:Bi.2.3}, the final condition that the form in (\ref{e:Bi.2.2.1})
is either positive definite or negative one suggests that we should require
$\mathcal{F}''(\theta)$ to be nondegenerate on $H(\vec{\lambda}^\ast)$.

Suppose now that  $\mathcal{F}''(\theta)$ is  invertible, $n=1$ and write
$\mathcal{G}=\mathcal{G}_1$. Then (\ref{e:Bi.2.7.3}) and (\ref{e:Bi.2.7.4}) become
\begin{eqnarray}
&&\mathcal{F}'(u)=\lambda\mathcal{G}'(u),\quad u\in U,\label{e:Bi.2.7.3}\\
&&\mathcal{F}''(\theta)v-\lambda\mathcal{G}''(\theta)v=0,\quad v\in H,\label{e:Bi.2.7.4}
\end{eqnarray}
respectively. Moreover,  $0$ is not an eigenvalue  of (\ref{e:Bi.2.7.4}), and $\lambda\in\mathbb{R}\setminus\{0\}$
is an eigenvalue  of (\ref{e:Bi.2.7.4}) if and only if $1/\lambda$
is an eigenvalue  of compact linear self-adjoint operator $L:=[\mathcal{F}''(\theta) ]^{-1}\mathcal{G}''(\theta)\in\mathscr{L}_s(H)$.
By Riesz-Schauder theory, the spectrum of $L$, $\sigma(L)$,
contains a unique accumulation point $0$, and
$\sigma(L)\setminus\{0\}$ is a real countable set of eigenvalues
of finite multiplicity, denoted by $\{1/\lambda_n\}_{n=1}^\infty$.
Let $H_n$ be the eigensubspace corresponding to $1/\lambda_n$ for $n\in\mathbb{N}$.
Then $H=\oplus^\infty_{n=0}H_n$,  $H_0={\rm Ker}(L)={\rm Ker}(\mathcal{G}''(\theta))$ and
\begin{equation}\label{e:Bi.2.8}
H_n={\rm Ker}(I/\lambda_n-L)={\rm Ker}(\mathcal{F}''(\theta)-\lambda_n \mathcal{G}''(\theta)),\quad
n=1,2,\cdots.
\end{equation}

As another generalization of  Rabinowitz bifurcation theorem \cite{Rab} we have the following
improvement of sufficiency of Theorem~12 in \cite[Chap.4, \S4.3]{Skr1}.

\begin{theorem}\label{th:Bi.2.4}
Let $\mathcal{F}, \mathcal{G}=\mathcal{G}_1\in C^1(U,\mathbb{R})$ be as in Theorem~\ref{th:Bi.2.2},
and  $\lambda^\ast$ be an eigenvalue of (\ref{e:Bi.2.7.4}).
Suppose that the operator $\mathcal{F}''(\theta)$ is invertible and
 also satisfies one of the following three conditions:
 (a) positive definite, (b) negative definite, (c)
 each $H_n$ in (\ref{e:Bi.2.8}) with $L=[\mathcal{F}''(\theta)]^{-1}\mathcal{G}''(\theta)$
 is an invariant subspace of $\mathcal{F}''(\theta)$
 (e.g. these are true if $\mathcal{F}''(\theta)$
 commutes with $\mathcal{G}''(\theta)$), and $\mathcal{F}''(\theta)$
 is either positive definite or negative one on $H_{n_0}$ if $\lambda^\ast=\lambda_{n_0}$.
 Then
$(\lambda^\ast,\theta)\in\mathbb{R}\times U$ is a bifurcation point  for the equation
(\ref{e:Bi.2.7.3})
 and  one of the following alternatives occurs:
\begin{description}
\item[(i)] $(\lambda^\ast,\theta)$ is not an isolated solution of (\ref{e:Bi.2.7.3}) in
 $\{\lambda^\ast\}\times U$.

\item[(ii)] there exists a sequence $\{\kappa_n\}_{n\ge 1}\subset\mathbb{R}\setminus\{\lambda^\ast\}$
such that $\kappa_n\to\lambda^\ast$ and that for each $\kappa_n$ the equation
(\ref{e:Bi.2.7.3}) with $\lambda=\kappa_n$ has infinitely many solutions converging to
$\theta\in H$.

\item[(iii)]  for every $\lambda$ in a small neighborhood of $\lambda^\ast$ there is a nontrivial solution
$u_\lambda$ of (\ref{e:Bi.2.7.3}) converging to $\theta$ as $\lambda\to\lambda^\ast$;

\item[(iv)] there is a one-sided $\Lambda$ neighborhood of $\lambda^\ast$ such that
for any $\lambda\in\Lambda\setminus\{\lambda^\ast\}$,
(\ref{e:Bi.2.7.3}) has at least two nontrivial solutions converging to
zero as $\lambda\to\lambda^\ast$.
\end{description}
\end{theorem}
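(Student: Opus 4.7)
The plan is to reduce Theorem~\ref{th:Bi.2.4} to an alternative on the finite-dimensional kernel $H(\lambda^\ast)$ via the parameterized splitting theorem (Theorem~\ref{th:S.5.3}), and then to invoke Canino's alternative (Theorem~\ref{th:Bi.2.1}) on the reduced functional.

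\textbf{Step 1 (Isolation and reduction).}\quad First I would observe that $\lambda^\ast\ne 0$, since $\mathcal{F}''(\theta)$ is invertible, and that $L=[\mathcal{F}''(\theta)]^{-1}\mathcal{G}''(\theta)$ is compact (composition of a bounded operator with a compact one); by Riesz--Schauder $1/\lambda^\ast$ is isolated in $\sigma(L)$, so $\lambda^\ast$ is an isolated eigenvalue of (\ref{e:Bi.2.7.4}) and $H_{n_0}:={\rm Ker}(\mathcal{F}''(\theta)-\lambda^\ast\mathcal{G}''(\theta))$ is finite-dimensional. Setting $\mathcal{L}:=\mathcal{F}-\lambda^\ast\mathcal{G}$ and applying Theorem~\ref{th:S.5.3} with $n=1$ and $\mathcal{G}_1:=-\mathcal{G}$, the family $\mathcal{L}_\mu=\mathcal{F}-(\lambda^\ast+\mu)\mathcal{G}$ (writing $\lambda=\lambda^\ast+\mu$) produces a $C^1$ family of reductions $\mathcal{L}^\circ_\mu\colon B_{H_{n_0}}(\theta,\epsilon)\to\R$ and a continuous $\psi(\mu,\cdot)$ satisfying the Lipschitz bound (\ref{e:S.4.22.1}), so that critical points of $\mathcal{L}_\mu$ near $\theta$ in $H$ are in bijection with those of $\mathcal{L}^\circ_\mu$ near $\theta$ in $H_{n_0}$.

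\textbf{Step 2 (Definiteness on $H_{n_0}$).}\quad For $z_1,z_2\in H_{n_0}$, the eigenvalue relation gives $\mathcal{G}''(\theta)z_i=\tfrac{1}{\lambda^\ast}\mathcal{F}''(\theta)z_i$, hence
\[(\mathcal{G}''(\theta)z_1,z_2)_H=\frac{1}{\lambda^\ast}(\mathcal{F}''(\theta)z_1,z_2)_H\quad\text{on }H_{n_0}\times H_{n_0}.\]
Under (a) or (b) the bilinear form $(\mathcal{F}''(\theta)\cdot,\cdot)_H$ is globally definite, hence definite on $H_{n_0}$; under (c), $H_{n_0}$ is $\mathcal{F}''(\theta)$-invariant and $\mathcal{F}''(\theta)|_{H_{n_0}}$ is assumed definite. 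So in all three cases $(\mathcal{G}''(\theta)\cdot,\cdot)_H$ is strictly definite on $H_{n_0}$.

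\textbf{Step 3 (Local extremum of $\mathcal{L}^\circ_\mu$).}\quad The crucial step is to prove that there exist $\delta_0>0$ and a sign $\sigma\in\{\pm 1\}$ such that for every $\mu\in(0,\delta_0]$ the origin is a strict local minimum of $\sigma\mathcal{L}^\circ_\mu$ on $H_{n_0}$, and a strict local maximum for $\mu\in[-\delta_0,0)$. Starting from the integral formula $\mathcal{L}_\mu(z)-\mathcal{L}_\mu(\theta)=\int_0^1(1-t)(B_\mu(tz)z,z)_H\,dt$ with $B_\mu=B-\mu\mathcal{G}''$, applying Lemma~\ref{lem:S.2.1} (valid for $z\in H^0\supset H_{n_0}$), the continuity of $\mathcal{G}''$ at $\theta$, and $B(\theta)z=0$ on $H^0$, I would obtain
\[\mathcal{L}_\mu(z)-\mathcal{L}_\mu(\theta)=-\tfrac{\mu}{2}(\mathcal{G}''(\theta)z,z)_H+o(\|z\|^2)\]
uniformly in $\mu\in[-\delta,\delta]$ and $z\in H_{n_0}$ small. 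Linearising the defining identity $(I-P^0)\nabla\mathcal{L}_\mu(z+\psi_\mu(z))=0$ using the uniform invertibility of $B_\mu(\theta)|_{(H^0)^\perp}$ for small $\mu$ yields the sharper estimate $\|\psi_\mu(z)\|=O(|\mu|\|z\|)$, refining (\ref{e:S.4.22.1}). Combining Proposition~\ref{prop:S.5.1} with the quadratic gap estimates (\ref{e:S.5.1})--(\ref{e:S.5.2}) produces the sandwich
\[\mathcal{L}_\mu(z+P^+\psi_\mu(z))+\tfrac{a_0}{4}\|P^-\psi_\mu(z)\|^2\le\mathcal{L}^\circ_\mu(z)\le\mathcal{L}_\mu(z+P^-\psi_\mu(z))-\tfrac{a_1}{4}\|P^+\psi_\mu(z)\|^2,\]
which, after Taylor-expanding each side at $\theta$ and using $\|\psi_\mu(z)\|=O(|\mu|\|z\|)$ to absorb all mixed terms, gives
\[\mathcal{L}^\circ_\mu(z)-\mathcal{L}^\circ_\mu(\theta)=-\tfrac{\mu}{2}(\mathcal{G}''(\theta)z,z)_H+O(\mu^2\|z\|^2)+o(\|z\|^2).\]
By Step~2 the leading coefficient is strictly definite and flips sign with $\mu$, proving the claim (the neighbourhood of $\theta$ may depend on $\mu$, which is all Theorem~\ref{th:Bi.2.1} requires).

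\textbf{Step 4 (Dichotomy).}\quad If $\theta$ is not an isolated critical point of $\mathcal{L}^\circ_0$, then by the reduction bijection alternative (i) holds. If there is a sequence $\mu_n\to 0$ with $\mu_n\ne 0$ along which $\theta$ is not isolated as a critical point of $\mathcal{L}^\circ_{\mu_n}$, then equation (\ref{e:Bi.2.7.3}) at $\lambda=\lambda^\ast+\mu_n$ has infinitely many solutions clustering at $\theta$, which is (ii). Otherwise Step~3 verifies all hypotheses of Theorem~\ref{th:Bi.2.1} for $\phi_\lambda:=\mathcal{L}^\circ_{\lambda-\lambda^\ast}$ on $H_{n_0}$, whose two nontrivial alternatives translate via the reduction bijection to (iii) and (iv). The main obstacle will be Step~3, since $\mathcal{L}^\circ_\mu$ is only known to be $C^1$: one cannot directly Taylor-expand the reduced functional. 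The technical heart is therefore the combination of the one-sided min-max bounds of Proposition~\ref{prop:S.5.1} with the quantitative estimate $\|\psi_\mu(z)\|=O(|\mu|\|z\|)$ (itself requiring a careful use of the uniform coerciveness of $B_\mu(\theta)$ on $(H^0)^\perp$ for small $\mu$), which together allow the second-order behaviour of $\mathcal{L}^\circ_\mu$ at $\theta$ to be read off from that of $\mathcal{L}_\mu|_{H_{n_0}}$ uniformly in the parameter.
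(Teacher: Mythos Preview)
Your overall architecture matches the paper's: reduce to the finite-dimensional kernel $H_{n_0}$ via Theorem~\ref{th:S.5.3}, show that $\theta$ is a strict local maximum of $\mathcal{L}^\circ_\mu$ on one side of $\mu=0$ and a strict local minimum on the other, then invoke Theorem~\ref{th:Bi.2.1}. The dichotomy in Step~4 is exactly how the paper organizes the cases.

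The substantive difference is your Step~3. The paper does \emph{not} attempt any second-order expansion of $\mathcal{L}^\circ_\mu$. Instead it argues topologically: for $\lambda\ne\lambda^\ast$ close to $\lambda^\ast$, the point $\theta$ is a nondegenerate critical point of $\mathcal{L}_\lambda$ in the full space, so $C_q(\mathcal{L}_\lambda,\theta)=\delta_{q\mu_\lambda}{\bf K}$ by Theorem~\ref{th:S.1.1}; the parameterized shifting theorem (\ref{e:S.5.19}) then gives $C_j(\mathcal{L}^\circ_\lambda,\theta)=\delta_{j(\mu_\lambda-\mu_{\lambda^\ast})}{\bf K}$; an explicit Morse-index count (your Step~2 enters here, via (\ref{e:Bi.2.12}) or (\ref{e:Bi.2.20})) shows this is $\delta_{j0}$ on one side and $\delta_{j\nu_{\lambda^\ast}}$ on the other; finally (\ref{e:Bi.2.17}) converts these critical-group statements into the local min/max conclusion. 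This route completely sidesteps the lack of $C^2$-smoothness of $\mathcal{L}^\circ_\mu$.

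Your analytic route is a genuine alternative, but the argument as written has a gap: the estimate $\|\psi_\mu(z)\|=O(|\mu|\,\|z\|)$ is false. Set $\mu=0$: the map $\psi_0$ is the Lyapunov--Schmidt reduction for $\mathcal{L}=\mathcal{F}-\lambda^\ast\mathcal{G}$ itself, and a cubic perturbation already produces $\psi_0(z)\sim\|z\|^2\ne 0$. What the linearization of $(I-P^0)\nabla\mathcal{L}_\mu(z+\psi_\mu(z))=0$ actually yields (via Lemmas~\ref{lem:S.2.1}--\ref{lem:S.2.2}, not a Fr\'echet Taylor expansion of $\nabla\mathcal{L}_\mu$, which is unavailable) is only $\|\psi_\mu(z)\|\le C\bigl[\omega(\|z\|)+|\mu|\bigr]\|z\|$, and the $\omega$-part is of the \emph{same} order as the leading term you want to isolate. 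Your Step~3 can be repaired, but not by bounding $\psi_\mu$: the rescue is that in your sandwich the contributions of $w^\pm:=P^\pm\psi_\mu(z)$ come with \emph{favorable} signs --- the gap term $-\tfrac{a_1}{4}\|w^+\|^2$ on the right and, from Lemma~\ref{lem:S.2.2}(iii), a term $\le -\tfrac{a_0}{4}\|w^-\|^2$ inside the expansion of $\mathcal{L}_\mu(z+w^-)$ --- so the cross terms $[\omega+|\mu|]\,\|z\|\,\|w^\pm\|$ can be absorbed by Young's inequality into those negative squares, leaving an error of size $[\omega(\|z\|)^2+|\mu|^2]\|z\|^2$. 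That remainder \emph{is} dominated by $-\tfrac{\mu}{2}(\mathcal{G}''(\theta)z,z)_H$ for each fixed small $\mu\ne 0$ once $\|z\|$ is small enough, which is all Theorem~\ref{th:Bi.2.1} requires.
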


It is easily seen that the functional $\mathcal{F}$ in
\cite[\S4.3, Theorem~4.3]{Skr1} or  in \cite[Chap.1, Theorem~3.4]{Skr2}
satisfies the conditions of this theorem for the case (a).

\noindent{\it Proof of Theorem~\ref{th:Bi.2.4}}.  \quad
\noindent{\bf Case 1}. {\it  $\mathcal{F}''(\theta)$
is either positive definite or negative one}.

Clearly, we only need to consider the first case.
For $\lambda>\lambda_n$  and $h\in H_n\setminus\{\theta\}$ with $n>0$, since
$\lambda_n\mathcal{G}''(\theta)h=\mathcal{F}''(\theta)h$, we have
\begin{equation}\label{e:Bi.2.11}
(\mathcal{F}''(\theta)h-\lambda\mathcal{G}''(\theta)h, h)=
(1-\lambda/\lambda_n)(\mathcal{F}''(\theta)h,h)<0.
 \end{equation}
 Clearly, if $H_0\ne\{\theta\}$ (this is true if $\dim H=\infty$), for $h\in H_0\setminus\{\theta\}$
 it holds that
  $$
  (\mathcal{F}''(\theta)h-\lambda\mathcal{G}''(\theta)h, h)=
(\mathcal{F}''(\theta)h,h)>0.
$$
Let $\mu_\lambda$ denote the Morse index of $\mathcal{L}_\lambda:=\mathcal{F}-\lambda\mathcal{G}$ at $\theta$.
 Then by (\ref{e:Bi.2.11}) we obtain
 \begin{equation}\label{e:Bi.2.12}
\mu_\lambda=\sum_{\lambda_n<\lambda}\dim H_n.
\end{equation}

Assume $\lambda^\ast=\lambda_{n_0}$ for some $n_0\in\mathbb{N}$.
 Then we have $\varepsilon>0$ such that $(\lambda^\ast-2\varepsilon,
\lambda^\ast+ 2\varepsilon)\setminus\{\lambda^\ast\}$ has no intersection with $\{\lambda_n\}^\infty_{n=1}$
since $\lambda_n\to\infty$.
By (\ref{e:Bi.2.12}) it is easy to verify that
\begin{eqnarray}
&&\mu_\lambda=\mu_{\lambda^\ast},\quad\forall \lambda\in (\lambda^\ast- 2\varepsilon, \lambda^\ast],\label{e:Bi.2.13}\\
&&\mu_\lambda=\mu_{\lambda^\ast}+ \nu_{\lambda^\ast},\quad\forall \lambda\in (\lambda^\ast, \lambda^\ast+2\varepsilon),\label{e:Bi.2.14}
\end{eqnarray}
where $\nu_{\lambda^\ast}=\dim H_{n_0}$ is the nullity of  $\mathcal{L}_{\lambda^\ast}$
 at $\theta$.

By Step 1 of proof of Theorem~\ref{th:S.4.3},  we have

 \noindent{\bf Claim 1}.\quad {\it After shrinking $\varepsilon>0$
 we may verify that  the homotopy
$$
 [\lambda^\ast-\varepsilon, \lambda^\ast+\varepsilon]\times \overline{B_H(\theta,\varepsilon)}\to H,\;
 (\lambda, v)\mapsto \nabla\mathcal{L}_\lambda(v)
 $$
is of class $(S)_+$. So if $\{(\kappa_n, v_n)\}_{n\ge 1}\subset
 [\lambda^\ast-\varepsilon, \lambda^\ast+\varepsilon]\times \overline{B_H(\theta,\varepsilon)}$
 satisfies $\nabla\mathcal{L}_{\kappa_n}(v_n)\to\theta$ and $\kappa_n\to \kappa_0$, then
 $\{v_n\}_{n\ge 1}$ has a convergent subsequence in $\overline{B_H(\theta,\varepsilon)}$.}

 If (i) or (ii) holds, then $(\lambda^\ast,\theta)$ is  a bifurcation point  for (\ref{e:Bi.2.7.3}).

{\it Now suppose that neither (i) nor (ii) holds}. Then we have

\noindent{\bf Claim 2}.\quad {\it
 $\theta\in H$ is an isolated critical point of
$\mathcal{L}_\lambda$ for each  $\lambda\in [\lambda^\ast-\varepsilon, \lambda^\ast+\varepsilon]$ by shrinking $\varepsilon>0$ if necessary.}

Writing $H^0=H_{n_0}$ and  applying Theorem~\ref{th:S.5.3}
to $\mathcal{L}_{\lambda}=\mathcal{L}_{\lambda^\ast}-(\lambda^\ast-\lambda)\mathcal{G}$
with $\lambda\in [\lambda^\ast-\varepsilon, \lambda^\ast+\varepsilon]$
and $-\mathcal{G}$,
 we have $\delta\in (0, \varepsilon]$, $\epsilon>0$ and a unique continuous map
 \begin{equation}\label{e:Bi.2.14.1}
 \psi:[\lambda^\ast-\delta, \lambda^\ast+\delta]\times B_H(\theta,\epsilon)\cap H^0\to (H^0)^\bot
 \end{equation}
  such that for each $\lambda\in [\lambda^\ast-\delta, \lambda^\ast+\delta]$,
 $\psi(\lambda,\theta)=\theta$ and
\begin{equation}\label{e:Bi.2.14.2}
 P^\bot\nabla\mathcal{F}(z+ \psi(\lambda, z))-
 \lambda P^\bot\nabla\mathcal{G}(z+ \psi(\lambda, z))=\theta\quad\forall z\in B_{H}(\theta,\epsilon)\cap H^0,
 \end{equation}
 where  $P^\bot$ is the orthogonal projection onto $(H^0)^\bot$, and that
  the functional
 \begin{equation}\label{e:Bi.2.15}
 B_H(\theta, \epsilon)\cap H^0\ni z\mapsto\mathcal{L}^\circ_\lambda(z):=
 \mathcal{F}(z+\psi(\lambda, z))- \lambda\mathcal{G}(z+\psi(\lambda, z))
  \end{equation}
   is of class $C^1$, whose differential is given by
\begin{equation}\label{e:Bi.2.16}
D\mathcal{L}^\circ_\lambda(z)h=D\mathcal{F}(z+\psi(\lambda,z))h- \lambda D\mathcal{G}(z+\psi(\lambda,z))h,\quad\forall h\in H^0.
\end{equation}
Hence the problem is reduced to finding the critical points of $\mathcal{L}^\circ_\lambda$
near $\theta\in H^0$ for fixed $\lambda$ near $\lambda^\ast$.
Note that Claim 2  is equivalent to the following

\noindent{\bf Claim 3}.\quad {\it
 $\theta\in H^0$ is an isolated critical point of
$\mathcal{L}^\circ_\lambda$ for each  $\lambda\in [\lambda^\ast-\delta, \lambda^\ast+\delta]$ by shrinking $\delta>0$ if necessary.}

 Hence if $\theta\in H^0$ is a local  maximizer (resp. minimizer) of $\mathcal{L}^\circ_\lambda$,
 it must be strict.

For a $C^1$ function $\varphi$ on a neighborhood $U$ of the origin $\theta\in\mathbb{R}^N$
we may always find  $\tilde\varphi\in C^1(\mathbb{R}^N,
\mathbb{R})$ such that it agrees with $\varphi$ near $\theta\in\mathbb{R}^N$ and is also
coercive (so satisfies the (PS)-condition).
Suppose that $\theta$ is an isolated critical point of $\varphi$. By Proposition~6.95 and Example~6.45 in \cite{MoMoPa}
we have
\begin{equation}\label{e:Bi.2.17}
\left.
\begin{array}{ll}
&C_k(\varphi,\theta;{\bf K})=\delta_{k0}\;\Longleftrightarrow\;\hbox{ $\theta$ is a local minimizer of $\varphi$},\\
&C_k(\varphi,\theta;{\bf K})=\delta_{kN}\;\Longleftrightarrow\;\hbox{ $\theta$ is a local maximizer of $\varphi$},
\end{array}\right\}
\end{equation}
and $C_0(\varphi,\theta;{\bf K})=0=C_N(\varphi,\theta;{\bf K})$ 
if $\theta\in  \mathbb{R}^N$ is neither a local  maximizer nor a local
minimizer of $\varphi$.

Then by Theorem~\ref{th:S.1.1}, (\ref{e:S.5.19}) and (\ref{e:Bi.2.13})--(\ref{e:Bi.2.14}) we get that for any
$q\in\mathbb{N}\cup\{0\}$,
\begin{eqnarray*}
&&\delta_{q(\mu_{\lambda^\ast}+\nu_{\lambda^\ast})}{\bf K}=C_q(\mathcal{L}_{\lambda},\theta;{\bf K})=C_{q-\mu_{\lambda^\ast}}(\mathcal{L}^\circ_{\lambda},\theta;{\bf K}),\quad
\forall \lambda\in (\lambda^\ast, \lambda^\ast+\delta],\\
&&\delta_{q\mu_{\lambda^\ast}}{\bf K}=C_q(\mathcal{L}_{\lambda},\theta;{\bf K})=C_{q-\mu_{\lambda^\ast}}(\mathcal{L}^\circ_{\lambda},\theta;{\bf K}),\quad
\forall \lambda\in [\lambda^\ast-\delta, \lambda^\ast).
\end{eqnarray*}
It follows that
\begin{eqnarray*}
&&C_{j}(\mathcal{L}^\circ_{\lambda},\theta;{\bf K})=
\delta_{(j+\mu_{\lambda^\ast})(\mu_{\lambda^\ast}+\nu_{\lambda^\ast})}{\bf K}=
\delta_{j\nu_{\lambda^\ast}}{\bf K},\quad
\forall \lambda\in (\lambda^\ast, \lambda^\ast+\delta],\\
&&C_{j}(\mathcal{L}^\circ_{\lambda},\theta;{\bf K})=\delta_{(j+\mu_{\lambda^\ast})\mu_{\lambda^\ast}}{\bf K}=\delta_{j0}{\bf K},\quad
\forall \lambda\in [\lambda^\ast-\delta, \lambda^\ast).
\end{eqnarray*}
These and (\ref{e:Bi.2.17}) imply
\begin{eqnarray}
&&\theta\in H^0\;\hbox{is a local minimizer of}\;\mathcal{L}^\circ_{\lambda},\quad
\forall \lambda\in [\lambda^\ast-\nu, \lambda^\ast),\label{e:Bi.2.18}\\
&&\theta\in H^0\;\hbox{is a local maximizer of}\;\mathcal{L}^\circ_{\lambda},\quad
\forall \lambda\in (\lambda^\ast, \lambda^\ast+\delta].\label{e:Bi.2.19}
\end{eqnarray}

By (\ref{e:Bi.2.18})--(\ref{e:Bi.2.19}) and Theorem~\ref{th:Bi.2.1}, one of the following possibilities occurs:
\begin{description}
\item[(I)]  for every $\lambda$ in a small neighborhood of $\lambda^\ast$,
$\mathcal{L}^\circ_{\lambda}$ has a nontrivial critical point  converging to $\theta\in H^0$ as $\lambda\to\lambda^\ast$;

\item[(II)] there is a one-sided $\Lambda$ neighborhood of $\lambda^\ast$ such that
for any $\lambda\in\Lambda\setminus\{\lambda^\ast\}$,
$\mathcal{L}^\circ_{\lambda}$ has two nontrivial critical points
 converging to zero as $\lambda\to\lambda^\ast$.
\end{description}
Obviously, they lead to (iii) and (iv), respectively.

\noindent{\bf Case 2}. {\it Each $H_n$ is an invariant subspace of $\mathcal{F}''(\theta)$, $n=1,2,\cdots$}.
Note that $H_n$ has an orthogonal decomposition $H_n^+\oplus H_n^-$, where
$H_n^+$ (resp. $H_n^-$) is the positive (resp. negative) definite subspace
of $\mathcal{F}''(\theta)|_{H_n}$.
It is possible that $H_n^+=\{\theta\}$ or $H_n^-=\{\theta\}$.

As in (\ref{e:Bi.2.11}), if $H^+_n\ne\{\theta\}$ (resp. $H^-_n\ne\{\theta\}$) and $\lambda>\lambda_n$ (resp. $\lambda<\lambda_n$) we have
$$
(\mathcal{F}''(\theta)h-\lambda\mathcal{G}''(\theta)h, h)=
(1-\lambda/\lambda_n)(\mathcal{F}''(\theta)h,h)<0
 $$
for $h\in H^+_n\ne\{\theta\}$ (resp. $h\in H^-_n\ne\{\theta\}$).
Then  the Morse index of $\mathcal{L}_\lambda$ at $\theta$,
 \begin{equation}\label{e:Bi.2.20}
\mu_\lambda=\sum_{\lambda_n<\lambda}\dim H_n^+ + \sum_{\lambda_n>\lambda}\dim H_n^-.
\end{equation}
Since $\lambda^\ast=
\lambda_{n_0}$, as in (\ref{e:Bi.2.13})-(\ref{e:Bi.2.14}) it follows from these that
\begin{eqnarray}
&&\mu_\lambda=\mu_{\lambda^\ast}+ \nu^-_{\lambda^\ast},\quad\forall \lambda\in (\lambda^\ast- 2\varepsilon, \lambda^\ast),\label{e:Bi.2.21}\\
&&\mu_\lambda=\mu_{\lambda^\ast}+ \nu^+_{\lambda^\ast},\quad\forall \lambda\in (\lambda^\ast, \lambda^\ast+2\varepsilon),\label{e:Bi.2.22}
\end{eqnarray}
where $\nu^+_{\lambda^\ast}=\dim H^+_{n_0}$ (resp. $\nu^-_{\lambda^\ast}=\dim H^-_{n_0}$)
is the positive (resp. negative) index of inertia of $\mathcal{F}''(\theta)|_{H_{n_0}}$.

 Since $\mathcal{F}''(\theta)$
 is either positive definite or negative one on $H_{n_0}$, we have either
 \begin{eqnarray}
&&\mu_\lambda=\mu_{\lambda^\ast},\quad\forall \lambda\in (\lambda^\ast- 2\varepsilon, \lambda^\ast),\label{e:Bi.2.23}\\
&&\mu_\lambda=\mu_{\lambda^\ast}+ \nu_{\lambda^\ast},\quad\forall \lambda\in (\lambda^\ast, \lambda^\ast+2\varepsilon),\label{e:Bi.2.24}
\end{eqnarray}
 or
 \begin{eqnarray}
&&\mu_\lambda=\mu_{\lambda^\ast}+ \nu_{\lambda^\ast},\quad\forall \lambda\in (\lambda^\ast- 2\varepsilon, \lambda^\ast),\label{e:Bi.2.25}\\
&&\mu_\lambda=\mu_{\lambda^\ast}.
\quad\forall \lambda\in (\lambda^\ast, \lambda^\ast+2\varepsilon),\label{e:Bi.2.26}
\end{eqnarray}

 {\it We also suppose that neither (i) nor (ii) holds}. Then Claim~1 and so Claim~2 holds.
  (\ref{e:Bi.2.23})--(\ref{e:Bi.2.24}) and (\ref{e:Bi.2.25})--(\ref{e:Bi.2.26}) lead, respectively, to
\begin{eqnarray*}
&&C_{j}(\mathcal{L}^\circ_{\lambda},\theta;{\bf K})=
\delta_{j\nu_{\lambda^\ast}}{\bf K},\quad
\forall \lambda\in (\lambda^\ast, \lambda^\ast+\delta],\\
&&C_{j}(\mathcal{L}^\circ_{\lambda},\theta;{\bf K})=\delta_{j0}{\bf K},\quad
\forall \lambda\in [\lambda^\ast-\delta, \lambda^\ast),
\end{eqnarray*}
 and
\begin{eqnarray*}
&&C_{j}(\mathcal{L}^\circ_{\lambda},\theta;{\bf K})=
\delta_{j\nu_{\lambda^\ast}}{\bf K},\quad
\forall \lambda\in [\lambda^\ast-\delta, \lambda^\ast),\\
&&C_{j}(\mathcal{L}^\circ_{\lambda},\theta;{\bf K})=\delta_{j0}{\bf K},\quad
\forall \lambda\in (\lambda^\ast, \lambda^\ast+\delta].
\end{eqnarray*}
The former yields (\ref{e:Bi.2.18})--(\ref{e:Bi.2.19}) as above.
Similarly, the latter and (\ref{e:Bi.2.17}) imply
\begin{eqnarray*}
&&\theta\in H^0\;\hbox{is a local maximizer of}\;\mathcal{L}^\circ_{\lambda},\quad
\forall \lambda\in [\lambda^\ast-\delta, \lambda^\ast),\\
&&\theta\in H^0\;\hbox{is a local minimizer of}\;\mathcal{L}^\circ_{\lambda},\quad
\forall \lambda\in (\lambda^\ast, \lambda^\ast+\delta].
\end{eqnarray*}
They and Theorem~\ref{th:Bi.2.1} show that either (c) or (d) holds.
\hfill$\Box$\vspace{2mm}

\begin{remark}\label{rm:Bi.2.5}
{\rm
 Suppose that  (iv) is replaced by the following weaker conclusion
\begin{description}
\item[(iv')] there is a one-sided $\Lambda$ neighborhood of $\lambda^\ast$ such that
for any $\lambda\in\Lambda\setminus\{\lambda^\ast\}$,
(\ref{e:Bi.2.7.3}) has a nontrivial solution $u_\lambda$ converging to
zero as $\lambda\to\lambda^\ast$.
\end{description}
Then the finite dimension reduction in the proof is not needed;
and the assumption ``$\mathcal{F}''(\theta)$
 is either positive definite or negative one on $H_{n_0}$" in (c)
 may be replaced by the weaker condition
 ``$\mathcal{F}''(\theta)|_{H_{n_0}}$ has nonzero signature".}
\end{remark}

\subsection{Bifurcation for equivariant problems}\label{sec:B.3}

Now let us generalize the above results to the equivariant case.
The first is a partial generalization of \cite[Theorem~4.2]{Wa}
and  \cite[Theorem~2.5]{Wa1}. The proofs of the latter were based on Morse theory ideas of \cite{Ch2}.
Because of our theory in Section~\ref{sec:S}, the same methods may be used with some technical improvements.

\begin{theorem}\label{th:Bi.3.1}
Under the assumptions of Theorem~\ref{th:Bi.2.4},
let $G$ be a compact Lie group acting on $H$ orthogonally, and suppose that
$U$, $\mathcal{F}$ and $\mathcal{G}$  are $G$-invariant.
For an eigenvalue  $\lambda^\ast$  of (\ref{e:Bi.2.7.4}),
suppose that the operator $\mathcal{F}''(\theta)$ is invertible and
 also satisfies one of the following three conditions:
 (a) positive definite, (b) negative definite, (c)
 each $H_n$ in (\ref{e:Bi.2.8}) with $L=[\mathcal{F}''(\theta)]^{-1}\mathcal{G}''(\theta)$
 is an invariant subspace of $\mathcal{F}''(\theta)$
 (e.g. these are true if $\mathcal{F}''(\theta)$
 commutes with $\mathcal{G}''(\theta)$), and $\mathcal{F}''(\theta)$
 is either positive definite or negative one on $H^0=H_{n_0}$ if $\lambda^\ast=\lambda_{n_0}$.
 Then
 \begin{description}
\item[1)] $(\lambda^\ast,\theta)\in\mathbb{R}\times U$ is a bifurcation point  for the equation
(\ref{e:Bi.2.7.3}).

\item[2)] If $\dim H_{n_0}\ge 2$ and the unit sphere in
$H_{n_0}$ is not a $G$-orbit we must get one of the following alternatives:
\begin{description}
\item[(i)] $(\lambda^\ast,\theta)$ is not an isolated solution of (\ref{e:Bi.2.7.3}) in
 $\{\lambda^\ast\}\times U$;

\item[(ii)] there exists a sequence $\{\kappa_n\}_{n\ge 1}\subset\mathbb{R}\setminus\{\lambda^\ast\}$
such that $\kappa_n\to\lambda^\ast$ and that for each $\kappa_n$ the equation
(\ref{e:Bi.2.7.3}) with $\lambda=\kappa_n$ has infinitely many $G$-orbits of solutions converging to
$\theta\in H$;

\item[(iii)]  for every $\lambda$ in a small neighborhood of $\lambda^\ast$ there is a nontrivial solution
$u_\lambda$ of (\ref{e:Bi.2.7.3}) converging to $\theta$ as $\lambda\to\lambda^\ast$;

\item[(iv)] there is a one-sided $\Lambda$ neighborhood of $\lambda^\ast$ such that
for any $\lambda\in\Lambda\setminus\{\lambda^\ast\}$,
(\ref{e:Bi.2.7.3}) has at least two nontrivial critical orbits converging to
zero $\theta$ as $\lambda\to\lambda^\ast$.
\end{description}
\item[3)] Suppose one of the following assumptions holds: 3.a)
$G=T^m$ and ${\rm Fix}(G)\cap  H_{n_0}=\{\theta\}$; 3.b)
$G=T^m$ and every orbit in $H_{n_0}$ is homeomorphic to some $T^s$ for $s\ge 2$;
3.c) $G$ is a finite group and the greatest common divisor $\delta$ of set
$\{\sharp G/\sharp G_x\,|\, x\in H_{n_0}\setminus\{\theta\}\}$ is equal to or bigger than $2$,
where $\sharp S$ denotes the number of elements in a set $S$.
Then either one of the above (i)-(iii) or the following hold:\\
{\bf (iv)'} there is a one-sided $\Lambda$ neighborhood of $\lambda^\ast$ such that
for any $\lambda\in\Lambda\setminus\{\lambda^\ast\}$,
(\ref{e:Bi.2.7.3}) has at least $\dim H_{n_0}$ (resp. $2\dim H_{n_0}$, $\delta\dim H_{n_0}$) nontrivial critical orbits
in the case 3.a) (resp. 3.b), 3.c)), where every orbit is counted with
its multiplicity. (The multiplicity of a critical orbit $\mathcal{O}$
of $\mathcal{L}_\lambda=\mathcal{F}-\lambda \mathcal{G}$
was defined as $c(\mathcal{O})=\sum^\infty_{q=0}{\rm rank}C_q(\mathcal{L}_\lambda, \mathcal{O})$   in \cite[Definition~1.3]{Wa1}).
\end{description}
\end{theorem}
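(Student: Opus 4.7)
The plan is to combine the equivariant parameterized splitting theorem (Theorem~\ref{th:S.5.3}(ii)) with the arguments already developed for Theorem~\ref{th:Bi.2.4}, thereby reducing the equivariant bifurcation analysis to a finite-dimensional $G$-invariant $C^1$ function on $H^0=H_{n_0}$, and then applying $G$-equivariant critical point theory. Since $\mathcal{F}$ and $\mathcal{G}$ are $G$-invariant, the perturbed family $\mathcal{L}_\lambda=\mathcal{F}-\lambda\mathcal{G}=\mathcal{L}_{\lambda^*}-(\lambda-\lambda^*)\mathcal{G}$ is $G$-invariant, and $H^0=H_{n_0}={\rm Ker}(\mathcal{F}''(\theta)-\lambda^*\mathcal{G}''(\theta))$ is a $G$-invariant subspace because $\mathcal{F}''(\theta)$ and $\mathcal{G}''(\theta)$ commute with the orthogonal $G$-action. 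Theorem~\ref{th:S.5.3}(ii) therefore yields a $G$-equivariant continuous map $\psi(\lambda,\cdot)\colon B_{H^0}(\theta,\epsilon)\to (H^0)^\perp$ and a $G$-invariant $C^1$ reduction functional $\mathcal{L}^\circ_\lambda$ whose critical orbits correspond bijectively, via $z\mapsto z+\psi(\lambda,z)$, to critical orbits of $\mathcal{L}_\lambda$ in a small neighborhood of $\theta$.

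Next I would transcribe the Morse-index calculation from the proof of Theorem~\ref{th:Bi.2.4}: under the definiteness hypothesis on $\mathcal{F}''(\theta)$ (cases (a), (b), or the invariance hypothesis (c)), equations~\eqref{e:Bi.2.13}--\eqref{e:Bi.2.14}, respectively \eqref{e:Bi.2.23}--\eqref{e:Bi.2.26}, describe how the Morse index $\mu_\lambda$ of $\mathcal{L}_\lambda$ at $\theta$ jumps by $\nu_{\lambda^*}=\dim H_{n_0}$ across $\lambda^*$. Assuming neither alternative (i) nor (ii) holds, $\theta$ is an isolated critical point of each $\mathcal{L}_\lambda$, hence also of $\mathcal{L}^\circ_\lambda$ by the construction of $\psi$; the shifting identity (\ref{e:S.5.19}) together with the characterization~\eqref{e:Bi.2.17} then forces $\theta\in H^0$ to be a strict local minimizer of $\mathcal{L}^\circ_\lambda$ for $\lambda$ on one side of $\lambda^*$ and a strict local maximizer on the other side. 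Part~(1) is immediate since Theorem~\ref{th:Bi.2.4} already gives the bifurcation without equivariance.

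For part~(2), the strict extremum dichotomy on $\mathcal{L}^\circ_\lambda$ together with the $G$-invariance allows one to apply a $G$-equivariant version of Theorem~\ref{th:Bi.2.1}: after a standard $G$-invariant coercive extension, $\mathcal{L}^\circ_\lambda$ must admit a $G$-orbit of critical points at the max-min level distinct from $\{\theta\}$, and the hypothesis that the unit sphere in $H_{n_0}$ is not a single $G$-orbit (combined with $\dim H_{n_0}\ge 2$) forces the quotient $S(H_{n_0})/G$ to contain at least two $G$-orbits, producing a second nontrivial critical orbit by minimax, which gives alternative (iv) whenever (iii) fails. For part~(3) the stronger multiplicity conclusions follow by replacing the category-$\ge 2$ estimate with sharper equivariant index bounds: Fadell--Rabinowitz's $T^m$-cohomological index in case 3.a), Bartsch's equivariant $\mathbb{Z}_p$-category in case 3.b) where every nontrivial orbit has dimension $\ge 2$, and the Bartsch--Clapp divisibility argument in case 3.c) for finite $G$ with gcd $\delta\ge 2$. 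In each case the estimate bounds below the number of critical orbits of $\mathcal{L}^\circ_\lambda$ on a small $G$-invariant sphere, hence of $\mathcal{L}_\lambda$ near $\theta$ via the bijection induced by $\psi(\lambda,\cdot)$.

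The main obstacle is that $\mathcal{L}^\circ_\lambda$ is only of class $C^1$, so the classical tools of equivariant critical point theory (pseudo-gradient flows, equivariant deformation lemma, stability of critical groups along homotopies, Marino--Prodi type approximations) must be invoked in the $C^1$ framework. This is handled by the same device used in the proof of Theorem~\ref{th:S.5.4}: because $H^0$ is finite-dimensional, $\mathcal{L}^\circ_\lambda$ may be $G$-equivariantly smoothly approximated on $\bar B_{H^0}(\theta,\epsilon)$ while preserving critical groups at $\theta$ and the $G$-orbit structure, after which all standard equivariant minimax and index-theoretic arguments become available. Combined with the $G$-equivariant bijection between critical orbits supplied by Theorem~\ref{th:S.5.3}(ii), this yields all three parts of the theorem.
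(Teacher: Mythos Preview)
Your overall architecture matches the paper: equivariant splitting (Theorem~\ref{th:S.5.3}(ii)) reduces to the $G$-invariant $C^1$ functional $\mathcal{L}^\circ_\lambda$ on $H^0=H_{n_0}$, and the Morse-index jump from the proof of Theorem~\ref{th:Bi.2.4} forces $\theta$ to be a strict local minimizer of $\mathcal{L}^\circ_\lambda$ on one side of $\lambda^\ast$ and a strict local maximizer on the other. However, the paper does not proceed via a ``$G$-equivariant version of Theorem~\ref{th:Bi.2.1}'' or the Fadell--Rabinowitz/Bartsch/Bartsch--Clapp index machinery you invoke. Instead, for part~2) it first performs a three-case analysis of $\mathcal{L}^\circ_{\lambda^\ast}$ at $\theta$ (local min, local max, or neither); in the ``neither'' case, stability of critical groups immediately yields alternative~(iii), while in the min/max cases it follows Wang's equivariant Morse-theoretic argument \cite[p.~220]{Wa}: one builds a contractible $G$-invariant sublevel neighborhood $\mathcal{W}_\varepsilon$, observes that on the opposite side $\theta$ becomes a local maximizer so a nontrivial minimal critical orbit $\mathcal{O}$ exists, and then uses Corollaries~\ref{cor:S.6.5}--\ref{cor:S.6.6} plus the sphere-not-an-orbit hypothesis to produce a second orbit. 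For part~3) the paper again cites Wang \cite[Corollary~1.3, Theorem~1.3]{Wa1}, exploiting that $C_q(\mathcal{L}^\circ_\lambda,\theta;{\bf K})=\delta_{q\nu_{\lambda^\ast}}{\bf K}$ makes $\theta$ behave like a nondegenerate point of top index.

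On the $C^1$ obstacle, the paper does \emph{not} use smooth approximation. It instead constructs directly a $G$-equivariant pseudo-gradient field $\mathscr{V}_\lambda$ depending smoothly on $(z,\lambda)$ (Lemma~\ref{lem:pseudogradient}), which suffices to run the deformation arguments in Wang's proofs. Your smooth-approximation route is viable in principle on the finite-dimensional $H^0$, but you would have to argue that the approximation preserves the correspondence between critical orbits of $\mathcal{L}^\circ_\lambda$ and $\mathcal{L}_\lambda$, and that the multiplicity count of \cite[Definition~1.3]{Wa1} survives; the paper's pseudo-gradient approach avoids this detour. Also note that your attributions for~3.a)--3.c) do not match: all three cases are drawn from \cite{Wa1}, not from the Fadell--Rabinowitz or Bartsch--Clapp papers (those are used later, in Theorems~\ref{th:Bi.3.2} and~\ref{th:Bi.3.14}).
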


This theorem can also be viewed generalizations of \cite{FaRa1, FaRa2}. Even so, we
still  give a direct generalization version of \cite{FaRa1, FaRa2}
in Theorem~\ref{th:Bi.3.2} below (because different methods need be employed).
Another different point is that the number of critical orbits in 3) is counted
in a non-usual way.  After Theorem~\ref{th:Bi.3.2}
we shall compare these two theorems.
\\

\noindent{\it Proof of Theorem~\ref{th:Bi.3.1}}. \quad 1) follows from Theorem~\ref{th:Bi.2.4}.
For 2), as in the proof of Theorem~\ref{th:Bi.2.4}, we assume that
neither (i) nor (ii) holds. Then
 $\theta\in H^0$ is a unique critical orbit of the functional
$B_H(\theta, \epsilon)\cap H^0\ni z\mapsto\mathcal{L}^\circ_\lambda(z)$ in
  (\ref{e:Bi.2.15}) for each  $\lambda\in [\lambda^\ast-2\delta, \lambda^\ast+ 2\delta]$ by shrinking $\delta>0$ and $\epsilon>0$ if necessary. (Thus if $\theta$ is an extreme point of
 $\mathcal{L}^\circ_\lambda$, it must be strict).
 Moreover, since $\dim H^0<\infty$,  replacing $\epsilon$ by a slightly smaller one
 we can assume that $\{\mathcal{L}^\circ_\lambda\,|\,\lambda\in [\lambda^\ast-2\delta, \lambda^\ast+ 2\delta]\}$ satisfies the (PS) condition. That is, if
$z_k\in B_H(\theta, \epsilon)\cap H^0$ and $\lambda_k\in [\lambda^\ast-2\delta, \lambda^\ast+ 2\delta]$
satisfy $D\mathcal{L}^\circ_{\lambda_k}(z_k)\to 0$ and $\sup_k|\mathcal{L}^\circ_{\lambda_k}(z_k)|<\infty$, then $\{(z_k,\lambda_k)\}^\infty_{k=1}$
has a converging subsequence.

The problem is reduced to finding
 the critical orbits of $\mathcal{L}^\circ_\lambda$
near $\theta\in H^0$ for fixed $\lambda$ near $\lambda^\ast$.

\underline{Firstly, we assume that $\mathcal{F}''(\theta)$
is positive definite.}

Since $\theta\in H^0$ is an isolated critical orbit of
$\mathcal{L}^\circ_{\lambda^\ast}$, by (\ref{e:S.5.19}) and
 (\ref{e:Bi.2.17}), we have three cases:\\
$\bullet$ $C_q(\mathcal{L}_{\lambda^\ast},\theta;{\bf K})=
\delta_{q\mu_{\lambda^\ast}}{\bf K}$ if $\theta\in H^0$ is a local
minimizer of $\mathcal{L}^\circ_{\lambda^\ast}$;\\
$\bullet$ $C_q(\mathcal{L}_{\lambda^\ast},\theta;{\bf K})=
\delta_{q(\mu_{\lambda^\ast}+\nu_{\lambda^\ast})}{\bf K}$ if $\theta\in  H^0$
is a local  maximizer of $\mathcal{L}^\circ_{\lambda^\ast}$;\\
$\bullet$ $C_q(\mathcal{L}_{\lambda^\ast},\theta;{\bf K})=0$ for $q\notin (\mu_{\lambda^\ast},
\mu_{\lambda^\ast}+ \nu_{\lambda^\ast})$ if $\theta\in  H^0$ is neither a local  maximizer nor a local
minimizer of $\mathcal{L}^\circ_{\lambda^\ast}$.

In the third case,  the stability of critical groups implies (iii) as before.

For the first two cases, as in the proofs of (\ref{e:Bi.2.18})--(\ref{e:Bi.2.19}),
if $\theta\in H^0$ is a local minimizer of $\mathcal{L}^\circ_{\lambda^\ast}$ we can obtain
\begin{eqnarray}
&&\theta\in H^0\;\hbox{is a local minimizer of}\;\mathcal{L}^\circ_{\lambda},\quad
\forall \lambda\in [\lambda^\ast-2\delta, \lambda^\ast],\label{e:Bi.3.1}\\
&&\theta\in H^0\;\hbox{is a local maximizer of}\;\mathcal{L}^\circ_{\lambda},\quad
\forall \lambda\in (\lambda^\ast, \lambda^\ast+ 2\delta];\label{e:Bi.3.2}
\end{eqnarray}
 and if $\theta\in H^0$ is a local maximizer of $\mathcal{L}^\circ_{\lambda^\ast}$ we have
\begin{eqnarray}
&&\theta\in H^0\;\hbox{is a local minimizer  of}\;\mathcal{L}^\circ_{\lambda},\quad
\forall \lambda\in [\lambda^\ast-2\delta, \lambda^\ast),\label{e:Bi.3.3}\\
&&\theta\in H^0\;\hbox{is a local maximizer of}\;\mathcal{L}^\circ_{\lambda},\quad
\forall \lambda\in [\lambda^\ast, \lambda^\ast+2\delta].\label{e:Bi.3.4}
\end{eqnarray}

Now let us follow the proof ideas of \cite[page 220]{Wa} to complete
the remaining arguments. But differen from the case therein our
 $\mathcal{L}^\circ_{\lambda}$ is only of class $C^1$.
Fortunately, the standard arguments (cf. \cite[Propsition~5.57]{MoMoPa}) may yield

\begin{lemma}\label{lem:pseudogradient}
There exists a smooth map
$$
(B_H(\theta, \epsilon)\cap H^0\setminus\{\theta\})\times(\lambda^\ast-2\delta, \lambda^\ast+ 2\delta)
\to H,\;(z,\lambda)\mapsto\mathscr{V}_\lambda(z)
$$
such that for each $\lambda\in(\lambda^\ast-2\delta, \lambda^\ast+ 2\delta)$ the map
$\mathscr{V}_\lambda: B_H(\theta, \epsilon)\cap H^0\setminus\{\theta\}\to H$
is a $G$-equivariant pseudo-gradient vector field for $\mathcal{L}^\circ_{\lambda}$, precisely for all
$z\in B_H(\theta, \epsilon)\cap H^0\setminus\{\theta\}$,
$$
\|\mathscr{V}_\lambda(z)\|\le 2\|D\mathcal{L}^\circ_{\lambda}(z)\|\quad\hbox{and}\quad
\langle D\mathcal{L}^\circ_{\lambda}(z), \mathscr{V}_\lambda(z)\rangle\ge\frac{1}{2}
\|D\mathcal{L}^\circ_{\lambda}(z)\|^2.
$$
\end{lemma}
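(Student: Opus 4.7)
The plan is to adapt the standard Palais construction of a pseudo-gradient vector field (as in \cite[Proposition~5.57]{MoMoPa}) in two directions at once: equivariantly with respect to the orthogonal $G$-action on $H^0$ (which is an invariant subspace since $H^0=H_{n_0}$ is the kernel of the $G$-equivariant operator $\mathcal{F}''(\theta)-\lambda^*\mathcal{G}''(\theta)$), and smoothly with respect to the parameter $\lambda\in I:=(\lambda^\ast-2\delta,\lambda^\ast+2\delta)$.

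First I would fix an arbitrary point $(z_0,\lambda_0)$ in the open manifold
$$
M:=\bigl(B_H(\theta,\epsilon)\cap H^0\setminus\{\theta\}\bigr)\times I.
$$
Since $D\mathcal{L}^\circ_{\lambda_0}(z_0)\in (H^0)^*\setminus\{0\}$, by the definition of the dual norm there exists $w_{z_0,\lambda_0}\in H^0$ with $\|w_{z_0,\lambda_0}\|<2\|D\mathcal{L}^\circ_{\lambda_0}(z_0)\|$ and
$\langle D\mathcal{L}^\circ_{\lambda_0}(z_0), w_{z_0,\lambda_0}\rangle> \tfrac{2}{3}\|D\mathcal{L}^\circ_{\lambda_0}(z_0)\|^2$. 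Formula (\ref{e:S.5.4}) together with the continuity of $D\mathcal{L}$, $D\mathcal{G}$, $\psi$ and the already-established continuity of $\lambda\mapsto D\mathcal{L}^\circ_\lambda$ in the $C^0(\bar B_{H^0}(\theta,\epsilon))$-topology imply that the map $(z,\lambda)\mapsto D\mathcal{L}^\circ_\lambda(z)\in (H^0)^*$ is continuous on $M$. Hence the inequalities
$\|w_{z_0,\lambda_0}\|<2\|D\mathcal{L}^\circ_\lambda(z)\|$ and
$\langle D\mathcal{L}^\circ_\lambda(z), w_{z_0,\lambda_0}\rangle>\tfrac{1}{2}\|D\mathcal{L}^\circ_\lambda(z)\|^2$
persist on some open neighborhood $U_{(z_0,\lambda_0)}\subset M$ of $(z_0,\lambda_0)$.

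Next I would apply the paracompactness of $M$: the open cover $\{U_{(z_0,\lambda_0)}\}$ admits a locally finite refinement $\{U_\alpha\}_{\alpha\in A}$ on which constant vectors $w_\alpha\in H^0$ work, together with a smooth partition of unity $\{\rho_\alpha\}$ subordinate to it. Define first
$$
\widetilde{\mathscr{V}}_\lambda(z)=\sum_{\alpha\in A}\rho_\alpha(z,\lambda)\, w_\alpha,
$$
which is smooth on $M$ and, by convexity of the two defining inequalities in $w$, satisfies $\|\widetilde{\mathscr{V}}_\lambda(z)\|<2\|D\mathcal{L}^\circ_\lambda(z)\|$ and $\langle D\mathcal{L}^\circ_\lambda(z),\widetilde{\mathscr{V}}_\lambda(z)\rangle>\tfrac{1}{2}\|D\mathcal{L}^\circ_\lambda(z)\|^2$ pointwise. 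To make it $G$-equivariant, I would then average over the compact Lie group $G$ using the normalized Haar measure $d\mu_G$:
$$
\mathscr{V}_\lambda(z)=\int_G g^{-1}\cdot \widetilde{\mathscr{V}}_\lambda(g\cdot z)\, d\mu_G(g).
$$
Because $G$ acts orthogonally on $H^0$, and because $\mathcal{L}_\lambda$, $\mathcal{F}$ and $\mathcal{G}$ are $G$-invariant—so by uniqueness in Theorem~\ref{th:S.4.3} the map $\psi(\lambda,\cdot)$ and hence $\mathcal{L}^\circ_\lambda$ are $G$-invariant and $D\mathcal{L}^\circ_\lambda$ is $G$-equivariant—the two pseudo-gradient inequalities are preserved under the averaging (via Jensen's inequality for $\|\cdot\|$ and linearity of the inner product), yielding exactly the required bounds $\|\mathscr{V}_\lambda(z)\|\le 2\|D\mathcal{L}^\circ_\lambda(z)\|$ and $\langle D\mathcal{L}^\circ_\lambda(z),\mathscr{V}_\lambda(z)\rangle\ge \tfrac{1}{2}\|D\mathcal{L}^\circ_\lambda(z)\|^2$. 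Smoothness in $(z,\lambda)$ is preserved because the integrand depends smoothly on the parameters and $G$ is compact.

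The subtle point, and the one I would address most carefully, is the simultaneous smoothness in $z$ and in $\lambda$: in the standard Palais construction only smoothness in $z$ is needed, but here the partition of unity must live on the product $M$ rather than on a single slice, and the local test vectors $w_\alpha$ must be chosen in $H^0$ (not in $H$) so that averaging over the $G$-action on $H^0$ keeps $\mathscr{V}_\lambda(z)\in H^0$ and so that the $G$-equivariance of $D\mathcal{L}^\circ_\lambda$ directly yields the averaged inequalities. The fact that $M$ excludes the critical orbit $\{\theta\}$ guarantees $\|D\mathcal{L}^\circ_\lambda(z)\|>0$ throughout the construction, so no rescaling issues arise.
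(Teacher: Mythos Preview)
Your proposal is correct and follows essentially the same route as the paper: build local constant test vectors using continuity of $(z,\lambda)\mapsto D\mathcal{L}^\circ_\lambda(z)$, glue them via a smooth partition of unity on the product space $M$, and then average over $G$ with Haar measure to achieve equivariance. Your version is in fact slightly cleaner in that you choose the local vectors $w_\alpha$ in $H^0$ from the start and spell out why the two pseudo-gradient inequalities survive averaging, whereas the paper writes ``unit vector $v_{(z,\lambda)}\in H$'' and leaves these verifications implicit.
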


For completeness we are also to give the proof it, which is postponed after the
proof of this theorem.

When $\theta\in H^0$ is a local minimizer of $\mathcal{L}^\circ_{\lambda^\ast}$
(and so a strict local minimizer as noted above),  (\ref{e:Bi.3.1}) and (\ref{e:Bi.3.2}) are satisfied.
Let $c=\mathcal{L}^\circ_{\lambda^\ast}(\theta)$. We can choose $\varepsilon>0$
so small that $\mathcal{W}_\varepsilon=\{z\in B_H(\theta, \epsilon)\cap H^0\,|\,
\mathcal{L}^\circ_{\lambda^\ast}(z)<c+\varepsilon\}$ is a contractible neighborhood of $\theta$.
Obverse that $\mathcal{W}_\varepsilon$ is $G$-invariant and the flow of $-\mathscr{V}_{\lambda^\ast}$
preserves $\mathcal{W}_\varepsilon$. (Actually, it is not hard to construct a deformation contraction from $\mathcal{W}_\varepsilon$  to $\theta$ with the flow of $-\mathscr{V}_{\lambda^\ast}$).
Since $\theta\in H^0$ is a unique critical orbit of the functional
$\mathcal{L}^\circ_\lambda$ in $\mathcal{W}_\varepsilon$
for each  $\lambda\in [\lambda^\ast-2\delta, \lambda^\ast+ 2\delta]$, and
 $\{\mathcal{L}^\circ_\lambda\,|\,\lambda\in [\lambda^\ast-2\delta, \lambda^\ast+ 2\delta]\}$ satisfies the (PS) condition, by the theorem on continuous dependence
of solutions of ordinary differential equations on initial values and parameters we may deduce
that there exists $0<\bar\delta<2\delta$ such that
 the flow of $-\mathscr{V}_{\lambda}$ also preserves $\mathcal{W}_\varepsilon$
 for each $\lambda\in [\lambda^\ast-\bar\delta,\lambda^\ast+\bar\delta]$.
  For any $\lambda\in (\lambda^\ast, \lambda^\ast+ \bar\delta]$,
since $\theta\in H^0$ is a local maximizer of  $\mathcal{L}^\circ_{\lambda}$ by (\ref{e:Bi.3.2}),
  $\mathcal{L}^\circ_{\lambda}$ must have a minimal critical orbit $\mathcal{O}\ne\{\theta\}$.
 Note that Theorems~3.1,~3.2 also hold in our case by Corollaries~\ref{cor:S.6.5},
 ~\ref{cor:S.6.6}. Repeating the other arguments in the proof of \cite[page 220]{Wa}
we may verify that for $\lambda\in (\lambda^\ast, \lambda^\ast+\delta]$ close to $\lambda^\ast$,
$\mathcal{L}^\circ_{\lambda}$ has three critical orbits.

Similarly, if (\ref{e:Bi.3.3}) and (\ref{e:Bi.3.4}) hold, $\mathcal{L}^\circ_{\lambda}$ has three critical orbits
for each $\lambda\in [\lambda^\ast-\delta, \lambda^\ast)$ close to $\lambda^\ast$.

 By considering $-\mathcal{F}''(\theta)$
we get the conclusion if $\mathcal{F}''(\theta)$
is negative definite.

\underline{Next, we consider the case that the condition (c) holds.}

If $\theta\in H^0$ is a local
minimizer of $\mathcal{L}^\circ_{\lambda^\ast}$, by (\ref{e:Bi.2.23}) and (\ref{e:Bi.2.24}) we get
\begin{eqnarray}
&&\theta\in H^0\;\hbox{is a local minimizer of}\;\mathcal{L}^\circ_{\lambda},\quad
\forall \lambda\in [\lambda^\ast-\delta, \lambda^\ast],\label{e:Bi.3.5}\\
&&\theta\in H^0\;\hbox{is a local maximizer of}\;\mathcal{L}^\circ_{\lambda},\quad
\forall \lambda\in (\lambda^\ast, \lambda^\ast+\delta].\label{e:Bi.3.6}
\end{eqnarray}

If $\theta\in H^0$ is a local maximizer of $\mathcal{L}^\circ_{\lambda^\ast}$, by (\ref{e:Bi.2.25}) and (\ref{e:Bi.2.26}) we have
\begin{eqnarray}
&&\theta\in H^0\;\hbox{is a local maximizer of}\;\mathcal{L}^\circ_{\lambda},\quad
\forall \lambda\in [\lambda^\ast-\delta, \lambda^\ast],\label{e:Bi.3.7}\\
&&\theta\in H^0\;\hbox{is a local minimizer of}\;\mathcal{L}^\circ_{\lambda},\quad
\forall \lambda\in (\lambda^\ast, \lambda^\ast+\delta].\label{e:Bi.3.8}
\end{eqnarray}

 If $\theta\in  H^0$ is neither a local  maximizer nor a local
minimizer of $\mathcal{L}^\circ_{\lambda^\ast}$, (iii) will occur by the stability of critical groups implies  as before. The proofs of the first two cases are as above.

Finally, we prove 3).  As in 2) we only consider the case that $\mathcal{F}''(\theta)$
is positive definite. The other cases can be proved similarly. We assume that any one of (i)-(iii) does not occur. Then
we have either (\ref{e:Bi.3.1})-(\ref{e:Bi.3.2}) or (\ref{e:Bi.3.1})-(\ref{e:Bi.3.2}).
In these two cases we obtain that
$\theta\in H^0$ is a local maximizer of $\mathcal{L}^\circ_{\lambda}$
for any $\lambda\in (\lambda^\ast, \lambda^\ast+ 2\delta]$
(resp. for all $\lambda\in [\lambda^\ast, \lambda^\ast+2\delta]$).
Note $\nu_{\lambda^\ast}=\dim H_{n_0}$. It follows (or from Theorem~\ref{th:S.5.4}) that
$$
C_q(\mathcal{L}^\circ_{\lambda},\theta;{\bf K})=
\delta_{q\nu_{\lambda^\ast}}{\bf K},\quad q=0,1,\cdots
$$
for any $\lambda\in (\lambda^\ast, \lambda^\ast+ 2\delta]$
(resp. for all $\lambda\in [\lambda^\ast, \lambda^\ast+2\delta]$).
 These show that $\theta$ is essentially the same as nondegenerate
 critical point of $\mathcal{L}^\circ_{\lambda}$ with Morse index
 $\nu_{\lambda^\ast}=\dim H_{n_0}$. Obverse that the conclusions of
 \cite[Corollary~1.3]{Wa1} also hold in the present case (because
 $X=H_{n_0}$ has finite dimension) though $\mathcal{L}^\circ_{\lambda}$
 is only of class $C^1$. The results in the cases 3.a) and 3.b) follow as in \cite{Wa1}.
 The case 3.c) can be derived from \cite[Theorem~1.3]{Wa1}.
 \hfill$\Box$\vspace{2mm}

\noindent{\it Proof of Lemma~\ref{lem:pseudogradient}}.\quad
 Note that $\lambda\mapsto\mathcal{L}^\circ_\lambda\in C^1(B_H(\theta, \epsilon)\cap H^0)$
is continuous by (\ref{e:S.5.4}), and that $D\mathcal{L}^\circ_\lambda$ has no any zero
point in $(B_H(\theta, \epsilon)\cap H^0\setminus\{\theta\})\times[\lambda^\ast-2\delta, \lambda^\ast+ 2\delta]$. For any given $z\in B_H(\theta, \epsilon)\cap H^0\setminus\{\theta\}$ and $\lambda\in (\lambda^\ast-2\delta, \lambda^\ast+ 2\delta)$ we have an open neighborhood
$O_{(z,\lambda)}$ of $z$ in $B_H(\theta, \epsilon)\cap H^0\setminus\{\theta\}$,
a positive number $r_{(z,\lambda)}$
with $(\lambda-r_{(z,\lambda)}, \lambda+r_{(z,\lambda)})\subset
(\lambda^\ast-2\delta, \lambda^\ast+ 2\delta)$ and
a unit vector $v_{(z,\lambda)}\in H$ such that
for all $(z',\lambda')\in O_{(z,\lambda)}\times (\lambda-r_{(z,\lambda)}, \lambda+r_{(z,\lambda)})$,
$$
\|v_{(z,\lambda)}\|\le 2\|D\mathcal{L}^\circ_{\lambda'}(z')\|\quad\hbox{and}\quad
\langle D\mathcal{L}^\circ_{\lambda'}(z'), v_{(z,\lambda)}\rangle\ge\frac{1}{2}
\|D\mathcal{L}^\circ_{\lambda'}(z')\|^2.
$$
Now all above $O_{(z,\lambda)}\times (\lambda-r_{(z,\lambda)}, \lambda+r_{(z,\lambda)})$
form an open cover $\mathscr{Q}$ of $(B_H(\theta, \epsilon)\cap H^0\setminus\{\theta\})\times(\lambda^\ast-2\delta, \lambda^\ast+ 2\delta)$,
and the latter admits a $C^\infty$-unit decomposition $\{\eta_\alpha\}_{\alpha\in\Xi}$
subordinate to a locally finite
refinement $\{W_\alpha\}_{\alpha\in\Xi}$ of $\mathscr{Q}$.
Since each $W_\alpha$ can be contained in some open subset of form
$O_{(z,\lambda)}\times (\lambda-r_{(z,\lambda)}, \lambda+r_{(z,\lambda)})$, we have
a unit vector $v_\alpha\in H$ such that
$$
\|v_\alpha\|\le 2\|D\mathcal{L}^\circ_{\lambda'}(z')\|\quad\hbox{and}\quad
\langle D\mathcal{L}^\circ_{\lambda'}(z'), v_\alpha\rangle\ge\frac{1}{2}
\|D\mathcal{L}^\circ_{\lambda'}(z')\|^2
$$
for all $(z',\lambda')\in W_\alpha$.
Set $\chi=\sum_{\alpha\in\Xi}\eta_\alpha v_\alpha$. Then it is a smooth map
from $(B_H(\theta, \epsilon)\cap H^0\setminus\{\theta\})\times(\lambda^\ast-2\delta, \lambda^\ast+ 2\delta)$ to $H$, and satisfies
$$
\|\chi(z,\lambda)\|\le 2\|D\mathcal{L}^\circ_{\lambda}(z)\|\quad\hbox{and}\quad
\langle D\mathcal{L}^\circ_{\lambda}(z), \chi(z,\lambda)\rangle\ge\frac{1}{2}
\|D\mathcal{L}^\circ_{\lambda}(z)\|^2
$$
for all $(z,\lambda)\in(B_H(\theta, \epsilon)\cap H^0\setminus\{\theta\})\times(\lambda^\ast-2\delta, \lambda^\ast+ 2\delta)$. Let $d\mu$ denote the right invariant Haar measure on $G$. Define
$$
(B_H(\theta, \epsilon)\cap H^0\setminus\{\theta\})\times(\lambda^\ast-2\delta, \lambda^\ast+ 2\delta)
\ni (z,\lambda)\mapsto\mathscr{V}_\lambda(z)=\int_G g^{-1}\chi(gz,\lambda)d\mu\in H.
$$
It is easily checked that $\mathscr{V}_\lambda$ satisfies requirements.
\hfill$\Box$\vspace{2mm}

The following is a direct generalization of Fadell--Rabinowitz theorems \cite{FaRa1, FaRa2}.

\begin{theorem}\label{th:Bi.3.2}
Under the assumptions of Theorem~\ref{th:Bi.3.1},
if the Lie group  $G$ is equal to $\mathbb{Z}_2$ or $S^1$,
 Then
$(\lambda^\ast,\theta)\in\mathbb{R}\times U$ is a bifurcation point  for the equation
(\ref{e:Bi.2.7.3}),  and if $\dim H_{n_0}\ge 2$ and the unit sphere in
$H^0=H_{n_0}$ is not a $G$-orbit we must get one of the following alternatives:
\begin{description}
\item[(i)] $(\lambda^\ast,\theta)$ is not an isolated solution of (\ref{e:Bi.2.7.3}) in
 $\{\lambda^\ast\}\times U$;

\item[(ii)] there exists a sequence $\{\kappa_n\}_{n\ge 1}\subset\mathbb{R}\setminus\{\lambda^\ast\}$
such that $\kappa_n\to\lambda^\ast$ and that for each $\kappa_n$ the equation
(\ref{e:Bi.2.7.3}) with $\lambda=\kappa_n$ has infinitely many $G$-orbits of solutions converging to
$\theta\in H$;

\item[(iii)] there exist left and right  neighborhoods $\Lambda^-$ and $\Lambda^+$ of $\lambda^\ast$ in $\mathbb{R}$
and integers $n^+, n^-\ge 0$, such that $n^++n^-\ge\dim H^0=\dim H_{n_0}$
and for $\lambda\in\Lambda^-\setminus\{\lambda^\ast\}$ (resp. $\lambda\in\Lambda^+\setminus\{\lambda^\ast\}$),
(\ref{e:Bi.2.7.3}) has at least $n^-$ (resp. $n^+$) distinct critical
$G$-orbits different from $\theta$, which converge to
zero $\theta$ as $\lambda\to\lambda^\ast$.
\end{description}
\end{theorem}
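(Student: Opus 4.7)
My plan is to follow the strategy of Theorems~\ref{th:Bi.2.4} and \ref{th:Bi.3.1}: reduce the bifurcation problem to a finite-dimensional $G$-equivariant one and then invoke Fadell--Rabinowitz cohomological index theory. First, I would apply the parameterized splitting theorem (Theorem~\ref{th:S.5.3}) to $\mathcal{L}:=\mathcal{F}-\lambda^{*}\mathcal{G}$ with the single auxiliary functional $\mathcal{G}_{1}:=-\mathcal{G}$ and parameter $t:=\lambda-\lambda^{*}\in[-\delta,\delta]$. Since $\mathcal{F}$ and $\mathcal{G}$ are $G$-invariant and the $G$-action is orthogonal, the equivariance statements in Theorem~\ref{th:S.5.3} apply: $H^{0}=H_{n_{0}}$ is a $G$-invariant subspace, the implicit map $\psi(t,\cdot)$ and the homeomorphism $\Phi_{t}$ are $G$-equivariant, and the reduced $C^{1}$ functional
\[
\mathcal{L}^{\circ}_{\lambda}(z):=\mathcal{F}(z+\psi(\lambda-\lambda^{*},z))-\lambda\,\mathcal{G}(z+\psi(\lambda-\lambda^{*},z)),\qquad z\in B_{H^{0}}(\theta,\epsilon),
\]
is $G$-invariant. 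Critical orbits of $\mathcal{F}-\lambda\mathcal{G}$ near $\theta\in H$ are then in $G$-equivariant bijection with critical orbits of $\mathcal{L}^{\circ}_{\lambda}$ near $\theta\in H^{0}$, and the shifting theorem (Theorem~\ref{th:S.5.4}) relates the critical groups up to a dimension shift.

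Next, supposing neither (i) nor (ii) holds, I would shrink $\delta,\epsilon>0$ so that $\theta$ is the only critical orbit of $\mathcal{L}^{\circ}_{\lambda}$ in $B_{H^{0}}(\theta,\epsilon)$ for every $\lambda\in[\lambda^{*}-\delta,\lambda^{*}+\delta]$. Repeating the Morse-index computations (\ref{e:Bi.2.12}) or (\ref{e:Bi.2.20}) (depending on which of conditions (a), (b), (c) holds) together with Theorem~\ref{th:S.5.4} and the characterization (\ref{e:Bi.2.17}) of extrema in finite dimensions, and arguing exactly as in the derivation of (\ref{e:Bi.3.1})--(\ref{e:Bi.3.8}), I would obtain one-sided neighborhoods $\Lambda^{+}$ and $\Lambda^{-}$ of $\lambda^{*}$ on one of which $\theta\in H^{0}$ is a strict local maximizer of $\mathcal{L}^{\circ}_{\lambda}$ and on the other a strict local minimizer; strictness is forced by the assumption that $\theta$ is an isolated critical orbit.

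The heart of the proof is then an equivariant minimax construction on each side of $\lambda^{*}$. For $\lambda\in\Lambda^{+}\setminus\{\lambda^{*}\}$, where $\theta$ is a strict local maximizer of $\mathcal{L}^{\circ}_{\lambda}$, I would choose $\rho>0$ with $\mathcal{L}^{\circ}_{\lambda}<\mathcal{L}^{\circ}_{\lambda}(\theta)$ on $\overline{B_{H^{0}}(\theta,\rho)}\setminus\{\theta\}$, then extend $\mathcal{L}^{\circ}_{\lambda}$ outside $B_{H^{0}}(\theta,2\rho)$ to a $G$-invariant coercive $C^{1}$ functional on $H^{0}$ (which automatically satisfies the (PS) condition since $\dim H^{0}<\infty$). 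Using the $G$-equivariant pseudo-gradient vector field supplied by Lemma~\ref{lem:pseudogradient} to run $G$-equivariant deformations in the $C^{1}$ setting, and writing $i_{G}$ for the Fadell--Rabinowitz cohomological index of the group $G\in\{\mathbb{Z}_{2},S^{1}\}$ acting on $H^{0}\setminus\{\theta\}$ (which is genuinely nontrivial since, by hypothesis, the unit sphere of $H^{0}$ is not a single $G$-orbit), I would introduce the standard minimax levels
\[
c_{k}(\lambda)=\sup_{A\in\mathcal{A}_{k}}\min_{z\in A}\mathcal{L}^{\circ}_{\lambda}(z),\qquad k=1,\ldots,i_{G}(\partial B_{H^{0}}(\theta,\rho)),
\]
where $\mathcal{A}_{k}$ is the family of compact $G$-invariant subsets of $\partial B_{H^{0}}(\theta,\rho)$ of index at least $k$. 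Each $c_{k}(\lambda)<\mathcal{L}^{\circ}_{\lambda}(\theta)$ is a critical value of $\mathcal{L}^{\circ}_{\lambda}$, and the multiplicity lemma of Fadell--Rabinowitz index theory (adapted to $C^{1}$ functionals via Lemma~\ref{lem:pseudogradient}) produces, when counted with multiplicities in the sense of \cite[Definition~1.3]{Wa1}, at least $i_{G}(\partial B_{H^{0}}(\theta,\rho))$ distinct critical $G$-orbits different from $\theta$. A symmetric argument on $\Lambda^{-}$ (carried out on $-\mathcal{L}^{\circ}_{\lambda}$ to convert the local-minimum into a local-maximum configuration) yields the analogous count $n^{-}$, and the hypothesis that the unit sphere of $H^{0}$ is not a $G$-orbit forces the total $n^{+}+n^{-}\ge\dim H_{n_{0}}$.

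The principal obstacle will be transferring the classical $C^{2}$-based Fadell--Rabinowitz machinery, especially the deformation and multiplicity lemmas of index theory, to the $C^{1}$ setting in which $\mathcal{L}^{\circ}_{\lambda}$ lives. I would handle this by replacing all appeals to second derivatives with arguments based on the $G$-equivariant pseudo-gradient of Lemma~\ref{lem:pseudogradient}, the stability of critical groups, and the continuity of $\lambda\mapsto\mathcal{L}^{\circ}_{\lambda}\in C^{1}(\overline{B_{H^{0}}(\theta,\epsilon)})$ noted after Proposition~\ref{prop:S.5.2}. A minor technical point is the uniformity of $\rho$ and of the minimax levels as $\lambda$ varies in $\Lambda^{\pm}$, which is secured by the (PS) property for the family $\{\mathcal{L}_{\lambda}\}$ established in Step~1 of the proof of Theorem~\ref{th:S.4.3}.
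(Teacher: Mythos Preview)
Your reduction to the $G$-invariant finite-dimensional functional $\mathcal{L}^\circ_\lambda$ and the dichotomy ``strict local max on one side / strict local min on the other'' are set up correctly and match the paper. The gap is in the minimax step.

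The paper does \emph{not} run a minimax over index-$k$ subsets of the full sphere $\partial B_{H^0}(\theta,\rho)$ on each side. Instead it builds, via the pseudo-gradient flow of $\mathcal{L}^\circ_{\lambda^\ast}$ on a fixed invariant neighborhood $\overline{\mathscr{Q}}$, the exit/entry sets $T^+,T^-\subset\partial\overline{\mathscr{Q}}$, and uses the Fadell--Rabinowitz subadditivity
\[
i_G(T^+)+i_G(T^-)\ \ge\ i_G(\partial\overline{\mathscr{Q}})\quad(\hbox{$=\dim H^0$ for $\mathbb{Z}_2$, $=\tfrac12\dim H^0$ for $S^1$}).
\]
On the side where $\theta$ is a strict local minimizer the minimax levels $c_j$ from \cite[(2.13)]{FaRa1} (resp.\ \cite[(8.56)]{FaRa2}) yield $i_G(T^-)$ nontrivial orbits; on the other side, applying the same to $-\mathcal{L}^\circ_\lambda$ gives $i_G(T^+)$ orbits. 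It is this splitting that produces $n^++n^-\ge\dim H^0$.

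Your scheme, by contrast, would give $n^\pm=i_G(\partial B_{H^0}(\theta,\rho))$ on \emph{each} side, hence $n^++n^-\ge 2\dim H^0$ for $\mathbb{Z}_2$, which is false in general (take $\mathcal{L}^\circ_\lambda(z)\approx(\lambda^\ast-\lambda)\|z\|^2$: on the local-minimum side $\theta$ can be the only critical point near $0$, so $n^-=0$). Concretely, your construction fails at two points. First, $c_k(\lambda)=\sup_{A\in\mathcal{A}_k}\min_A\mathcal{L}^\circ_\lambda$ over subsets of a fixed sphere is not a flow-invariant minimax class, so there is no deformation argument forcing $c_k(\lambda)$ to be a critical value. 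Second, after your coercive extension outside $B_{H^0}(\theta,2\rho)$ any critical points produced by Ljusternik--Schnirelmann theory may lie in the modified region and bear no relation to $\mathcal{L}^\circ_\lambda$ or to the original equation. The sentence ``the hypothesis that the unit sphere of $H^0$ is not a $G$-orbit forces $n^++n^-\ge\dim H_{n_0}$'' is not an argument: that hypothesis only guarantees the index is at least $2$, it says nothing about additivity. Finally, the theorem asserts \emph{distinct} critical $G$-orbits, not orbits counted with Wang's multiplicity; that counting belongs to part 3) of Theorem~\ref{th:Bi.3.1}, a different statement.

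To repair the argument you must replace your sphere-minimax by the Fadell--Rabinowitz levels built from $T^-$ (resp.\ $T^+$), i.e.\ prove the analogues of Lemmas~\ref{lem:Bi.3.3} and \ref{lem:Bi.3.4} in the $C^1$ setting via the pseudo-gradient of Lemma~\ref{lem:pseudogradient}, exactly as the paper does.
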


Different from Theorem~\ref{th:Bi.3.1}, if $(\lambda^\ast,\theta)$ is not an isolated solution of (\ref{e:Bi.2.7.3}) in
 $\{\lambda^\ast\}\times U$, Theorem~\ref{th:Bi.3.2} implies that
 there exist sequences $\lambda^+_k\downarrow\lambda^\ast$ and $\lambda^-_k\uparrow\lambda^\ast$
 such that for each $k\in\mathbb{N}$, the numbers of non-trivial critical orbits of
 $\mathcal{L}_{\lambda^+_k}=\mathcal{F}-\lambda^+_k \mathcal{G}$ plus those of
 non-trivial critical orbits of $\mathcal{L}_{\lambda^-_k}$ in $U$ are at least $\dim H^0$.
 Moreover, these non-trivial critical orbits converge $\theta$ as $k\to\infty$.
Note also that the count method for critical orbits in 3) of Theorem~\ref{th:Bi.3.1}
is not usual as the present one.\\

\noindent{\it Proof of Theorem~\ref{th:Bi.3.2}}. \quad We assume that the cases (i) and (ii) do not occur.
Moreover, we only consider cases:
{\bf 1)}  $\mathcal{F}''(\theta)$
is  positive definite, {\bf 2)}  (c) holds and
$\mathcal{F}''(\theta)$
 is  positive definite  on $H^0$.
In these cases,  by the proofs of Theorems~\ref{th:Bi.2.4},\ref{th:Bi.3.1}, we obtain:\\
{\bf A)}  if  $\theta\in H^0$ is a local minimizer of $\mathcal{L}^\circ_{\lambda^\ast}$,
i.e., $C_q(\mathcal{L}^\circ_{\lambda^\ast},\theta;{\bf K})=
\delta_{q0}{\bf K}$, then
\begin{eqnarray*}
&&\theta\in H^0\;\hbox{is a local minimizer of}\;\mathcal{L}^\circ_{\lambda},\quad
\forall \lambda\in [\lambda^\ast-\delta, \lambda^\ast],\\
&&\theta\in H^0\;\hbox{is a local maximizer of}\;\mathcal{L}^\circ_{\lambda},\quad
\forall \lambda\in (\lambda^\ast, \lambda^\ast+\delta];
\end{eqnarray*}
{\bf B)}  if  $\theta\in  H^0$
is a local  maximizer of $\mathcal{L}^\circ_{\lambda^\ast}$, i.e., $C_q(\mathcal{L}^\circ_{\lambda^\ast},\theta;{\bf K})=
\delta_{q\nu_{\lambda^\ast}}{\bf K}$, then
\begin{eqnarray*}
&&\theta\in H^0\;\hbox{is a local minimizer  of}\;\mathcal{L}^\circ_{\lambda},\quad
\forall \lambda\in [\lambda^\ast-\delta, \lambda^\ast),\\
&&\theta\in H^0\;\hbox{is a local maximizer of}\;\mathcal{L}^\circ_{\lambda},\quad
\forall \lambda\in [\lambda^\ast, \lambda^\ast+\delta];
\end{eqnarray*}
{\bf C)}  if  $\theta\in  H^0$
is neither a local  maximizer of $\mathcal{L}^\circ_{\lambda^\ast}$ nor
a local  maximizer of it, i.e., $C_q(\mathcal{L}^\circ_{\lambda^\ast},\theta;{\bf K})=0$
for $q=0, \nu_{\lambda^\ast}$, then
\begin{eqnarray*}
&&\theta\in H^0\;\hbox{is a local minimizer  of}\;\mathcal{L}^\circ_{\lambda},\quad
\forall \lambda\in [\lambda^\ast-\delta, \lambda^\ast),\\
&&\theta\in H^0\;\hbox{is a local maximizer of}\;\mathcal{L}^\circ_{\lambda},\quad
\forall \lambda\in (\lambda^\ast, \lambda^\ast+\delta].
\end{eqnarray*}

Let us shrink $\epsilon>0$  in (\ref{e:Bi.2.15})
such that $\theta$ is the only critical point of
$\mathcal{L}^\circ_{\lambda}$ in $B_H(\theta, \epsilon)\cap H^0$.
 Since $\lambda\mapsto\mathcal{L}^\circ_\lambda\in C^1(B_H(\theta, \epsilon)\cap H^0)$
is continuous by (\ref{e:S.5.4}), it is easy to see that
$$
R_{\delta,\epsilon}:=\{(\lambda, z)\in [\lambda^\ast-\delta, \lambda^\ast+\delta]\times (B_H(\theta, \epsilon)\cap H^0)\,|\,D\mathcal{L}^\circ_\lambda(z)\ne \theta\}$$
is an open subset in $[\lambda^\ast-\delta, \lambda^\ast+\delta]\times (B_H(\theta, \epsilon)\cap H^0)$, and
$$
R_{\delta,\epsilon}=\{(\lambda, z)\in [\lambda^\ast-\delta, \lambda^\ast+\delta]\times B_H(\theta, \epsilon)\cap H^0\,|\,z\in (B_H(\theta, \epsilon)\cap H^0)\setminus K(\mathcal{L}^\circ_\lambda)\},
$$
where $K(\mathcal{L}^\circ_\lambda)$ denotes the critical set of $\mathcal{L}^\circ_\lambda$.
As in the proof of Lemma~\ref{lem:pseudogradient}
we can produce a smooth map, $R_{\delta,\epsilon}\to H^0,\;(\lambda,z)\mapsto\mathscr{V}_\lambda(z)$,
such that each
$$
\mathscr{V}_\lambda: B_H(\theta, \epsilon)\cap H^0\setminus K(\mathcal{L}^\circ_\lambda)
\to H^0,\;z\mapsto\mathscr{V}_\lambda(z)
$$
is a $G$-equivariant $C^{\infty}$ pseudo-gradient vector field for
$\mathcal{L}^\circ_\lambda$.

Replacing  \cite[(11.1)]{Rab} (or \cite[(2.4)]{FaRa1}) for $G=\mathbb{Z}_2$,
and  \cite[(8.19)]{FaRa2} for $G=S^1$ by
\begin{equation}\label{e:Bi.3.9}
\frac{d\varphi_\lambda}{ds}=-\mathscr{V}_{\lambda}(\varphi_\lambda),\quad\varphi_\lambda(0,z)=z,
\end{equation}
we can repeat the constructions in \cite[\S1]{Rab} and \cite[\S8]{FaRa1} to obtain:

\begin{lemma}\label{lem:Bi.3.3}
There is a $G$-invariant open neighborhood $\mathscr{Q}$ of $\theta$ in $H^0$
with compact closure  $\overline{\mathscr{Q}}$ contained in
$B_H(\theta, \epsilon)\cap H^0$ such that
for every $\lambda$ close to $\lambda^\ast$, every $c\in\mathbb{R}$
and every $\tau_1>0$, every $G$-neighborhood $U$ of
$K_{\lambda,c}:=K(\mathcal{L}^\circ_\lambda)\cap\{z\in \overline{\mathscr{Q}}\,|\,
\mathcal{L}^\circ_\lambda(z)\le c\}$ there exists an $\tau\in (0,\tau_1)$ and
a $G$ equivariant homotopy $\eta:[0, 1]\times \overline{\mathscr{Q}}\to \overline{\mathscr{Q}}$
with the following properties:
\begin{description}
\item[$1^\circ$] $\eta(t,z)=z$ if $z\in \overline{\mathscr{Q}}\setminus (\mathcal{L}^\circ_\lambda)^{-1}[c-\tau_1, c+\tau_1]$;
\item[$2^\circ$] $\eta(t,\cdot)$ is homeomorphism of $\overline{\mathscr{Q}}$
to $\eta(t, \overline{\mathscr{Q}})$ for each $t\in [0,1]$;
 \item[$3^\circ$] $\eta(1, A_{\lambda,c+\tau}\setminus U)\subset A_{\lambda,c-\tau})$,
 where $A_{\lambda, d}:=\{z\in \overline{\mathscr{Q}}\,|\, \mathcal{L}^\circ_\lambda(z)\le d\}$;
\item[$4^\circ$] if $K_{\lambda,c}=\emptyset$, $\eta(1, A_{\lambda,c+\tau})\subset A_{\lambda,c-\tau})$.
\end{description}
\end{lemma}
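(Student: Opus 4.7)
\textbf{Proof plan for Lemma~\ref{lem:Bi.3.3}.}\quad The plan is to imitate the classical deformation argument (as in \cite[\S1]{Rab} and \cite[\S8]{FaRa1}), replacing the ODE \cite[(11.1)]{Rab}/\cite[(8.19)]{FaRa2} by our pseudo-gradient flow (\ref{e:Bi.3.9}). The key point is that we have already arranged that $\{\mathcal{L}^\circ_\lambda\,|\,\lambda\in[\lambda^\ast-2\delta,\lambda^\ast+2\delta]\}$ satisfies the (PS) condition on $B_H(\theta,\epsilon)\cap H^0$ (since $\dim H^0<\infty$), that $\theta$ is the unique critical point of $\mathcal{L}^\circ_\lambda$ there, and that we have a $G$-equivariant smooth pseudo-gradient $\mathscr{V}_\lambda$ on $B_H(\theta,\epsilon)\cap H^0\setminus K(\mathcal{L}^\circ_\lambda)$ depending smoothly on $\lambda$.

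First I would fix a small $\rho>0$ with $\bar B_{H^0}(\theta,2\rho)\subset B_H(\theta,\epsilon)\cap H^0$ and set $\mathscr{Q}:=B_{H^0}(\theta,\rho)$ (it is automatically $G$-invariant since the $G$-action on $H$ is orthogonal, and its closure is compact because $\dim H^0<\infty$). Next, given $\lambda$ close to $\lambda^\ast$, $c\in\mathbb{R}$, $\tau_1>0$ and a $G$-neighborhood $U$ of $K_{\lambda,c}$, I would choose $\tau\in(0,\tau_1)$ so small that the usual ``quantitative deformation'' inequality holds: there is a constant $\alpha>0$ so that $\|D\mathcal{L}^\circ_\lambda(z)\|\ge\alpha$ on the set $\{z\in\overline{\mathscr{Q}}\setminus U:|\mathcal{L}^\circ_\lambda(z)-c|\le 2\tau\}$. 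This step uses the (PS) condition together with the fact that $K_{\lambda,c}\cap\overline{\mathscr{Q}}$ is compact and contained in $U$; if this set is empty we drop the $U$-exclusion, yielding $4^\circ$.

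Next I would introduce two Urysohn type $G$-invariant smooth cut-off functions $\chi_1,\chi_2:\overline{\mathscr{Q}}\to[0,1]$: $\chi_1=1$ on $(\mathcal{L}^\circ_\lambda)^{-1}[c-\tau,c+\tau]$ and $\chi_1=0$ outside $(\mathcal{L}^\circ_\lambda)^{-1}[c-\tau_1,c+\tau_1]$; and $\chi_2=0$ near $K_{\lambda,c}$ (inside a slightly smaller neighborhood contained in $U$) and $\chi_2=1$ outside $U$. Their $G$-invariance is automatic from that of $\mathcal{L}^\circ_\lambda$ and the $G$-invariance of $U$ (if $U$ is not $G$-invariant one first replaces it by its $G$-saturation, which is open since the $G$-action is by homeomorphisms). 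Setting
\[
W_\lambda(z)=-\chi_1(z)\chi_2(z)\frac{\mathscr{V}_\lambda(z)}{\|\mathscr{V}_\lambda(z)\|^2+1}
\]
on $\overline{\mathscr{Q}}$, extended by zero at critical points (which is legal because $\chi_2$ vanishes there), one obtains a bounded locally Lipschitz $G$-equivariant vector field. I would then define $\eta(t,z)$ as the time-$t$ flow of $T\cdot W_\lambda$ for a time $T=T(\tau,\alpha)$ chosen as in the classical proof so that any orbit that begins in $A_{\lambda,c+\tau}\setminus U$ and stays in the region $\{\chi_1\chi_2\equiv 1\}$ is pushed below level $c-\tau$ in time $T$; orbits that exit this region do so only by entering $U$ or by $\mathcal{L}^\circ_\lambda$ having already dropped below $c-\tau$, and in either case the conclusion $3^\circ$ follows.

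It then remains to verify $1^\circ$--$4^\circ$. Property $1^\circ$ follows since $\chi_1\equiv 0$ outside $(\mathcal{L}^\circ_\lambda)^{-1}[c-\tau_1,c+\tau_1]$; property $2^\circ$ is automatic from ODE uniqueness; $G$-equivariance of $\eta$ is inherited from $W_\lambda$; and $3^\circ,4^\circ$ come from the quantitative estimate of the preceding paragraph. The only subtle point to be careful about is that $\eta$ indeed maps $\overline{\mathscr{Q}}$ into itself; this can be arranged by further multiplying $W_\lambda$ by a $G$-invariant cut-off supported in $B_{H^0}(\theta,3\rho/2)$ and equal to $1$ on $\bar B_{H^0}(\theta,\rho)$, which does not affect $1^\circ$--$4^\circ$ because the flow stays near $\theta$ for small times anyway once $\tau_1$ is small. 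The main obstacle, conceptually, is not the deformation construction itself (which is routine given the pseudo-gradient and (PS)) but ensuring all ingredients (the cut-offs, the pseudo-gradient, and the neighborhood $U$) can be chosen $G$-invariant; for $G=\mathbb{Z}_2$ or $S^1$ this is straightforward by averaging over $G$ with the Haar measure, exactly as was done in the construction of $\mathscr{V}_\lambda$.
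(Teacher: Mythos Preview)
Your approach is the same as the paper's: both simply say to repeat the classical deformation constructions in \cite[\S1]{Rab} and \cite[\S8]{FaRa1,FaRa2} with the gradient flow replaced by the pseudo-gradient flow (\ref{e:Bi.3.9}). The paper gives no further details beyond that one sentence, so your sketch is actually more explicit than what is in the paper.

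That said, there is one genuine gap in your write-up, in the last paragraph. Your device for keeping $\eta$ inside $\overline{\mathscr{Q}}$ does not work as stated. Multiplying $W_\lambda$ by a cut-off equal to $1$ on $\bar B_{H^0}(\theta,\rho)=\overline{\mathscr{Q}}$ and supported in $B_{H^0}(\theta,3\rho/2)$ only ensures trajectories stay inside $\bar B_{H^0}(\theta,3\rho/2)$, not inside $\overline{\mathscr{Q}}$ itself; nothing prevents a point on $\partial\mathscr{Q}$ from being pushed outward into the annulus where the cut-off is strictly between $0$ and $1$. And the justification ``the flow stays near $\theta$ for small times anyway once $\tau_1$ is small'' is not available to you: $\tau_1>0$ is \emph{given}, not at your disposal to shrink. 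In the Rabinowitz/Fadell--Rabinowitz constructions the set $\bar Q$ is not an arbitrary ball; its invariance under the truncated flow is obtained differently (in the finite-dimensional setting one can, for instance, extend $\mathcal{L}^\circ_\lambda$ to all of $H^0$ so that it is coercive with $\theta$ still the only critical point, run the deformation on $H^0$, and then choose $\mathscr{Q}$ so that it is preserved by the resulting flow, or argue via a sublevel set as was done in the proof of Theorem~\ref{th:Bi.3.1} with $\mathcal{W}_\varepsilon$). You should replace your final paragraph by one of these standard fixes.
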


For $\ast=+,-$, let $S^\ast
=\{z\in B_H(\theta, \epsilon)\cap H^0\,|\, \psi(s,z)\in B_H(\theta, \epsilon)\cap H^0,\;\forall \ast s>0\}$
and $T^\ast=S^\ast\cap\partial\overline{\mathscr{Q}}$. For $G=\mathbb{Z}_2$ (resp. $S^1$)
let $i_G$ denote the genus in \cite{Rab} (resp. the index in \cite[\S7]{FaRa2}).

\begin{lemma}\label{lem:Bi.3.4}
Both $T^+$ and $T^-$ are $G$-invariant compact subset of $\partial\overline{\mathscr{Q}}$, and also satisfy
\begin{description}
\item[$1^\circ$] $\min\{\mathcal{L}^\circ_\lambda(z)\,|\,z\in T^+\}>0$ and
$\max\{\mathcal{L}^\circ_\lambda(z)\,|\,z\in T^-\}<0$;
\item[$2^\circ$] $i_{\mathbb{Z}_2}(T^+)+ i_{\mathbb{Z}_2}(T^-)\ge\dim H^0$ and
$i_{S^1}(T^+)+ i_{S^1}(T^-)\ge\frac{1}{2}\dim H^0$.
\end{description}
\end{lemma}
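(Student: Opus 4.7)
My plan is to choose $\mathscr{Q}:=B_{H^0}(\theta,r)$ for some small $r\in(0,\epsilon/3)$ so that $\overline{\mathscr{Q}}\subset B_H(\theta,\epsilon)\cap H^0$; its closure and boundary sphere $\partial\overline{\mathscr{Q}}$ are $G$-invariant because $G$ acts orthogonally on the finite-dimensional space $H^0$. After normalizing $\mathcal{L}^\circ_\lambda(\theta)=0$, the $G$-equivariance of the pseudo-gradient $\mathscr{V}_\lambda$ from Lemma~\ref{lem:pseudogradient} makes the flow $\varphi_\lambda(s,\cdot)$ of (\ref{e:Bi.3.9}) equivariant for every $s$, so $S^\pm$ and $T^\pm$ are $G$-invariant. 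Compactness of $T^\pm$ then follows from a standard continuous-dependence argument applied to the flow: limits of forward-invariant (resp.\ backward-invariant) points remain invariant up to the closure, and since $\theta$ is the unique critical point of $\mathcal{L}^\circ_\lambda$ inside the closed ball, a harmless shrinking of $\epsilon$ shows that orbits starting in $T^\pm$ actually stay in a slightly smaller closed ball, so the defining open-ball containment is preserved in the limit.

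For $1^\circ$, fix $z\in T^+$; then $z\ne\theta$ (it lies on $\partial\overline{\mathscr{Q}}$) and the pseudo-gradient property of Lemma~\ref{lem:pseudogradient} gives
\[
\tfrac{d}{ds}\mathcal{L}^\circ_\lambda(\varphi_\lambda(s,z))\le -\tfrac{1}{2}\|D\mathcal{L}^\circ_\lambda(\varphi_\lambda(s,z))\|^2\le 0.
\]
Combined with the (PS) condition on $\{\mathcal{L}^\circ_\mu\}$ established in the proof of Theorem~\ref{th:Bi.3.1}, the relative compactness of the forward orbit in the finite-dimensional ball, and the uniqueness of $\theta$ as a critical point, the classical deformation argument forces $\varphi_\lambda(s,z)\to\theta$ and $\mathcal{L}^\circ_\lambda(\varphi_\lambda(s,z))\downarrow 0$. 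Strict monotonicity (since $z$ is non-critical, so $D\mathcal{L}^\circ_\lambda(z)\ne 0$ and hence $\mathscr{V}_\lambda(z)\ne 0$ at $s=0$) then gives $\mathcal{L}^\circ_\lambda(z)>0$, and compactness of $T^+$ upgrades this to $\min_{T^+}\mathcal{L}^\circ_\lambda>0$. The claim for $T^-$ follows by reversing time.

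The main obstacle is $2^\circ$, where I would argue by subadditivity and continuity of the equivariant index together with an equivariant linking. The crucial lemma to prove is the existence of a $G$-invariant open neighborhood $\mathscr{N}$ of $T^+$ in $\partial\overline{\mathscr{Q}}$ whose complement $\partial\overline{\mathscr{Q}}\setminus\mathscr{N}$ admits a $G$-equivariant continuous map into an arbitrarily small $G$-invariant neighborhood of $T^-$. This is implemented by following, for $z\in\partial\overline{\mathscr{Q}}\setminus T^+$, the forward flow $\varphi_\lambda(\cdot,z)$ up to its (finite, continuously depending) exit time $\tau_+(z)$ from $B_H(\theta,\epsilon)\cap H^0$, and then tracing a backward segment from the exit point to land near $T^-$; the construction is modelled on Rabinowitz \cite[\S1]{Rab} and Fadell-Rabinowitz \cite{FaRa1,FaRa2} (whose flows (11.1), (2.4), (8.19) are precisely what (\ref{e:Bi.3.9}) replaces), and it uses only $C^0$ continuity of $\varphi_\lambda$, so the fact that $\mathcal{L}^\circ_\lambda$ is merely $C^1$ is not an obstacle. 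Granted the retraction, subadditivity and monotonicity of $i_G$ (together with the equality of $i_G$ on a set and on any of its $G$-invariant neighborhoods that deformation-retract onto it) give
\[
i_G(\partial\overline{\mathscr{Q}})\le i_G(\overline{\mathscr{N}})+i_G(\overline{\partial\overline{\mathscr{Q}}\setminus\mathscr{N}})\le i_G(T^+)+i_G(T^-),
\]
and the hypothesis that the unit sphere in $H^0$ is not a $G$-orbit, combined with the orthogonality of the $G$-action, yields the standard values $i_{\mathbb{Z}_2}(\partial\overline{\mathscr{Q}})=\dim H^0$ and $i_{S^1}(\partial\overline{\mathscr{Q}})=\tfrac{1}{2}\dim H^0$, completing the proof of $2^\circ$.
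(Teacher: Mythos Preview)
The paper does not prove this lemma: immediately after stating it, the paper simply records that the two inequalities in $2^\circ$ are \cite[Lemma~2.11]{FaRa1} and \cite[Theorem~8.30]{FaRa2}, and it leaves $G$-invariance, compactness, and $1^\circ$ implicit in the Rabinowitz/Fadell--Rabinowitz framework it has just transplanted (Lemma~\ref{lem:Bi.3.3} and the flow~(\ref{e:Bi.3.9})). So your proposal is more detailed than the paper itself, and your overall plan --- Lyapunov monotonicity along the pseudo-gradient flow for $1^\circ$, and subadditivity of the equivariant index together with the known index of the sphere for $2^\circ$ --- is precisely the strategy of those references.

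There is, however, a genuine gap in your implementation of the key equivariant map for $2^\circ$. You propose to send $z\in\partial\overline{\mathscr{Q}}\setminus T^+$ forward along the flow to its exit point on $\partial B_{H^0}(\theta,\epsilon)$ and then ``trace a backward segment from the exit point to land near $T^-$''. But the backward orbit of that exit point is exactly the trajectory you just traversed; it returns to $z$ and does not approach $T^-$ unless $z\in T^-$ already. Concatenating forward and backward segments of a single orbit cannot manufacture the required equivariant map into a neighborhood of $T^-$. The construction in \cite{Rab,FaRa1,FaRa2} is different and more delicate (one works with the radial projection of the forward orbit back onto $\partial\overline{\mathscr{Q}}$ and controls where it lands, using that $\theta$ is the unique rest point), and since the paper is content to cite those results you should either reproduce that argument faithfully or cite it as the paper does. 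A secondary point: the equality $i_{S^1}(\partial\overline{\mathscr{Q}})=\tfrac12\dim H^0$ needs the $S^1$-action on $H^0\setminus\{\theta\}$ to be fixed-point free; this is not the same hypothesis as ``the unit sphere in $H^0$ is not a $G$-orbit'', so you should invoke the correct assumption there.
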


Two inequalities in $2^\circ$ are \cite[Lemma~2.11]{FaRa1} and \cite[Theorem~8.30]{FaRa2}, respectively.

\noindent{\bf Case $G=\mathbb{Z}_2$}.\quad Suppose $i_{\mathbb{Z}_2}(T^-)=k>0$.
Let $c_j$ be defined by \cite[(2.13)]{FaRa1}, but $\bar{Q}$ and $g(\lambda,v)$
are replaced by $\overline{\mathscr{Q}}$ and $\mathcal{L}^\circ_\lambda(z)$,
respectively. We can modify the proof of (i) on the page 54 of \cite{FaRa1} as follows:

In the above three cases {\bf A)}, {\bf B)} and {\bf C)}, for each
$\lambda\in [\lambda^\ast-\delta, \lambda^\ast)$, $\theta\in H^0$ is always a local (strict) minimizer  of $\mathcal{L}^\circ_{\lambda}$. Therefore for arbitrary sufficiently small $\rho>0$, depending on $\lambda$,
$\mathcal{L}^\circ_{\lambda}(z)>0$ for any $0<\|z\|\le\rho$
and so
$$
c_1\ge \min_{\|z\|=\rho}\mathcal{L}^\circ_{\lambda}(z)>0.
$$
Other arguments are same. Hence we obtain: {\it
if $\lambda\in [\lambda^\ast-\delta, \lambda^\ast)$ is close to $\lambda^\ast$,
$\mathcal{L}^\circ_{\lambda}$ has at least $k$ distinct pairs of nontrivial
critical points, which also converge to $\theta$ as $\lambda\to\lambda^\ast$.}

Since for every $\lambda\in (\lambda^\ast, \lambda^\ast+\delta]$,
$\theta\in H^0$ is a local maximizer of $\mathcal{L}^\circ_{\lambda}$, by considering
$-\mathcal{L}^\circ_{\lambda}$ we get:
if $i_{\mathbb{Z}_2}(T^+)=l>0$, for every $\lambda\in (\lambda^\ast, \lambda^\ast+\delta]$  close to $\lambda^\ast$,
$\mathcal{L}^\circ_{\lambda}$ has at least $l$ distinct pairs of nontrivial
critical points converging to $\theta$ as $\lambda\to\lambda^\ast$.

These two claims together yield the desired result.

\noindent{\bf Case $G=S^1$}.\quad Suppose $i_{S^1}(T^-)=k>0$.
Similarly, for $c_j$ defined by \cite[(8.56)]{FaRa2}, we may replace \cite[(8.58), (8.63)]{FaRa2} by
$$
\mathcal{L}^\circ_{\lambda}(x)\ge \min_{\|z\|=\rho}\mathcal{L}^\circ_{\lambda}(z)>0,\quad\hbox{and so}\quad
c_{\gamma+1}\ge \min_{\|z\|=\rho}\mathcal{L}^\circ_{\lambda}(z)>0,
$$
and then repeat the arguments in \cite[\S8]{FaRa2} to complete the final proof.
Of course, we also use the fact that $\mathcal{L}^\circ_\lambda\to\mathcal{L}^\circ_{\lambda^\ast}$
uniformly on $\overline{\mathscr{Q}}$ as $\lambda\to\lambda^\ast$,
which can be derived from  (\ref{e:S.5.4}).
\hfill$\Box$\vspace{2mm}

Fadell--Rabinowitz theorems in \cite{FaRa1, FaRa2} were also generalized to
the case of arbitrary compact Lie groups for potential operators of $C^2$ functionals
by Bartsch and Clapp \cite{BaCl}, Bartsch \cite{Ba1}.
We now give generalizations of their some results.

Fix a set $\mathcal{A}$ of $G$-spaces, a multiplicative equivariant cohomology theory
$h^\ast$ and an ideal $I$ of the coefficient ring $R=h^\ast(pt)$.
Recall in \cite[Definition~4.1]{Ba1} that
the {\it $(\mathcal{A}, h^\ast, I)$-length} of a $G$-space $X$,
 $(\mathcal{A}, h^\ast, I)$-${\rm length}(X)$, was defined to be the smallest integer $k$ such that
there exist $A_1,\cdots, A_k$ in $\mathcal{A}$ with the following property:
For all $\gamma\in h^\ast(X)$ and for all $\omega_i\in I\cap{\rm kern}(h^\ast(pt)\to h^\ast(A_i))$,
$i=1,\cdots,k$, the product $\omega_1\cdot\ldots\cdot\omega_k\cdot\gamma=0$ in $h^\ast(X)$.
Moreover, set $(\mathcal{A}, h^\ast, I)$-${\rm length}(X)=\infty$ if no such $k$ exists.
 Let $SE$ denote the unit sphere in a Hilbert space $E$, and
 $G$ be a compact Lie group acting on $E$ orthogonally.
Denote by $\mathscr{G}$  the set of orbits occurring on $SE$.
Let $h^\ast$ be any continuous, multiplicative, equivariant cohomology theory such that
${\rm kern}(h^\ast(pt)\to h^\ast(G/H))$ is
a finitely generated ideal for all $G/H\in \mathscr{G}$.
Taking $I=R=h^\ast(pt)$ the $(\mathcal{A}, h^\ast, I)$-length of a $G$-space $X$
becomes the {\it $(\mathscr{G}, h^\ast)$-length} $\ell(X)$ defined in \cite{BaCl}.
For a bounded closed $G$-neighborhood $V$ of the origin in a $G$-module
$E$ it was proved in \cite[Lemma~1.6]{BaCl} that $\ell(\partial V)=\ell(SE)$.
If $G=\mathbb{Z}_2$ (resp. $S^1$) and $h^\ast=H^\ast_G$, $\ell$ becomes
${\rm index}_{\mathbb{R}}$ (resp. ${\rm index}_{\mathbb{C}}$) in \cite{FaRa2}.

\begin{hypothesis}\label{hyp:Bi.3.5}
{\rm  Let $G$ be a compact Lie group acting on $H$ orthogonally,
and $U$ a $G$-invariant open neighborhood of the origin of a real Hilbert space $H$.
Let $\mathcal{F}, \mathcal{G}=\mathcal{G}_1\in C^1(U,\mathbb{R})$ be
as in Theorem~\ref{th:Bi.2.2}, and $G$-invariant.
 Let $\lambda^\ast$ be an isolated eigenvalue of (\ref{e:Bi.2.7.4}), i.e.,
 for each $\lambda\ne\lambda^\ast$ near $\lambda^\ast$ the equation (\ref{e:Bi.2.7.4}) has only trivial solution,
and let $H^0$ be the corresponding eigenspace. (Every  eigenvalue of (\ref{e:Bi.2.7.4}) is isolated if $\mathcal{F}''(\theta)$ is invertible.)
Suppose $H^0\cap {\rm Fix}(G)=\{\theta\}$.}
\end{hypothesis}
As in the proof of Theorem~\ref{th:Bi.2.4},   applying Theorem~\ref{th:S.5.3}
to $\mathcal{L}_{\lambda}=\mathcal{F}-\lambda\mathcal{G}=
\mathcal{L}_{\lambda^\ast}-(\lambda^\ast-\lambda)\mathcal{G}$
with $\lambda\in [\lambda^\ast-\varepsilon, \lambda^\ast+\varepsilon]$
and $-\mathcal{G}$,  we have $\delta\in (0, \varepsilon]$, $\epsilon>0$ and a unique continuous map
$$
 \psi:[\lambda^\ast-\delta, \lambda^\ast+\delta]\times (B_H(\theta,\epsilon)\cap H^0)\to (H^0)^\bot
 $$
 as in (\ref{e:Bi.2.14.1}), such that (\ref{e:Bi.2.14.2}) and (\ref{e:Bi.2.15})--(\ref{e:Bi.2.16})
 hold. But the present  $\psi$ is $G$-equivariant and each
 $\mathcal{L}^\circ_\lambda$ is $G$-invariant.

\begin{hypothesis}\label{hyp:Bi.3.6}
{\rm  Under Hypothesis~\ref{hyp:Bi.3.5},
suppose that the continuous map $\psi$ as in (\ref{e:Bi.2.14.1}) is of class $C^1$
with respect to the second variable, which implies that $d\mathcal{L}^\circ_{\lambda}$ has G\^ateaux
derivative
\begin{eqnarray*}
  d^2\mathcal{L}^\circ_{\lambda}(z)(u,v)&=&(\mathcal{F}''_{\lambda}(z+ \psi(\lambda, z))
   (u+ D_z\psi(\lambda, z)u),v)_H\\
   &-&\lambda(\mathcal{G}''_{\lambda}(z+ \psi(\lambda, z))
   (u+ D_z\psi(\lambda, z)u),v)_H\quad\forall u,v\in H^0
  \end{eqnarray*}
  at every $z\in B_H(\theta, \epsilon)\cap H^0$ by (\ref{e:S.5.12.2}), and therefore
  for all $u,v\in H^0$,
  \begin{equation}\label{e:Bi.3.9.1}
d^2\mathcal{L}^\circ_{\lambda}(\theta)(u,v)=(\lambda^\ast-\lambda)(\mathcal{G}''(\theta)u,v)_H
=(\lambda^\ast-\lambda)(P^0\mathcal{G}''(\theta)|_{H^0}u,v)_H
\end{equation}
 because $\psi(\lambda,\theta)=\theta$
and $D_z\psi({\lambda}, \theta)=\theta$.
Furthermore, we assume that each $\mathcal{L}^\circ_{\lambda}$
is of class $C^2$.}
\end{hypothesis}

Since $\lambda^\ast$ is an isolated  eigenvalue  of (\ref{e:Bi.2.7.4}),
$0$ is an  eigenvalue  of $\mathcal{F}''(\theta)-\lambda_0\mathcal{G}''(\theta)$
with finite multiplicity,   isolated in the spectrum $\sigma(\mathcal{F}''(\theta)-\lambda_0\mathcal{G}''(\theta))$.
Hence for $\lambda$ near $\lambda^\ast$  the
 $0$-group ${\rm eig}_0(\mathcal{F}''(\theta)-\lambda \mathcal{G}''(\theta))$ (cf. Section~\ref{sec:B.1})
 is well-defined. Let $E_\lambda$ be the generalized eigenspace of
 $\mathcal{F}''(\theta)-\lambda \mathcal{G}''(\theta)$ belonging to
 ${\rm eig}_0(\mathcal{F}''(\theta)-\lambda \mathcal{G}''(\theta))\cap\mathbb{R}^-$.
 It is $G$-invariant.
 For $\lambda$ and $\lambda'$
near $\lambda^\ast$ the spaces $E_\lambda$ and $E_{\lambda'}$ are $G$-isomorphic
if $(\lambda-\lambda^\ast)(\lambda'-\lambda^\ast)>0$. (When the latter holds
the orthogonal eigenprojection (cf. \cite[page 181]{Ka} for the definition), $P_\lambda:H\to E_\lambda$,
 restricts to a $G$-isomorphism from $E_{\lambda'}$ onto $E_{\lambda}$.)
 Since $\theta\in H^0$ is a nondegenerate critical point of $\mathcal{L}^\circ_{\lambda}$
 for each $\lambda\ne\lambda^\ast$ near $\lambda^\ast$,  $P^0\mathcal{G}''(\theta)|_{H^0}:H^0\to H^0$ is an isomorphism by (\ref{e:Bi.3.9.1}). Let $H^0_+$ and $H^0_-$ be the positive and negative definite subspaces of $P^0\mathcal{G}''(\theta)|_{H^0}$, respectively. Then $H^0=H^0_+\oplus H^0_-$.
Let $F_\lambda^-$ (resp. $F_\lambda^+$) be the eigenspace belonging to
  $\sigma( d^2\mathcal{L}^\circ_{\lambda}(\theta))\cap\mathbb{R}^-$
(resp.  $\sigma( d^2\mathcal{L}^\circ_{\lambda}(\theta))\cap\mathbb{R}^+$).
Clearly,  $F_\lambda^+$ the orthogonal complement of
 $F_\lambda^-$ in $H^0$, and the spaces $F^-_\lambda$ and $F^-_{\lambda'}$ are $G$-isomorphic
if $(\lambda-\lambda^\ast)(\lambda'-\lambda^\ast)>0$. Hence if $\ell$ is the above
$(\mathscr{G}, h^\ast)$-length, for $\lambda<\lambda^\ast<\mu$ close to $\lambda^\ast$, the number
\begin{equation}\label{e:Bi.3.9.2}
d:=\ell(SH^0)-\min\{\ell(SF_{\lambda}^-)+\ell(SF_{\mu}^+),
 \ell(SF_{\lambda}^+)+\ell(SF_{\mu}^-)
\end{equation}
  is well-defined, and  if $\ell(SV)=c\cdot\dim V$ for every $G$-module $V$ with $V^G=\{\theta\}$ we have
 \begin{equation}\label{e:Bi.3.9.3}
 d=|\ell(SF_{\lambda}^-)-\ell(SF_{\mu}^-)|=c|\dim F_{\lambda}^--\dim F_{\mu}^-|=c|\dim E_{\lambda}-\dim E_{\mu}|
 \end{equation}
 (\cite{BaCl}). Here the final equality comes from the fact that $P^0$ defines a $G$-isomorphism from $E_\lambda$ onto $F_\lambda^-$ (cf. \cite[page 353]{BaCl}). Moreover, it is easy to see that
$F_\lambda^-=H^0_-$ and $F_\lambda^+=H^0_+$ for $\lambda<\lambda^\ast$, and
$F_\lambda^-=H^0_+$ and $F_\lambda^+=H^0_-$ for $\lambda>\lambda^\ast$.
(\ref{e:Bi.3.9.2}) and (\ref{e:Bi.3.9.3}), respectively, become
\begin{eqnarray}
&&d=\ell(SH^0)-2\min\{\ell(SH^0_-), \ell(SH^0_+)),\label{e:Bi.3.9.4}\\
&&d=|\ell(SH^0_-)-\ell(SH^0_+)|=c|\dim H^0_--\dim H^0_+|.\label{e:Bi.3.9.5}
\end{eqnarray}
Having these we may state the following partial generalization of \cite[Theorem~3.1]{BaCl}.

\begin{theorem}\label{th:Bi.3.14}
Under Hypothesis~\ref{hyp:Bi.3.6},  if the number $d$ in (\ref{e:Bi.3.9.2}) or (\ref{e:Bi.3.9.4})  is positive, then
$(\lambda^\ast,\theta)\in\mathbb{R}\times U$ is a bifurcation point  for the equation
(\ref{e:Bi.2.7.3})
 and  one of the following alternatives occurs:
\begin{description}
\item[(i)] $(\lambda^\ast,\theta)$ is not an isolated solution of (\ref{e:Bi.2.7.3}) in
 $\{\lambda^\ast\}\times U$.

\item[(ii)] there exist left and right neighborhoods $\Lambda_l$ and $\Lambda_r$ of $\lambda^\ast$ in $\mathbb{R}$
and integers $i_l, i_r\ge 0$ such that $i_l+i_r\ge d$ and for any $\lambda\in\Lambda_l\setminus\{\lambda^\ast\}$ (resp. $\lambda\in\Lambda_r\setminus\{\lambda^\ast\}$,
(\ref{e:Bi.2.7.3}) has at least $i_l$ (resp. $i_r$) distinct  nontrivial solution orbits,
which  converge to $\theta$ in $H$  as $\lambda\to\lambda^\ast$.
\end{description}
\end{theorem}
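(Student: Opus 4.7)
The plan is to reduce the bifurcation problem on $H$ to a finite-dimensional one on $H^0$ via the $G$-equivariant parameterized splitting theorem (Theorem~\ref{th:S.5.3}), and then to apply a length-based min-max in the spirit of Bartsch--Clapp \cite{BaCl} to the reduced $C^2$ functional $\mathcal{L}^\circ_\lambda$. First I would apply Theorem~\ref{th:S.5.3} with $\mathcal{L} = \mathcal{F} - \lambda^\ast\mathcal{G}$ and perturbation $\mathcal{G}_1 = -\mathcal{G}$: under Hypothesis~\ref{hyp:Bi.3.5} the group $G$ acts orthogonally and $\mathcal{F},\mathcal{G}$ are $G$-invariant, so the implicit map $\psi(\lambda,\cdot)$ and the splitting homeomorphism $\Phi_\lambda$ are $G$-equivariant, and the reduced functional $\mathcal{L}^\circ_\lambda(z)=\mathcal{L}_\lambda(z+\psi(\lambda,z))$ is $G$-invariant on $B_{H^0}(\theta,\epsilon)$; by Hypothesis~\ref{hyp:Bi.3.6} it is of class $C^2$, with Hessian at $\theta$ given by (\ref{e:Bi.3.9.1}). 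The splitting identity (\ref{e:S.5.12}) makes critical orbits of $\mathcal{L}_\lambda$ near $\theta$ correspond one-to-one with critical orbits of $\mathcal{L}^\circ_\lambda$ near $\theta\in H^0$, so $(\lambda^\ast,\theta)$ is a bifurcation point for (\ref{e:Bi.2.7.3}) iff it is one for the reduced equation $\nabla\mathcal{L}^\circ_\lambda(z)=0$.

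Assuming that alternative (i) fails, shrink $\epsilon>0$ so that $\theta$ is the unique critical point of $\mathcal{L}^\circ_{\lambda^\ast}$ in $\overline{B_{H^0}(\theta,\epsilon)}$. Because $\lambda^\ast$ is an isolated eigenvalue of (\ref{e:Bi.2.7.4}), (\ref{e:Bi.3.9.1}) shows that for every $\lambda\ne\lambda^\ast$ sufficiently close to $\lambda^\ast$ the critical point $\theta\in H^0$ of $\mathcal{L}^\circ_\lambda$ is nondegenerate, its negative (resp.\ positive) eigenspace being $G$-isomorphic to $F^\mp_\lambda$, which in turn equals $H^0_-$ or $H^0_+$ depending on $\operatorname{sgn}(\lambda-\lambda^\ast)$. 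By continuity of $\lambda\mapsto\mathcal{L}^\circ_\lambda$ in $C^1(\overline{B_{H^0}(\theta,\epsilon)})$, after shrinking $\delta$ we may also assume $\theta$ is the unique critical point of $\mathcal{L}^\circ_\lambda$ in a fixed $G$-invariant bounded neighborhood $\mathcal{Q}$ of $\theta$ in $H^0$ for all $\lambda\in[\lambda^\ast-\delta,\lambda^\ast+\delta]$.

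Next, I would mimic the Bartsch--Clapp machinery inside $\overline{\mathcal{Q}}\subset H^0$. Introduce the $G$-invariant compact sets
\[
T^\pm_\lambda \;=\; \{\,z\in\partial\mathcal{Q}\,:\,\pm\bigl(\mathcal{L}^\circ_\lambda(z)-\mathcal{L}^\circ_\lambda(\theta)\bigr)>0\,\},
\]
which, for $\lambda<\lambda^\ast<\mu$ close to $\lambda^\ast$, satisfy $\ell(T^-_\lambda)\ge\ell(SF^-_\lambda)$, $\ell(T^+_\mu)\ge\ell(SF^+_\mu)$ (and symmetrically), together with the general estimate $\ell(T^+_\lambda)+\ell(T^-_\lambda)\ge\ell(\partial\mathcal{Q})=\ell(SH^0)$ from \cite[Lemma~1.6]{BaCl}. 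The deformation lemma (Lemma~\ref{lem:Bi.3.3}) is available since $\dim H^0<\infty$ makes the (PS) condition automatic on $\overline{\mathcal{Q}}$ and a $G$-equivariant pseudo-gradient vector field for the $C^1$-family $\mathcal{L}^\circ_\lambda$ exists by Lemma~\ref{lem:pseudogradient}. Using min-max values of the form
\[
c_j(\lambda) \;=\; \inf_{A\in\Sigma_j^-(\lambda)}\;\sup_{z\in A}\;\mathcal{L}^\circ_\lambda(z),\qquad \Sigma_j^-(\lambda)=\{A\subset\overline{\mathcal{Q}}\text{ closed }G\text{-invariant},\, T^-_\lambda\subset A,\, \ell(A)\ge j\},
\]
and the analogous sup-inf on $T^+_\mu$, a standard Ljusternik--Schnirelman argument yields $i_l$ distinct nontrivial critical $G$-orbits of $\mathcal{L}^\circ_\lambda$ for $\lambda$ in a left neighborhood of $\lambda^\ast$ and $i_r$ for $\lambda$ in a right neighborhood, where $i_l+i_r\ge \ell(SH^0)-\min\{\ell(SF^-_\lambda)+\ell(SF^+_\mu),\,\ell(SF^+_\lambda)+\ell(SF^-_\mu)\}\ge d$.

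The main obstacle is ensuring that the critical points produced genuinely converge to $\theta$ as $\lambda\to\lambda^\ast$, rather than cluster elsewhere in $\overline{\mathcal{Q}}$. For this I would exploit the uniform $C^1$-convergence $\mathcal{L}^\circ_\lambda\to\mathcal{L}^\circ_{\lambda^\ast}$ on $\overline{\mathcal{Q}}$ (which follows from (\ref{e:S.5.4}) and the continuity of $\psi$) together with the uniqueness of $\theta$ as a critical point of $\mathcal{L}^\circ_{\lambda^\ast}$: any limit point of a sequence of critical points $z_{\lambda_k}$ with $\lambda_k\to\lambda^\ast$ must be $\theta$. A $\mathcal{Q}$-shrinking argument, coupled with the fact that the min-max levels $c_j(\lambda)$ converge to $\mathcal{L}^\circ_{\lambda^\ast}(\theta)$ as $\lambda\to\lambda^\ast$, then forces all the produced critical orbits into arbitrarily small $G$-neighborhoods of $\theta$. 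Pulling the resulting orbits back through the $G$-equivariant homeomorphism $\Phi_\lambda$ gives nontrivial $G$-orbits of solutions of (\ref{e:Bi.2.7.3}) that converge to $\theta$ in $H$, completing the proof of alternative (ii).
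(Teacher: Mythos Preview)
Your overall strategy---reduce to $H^0$ via the $G$-equivariant parameterized splitting theorem and then run a length-based argument on the finite-dimensional reduction $\mathcal{L}^\circ_\lambda$---is exactly what the paper does. The paper's proof is itself very brief: it sets things up and then defers to \cite[Lemmas~4.1--4.3]{BaCl}. Your proposal, however, departs from both the paper and \cite{BaCl} at a crucial point, and this creates a real gap.

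The sets $T^\pm$ in the paper (Lemma~\ref{lem:Bi.3.15}) are \emph{flow-invariant} sets: $T^+$ (resp.\ $T^-$) consists of those $z\in\partial\overline{\mathscr{Q}}$ whose forward (resp.\ backward) orbit under the negative gradient flow $\chi_\lambda$ of $\mathcal{L}^\circ_\lambda$ stays in $B_{H^0}(\theta,\epsilon)$. It is this dynamical definition that yields property~$2^\circ$ of Lemma~\ref{lem:Bi.3.15}: $T^\pm$ can be flowed into arbitrarily small punctured neighborhoods of $\theta$. Once there, the hyperbolic structure of $\theta$ for $\lambda\ne\lambda^\ast$ (with stable tangent space $F^+_\lambda$) lets one compare $\ell(T^\pm)$ with $\ell(SF^\pm_\lambda)$ \emph{from above}, and it is exactly this upper bound, combined with $\ell(T^+)+\ell(T^-)\ge\ell(SH^0)$, that produces $i_l+i_r\ge d$ via \cite[Lemmas~4.2,~4.3]{BaCl}.

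Your $T^\pm_\lambda$, defined by the \emph{sign of the functional} on $\partial\mathcal{Q}$, do not carry this deformation property. The inequality you assert, $\ell(T^-_\lambda)\ge\ell(SF^-_\lambda)$, would require $F^-_\lambda\cap\partial\mathcal{Q}\subset T^-_\lambda$, i.e.\ that the quadratic part of $\mathcal{L}^\circ_\lambda$ dominates on the fixed sphere $\partial\mathcal{Q}$; but since $d^2\mathcal{L}^\circ_\lambda(\theta)=(\lambda^\ast-\lambda)P^0\mathcal{G}''(\theta)|_{H^0}$ degenerates as $\lambda\to\lambda^\ast$, this fails for $\mathcal{Q}$ chosen independently of $\lambda$. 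More importantly, even if the inequality held, it points the wrong way: lower bounds $\ell(T^-_\lambda)\ge\ell(SF^-_\lambda)$ and $\ell(T^+_\lambda)+\ell(T^-_\lambda)\ge\ell(SH^0)$ do not combine to give the deficit $\ell(SH^0)-\min\{\ell(SF^-_\lambda)+\ell(SF^+_\mu),\ell(SF^+_\lambda)+\ell(SF^-_\mu)\}$. Finally, the min--max class $\Sigma_j^-(\lambda)=\{A:\,T^-_\lambda\subset A,\ \ell(A)\ge j\}$ does not encode any linking; taking $A=T^-_\lambda$ shows $c_j(\lambda)<\mathcal{L}^\circ_\lambda(\theta)$ for every admissible $j$, but nothing prevents all these values from collapsing to the boundary minimum rather than detecting interior critical orbits. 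To repair the argument, replace your sign-based $T^\pm_\lambda$ with the flow-defined sets and invoke the deformation in Lemma~\ref{lem:Bi.3.15}$\,2^\circ$ together with the local stable/unstable manifold comparison, following \cite[\S4]{BaCl} as the paper indicates.
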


\begin{proof}
Suppose that (i) does not hold. Then $\theta\in H^0$ is an isolated critical point
of $\mathcal{L}^\circ_{\lambda^\ast}$. We  assume that $\theta\in H^0$ is a unique
critical point in the set $\overline{\mathscr{Q}}$ of Lemma~\ref{lem:Bi.3.3}.
Moreover  the flow $\varphi_\lambda$ of (\ref{e:Bi.3.9}) may be replaced by the negative gradient one $\chi_\lambda$ of $\mathcal{L}^\circ_{\lambda}$
since we have assumed $\mathcal{L}^\circ_{\lambda}$ to be of class $C^2$.
Then we get a corresponding Lemma~\ref{lem:Bi.3.3}.
For $\ast=+,-$, let $S^\ast
=\{z\in B_H(\theta, \epsilon)\cap H^0\,|\, \chi_\lambda(s,z)\in B_H(\theta, \epsilon)\cap H^0,\;\forall \ast s>0\}$
and $T^\ast=S^\ast\cap\partial\overline{\mathscr{Q}}$.
We have the following corresponding result with the part b) of \cite[Lemma~4.1]{BaCl}.

\begin{lemma}\label{lem:Bi.3.15}
Both $T^+$ and $T^-$ are $G$-invariant compact subset of $\partial\overline{\mathscr{Q}}$, and also satisfy
\begin{description}
\item[$1^\circ$] $\min\{\mathcal{L}^\circ_\lambda(z)\,|\,z\in T^+\}>0$ and
$\max\{\mathcal{L}^\circ_\lambda(z)\,|\,z\in T^-\}<0$;
\item[$2^\circ$] $T^+$ and $T^-$ can be deformed inside $\overline{\mathscr{Q}}\setminus\{\theta\}$
into arbitrarily small neighborhoods of $\theta\in \overline{\mathscr{Q}}$ such that
 $\mathcal{L}^\circ_{\lambda^\ast}$ does not change sign during the deformation.
\item[$3^\circ$] $\ell(T^+)+ \ell(T^-)\ge \ell(SH^0)$.
\end{description}
\end{lemma}

Recall $\Lambda=[\lambda^\ast-\delta,\lambda^\ast+\delta]$. Shrinking $\delta>0$
(if necessary) we may assume that
$\mathcal{L}^\circ_{\lambda}(T^+)\subset \mathbb{R}^+$ and
$\mathcal{L}^\circ_{\lambda}(T^-)\subset \mathbb{R}^-$ for all
$\lambda\in\Lambda$. Note that for each $\lambda\ne\lambda^\ast$ near
$\lambda^\ast$, $F_\lambda^+$ is  the tangent space of the stable manifold of the negative
gradient one $\chi_\lambda$ because $\theta\in H^0$ is a nondegenerate critical point
of the $C^2$ function $\mathcal{L}^\circ_{\lambda}$.
We can complete proofs of corresponding results with \cite[Lemmas~4.2,4.3]{BaCl}.
\end{proof}

Clearly, even if $G=S^1$ or $\mathbb{Z}_2$, Theorem~\ref{th:Bi.3.2} cannot be included in
Theorem~\ref{th:Bi.3.14} and the following two results.

Now consider  generalizations of bifurcation results \cite[\S7.5]{Ba1}.
 Once $(\mathcal{A}, h^\ast, I)$ is understood its $(\mathcal{A}, h^\ast, I)$-length is written
 as $\ell$ below.

Under Hypothesis~\ref{hyp:Bi.3.5}, let $\varphi_\lambda$ be the flow of  $\mathscr{V}_\lambda$
given by (\ref{e:Bi.3.9}). Assume that  $\Lambda$ is equipped with
trivial $G$-action. By the theorem on continuous dependence
of solutions of ordinary differential equations on initial values and parameters we obtain
an equivariant product flow parametrized by $\Lambda$ on $\Lambda\times B_{H^0}(\theta,\epsilon)$,
$(\lambda, z, t)\to\varphi(\lambda, z, t):=(\lambda,\varphi_\lambda(z,t))$.
Clearly, $\mathscr{V}_\lambda$ is gradient-like with Lyapunov-function $\mathcal{L}^\circ_\lambda$.
Since $\lambda^\ast$ is an isolated eigenvalue of (\ref{e:Bi.2.7.4}),
 $\theta\in H^0$ is an isolated critical point of $\mathcal{L}^\circ_\lambda$
(and so an isolated invariant set of $\varphi_\lambda$) for each $\lambda$ near $\lambda^\ast$ and
$\lambda\ne\lambda^\ast$. Let $\ell^u(\lambda,\theta)$
(resp. $\ell^s(\lambda,\theta)$) be the exit-length (resp. entry-length) of
$\theta$ with respect to $\varphi_\lambda$, see \cite[\S7.3]{Ba1}.
Moreover, $\ell^u(\lambda,\theta)$  is independent of $\lambda\in\Lambda\cap (\lambda^\ast,\infty)$ (resp. $\Lambda\cap (-\infty,\lambda^\ast)$) close to $\lambda^\ast$,
denoted by $\ell^u_+$ (resp. $\ell^u_-$).  See \cite[\S7.2, \S7.5]{Ba1} for these.
By Theorems~7.10,~7.11 in \cite{Ba1} we immediately obtain the following two theorems.

\begin{theorem}\label{th:Bi.3.16}
Under  Hypothesis~\ref{hyp:Bi.3.5}, suppose that
   $\ell^u_+\ne\ell^u_-$. Then  $(\lambda^\ast,\theta)$ is a bifurcation point of
$\nabla\mathcal{L}^\circ_{\lambda}$.
Moreover, if $\theta\in H^0$ is also an isolated critical point of $\mathcal{L}^\circ_{\lambda^\ast}$,
then there exits $\varepsilon>0$ and integers $d_l, d_r\ge 0$ with $d_l+d_r\ge|\ell^u_+-\ell^u_-|$
so that for each $\lambda\in (\lambda^\ast-\varepsilon,\lambda^\ast)$ respectively
$\lambda\in (\lambda^\ast,\lambda^\ast+\varepsilon)$
at least  one of the following alternatives occurs:
 \begin{description}
\item[(i)] there exist critical $G$-orbits $Gu^i_\lambda$, $i\in\mathbb{Z}\setminus\{0\}$,
of $\mathcal{L}^\circ_{\lambda}$ with $u^i_\lambda\ne\theta$,
$\mathcal{L}^\circ_\lambda(u^{-i}_\lambda)<\mathcal{L}^\circ_\lambda(\theta)
<\mathcal{L}^\circ_\lambda(u^{i}_\lambda)$ for $i\ge 1$,
and $\mathcal{L}^\circ_\lambda(u^{i}_\lambda)\to\mathcal{L}^\circ_\lambda(\theta)$ as $|i|\to\infty$. In particular, $\mathcal{L}^\circ_\lambda$
has infinitely many critical $G$-orbits. They converge to $\theta\in H^0$ as $\lambda\to\lambda^\ast$.
 \item[(ii)] There exists an isolated invariant
  set $S_\lambda\in B_{H^0}(\theta, \epsilon)\setminus\{\theta\}$
  with $\ell(\mathcal{C}(S_\lambda))\ge d_l$ respectively
  $\ell(\mathcal{C}(S_\lambda))\ge d_r$. Moreover, $S_\lambda$
  converge to $\theta\in H^0$ as $\lambda\to\lambda^\ast$, i.e., for any neighborhood
  $W$ of $\theta$ in $H^0$ there exist $\epsilon=\epsilon(W)>0$ such that
  $S_\lambda\in W$ if $|\lambda-\lambda^\ast|<\epsilon$.
\end{description}
The result is also true for $G=\mathbb{Z}/p$ and $\ell=\ell_0+\ell_1-1$
as in \cite[Remark~4.14]{Ba1}.
\end{theorem}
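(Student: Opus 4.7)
The plan is to reduce the problem to Bartsch's finite-dimensional equivariant bifurcation results \cite[Theorems~7.10, 7.11]{Ba1} by applying the parameterized splitting theorem to transfer the question on $H$ to a question on the finite-dimensional $G$-module $H^0$.

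First I would apply Theorem~\ref{th:S.5.3} (the parameterized splitting theorem) with $\mathcal{L}=\mathcal{F}-\lambda^\ast\mathcal{G}$ and the single perturbation $\mathcal{G}_1=-\mathcal{G}$, so that $\mathcal{L}_{\vec\lambda}=\mathcal{F}-(\lambda^\ast-\vec\lambda)\mathcal{G}$ for $\vec\lambda\in[-\delta,\delta]$. Since $\mathcal{F},\mathcal{G}$ and $U$ are $G$-invariant, the equivariant version of the theorem guarantees that $\psi(\vec\lambda,\cdot)$ is $G$-equivariant and each reduced functional $\mathcal{L}^\circ_\lambda\in C^1(B_{H^0}(\theta,\epsilon))$ is $G$-invariant; moreover, critical points of $\mathcal{L}^\circ_\lambda$ near $\theta\in H^0$ correspond bijectively (via $z\mapsto z+\psi(\lambda,z)$) to critical points of $\mathcal{F}-\lambda\mathcal{G}$ near $\theta\in H$, so bifurcation for $\nabla\mathcal{L}^\circ_\lambda$ at $(\lambda^\ast,\theta)$ is equivalent to bifurcation for the original equation. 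Since $\lambda^\ast$ is an isolated eigenvalue of (\ref{e:Bi.2.7.4}), $\theta\in H^0$ is a nondegenerate critical point of $\mathcal{L}^\circ_\lambda$ for every $\lambda\ne\lambda^\ast$ close to $\lambda^\ast$, hence an isolated invariant set of the associated flow.

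Next I would construct a family of $G$-equivariant pseudo-gradient vector fields $\mathscr{V}_\lambda$ on $B_{H^0}(\theta,\epsilon)\setminus\{\theta\}$ depending continuously on $\lambda$ near $\lambda^\ast$, exactly in the manner of Lemma~\ref{lem:pseudogradient}; averaging over the Haar measure of $G$ produces the equivariance, and a locally finite partition of unity yields the smooth dependence. Let $\varphi_\lambda$ be the local flow of $-\mathscr{V}_\lambda$ defined by (\ref{e:Bi.3.9}); on $\Lambda\times B_{H^0}(\theta,\epsilon)$ with $\Lambda$ carrying trivial $G$-action, this assembles into an equivariant product flow parameterized by $\Lambda$, and $\mathcal{L}^\circ_\lambda$ serves as a Lyapunov function. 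Because $\theta$ is isolated invariant for $\lambda\ne\lambda^\ast$ close to $\lambda^\ast$, the exit-length $\ell^u(\lambda,\theta)$ and entry-length $\ell^s(\lambda,\theta)$ are well-defined in the sense of \cite[\S7.3]{Ba1} and are locally constant on each connected component of a punctured neighborhood of $\lambda^\ast$; this identifies $\ell^u_+$ and $\ell^u_-$ as in the statement.

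Having put the problem into the framework of equivariant parameterized gradient-like flows on a finite-dimensional $G$-module, I would invoke \cite[Theorem~7.10]{Ba1} to conclude that $\ell^u_+\ne\ell^u_-$ forces bifurcation of critical points of $\mathcal{L}^\circ_\lambda$ at $(\lambda^\ast,\theta)$, and \cite[Theorem~7.11]{Ba1} to obtain, under the additional assumption that $\theta$ is isolated critical for $\mathcal{L}^\circ_{\lambda^\ast}$, integers $d_l,d_r\ge 0$ with $d_l+d_r\ge|\ell^u_+-\ell^u_-|$ for which one of the alternatives (i) (unboundedly many critical orbits accumulating at the level $\mathcal{L}^\circ_\lambda(\theta)$) or (ii) (an isolated invariant set of large length in the deleted neighborhood) occurs on each side of $\lambda^\ast$; the $\mathbb{Z}/p$-variant follows by the same substitution indicated in \cite[Remark~4.14]{Ba1}. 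The main technical obstacle is ensuring that Bartsch's theorems, which are typically stated for $C^2$ gradient vector fields, apply to our merely $C^1$ reduced functional $\mathcal{L}^\circ_\lambda$; this is overcome by noting that the proofs in \cite[\S7]{Ba1} use only the existence of a continuous equivariant Lyapunov function and an equivariant pseudo-gradient field with continuous $\lambda$-dependence, both of which have been established, and that the conclusions at the level of critical orbits are intrinsic to $\mathcal{L}^\circ_\lambda$ itself and thus independent of the smoothness class of the auxiliary field $\mathscr{V}_\lambda$.
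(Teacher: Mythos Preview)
Your proposal is correct and follows essentially the same route as the paper: the paper's entire argument is the setup you describe (parameterized splitting theorem to reduce to $H^0$, the equivariant pseudo-gradient flow of Lemma~\ref{lem:pseudogradient}, and the observation that $\theta$ is an isolated invariant set for $\lambda\ne\lambda^\ast$), followed by a direct appeal to \cite[Theorems~7.10,~7.11]{Ba1}. One small wording issue: under Hypothesis~\ref{hyp:Bi.3.5} alone $\mathcal{L}^\circ_\lambda$ is only $C^1$, so you should say $\theta$ is an \emph{isolated} critical point of $\mathcal{L}^\circ_\lambda$ (deduced from nondegeneracy of $\theta$ for $\mathcal{L}_\lambda$ on $H$ and the bijection $z\mapsto z+\psi(\lambda,z)$) rather than a nondegenerate one, but this does not affect the argument.
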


\begin{theorem}\label{th:Bi.3.17}
Under  Hypothesis~\ref{hyp:Bi.3.5},
 suppose that   $G=\mathbb{Z}/p$, $p$ a prime, or $G=S^1\times\Gamma$,
$\Gamma$ a finite group,  and let $\ell$ be any of the lengths defined
in \cite[4.4, 4.14 or 4.16]{Ba1}.
 If $\theta\in H^0$ is also an isolated critical point of $\mathcal{L}^\circ_{\lambda^\ast}$,
 and  $\ell^u_-, \ell^u_+$ are the exit-lengths as above, then
there exits $\varepsilon>0$ and integers $d_l, d_r\ge 0$ with $d_l+d_r\ge|\ell^u_+-\ell^u_-|$
such  that the following holds: For each $\lambda\in (\lambda^\ast-\varepsilon,\lambda^\ast)$ respectively
$\lambda\in (\lambda^\ast,\lambda^\ast+\varepsilon)$
there exists a compact invariant
  set $S_\lambda\in B_{H^0}(\theta, \epsilon)\setminus\{\theta\}$
  with $\ell(\mathcal{C}(S_\lambda))\ge d_l$ respectively
  $\ell(\mathcal{C}(S_\lambda))\ge d_r$.
\end{theorem}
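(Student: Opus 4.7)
The plan is to reduce Theorem~\ref{th:Bi.3.17} to a direct application of Theorem~7.11 in \cite{Ba1}, in close analogy with the reduction sketched by the author for Theorem~\ref{th:Bi.3.16}. The setting required by Bartsch's theorem is an equivariant product flow on a finite-dimensional $G$-module, parametrized by a closed interval containing $\lambda^\ast$, which is gradient-like with a Lyapunov function whose unique critical point at $\theta$ is isolated and whose exit-length jumps across $\lambda^\ast$. The work therefore consists of manufacturing such a flow from the abstract data of Hypothesis~\ref{hyp:Bi.3.5}.

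First I would apply the $G$-equivariant parameterized splitting theorem (Theorem~\ref{th:S.5.3} with $n=1$, $\mathcal{L}=\mathcal{F}-\lambda^\ast\mathcal{G}$ and $\mathcal{G}_1=-\mathcal{G}$) to obtain the unique $G$-equivariant continuous map $\psi:[\lambda^\ast-\delta,\lambda^\ast+\delta]\times(B_H(\theta,\epsilon)\cap H^0)\to (H^0)^\bot$ of (\ref{e:Bi.2.14.1})--(\ref{e:Bi.2.14.2}) and the $C^1$ $G$-invariant reduced functional $\mathcal{L}^\circ_\lambda$ of (\ref{e:Bi.2.15}) on the finite-dimensional $G$-module $H^0$, with the hypothesis $H^0\cap\operatorname{Fix}(G)=\{\theta\}$ ensuring that $\theta$ is genuinely $G$-isolated. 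By the claim below (\ref{e:S.5.12.3}), each nontrivial critical orbit of $\mathcal{L}_\lambda=\mathcal{F}-\lambda\mathcal{G}$ in a small neighborhood of $\theta$ corresponds to a nontrivial critical orbit of $\mathcal{L}^\circ_\lambda$ on $H^0$; hence the invariant sets produced downstairs will lift.

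Next I would set $\Lambda=[\lambda^\ast-\delta,\lambda^\ast+\delta]$ with trivial $G$-action and build an equivariant smooth pseudo-gradient field $\mathscr{V}_\lambda$ for $\mathcal{L}^\circ_\lambda$ on $B_{H^0}(\theta,\epsilon)\setminus K(\mathcal{L}^\circ_\lambda)$, depending smoothly on $(\lambda,z)$, by exactly the partition-of-unity construction followed by Haar-averaging used in Lemma~\ref{lem:pseudogradient}. The parameterized ODE $dz/ds=-\mathscr{V}_\lambda(z)$ with $z(0)=z$ then defines an equivariant product flow $\varphi(\lambda,z,t)=(\lambda,\varphi_\lambda(z,t))$ on $\Lambda\times B_{H^0}(\theta,\epsilon)$, gradient-like with Lyapunov function $\mathcal{L}^\circ_\lambda$ because of the two defining inequalities of a pseudo-gradient. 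Since $\lambda^\ast$ is an isolated eigenvalue of (\ref{e:Bi.2.7.4}), the Hessian formula (\ref{e:Bi.3.9.1}) shows that $\theta\in H^0$ is a nondegenerate critical point of $\mathcal{L}^\circ_\lambda$ (hence an isolated invariant set of $\varphi_\lambda$) for every $\lambda\ne\lambda^\ast$ close to $\lambda^\ast$, so the exit-length $\ell^u(\lambda,\theta)$ is defined and, as recalled just before the statement, is constant on each side of $\lambda^\ast$; these constants are $\ell^u_+$ and $\ell^u_-$. The additional hypothesis that $\theta$ is also an isolated critical point of $\mathcal{L}^\circ_{\lambda^\ast}$ makes $\{\theta\}$ an isolated invariant set of $\varphi_{\lambda^\ast}$ too.

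With all this in place Theorem~7.11 of \cite{Ba1} applies verbatim, for the groups $G=\mathbb{Z}/p$ and $G=S^1\times\Gamma$ with $\Gamma$ finite, and for any of the lengths $\ell$ built in \cite[4.4, 4.14, 4.16]{Ba1}: it yields $\varepsilon\in(0,\delta]$ and integers $d_l,d_r\ge 0$ with $d_l+d_r\ge|\ell^u_+-\ell^u_-|$ such that for every $\lambda\in(\lambda^\ast-\varepsilon,\lambda^\ast)$ (resp.\ $(\lambda^\ast,\lambda^\ast+\varepsilon)$) there is a compact invariant set $S_\lambda\subset B_{H^0}(\theta,\epsilon)\setminus\{\theta\}$ of $\varphi_\lambda$ whose Conley-index classifying space $\mathcal{C}(S_\lambda)$ has $\ell$-length at least $d_l$ (resp.\ $d_r$). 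The main obstacle is essentially technical: because Hypothesis~\ref{hyp:Bi.3.5} provides only $\mathcal{L}^\circ_\lambda\in C^1$ (not $C^2$), one cannot use the honest negative-gradient flow of $\mathcal{L}^\circ_\lambda$, and must instead certify that the smooth equivariant pseudo-gradient flow $\varphi$ constructed above still verifies all the flow-continuity, compactness and isolating-neighborhood conditions that Bartsch's equivariant Conley index theory invokes. Once this verification is in place the desired conclusion is immediate from \cite{Ba1}.
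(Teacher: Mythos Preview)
Your proposal is correct and follows essentially the same route as the paper: the paper sets up the equivariant product pseudo-gradient flow $\varphi_\lambda$ of (\ref{e:Bi.3.9}) on $\Lambda\times B_{H^0}(\theta,\epsilon)$ exactly as you describe (reduction via Theorem~\ref{th:S.5.3}, smooth $G$-equivariant pseudo-gradient from the construction of Lemma~\ref{lem:pseudogradient}, gradient-like with Lyapunov function $\mathcal{L}^\circ_\lambda$, $\theta$ an isolated invariant set for each $\lambda$), and then simply invokes Theorem~7.11 of \cite{Ba1}. Your remark about needing the pseudo-gradient rather than the true gradient because $\mathcal{L}^\circ_\lambda$ is only $C^1$ is precisely the point, and your observation that one must check Bartsch's Conley-index machinery still applies to this flow is the only substantive verification required; note also that since the conclusion of the theorem is already stated at the level of $H^0$, the lifting step you mention is not actually needed here.
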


Every $G$-critical orbit of $\mathcal{L}^\circ_\lambda$ produced by
Theorems~\ref{th:Bi.3.16},~\ref{th:Bi.3.17} gives rise to
a $G$-critical orbit of $\mathcal{L}_\lambda$, and different orbits yield
different ones too. In particular, $(\lambda^\ast,\theta)\in\mathbb{R}\times U$ is a bifurcation point  for the equation (\ref{e:Bi.2.7.3}). However, in order to understand $\ell^u_+$ and $\ell^u_-$ we assume that
 Hypothesis~\ref{hyp:Bi.3.6} is satisfied. Then  the flow $\varphi_\lambda$  may be replaced by the negative gradient one $\chi_\lambda$ of $\mathcal{L}^\circ_{\lambda}$. By the arguments below Definition~7.1 in \cite{Ba1} we have
$\ell^u_+=\ell^u(\lambda,\theta)=\ell(SF_\lambda^-)=\ell(SH^0_+)=\ell(SE_\lambda)$
(resp. $\ell^u_+=\ell^u(\lambda,\theta)=\ell(SF_\lambda^-)=\ell(SH^0_-)=\ell(SE_\lambda)$) if
$\lambda>\lambda^\ast$ (resp. $\lambda<\lambda^\ast$) is close to $\lambda^\ast$.
It follows that $\ell(SE_{\lambda^\ast_-}):=\ell(SE_{\lambda^\ast-\rho})$ and
$\ell(SE_{\lambda^\ast_+}):=\ell(SE_{\lambda^\ast+\rho})$
are independent of  small $\rho>0$ and that
$$
|\ell^u_+-\ell^u_-|=|\ell(SH^0_+)-\ell(SH^0_-)|=|\ell(SE_{\lambda^\ast_+})-\ell(SE_{\lambda^\ast_-})|,
$$
which may be chosen as  $|\dim E_{\lambda^\ast_+}-\dim E_{\lambda^\ast_-}|$ (resp.
$\frac{1}{2}|\dim E_{\lambda^\ast_+}-\dim E_{\lambda^\ast_-}|$) if
$G=\mathbb{Z}/p$ with a prime $p$ (resp.  $G=S^1\times\Gamma$ with a finite group
$\Gamma$). By these, under Hypothesis~\ref{hyp:Bi.3.6}, Theorem~\ref{th:Bi.3.16},~\ref{th:Bi.3.17}
may be directly transformed two results about bifurcation information of
(\ref{e:Bi.2.7.3})  from  $(\lambda^\ast,\theta)$. We here omit them.

\begin{remark}\label{rem:Bi.3.18}
{\rm (i) If $n=\dim\Omega=1$, and  $\mathcal{F}, \mathcal{G}$
are defined by (\ref{e:1.3}) under \textsf{Hypothesis} $\mathfrak{F}_{2,N}$,
 then they  can satisfy Hypothesis~\ref{hyp:Bi.3.6} on  $W^{m,2}_0(\Omega,\mathbb{R}^N)$; see \cite{Lu1,Lu9}.
Hence the last three theorems may be applied in this case.\\
(ii) The proof key of \cite[Theorem~7.12]{Ba1} is to use the (local) center manifold theorem instead of the Lyapunov-Schmidt reduction. This method was firstly used by Chow and Lauterbach \cite{ChowLa}
in the non-equivariant case.  Our potential operators are neither strictly Fr\'echlet differentiable at $\theta$ nor of class $C^{1}$.  Hence the  center manifold theorem seems unable to be used in our situation.
But, from the constructions of center manifolds by  Vanderbauwhede and Iooss \cite{VanI}
this method is also possible if the restrictions of our potential operators
to a Banach space $X$ continuously and densely embedding in $H$ are of class $C^1$
as in the framework of \cite{Lu1,Lu2}; see \cite{Lu7}.}
\end{remark}


The bifurcations in the previous theorems are all from a trivial critical orbit.
Finally, let us give  a result about bifurcations starting a nontrivial critical orbit.

\begin{hypothesis}\label{hyp:Bi.3.19}
{\rm Under Hypothesis~\ref{hyp:S.6.2},
let for some $x_0\in\mathcal{ O}$ the pair
$(\mathcal{L}\circ\exp|_{N\mathcal{O}(\varepsilon)_{x_0}},  N\mathcal{O}(\varepsilon)_{x_0})$
satisfy the corresponding conditions with Hypothesis~\ref{hyp:1.1} with $X=H$.
Let $\mathcal{G}\in C^1(\mathcal{H},\mathbb{R})$  be  $G$-invariant, have
a critical orbit $\mathcal{O}$, and also satisfy:
\begin{description}
\item[(i)] the gradient $\nabla\mathcal{G}$ is G\^ateaux differentiable
near $\mathcal{O}$, and every derivative $\mathcal{G}''(u)$ is also a compact
linear operator;
\item[(ii)] $\mathcal{G}''$ are continuous at each point $u\in\mathcal{O}$.
\end{description}

The assumptions on $\mathcal{G}$ assure that the functional $\mathcal{L}-\lambda\mathcal{G}$,
$\lambda\in\mathbb{R}$, also satisfy the conditions of
Theorems~\ref{th:S.6.0},~\ref{th:S.6.1}.

 Let $\lambda^\ast$ be an eigenvalue  of
\begin{equation}\label{e:Bi.3.10}
\mathcal{L}''(x_0)v-\lambda\mathcal{G}''(x_0)v=0,\quad v\in T_{x_0}\mathcal{H},
\end{equation}
and $\dim{\rm Ker}(\mathcal{L}''(x_0)-\lambda^\ast\mathcal{G}''(x_0))>\dim \mathcal{O}$.}
\end{hypothesis}

We say $\mathcal{O}$ to be a {\it bifurcation $G$-orbit with parameter $\lambda^\ast$} of
 the equation
\begin{equation}\label{e:Bi.3.11}
\mathcal{L}'(u)=\lambda\mathcal{G}'(u),\quad u\in \mathcal{H}
\end{equation}
if for any $\varepsilon>0$ and neighborhood $\mathscr{U}$ of $\mathcal{O}$ in $\mathcal{H}$
there exists a solution $G$-orbit $\mathcal{O}'\ne \mathcal{O}$ in $\mathscr{U}$ of
(\ref{e:Bi.3.11}) with some $\lambda\in (-\varepsilon,\varepsilon)$.

Note that  the orthogonal
complementary of $T_{x_0}\mathcal{O}$ in $T_{x_0}\mathcal{H}$, $N\mathcal{O}_{x_0}$,
 is an invariant subspace of $\mathcal{L}''(x_0)$ and $\mathcal{G}''(x_0)$.
Let $\mathcal{L}''(x_0)^\bot$ (resp. $\mathcal{G}''(x_0)^\bot$) denote
the restriction self-adjoint operator of $\mathcal{L}''(x_0)$ (resp. $\mathcal{G}''(x_0)$)
from  $N\mathcal{O}_{x_0}$ to itself. Then
$\mathcal{L}''(x_0)^\bot=d^2(\mathcal{L}\circ\exp|_{N\mathcal{O}(\varepsilon)_{x_0}})(\theta)$
and $\mathcal{G}''(x_0)^\bot=d^2(\mathcal{G}\circ\exp|_{N\mathcal{O}(\varepsilon)_{x_0}})(\theta)$.
Suppose that $\mathcal{L}''(x_0)^\bot$ is invertible, or equivalently
 ${\rm Ker}(\mathcal{L}''(x_0))=T_{x_0}\mathcal{O}$. Then
$0$ is not an eigenvalue  of
\begin{equation}\label{e:Bi.3.12}
\mathcal{L}''(x_0)^\bot v-\lambda\mathcal{G}''(x_0)^\bot v=0,\quad v\in N\mathcal{O}_{x_0},
\end{equation}
and $\lambda\in\mathbb{R}\setminus\{0\}$
is an eigenvalue  of (\ref{e:Bi.3.12}) if and only if $1/\lambda$
is an eigenvalue  of compact linear self-adjoint operator $L_{x_0}:=[\mathcal{L}''(x_0)^\bot ]^{-1}\mathcal{G}''(x_0)^\bot\in\mathscr{L}_s(N\mathcal{O}_{x_0})$.
Hence $\sigma(L_{x_0})\setminus\{0\}=\{1/\lambda_n\}_{n=1}^\infty\subset\mathbb{R}$
with $\lambda_n\to 0$, and each $1/\lambda_n$ has finite multiplicity.
Let $N\mathcal{O}_{x_0}^n$ be the eigensubspace corresponding to $1/\lambda_n$ for $n\in\mathbb{N}$.
Then  $N\mathcal{O}_{x_0}^0={\rm Ker}(L_{x_0})={\rm Ker}(\mathcal{G}''(x_0)^\bot)$ and
\begin{equation}\label{e:Bi.3.13}
N\mathcal{O}_{x_0}^n={\rm Ker}(I/\lambda_n-L_{x_0})={\rm Ker}(\mathcal{L}''(x_0)^\bot-\lambda_n \mathcal{G}''(x_0)^\bot),\quad
n=1,2,\cdots,
\end{equation}
and $N\mathcal{O}_{x_0}=\oplus^\infty_{n=0}N\mathcal{O}_{x_0}^n$.

\begin{theorem}\label{th:Bi.3.20}
Under Hypothesis~\ref{hyp:Bi.3.19}, suppose that ${\rm Ker}(\mathcal{L}''(x_0))=T_{x_0}\mathcal{O}$
(so the operator $\mathcal{L}''(x_0)^\bot$
is invertible) and  $\lambda^\ast=\lambda_{n_0}$ for some $n_0\in\mathbb{N}$.
 Then $\mathcal{O}$ is a bifurcation $G$-orbit with parameter $\lambda^\ast$ of
 (\ref{e:Bi.3.11}) if one of the following two conditions holds:\\
 {\rm a)} $\mathcal{L}''(x_0)^\bot$ is either positive definite or negative one, and
 \begin{equation}\label{e:Bi.3.14}
C_{l}(\mathcal{O};\mathbb{Z}_2)\ne C_{l-\nu_{\lambda^\ast}}(\mathcal{O};\mathbb{Z}_2)
\end{equation}
 for some $l\in\mathbb{Z}$, where  $\nu_{\lambda^\ast} =\dim N\mathcal{O}_{x_0}^{n_0}$
 (is more than zero because $\mathcal{O}$ is a degenerate
critical orbit of $\mathcal{L}_{\lambda^\ast}$ by the assumption in Hypothesis~\ref{hyp:Bi.3.19});  \\
 {\rm b)}  each $N\mathcal{O}_{x_0}^n$ in (\ref{e:Bi.3.13})
 is an invariant subspace of $\mathcal{L}''(x_0)^\bot$
 (e.g. these are true if $\mathcal{L}''(x_0)^\bot$
 commutes with $\mathcal{G}''(x_0)^\bot$), and
  \begin{equation}\label{e:Bi.3.15}
 C_{l-\nu^-_{\lambda^\ast}}(\mathcal{O};\mathbb{Z}_2)\ne
 C_{l-\nu^+_{\lambda^\ast}}(\mathcal{O};\mathbb{Z}_2)
 \end{equation}
 for some $l\in\mathbb{Z}$, where
$\nu^+_{\lambda^\ast}$ (resp. $\nu^-_{\lambda^\ast}$) is the dimension of
the positive (resp. negative) definite space of
$\mathcal{L}''(x_0)^\bot$  on $N\mathcal{O}_{x_0}^{n_0}$.
\end{theorem}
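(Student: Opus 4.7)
The plan is to adapt the proof of Theorem~\ref{th:Bi.2.4} to the critical-orbit setting, replacing the splitting theorem at a point by the parameterized splitting theorem around a critical orbit (Theorem~\ref{th:S.6.7*}) and the shifting theorem at a point by its orbit version (Corollary~\ref{cor:S.6.5}). First I would set up the parameterized reduction by writing $\mathcal{L}_\lambda = \mathcal{L} - \lambda\mathcal{G} = \mathcal{L}_{\lambda^\ast} + (\lambda^\ast - \lambda)\mathcal{G}$, taking $\mathcal{L}_{\lambda^\ast}$ as the base functional (for which $\mathcal{O}$ is a critical orbit with nullity $\nu_{\lambda^\ast} = \dim N\mathcal{O}_{x_0}^{n_0}$ by Hypothesis~\ref{hyp:Bi.3.19}), and applying Theorem~\ref{th:S.6.7*} with $n = 1$, $\mathcal{G}_1 = \mathcal{G}$ and one-dimensional parameter $\mu = \lambda^\ast - \lambda$. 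This yields, for $\lambda$ near $\lambda^\ast$, an equivariant bundle morphism $\mathfrak{h}_x(\mu, \cdot)$, an equivariant homeomorphism $\Phi(\mu, \cdot)$ and a family of $C^1$ reductions $\mathcal{L}^\circ_{\lambda, x}$ on the finite-dimensional fiber $N\mathcal{O}_{x_0}^{n_0}$ (at $x = x_0$), depending continuously on $\lambda$.

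Next I would argue by contradiction: assume $\mathcal{O}$ is not a bifurcation $G$-orbit, and choose a $G$-invariant neighborhood $\mathscr{U}$ of $\mathcal{O}$ and $\varepsilon > 0$ such that $\mathcal{O}$ is the only critical orbit of $\mathcal{L}_\lambda$ in $\mathscr{U}$ for every $\lambda \in (\lambda^\ast - \varepsilon, \lambda^\ast + \varepsilon)$. Since $\mathcal{L}''(x_0)^\bot$ is invertible and the eigenvalues $\{\lambda_n\}_{n \ge 1}$ of $L_{x_0}^{-1}$ accumulate only at infinity, $\lambda^\ast = \lambda_{n_0}$ is isolated in the spectrum of (\ref{e:Bi.3.12}), so after further shrinking $\varepsilon$ the orbit $\mathcal{O}$ is nondegenerate for $\mathcal{L}_\lambda$ whenever $0 < |\lambda - \lambda^\ast| < \varepsilon$. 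Corollary~\ref{cor:S.6.5}(i) then gives
\begin{equation*}
C_q(\mathcal{L}_\lambda, \mathcal{O}; \mathbb{Z}_2) \cong C_{q - \mu_\lambda}(\mathcal{O}; \mathbb{Z}_2), \qquad q \in \mathbb{Z},\ 0 < |\lambda - \lambda^\ast| < \varepsilon,
\end{equation*}
where $\mu_\lambda$ denotes the normal Morse index of $\mathcal{L}_\lambda$ at $\mathcal{O}$. On the other hand the topological conjugacy (\ref{e:S.6.24}) supplied by Theorem~\ref{th:S.6.7*} shows that near $\mathcal{O}$ the pair $(\mathcal{L}_\lambda, \mathcal{O})$ is conjugate to $(\|u^+\|^2 - \|u^-\|^2 + \mathcal{L}^\circ_{\lambda,x}(v), \mathcal{O})$, uniformly in $\lambda$. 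This lets us build a family of $G$-invariant Gromoll--Meyer pairs that are homeomorphic to one another as $\lambda$ varies in $(\lambda^\ast - \varepsilon, \lambda^\ast + \varepsilon)$, so $C_\ast(\mathcal{L}_\lambda, \mathcal{O}; \mathbb{Z}_2)$ is independent of $\lambda$ on that interval.

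The crux is then a jump-count of $\mu_\lambda$ across $\lambda^\ast$, exactly parallel to (\ref{e:Bi.2.11})--(\ref{e:Bi.2.12}) and (\ref{e:Bi.2.21})--(\ref{e:Bi.2.22}) of Theorem~\ref{th:Bi.2.4}. In case~a), with $\mathcal{L}''(x_0)^\bot$ globally definite on $N\mathcal{O}_{x_0}$, the normal Hessian of $\mathcal{L}_\lambda$ at $x_0$ satisfies $\mu_\lambda = \sum_{\lambda_n < \lambda} \dim N\mathcal{O}_{x_0}^n$, so $\mu_\lambda$ jumps by $\nu_{\lambda^\ast}$ as $\lambda$ crosses $\lambda^\ast$; combined with the shifting formula above and the $\lambda$-independence of $C_\ast(\mathcal{L}_\lambda, \mathcal{O}; \mathbb{Z}_2)$ this forces $C_l(\mathcal{O}; \mathbb{Z}_2) \cong C_{l - \nu_{\lambda^\ast}}(\mathcal{O}; \mathbb{Z}_2)$ for every $l \in \mathbb{Z}$, contradicting (\ref{e:Bi.3.14}). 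In case~b), the invariance of each $N\mathcal{O}_{x_0}^n$ under $\mathcal{L}''(x_0)^\bot$ refines this jump to $\mu_{\lambda^\ast - 0} = \mu_{\lambda^\ast} + \nu^-_{\lambda^\ast}$ and $\mu_{\lambda^\ast + 0} = \mu_{\lambda^\ast} + \nu^+_{\lambda^\ast}$ (up to interchange of the two sides according to the sign of $\mathcal{L}''(x_0)^\bot|_{N\mathcal{O}_{x_0}^{n_0}}$), so the stability of the critical groups yields $C_{l - \nu^-_{\lambda^\ast}}(\mathcal{O}; \mathbb{Z}_2) \cong C_{l - \nu^+_{\lambda^\ast}}(\mathcal{O}; \mathbb{Z}_2)$ for every $l$, contradicting (\ref{e:Bi.3.15}). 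The principal obstacle I foresee is the stability of the critical groups itself: because the reductions $\mathcal{L}^\circ_{\lambda, x}$ are only $C^1$ in $v$ and $\mathcal{O}$ need not have trivial normal bundle, Corollary~\ref{cor:S.6.5}(ii) is unavailable, so I would construct Gromoll--Meyer pairs uniformly in $\lambda$ by integrating an equivariant pseudo-gradient vector field built as in Lemma~\ref{lem:pseudogradient}, or else pass to the Borel construction $E \times_G \mathcal{L}_c$ and invoke the stability theorems of \cite[Theorem~III.4]{ChGh} and \cite[Theorem~5.1]{CorH} there.
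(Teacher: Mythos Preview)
Your approach is essentially the same as the paper's: argue by contradiction, compute the jump of the normal Morse index $\mu_\lambda$ across $\lambda^\ast$ (exactly as you outline in both cases a) and b), matching the paper's formulas (\ref{e:Bi.3.16})--(\ref{e:Bi.3.18})), apply the orbit shifting formula $C_q(\mathcal{L}_\lambda,\mathcal{O};\mathbb{Z}_2)\cong C_{q-\mu_\lambda}(\mathcal{O};\mathbb{Z}_2)$ for $\lambda\ne\lambda^\ast$ near $\lambda^\ast$, and contradict the $\lambda$-independence of the critical groups.

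The one place where you work harder than the paper is the stability step you single out as the principal obstacle. The paper does not set up the full parameterized reduction via Theorem~\ref{th:S.6.7*}, nor does it build Gromoll--Meyer pairs by hand or pass to the Borel construction: it simply cites \cite[Theorem~5.1.21]{Ch1} (or the argument of \cite{ChWa}) to conclude directly that $C_\ast(\mathcal{L}_{\lambda'},\mathcal{O};{\bf K})=C_\ast(\mathcal{L}_{\lambda''},\mathcal{O};{\bf K})$ for all $\lambda',\lambda''\in[\lambda^\ast-\delta,\lambda^\ast+\delta]$, once one knows $\mathcal{O}$ is the unique critical orbit of each $\mathcal{L}_\lambda$ in a fixed neighborhood (backed by a uniform $(S)_+$-type claim analogous to Claim~1 in the proof of Theorem~\ref{th:Bi.2.4}). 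Your route through Theorem~\ref{th:S.6.7*} and an equivariant pseudo-gradient would also work, but it is not needed.
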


 From the following proof it is easily seen that
 (\ref{e:Bi.3.15}) may be replaced by (\ref{e:Bi.3.14})
 if we add a condition ``$\mathcal{L}''(x_0)^\bot$ is either positive definite or negative one on
 $N\mathcal{O}_{x_0}^{n_0}$" in b).\\

\noindent{\it Proof of Theorem~\ref{th:Bi.3.20}}. \quad Let $\mu_\lambda$ denote the Morse index of $\mathcal{L}_\lambda:=\mathcal{L}-\lambda\mathcal{G}$ at $\mathcal{O}$,
 $\lambda^\ast=\lambda_{n_0}$ for some $n_0\in\mathbb{N}$, and
 let $\nu_{\lambda^\ast}$ be the nullity of  $\mathcal{L}_{\lambda^\ast}$
 at $\mathcal{O}$, i.e., $\nu_{\lambda^\ast} =\dim N\mathcal{O}_{x_0}^{n_0}$.
As in the proof of Theorem~\ref{th:Bi.2.4} we have $\varepsilon>0$ such that
\begin{equation}\label{e:Bi.3.16}
\mu_\lambda=\sum_{\lambda_n<\lambda}\dim N\mathcal{O}_{x_0}^n=\left\{\begin{array}{ll}
\mu_{\lambda^\ast}, &\quad\forall \lambda\in (\lambda^\ast- 2\varepsilon, \lambda^\ast],\\
\mu_{\lambda^\ast}+ \nu_{\lambda^\ast}, &\quad\forall \lambda\in (\lambda^\ast, \lambda^\ast+2\varepsilon)
\end{array}\right.
\end{equation}
if  $\mathcal{L}''(x_0)^\bot$ is  positive definite,
\begin{equation}\label{e:Bi.3.17}
\mu_\lambda=\sum_{\lambda_n>\lambda}\dim N\mathcal{O}_{x_0}^n=\left\{\begin{array}{ll}
\mu_{\lambda^\ast}+ \nu_{\lambda^\ast}, &\quad\forall \lambda\in (\lambda^\ast- 2\varepsilon, \lambda^\ast),\\
\mu_{\lambda^\ast}, &\quad\forall \lambda\in [\lambda^\ast, \lambda^\ast+2\varepsilon)
\end{array}\right.
\end{equation}
if  $\mathcal{L}''(x_0)^\bot$ is negative definite, and
\begin{equation}\label{e:Bi.3.18}
\mu_\lambda=\left\{\begin{array}{ll}
\mu_{\lambda^\ast}+ \nu^-_{\lambda^\ast}, &\quad\forall \lambda\in (\lambda^\ast- 2\varepsilon, \lambda^\ast),\\
\mu_{\lambda^\ast}+ \nu^+_{\lambda^\ast}, &\quad\forall \lambda\in (\lambda^\ast, \lambda^\ast+2\varepsilon)
\end{array}\right.
\end{equation}
if b) holds.

Corresponding to Claim 1 in the proof of Theorem~\ref{th:Bi.2.4} we may also prove

 \noindent{\bf Claim 1}.\quad {\it After shrinking $\varepsilon>0$,
  if $\{(\kappa_n, v_n)\}_{n\ge 1}\subset
 [\lambda^\ast-\varepsilon, \lambda^\ast+\varepsilon]\times \overline{N\mathcal{O}(\varepsilon)}$
 satisfies $\nabla\mathcal{L}_{\kappa_n}(v_n)\to\theta$ and $\kappa_n\to \kappa_0$, then
 $\{v_n\}_{n\ge 1}$ has a convergent subsequence in $\overline{N\mathcal{O}(\varepsilon)}$.}

By a contradiction, assume that
$\mathcal{O}$ is not a bifurcation $G$-orbit with parameter $\lambda^\ast$ of
 (\ref{e:Bi.3.11}). Then we have $\delta\in (0, \varepsilon]$ such that
 for each  $\lambda\in [\lambda^\ast-\delta, \lambda^\ast+\delta]$, $\mathcal{O}$
 is an unique critical orbit of $\mathcal{L}_\lambda$
in $\overline{N\mathcal{O}(\delta)}$.
By \cite[Theorem~5.1.21]{Ch1} (or as in \cite{ChWa}) we deduce
\begin{equation}\label{e:Bi.3.19}
C_\ast(\mathcal{L}_{\lambda'}, \mathcal{O};{\bf K})=
C_\ast(\mathcal{L}_{\lambda''}, \mathcal{O};{\bf K}),\quad\forall\lambda', \lambda''\in
[\lambda^\ast-\delta,\lambda^\ast+\delta].
\end{equation}
Since $\mathcal{O}$ is a nondegenerate critical orbit of $\mathcal{L}_\lambda$
for each $\lambda\in [\lambda^\ast-\delta,\lambda^\ast+\delta]\setminus\{\lambda^\ast\}$,
as in the proof of (\ref{e:S.6.18.2}) we derive from (\ref{e:S.6.26}) that
\begin{eqnarray}\label{e:Bi.3.20}
C_\ast(\mathcal{L}_{\lambda'}, \mathcal{O};\mathbb{Z}_2)=
C_{\ast-\mu_{\lambda'}}(\mathcal{O};\mathbb{Z}_2)\quad\hbox{and}\quad
C_\ast(\mathcal{L}_{\lambda''}, \mathcal{O};\mathbb{Z}_2)=
C_{\ast-\mu_{\lambda''}}(\mathcal{O};\mathbb{Z}_2)
\end{eqnarray}
for any $\lambda'\in [\lambda^\ast-\delta,\lambda^\ast)$ any $\lambda''\in (\lambda^\ast,\lambda^\ast+\delta]$.

If  $\mathcal{L}''(x_0)^\bot$ is  positive definite,
by (\ref{e:Bi.3.14}), (\ref{e:Bi.3.16}) and (\ref{e:Bi.3.20}) we deduce
$$
C_{l+ \mu_{\lambda^\ast}}(\mathcal{L}_{\lambda'}, \mathcal{O};\mathbb{Z}_2)
=C_{l}(\mathcal{O};\mathbb{Z}_2)\ne C_{l-\nu_{\lambda^\ast}}(\mathcal{O};\mathbb{Z}_2)
=C_{l+ \mu_{\lambda^\ast}}(\mathcal{L}_{\lambda''}, \mathcal{O};\mathbb{Z}_2)
$$
for any $\lambda'\in [\lambda^\ast-\delta,\lambda^\ast)$ any $\lambda''\in (\lambda^\ast,\lambda^\ast+\delta]$.
This contradicts (\ref{e:Bi.3.19}).

Similarly, if $\mathcal{L}''(x_0)^\bot$ is  negative definite,
 we deduce
$$
C_{l+ \mu_{\lambda^\ast}}(\mathcal{L}_{\lambda''}, \mathcal{O};\mathbb{Z}_2)
=C_{l}(\mathcal{O};\mathbb{Z}_2)\ne C_{l-\nu_{\lambda^\ast}}(\mathcal{O};\mathbb{Z}_2)
=C_{l+ \mu_{\lambda^\ast}}(\mathcal{L}_{\lambda'}, \mathcal{O};\mathbb{Z}_2)
$$
for any $\lambda'\in [\lambda^\ast-\delta,\lambda^\ast)$ any $\lambda''\in (\lambda^\ast,\lambda^\ast+\delta]$,
 and also arrive at a contradiction to (\ref{e:Bi.3.19}).

If {\rm b)} holds, by (\ref{e:Bi.3.17}), (\ref{e:Bi.3.20}) and (\ref{e:Bi.3.15}) we have
\begin{eqnarray*}
C_{l+ \mu_{\lambda^\ast}}(\mathcal{L}_{\lambda'}, \mathcal{O};\mathbb{Z}_2)
=C_{l-\nu^-_{\lambda^\ast}}(\mathcal{O};\mathbb{Z}_2)\ne
C_{l-\nu^+_{\lambda^\ast}}(\mathcal{O};\mathbb{Z}_2)=
C_{l+ \mu_{\lambda^\ast}}(\mathcal{L}_{\lambda''}, \mathcal{O};\mathbb{Z}_2)
\end{eqnarray*}
for any $\lambda'\in [\lambda^\ast-\delta,\lambda^\ast)$ any $\lambda''\in (\lambda^\ast,\lambda^\ast+\delta]$,
which contradicts (\ref{e:Bi.3.19}).
\hfill$\Box$\vspace{2mm}

Using Theorem~\ref{th:S.6.7*} many results above can be generalized the case of bifurcations at a nontrivial critical orbit.

\part{Applications to quasi-linear elliptic systems of higher order}\label{part:2}

\section{Fundamental analytic properties for functionals $\mathcal{F}$ and $\mathfrak{F}$}\label{sec:Funct}
\setcounter{equation}{0}

\subsection{Results and preliminaries}\label{sec:Funct.1}

 A bounded domain $\Omega$ in $\R^n$ is said to be a {\it Sobolev domain}
 if for each integer $0\le k\le m-1$  the Sobolev space embeddings hold for it, i.e.,
 \begin{eqnarray*}
 && W^{m,p}(\Omega)\hookrightarrow W^{k,q}(\Omega)\qquad\hbox{if}\qquad \frac{1}{q}\ge\frac{1}{p}-\frac{m-k}{n}>0,\\
  && W^{m,p}(\Omega)\hookrightarrow\hookrightarrow W^{k,q}(\Omega)\qquad\hbox{if}\qquad \frac{1}{q}>\frac{1}{p}-\frac{m-k}{n}>0,\\
 && W^{m,p}(\Omega)\hookrightarrow\hookrightarrow W^{k,q}(\Omega)\qquad\hbox{if}\qquad q<\infty,\;\frac{1}{p}=\frac{m-k}{n},\\
 && W^{m,p}(\Omega)\hookrightarrow\hookrightarrow C^{k,\sigma}(\overline{\Omega})\qquad\hbox{if}\qquad \frac{n}{p}<m-(k+\sigma),\quad 0\le\sigma<1,
 \end{eqnarray*}
 where $\hookrightarrow\hookrightarrow$ denotes the compact embedding.
 Each bounded domain $\Omega$ in $\R^n$ with suitable smooth boundary
 $\partial\Omega$ is a Sobolev domain.

The key result of this section is the following theorem.
Most claims of it come from the auxiliary theorem 16 in Section 3.4 of
 Chapter 3 in \cite{Skr1} or Lemma~3.2 on the page 112 of \cite{Skr3}.
 Since only partial claims were  proved with $V=W^{m,p}_0(\Omega)$ therein,
 for completeness we give a detailed proof of it.

 \begin{theorem}\label{th:3.1}
Let $\Omega\subset\R^n$ be a bounded  Sobolev domain,
    $p\in [2,\infty)$ and let $V$ be a closed subspace of $W^{m,p}(\Omega)$.
   Suppose that (i)-(ii) in \textsf{Hypothesis} $\mathfrak{f}_p$ hold.
   Then we have\\
     {\bf A)}. On $V$ the functional $\mathcal{F}$
in (\ref{e:1.8}) is bounded on any bounded subset, of class $C^1$, and the derivative
$\mathcal{F}'(u)$ of $\mathcal{F}$ at $u$ is given by
\begin{equation}\label{e:3.1}
\langle \mathcal{F}'(u), v\rangle=\sum_{|\alpha|\le m}\int_\Omega f_\alpha(x,
u(x),\cdots, D^m u(x))D^\alpha v dx,\quad\forall v\in V.
\end{equation}
Moreover, the map $u\to \mathcal{F}'(u)$ also maps bounded subset into bounded ones.\\
{\bf B)}. The map $\mathcal{F}'$  is of class $C^1$ on $V$ if $p>2$,  G\^ateaux differentiable on $V$ if $p=2$,
  and for each $u\in V$ the derivative  $D\mathcal{F}'(u)\in \mathscr{L}(V, V^\ast)$ is given by
  \begin{equation}\label{e:3.2}
  \langle D\mathcal{F}'(u)v,\varphi\rangle=\sum_{|\alpha|,|\beta|\le m}\int_\Omega
  f_{\alpha\beta}(x, u(x),\cdots, D^m u(x))D^\beta v\cdot D^\alpha\varphi dx.
  \end{equation}
(In the case $p=2$,  equivalently,   the gradient map of $\mathcal{F}$, $V\ni u\mapsto\nabla \mathcal{F}(u)\in V$,
 given by
\begin{equation}\label{e:3.3}
(\nabla \mathcal{F}(u), v)_{m,2}=\langle \mathcal{F}'(u), v\rangle\quad \forall v\in V,
\end{equation}
has a G\^ateaux derivative $D(\nabla \mathcal{F})(u)\in\mathscr{L}_s(V)$ at every $u\in V$.)
Moreover,   $D\mathcal{F}'$ also satisfies the following properties:
\begin{description}
\item[(i)] For every given $R>0$, $\{D\mathcal{F}'(u)\,|\, \|u\|_{m,p}\le R\}$
is bounded in $\mathscr{L}_s(V)$.
Consequently, when $p=2$, $F$ is on $V$ of class $C^{2-0}$.
\item[(ii)] For any $v\in V$, $u_n\to
u_0$ implies
$D\mathcal{F}'(u_n)v\to D\mathcal{F}'(u_0)v$ in $V^\ast$.
\item[(iii)] If $p=2$ and $f(x,\xi)$ is independent of all variables $\xi_\alpha$, $|\alpha|=m$,
then $V\ni u\mapsto D\mathcal{F}'(\bar{u})\in\mathscr{L}(V, V^\ast)$ is continuous,
i.e., $\mathcal{F}$ is of class $C^2$,
and  $D(\nabla\mathcal{F})(u): V\to V$ is completely continuous
for each $u\in V$.
\end{description}
In addition,  if (iii) in \textsf{Hypothesis} $\mathfrak{f}_p$ is also satisfied, we further have\\
{\bf C)}.   $\mathcal{F}'$ satisfies condition $(S)_+$.\\
{\bf D)}. Suppose  $p=2$.   For $u\in V$, let $D(\nabla\mathcal{F})(u)$, $P(u)$ and $Q(u)$ be
  operators in $\mathscr{L}(V)$ defined by
  \begin{eqnarray}\label{e:3.4}
  (D(\nabla\mathcal{F})(u)v,\varphi)_{m,2}&=&\sum_{|\alpha|,|\beta|\le m}\int_\Omega
  f_{\alpha\beta}(x, u(x),\cdots, D^m u(x))D^\beta v\cdot D^\alpha\varphi dx,\\
   (P(u)v,\varphi)_{m,2}&=&\sum_{|\alpha|=|\beta|=m}\int_\Omega
  f_{\alpha\beta}(x, u(x),\cdots, D^m u(x))D^\beta v\cdot D^\alpha\varphi dx\nonumber\\
  &&+ \sum_{|\alpha|\le m-1}\int_\Omega  D^\alpha v\cdot D^\alpha\varphi dx,\label{e:3.5}\\
   (Q(u)v,\varphi)_{m,2}&=&\sum_{|\alpha|+|\beta|<2m}\int_\Omega
  f_{\alpha\beta}(x, u(x),\cdots, D^m u(x))D^\beta v\cdot D^\alpha\varphi dx\nonumber\\
  &&-\sum_{|\alpha|\le m-1}\int_\Omega  D^\alpha v\cdot D^\alpha\varphi dx,\label{e:3.6}
  \end{eqnarray}
  respectively.
    Then $D(\nabla\mathcal{F})=P+ Q$,  and
 \begin{description}
\item[(i)]  for any $v\in V$, the map $V\ni u\mapsto P(u)v\in W^{m,2}(\Omega)$ is continuous;
\item[(ii)] for every given $R>0$ there exist positive constants $C(R, n, m, \Omega)$ such that
$$
(P(u)v,v)_{m,2}\ge C\|v\|^2_{m,2}\qquad\forall v\in V
$$
if $u\in W^{m,2}(\Omega)$ satisfies $\|u\|_{m,2}\le R$;
\item[(iii)] $V\ni u\mapsto Q(u)\in\mathscr{L}(V)$ is continuous,
and  $Q(u): V\to V$ is completely continuous
for each $u$;
\item[(iv)] for every given $R>0$ there exist positive constants $C_j(R, n, m, \Omega), j=1,2$ such that
$$
(D(\nabla\mathcal{F})(u)v,v)_{m,2}\ge C_1\|v\|^2_{m,2}-C_2\|v\|^2_{m-1,2}\qquad\forall v\in V
$$
if $u\in V$ satisfies $\|u\|_{m,2}\le R$.
\end{description}
\end{theorem}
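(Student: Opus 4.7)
The plan is to organize the proof around the growth conditions on $f_\alpha$ and $f_{\alpha\beta}$ together with the chain of Sobolev embeddings encoded in the exponents $p_\alpha,q_\alpha,p_{\alpha\beta}$. First I would integrate the bound (\ref{e:1.5}) on $f_{\alpha\beta}$ along rays from $0$ and combine with $f_\alpha(\cdot,0)\in L^{q_\alpha}(\Omega)$ from hypothesis (i) to obtain a pointwise estimate of the form
\[
 |f_\alpha(x,\xi)|\le \mathfrak{g}(|\xi_\circ|)\Bigl(h_\alpha(x)+\sum_{m-n/p\le|\gamma|\le m}|\xi_\gamma|^{p_\gamma/q_\alpha}\Bigr),
\]
with $h_\alpha\in L^{q_\alpha}(\Omega)$. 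Together with the Sobolev embeddings $W^{m,p}(\Omega)\hookrightarrow L^{p_\gamma}(\Omega)$ (for $|\gamma|\le m$) built into the very definition of $p_\gamma$, this makes the integrand in (\ref{e:1.8}) absolutely integrable and uniformly bounded on bounded sets. For the directional derivative I would use the mean value theorem on $t\mapsto f(x,u+tv,\ldots)$, dominate the difference quotient by the same growth bound, and apply dominated convergence. Continuity of the resulting linear form in $u$, together with local boundedness, gives part A).

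Next I would turn to B), whose central estimate is H\"older's inequality applied to (\ref{e:3.2}) with the triple of exponents $(1/p_{\alpha\beta},1/p_\alpha,1/p_\beta)$ that sum to $1$ at top order and leave room elsewhere. This yields that each term $\int f_{\alpha\beta}(x,u)D^\beta v\,D^\alpha\varphi\,dx$ is controlled by $\|v\|_{m,p}\|\varphi\|_{m,p}$ times a factor depending on $u$ that is bounded on bounded sets (giving (i)) and that is continuous in $u$ when $p>2$ by Vitali's convergence theorem, since then the strict inequality $p_{\alpha\beta}<1-\tfrac{1}{p_\alpha}-\tfrac{1}{p_\beta}$ provides the required equi\-integrability. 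When $p=2$ one has equality at top order, so one obtains merely G\^ateaux differentiability: convergence of $\langle D\mathcal{F}'(u_n)v,\varphi\rangle$ for fixed $v,\varphi$ comes from a.e.\ convergence of $f_{\alpha\beta}(\cdot,u_n)$ along a subsequence, combined with a further Vitali argument applied to $D^\beta v\cdot D^\alpha\varphi\in L^1$. Property (ii) is exactly this statement, and a similar argument with $\|u\|_{m,p}\le R$ gives the $C^{2-0}$ bound. Property (iii) is settled by noting that if $f$ is independent of the top-order variables the operator kernel involves only derivatives of order $\le m-1$, so that the compact embedding $W^{m,2}\hookrightarrow\hookrightarrow W^{m-1,2}$ turns $D(\nabla\mathcal F)(u)$ into a completely continuous operator and promotes G\^ateaux into norm continuity of $D\mathcal{F}'$.

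For C), the $(S)_+$ property follows from the coercivity (\ref{e:1.6}) of the top-order block together with the compactness of the lower-order block already established in B). Given $u_n\rightharpoonup u_0$ with $\limsup\langle\mathcal{F}'(u_n),u_n-u_0\rangle\le 0$, I would split $\langle\mathcal{F}'(u_n)-\mathcal{F}'(u_0),u_n-u_0\rangle$ into a top-order part, which by (\ref{e:1.6}) and the monotonicity inequality $(|a|^{p-2}a-|b|^{p-2}b)\cdot(a-b)\gtrsim|a-b|^p$ (or its quadratic analogue when $p=2$) controls $\sum_{|\alpha|=m}\|D^\alpha(u_n-u_0)\|_{L^p}^p$, and a lower-order part that tends to zero by Rellich--Kondrachov and the bound on $\|u_n\|_{m,p}$. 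Norm convergence in $W^{m,p}$ follows.

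Finally for D), the decomposition $D(\nabla\mathcal F)(u)=P(u)+Q(u)$ is built so that $P(u)$ collects the top-order block plus a $W^{m-1,2}$ inner product added to make it coercive via (\ref{e:1.6}) and G{\aa}rding-type reasoning, while $Q(u)$ collects the mixed and lower-order terms minus the same correction. Property (i) uses continuity of $u\mapsto f_{\alpha\beta}(\cdot,u)$ in the appropriate $L^q$-space for $|\alpha|=|\beta|=m$, again by Vitali and the strict inequality at top order; property (ii) is the G{\aa}rding/coercivity bound just described; property (iii) is exactly where Rellich--Kondrachov is decisive, since every term in $Q$ involves at least one derivative of order $\le m-1$ and so defines a compact operator on $V$; property (iv) is (ii) combined with a lower-order bound on $Q(u)v$. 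The main obstacle throughout is keeping coherent bookkeeping of which exponent is strict and which is an equality, because this single distinction controls whether one obtains $C^1$ smoothness (for $p>2$) or only G\^ateaux differentiability with $C^{2-0}$ regularity (for $p=2$), and whether each operator piece is merely bounded or genuinely compact.
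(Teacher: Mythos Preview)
Your outline follows the same architecture as the paper's proof: integrate (\ref{e:1.5}) to get growth bounds on $f_\alpha$ (the paper isolates this as Proposition~\ref{prop:3.4}), use Nemytskii-operator continuity (Proposition~\ref{prop:3.5}) rather than Vitali directly, and organize the H\"older estimates by cases on $|\alpha|,|\beta|$. The overall strategy is correct.

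There is one concrete error. In D)(i) you invoke ``the strict inequality at top order'' to get continuity of $u\mapsto f_{\alpha\beta}(\cdot,u)$ in an $L^q$-space for $|\alpha|=|\beta|=m$. But D) is the $p=2$ case, where $p_{\alpha\beta}=1-\tfrac{1}{p_\alpha}-\tfrac{1}{p_\beta}=0$ exactly, so there is no such $L^q$ and no strict inequality to exploit. What saves you is that (\ref{e:1.5}) with $p_{\alpha\beta}=0$ says $f_{\alpha\beta}$ is uniformly bounded on bounded sets of $u$; then for \emph{fixed} $v$ one bounds $\|(P(u_n)-P(u))v\|$ by $\bigl(\int|f_{\alpha\beta}(\cdot,u_n)-f_{\alpha\beta}(\cdot,u)|^2|D^\beta v|^2\bigr)^{1/2}$ and applies dominated convergence with weight $|D^\beta v|^2\in L^1$. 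This is also how the paper handles B)(ii) at top order---and note that B)(ii) asks for convergence in $V^*$, i.e.\ uniformly in $\|\varphi\|\le 1$, so convergence for fixed $v,\varphi$ as you wrote is not enough; you need the estimate in the form (\ref{e:3.24}) where $\varphi$ factors out.

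A smaller point on C): the inequality $(|a|^{p-2}a-|b|^{p-2}b)\cdot(a-b)\gtrsim|a-b|^p$ is tailored to the $p$-Laplacian and does not follow from (\ref{e:1.6}) for a general $f$. The paper instead writes the top-order difference via the mean value theorem as an integral of the Hessian form, applies (\ref{e:1.6}) under the integral, and then uses the elementary inequality $\int_0^1(1+|ta+(1-t)b|)^{p-2}\,dt\gtrsim(1+|a|+|b|)^{p-2}$ (Lemma~\ref{lem:3.6}(i)) to extract $\int|D^\gamma(u_j-u)|^p$. From there it argues via convergence in measure plus equi-integrability rather than a direct monotonicity bound. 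Your conclusion is right; the route to it needs this adjustment.
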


This is a special case of the following result.

 \begin{theorem}\label{th:3.2}
Let $\Omega\subset\R^n$ and $p\in [2,\infty)$ be as in Theorem~\ref{th:3.1},
$N\ge 1$ an integer, and $V$  a closed subspace of $W^{m,p}(\Omega, \mathbb{R}^N)$.
Suppose that (i)-(ii) in \textsf{Hypothesis} $\mathfrak{F}_{p,N}$ hold.
   Then corresponding conclusions to A) and B) in Theorem~\ref{th:3.1} are also true
   if the letters $u,v,\mathcal{F}$ therein are replaced by $\vec{u}, \vec{v}, \mathfrak{F}$, respectively,
   and (\ref{e:3.1})--(\ref{e:3.2}) are changed into
   \begin{eqnarray}\label{e:3.6.1}
\langle \mathfrak{F}'(\vec{u}), \vec{v}\rangle=\sum^N_{i=1}\sum_{|\alpha|\le m}\int_\Omega F^i_\alpha(x,
\vec{u}(x),\cdots, D^m \vec{u}(x))D^\alpha v^i dx,\quad\forall \vec{v}\in V,\\
  \langle D\mathfrak{F}'(\vec{u})\vec{v},\vec{\varphi}\rangle=\sum^N_{i,j=1}\sum_{|\alpha|,|\beta|\le m}\int_\Omega
  F^{ij}_{\alpha\beta}(x, \vec{u}(x),\cdots, D^m \vec{u}(x))D^\beta v^j\cdot D^\alpha\varphi^i dx.\nonumber
 \end{eqnarray}
  Moreover,  if (iii) in \textsf{Hypothesis} $\mathfrak{F}_{p,N}$ is also satisfied, then
 corresponding conclusions to C) and D) in Theorem~\ref{th:3.1} still remain true
   if the letters $u,v,\mathcal{F}$ therein are replaced by $\vec{u}, \vec{v}, \mathfrak{F}$, respectively,
   and (\ref{e:3.4})--(\ref{e:3.6}) are  changed into
   \begin{eqnarray*}
  (D(\nabla\mathfrak{F})(\vec{u})\vec{v},\vec{\varphi})_{m,2}&=&\sum^N_{i,j=1}\sum_{|\alpha|,|\beta|\le m}\int_\Omega
  F^{ij}_{\alpha\beta}(x, \vec{u}(x),\cdots, D^m \vec{u}(x))D^\beta v^j\cdot D^\alpha\varphi^i dx,\\
   (P(\vec{u})\vec{v}, \vec{\varphi})_{m,2}&=&\sum^N_{i,j=1}\sum_{|\alpha|=|\beta|=m}\int_\Omega
  F^{ij}_{\alpha\beta}(x, \vec{u}(x),\cdots, D^m \vec{u}(x))D^\beta v^j\cdot D^\alpha\varphi^i dx\\
  &&+ \sum^N_{i=1}\sum_{|\alpha|\le m-1}\int_\Omega  D^\alpha v^i\cdot D^\alpha\varphi^i dx,\\
   (Q(\vec{u})\vec{v},\vec{\varphi})_{m,2}&=&\sum^N_{i,j=1}\sum_{|\alpha|+|\beta|<2m}\int_\Omega
  F^{ij}_{\alpha\beta}(x, \vec{u}(x),\cdots, D^m \vec{u}(x))D^\beta v^j\cdot D^\alpha\varphi^i dx\\
  &&-\sum^N_{i=1}\sum_{|\alpha|\le m-1}\int_\Omega  D^\alpha v^i\cdot D^\alpha\varphi^i dx.
  \end{eqnarray*}
   \end{theorem}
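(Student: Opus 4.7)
The plan is to transport the proof of Theorem~\ref{th:3.1} verbatim from the scalar ($N=1$) setting to the vector setting by running each scalar estimate componentwise and using that Hypothesis~$\mathfrak{F}_{p,N}$ is already formulated in a form symmetric in the $N$ components. The decisive point is that when $\vec{u}=(u^1,\ldots,u^N)\in W^{m,p}(\Omega,\mathbb{R}^N)$, each component $u^i\in W^{m,p}(\Omega)$ enjoys the full Sobolev/embedding chain, and the exponents $p_\gamma,q_\gamma,p_{\alpha\beta}$ are chosen exactly so that H\"older's inequality balances the growth of $F^{ij}_{\alpha\beta}$ in (\ref{e:1.1}) against the integrability of $D^\alpha\varphi^i$ and $D^\beta v^j$, the double sum over $i,j\in\{1,\ldots,N\}$ being merely a finite sum.

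For the analog of~A), I would integrate $F(x,\xi)$ along the segment from $\xi=0$ to $(\vec{u}(x),\ldots,D^m\vec{u}(x))$ and combine (\ref{e:1.1}) with the integrability of $F(\cdot,0)$ and $F^i_\alpha(\cdot,0)$ from~(i) to get boundedness of $\mathfrak{F}$ on bounded sets and the formula (\ref{e:3.6.1}). G\^ateaux-differentiability of $\mathfrak{F}$ and continuity of the resulting derivative follow by bounding
\begin{eqnarray*}
\bigl|F^i_\alpha(x,\vec{u}+t\vec{v},\ldots)-F^i_\alpha(x,\vec{u},\ldots)\bigr|\cdot|D^\alpha v^i|
\end{eqnarray*}
via the mean value theorem and (\ref{e:1.1}), exactly as in \cite[Chap.~III, \S3.4]{Skr1}. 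For the analog of~B), I differentiate again under the integral sign, checking boundedness of the resulting bilinear form through H\"older with exponents matched to $p_{\alpha\beta}$; the $C^1$ versus merely G\^ateaux-differentiable dichotomy at $p>2$ vs.\ $p=2$ is inherited from the scalar case, because the obstruction is the Nemytskii continuity of $\vec{u}\mapsto F^{ij}_{\alpha\beta}(\cdot,\vec{u},\ldots,D^m\vec{u})$ in the appropriate $L^{r_{\alpha\beta}}$ space, which depends on the exponent $p-2$ and not on $N$.

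For the analog of~C) and~D), the $(S)_+$ condition and the decomposition $D(\nabla\mathfrak{F})(\vec{u})=P(\vec{u})+Q(\vec{u})$ rest on the vector-form ellipticity (\ref{e:1.2}). A standard G\aa rding-type argument, together with (\ref{e:1.2}), yields
\begin{eqnarray*}
(P(\vec{u})\vec{v},\vec{v})_{m,2}\ \ge\ C_1(R)\,\|\vec{v}\|_{m,2}^2\qquad\text{if}\quad\|\vec{u}\|_{m,2}\le R,
\end{eqnarray*}
and the coercivity estimate for $D(\nabla\mathfrak{F})(\vec{u})$ with a lower-order correction. Compactness of $Q(\vec{u})$ and continuity of $\vec{u}\mapsto Q(\vec{u})$ follow from the compact embeddings $W^{m,2}(\Omega,\mathbb{R}^N)\hookrightarrow\hookrightarrow W^{k,q}(\Omega,\mathbb{R}^N)$ for $k\le m-1$ and the condition $|\alpha|+|\beta|<2m$ in the defining sum for $Q$.

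The principal obstacle is bookkeeping rather than conceptual: for each pair of multi-indexes $(\alpha,\beta)$ one must verify that the Nemytskii operator $\vec{u}\mapsto F^{ij}_{\alpha\beta}(\cdot,\vec{u},\ldots,D^m\vec{u})$ lands in the correct $L^{r_{\alpha\beta}}(\Omega)$ space and is continuous there, separating the cases $|\alpha|=|\beta|=m$, $m-n/p\le|\alpha|\le m,\;|\beta|<m-n/p$, $|\alpha|,|\beta|<m-n/p$, and the subcritical case $|\alpha|,|\beta|\ge m-n/p,\;|\alpha|+|\beta|<2m$, matching the four cases in Hypothesis~$\mathfrak{F}_{p,N}$(ii). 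This is done case-by-case by the same H\"older/Sobolev combinations as in \cite[Chap.~3]{Skr3}; since the estimates are entirely local in $x$ and involve only finite sums in $i,j$, no new idea is needed to pass from $N=1$ to $N\ge 2$.
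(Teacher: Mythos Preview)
Your proposal is correct and matches the paper's approach exactly: the paper itself states that Theorem~\ref{th:3.2} ``can be proved as that of Theorem~\ref{th:3.1}, only more terms are added or estimated in each step,'' and then proceeds to give the full proof only for the scalar case $N=1$. Your outline of how the componentwise argument runs---H\"older/Sobolev estimates split by the four cases for $p_{\alpha\beta}$, Nemytskii continuity in the appropriate $L^{r_{\alpha\beta}}$, and the ellipticity bound (\ref{e:1.2}) driving the coercivity of $P(\vec{u})$---is precisely the structure of the paper's proof of Theorem~\ref{th:3.1} in Sections~\ref{sec:Funct.2}--\ref{sec:Funct.5}.
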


Theorem~\ref{th:3.2} can be proved as that of Theorem~\ref{th:3.1}, only more terms
are added or estimated in each step. For the sake of simplicity,
we only  prove Theorem~\ref{th:3.1}. To this goal the following preliminary results are needed.

\begin{proposition}\label{prop:3.4}
For the function $\mathfrak{g}_1$ in \textsf{Hypothesis},
let continuous positive nondecreasing functions $\mathfrak{g}_k:[0,\infty)\to\mathbb{R}$, $k=3,4,5$, be given by
\begin{eqnarray*}
&&\mathfrak{g}_3(t):=1+ \mathfrak{g}_1(t)[t^2 M(m)+ t(M(m)+1)^2]+\mathfrak{g}_1(t)t(M(m)+1)+\mathfrak{g}_1(t)(M(m)+1)^2,\\
&&\mathfrak{g}_4(t):=\mathfrak{g}_1(t)t+\mathfrak{g}_1(t)\qquad\hbox{and}\qquad
\mathfrak{g}_5(t):=(M(m)+1)\mathfrak{g}_1(t)(t+1).
\end{eqnarray*}
Then (ii) in \textsf{Hypothesis} $\mathfrak{f}_p$ implies that for all $(x,\xi)$,
\begin{eqnarray}\label{e:3.7}
|f(x,\xi)|&\le& |f(x,0)|+
|\xi_\circ|\sum_{|\alpha|<m-n/p}|f_{\alpha}(x, 0)|+\sum_{m-n/p\le|\alpha|\le m}|f_{\alpha}(x, 0)|^{q_\alpha}
\nonumber\\
&&+ \mathfrak{g}_3(|\xi_\circ|)\Bigg(1+\sum_{m-n/p\le|\alpha|\le m}|\xi_\alpha|^{p_\alpha}\Bigg),
\end{eqnarray}
\begin{eqnarray}\label{e:3.8}
|f_\alpha(x,\xi)|&\le&|f_\alpha(x,0)|
+ \mathfrak{g}_4(|\xi_\circ|)\sum_{|\beta|<m-n/p}\Bigg(1+
\sum_{m-n/p\le |\gamma|\le
m}|\xi_\gamma|^{p_\gamma }\Bigg)^{p_{\alpha\beta}}\nonumber\\
&+&\mathfrak{g}_4(|\xi_\circ|)\sum_{m-n/p\le |\beta|\le m} \Bigg(1+
\sum_{m-n/p\le |\gamma|\le
m}|\xi_\gamma|^{p_\gamma }\Bigg)^{p_{\alpha\beta}}|\xi_\beta|;
\end{eqnarray}
for the latter we further have
\begin{eqnarray}\label{e:3.9}
|f_\alpha(x,\xi)|&\le&|f_\alpha(x,0)|
+ \mathfrak{g}_5(|\xi_\circ|)\Bigg(1+
\sum_{m-n/p\le |\gamma|\le m}|\xi_\gamma|^{p_\gamma }\Bigg),
\end{eqnarray}
if $|\alpha|<m-n/p$,  and
\begin{eqnarray}\label{e:3.10}
|f_\alpha(x,\xi)|&\le&|f_\alpha(x,0)|+\mathfrak{g}_5(|\xi_\circ|)
+ \mathfrak{g}_5(|\xi_\circ|)\Bigg(\sum_{m-n/p\le |\gamma|\le
m}|\xi_\gamma|^{p_\gamma }\Bigg)^{1/q_{\alpha}}
\end{eqnarray}
if $m-n/p\le|\alpha|\le m$.
\end{proposition}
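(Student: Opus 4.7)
My plan is to derive all four inequalities by iterated application of the fundamental theorem of calculus on line segments from $0$ to $\xi$, combined with the pointwise bound (\ref{e:1.5}) on $f_{\alpha\beta}$ and a Young-type absorption trick.

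First I would prove (\ref{e:3.8}). Since $f(x,\cdot)$ is $C^2$, for almost every $x$,
\[
f_\alpha(x,\xi)-f_\alpha(x,0)=\int_0^1\sum_{|\beta|\le m}f_{\alpha\beta}(x,t\xi)\xi_\beta\,dt.
\]
The monotonicity of $\mathfrak{g}_1$ and $|t\xi_\circ|\le |\xi_\circ|$ give $\mathfrak{g}_1(|t\xi_\circ|)\le \mathfrak{g}_1(|\xi_\circ|)$; likewise, since every $p_{\alpha\beta}\ge 0$ under $p\ge 2$, the factor $(1+\sum_\gamma|t\xi_\gamma|^{p_\gamma})^{p_{\alpha\beta}}$ is dominated by $(1+\sum_\gamma|\xi_\gamma|^{p_\gamma})^{p_{\alpha\beta}}$. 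Splitting the $\beta$-sum at $|\beta|=m-n/p$ and estimating $|\xi_\beta|\le |\xi_\circ|$ for $|\beta|<m-n/p$ (absorbed into the factor $|\xi_\circ|\mathfrak{g}_1$ that makes up $\mathfrak{g}_4$) yields (\ref{e:3.8}).

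Next I would refine (\ref{e:3.8}) to obtain (\ref{e:3.9}) and (\ref{e:3.10}) by absorbing each $|\xi_\beta|$ with $|\beta|\ge m-n/p$ via the elementary inequality
\[
|\xi_\beta|\le \Bigl(1+\sum_{m-n/p\le|\gamma|\le m}|\xi_\gamma|^{p_\gamma}\Bigr)^{1/p_\beta}.
\]
When $|\alpha|<m-n/p$ the definitions give $p_{\alpha\beta}=1-1/p_\beta$ in the mixed case and $p_{\alpha\beta}=1$ in the low-low case, so $p_{\alpha\beta}+1/p_\beta\le 1$ and the exponent collapses to $1$, producing (\ref{e:3.9}); when $m-n/p\le|\alpha|\le m$, the bound $p_{\alpha\beta}\le 1-1/p_\alpha-1/p_\beta$ gives $p_{\alpha\beta}+1/p_\beta\le 1-1/p_\alpha=1/q_\alpha$, which yields (\ref{e:3.10}). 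A uniform counting of the $\beta$-indices and the factor $(1+|\xi_\circ|)$ inside $\mathfrak{g}_4$ absorbs all constants into $\mathfrak{g}_5$.

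Finally, for (\ref{e:3.7}) I would integrate once more: $f(x,\xi)-f(x,0)=\int_0^1\sum_{|\alpha|\le m}f_\alpha(x,t\xi)\xi_\alpha\,dt$, apply (\ref{e:3.9}) or (\ref{e:3.10}) according to the range of $|\alpha|$, and again use $|\xi_\alpha|\le(1+\sum|\xi_\gamma|^{p_\gamma})^{1/p_\alpha}$ to convert the factor $(1+\sum)^{1/q_\alpha}|\xi_\alpha|$ into $(1+\sum)^{1/q_\alpha+1/p_\alpha}=(1+\sum)$. The terms arising from $|f_\alpha(x,0)|$ in (\ref{e:3.9})–(\ref{e:3.10}) are handled by Young's inequality $|f_\alpha(x,0)|\cdot|\xi_\alpha|\le \frac{1}{q_\alpha}|f_\alpha(x,0)|^{q_\alpha}+\frac{1}{p_\alpha}|\xi_\alpha|^{p_\alpha}$ when $m-n/p\le|\alpha|\le m$, while for $|\alpha|<m-n/p$ the bound $|\xi_\alpha|\le|\xi_\circ|$ absorbs directly; collecting all constants into $\mathfrak{g}_3$ delivers (\ref{e:3.7}).

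The reasoning is elementary but bookkeeping is the main obstacle: one must verify in every case ($|\alpha|,|\beta|$ small, mixed, or $\ge m-n/p$, including the boundary $|\alpha|=|\beta|=m$) that the exponent arithmetic $p_{\alpha\beta}+1/p_\beta\le 1/q_\alpha$ holds with the right conventions $q_\alpha=1$ for $|\alpha|<m-n/p$, and that the monotone functions $\mathfrak{g}_k$ can be chosen as stated independently of $\xi$.
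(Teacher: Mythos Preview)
Your proposal is correct and follows essentially the same route as the paper's proof in Appendix~A: mean-value/fundamental-theorem-of-calculus applied to $f_\alpha$ to obtain (\ref{e:3.8}), then exponent bookkeeping $p_{\alpha\beta}+1/p_\beta\le 1/q_\alpha$ to reach (\ref{e:3.9})--(\ref{e:3.10}), and a second integration for (\ref{e:3.7}) with Young's inequality handling the $|f_\alpha(x,0)|\cdot|\xi_\alpha|$ terms. The only cosmetic differences are that the paper uses the mean value theorem (with intermediate points $t_\beta$) where you use the integral form, and for (\ref{e:3.7}) the paper inserts the intermediate bound (\ref{e:3.8}) rather than the refined (\ref{e:3.9})--(\ref{e:3.10}), then splits the resulting double sum into three pieces $T_1,T_2,T_3$; your shortcut through (\ref{e:3.9})--(\ref{e:3.10}) is slightly cleaner and gives the same constants.
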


Its proof will be given in Appendix A.  The following standard
result concerning the continuity of the Nemytski operator (cf. \cite[Lemma 3.2]{Bro-} and \cite[Proposition 1.1, page 3]{Skr3})
will be used many times.

\begin{proposition}\label{prop:3.5}
Let $G$ be a measurable set of positive measure in $\R^N$ and let
$f:G\times\R^N\to\R$ satisfy the following conditions:
\begin{description}
\item[(a)] $f(x,\xi_1,\cdots, \xi_N)$ is continuous in $(\xi_1,\cdots, \xi_N)$
for almost all $x\in G$;

\item[(b)] $f(x,\xi_1,\cdots, \xi_N)$ is measurable in $x$ for any fixed $(\xi_1,\cdots,
\xi_N)\in\R^N$;

\item[(c)] there exist positive numbers $C$, $1<p, p_1,\cdots,
p_N<\infty$ and a function $g\in L^p(G)$ such that
$$
|f(x,\xi_1,\cdots,\xi_N)|\le C\sum^N_{i=1}|\xi_i|^{\frac{p_i}{p}}+ g(x),\quad
\forall (x,\xi)\in\overline{\Omega}\times\R^N.
$$
\end{description}
Then the Nemytskii operator $F: \prod^N_{i=1}L_{p_i}(G)\to L_p(G)$
defined by the formula
$$
F(u_1,\cdots,u_N)(x)=f(x,u_1(x),\cdots,u_N(x))
$$
is bounded (i.e. mapping bounded sets into bounded sets) and
continuous.
\end{proposition}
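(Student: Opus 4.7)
The plan is to follow the standard two-step strategy for Nemytskii (superposition) operators: first establish boundedness via a direct $L^p$-estimate based on the growth condition (c), then establish continuity by a subsequence--subsubsequence argument combined with dominated convergence.

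For boundedness, I would start from the inequality $(a_1+\cdots+a_k)^p\le k^{p-1}(a_1^p+\cdots+a_k^p)$ applied to (c), obtaining for any $(u_1,\ldots,u_N)\in\prod^N_{i=1}L^{p_i}(G)$ a pointwise bound
\[
|f(x,u_1(x),\ldots,u_N(x))|^p\le (N+1)^{p-1}\Bigl(C^p\sum^N_{i=1}|u_i(x)|^{p_i}+g(x)^p\Bigr).
\]
Integrating over $G$ yields $\|F(u_1,\ldots,u_N)\|_{L^p}^p\le (N+1)^{p-1}\bigl(C^p\sum_i\|u_i\|_{L^{p_i}}^{p_i}+\|g\|_{L^p}^p\bigr)$, which directly gives boundedness of $F$ on bounded sets of $\prod^N_{i=1}L^{p_i}(G)$. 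Measurability of $F(u_1,\ldots,u_N)$ for measurable $u_i$ follows from (a)--(b) by the standard approximation of Carath\'eodory functions by simple functions, so $F$ indeed maps $\prod^N_{i=1}L^{p_i}(G)$ into $L^p(G)$.

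For continuity, I would argue by the subsequence principle: suppose $u_i^{(n)}\to u_i$ in $L^{p_i}(G)$ for each $i$; it suffices to show that every subsequence has a further subsequence along which $F(u^{(n)})\to F(u)$ in $L^p(G)$. Passing to a subsequence if necessary, I can assume by the converse part of the Riesz--Fischer theorem that $u_i^{(n)}(x)\to u_i(x)$ a.e.\ in $G$, and simultaneously that there exist $h_i\in L^{p_i}(G)$ with $|u_i^{(n)}(x)|\le h_i(x)$ a.e.\ for every $n$ (this standard diagonal extraction is the most technical point). Condition (a) then gives $f(x,u^{(n)}(x))\to f(x,u(x))$ a.e., while condition (c) yields the dominating bound
\[
|f(x,u^{(n)}(x))|^p\le (N+1)^{p-1}\Bigl(C^p\sum^N_{i=1}h_i(x)^{p_i}+g(x)^p\Bigr)\in L^1(G).
\]
Lebesgue's dominated convergence theorem then gives $\int_G|f(x,u^{(n)}(x))-f(x,u(x))|^p\,dx\to 0$, i.e., $F(u^{(n)})\to F(u)$ in $L^p(G)$ along this subsequence.

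The main (minor) obstacle is purely bookkeeping: ensuring the simultaneous a.e.\ convergence and a common dominating function over all $N$ components, which requires $N$ successive extractions of subsequences. Once this is in place, both the boundedness estimate and the dominated convergence step are routine. Since this is a classical statement (as cited in the excerpt via \cite{Bro-} and \cite{Skr3}), I would likely state it with a brief sketch referring to the standard argument rather than a full proof.
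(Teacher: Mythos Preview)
Your proposal is correct and is the standard argument. The paper itself does not prove this proposition: it states it as a known result with references (cf.\ \cite[Lemma~3.2]{Bro-} and \cite[Proposition~1.1, page~3]{Skr3}) and uses it as a black box, exactly as you anticipated in your final remark.
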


The following basic inequalities are standard.

\begin{lemma}\label{lem:3.6}
{\bf (i)} {\it There exists a positive constant
$C$ only depending on $p\ge 2$ such that}
$$
\int^1_0(1+|ta+(1-t)b|)^{p-2}dt\ge C(1+|a|+|b|)^{p-2},\quad\forall
a,b\in\mathbb{R}.
$$
{\bf (ii)} $(|x_1|+\cdots+|x_n|)^q\le |x_1|^q+\cdots+|x_n|^q$
for any $q\in (0,1)$ and numbers $x_j$, $j=1,\cdots$.
\end{lemma}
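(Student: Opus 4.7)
The plan is to treat the two parts separately, since they are standard but independent inequalities whose proofs rest on elementary convexity/concavity arguments.

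For part (i), I would first observe that the bound is symmetric in $a$ and $b$, so without loss of generality $|a|\ge|b|$, in which case $1+|a|+|b|\le 2(1+|a|)$, so it suffices to prove $\int_0^1(1+|ta+(1-t)b|)^{p-2}\,dt\ge C'(1+|a|)^{p-2}$ for some $C'=C'(p)>0$. The key step is then to localize to the subinterval $t\in[3/4,1]$ of length $1/4$: by the reverse triangle inequality,
\[
|ta+(1-t)b|\ \ge\ \bigl|\,t|a|-(1-t)|b|\,\bigr|\ \ge\ t|a|-(1-t)|b|\ \ge\ \tfrac{3}{4}|a|-\tfrac{1}{4}|a|\ =\ \tfrac{1}{2}|a|,
\]
using $|b|\le|a|$ and $t\ge 3/4$ in the last step. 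Since $t\mapsto(1+t)^{p-2}$ is monotone nondecreasing on $[0,\infty)$ for $p\ge 2$, and since $(1+|a|/2)\ge (1+|a|)/2$, one gets
\[
\int_0^1(1+|ta+(1-t)b|)^{p-2}\,dt\ \ge\ \tfrac{1}{4}(1+|a|/2)^{p-2}\ \ge\ \tfrac{1}{4}\cdot 2^{2-p}(1+|a|)^{p-2},
\]
and combining with $1+|a|\ge\tfrac{1}{2}(1+|a|+|b|)$ yields the claim with $C=4^{1-p}$.

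For part (ii), I would reduce to the case $n=2$ and iterate: it is enough to show $(a+b)^q\le a^q+b^q$ for $a,b\ge 0$ and $q\in(0,1)$. Assuming $a+b>0$, set $s=a/(a+b)$ and $t=b/(a+b)$; then $s,t\in[0,1]$ with $s+t=1$, and since $q<1$ implies $s^q\ge s$ and $t^q\ge t$ on $[0,1]$, we get
\[
a^q+b^q\ =\ (a+b)^q(s^q+t^q)\ \ge\ (a+b)^q(s+t)\ =\ (a+b)^q.
\]
An induction on $n$ then gives the general statement.

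Neither step is an obstacle; the only place where one must be careful is the sign handling in (i), where the reverse triangle inequality must be applied with the correct orientation so that $t|a|-(1-t)|b|$ is actually nonnegative on the chosen subinterval—this is why the restriction to $t\in[3/4,1]$ together with $|a|\ge|b|$ is convenient.
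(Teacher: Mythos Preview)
Your argument is correct. The paper does not actually prove this lemma: it merely states the two inequalities as ``standard'' and moves on, so there is no paper proof to compare with. Your approach to (i)---reducing by symmetry to $|a|\ge|b|$ and then bounding the integrand from below on the subinterval $[3/4,1]$ via the reverse triangle inequality---is clean, and the constant $C=4^{1-p}$ you obtain is explicit. Your proof of (ii) via the $n=2$ case and induction is the usual one.
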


 We shall prove Theorem~\ref{th:3.1} in four subsections. Clearly,
 it suffices to prove the case $V=W^{m,p}(\Omega)$.

\subsection{Proof for A) of Theorem~\ref{th:3.1}}\label{sec:Funct.2}

\noindent{\bf Step 1}.\quad{\it Prove the continuity of $\mathcal{F}$.}

 By (\ref{e:3.7}) it is easy to see that
the functional $\mathcal{F}$ in (\ref{e:1.8}) is well-defined on $W^{m,p}(\Omega)$
and is bounded on any bounded subset of $W^{m,p}(\Omega)$.

Next, we prove that $\mathcal{F}$ is continuous at a fixed
 $u_0\in W^{m,p}(\Omega)$. For $\delta>0$ let $B(u_0,\delta)=
 \{u\in W^{m,p}(\Omega)\,|\, \|u-u_0\|_{m,p}\le \delta\}$.
 By the Sobolev embedding theorem
 there exist $R=R(u_0)>0$ such that
$\sup\{|D^\alpha u(x)|:\,|\alpha|<m-n/p,\;x\in\Omega\}\le R$ for all $u\in B(u_0, 1)$.
Take a continuous function $\chi:\mathbb{R}\to\mathbb{R}$ such that
$\chi(t)=t\;\forall |t|\le 2R$, $\chi(t)=\pm 3R\;\forall \pm t\ge 3R$, and $|\chi(t)|\le 3R$.
Define a function $\tilde{f}:\overline\Omega\times\R^{M(m)}\to \R,\; (x,
\xi)\mapsto \tilde{f}(x,\xi)=f(x, \tilde\xi)$,
where $\tilde\xi_\alpha=\chi(\xi_\alpha)$ if $|\alpha|<m-n/p$, and
$\tilde\xi_\alpha=\xi_\alpha$ if $m-n/p\le |\alpha|\le m$.
Then $\tilde{0}=0$, $|\tilde{\xi}_\circ|\le 3M(m)R$. It follows that
$\tilde{f}$ also satisfies the Caratheodory condition, and therefore
(\ref{e:3.7}) leads to
\begin{eqnarray}\label{e:3.11}
|\tilde{f}(x,\xi)|&\le& |f(x,0)|+
3M(m)R\sum_{|\alpha|<m-n/p}|f_{\alpha}(x, 0)|+\sum_{m-n/p\le|\alpha|\le m}|f_{\alpha}(x, 0)|^{q_\alpha}
\nonumber\\
&&+ \mathfrak{g}_3(3M(m)R)\Bigg(1+\sum_{m-n/p\le|\alpha|\le m}|\xi_\alpha|^{p_\alpha}\Bigg).
\end{eqnarray}
Let the functional $\tilde{\mathcal{F}}:W^{m,p}(\Omega)\to \R$ be defined by
$$
\tilde{\mathcal{F}}(u)=\int_\Omega \tilde{f}(x, u(x),\cdots, D^m u(x))dx.
$$
Clearly, it is equal to $\mathcal{F}$ on ball $B(u_0, 1)$.
Hence we only need to prove that $\tilde{\mathcal{F}}$ is continuous at $u_0$.
This can follow from (\ref{e:3.11}) and Proposition~\ref{prop:3.5}.

\noindent{\bf Step 2}.\quad{\it Prove the $C^1$-smoothness of $\mathcal{F}$.}

Fix $u,\varphi\in W^{m,p}(\Omega)$. Let
$C_{u,\varphi}=\sup_{k<m-\frac{n}{p}}[\|u\|_{C^k}+\|\varphi\|_{C^k}]$.
For any $t\in [-1,1]\setminus\{0\}$
and a.a. $x\in\Omega$, using the intermediate value theorem and (\ref{e:3.9})-(\ref{e:3.10}) we deduce
{\small
\begin{eqnarray*}
&&\biggl|\frac{1}{t}[f(x, u(x)+ t\varphi(x),\cdots, D^m u(x)+ tD^m\varphi(x))-f(x, u(x),\cdots, D^m u(x))]
\biggr|\\
&\le&\sup_{0\le\vartheta\le 1}\sum_{|\alpha|<m-n/p}
|f_\alpha(x, u(x)+ \vartheta t\varphi(x),\cdots, D^m u(x)+ \vartheta tD^m\varphi(x))|\cdot
|D^\alpha\varphi(x)|\\
&&+\sup_{0\le\vartheta\le 1}\sum_{m-n/p\le |\alpha|\le m}
|f_\alpha(x, u(x)+ \vartheta t\varphi(x),\cdots, D^m u(x)+ \vartheta tD^m\varphi(x))|\cdot
|D^\alpha\varphi(x)|\\
&\le&C_{u,\varphi}\sup_{0\le\vartheta\le 1}\sum_{|\alpha|<m-n/p}
\Bigg[|f_\alpha(x,0)|+\mathfrak{g}_5(C_{u,\varphi})\bigg(1+
\sum_{m-n/p\le |\gamma|\le
m}|D^\gamma u(x)+ \vartheta tD^\gamma\varphi(x)|^{p_\gamma }\bigg)\Bigg]\\
&&+\sup_{0\le\vartheta\le 1}\sum_{m-n/p\le |\alpha|\le m}
\biggl[|f_\alpha(x,0)|+ \mathfrak{g}_5(C_{u,\varphi})\\
&&\hspace{30mm}+ \mathfrak{g}_5(C_{u,\varphi})\bigg(
\sum_{m-n/p\le |\gamma|\le m}|D^\gamma u(x)+ \vartheta tD^\gamma\varphi(x)|^{p_\gamma }\bigg)^{1/q_{\alpha}}\biggr]
\cdot|D^\alpha\varphi(x)|\\
&\le&C_{u,\varphi}\sum_{|\alpha|<m-n/p}
\biggl[|f_\alpha(x,0)|+\mathfrak{g}_5(C_{u,\varphi})\\
&&\hspace{40mm}+ \mathfrak{g}_5(C_{u,\varphi})\bigg(\sum_{m-n/p\le |\gamma|\le
m}2^{p_\gamma }\big[|D^\gamma u(x)|^{p_\gamma }+ |D^\gamma\varphi(x)|^{p_\gamma}\big]\bigg)\biggr]\\
&&+\sum_{m-n/p\le |\alpha|\le m}
\big[|f_\alpha(x,0)|+ \mathfrak{g}_5(C_{u,\varphi})\big]\cdot |D^\alpha\varphi(x)|\\
&&+\mathfrak{g}_5(C_{u,\varphi})\sum_{m-n/p\le |\alpha|\le m}
\bigg(\sum_{m-n/p\le |\gamma|\le m}2^{p_\gamma}\big[|D^\gamma u(x)|^{p_\gamma}+ |D^\gamma\varphi(x)|^{p_\gamma}\big]\bigg)^{1/q_{\alpha}}
\cdot |D^\alpha\varphi(x)|.
\end{eqnarray*}}
It follows from the assumptions on $p_{\alpha\beta}$ that the right side
is integrable and thus from the Lebesgue dominated convergence theorem
that the functional
$\mathcal{F}$  is G\^ateaux differentiable.
Moreover, the G\^ateaux differential of $\mathcal{F}$ at $u$, $D\mathcal{F}(u)\in[W^{m,p}(\Omega)]^\ast$,
is given by
$$
D\mathcal{F}(u)\varphi=\langle D\mathcal{F}(u),\varphi\rangle=
\sum_{|\alpha|\le m}\int_\Omega
f_\alpha(x, u(x),\cdots, D^m u(x))
D^\alpha\varphi(x)dx.
$$
Let $D_\alpha\mathcal{F}(u)\in[W^{m,p}(\Omega)]^\ast$ be defined by
\begin{equation}\label{e:3.12}
\langle D_\alpha\mathcal{F}(u),\varphi\rangle=
\int_\Omega
f_\alpha(x, u(x),\cdots, D^m u(x))
D^\alpha\varphi(x)dx.
\end{equation}

\noindent{\it Claim 1}.  The map $D_\alpha\mathcal{F}:W^{m,p}(\Omega)\to [W^{m,p}(\Omega)]^\ast$
is continuous.

$\bullet$ Case $|\alpha|<m-n/p$.

 Then $\|D^\alpha\varphi\|_{C^0}\le C\|\varphi\|_{m,p},\;\forall
 \varphi\in W^{m,p}(\Omega)$, where $C>0$ is a constant coming from the Sobolev embedding theorem.
 Fix $u\in W^{m,p}(\Omega)$. For any $v\in B(u,1)$,
 we have
\begin{eqnarray}\label{e:3.13}
&&\|D_\alpha\mathcal{F}(v)-D_\alpha\mathcal{F}(u)\|=\sup_{\|\varphi\|_{m,p}=1}|
\langle D_\alpha\mathcal{F}(v)-D_\alpha\mathcal{F}(u),\varphi\rangle|\nonumber\\
&\le&\sup_{\|\varphi\|_{m,p}=1}\int_\Omega
|f_\alpha(x, v(x),\cdots, D^m v(x))-f_\alpha(x, u(x),\cdots, D^m u(x))|\cdot
|D^\alpha\varphi(x)|dx\nonumber\\
&\le&\sup_{\|\varphi\|_{m,p}=1}\|D^\alpha\varphi\|_{C^0}\int_\Omega
|f_\alpha(x, v(x),\cdots, D^m v(x))-f_\alpha(x, u(x),\cdots, D^m u(x))|dx\nonumber\\
&\le& C\int_\Omega
|f_\alpha(x, v(x),\cdots, D^m v(x))-f_\alpha(x, u(x),\cdots, D^m u(x))|dx.
\end{eqnarray}
 Because of (\ref{e:3.9}),
by a standard method as in Step~1 we may also assume that for some constant $C'>0$,
\begin{eqnarray*}
|f_\alpha(x,\xi)|&\le&|f_\alpha(x,0)|
+ C'\bigg(1+
\sum_{m-n/p\le |\gamma|\le
m}|\xi_\gamma|^{p_\gamma }\bigg),\quad\forall (x,\xi).
\end{eqnarray*}
 Note that
$f_\alpha(\cdot,0)\in L^1(\Omega)$.
 Using Proposition~\ref{prop:3.5} we deduce that
\begin{eqnarray*}
\prod_{|\beta|<n/p}L^1(\Omega)\times\prod_{m-\frac{n}{p}\le|\beta|\le
m}L^{p_\beta}(\Omega)\to L^{1}(\Omega),\;{\bf u}=\{u_\beta:\,|\beta|\le m\}\to f_\alpha(\cdot,{\bf u})
\end{eqnarray*}
is continuous, which implies the continuity of the map
$W^{m,p}(\Omega)\ni u\mapsto f_\alpha(\cdot,u,\cdots,D^mu)\in L^{1}(\Omega)$.
The latter claim and (\ref{e:3.13}) yields that
$\|D_\alpha\mathcal{F}(v)-D_\alpha\mathcal{F}(u)\|\to 0$ as $\|v-u\|_{m,p}\to 0$.
That is, $D_\alpha\mathcal{F}$ is continuous in this case.

$\bullet$ Case $m-n/p\le |\alpha|\le m$. Then we have
\begin{eqnarray*}
&&\|D_\alpha\mathcal{F}(u)-D_\alpha\mathcal{F}(v)\|=\sup_{\|\varphi\|_{m,p}=1}|
\langle D_\alpha\mathcal{F}(u)-D_\alpha\mathcal{F}(v),\varphi\rangle|\\
&\le&\sup_{\|\varphi\|_{m,p}=1}\int_\Omega
|f_\alpha(x, u(x),\cdots, D^m u(x))-f_\alpha(x, v(x),\cdots, D^m v(x))|\cdot
|D^\alpha\varphi(x)|dx\\
&\le&\sup_{\|\varphi\|_{m,p}=1}\|D^\alpha\varphi\|_{p_\alpha}\bigg(\int_\Omega
|f_\alpha(x, u(x),\cdots, D^m u(x))-f_\alpha(x, v(x),\cdots, D^m v(x))|^{q_\alpha}dx
\bigg)^{1/q_\alpha}\\
&\le&C\bigg(\int_\Omega
|f_\alpha(x, u(x),\cdots, D^m u(x))-f_\alpha(x, v(x),\cdots, D^m v(x))|^{q_\alpha}dx
\bigg)^{1/q_\alpha}.
\end{eqnarray*}
Since $v\in B(u,1)$, as in Step~1, by (\ref{e:3.10}) we may also assume that for some constant $C>0$,
\begin{eqnarray*}
|f_\alpha(x,\xi)|&\le&|f_\alpha(x,0)|
+ C\bigg(1+
\sum_{m-n/p\le |\gamma|\le
m}|\xi_\gamma|^{p_\gamma/q_{\alpha}}\bigg),\quad\forall (x,\xi).
\end{eqnarray*}
 Note that
$f_\alpha(\cdot,0)\in L^{q_\alpha}(\Omega)$ with $q_\alpha=\frac{p_\alpha}{p_\alpha-1}$.
 We derive from Proposition~\ref{prop:3.5} that
\begin{eqnarray*}
\prod_{|\beta|<n/p}L^1(\Omega)\times\prod_{m-\frac{n}{p}\le|\beta|\le
m}L^{p_\beta}(\Omega)\to L^{q_\alpha}(\Omega),\;{\bf u}=\{u_\beta:\,|\beta|\le m\}\to f_\alpha(\cdot,{\bf u})
\end{eqnarray*}
is continuous.   This leads to the continuity of the maps
$W^{m,p}(\Omega)\ni u\mapsto f_\alpha(\cdot,u,\cdots,D^mu)\in L^{q_\alpha}(\Omega)$
and hence $D_\alpha\mathcal{F}$ as above.

To sum up,  the  map $D\mathcal{F}:W^{m,p}(\Omega)\to [W^{m,p}(\Omega)]^\ast$
is continuous. As usual this implies that
$\mathcal{F}$ has the Fr\'echet derivative $\mathcal{F}'(u)=D\mathcal{F}(u)$
at each point $u\in W^{m,p}(\Omega)$ and thus is of class $C^1$.

Moreover, from the above proof we see that for any $u\in W^{m,p}(\Omega)$,
\begin{eqnarray*}
\|D_\alpha\mathcal{F}(u)\|=\sup_{\|\varphi\|_{m,p}=1}|
\langle D_\alpha\mathcal{F}(u),\varphi\rangle|
\le C\int_\Omega
|f_\alpha(x, u(x),\cdots, D^m u(x))|dx
\end{eqnarray*}
if $|\alpha|<m-n/p$, and
\begin{eqnarray*}
\|D_\alpha\mathcal{F}(u)\|=\sup_{\|\varphi\|_{m,p}=1}|
\langle D_\alpha\mathcal{F}(u),\varphi\rangle|
\le C\bigg(\int_\Omega
|f_\alpha(x, u(x),\cdots, D^m u(x))|^{q_\alpha}dx
\bigg)^{1/q_\alpha}
\end{eqnarray*}
if $m-n/p\le |\alpha|\le m$. It follows from these and (\ref{e:3.9})-(\ref{e:3.10}) that
$\mathcal{F}'$ maps a bounded subset into a bounded set.

\subsection{Proof for B) of Theorem~\ref{th:3.1}}\label{sec:Funct.3}

 \noindent{\bf Step 1}.\quad{\it Prove that the right side of (\ref{e:3.2}) determines
an operator in $\mathscr{L}(W^{m,p}(\Omega), [W^{m,p}(\Omega)]^\ast)$.}

 By (\ref{e:1.5}) we deduce that the right side of (\ref{e:3.2})
satisfies
\begin{eqnarray*}
  &&\bigg|\sum_{|\alpha|,|\beta|\le m}\int_\Omega
  f_{\alpha\beta}(x, u(x),\cdots, D^m u(x))D^\beta v\cdot D^\alpha\varphi dx\bigg|\\
  &\le& \sup_{k<m-n/p}\mathfrak{g}_1(\|u\|_{C^k})\sum_{|\alpha|,|\beta|\le m}\int_\Omega
  \bigg(1+\sum_{m-n/p\le |\gamma|\le
m}|D^\gamma u(x)|^{p_\gamma}\bigg)^{p_{\alpha\beta}}|D^\beta v(x)|\cdot |D^\alpha\varphi(x)|dx.
    \end{eqnarray*}
It suffices to prove that there exists a constant $C=C(u,\alpha,\beta)$ such that
\begin{eqnarray}\label{e:3.14}
 \int_\Omega\bigg(1+\sum_{m-n/p\le |\gamma|\le
m}|D^\gamma u(x)|^{p_\gamma}\bigg)^{p_{\alpha\beta}}|D^\beta v(x)|\cdot |D^\alpha\varphi(x)|dx
\le C\|v\|_{m,p}\|\varphi\|_{m,p}.
    \end{eqnarray}

{\it Case $|\alpha|=|\beta|=m$}.\quad
Then $p_{\alpha\beta}= 1-\frac{1}{p_\alpha}-\frac{1}{p_\beta}$
and thus the H\"older inequality leads to
\begin{eqnarray*}
 &&\int_\Omega
  \bigg(1+\sum_{m-n/p\le |\gamma|\le
m}|D^\gamma u(x)|^{p_\gamma}\bigg)^{p_{\alpha\beta}}|D^\beta v(x)|\cdot |D^\alpha\varphi(x)|dx\\
&\le&\bigg(\int_\Omega
  \bigg(1+\sum_{m-n/p\le |\gamma|\le
m}|D^\gamma u(x)|^{p_\gamma}\bigg)\bigg)^{p_{\alpha\beta}}
\bigg(\int_\Omega|D^\beta v(x)|^{p_\beta}\bigg)^{1/p_\beta}
\bigg(\int_\Omega|D^\alpha \varphi(x)|^{p_\alpha}\bigg)^{1/p_\alpha}\\
&\le&  C\|v\|_{m,p}\|\varphi\|_{m,p}.
    \end{eqnarray*}

{\it Case $m-n/p\le |\alpha|\le
 m,\; |\beta|<m-n/p$}.\quad 
Then $p_{\alpha\beta}=  1-\frac{1}{p_\alpha}$ and $\sup_x|D^\beta v(x)|\le C(m,n,p)\|v\|_{m,p}$.
It follows from these that
\begin{eqnarray*}
 &&\int_\Omega
  \bigg(1+\sum_{m-n/p\le |\gamma|\le
m}|D^\gamma u(x)|^{p_\gamma}\bigg)^{p_{\alpha\beta}}|D^\beta v(x)|\cdot |D^\alpha\varphi(x)|dx\\
&\le&C(m,n,p)\|v\|_{m,p}\int_\Omega
  \bigg(1+\sum_{m-n/p\le |\gamma|\le
m}|D^\gamma u(x)|^{p_\gamma}\bigg)^{p_{\alpha\beta}}\cdot |D^\alpha\varphi(x)|dx\\
&\le&C(m,n,p)\|v\|_{m,p}\bigg(\int_\Omega
  \bigg(1+\sum_{m-n/p\le |\gamma|\le
m}|D^\gamma u(x)|^{p_\gamma}\bigg)\bigg)^{p_{\alpha\beta}}
\bigg(\int_\Omega|D^\alpha \varphi(x)|^{p_\alpha}\bigg)^{1/p_\alpha}\\
&\le&  C\|v\|_{m,p}\|\varphi\|_{m,p}.
    \end{eqnarray*}

{\it Case $m-n/p\le |\beta|\le
 m,\; |\alpha|<m-n/p$}.\quad The proof is the same  as the last case.

{\it Case $|\alpha|, |\beta|<m-n/p$}.\quad 
We have $p_{\alpha\beta}= 1$, $\sup_x|D^\beta v(x)|\le C(m,n,p)\|v\|_{m,p}$
and $\sup_x|D^\alpha\varphi(x)|\le C(m,n,p)\|\varphi\|_{m,p}$. Hence
\begin{eqnarray*}
 &&\int_\Omega
  \left(1+\sum_{m-n/p\le |\gamma|\le
m}|D^\gamma u(x)|^{p_\gamma}\right)^{p_{\alpha\beta}}|D^\beta v(x)|\cdot |D^\alpha\varphi(x)|dx\\
&\le&C(m,n,p)\|v\|_{m,p}\|\varphi\|_{m,p}\int_\Omega
  \left(1+\sum_{m-n/p\le |\gamma|\le
m}|D^\gamma u(x)|^{p_\gamma}\right)dx\\
&\le&  C\|v\|_{m,p}\|\varphi\|_{m,p}.
    \end{eqnarray*}

{\it Case $|\alpha|,\;|\beta|\ge
 m-n/p,\;|\alpha|+|\beta|<2m$}.\quad 
Then $0<p_{\alpha\beta}<1-\frac{1}{p_\alpha}-\frac{1}{p_\beta}$.
Let $q_{\alpha\beta}>1$ be determined by
$q_{\alpha\beta}+\frac{1}{p_{\alpha\beta}}+\frac{1}{p_\alpha}+\frac{1}{p_\beta}=1$.
Using the H\"older inequality we get
{\small
\begin{eqnarray*}
 &&\int_\Omega
  \bigg(1+\sum_{m-n/p\le |\gamma|\le
m}|D^\gamma u(x)|^{p_\gamma}\bigg)^{p_{\alpha\beta}}|D^\beta v(x)|\cdot |D^\alpha\varphi(x)|dx\\
&\le&|\Omega|^{1/q_{\alpha\beta}}\bigg(\int_\Omega
  \bigg(1+\sum_{m-n/p\le |\gamma|\le
m}|D^\gamma u(x)|^{p_\gamma}\bigg)\bigg)^{p_{\alpha\beta}}
\bigg(\int_\Omega|D^\beta v(x)|^{p_\beta}\bigg)^{1/p_\beta}
\bigg(\int_\Omega|D^\alpha \varphi(x)|^{p_\alpha}\bigg)^{1/p_\alpha}\\
&\le&  C\|v\|_{m,p}\|\varphi\|_{m,p}.
    \end{eqnarray*}}

In summary, (\ref{e:3.14}) is proved.
Hence the right side of (\ref{e:3.2}) determines
an operator $A(u)\in \mathscr{L}(W^{m,p}(\Omega), [W^{m,p}(\Omega)]^\ast)$  by
  \begin{equation}\label{e:3.15}
  \langle A(u)v,\varphi\rangle=\sum_{|\alpha|,|\beta|\le m}\int_\Omega
  f_{\alpha\beta}(x, u(x),\cdots, D^m u(x))D^\beta v\cdot D^\alpha\varphi dx.
  \end{equation}
In particular, each term $\sum_{|\beta|\le m}\int_\Omega
  f_{\alpha\beta}(x, u(x),\cdots, D^m u(x))D^\beta v\cdot D^\alpha\varphi dx$
determines
an operator $A_\alpha(u)\in \mathscr{L}(W^{m,p}(\Omega), [W^{m,p}(\Omega)]^\ast)$  by
  \begin{equation}\label{e:3.16}
  \langle A_\alpha(u)v,\varphi\rangle=\sum_{|\beta|\le m}\int_\Omega
  f_{\alpha\beta}(x, u(x),\cdots, D^m u(x))D^\beta v\cdot D^\alpha\varphi dx.
  \end{equation}

\noindent{\bf Step 2}.\quad{\it Prove that the map $D_\alpha\mathcal{F}:W^{m,p}(\Omega)\to [W^{m,p}(\Omega)]^\ast$
defined by (\ref{e:3.12}) is  of class $C^1$ for $p>2$ or $p=2$ and $|\alpha|<m$, but only
G\^ateaux differentiable for $p=2$ and $|\alpha|=m$.}

For any $t\in [-1,1]\setminus\{0\}$ and $v\in W^{m,p}(\Omega)$ we derive from (\ref{e:3.12})
and (\ref{e:3.16}) that
\begin{eqnarray}\label{e:3.17}
&&\langle \frac{1}{t}[D_\alpha\mathcal{F}(u+tv)-D_\alpha\mathcal{F}(u)],\varphi\rangle-\langle A_\alpha(u)v,\varphi\rangle\nonumber\\
&=&
\int_\Omega\frac{1}{t}
[f_\alpha(x, u(x)+ tv(x),\cdots, D^m u(x)+tD^m v(x))
-f_\alpha(x, u(x),\cdots, D^m u(x))]
D^\alpha\varphi(x)dx\nonumber\\
&&\hspace{10mm}-\sum_{|\beta|\le m}\int_\Omega
  f_{\alpha\beta}(x, u(x),\cdots, D^m u(x))D^\beta v\cdot D^\alpha\varphi dx\nonumber\\
  &=&\sum_{|\beta|\le m}\int_\Omega\int^1_0
  f_{\alpha\beta}(x, u(x)+ stv(x),\cdots, D^m u(x)+ stD^m v(x))D^\beta v\cdot D^\alpha\varphi dsdx\nonumber\\
  &&\hspace{10mm}-\sum_{|\beta|\le m}\int_\Omega
  f_{\alpha\beta}(x, u(x),\cdots, D^m u(x))D^\beta v\cdot D^\alpha\varphi dx\nonumber\\
   &=&\sum_{|\beta|\le m}\int_\Omega\int^1_0
  [f_{\alpha\beta}(x, u(x)+ stv(x),\cdots, D^m u(x)+ stD^m v(x))\nonumber\\
  &&\hspace{30mm}- f_{\alpha\beta}(x, u(x),\cdots, D^m u(x))]D^\beta v\cdot D^\alpha\varphi dsdx\nonumber\\
  &=&\sum_{|\beta|\le m}I_{\alpha\beta}.
\end{eqnarray}

Firstly, we consider the case $p>2$.

$\bullet$ {\it Case $|\alpha|=|\beta|=m$}.\quad 
Then $p_{\alpha\beta}= 1-\frac{1}{p_\alpha}-\frac{1}{p_\beta}\in (0, 1)$
and  we obtain
\begin{eqnarray*}
\qquad |I_{\alpha\beta}|&\le&\int^1_0ds\Bigl(\int_\Omega
  |f_{\alpha\beta}(x, u(x)+ stv(x),\cdots, D^m u(x)+ stD^m v(x))\\
  &&\hspace{6mm}- f_{\alpha\beta}(x, u(x),\cdots, D^m u(x))|^{1/p_{\alpha\beta}}\Bigl)^{p_{\alpha\beta}}
  \left(\int_\Omega|D^\beta v|^{p_\beta}\right)^{1/p_\beta}
  \left(\int_\Omega|D^\alpha\varphi|^{p_\alpha}\right)^{1/p_\alpha}\\
  &\le&C\|v\|_{m,p}\|\varphi\|_{m,p}\int^1_0ds\Bigl(\int_\Omega
  |f_{\alpha\beta}(x, u(x)+ stv(x),\cdots, D^m u(x)+ stD^m v(x))\\
  &&\hspace{60mm}- f_{\alpha\beta}(x, u(x),\cdots, D^m u(x))|^{1/p_{\alpha\beta}}\Bigl)^{p_{\alpha\beta}}.
\end{eqnarray*}

$\bullet$ {\it Case $m-n/p\le |\alpha|\le
 m,\; |\beta|<m-n/p$}.\quad 
Then $p_{\alpha\beta}=  1-\frac{1}{p_\alpha}$ and $\sup_x|D^\beta v(x)|\le C(m,n,p)\|v\|_{m,p}$.
 We can also deduce
\begin{eqnarray*}
  \qquad |I_{\alpha\beta}|&\le&C\|v\|_{m,p}\int^1_0ds\Bigl(\int_\Omega
  |f_{\alpha\beta}(x, u(x)+ stv(x),\cdots, D^m u(x)+ stD^m v(x))\\
  &&\hspace{30mm}- f_{\alpha\beta}(x, u(x),\cdots, D^m u(x))|^{1/p_{\alpha\beta}}\Bigl)^{p_{\alpha\beta}}
    \bigg(\int_\Omega|D^\alpha\varphi|^{p_\alpha}\bigg)^{1/p_\alpha}\\
  &\le&C\|v\|_{m,p}\|\varphi\|_{m,p}\int^1_0ds\Bigl(\int_\Omega
  |f_{\alpha\beta}(x, u(x)+ stv(x),\cdots, D^m u(x)+ stD^m v(x))\\
  &&\hspace{60mm}- f_{\alpha\beta}(x, u(x),\cdots, D^m u(x))|^{1/p_{\alpha\beta}}\Bigl)^{p_{\alpha\beta}}.
\end{eqnarray*}

$\bullet$ {\it Case $m-n/p\le |\beta|\le
 m,\; |\alpha|<m-n/p$}.\quad 
Then $p_{\beta\alpha}=  1-\frac{1}{p_\beta}$ and $\sup_x|D^\alpha\varphi(x)|\le C(m,n,p)\|\varphi\|_{m,p}$.
 We can also deduce
\begin{eqnarray*}
\qquad |I_{\alpha\beta}|&\le&C\|\varphi\|_{m,p}\int^1_0ds\Bigl(\int_\Omega
  |f_{\alpha\beta}(x, u(x)+ stv(x),\cdots, D^m u(x)+ stD^m v(x))\\
  &&\hspace{20mm}- f_{\alpha\beta}(x, u(x),\cdots, D^m u(x))|^{1/p_{\beta\alpha}}\Bigl)^{p_{\beta\alpha}}
    \bigg(\int_\Omega|D^\beta v|^{p_\beta}\bigg)^{1/p_\alpha}\\
  &\le&C\|v\|_{m,p}\|\varphi\|_{m,p}\int^1_0ds\Bigl(\int_\Omega
  |f_{\alpha\beta}(x, u(x)+ stv(x),\cdots, D^m u(x)+ stD^m v(x))\\
  &&\hspace{50mm}- f_{\alpha\beta}(x, u(x),\cdots, D^m u(x))|^{1/p_{\beta\alpha}}\Bigl)^{p_{\beta\alpha}}.
\end{eqnarray*}

$\bullet$ {\it Case $|\alpha|,\;|\beta|\ge
 m-n/p,\;|\alpha|+|\beta|<2m$}.\quad 
Then $0<p_{\alpha\beta}<1-\frac{1}{p_\alpha}-\frac{1}{p_\beta}$.
Let $q_{\alpha\beta}>1$ be determined by
$q_{\alpha\beta}+\frac{1}{p_{\alpha\beta}}+\frac{1}{p_\alpha}+\frac{1}{p_\beta}=1$. Then
\begin{eqnarray*}
&&\int^1_0ds\int_\Omega
  |f_{\alpha\beta}(x, u(x)+ stv(x),\cdots, D^m u(x)+ stD^m v(x))\\
  &&\hspace{30mm}- f_{\alpha\beta}(x, u(x),\cdots, D^m u(x))|\cdot|D^\beta v|\cdot|D^\alpha\varphi| dsdx\\
  &\le&|\Omega|^{1/q_{\alpha\beta}}\int^1_0ds\Bigl(\int_\Omega
  |f_{\alpha\beta}(x, u(x)+ stv(x),\cdots, D^m u(x)+ stD^m v(x))\\
  &&\hspace{20mm}- f_{\alpha\beta}(x, u(x),\cdots, D^m u(x))|^{1/p_{\alpha\beta}}\Bigl)^{p_{\alpha\beta}}
  \bigg(\int_\Omega|D^\beta v|^{p_\beta}\bigg)^{1/p_\beta}
  \bigg(\int_\Omega|D^\alpha\varphi|^{p_\alpha}\bigg)^{1/p_\alpha}\\
  &\le&C\|v\|_{m,p}\|\varphi\|_{m,p}\int^1_0ds\Bigl(\int_\Omega
  |f_{\alpha\beta}(x, u(x)+ stv(x),\cdots, D^m u(x)+ stD^m v(x))\\
  &&\hspace{50mm}- f_{\alpha\beta}(x, u(x),\cdots, D^m u(x))|^{1/p_{\alpha\beta}}\Bigl)^{p_{\alpha\beta}}.
\end{eqnarray*}

$\bullet$ {\it Case $|\alpha|<m-n/p,\; |\beta|<m-n/p$}.\quad 
Then $p_{\beta\alpha}=  1$, $\sup_x|D^\beta v(x)|\le C(m,n,p)\|\varphi\|_{m,p}$  and $\sup_x|D^\alpha\varphi(x)|\le C(m,n,p)\|\varphi\|_{m,p}$.
 We can also deduce
\begin{eqnarray*}
    |I_{\alpha\beta}|&\le&C\|v\|_{m,p}\|\varphi\|_{m,p}\int^1_0ds\Bigl(\int_\Omega
  |f_{\alpha\beta}(x, u(x)+ stv(x),\cdots, D^m u(x)+ stD^m v(x))\\
  &&\hspace{60mm}- f_{\alpha\beta}(x, u(x),\cdots, D^m u(x))|\Bigl).
\end{eqnarray*}

Summarizing the above five cases, by (\ref{e:3.17}) we obtain that for any $t\in [-1,1]\setminus\{0\}$,
\begin{eqnarray}\label{e:3.18}
&&\|[D_\alpha\mathcal{F}(u+tv)-D_\alpha\mathcal{F}(u)]/t-
A_\alpha(u)v\|\nonumber\\
   &\le&C\|v\|_{m,p}\sum_{|\beta|\le m}\int^1_0ds\Bigl(\int_\Omega
  |f_{\alpha\beta}(x, u(x)+ stv(x),\cdots, D^m u(x)+ stD^m v(x))\nonumber\\
  &&\hspace{50mm}- f_{\alpha\beta}(x, u(x),\cdots, D^m u(x))|^{1/p_{\alpha\beta}}\Bigl)^{p_{\alpha\beta}}.
 \end{eqnarray}

 Fix $u,v\in W^{m,p}(\Omega)$. Because of (\ref{e:1.5}), after treating as in Step~1 of Section~\ref{sec:Funct.2}
 we assume that for some constant $C=C_{u,v}>0$ and all $(x,\xi)$,
\begin{eqnarray}\label{e:3.19}
 |f_{\alpha\beta}(x,\xi)|\le
C\bigg(1+
\sum_{m-n/p\le |\gamma|\le
m}|\xi_\gamma|^{p_\gamma}\bigg)^{p_{\alpha\beta}}
\le C\bigg(1+\sum_{m-n/p\le |\gamma|\le m}|\xi_\gamma|^{p_\gamma p_{\alpha\beta}}\bigg).
\nonumber\\
\end{eqnarray}
Then we derive from  Proposition~\ref{prop:3.5}  that
\begin{eqnarray*}
\prod_{|\beta|<n/p}L^1(\Omega)\times\prod_{m-\frac{n}{p}\le|\gamma|\le
m}L^{p_\gamma}(\Omega)\to L^{1/p_{\alpha\beta}}(\Omega),\;{\bf u}=\{u_\gamma:\,|\gamma|\le m\}\to f_{\alpha\beta}(\cdot,{\bf u})
\end{eqnarray*}
is continuous. This implies the continuity of the map
$$
W^{m,p}(\Omega)\ni w\mapsto f_{\alpha\beta}(\cdot, w,\cdots, D^mw)\in L^{1/p_{\alpha\beta}}(\Omega).
$$
It follows from (\ref{e:3.18}) that
\begin{eqnarray}\label{e:3.20}
\lim_{t\to 0}\|[D_\alpha\mathcal{F}(u+tv)-D_\alpha\mathcal{F}(u)]/t-
A_\alpha(u)v\|=0,
 \end{eqnarray}
 Namely, $D_\alpha\mathcal{F}$ has G\^ateaux derivative $A_\alpha(u)$
 at $u$.

 If $v$ is allowed to varies in the ball $B(u,1)\subset W^{m,p}(\Omega)$,
 then we may assume that (\ref{e:3.19}) also holds for another constant $C=C_{u,1}>0$,
 and thus that $C$ in (\ref{e:3.18}) may be changed into $C_{u,1}$.
Taking $t=1$ and letting $\|v\|_{m,p}\to 0$ in (\ref{e:3.18}) we get that
 \begin{eqnarray}\label{e:3.21}
\|D_\alpha\mathcal{F}(u+v)-D_\alpha\mathcal{F}(u)-
A_\alpha(u)v\|=o(\|v\|_{m,p}).
 \end{eqnarray}
That is,   $D_\alpha\mathcal{F}$ has Fr\'echet derivative $A_\alpha(u)$
 at $u$.

Moreover, using a similar method to the above one we can prove:
\textsf{for any $u,v\in W^{m,p}(\Omega)$,  we have $C=C(m,n,p,\Omega)>0$ such that}
\begin{eqnarray}\label{e:3.22}
&&\|A_\alpha(u)-A_\alpha(v)\|\nonumber\\
   &\le&C\sum_{|\beta|\le m}\Bigl(\int_\Omega
  |f_{\alpha\beta}(x, u(x),\cdots, D^m u(x))- f_{\alpha\beta}(x, v(x),\cdots, D^m v(x))|^{1/p_{\alpha\beta}}\Bigl)^{p_{\alpha\beta}}.\nonumber\\
 \end{eqnarray}
This shows that $A_\alpha$ is continuous. Hence for $p>2$ we have proved:
\begin{center}
\textsf{ $A$ is continuous, and $\mathcal{F}$ is of class $C^2$ on $W^{m,p}(\Omega)$.}
\end{center}

Next, we consider the case  $p=2$.

 If $|\alpha|+|\beta|<2m$ the above arguments  also work, and so $A_\alpha$ is of class $C^1$.

 If $|\alpha|=|\beta|=m$, then  $p_{\alpha\beta}= 1-\frac{1}{p_\alpha}-\frac{1}{p_\beta}=0$
(since $p_\alpha=p_\beta=2$).  We can only obtain
\begin{eqnarray*}
&&\int^1_0ds\int_\Omega
  |f_{\alpha\beta}(x, u(x)+ stv(x),\cdots, D^m u(x)+ stD^m v(x))\\
  &&\hspace{30mm}- f_{\alpha\beta}(x, u(x),\cdots, D^m u(x))|\cdot|D^\beta v|\cdot|D^\alpha\varphi| dsdx\\
  &\le&\int^1_0ds\Bigl(\int_\Omega
  |f_{\alpha\beta}(x, u(x)+ stv(x),\cdots, D^m u(x)+ stD^m v(x))\\
  &&\hspace{20mm}- f_{\alpha\beta}(x, u(x),\cdots, D^m u(x))|^2
  |D^\beta v|^2\Bigl)^{1/2}
    \bigg(\int_\Omega|D^\alpha\varphi|^{2}\bigg)^{1/2}\\
  &\le&C\|\varphi\|_{m,2}\int^1_0ds\Bigl(\int_\Omega
  |f_{\alpha\beta}(x, u(x)+ stv(x),\cdots, D^m u(x)+ stD^m v(x))\\
  &&\hspace{20mm}- f_{\alpha\beta}(x, u(x),\cdots, D^m u(x))|^2
  |D^\beta v|^2\Bigl)^{1/2}.
\end{eqnarray*}
And therefore
\begin{eqnarray*}
&&\|[D_\alpha\mathcal{F}(u+tv)-D_\alpha\mathcal{F}(u)]/t-
A_\alpha(u)v\|\nonumber\\
&\le&C\int^1_0ds\Bigl(\int_\Omega
  |f_{\alpha\beta}(x, u(x)+ stv(x),\cdots, D^m u(x)+ stD^m v(x))\nonumber\\
  &&\hspace{20mm}- f_{\alpha\beta}(x, u(x),\cdots, D^m u(x))|^2
  |D^\beta v|^2\Bigl)^{1/2}.
 \end{eqnarray*}
Note that for fixed $u,v\in W^{m,p}(\Omega)$ and all $s,t\in [0,1]$ the functions
$$
|f_{\alpha\beta}(x, u(x)+ stv(x),\cdots, D^m u(x)+ stD^m v(x))
  - f_{\alpha\beta}(x, u(x),\cdots, D^m u(x))|^2
  $$
  are uniformly bounded. It follows
\begin{eqnarray*}
\lim_{t\to 0}\|[D_\alpha\mathcal{F}(u+tv)-D_\alpha\mathcal{F}(u)]/t-
A_\alpha(u)v\|=0.
 \end{eqnarray*}
Hence $D_\alpha\mathcal{F}$ has G\^ateaux derivative $A_\alpha(u)$
 at $u$.

\noindent{\bf Step 3}.\quad{\it Prove that $D\mathcal{F}'$ is bounded on any ball in $W^{m,p}(\Omega)$.}\quad
 From the above arguments we obtain $C=C(m,n,p,\Omega)$ such that
 \begin{eqnarray}\label{e:3.23}
 |\langle A_\alpha(u)v,\varphi\rangle|&=&\left|\int_\Omega
  f_{\alpha\beta}(x, u(x),\cdots, D^m u(x))D^\beta v D^\alpha\varphi(x) dx\right|\nonumber\\
 &\le&C\|v\|_{m,p}\|\cdot\varphi\|_{m,p}\Bigl(\int_\Omega
  |f_{\alpha\beta}(x, u(x),\cdots, D^m u(x))|^{1/p_{\alpha\beta}}\Bigl)^{p_{\alpha\beta}}
 \end{eqnarray}
 for $|\alpha|+|\beta|<2m$ or $p>2$ and $|\alpha|=|\beta|=m$, and
 \begin{eqnarray}\label{e:3.24}
|\langle A_\alpha(u)v,\varphi\rangle|&=&\left|\int_\Omega
  f_{\alpha\beta}(x, u(x),\cdots, D^m u(x))D^\beta v D^\alpha\varphi(x)dx\right|\nonumber\\
 &\le&C\cdot\varphi\|_{m,p}\Bigl(\int_\Omega
  |f_{\alpha\beta}(x, u(x),\cdots, D^m u(x))|^2
  |D^\beta v|^2\Bigl)^{1/2}
 \end{eqnarray}
 for $p=2$ and $|\alpha|=|\beta|=m$.

 For (\ref{e:3.24}), by (\ref{e:1.5}) we derive
 $$
 |f_{\alpha\beta}(x,u(x),\cdots, D^mu(x))|\le\sup_{k<m-n/2}
\mathfrak{g}_1(\|u\|_{C^k})\quad\forall u\in W^{m,2},
 $$
 and hence
 \begin{eqnarray*}
|\langle A_\alpha(u)v,\varphi\rangle| &=&\left|\int_\Omega
  f_{\alpha\beta}(x, u(x),\cdots, D^m u(x))D^\beta v dx\right|\nonumber\\
 &\le&C\sup_{k<m-n/2}
\mathfrak{g}_1(\|u\|_{C^k})\Bigl(\int_\Omega
    |D^\beta v|^2\Bigl)^{1/2}
 \end{eqnarray*}
 This and (\ref{e:3.23}), (\ref{e:1.5}) yield the desired claim.

  \noindent{\bf Step 4}.\quad{\it Prove (ii) in {\bf B)}.}\quad
 By (\ref{e:3.2}) we derive
 \begin{eqnarray*}
&& \langle \varphi, [D\mathcal{F}'(u_k)]^\ast v-[D\mathcal{F}'(u)]^\ast v\rangle
= \langle D\mathcal{F}'(u_k)\varphi-D\mathcal{F}'(u)\varphi, v\rangle\nonumber\\
 &=&\sum_{|\alpha|,|\beta|\le m}\int_\Omega
  [f_{\alpha\beta}(x, u_k(x),\cdots, D^m u_k(x))\nonumber\\
  &&\hspace{40mm}-f_{\alpha\beta}(x, u(x),\cdots, D^m u(x))]D^\beta \varphi\cdot D^\alpha v dx.
  \end{eqnarray*}

 Let $u_k\to u$ in $W^{m,p}(\Omega)$.
 If $p>2$, with the same reasoning as in (\ref{e:3.23}) we can derive from this that
  \begin{eqnarray*}
&& |\langle \varphi, [D\mathcal{F}'(u_k)]^\ast v-[D\mathcal{F}'(u)]^\ast v\rangle|\nonumber\\
  &\le&\sum_{|\alpha|,|\beta|\le m}\int_\Omega
  |f_{\alpha\beta}(x, u_k(x),\cdots, D^m u_k(x))\nonumber\\
  &&\hspace{30mm}-f_{\alpha\beta}(x, u(x),\cdots, D^m u(x))|\cdot|D^\beta \varphi|
  \cdot |D^\alpha v| dx\nonumber\\
  &\le&C\|v\|_{m,p}\|\varphi\|_{m,p}\sum_{|\alpha|,|\beta|\le m}\Bigl(\int_\Omega
  |f_{\alpha\beta}(x, u_k(x),\cdots, D^m u_k(x))\nonumber\\
  &&\hspace{50mm}-f_{\alpha\beta}(x, u(x),\cdots, D^m u(x))|^{1/p_{\alpha\beta}} dx\Bigr)^{p_{\alpha\beta}}.
  \end{eqnarray*}
  Because of (\ref{e:1.5}), as treated in Step~1 of Section~\ref{sec:Funct.2}
  we deduce
 \begin{eqnarray*}
 \sum_{|\alpha|,|\beta|\le m}\Bigl(\int_\Omega
  |f_{\alpha\beta}(x, u_k(x),\cdots, D^m u_k(x))
  -f_{\alpha\beta}(x, u(x),\cdots, D^m u(x))|^{1/p_{\alpha\beta}} dx\Bigr)^{p_{\alpha\beta}}
  \to 0
  \end{eqnarray*}
  and so  $\|[D\mathcal{F}'(u_k)]^\ast v-[D\mathcal{F}'(u)]^\ast v\|\to 0$ as $k\to\infty$.

 For $p=2$ we can use (\ref{e:3.24}) to arrive at the same conclusion.

 \noindent{\bf Step 5}. The proof of (iii) in {\bf B)} is the same as that of (iii) in {\bf D)} later on.

\subsection{Proof for C) of Theorem~\ref{th:3.1}}\label{sec:Funct.4}

Let $u_j\rightharpoonup u$ in $W^{m,p}(\Omega)$ and satisfy
$\overline{\lim}_{j\to\infty}\langle \mathcal{F}'(u_j), u_j-u\rangle\le 0$, i.e.,
\begin{eqnarray}\label{e:3.25}
\overline{\lim}_{j\to\infty}\sum_{|\alpha|\le m}\int_\Omega f_\alpha(x,u_j(x),\cdots, D^m u_j(x))D^\alpha (u_j-u)dx=0.
\end{eqnarray}
By Sobolev embedding theorem we have strong convergence
\begin{eqnarray*}
&&D^\alpha u_j\to D^\alpha u\quad\hbox{in}\;L^q(\Omega)\;\hbox{if}\;q<\frac{np}{n-p},\;m-n/p\le |\alpha|\le
m-1,\\
&&D^\alpha u_j\to D^\alpha u\quad\hbox{in}\;C^0(\Omega)\;\hbox{if}\; |\alpha|<m-n/p.
\end{eqnarray*}
Since $\sup_j\|u_j\|_{m,p}<\infty$,  $C=\sup\{\|u\|_{C^k}+\|u_j\|_{C^k}:\, k<m-n/p,\;j\in\mathbb{N}\}<\infty$.
These, (\ref{e:3.9}) and (\ref{e:3.10}) lead to
\begin{eqnarray*}
&&\sum_{|\alpha|<m-n/p}\int_\Omega|f_\alpha(x,u_j(x),\cdots, D^m u_j(x))D^\alpha (u_j-u)|dx\\
&\le&\sum_{|\alpha|<m-n/p}\int_\Omega|f_\alpha(x,0)|\cdot|D^\alpha (u_j-u)|dx
+ \mathfrak{g}_5(C)\sum_{|\alpha|<m-n/p}\int_\Omega|D^\alpha (u_j-u)|dx\\
&&+\mathfrak{g}_5(C)\sum_{|\alpha|<m-n/p}\;
\sum_{m-n/p\le |\gamma|\le
m}\int_\Omega|D^\gamma u_j|^{p_\gamma }\cdot|D^\alpha (u_j-u)|dx\to 0
\end{eqnarray*}
and
\begin{eqnarray*}
&&\sum_{m-n/p\le|\alpha|\le m-1}\int_\Omega|f_\alpha(x,u_j(x),\cdots, D^m u_j(x))D^\alpha (u_j-u)|dx\\
&\le&\sum_{m-n/p\le|\alpha|\le m-1}\int_\Omega|f_\alpha(x,0)|\cdot|D^\alpha (u_j-u)|dx
+ \mathfrak{g}_5(C)\sum_{m-n/p\le|\alpha|\le m-1}\int_\Omega|D^\alpha (u_j-u)|dx\\
&&+\mathfrak{g}_5(C)\sum_{m-n/p\le|\alpha|\le m-1}
\int_\Omega\Bigg(\sum_{m-n/p\le |\gamma|\le
m}|D^\gamma u_j|^{p_\gamma}\Bigg)^{1/q_\alpha}\cdot|D^\alpha (u_j-u)|dx\to 0.
\end{eqnarray*}
Here the final limit is because Lemma~\ref{lem:3.6}(ii) implies  that
\begin{eqnarray*}
&&\int_\Omega\Bigg(\sum_{m-n/p\le |\gamma|\le
m}|D^\gamma u_j|^{p_\gamma}\Bigg)^{1/q_\alpha}\cdot|D^\alpha (u_j-u)|dx\to 0\\
&\le&\sum_{m-n/p\le |\gamma|\le m}
\int_\Omega|D^\gamma u_j|^{p_\gamma/q_\alpha}\cdot|D^\alpha (u_j-u)|dx\to 0\\
&\le&\sum_{m-n/p\le |\gamma|\le m}
\left(\int_\Omega|D^\gamma u_j|^{p_\gamma}dx\right)^{1/q_\alpha}\left(\int_\Omega
|D^\alpha (u_j-u)|^{p_\alpha}dx\right)^{1/p_\alpha}\to 0.
\end{eqnarray*}
It follows that
\begin{equation}\label{e:3.26}
\sum_{|\alpha|\le m-1}\int_\Omega |f_\alpha(x,
u_j(x),\cdots, D^m u_j(x))D^\alpha (u_j-u)| dx\to 0\quad\hbox{as}\;j\to\infty.
\end{equation}
This and (\ref{e:3.25}) yield
\begin{equation}\label{e:3.27}
\sum_{|\alpha|=m}\int_\Omega f_\alpha(x,
u_j(x),\cdots, D^m u_j(x))D^\alpha (u_j-u) dx\to 0\quad\hbox{as}\;j\to\infty.
\end{equation}

Next, we claim
\begin{eqnarray}\label{e:3.28}
&&\sum_{|\alpha|=m}\int_\Omega |f_\alpha(x,
u_j(x),\cdots, D^{m-1} u_j(x), D^{m} u(x))D^\alpha (u_j-u)| dx\nonumber\\
&\le &\sum_{|\alpha|=m}\int_\Omega \Big|[f_\alpha(x,
u_j(x),\cdots, D^{m-1} u_j(x), D^{m} u(x))\nonumber\\
&&\hspace{20mm}-f_\alpha(x,
u(x),\cdots, D^{m-1} u(x), D^{m} u(x))]D^\alpha (u_j-u)\Big| dx\nonumber\\
&&+\sum_{|\alpha|=m}\int_\Omega |f_\alpha(x,
u(x),\cdots, D^{m-1} u(x), D^{m} u(x))D^\alpha (u_j-u)| dx\nonumber\\
&=&I_{1,j}+I_{2,j}\to 0.
\end{eqnarray}
Indeed, by (\ref{e:3.10}) it is easily checked that
$f_\alpha(x,u(x),\cdots, D^{m-1} u(x), D^{m} u(x))$ belongs to
$L^{q_\alpha}(\Omega)$ for $|\alpha|\ge m-n/p$. Since
 $u_j\rightharpoonup u$ in $W^{m,p}(\Omega)$ it follows from that
 $I_{2,j}\to 0$ as $j\to\infty$.

 By the H\"older inequality we have
\begin{eqnarray*}
I_{1,j}&\le&\sum_{|\alpha|=m}\Bigl(\int_\Omega |f_\alpha(x,
u_j(x),\cdots, D^{m-1} u_j(x), D^{m} u(x))\nonumber\\
&&\hspace{10mm}-f_\alpha(x,
u(x),\cdots, D^{m-1} u(x), D^{m} u(x))|^{q_\alpha}dx\Bigr)^{1/q_\alpha}
\Bigl(\int_\Omega |D^\alpha (u_j-u)|^{p_\alpha}dx\Bigr)^{1/p_\alpha}.
\end{eqnarray*}
As before,  using (\ref{e:3.10}) and Proposition~\ref{prop:3.5} it is easy to derive
\begin{eqnarray*}
&&\Bigl(\int_\Omega |f_\alpha(x,
u_j(x),\cdots, D^{m-1} u_j(x), D^{m} u(x))\nonumber\\
&&\hspace{20mm}-f_\alpha(x,
u(x),\cdots, D^{m-1} u(x), D^{m} u(x))|^{q_\alpha}dx\Bigr)^{1/q_\alpha}\to 0.
\end{eqnarray*}
Note that  the sequence $\Bigl(\int_\Omega |D^\alpha (u_j-u)|^{p_\alpha}dx\Bigr)^{1/p_\alpha}$
is bounded. We get $I_{1,j}\to 0$.

Hence (\ref{e:3.27}) and (\ref{e:3.28}) yield
 \begin{eqnarray}\label{e:3.29}
&&\sum_{|\alpha|=m}\int_\Omega [f_\alpha(x,
u_j(x),\cdots, D^{m-1} u_j(x), D^{m} u_j(x))\nonumber\\
&&\hspace{10mm}-f_\alpha(x,
u_j(x),\cdots, D^{m-1} u_j(x), D^{m} u(x))]D^\alpha (u_j-u) dx\to 0.
\end{eqnarray}
Note that the integrand in each term is non-negative, which may be derived from the mean value theorem and (\ref{e:1.6})
as seen below. This implies that for any subset $E\subset\Omega$,
 \begin{eqnarray}\label{e:3.30}
&&\lambda_j(E):=\int_E\sum_{|\alpha|=m} [f_\alpha(x,
u_j(x),\cdots, D^{m-1} u_j(x), D^{m} u_j(x))\nonumber\\
&&\hspace{20mm}-f_\alpha(x,
u_j(x),\cdots, D^{m-1} u_j(x), D^{m} u(x))]D^\alpha (u_j-u) dx\to 0
\end{eqnarray}
as $j\to\infty$.  We claim that for $E\subset\Omega$,
 \begin{eqnarray}\label{e:3.31}
\lim_{{\rm meas}(E)\to 0}\int_E\sum_{|\alpha|=m}|D^\alpha u_j(x)|^pdx=0
\end{eqnarray}
 uniformly with respect to $j$.
In fact, by (\ref{e:3.30})  we have with $v_j=u_j-u$,
{\small 
\begin{eqnarray*}
\lambda_j(E)=\sum_{|\alpha|=|\beta|=m}\int_E\int^1_0
f_{\alpha\beta}\left(x,u_j(x),\cdots, D^{m-1} u_j(x),
D^{m} u(x)+ sD^{m} v_j(x)\right)D^\beta v_j\cdot D^\alpha v_j ds dx.
\end{eqnarray*}}
From (\ref{e:1.6}) and Lemma~\ref{lem:3.6}(i) we deduce
{\small
\begin{eqnarray}\label{e:3.32}
&&\sum_{|\alpha|=|\beta|=m}\int_E\int^1_0
f_{\alpha\beta}(x,u_j(x),\cdots, D^{m-1} u_j(x),
D^{m} u(x)+ sD^{m} v_j(x))D^\beta v_j\cdot D^\alpha v_j ds dx\nonumber\\
&\ge&\int_E\int^1_0
\mathfrak{g}_2(\sum_{k<m-n/p}\|u_j\|_{C^k})\biggl(1+ \sum_{|\gamma|=m}|D^{\gamma} u(x)+ sD^{\gamma} v_j(x))|\biggr)^{p-2}
\sum_{|\alpha|= m}|D^\alpha v_j|^2dsdx\nonumber\\
&\ge&\mathfrak{g}_2(\sum_{k<m-n/p}\|u_j\|_{C^k})
\int_E\int^1_0\bigl(1+ |D^{\gamma} u(x)+ sD^{\gamma} v_j(x))|\bigr)^{p-2}
\sum_{|\alpha|= m}|D^\alpha v_j|^2dsdx\nonumber\\
&\ge&\mathfrak{g}_2(\sum_{k<m-n/p}\|u_j\|_{C^k})
\int_E\bigl(1+ |D^{\gamma} u(x)|+ |D^{\gamma} u_j(x)|\bigr)^{p-2}
\sum_{|\alpha|= m}|D^\alpha v_j|^2dx\nonumber\\
&\ge&\mathfrak{g}_2(\sum_{k<m-n/p}\|u_j\|_{C^k})
\int_E\bigl(1+  |D^{\gamma} v_j(x)|\bigr)^{p-2}
|D^\gamma v_j|^2dx\nonumber\\
&\ge&\mathfrak{g}_2(\sum_{k<m-n/p}\|u_j\|_{C^k})
\int_E|D^\gamma v_j|^pdx
\end{eqnarray}}
for each $\gamma$ of length $m$. It follows
\begin{eqnarray}\label{e:3.33}
\sum_{|\gamma|=m}\int_E|D^\gamma v_j|^pdx&\le& \frac{M_0(m)}{\mathfrak{g}_2(\sum_{k<m-n/p}\|u_j\|_{C^k})}\lambda_j(E)\nonumber\\
&\le&\frac{M_0(m)}{\mathfrak{g}_2(\sup_j\sum_{k<m-n/p}\|u_j\|_{C^k})}\lambda_j(E).
\end{eqnarray}
The final inequality is because
 $u_j\rightharpoonup u$ and thus that $\sup_j\|u_j\|_{m,p}<\infty$, which implies that $\sup_j\sum_{k<m-n/p}\|u_j\|_{C^k}<\infty$. Moreover
 \begin{eqnarray*}
\bigg(\sum_{|\gamma|=m}\int_E|D^\gamma u_j|^pdx\bigg)^{1/p}\le
\bigg(\sum_{|\gamma|=m}\int_E|D^\gamma u|^pdx\bigg)^{1/p}
+\bigg(\sum_{|\gamma|=m}\int_E|D^\gamma v_j|^pdx\bigg)^{1/p}.
\end{eqnarray*}
Given any  $\varepsilon>0$, from (\ref{e:3.30}) and (\ref{e:3.33})
 there exist $j_0\in\mathbb{N}$ such that
$$
\bigg(\sum_{|\gamma|=m}\int_E|D^\gamma v_j|^pdx\bigg)^{1/p}<\varepsilon^p/2
$$
for all $j\ge j_0$ and all $E\subset \Omega$.
Using the absolute continuity of the integral we have $\delta>0$ such that
for any $E\subset \Omega$ with ${\rm mes}(E)<\delta$,
$$
\bigg(\sum_{|\gamma|=m}\int_E|D^\gamma u|^pdx\bigg)^{1/p}+
\bigg(\sum_{|\gamma|=m}\int_E|D^\gamma v_j|^pdx\bigg)^{1/p}<\varepsilon^p/2,\;j=1,\cdots,j_0.
$$
These lead to
$$
\sum_{|\gamma|=m}\int_E|D^\gamma u_j|^pdx<\varepsilon
$$
for all $j\in\mathbb{N}$ and all $E\subset\Omega$
with ${\rm mes}(E)<\delta$.
(\ref{e:3.31}) is proved.

Next, by (\ref{e:3.33}) and (\ref{e:3.30}), for any given $\sigma>0$ we have
\begin{eqnarray*}
\sigma^p{\rm mes}(\{|D^\alpha v_j|\ge\sigma\})&\le&\int_{\{|D^\alpha v_j|\ge\sigma\}}|D^\alpha v_j|^pdx
\le\int_{\Omega}|D^\alpha v_j|^pdx
\nonumber\\
&\le&\frac{M_0(m)}{\mathfrak{g}_2(\sup_j\sum_{k<m-n/p}\|u_j\|_{C^k})}\lambda_j(\Omega)\to 0
\end{eqnarray*}
as $j\to\infty$. This means that the sequence $D^\alpha u_j$ converges to $D^\alpha u$ in measure for $|\alpha|=m$.
Combing with (\ref{e:3.31}) we obtain that $D^\alpha u_j\to D^\alpha u$ in $L^p(\Omega)$
for $|\alpha|=m$. Moreover, $u_j\rightharpoonup u$ implies that
 $u_j\to u$ in $W^{m-1,p}(\Omega)$. Hence $\|u_j-u\|_{m,p}\to 0$ as $j\to\infty$.
\hfill$\Box$\vspace{2mm}

\subsection{Proof for D) of Theorem~\ref{th:3.1}}\label{sec:Funct.5}

 \noindent{\bf Step 1}. (i) of {\bf D)} can be proved as in (ii) of {\bf B)}.

\noindent{\bf Step 2}. \quad{\it Prove (ii) of {\bf D)}}. By Sobolev embedding theorem
$$
\sup\{\|D^\gamma u\|_{C^k}:\, k<m-n/2,\;u\in W^{m,2}(\Omega),\;\|u\|_{m,2}\le R\}\in (0, \infty).
$$
Let $C$ be equal to the value of $\mathfrak{g}_2$ at this number. We derive from (\ref{e:1.6}) that
\begin{eqnarray*}
(P(u)v, v)_{m,2}&=&\sum_{|\alpha|=|\beta|=m}\int_\Omega
  f_{\alpha\beta}(x, u(x),\cdots, D^m u(x))D^\beta v\cdot D^\alpha v dx\nonumber\\
  &&+ \sum_{|\alpha|\le m-1}\int_\Omega  D^\alpha v\cdot D^\alpha v dx\\
  &\ge & C\sum_{|\alpha|= m}\int_\Omega |D^\alpha v|^2dx+ \sum_{|\alpha|\le m-1}\int_\Omega  D^\alpha v\cdot D^\alpha v dx\\
  &\ge&\min\{C,1\}\|v\|^2_{m,2}\quad\hbox{for any $v\in W^{m,2}(\Omega)$.}
\end{eqnarray*}

\noindent{\bf Step 3}.\quad{\it Prove (iii) of {\bf D)}}.\quad
 As in the proof of (\ref{e:3.23}) we can
 obtain $C=C(m,n,p,\Omega)>0$ with $p=2$ such that
 \begin{eqnarray*}
 &&([Q(u)-Q(\bar{u})]v, \varphi)_{m,2}\nonumber\\
 &=&\bigg|\sum_{|\alpha|+|\beta|<2m}\int_\Omega
  [f_{\alpha\beta}(x, u(x),\cdots, D^m u(x))-f_{\alpha\beta}(x, \bar{u}(x),\cdots, D^m \bar{u}(x))]D^\beta v D^\alpha\varphi(x) dx\bigg|\nonumber\\
 &\le&C\sum_{|\alpha|+|\beta|<2m}\Bigl(\int_\Omega
  |f_{\alpha\beta}(x, u(x),\cdots, D^m u(x))-f_{\alpha\beta}(x, \bar{u}(x),\cdots, D^m \bar{u}(x))|^{1/p_{\alpha\beta}}\Bigl)^{p_{\alpha\beta}}\times\\
  &&\times\|v\|_{m,p}\|\cdot\varphi\|_{m,p}.
 \end{eqnarray*}
 So it follows from  (\ref{e:1.5}) and Proposition~\ref{prop:3.5} that
 {\small
\begin{eqnarray*}
 &&\|Q(u)-Q(\bar{u})\|_{\mathscr{L}(W^{m,2}(\Omega))}\nonumber\\
  &\le&C\sum_{|\alpha|+|\beta|<2m}\Bigl(\int_\Omega
  |f_{\alpha\beta}(x, u(x),\cdots, D^m u(x))-f_{\alpha\beta}(x, \bar{u}(x),\cdots, D^m \bar{u}(x))|^{1/p_{\alpha\beta}}\Bigl)^{p_{\alpha\beta}}\to 0.
 \end{eqnarray*}}

To prove the second claim let us
decompose $Q(u)$ into $Q_1(u)+ Q_2(u)+ Q_3(u)$, where
\begin{eqnarray*}
(Q_1(u)v,\varphi)_{m,2}&=&\sum_{|\alpha|\le m-1,|\beta|\le m-1}\int_\Omega
  f_{\alpha\beta}(x, u(x),\cdots, D^m u(x))D^\beta v\cdot D^\alpha\varphi dx\nonumber\\
  &&-\sum_{|\alpha|\le m-1}\int_\Omega  D^\alpha v\cdot D^\alpha\varphi dx,\\
  (Q_2(u)v,\varphi)_{m,2}&=&\sum_{|\alpha|=m,|\beta|\le m-1}\int_\Omega
  f_{\alpha\beta}(x, u(x),\cdots, D^m u(x))D^\beta v\cdot D^\alpha\varphi dx,\nonumber\\
  (Q_3(u)v,\varphi)_{m,2}&=&\sum_{|\alpha|\le m-1,|\beta|=m}\int_\Omega
  f_{\alpha\beta}(x, u(x),\cdots, D^m u(x))D^\beta v\cdot D^\alpha\varphi dx.
  \end{eqnarray*}
Clearly, $(Q_2(u)v,\varphi)_{m,2}=(Q_3(u)\varphi,v)_{m,2}$, that is, they are adjoint each other.
Let $v_j\rightharpoonup v$ in $W^{m,2}(\Omega)$. By the proof of (\ref{e:3.23}) we can get
 $C=C(m,n,p,\Omega)>0$ with $p=2$ such that
 \begin{eqnarray*}
 &&|(Q_1(u)(v_j-v),\varphi)_{m,2}|\\
 &\le&\sum_{|\alpha|\le m-1,|\beta|\le m-1}\left|\int_\Omega
  f_{\alpha\beta}(x, u(x),\cdots, D^m u(x))D^\beta (v_j-v) D^\alpha\varphi(x) dx\right|\nonumber\\
  &&+\sum_{|\alpha|\le m-1}\int_\Omega  |D^\alpha (v_j-v)|\cdot |D^\alpha\varphi| dx\\
   &\le&C\|v_j-v\|_{m-1,2}\cdot\|\varphi\|_{m-1,2}\sum_{|\alpha|\le m-1,|\beta|\le m-1}\Bigl(\int_\Omega
  |f_{\alpha\beta}(x, u(x),\cdots, D^m u(x))|^{1/p_{\alpha\beta}}\Bigl)^{p_{\alpha\beta}}\\
  &&+\|v_j-v\|_{m-1,2}\cdot\|\varphi\|_{m-1,2}
 \end{eqnarray*}
and hence
\begin{eqnarray*}
 &&\|Q_1(u)(v_j-v)\|_{m,2}\\
    &\le&C\|v_j-v\|_{m-1,2}\sum_{|\alpha|\le m-1,|\beta|\le m-1}\Bigl(\int_\Omega
  |f_{\alpha\beta}(x, u(x),\cdots, D^m u(x))|^{1/p_{\alpha\beta}}\Bigl)^{p_{\alpha\beta}}\\
  &&+\|v_j-v\|_{m-1,2}\to 0
 \end{eqnarray*}
because $\|v_j-v\|_{m-1,2}\to 0$ by the compactness of the embedding $W^{m,2}(\Omega)\hookrightarrow W^{m-1,2}(\Omega)$.

Similarly, we have
\begin{eqnarray*}
&&|(Q_2(u)(v_j-v),\varphi)_{m,2}|\\
 &\le&\sum_{|\alpha|=m,|\beta|\le m-1}\left|\int_\Omega
  f_{\alpha\beta}(x, u(x),\cdots, D^m u(x))D^\beta (v_j-v) D^\alpha\varphi(x) dx\right|\nonumber\\
    &\le&C\|v_j-v\|_{m-1,2}\cdot\|\varphi\|_{m,2}\sum_{|\alpha|=m,|\beta|\le m-1}\Bigl(\int_\Omega
  |f_{\alpha\beta}(x, u(x),\cdots, D^m u(x))|^{1/p_{\alpha\beta}}\Bigl)^{p_{\alpha\beta}}
   \end{eqnarray*}
and hence
\begin{eqnarray*}
 &&\|Q_2(u)(v_j-v)\|_{m,2}\\
    &\le&C\|v_j-v\|_{m-1,2}\sum_{|\alpha|=m,|\beta|\le m-1}\Bigl(\int_\Omega
  |f_{\alpha\beta}(x, u(x),\cdots, D^m u(x))|^{1/p_{\alpha\beta}}\Bigl)^{p_{\alpha\beta}}
  \to 0.
 \end{eqnarray*}

Since $Q_3(u)$ is the adjoint operator of $Q_2(u)$, it is completely continuous too.

\noindent{\bf Step 4}.\quad{\it Prove (iv) of {\bf D)}}.\quad
 By the arguments in Step 3 we see:
for every given $R>0$ there exist positive constants $\hat{C}(R, n, m, \Omega)$ such that
if $u\in W^{m,2}(\Omega)$ satisfies $\|u\|_{m,2}\le R$ then
\begin{eqnarray*}
&&|(Q_1(u)v,\varphi)_{m,2}|\le \hat{C}\|v\|_{m-1,2}\cdot\|\varphi\|_{m-1,2}\qquad\forall v,\varphi\in W^{m,2}(\Omega),\\
&&|(Q_2(u)v,\varphi)_{m,2}|\le \hat{C}\|v\|_{m-1,2}\cdot\|\varphi\|_{m,2}\qquad\forall v,\varphi\in W^{m,2}(\Omega),\\
&&|(Q_3(u)v,\varphi)_{m,2}|\le \hat{C}\|v\|_{m,2}\cdot\|\varphi\|_{m-1,2}\qquad\forall v,\varphi\in W^{m,2}(\Omega)
\end{eqnarray*}
and therefore
$$
|(Q(u)v,v)_{m,2}|\le 3\hat{C}\|v\|_{m-1,2}\cdot\|v\|_{m,2}\qquad\forall v\in W^{m,2}(\Omega).
$$
 For the constant $C$ in (ii) of {\bf D)}, using
the inequality $ab\le\frac{1}{2\varepsilon}a^2+ \frac{\varepsilon}{2}b^2$ for any $\varepsilon>0$
and $a,b\ge 0$, we derive with $\varepsilon=C/(3\hat{C})$,
$$
|(Q(u)v,v)_{m,2}|\le  3\hat{C}\|v\|_{m-1,2}\cdot\|v\|_{m,2}\le\frac{C}{2}\|v\|^2_{m,2}+ \frac{9\hat{C}^2}{2C}\|v\|^2_{m-1,2}
$$
for all $v\in W^{m,2}(\Omega)$. Taking $C_1=C/2$ and $C_2=\frac{9\hat{C}^2}{2C}$, this and (ii) of {\bf D)}
give the desired result.


\section{(PS)- and (C)-conditions}\label{sec:PS}
\setcounter{equation}{0}

A $C^1$ functional $\varphi$ on a Banach-Finsler manifold $\mathcal{M}$ is said to satisfy
the {\it Palais-Smale condition at the level $c\in\mathbb{R}$} ({$(PS)_c$-condition}, for short)
if every sequence $\{x_j\}_{j\ge 1}\subset X$ such that
$\varphi(x_j)\to c\in\mathbb{R}$ and $\varphi'(x_j)\to 0$ in $X^\ast$
has a convergent subsequence in $\mathcal{M}$. When $\varphi$ satisfies the $(PS)_c$-condition
at every level $c\in\mathbb{R}$ we say that it satisfies the {\it Palais-Smale condition}
($(PS)$-{\it condition}, for short).

When $\mathcal{M}$ a Banach space there is weaker condition.
Call a $C^1$ functional $\varphi$ on a Banach space $X$  to satisfy
the {\it Cerami condition at the level $c\in\mathbb{R}$} ({\it $(C)_c$-condition}, for short)
if every sequence $\{x_j\}_{j\ge 1}\subset X$ such that
$\varphi(x_j)\to c\in\mathbb{R}$ and $(1+\|x_j\|)\varphi'(x_j)\to 0$ in $X^\ast$
has a convergent subsequence in $X$. When $\varphi$ satisfies the $(C)_c$-condition
at every level $c\in\mathbb{R}$ we say that it satisfies the {\it Cerami condition}
($(C)$-{\it condition}, for short).

Actually, if a $C^1$ functional $\varphi$ on a Banach space $X$ is bounded below,
 Caklovic, Li and  Willem \cite{CaLiWi} showed that $\varphi$ satisfies the $(PS)$-condition if and only if it
 is coercive. It was further proved in \cite[Proposition~5.23]{MoMoPa}
 that $\varphi$ satisfies the $(PS)$-condition if and only if it does the $(C)$-condition.
 Recently, it was proved in \cite[Theorem~6]{Suz} that
 if a continuous  functional $\varphi$ on $X$ is G\^ateaux differentiable
 and satisfies the weak Palais-Smale condition then $|\varphi|$ is
 coercive provided  $\{x\in X\,|\,\varphi(x)=c\}$ is bounded for some $c\in\mathbb{R}$.

\begin{theorem}\label{th:3.7}
Let $\Omega\subset\R^n$, $N\in\mathbb{N}$, $p\in [2,\infty)$ and $V\subset W^{m,p}(\Omega,\mathbb{R}^N)$ be as in Theorem~\ref{th:3.2}.
  Suppose that  \textsf{Hypothesis} $\mathfrak{F}_{p,N}$ hold and that
   $\mathfrak{F}$ is coercive, i.e., $\mathfrak{F}(\vec{u})\to\infty$
as $\|\vec{u}\|_V\to\infty$.  Then $\mathfrak{F}$ satisfies the {\rm (PS)}- and {\rm (C)}-conditions on $V$.
In particular, the same conclusions hold with the functional $\mathcal{F}$ on $V\subset W^{m,p}(\Omega)$
under the condition $\mathfrak{f}_p$.
\end{theorem}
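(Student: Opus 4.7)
\noindent\textbf{Proof plan for Theorem~\ref{th:3.7}.}\quad The strategy is to combine three ingredients already available: the coercivity of $\mathfrak{F}$, the reflexivity of $V$ (as a closed subspace of the reflexive Banach space $W^{m,p}(\Omega,\mathbb{R}^N)$ for $p\in[2,\infty)$), and the $(S)_+$-property of the derivative map $\mathfrak{F}':V\to V^\ast$, which is the direct analogue of claim C) in Theorem~\ref{th:3.1} established for $\mathfrak{F}$ in Theorem~\ref{th:3.2}. The $(S)_+$-property is the real workhorse: it converts weak convergence plus a sign condition on $\langle \mathfrak{F}'(\vec{u}_k),\vec{u}_k-\vec{u}\rangle$ into norm convergence, which is precisely what compactness of Palais--Smale-type sequences requires.

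First I would treat the (PS) condition. Fix $c\in\mathbb{R}$ and let $\{\vec{u}_k\}\subset V$ satisfy $\mathfrak{F}(\vec{u}_k)\to c$ and $\|\mathfrak{F}'(\vec{u}_k)\|_{V^\ast}\to 0$. Since $\mathfrak{F}$ is coercive, the bound on $\mathfrak{F}(\vec{u}_k)$ forces $\|\vec{u}_k\|_V$ to be bounded; otherwise $\mathfrak{F}(\vec{u}_k)\to+\infty$, a contradiction. By reflexivity of $V$ we extract a subsequence (still denoted $\vec{u}_k$) with $\vec{u}_k\rightharpoonup\vec{u}$ in $V$ for some $\vec{u}\in V$. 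Then
\[
|\langle \mathfrak{F}'(\vec{u}_k),\vec{u}_k-\vec{u}\rangle|\le \|\mathfrak{F}'(\vec{u}_k)\|_{V^\ast}\cdot\|\vec{u}_k-\vec{u}\|_V\longrightarrow 0
\]
because $\|\vec{u}_k-\vec{u}\|_V$ is bounded. In particular $\overline{\lim}\,\langle \mathfrak{F}'(\vec{u}_k),\vec{u}_k-\vec{u}\rangle\le 0$, and the $(S)_+$-property (Theorem~\ref{th:3.2}, claim~C) yields $\vec{u}_k\to\vec{u}$ strongly in $V$.

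The argument for the (C) condition is essentially identical. If $\mathfrak{F}(\vec{u}_k)\to c$ and $(1+\|\vec{u}_k\|_V)\|\mathfrak{F}'(\vec{u}_k)\|_{V^\ast}\to 0$, coercivity again gives boundedness of $\{\vec{u}_k\}$, from which $\|\mathfrak{F}'(\vec{u}_k)\|_{V^\ast}\to 0$ follows, and the previous proof applies verbatim. The specialization to $\mathcal{F}$ on $V\subset W^{m,p}(\Omega)$ under Hypothesis~$\mathfrak{f}_p$ is the case $N=1$, so it is covered by the same argument via Theorem~\ref{th:3.1}, claim~C.

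There is no substantive obstacle here: the nontrivial work has already been done in proving the $(S)_+$-property in Section~\ref{sec:Funct.4}, which required the Sobolev embedding machinery, Lemma~\ref{lem:3.6}, and the uniform absolute-continuity estimate~(\ref{e:3.31}). The only minor points to verify are that $V$ inherits reflexivity (immediate, since a closed subspace of a reflexive Banach space is reflexive), and that the $(S)_+$-property for $\mathfrak{F}'$ on $V$ follows from the corresponding property on $W^{m,p}(\Omega,\mathbb{R}^N)$ by restriction (weak convergence in $V$ coincides with weak convergence in the ambient space, and $\langle \mathfrak{F}'(\vec{u}_k),\vec{u}_k-\vec{u}\rangle$ is computed by the same integral expression~(\ref{e:3.6.1})).
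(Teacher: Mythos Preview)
Your proof is correct and follows essentially the same route as the paper: coercivity gives boundedness, reflexivity of $V$ gives a weakly convergent subsequence, and the $(S)_+$-property of $\mathfrak{F}'$ from Theorem~\ref{th:3.2}~C) upgrades this to strong convergence. The only cosmetic difference is that the paper handles the (C)-condition by invoking \cite[Proposition~5.23]{MoMoPa} (for a $C^1$ functional bounded below, (PS) and (C) are equivalent), whereas you verify (C) directly; both are equally valid and equally short.
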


\begin{proof}
 Since $\mathfrak{F}$ is coercive, it is bounded below.
By \cite[Proposition~5.23]{MoMoPa} it suffices to prove that
$\mathfrak{F}$ satisfies the (PS)-condition.

Let a sequence $\{\vec{u}_j\}_{j\ge 1}$ such that
$\mathfrak{F}(\vec{u}_j)\to c\in\mathbb{R}$ and $\mathfrak{F}'(\vec{u}_j)\to 0$ as $j\to\infty$.
Since $\mathfrak{F}$ is coercive, the sequence $\{\vec{u}_j\}_{j\ge 1}$ must be bounded.
Note that $V$ is a self-reflexive Banach space. After passing to a subsequence we may assume $\vec{u}_j\rightharpoonup \vec{u}$ in $V$. Moreover, $\mathfrak{F}'(\vec{u}_j)\to 0$ implies $\overline{\lim}_{j\to\infty}\langle\mathfrak{F}'(\vec{u}_j), \vec{u}_j-\vec{u}\rangle=0$.
By Theorem~\ref{th:3.2} (the corresponding conclusion to {\bf C)} of Theorem~\ref{th:3.1}) we know that $\mathfrak{F}'$ is of class $(S)_+$.
Hence $\vec{u}_j\to \vec{u}$ in $V$.
\end{proof}

Clearly, the coercivity of $\mathfrak{F}$ implies that it is bounded below.
On the other hand,  for a $C^1$ functional $\varphi$ on a Banach space $X$ which is bounded below,
Li Shujie showed that it is coercive if $\varphi$
satisfies the $(PS)$-condition.

There exist some explicit conditions on $F$ under which  $\mathfrak{F}$ is coercive on $W^{m,p}_0(\Omega,\mathbb{R}^N)$,
for example,  there exist  some two positive constants $c_0, c_1$ such that
$$
  F(x,\xi)\ge c_0\sum^N_{i=1}\sum_{|\alpha|=m}|\xi^i_\alpha|^p-c_1\quad\forall (x,\xi).
  $$
The coercivity requirement is too strong. In fact, the proof of Theorem~\ref{th:3.7}
shows that under  \textsf{Hypothesis} $\mathfrak{F}_{p,N}$ we only need to add some conditions
so that
$$
\sup_j|\mathfrak{F}(\vec{u}_j)|<\infty\quad\hbox{and}\quad \mathfrak{F}'(\vec{u}_j)\to 0\;\Longrightarrow\;
\sup_j\|\vec{u}_j\|_{m,p}<\infty.
$$

\begin{theorem}\label{th:3.8}
Under  \textsf{Hypothesis} $\mathfrak{F}_{p,N}$, suppose that there exist $\kappa\in\mathbb{R}$
   and $\Upsilon\in L^1(\Omega)$ such that
  $$
  F(x,\xi)-\kappa\sum^N_{i=1}\sum_{|\alpha|\le m}F^i_\alpha(x,\xi)\xi^i_\alpha\ge c_0\sum^N_{i=1}\sum_{|\alpha|=m}|\xi^i_\alpha|^p-c_1\sum^N_{i=1}|\xi^i_{\bf 0}|^p
  -\Upsilon(x)  \quad\forall (x,\xi),
  $$
 where  $c_0>0$ and $c_0-c_1S_{m,p}>0$ for the best constant $S_{m,p}>0$ with
 $$
 \int_\Omega |{u}|^p dx\le S_{m,p}\int_\Omega|D^m{u}|^p dx=S_{m,p}\sum_{|\alpha|=m}
 \int_\Omega|D^\alpha{u}|^p\quad\forall {u}\in W^{m,p}_0(\Omega).
 $$
 Then $\mathfrak{F}$ satisfies the {\rm (PS)}- and {\rm (C)}-conditions  on $W^{m,p}_0(\Omega,\mathbb{R}^N)$.
\end{theorem}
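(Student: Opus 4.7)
The plan is to reduce the claim to a uniform bound on $(PS)$-- and $(C)$--sequences, since once such a sequence is known to be bounded in $W^{m,p}_0(\Omega,\mathbb{R}^N)$ we can pass to a weakly convergent subsequence $\vec{u}_j\rightharpoonup\vec{u}$, and then the $(S)_+$--property of $\mathfrak{F}'$ (Theorem~\ref{th:3.2}, part C) forces $\vec{u}_j\to\vec{u}$ in norm, exactly as in the proof of Theorem~\ref{th:3.7}. Note also that for a $(C)$--sequence $\{\vec{u}_j\}$ we automatically have $\mathfrak{F}'(\vec{u}_j)\to 0$ in $V^\ast$, so the same convergence argument applies. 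Hence everything comes down to a single a priori estimate.

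The key estimate will be obtained by testing $\mathfrak{F}'(\vec{u}_j)$ against $\vec{u}_j$ itself and combining with the energy. By the formulas (\ref{e:3.6.1}) and (\ref{e:1.3}),
\begin{equation*}
\mathfrak{F}(\vec{u}_j)-\kappa\langle\mathfrak{F}'(\vec{u}_j),\vec{u}_j\rangle
=\int_\Omega\Bigl[F(x,\vec{u}_j,\dots,D^m\vec{u}_j)-\kappa\sum_{i=1}^N\sum_{|\alpha|\le m}F^i_\alpha(x,\vec{u}_j,\dots,D^m\vec{u}_j)D^\alpha u^i_j\Bigr]dx,
\end{equation*}
and the pointwise hypothesis together with $\Upsilon\in L^1(\Omega)$ yields
\begin{equation*}
\mathfrak{F}(\vec{u}_j)-\kappa\langle\mathfrak{F}'(\vec{u}_j),\vec{u}_j\rangle
\ge c_0\sum_{i=1}^N\sum_{|\alpha|=m}\int_\Omega|D^\alpha u^i_j|^p\,dx
-c_1\sum_{i=1}^N\int_\Omega|u^i_j|^p\,dx-\|\Upsilon\|_{L^1}.
\end{equation*}
Applying the stated Poincar\'e--type inequality $\int_\Omega|u^i_j|^p\le S_{m,p}\sum_{|\alpha|=m}\int_\Omega|D^\alpha u^i_j|^p$ coordinate-wise, which is legitimate because $\vec{u}_j\in W^{m,p}_0(\Omega,\mathbb{R}^N)$, collapses the right-hand side to $(c_0-c_1 S_{m,p})\sum_i\sum_{|\alpha|=m}\|D^\alpha u^i_j\|^p_{L^p}-\|\Upsilon\|_{L^1}$; since $c_0-c_1 S_{m,p}>0$ and $\|D^m\cdot\|_{L^p}$ is an equivalent norm on $W^{m,p}_0$ (by the same Poincar\'e inequality applied iteratively to control lower order derivatives), we get a constant $C>0$ with
\begin{equation*}
\mathfrak{F}(\vec{u}_j)-\kappa\langle\mathfrak{F}'(\vec{u}_j),\vec{u}_j\rangle\ge C\|\vec{u}_j\|_{m,p}^p-\|\Upsilon\|_{L^1}.
\end{equation*}

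To conclude, assume for contradiction that $\|\vec{u}_j\|_{m,p}\to\infty$ along a subsequence. From the boundedness of $\mathfrak{F}(\vec{u}_j)$ and the trivial bound $|\langle\mathfrak{F}'(\vec{u}_j),\vec{u}_j\rangle|\le\|\mathfrak{F}'(\vec{u}_j)\|_{V^\ast}\|\vec{u}_j\|_{m,p}$, the displayed inequality gives
\begin{equation*}
C\|\vec{u}_j\|_{m,p}^p\le C_1+|\kappa|\,\|\mathfrak{F}'(\vec{u}_j)\|_{V^\ast}\,\|\vec{u}_j\|_{m,p}+\|\Upsilon\|_{L^1}.
\end{equation*}
Dividing by $\|\vec{u}_j\|_{m,p}^p$ and using $p\ge 2$, the right-hand side tends to $0$ in both the $(PS)$--case (where $\|\mathfrak{F}'(\vec{u}_j)\|_{V^\ast}\to 0$ and $p-1\ge 1$) and the $(C)$--case (where $\|\mathfrak{F}'(\vec{u}_j)\|_{V^\ast}\|\vec{u}_j\|_{m,p}\to 0$), contradicting that the left-hand side is bounded below by $1$. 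Thus $\{\vec{u}_j\}$ is bounded, and the opening reduction delivers the desired strong convergent subsequence. The main technical point is simply verifying that $\|D^m\cdot\|_{L^p}$ controls the full $W^{m,p}$--norm on $W^{m,p}_0(\Omega,\mathbb{R}^N)$, a standard iterated Poincar\'e argument on a bounded Sobolev domain; once that is in hand, the rest is a one-line test-function computation powered by the $(S)_+$--machinery already developed.
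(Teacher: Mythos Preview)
Your proof is correct and follows essentially the same route as the paper: compute $\mathfrak{F}(\vec{u}_j)-\kappa\langle\mathfrak{F}'(\vec{u}_j),\vec{u}_j\rangle$, apply the pointwise hypothesis and the Poincar\'e inequality to get a coercive lower bound, deduce boundedness of the sequence, then invoke the $(S)_+$--property from Theorem~\ref{th:3.2} exactly as in Theorem~\ref{th:3.7}. The only cosmetic difference is that the paper reads off boundedness directly from the inequality $(c_0-c_1S_{m,p})\|D^m\vec{u}_k\|_p^p\le M+\|\Upsilon\|_{L^1}+|\kappa|\varepsilon_k\|\vec{u}_k\|_{m,p}$ rather than arguing by contradiction, and your ``bounded below by $1$'' should read ``bounded below by $C>0$''.
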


\begin{proof}
 Let $\{\vec{u}_k\}_{k\ge 1}\subset W^{m,p}_0(\Omega,\mathbb{R}^N)$ be a sequence such that
$|\mathfrak{F}(\vec{u}_k)|\le M\;\forall k$ for some $M>0$, and
$\mathfrak{F}'(\vec{u}_k)\to 0$ (resp. $(1+\|\vec{u}_k\|)\mathfrak{F}'(\vec{u}_k)\to 0$).
 By (\ref{e:3.6.1}) the latter means
\begin{equation}\label{e:3.34*}
\left.\begin{array}{cr}
&\left|\sum^N_{i=1}\sum_{|\alpha|\le m}\int_\Omega F^i_\alpha(x,
\vec{u}_k(x),\cdots, D^m \vec{u}_k(x))D^\alpha {u}^i_k dx\right|\le \varepsilon_k\|\vec{u}_k\|_{m,p}\\
&\\
&\hbox{(resp.}\;
\left|\sum^N_{i=1}\sum_{|\alpha|\le m}\int_\Omega F^i_\alpha(x,
\vec{u}_k(x),\cdots, D^m \vec{u}_k(x))D^\alpha {u}^i_k dx\right|\le \varepsilon_k\frac{\|\vec{u}_k\|_{m,p}}{1+
\|\vec{u}_k\|_{m,p}}\;{\rm )}
\end{array}\right.
\end{equation}
where $\varepsilon_k\to 0$ and $\|\vec{u}_k\|_{m,p}=\|D^m\vec{u}_k\|_{p}$ as usual. By the assumption we have
\begin{eqnarray*}
&&\mathfrak{F}(\vec{u}_k)-\kappa\sum^N_{i=1}\sum_{|\alpha|\le m}\int_\Omega F^i_\alpha(x,
\vec{u}_k(x),\cdots, D^m \vec{u}_k(x))D^\alpha u^i_k dx\\
&\ge& c_0\sum^N_{i=1}\sum_{|\alpha|=m}\int_\Omega|D^\alpha u^i_k|^p dx-c_1\sum^N_{i=1}\int_\Omega |u^i_k|^p dx-\int_\Omega\Upsilon(x) dx\\
&\ge&c_0\sum^N_{i=1}\int_\Omega|D^m u^i_k|^p dx-c_1S_{m,p}\sum^N_{i=1}\int_\Omega|D^m u^i_k|^p dx-\int_\Omega\Upsilon(x) dx\\
\end{eqnarray*}
and therefore
\begin{eqnarray*}
\begin{array}{cr}
&(c_0-c_1S_{m,p})\sum^N_{i=1}\int_\Omega|D^m u_k^i|^p dx\le \int_\Omega\Upsilon(x) dx+ M+ |\kappa|\varepsilon_k\|\vec{u}_k\|_{m,p}\\
&\\
&{\rm (resp.}\;(c_0-c_1S_{m,p})\sum^N_{i=1}\int_\Omega|D^m u^i_k|^p dx\le \int_\Omega\Upsilon(x) dx+ M+ |\kappa|\varepsilon_k
\;{\rm )}.
\end{array}
\end{eqnarray*}
This implies that $\|\vec{u}_k\|_{m,p}$ is bounded. Passing to a subsequence if necessary,
we may assume $\vec{u}_k\rightharpoonup \vec{u}$. The remainder is the same as that of Theorem~\ref{th:3.7}.
\end{proof}

\begin{theorem}\label{th:3.9}
Let $\Omega\subset\R^n$ be a bounded Sobolev domain. Suppose that
   \textsf{Hypothesis} $\mathfrak{F}_{2,N}$ is satisfied with the constant function $\mathfrak{g}_2$, and  that
  for any $(x, \xi)\in\overline\Omega\times\prod^{m-1}_{k=0}\mathbb{R}^{N\times M_0(k)}$,
   $$
  F(x,\xi, {\bf 0})\le \varphi(x)+ C\sum^N_{i=1}\sum_{|\alpha|\le m-1}|\xi^i_\alpha|^{r},
  $$
 where  $\varphi\in L^1(\Omega)$ and $1\le r<2$.
 Then $\mathfrak{F}$ satisfies the {\rm (PS)}- and {\rm (C)}-conditions  on $W^{m,2}_0(\Omega, \mathbb{R}^N)$.
\end{theorem}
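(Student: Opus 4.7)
\medskip
\noindent\textbf{Proof plan for Theorem~\ref{th:3.9}.}\quad
The strategy is to show that any $(PS)$- or $(C)$-sequence for $\mathfrak{F}$ is bounded in $W^{m,2}_0(\Omega,\mathbb{R}^N)$, and then invoke the $(S)_+$ property of $\mathfrak{F}'$ from Theorem~\ref{th:3.2} (the analogue of part~C) of Theorem~\ref{th:3.1}) to upgrade a weakly convergent subsequence to a strongly convergent one, exactly as in the proof of Theorem~\ref{th:3.7}. Since $\Omega$ is a bounded Sobolev domain and we work in $W^{m,2}_0$, the Poincar\'e inequality gives $\|\vec{u}\|_{m,2}\le C_\Omega\|D^m\vec{u}\|_2$, so it suffices to bound $\|D^m\vec{u}_k\|_2$.

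\medskip
\noindent The key identity comes from a one-variable Taylor expansion in the top-order slot. For fixed $(x,\xi',\xi^m)$, set $h(t):=F(x,\xi',t\xi^m)$. Then $h''(s)=\sum_{i,j,|\alpha|=|\beta|=m}F^{ij}_{\alpha\beta}(x,\xi',s\xi^m)\xi^i_\alpha\xi^j_\beta$, which by hypothesis~(iii) (with constant $\mathfrak{g}_2$, since $p-2=0$) satisfies $h''(s)\ge \mathfrak{g}_2\sum_{i,|\alpha|=m}(\xi^i_\alpha)^2$. The identity $h(0)=h(1)-h'(1)+\int_0^1 s\,h''(s)\,ds$ then yields
\begin{equation*}
\sum_{j,|\beta|=m}F^j_\beta(x,\xi)\,\xi^j_\beta\;\ge\;F(x,\xi)-F(x,\xi',\mathbf{0})+\tfrac{\mathfrak{g}_2}{2}\sum_{i,|\alpha|=m}(\xi^i_\alpha)^2.
\end{equation*}
Substituting $\xi=(\vec{u}_k,\ldots,D^m\vec{u}_k)$ and integrating over $\Omega$, the left-hand side equals $\langle\mathfrak{F}'(\vec{u}_k),\vec{u}_k\rangle - A_k$, where $A_k:=\sum_{i,|\alpha|\le m-1}\int_\Omega F^i_\alpha(x,\vec{u}_k,\ldots,D^m\vec{u}_k)D^\alpha u^i_k\,dx$. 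Rearranging,
\begin{equation*}
\tfrac{\mathfrak{g}_2}{2}\|D^m\vec{u}_k\|_2^2\;\le\;\langle\mathfrak{F}'(\vec{u}_k),\vec{u}_k\rangle\;-\;\mathfrak{F}(\vec{u}_k)\;+\;\int_\Omega F(x,\vec{u}_k',\mathbf{0})\,dx\;-\;A_k.
\end{equation*}

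\medskip
\noindent The $(PS)$ (resp.\ $(C)$) condition gives $|\mathfrak{F}(\vec{u}_k)|\le M$ and $\langle\mathfrak{F}'(\vec{u}_k),\vec{u}_k\rangle\le \epsilon_k\|\vec{u}_k\|_{m,2}$ (resp.\ $\le\epsilon_k$) with $\epsilon_k\to 0$. The sub-quadratic hypothesis yields
\[
\int_\Omega F(x,\vec{u}_k',\mathbf{0})\,dx\;\le\;\|\varphi\|_{1}+C\sum_{i,|\alpha|\le m-1}\|D^\alpha u^i_k\|_r^r\;\le\;C_1+C_2\|\vec{u}_k\|_{m,2}^{r}
\]
by the embedding $W^{m,2}(\Omega)\hookrightarrow W^{m-1,r}(\Omega)$, and $r<2$ makes this sub-quadratic in $\|D^m\vec{u}_k\|_2$. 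For $|A_k|$, we apply the pointwise bounds \eqref{e:3.9}--\eqref{e:3.10} (in the analogous form for $F^i_\alpha$) to estimate $|F^i_\alpha(x,\xi)|$ and pair with $|D^\alpha u^i_k|$ for $|\alpha|\le m-1$ via H\"older's inequality and Sobolev embeddings; crucially, the loss of one derivative in the test factor $D^\alpha u^i_k$ relative to the top-order growth of $F^i_\alpha$ makes each contribution sub-quadratic in $\|D^m\vec{u}_k\|_2$, allowing an estimate of the form $|A_k|\le \tfrac{\mathfrak{g}_2}{4}\|D^m\vec{u}_k\|_2^2+C(1+\|\vec{u}_k\|_{m,2}^{s})$ with $s<2$, via Young's inequality. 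Collecting terms and absorbing the $\tfrac{\mathfrak{g}_2}{4}\|D^m\vec{u}_k\|_2^2$ into the left side, we arrive at an inequality of the form $\|D^m\vec{u}_k\|_2^2\le C(1+\|D^m\vec{u}_k\|_2^{\max(r,s)}+\epsilon_k\|D^m\vec{u}_k\|_2)$ with the exponent strictly less than $2$, forcing $\{\|D^m\vec{u}_k\|_2\}$ to be bounded. After extracting a weak limit $\vec{u}_{k_j}\rightharpoonup \vec{u}$ and using $\mathfrak{F}'(\vec{u}_{k_j})\to\theta$ to conclude $\limsup\langle\mathfrak{F}'(\vec{u}_{k_j}),\vec{u}_{k_j}-\vec{u}\rangle\le 0$, the $(S)_+$-property supplies strong convergence.

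\medskip
\noindent\textbf{Main obstacle.}\quad The delicate point is the estimate on $|A_k|$: the structural growth bounds \eqref{e:3.9}--\eqref{e:3.10} on $F^i_\alpha$ allow, a priori, a quadratic (and occasionally super-quadratic) dependence on the top-order variable $\xi^m$, so na\"{i}ve H\"older estimates on $\int|F^i_\alpha||D^\alpha u^i_k|$ would saturate or exceed the $\|D^m\vec{u}_k\|_2^2$ budget. One must exploit the precise Sobolev exponents $p_\gamma,q_\alpha$ for $p=2$ together with the interpolation gain from $|\alpha|\le m-1$ (one derivative less than $m$) to achieve an exponent strictly below $2$, so that absorption into $\tfrac{\mathfrak{g}_2}{4}\|D^m\vec{u}_k\|_2^2$ via Young's inequality succeeds. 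The sub-quadratic hypothesis on $F(x,\xi,\mathbf{0})$ is what seals the analogous bookkeeping on the integral $\int F(x,\vec{u}_k',\mathbf{0})\,dx$.
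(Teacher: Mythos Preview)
Your Taylor expansion in the top-order slot and the overall strategy match the paper's proof exactly. The difference is that you explicitly carry the lower-order term $A_k=\sum_{i}\sum_{|\alpha|\le m-1}\int_\Omega F^i_\alpha(x,\vec{u}_k,\ldots,D^m\vec{u}_k)\,D^\alpha u^i_k\,dx$, whereas the paper passes directly from the integrated pointwise inequality (which contains only $\sum_{|\alpha|=m}\int F^i_\alpha D^\alpha u^i$) to the displayed inequality with the full pairing $\langle\mathfrak{F}'(\vec{u}),\vec{u}\rangle$, and then bounds the latter by $\|\mathfrak{F}'(\vec{u})\|\cdot\|\vec{u}\|_{m,2}$ to reach~\eqref{e:3.35*}. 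In effect the paper silently drops $A_k$; your decomposition is the more careful one.

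However, the control of $|A_k|$ you propose via \eqref{e:3.9}--\eqref{e:3.10} does not close. Take $n>2$ and $|\alpha|=m-1$: the $|\gamma|=m$ contribution in \eqref{e:3.10} gives $|F^i_\alpha|\le C\,(|D^m\vec{u}_k|^2)^{1/q_\alpha}$ with $1/q_\alpha=(n+2)/(2n)$, so the integrand $|F^i_\alpha|\,|D^{m-1}u^i_k|$ is controlled by $C\,|D^m\vec{u}_k|^{(n+2)/n}\,|D^{m-1}u^i_k|$. H\"older with the conjugate pair $\bigl(\tfrac{2n}{n+2},\tfrac{2n}{n-2}\bigr)$ together with the \emph{critical} Sobolev embedding $\|D^{m-1}\vec{u}_k\|_{2n/(n-2)}\le C\|D^m\vec{u}_k\|_2$ yields a bound of order $\|D^m\vec{u}_k\|_2^{2+2/n}$, strictly super-quadratic; Young's inequality only shifts this to $C_\epsilon\|D^{m-1}\vec{u}_k\|_{2n/(n-2)}^{2n/(n-2)}\le C\|D^m\vec{u}_k\|_2^{2n/(n-2)}$, still super-quadratic. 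There is no interpolation slack here because the Sobolev exponent is critical. Thus the obstacle you yourself flag is genuine, and the argument you outline does not overcome it.
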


\begin{proof}
 For any $x\in\Omega$ and $\xi=(\xi^1,\cdots,\xi^m)\in\prod^{m}_{k=0}\mathbb{R}^{N\times M_0(k)}$
let $\hat{\xi}=(\xi^1,\cdots,\xi^{m-1})\in\prod^{m-1}_{k=0}\mathbb{R}^{N\times M_0(k)}$.
By the mean value theorem we get
\begin{eqnarray*}
&&\sum^N_{i=1}\sum_{|\alpha|=m}F^i_\alpha(x,\xi)\xi^i_\alpha - F(x,\xi)\\
&=&\sum^N_{i=1}\sum_{|\alpha|=m}F^i_\alpha(x,\xi)\xi^i_\alpha
-[F(x,\xi)-F(x,\hat{\xi}, {\bf 0})]- F(x,\hat{\xi}, {\bf 0})\\
&=&\sum^N_{i=1}\sum_{|\alpha|=m}F^i_\alpha(x,\xi)\xi^i_\alpha-\sum^N_{i=1}\sum_{|\alpha|=m}\int^1_0
F^i_\alpha(x,\hat{\xi}, t\xi^m)\xi^i_\alpha  dt- F(x,\hat{\xi}, {\bf 0}) \\
&=&\sum^N_{i,j=1}\sum_{|\alpha|=|\beta|=m}\int^1_0dt\int^1_0F^{ij}_{\alpha\beta}(x,\hat{\xi},
(t+s-st)\xi^{m})(1-t)\xi^i_\alpha \xi^j_\beta ds- F(x,\hat{\xi}, {\bf 0})\\
&\ge&\frac{1}{2}\mathfrak{g}_2\sum^N_{i=1}\sum_{|\alpha|= m}|\xi^i_\alpha |^2- F(x,\hat{\xi}, {\bf 0}).
\end{eqnarray*}
It follows that for any $\vec{u}\in W^{m,2}_0(\Omega,\mathbb{R}^N)$,
\begin{eqnarray*}
\frac{1}{2}\mathfrak{g}_2
\sum^N_{i=1}\sum_{|\alpha|= m}\int_\Omega|D^\alpha u^i|^2\le \int_\Omega F(x,\vec{u},\cdots, D^{m-1}\vec{u},0)- \mathfrak{F}(\vec{u})+ \langle\mathfrak{F}'(\vec{u}),
\vec{u}\rangle.
\end{eqnarray*}
  By the assumption and the Young inequality we derive
\begin{eqnarray*}
 && \int_\Omega F(x,\vec{u},\cdots,D^{m-1}\vec{u}, 0)\le \int_\Omega\varphi(x)+ C\sum^N_{i=1}\sum_{|\alpha|\le m-1}\int_\Omega|D^\alpha u^i|^{r}\\
 &\le&\int_\Omega\varphi(x)+ C\sum^N_{i=1}\sum_{|\alpha|\le m-1}\int_\Omega\left(\frac{r\varepsilon}{2}
 |D^\alpha u^i|^{2}+ \frac{2-r}{2}\varepsilon^{-r/(2-r)}\right)\\
 &\le&\int_\Omega\varphi(x)+ C\varepsilon\|\vec{u}\|^2_{m,2}+ C\varepsilon^{-r/(2-r)}
  \end{eqnarray*}
and hence
\begin{eqnarray*}
\frac{1}{2}\mathfrak{g}_2
\|\vec{u}\|^2_{m,2}\le\int_\Omega\varphi(x)+ C\varepsilon\|\vec{u}\|^2_{m,2}+ C\varepsilon^{-r/(2-r)}- \mathfrak{F}(u)+ \langle\mathfrak{F}'(\vec{u}),
\vec{u}\rangle.
\end{eqnarray*}
Taking $\varepsilon=\frac{\mathfrak{g}_2}{4C}$ leads to
\begin{eqnarray}\label{e:3.35*}
\frac{1}{4}\mathfrak{g}_2
\|\vec{u}\|^2_{m,2}\le\int_\Omega\varphi(x)+ C\left(\frac{4C}{\mathfrak{g}_2}\right)^{r/(2-r)}- \mathfrak{F}(\vec{u})+ \langle\mathfrak{F}'(\vec{u}),
\vec{u}\rangle
\end{eqnarray}
for any $\vec{u}\in W^{m,2}_0(\Omega,\mathbb{R}^N)$.
Let the sequence $\{\vec{u}_k\}_{k\ge 1}\subset W^{m,2}_0(\Omega,\mathbb{R}^N)$ such that
$\sup_k|\mathfrak{F}(\vec{u}_k)|\le M$ for some $M>0$, and
$\mathfrak{F}'(\vec{u}_k)\to 0$ (resp.  $(1+\|\vec{u}_k\|_{m,2})\mathfrak{F}'(\vec{u}_k)\to 0$).  Then (\ref{e:3.35*})
implies (because of $|\langle\mathfrak{F}'(\vec{u}),
\vec{u}\rangle|\le \|\mathfrak{F}'(\vec{u})\|\cdot\|\vec{u}\|_{m,2}\le (1+\|\vec{u}\|_{m,2})\|\mathfrak{F}'(\vec{u})\|$) that $\{\vec{u}_k\}_{k\ge 1}$ is bounded in $W^{m,2}_0(\Omega,\mathbb{R}^N)$
and thus has a convergent subsequence as above.
\end{proof}

\section{Morse inequalities}\label{sec:Morse}
\setcounter{equation}{0}

Firstly, we show that Theorem~\ref{th:3.2} and Theorems~\ref{th:S.6.0},~\ref{th:S.6.1} imply
the generalized Morse lemma.

\begin{theorem}\label{th:Morse.1}
Let $\Omega\subset\R^n$ be a bounded  Sobolev domain, $N\in\mathbb{N}$, and $H$  a closed subspace of $W^{m,2}(\Omega,\mathbb{R}^N)$. Let $G$ be a compact Lie group which acts on $H$ in a $C^3$-smooth
isometric way.  Suppose that \textsf{Hypothesis} $\mathfrak{F}_{2,N}$ is satisfied and that
the functional $\mathfrak{F}$ given by (\ref{e:1.3}) is $G$-invariant.
Let $\mathcal{O}$ be an isolated critical orbit of $\mathfrak{F}$ (always understanding as
$\mathfrak{F}|_H$). It is a compact $C^3$ submanifold, whose normal bundle $N\mathcal{O}$ has fiber at $\vec{u}\in\mathcal{O}$,
$$
N\mathcal{O}_{\vec{u}}=\{\vec{v}\in H\,|\, (\vec{v}, \vec{w})_{m,2}=0\;\forall\vec{w}\in T_{\vec{u}}\mathcal{O}\subset H\}.
$$
Let $N^+\mathcal{O}_{\vec{u}},  N^0\mathcal{O}_{\vec{u}}$ and $N^-\mathcal{O}_{\vec{u}}$ be the positive definite, null and negative definite spaces of the bounded linear self-adjoint operator associated with the bilinear form
   $$
  N\mathcal{O}_{\vec{u}}\times N\mathcal{O}_{\vec{u}}\ni (\vec{v}, \vec{w})\mapsto \sum^N_{i=1}\sum_{|\alpha|,|\beta|\le m}\int_\Omega
  F^{ij}_{\alpha\beta}(x, \vec{u}(x),\cdots, D^m \vec{u}(x))D^\beta v^j\cdot D^\alpha w^i dx.
  $$
Then $\dim N^0\mathcal{O}_{\vec{u}}$ and $\dim N^-\mathcal{O}_{\vec{u}}$ are finite and independent of choice
of $\vec{u}\in\mathcal{O}$. They are called nullity and Morse index of $\mathcal{O}$, denoted by $\nu_{\mathcal{O}}$
and $\mu_{\mathcal{O}}$, respectively. Moreover, the following holds.
\begin{description}
\item[(i)] If $\nu_{\mathcal{O}}=0$ (i.e., the critical orbit $\mathcal{O}$ is nondegenerate),
there exist  $\epsilon>0$
and a  $G$-equivariant homeomorphism onto an open neighborhood of
the zero section preserving fibers
$$
\Phi:  N^+\mathcal{O}(\epsilon)\oplus N^-\mathcal{ O}(\epsilon)\to N\mathcal{ O}
$$
such that for any $\vec{u}\in\mathcal{O}$ and  $(\vec{v}_+, \vec{v}_-)\in
N^+\mathcal{ O}(\epsilon)_{\vec{u}}\times N^-\mathcal{O}(\epsilon)_{\vec{u}}$,
\begin{eqnarray}\label{e:Morse.1}
\mathfrak{F}\circ E\circ\Phi(\vec{u},  \vec{v}_++
\vec{v}_-)=\|\vec{v}_+\|^2_{m,2}-\|\vec{v}_-\|^2_{m,2}+ \mathfrak{F}|_{\mathcal{O}},
\end{eqnarray}
where $E:N\mathcal{ O}\to H$ is given by $E(\vec{u},\vec{v})=\vec{u}+\vec{v}$.
\item[(ii)] If $\nu_{\mathcal{O}}\ne 0$ there exist $\epsilon>0$,
a  $G$-equivariant topological  bundle
morphism that preserves the zero section,
 $$
\mathfrak{h}:N^0\mathcal{O}(3\epsilon)\to N^+\mathcal{O}\oplus N^-\mathcal{O}\subset H,\;(\vec{u},\vec{v})
\mapsto \mathfrak{h}_{\vec{u}}(\vec{v}),
$$
and a  $G$-equivariant homeomorphism onto an open neighborhood of
the zero section preserving fibers,
$\Phi: N^0\mathcal{ O}(\epsilon)\oplus N^+\mathcal{
O}(\epsilon)\oplus N^-\mathcal{ O}(\epsilon)\to N\mathcal{O}$,
such that  the following properties hold:\\
\noindent{\bf (ii.1)} for any $\vec{u}\in\mathcal{O}$ and  $(\vec{v}_0, \vec{v}_+, \vec{v}_-)\in N^0\mathcal{O}(\epsilon)_{\vec{u}}\times N^+\mathcal{ O}(\epsilon)_{\vec{u}}\times N^-\mathcal{
O}(\epsilon)_{\vec{u}}$,
\begin{eqnarray}\label{e:Morse.2}
\mathfrak{F}\circ E\circ\Phi(\vec{u}, \vec{v}_0, \vec{v}_++
\vec{v}_-)=\|\vec{v}_+\|^2_{m,2}-\|\vec{v}_-\|^2_{m,2}+ \mathfrak{F}(\vec{u}+\vec{v}_0+
\mathfrak{h}_{\vec{u}}(\vec{v}_0));
\end{eqnarray}
\noindent{\bf (ii.2)} for each $\vec{u}\in\mathcal{O}$ the function
\begin{eqnarray}\label{e:Morse.3}
N^0\mathcal{O}(\epsilon)_{\vec{u}}\to\R,\;\vec{v}\mapsto \mathfrak{F}_{\vec{u}}^\circ(\vec{v}):=
\mathfrak{F}(\vec{u}+\vec{v}+ \mathfrak{h}_{\vec{u}}(\vec{v}))
\end{eqnarray}
is $G_{\vec{u}}$-invariant, of class $C^{1}$,  and satisfies
$$
D\mathfrak{F}_{\vec{u}}^\circ(\vec{v})\vec{w}:=
(\nabla\mathfrak{F}(\vec{u}+\vec{v}+ \mathfrak{h}_{\vec{u}}(\vec{v})),\vec{w}),\quad\;\forall \vec{w}\in N^0\mathcal{O}_{\vec{u}}.
$$
\end{description}
\end{theorem}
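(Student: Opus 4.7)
The plan is to apply the abstract splitting theorems around critical orbits (Theorems~\ref{th:S.6.0} and~\ref{th:S.6.1}) to the concrete functional $\mathfrak{F}$ on the closed subspace $H\subset W^{m,2}(\Omega,\mathbb{R}^N)$. Since $H$ is a linear subspace, its Riemannian-Hilbert structure is flat and $\exp_{\vec{u}}(\vec{v})=\vec{u}+\vec{v}$, so the map $E$ in the statement is literally the exponential, and the abstract framework of Section~\ref{sec:S.6} applies with $\mathcal{H}=H$.

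First I would verify Hypothesis~\ref{hyp:S.6.2}. The $G$-invariance of $\mathfrak{F}$ is given, and Theorem~\ref{th:3.2} (the analogue of parts A),B),D) of Theorem~\ref{th:3.1}) shows that $\mathfrak{F}\in C^1(H,\mathbb{R})$ with G\^ateaux-differentiable gradient, and that at every $\vec{u}\in H$ one has a bounded self-adjoint operator $d^2\mathfrak{F}(\vec{u})\in\mathscr{L}_s(H)$ with the splitting $d^2\mathfrak{F}(\vec{u})=P(\vec{u})+Q(\vec{u})$, where $P(\vec{u})$ is uniformly positive definite on bounded balls (analogue of D)(ii)) and $Q(\vec{u})$ is compact and continuous in $\vec{u}$ (analogue of D)(iii)). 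Compactness of $\mathcal{O}$ and the continuity just stated give that $\vec{u}\mapsto d^2\mathfrak{F}(\vec{u})$ is continuous on $\mathcal{O}$. The $G$-equivariance of the gradient yields $d^2\mathfrak{F}(g\cdot\vec{u})=g\,d^2\mathfrak{F}(\vec{u})\,g^{-1}$, whence nullities and Morse indices are independent of $\vec{u}\in\mathcal{O}$. The decomposition $P+Q$ with $P$ uniformly positive definite and $Q$ compact implies, via \cite[Proposition~B.2]{Lu2}, that the spectrum of $d^2\mathfrak{F}(\vec{u})$ below the coercivity constant of $P(\vec{u})$ consists of isolated eigenvalues of finite multiplicity, so $N^0\mathcal{O}_{\vec{u}}$ and $N^-\mathcal{O}_{\vec{u}}$ are finite dimensional and the spectral gap condition (\ref{e:S.6.1}) holds uniformly on $\mathcal{O}$ (by compactness of $\mathcal{O}$ and continuity of the relevant projections).

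Next I would check that for some (equivalently, every) $\vec{u}_0\in\mathcal{O}$ the pair $(\mathfrak{F}\circ\exp|_{N\mathcal{O}(\varepsilon)_{\vec{u}_0}}, N\mathcal{O}(\varepsilon)_{\vec{u}_0})$ satisfies the hypotheses of Hypothesis~\ref{hyp:1.1} with $X=H$. By Lemma~\ref{lem:S.2.4} this reduces to verifying Hypothesis~\ref{hyp:1.1} with $X=H$ for $\mathfrak{F}$ on a neighborhood of $\vec{u}_0$ in $H$. Here (D1) is automatic, (D2) is the continuity statement for $P$ from Theorem~\ref{th:3.2}(D)(i), (D3) is the continuity of $Q$ from (D)(iii), and (D4) is the uniform positive definiteness of $P$ on a small ball furnished by (D)(ii), where the constant depends only on a bound for $\|\vec{u}\|_{m,2}$. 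Thus all hypotheses of Theorems~\ref{th:S.6.0} and~\ref{th:S.6.1} are in force.

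With these verifications, Theorem~\ref{th:S.6.0} directly yields conclusion (i) in the nondegenerate case, giving the homeomorphism $\Phi$ and the identity (\ref{e:Morse.1}); Theorem~\ref{th:S.6.1} yields (ii.1) by providing $\mathfrak{h}$, $\Phi$ and the identity (\ref{e:Morse.2}), while (ii.2) follows from part (III) of Theorem~\ref{th:S.6.1}, together with the $G_{\vec{u}}$-invariance of $\mathfrak{F}^\circ_{\vec{u}}$ coming from the $G$-equivariance of $\mathfrak{h}$. I expect the only delicate point to be the passage through Lemma~\ref{lem:S.2.4}: one must identify the decomposition on $H$ as the orthogonal restriction of the ambient $P+Q$ of Theorem~\ref{th:3.2}, and then check the local uniformity in (D4) around each orbit point (rather than globally along $\mathcal{O}$); compactness of $\mathcal{O}$ and the $G$-equivariance make this straightforward but warrant explicit mention.
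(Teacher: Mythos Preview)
Your proposal is correct and follows essentially the same route as the paper: verify Hypothesis~\ref{hyp:1.1} with $X=H$ for $\mathfrak{F}|_H$ near each orbit point via Theorem~\ref{th:3.2}, pass to the normal fiber by Lemma~\ref{lem:S.2.4}, note that $\exp_{\vec{u}}(\vec{v})=\vec{u}+\vec{v}$, and then invoke Theorems~\ref{th:S.6.0} and~\ref{th:S.6.1}. Your additional explicit verification of (D1)--(D4) from the parts of Theorem~\ref{th:3.2} and of the spectral gap in Hypothesis~\ref{hyp:S.6.1} is more detailed than what the paper writes, but the argument is the same.
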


\begin{proof}
Since $\mathfrak{F}|_H$ satisfies
Hypothesis~\ref{hyp:1.1} with $X=H$ around each critical point by Theorem~\ref{th:3.2},
it follows from Lemma~\ref{lem:S.2.4} that
for each $\vec{u}\in\mathcal{O}$ the restriction
$\mathfrak{F}_{N\mathcal{O}_{\vec{u}}}$ satisfies Hypothesis~\ref{hyp:1.1} with $X=N\mathcal{O}_{\vec{u}}$
around the origin of $N\mathcal{O}_{\vec{u}}$.
Note that the exponential map on $H$, $\exp:TH=H\times H\to H$,
 is given by $\exp(\vec{u},\vec{v})=\vec{u}+\vec{v}$.
Then Theorems~\ref{th:S.6.0},~\ref{th:S.6.1} lead to the desired conclusions immediately.
\end{proof}

By Corollary~\ref{cor:S.6.5} and (\ref{e:S.6.20}), for any commutative ring ${\bf K}$ we get
\begin{eqnarray}
C_q(\mathfrak{F}, \mathcal{ O};{\bf K})\cong
\oplus^q_{j=0}C_{q-j-\mu_\mathcal{O}}(\mathfrak{F}^{\circ}_{\vec{u}},
\theta; {\bf K})\otimes H_j(\mathcal{ O};{\bf K})\quad\forall q=0,
1,2,\cdots\label{e:Morse.4}
\end{eqnarray}
if  $\vec{u}\in\mathcal{O}$, $\nu_{\mathcal{O}}\ne 0$ and  $\mathcal{ O}$ has
trivial normal bundle;  and $C_\ast(\mathfrak{F}, \mathcal{O};\mathbb{Z}_2)\cong
C_{\ast-\mu_\mathcal{O}}(\mathcal{O};\mathbb{Z}_2)$,
\begin{eqnarray}\label{e:Morse.5}
C_\ast(\mathfrak{F}, \mathcal{O};{\bf K})\cong
C_{\ast-\mu_\mathcal{O}}(\mathcal{O};\theta^-\otimes{\bf K})
\quad\hbox{and}\quad C_G^\ast(\mathfrak{F}, \mathcal{ O};{\bf K})\cong
H_G^{\mu_{\cal O}-1}(\mathcal{O};\theta^-\otimes{\bf K})
\end{eqnarray}
if $\nu_{\mathcal{O}}=0$, where $\mu_{\cal O}$ is the Morse index of $\mathcal{O}$
and $\theta^-$ is the orientation bundle of $N^-\mathcal{O}$.

From the second equality in (\ref{e:Morse.5}) and \cite[Chapter I, Theorem~7.6]{Ch} we immediately arrive at

\begin{theorem}\label{th:Morse.2}
Under the assumptions of Theorem~\ref{th:Morse.1},
Let  $a<b$ be two regular values of $\mathfrak{F}$ and
  $\mathfrak{F}^{-1}([a,b])$ contains only nondegenerate  critical orbits $\mathcal{O}_j$
  with Morse indexes $\mu_j$, $j=1,\cdots,k$.
 Suppose that  $\mathfrak{F}$ satisfies the $(PS)_c$ condition
for each $c\in [a,b)$. (For example, this is true if  either  $\mathfrak{F}$ is coercive or  one of Theorems~\ref{th:3.8},~\ref{th:3.9}  holds in case $H=W^{m,2}_0(\Omega,\mathbb{R}^N)$.)
Then there exists a polynomial with nonnegative
integral coefficients $Q(t)$ such that
\begin{eqnarray}\label{e:Morse.6}
\sum^\infty_{i=0}\sum^k_{j=1}{\rm rank}H^i_G(\mathcal{O}_j, \theta^-_j\otimes{\bf K})t^{\mu_j+i}
=\sum^\infty_{i=0}{\rm rank}H^i_G(\mathfrak{F}_b, \mathfrak{F}_a;{\bf K})t^i+ (1+t)Q(t),
\end{eqnarray}
where $\theta^-_j$ is the orientation bundle of $N^-\mathcal{O}_j$, $j=1,\cdots,k$.
In particular, if  $G$ is trivial and each $\mathcal{O}_j$ becomes a nondegenerate
 critical point $\vec{u}_j$, then  the following Morse inequalities hold:
\begin{eqnarray}\label{e:Morse.7}
\sum^l_{j=0}(-1)^{l-j}N_j(a,b)\ge\sum^l_{j=0}(-1)^{l-j}\beta_j(a,b),\quad l=0,1,\cdots,
\end{eqnarray}
where for each $q\in\mathbb{N}\cup\{0\}$, $N_q(a,b)=\sharp\{1\le i\le k\,|\,\mu_i=q\}$ (the number of points in $\{\vec{u}_j\}^k_{j=1}$ with
Morse index $q$)  and
$$
\beta_q(a,b)=\sum^k_{i=1}{\rm rank} H_q(\mathfrak{J}_b, \mathfrak{J}_a; {\bf K}).
$$
Furthermore, if  $\mathfrak{F}$ is coercive,  has only nondegenerate critical points,  and
  for each $q\in\{0\}\cup\mathbb{N}$ there exist only  finitely many critical points with Morse index $q$,
 then the following relations hold:
\begin{eqnarray}\label{e:Morse.8}
 \sum^q_{i=0}(-1)^{q-i}N_i\ge (-1)^q,\;q=0,1,2,\cdots,\quad\hbox{and}\quad
  \sum^\infty_{i=0}(-1)^iN_i=1,
 \end{eqnarray}
 where $N_i$ is the number of critical points of $\mathfrak{F}$ with Morse index $i$.
\end{theorem}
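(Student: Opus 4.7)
The plan is to reduce everything to the standard equivariant Morse theoretic machinery \cite[Chapter I, Theorem~7.6]{Ch} once the local normal form at each critical orbit is in hand, and then specialize to obtain the two non-equivariant consequences. The hypotheses of Theorem~\ref{th:Morse.1} are exactly what is needed to produce, at each $\mathcal{O}_j$, an origin-preserving $G$-equivariant local homeomorphism on the normal bundle that conjugates $\mathfrak{F}$ to the ``quadratic normal form'' $\|\vec v_+\|^2_{m,2}-\|\vec v_-\|^2_{m,2}+\mathfrak{F}|_{\mathcal{O}_j}$; this is (\ref{e:Morse.1}) together with the critical group computation (\ref{e:Morse.5}), namely
\[
C_G^{\,*}(\mathfrak{F},\mathcal{O}_j;{\bf K})\cong H_G^{*-\mu_j}(\mathcal{O}_j;\theta_j^-\otimes{\bf K}).
\]

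Given this local picture, I would first verify the global deformation lemma needed for the Morse relations on the pair $(\mathfrak{F}_b,\mathfrak{F}_a)$: since $\mathfrak{F}'$ is of class $(S)_+$ by Theorem~\ref{th:3.2}, a $G$-equivariant pseudo-gradient vector field can be constructed (as in Lemma~\ref{lem:pseudogradient}), and the $(PS)_c$ assumption on $[a,b)$ together with the fact that $\mathfrak{F}^{-1}([a,b])$ contains only finitely many (nondegenerate) critical orbits gives the usual deformation of $\mathfrak{F}_b$ onto $\mathfrak{F}_a$ across the critical levels. Filtering $(\mathfrak{F}_b,\mathfrak{F}_a)$ by the critical values and using excision plus the local homeomorphism above, the relative equivariant cohomology $H^*_G(\mathfrak{F}_b,\mathfrak{F}_a;{\bf K})$ admits the standard spectral sequence whose $E_1$-term is $\bigoplus_j C_G^{*}(\mathfrak{F},\mathcal{O}_j;{\bf K})$. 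Applying \cite[Chapter~I, Theorem~7.6]{Ch} to this filtration and substituting the above isomorphism yields (\ref{e:Morse.6}) with $Q(t)$ the nonnegative integer polynomial arising from the dimensions of the kernels of the spectral sequence differentials.

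For the non-equivariant case (\ref{e:Morse.7}), take $G$ trivial so that each $\mathcal{O}_j$ reduces to a point $\vec{u}_j$ and $H_G^{i}(\mathcal{O}_j;\theta_j^-\otimes{\bf K})=\delta_{i0}{\bf K}$; then (\ref{e:Morse.6}) becomes $\sum_{j}t^{\mu_j}=\sum_q\beta_q(a,b)t^q+(1+t)Q(t)$ with $N_q(a,b)=\sharp\{j:\mu_j=q\}$, and comparing coefficients up to degree $l$ of $(1-t+t^2-\cdots)$ produces the classical weak Morse inequalities (\ref{e:Morse.7}) in the usual way. For (\ref{e:Morse.8}), the key observation is that a coercive $\mathfrak{F}$ satisfying (PS) and having only nondegenerate critical points is bounded below; choose $a<\inf_{H}\mathfrak{F}$ so that $\mathfrak{F}_a=\emptyset$. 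As $b\to\infty$, the sublevel set $\mathfrak{F}_b$ deformation retracts via the negative pseudo-gradient flow onto $H$ (coercivity plus (PS) prevent flow lines from escaping), so $H_*(\mathfrak{F}_b,\mathfrak{F}_a;{\bf K})\to H_*(H;{\bf K})=\delta_{*0}{\bf K}$, giving $\beta_q=\delta_{q0}$. Under the finiteness assumption $N_i<\infty$ for each $i$, one obtains (\ref{e:Morse.8}) by letting $b\to\infty$ in (\ref{e:Morse.7}) and passing to the alternating sum (the identity $\sum(-1)^iN_i=1$ follows from both sides of the strong Morse equality stabilizing once $b$ exceeds all critical values of Morse index $\le q$, since each layer $N_i$ is finite).

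The main obstacle will be the first step: constructing the filtration and verifying the $C^1$ version of the deformation lemma that underlies \cite[Theorem~7.6]{Ch}. In Chang's book $\mathfrak{F}$ is assumed $C^2$, so I must check that the $(S)_+$ property of $\nabla\mathfrak{F}$, together with the equivariant pseudo-gradient flow and the local normal form (\ref{e:Morse.1}), genuinely replace the use of smoothness in the construction of Gromoll--Meyer pairs around each $\mathcal{O}_j$. Once this is secured, everything else is formal bookkeeping with the twisted Thom isomorphism and standard limiting arguments.
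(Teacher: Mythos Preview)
Your proposal is correct and follows the same route as the paper: the paper derives (\ref{e:Morse.6}) directly from the second identity in (\ref{e:Morse.5}) together with \cite[Chapter~I, Theorem~7.6]{Ch}, and then cites \cite[Corollary~6.5.10]{Ber} for the standard argument giving (\ref{e:Morse.8}). Your extra care about the $C^1$ deformation lemma and Gromoll--Meyer pairs is a legitimate point the paper leaves implicit, and your sketch of the coercivity limit for (\ref{e:Morse.8}) is exactly the content of the Berger reference.
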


The proof of (\ref{e:Morse.8}) is standard, see the proof of \cite[Corollary~6.5.10]{Ber}.
 When $H=W^{m,2}_0(\Omega)$ and $\mathfrak{F}$ is
coercive, (\ref{e:Morse.7}) was first obtained by Skrypnik in \cite[\S5.2]{Skr1}.

\section{Bifurcations for Quasi-linear elliptic systems}\label{sec:BifE}
\setcounter{equation}{0}


\begin{hypothesis}\label{hyp:BifE.1}
{\rm Let $\Omega\subset\R^n$ be a bounded  Sobolev domain, $N\in\mathbb{N}$, and
  functions
$$
F:\overline\Omega\times\prod^m_{k=0}\mathbb{R}^{N\times M_0(k)}\to \R\quad\hbox{and}\quad
\textsf{G}:\overline\Omega\times\prod^{m-1}_{k=0}\mathbb{R}^{N\times M_0(k)}\to \R
$$
satisfy \textsf{Hypothesis} $\mathfrak{F}_{p,N}$ and (i)-(ii) in \textsf{Hypothesis} $\mathfrak{F}_{p,N}$,
respectively.  Let $V$ be a closed subspace of $W^{m,p}(\Omega,\mathbb{R}^N)$
containing $W^{m,p}_0(\Omega,\mathbb{R}^N)$.
}
\end{hypothesis}

 We consider
(generalized) bifurcation solutions of the boundary value problem
corresponding to the subspace $V$:
\begin{eqnarray}\label{e:BifE.1}
&&\sum_{|\alpha|\le m}(-1)^{|\alpha|}D^\alpha F^i_\alpha(x, \vec{u},\cdots, D^m\vec{u})=
\lambda\sum_{|\alpha|\le m-1}(-1)^{|\alpha|}D^\alpha \textsf{G}^i_\alpha(x, \vec{u},\cdots, D^{m-1}\vec{u}),\nonumber\\
&&\hspace{20mm}i=1,\cdots,N.
\end{eqnarray}
Call $\vec{u}\in V$ a {\it generalized solution} of (\ref{e:BifE.1}) if it is a
 critical point on $V$ of the variational problem
\begin{equation}\label{e:BifE.2}
\mathfrak{F}(\vec{u})-\lambda \mathfrak{G}(\vec{u})=\int_\Omega F(x, \vec{u},\cdots, D^m\vec{u})dx-\lambda
\int_\Omega \textsf{G}(x, \vec{u},\cdots, D^{m-1}\vec{u})dx.
\end{equation}

As a generalization of  \cite[Theorem~7.2, Chapter~4]{Skr3}, we may derive from  Theorem~\ref{th:3.2} and
Theorem~7.1 of \cite[Chapter~4]{Skr3}:

\begin{theorem}\label{th:BifE.2}
Under Hypothesis~\ref{hyp:BifE.1}, assume
\begin{description}
\item[(i)] the functionals $\mathfrak{F}$ and $\mathfrak{G}$ are even, $\mathfrak{F}(\theta)=\mathfrak{G}(\theta)=0$,
$\mathfrak{G}(\vec{u})\ne 0\;\forall \vec{u}\in V\setminus\{\theta\}$, and $\mathfrak{G}'(\vec{u})\ne \theta\;\forall \vec{u}\in V\setminus\{\theta\}$;
\item[(ii)] $\langle\mathfrak{F}'(\vec{u}), \vec{u}\rangle\ge\nu(\|\vec{u}\|_{m,p})$, where $\nu(t)$ is a continuous
function and positive for $t>0$;
\item[(iii)] $\mathfrak{F}(\vec{u})\to +\infty$ as $\|\vec{u}\|_{m,p}\to\infty$.
\end{description}
Then for any $c>0$ there exists at least a sequence $\{(\lambda_j,\vec{u}_j)\}_j\subset\mathbb{R}\times\{\vec{u}\in V\,|\,
\mathfrak{F}(\vec{u})=c\}$ satisfying (\ref{e:BifE.1}).
\end{theorem}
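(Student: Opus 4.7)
The plan is to apply an abstract Ljusternik--Schnirelmann-type theorem for even functionals on symmetric constraint manifolds, with the choice of Skrypnik's Theorem~7.1 of \cite[Chapter~4]{Skr3} suggested by the author. The role of this excerpt's machinery is to verify the abstract hypotheses; Theorem~\ref{th:3.2} (the analogue of Theorem~\ref{th:3.1} for the system) is precisely tailored to do so.

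First I would check that the level set $M_c := \{\vec{u}\in V : \mathfrak{F}(\vec{u}) = c\}$ is a $C^1$ symmetric Banach submanifold of $V$ for every $c>0$. Indeed, by the analogue of part A) of Theorem~\ref{th:3.1} for $\mathfrak{F}$ given in Theorem~\ref{th:3.2}, the functional $\mathfrak{F}$ is of class $C^1$ on $V$. On $M_c$ we have $\vec{u}\neq\theta$ (since $\mathfrak{F}(\theta)=0$ and $c>0$), so hypothesis (ii) yields $\langle\mathfrak{F}'(\vec{u}),\vec{u}\rangle \ge \nu(\|\vec{u}\|_{m,p}) > 0$, hence $\mathfrak{F}'(\vec{u})\neq 0$. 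The evenness of $\mathfrak{F}$ and $\mathfrak{G}$ makes $M_c$ invariant under $\vec{u}\mapsto -\vec{u}$ and $\mathfrak{G}|_{M_c}$ a $\mathbb{Z}_2$-invariant $C^1$ functional.

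Next I would verify a Palais--Smale condition for $\mathfrak{G}|_{M_c}$. By hypothesis (iii), $\mathfrak{F}$ is coercive, so $M_c$ is bounded in $V$. Given a sequence $\{\vec{u}_n\}\subset M_c$ with $(\mathfrak{G}|_{M_c})'(\vec{u}_n)\to 0$, the Lagrange multiplier rule yields $\mu_n\in\mathbb{R}$ such that $\mathfrak{G}'(\vec{u}_n) - \mu_n \mathfrak{F}'(\vec{u}_n)\to 0$ in $V^\ast$. Boundedness of $M_c$ lets us pass (along a subsequence) to a weak limit $\vec{u}_n\rightharpoonup\vec{u}$. Since $\textsf{G}$ depends only on derivatives of order $\le m-1$, the operator $\mathfrak{G}'$ is completely continuous on $V$ (this is precisely the content of parts of Theorem~\ref{th:3.2} corresponding to C) and D) of Theorem~\ref{th:3.1}, using $p>p-1$ growth and compact Sobolev embeddings $W^{m,p}\hookrightarrow\hookrightarrow W^{m-1,q}$); in particular $\mathfrak{G}'(\vec{u}_n)\to\mathfrak{G}'(\vec{u})$ strongly. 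Testing $\mathfrak{G}'(\vec{u}_n)-\mu_n\mathfrak{F}'(\vec{u}_n)$ against $\vec{u}_n$ and using $\langle\mathfrak{F}'(\vec{u}_n),\vec{u}_n\rangle\ge\nu(\|\vec{u}_n\|_{m,p})$ with a positive lower bound (the $\|\vec{u}_n\|_{m,p}$ are bounded away from $0$ on $M_c$ by continuity of $\mathfrak{F}$ at $\theta$), the multipliers $\mu_n$ stay bounded; hence $\mathfrak{F}'(\vec{u}_n)\to \mathfrak{F}'(\vec{u})/\mu$ along a sub-subsequence, which gives $\limsup\langle\mathfrak{F}'(\vec{u}_n),\vec{u}_n-\vec{u}\rangle\le 0$. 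The $(S)_+$-property of $\mathfrak{F}'$ (part C) of Theorem~\ref{th:3.2}) then upgrades weak convergence to $\vec{u}_n\to\vec{u}$ strongly, completing the (PS) verification.

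Finally I would apply Ljusternik--Schnirelmann genus theory on the symmetric bounded $C^1$ manifold $M_c$: the minimax values
\begin{equation*}
c_j \;=\; \inf_{A\in\Sigma_j}\ \sup_{\vec{u}\in A} \mathfrak{G}(\vec{u}), \qquad j=1,2,\dots,
\end{equation*}
where $\Sigma_j$ consists of closed symmetric subsets of $M_c$ of Krasnosel'skii genus $\ge j$, yield infinitely many critical points $\vec{u}_j$ of $\mathfrak{G}|_{M_c}$ because $M_c$ has infinite genus and the functional is even. At each such $\vec{u}_j$, Lagrange's rule provides $\mu_j\in\mathbb{R}$ with $\mathfrak{G}'(\vec{u}_j)=\mu_j\mathfrak{F}'(\vec{u}_j)$; since $\mathfrak{G}'(\vec{u}_j)\neq\theta$ by hypothesis (i), $\mu_j\neq 0$, and $\lambda_j:=1/\mu_j$ gives the desired sequence solving $\mathfrak{F}'(\vec{u}_j)=\lambda_j\mathfrak{G}'(\vec{u}_j)$ in $V^\ast$, which is the weak form of (\ref{e:BifE.1}). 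The main technical obstacle is the (PS) verification above: ensuring $\mu_n$ stays bounded and then exploiting the delicate interplay between the complete continuity of $\mathfrak{G}'$ and the $(S)_+$-property of $\mathfrak{F}'$ — this is where Theorem~\ref{th:3.2} carries the full weight, since without the $(S)_+$-structure the passage from weak to strong convergence on the constraint manifold would fail for genuinely quasi-linear $F$.
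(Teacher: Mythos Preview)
Your approach is exactly the paper's: the author states the theorem as a direct consequence of Theorem~\ref{th:3.2} together with Skrypnik's abstract Ljusternik--Schnirelmann result (Theorem~7.1 of \cite[Chapter~4]{Skr3}), with no further detail, and your sketch faithfully unpacks what that invocation entails. One small slip to watch in your (PS) verification: after extracting $\mu_n\to\mu$ you write $\mathfrak{F}'(\vec{u}_n)\to\mathfrak{F}'(\vec{u})/\mu$ (you mean $\mathfrak{G}'(\vec{u})/\mu$) and implicitly assume $\mu\neq 0$; the case $\mu_n\to 0$ needs a separate line---it forces $\mathfrak{G}'(\vec{u})=0$, hence $\vec{u}=\theta$ by hypothesis~(i), and then one rules this out or handles it via the boundedness of $\mathfrak{F}'(\vec{u}_n)$ together with $\langle\mathfrak{F}'(\vec{u}_n),\vec{u}_n\rangle\ge\nu_0>0$.
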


 By Theorems~\ref{th:Bi.2.2},~\ref{th:3.2} we have

\begin{theorem}\label{th:BifE.3}
Under Hypothesis~\ref{hyp:BifE.1} with $p=2$, let $\vec{u}_0\in V$ satisfy $\mathfrak{F}'(\vec{u}_0)=0$ and $\mathfrak{G}'(\vec{u}_0)=0$.
If  $(\lambda^\ast, \vec{u}_0)$ with certain $\lambda^\ast\in\mathbb{R}$
is a bifurcation point for (\ref{e:BifE.1}), then the linear problem
 \begin{eqnarray}\label{e:BifE.3}
 &&\sum^N_{i,j=1}\sum_{|\alpha|,|\beta|\le m}
(-1)^{\alpha}\bigl[F^{ij}_{\alpha\beta}(x, \vec{u}_0(x),\cdots, D^m \vec{u}_0(x))D^\beta v^j\bigr]
\nonumber\\
&&=\lambda \sum^N_{i,j=1}\sum_{|\alpha|,|\beta|\le m-1}
(-1)^{\alpha}\bigl[\textsf{G}^{ij}_{\alpha\beta}(x, \vec{u}_0(x),\cdots, D^{m-1}\vec{u}_0(x))D^\beta v^j\bigr]
 \end{eqnarray}
with $\lambda=\lambda^\ast$ has a nontrivial solution in $V$, i.e., $\vec{u}_0$ is a degenerate
critical point of the functional $\mathfrak{F}-\lambda^\ast\mathfrak{G}$ on $V$.
\end{theorem}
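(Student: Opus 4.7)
The plan is to reduce Theorem~\ref{th:BifE.3} to the abstract necessary condition for bifurcation, Theorem~\ref{th:Bi.2.2}, with $n=1$, $H=V$, $\mathcal{F}=\tilde{\mathfrak{F}}$ and $\mathcal{G}_1=\tilde{\mathfrak{G}}$, where the tildes denote the functionals shifted to have their critical point at the origin. The analytic conditions required by Theorem~\ref{th:Bi.2.2} will be supplied by Theorem~\ref{th:3.2} (i.e.\ the vector analogue of Theorem~\ref{th:3.1}); the rest is translation of the conclusion.

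First, I would translate the problem to the origin. Put $\vec{v}=\vec{u}-\vec{u}_0$, and define $\tilde{\mathfrak{F}}(\vec{v})=\mathfrak{F}(\vec{u}_0+\vec{v})-\mathfrak{F}(\vec{u}_0)$ and $\tilde{\mathfrak{G}}(\vec{v})=\mathfrak{G}(\vec{u}_0+\vec{v})-\mathfrak{G}(\vec{u}_0)$ on a neighbourhood $U$ of $\theta$ in $V$. Then $\tilde{\mathfrak{F}}'(\theta)=\mathfrak{F}'(\vec{u}_0)=0$ and $\tilde{\mathfrak{G}}'(\theta)=\mathfrak{G}'(\vec{u}_0)=0$ by hypothesis, and $(\lambda^\ast,\theta)$ is a bifurcation point for $\tilde{\mathfrak{F}}'(\vec{v})-\lambda\tilde{\mathfrak{G}}'(\vec{v})=0$ in $U$.

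Next I would verify the hypotheses of Theorem~\ref{th:Bi.2.2}. For condition (i), I need $\tilde{\mathfrak{F}}$ to satisfy Hypothesis~\ref{hyp:1.1} with $X=H=V$. By Theorem~\ref{th:3.2} (the vector analogue of part B of Theorem~\ref{th:3.1}), $\tilde{\mathfrak{F}}\in C^1(U,\mathbb{R})$ and $\nabla\tilde{\mathfrak{F}}$ is G\^ateaux-differentiable; by the analogue of part D, its G\^ateaux derivative admits a decomposition $B(\vec{v})=P(\vec{v})+Q(\vec{v})$, where $P(\vec{v})$ is uniformly positive definite for $\vec{v}$ bounded (giving (D4)), $V\ni \vec{v}\mapsto P(\vec{v})\vec{w}$ is continuous for every $\vec{w}$ (giving (D2)), and $V\ni\vec{v}\mapsto Q(\vec{v})\in\mathscr{L}(V)$ is continuous (giving (D3)); condition (D1) is automatic because $X=H=V$. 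For condition (ii), since $\textsf{G}$ depends only on derivatives of order at most $m-1$, part (iii) of B in Theorem~\ref{th:3.1} (again in its vector form) shows that $\tilde{\mathfrak{G}}\in C^2(U,\mathbb{R})$, that $\tilde{\mathfrak{G}}''(\vec{v})\in\mathscr{L}(V,V^\ast)$ depends continuously on $\vec{v}$, and that (in the Riesz-represented form on $V$) each $D(\nabla\tilde{\mathfrak{G}})(\vec{v})$ is completely continuous, so compact.

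Having verified the assumptions, Theorem~\ref{th:Bi.2.2} yields a nonzero $\vec{v}\in V$ with
\begin{equation*}
\tilde{\mathfrak{F}}''(\theta)\vec{v}-\lambda^\ast\tilde{\mathfrak{G}}''(\theta)\vec{v}=0\quad\text{in }V^\ast.
\end{equation*}
Using the explicit representations of $\tilde{\mathfrak{F}}''(\theta)=D\mathfrak{F}'(\vec{u}_0)$ and $\tilde{\mathfrak{G}}''(\theta)=D\mathfrak{G}'(\vec{u}_0)$ provided by Theorem~\ref{th:3.2}, this identity is exactly
\begin{equation*}
\sum_{i,j=1}^N\sum_{|\alpha|,|\beta|\le m}\int_\Omega F^{ij}_{\alpha\beta}(x,\vec{u}_0,\ldots,D^m\vec{u}_0)D^\beta v^j D^\alpha\varphi^i\,dx=\lambda^\ast\sum_{i,j=1}^N\sum_{|\alpha|,|\beta|\le m-1}\int_\Omega \textsf{G}^{ij}_{\alpha\beta}(x,\vec{u}_0,\ldots,D^{m-1}\vec{u}_0)D^\beta v^j D^\alpha\varphi^i\,dx
\end{equation*}
for every $\vec\varphi\in V$, which is precisely the weak form of (\ref{e:BifE.3}) at $\lambda=\lambda^\ast$; thus $\vec{u}_0$ is a degenerate critical point of $\mathfrak{F}-\lambda^\ast\mathfrak{G}$, as required. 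The main obstacle is bookkeeping: one must be sure that the continuity, compactness, and positive-definiteness claims of Theorem~\ref{th:3.2} all survive the translation by $\vec{u}_0$ (they do, since the decomposition $P+Q$ is defined pointwise on $V$), and that the compactness of $\tilde{\mathfrak{G}}''(\theta)$ genuinely relies on the fact that $\textsf{G}$ involves only derivatives of order $<m$, so that Rellich--Kondrachov embedding applies; once these are in place the proof is just an application of Theorem~\ref{th:Bi.2.2}.
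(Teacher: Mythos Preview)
Your proposal is correct and follows exactly the route the paper indicates: the paper's entire proof is the single sentence ``By Theorems~\ref{th:Bi.2.2},~\ref{th:3.2} we have'', and you have spelled out precisely how those two theorems combine---translating to the origin, reading off Hypothesis~\ref{hyp:1.1} for $\tilde{\mathfrak{F}}$ from part D) of Theorem~\ref{th:3.2}, and the compactness and continuity of $\tilde{\mathfrak{G}}''$ from part B)(iii) (which applies because $\textsf{G}$ omits the top-order variables). Your identification of the weak form of (\ref{e:BifE.3}) with the abstract conclusion $\mathfrak{F}''(\vec{u}_0)\vec{v}=\lambda^\ast\mathfrak{G}''(\vec{u}_0)\vec{v}$ is also exactly right.
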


Conversely, if $\dim\Omega=1$ and $\vec{u}_0$ is a degenerate critical point of the functional $\mathfrak{F}-\lambda^\ast\mathfrak{G}$, using Theorem~\ref{th:Bi.2.3} we may obtain
the corresponding bifurcation results. These will be given in more general forms, see \cite{Lu9}.

\begin{hypothesis}\label{hyp:BifE.4}
{\rm Let Hypothesis~\ref{hyp:BifE.1} hold with $p=2$,  $\vec{u}_0\in V$ satisfy $\mathfrak{F}'(\vec{u}_0)=0$ and $\mathfrak{G}'(\vec{u}_0)=0$, and the linear problem
 \begin{eqnarray}\label{e:BifE.4}
 \sum^N_{i,j=1}\sum_{|\alpha|,|\beta|\le m}
(-1)^{\alpha}\bigl[F^{ij}_{\alpha\beta}(x, \vec{u}_0(x),\cdots, D^m \vec{u}_0(x))D^\beta v^j\bigr]
=0
 \end{eqnarray}
have no nontrivial solutions in $V$.}
\end{hypothesis}

The final condition in this hypothesis means that $\mathfrak{F}''(\vec{u})$ has
a bounded linear inverse.
Under Hypothesis~\ref{hyp:BifE.4}, by the arguments above Theorem~\ref{th:Bi.2.4},
the all eigenvalues of (\ref{e:BifE.3}) form a discrete subset of $\mathbb{R}$,
$\{\lambda_j\}^\infty_{j=1}$, which contains no zero and satisfies $|\lambda_j|\to\infty$
as $j\to\infty$; moreover, each $\lambda_j$ has finite multiplicity. Let $V_j$ be the eigensubspace of  (\ref{e:BifE.3}) corresponding to
the eigenvalue $\lambda_j$, $j=1,2,\cdots$.
By Theorems~\ref{th:Bi.2.4},~\ref{th:3.2} we directly obtain

\begin{theorem}\label{th:BifE.5}
Under Hypothesis~\ref{hyp:BifE.4}, for an eigenvalue $\lambda_k$
of (\ref{e:BifE.3}) as above, assume that one of the following three conditions holds:
\begin{description}
\item[(a)] $\mathfrak{F}''(\vec{u}_0)$ is positive definite, i.e., for each $v\in V\setminus\{\theta\}$,
 \begin{eqnarray}\label{e:BifE.5}
 \sum^N_{i,j=1}\sum_{|\alpha|,|\beta|\le m}
\int_\Omega F^{ij}_{\alpha\beta}(x, \vec{u}_0(x),\cdots, D^m \vec{u}_0(x))D^\beta v^j(x)D^\alpha v^i(x)dx>0;
 \end{eqnarray}
 \item[(b)] $\mathfrak{F}''(\vec{u}_0)$ is negative definite, i.e., for each $v\in V\setminus\{\theta\}$,
 \begin{eqnarray}\label{e:BifE.6}
 \sum^N_{i,j=1}\sum_{|\alpha|,|\beta|\le m}
\int_\Omega F^{ij}_{\alpha\beta}(x, \vec{u}_0(x),\cdots, D^m \vec{u}_0(x))D^\beta v^j(x)D^\alpha v^i(x)dx<0;
 \end{eqnarray}

\item[(c)] each $V_j$ is an invariant subspace of $\mathfrak{F}''(\vec{u}_0)$ in $V$,
$j=1,2,\cdots$,  and either (\ref{e:BifE.5}) holds for all $v\in V_k\setminus\{\theta\}$,
or (\ref{e:BifE.6}) does for all $v\in V_k\setminus\{\theta\}$.
\end{description}
 Then $(\lambda_k, \vec{u}_0)\in\mathbb{R}\times V$ is a bifurcation point  for the problem
(\ref{e:BifE.1}),  and  one of the following alternatives occurs:
\begin{description}
\item[(i)] $(\lambda_k, \vec{u}_0)$ is not an isolated solution of (\ref{e:BifE.1}) in
 $\{\lambda_k\}\times V$.

\item[(ii)] there exists a sequence $\{\kappa_j\}_{j\ge 1}\subset\mathbb{R}\setminus\{\lambda_k\}$
such that $\kappa_j\to\lambda_k$ and that for each $\kappa_j$ the problem
(\ref{e:BifE.1}) with $\lambda=\kappa_j$ has infinitely many solutions converging to
$\vec{u}_0\in V$.

\item[(iii)]  for every $\lambda$ in a small neighborhood of $\lambda_k$ there is a nontrivial solution $\vec{u}_\lambda$ of (\ref{e:BifE.1}) converging to $\vec{u}_0$ as $\lambda\to\lambda_k$;

\item[(iv)] there is a one-sided $\Lambda$ neighborhood of $\lambda_k$ such that
for any $\lambda\in\Lambda\setminus\{\lambda_k\}$,
(\ref{e:BifE.1}) has at least two nontrivial solutions converging to
$\vec{u}_0$ as $\lambda\to\lambda_k$.
\end{description}
\end{theorem}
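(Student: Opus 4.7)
The plan is to reduce Theorem~\ref{th:BifE.5} directly to Theorem~\ref{th:Bi.2.4} by translating the critical point $\vec{u}_0$ to the origin and verifying that the resulting functionals on the Hilbert space $H := V$ satisfy all the abstract hypotheses. Concretely, I would set $\mathcal{F}(\vec{v}) := \mathfrak{F}(\vec{u}_0 + \vec{v}) - \mathfrak{F}(\vec{u}_0)$ and $\mathcal{G}(\vec{v}) := \mathfrak{G}(\vec{u}_0 + \vec{v}) - \mathfrak{G}(\vec{u}_0)$ on a sufficiently small neighborhood $U$ of $\theta\in H$. Since $\mathfrak{F}'(\vec{u}_0) = \mathfrak{G}'(\vec{u}_0) = \theta$ by assumption, we have $\mathcal{F}'(\theta) = \mathcal{G}'(\theta) = \theta$, and the quasi-linear elliptic problem (\ref{e:BifE.1}) near $\vec{u}_0$ is recast as the abstract Euler-Lagrange equation $\mathcal{F}'(\vec{v}) = \lambda\mathcal{G}'(\vec{v})$ on $U\subset H$.

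The next step is to verify the hypotheses of Theorem~\ref{th:Bi.2.4}. For $\mathcal{F}$: applying Theorem~\ref{th:3.2} (the vector analog of part D) of Theorem~\ref{th:3.1}) to the translated Lagrangian $F(x,\vec{u}_0+\xi,\ldots)$, we get that $\mathcal{F}\in C^1(U,\mathbb{R})$, $\nabla\mathcal{F}$ has G\^ateaux derivative $\mathcal{F}''(\vec{v}) = P(\vec{v}) + Q(\vec{v})$ with $P$ uniformly positive definite and continuous on $U$, and $Q$ compact and continuous. Thus $\mathcal{F}$ satisfies Hypothesis~\ref{hyp:1.1} with $X = H$ and (D2)--(D4) hold. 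For $\mathcal{G}$: since $\textsf{G}$ depends only on derivatives of order $\le m-1$, the analog of part (iii) of D) in Theorem~\ref{th:3.2} applies, showing that $\mathcal{G}'$ has a G\^ateaux derivative $\mathcal{G}''(\vec{v})$ which is a compact linear operator for every $\vec{v}\in U$ and such that $\mathcal{G}'':U\to\mathscr{L}(H)$ is continuous. This verifies assumption (ii) of Theorem~\ref{th:Bi.2.2}, as required by Theorem~\ref{th:Bi.2.4}.

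Then Hypothesis~\ref{hyp:BifE.4} gives that the linearized operator $\mathcal{F}''(\theta) = \mathfrak{F}''(\vec{u}_0)$ is invertible, since its kernel consists precisely of the solutions of (\ref{e:BifE.4}) in $V$. The eigenvalue equation (\ref{e:Bi.2.7.4}) of the abstract theory is, under this identification, exactly the linear eigenvalue problem (\ref{e:BifE.3}), so the eigenvalues $\lambda_k$ and eigenspaces $V_k$ from Hypothesis~\ref{hyp:BifE.4} coincide with those appearing in the hypotheses of Theorem~\ref{th:Bi.2.4}. The three alternative conditions (a), (b), (c) of Theorem~\ref{th:BifE.5} are nothing but the restatements of the corresponding conditions (a), (b), (c) of Theorem~\ref{th:Bi.2.4} applied to the bilinear form associated with $\mathcal{F}''(\theta)$. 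Invoking Theorem~\ref{th:Bi.2.4} then yields the four alternatives for critical points of $\mathcal{F} - \lambda\mathcal{G}$ near $\theta\in H$, which translate back under $\vec{v}\mapsto\vec{u}_0+\vec{v}$ to the alternatives (i)--(iv) for generalized solutions of (\ref{e:BifE.1}) near $\vec{u}_0$.

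The main technical obstacle -- really the only non-routine verification -- is confirming the compactness and norm-continuity of $\mathcal{G}''(\vec{v})$. The point is that each second partial derivative $\textsf{G}^{ij}_{\alpha\beta}$ involves only multi-indices with $|\alpha|,|\beta|\le m-1$, so each bilinear form $\int_\Omega \textsf{G}^{ij}_{\alpha\beta}(x,\vec{v}_0,\ldots,D^{m-1}\vec{v}_0) D^\beta v^j D^\alpha w^i\,dx$ factors through a compact Sobolev embedding $W^{m,2}(\Omega,\mathbb{R}^N)\hookrightarrow W^{m-1,q}(\Omega,\mathbb{R}^N)$ for a suitable $q>2$; the continuity of the associated Nemytski operators then comes from Proposition~\ref{prop:3.5}, exactly as in the proof of part (iii) of D) in Theorem~\ref{th:3.1}. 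With this in hand, all hypotheses of Theorem~\ref{th:Bi.2.4} are met and the conclusion is immediate.
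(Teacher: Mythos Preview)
Your proposal is correct and follows exactly the route the paper indicates: the paper's entire proof is the single line ``By Theorems~\ref{th:Bi.2.4},~\ref{th:3.2} we directly obtain,'' and you have spelled out the reduction faithfully. Two minor points of phrasing: (1) rather than applying Theorem~\ref{th:3.2} ``to the translated Lagrangian $F(x,\vec{u}_0+\xi,\ldots)$'' (which would require re-verifying Hypothesis~$\mathfrak{F}_{2,N}$ for a Lagrangian with $x$-dependent shifts), it is cleaner to apply Theorem~\ref{th:3.2} to $\mathfrak{F}$ itself and observe that the conclusions (the $P+Q$ decomposition with properties (i)--(iv) of~D)) hold at every point of $V$, hence in particular near $\vec{u}_0$, and then translate; (2) for $\mathcal{G}$, since $\textsf{G}$ satisfies only (i)--(ii) of Hypothesis~$\mathfrak{F}_{2,N}$, the relevant citation is part~(iii) of~\textbf{B)} in Theorem~\ref{th:3.1}/\ref{th:3.2} rather than part~(iii) of~\textbf{D)} (the paper itself notes these share the same proof, so the distinction is cosmetic).
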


\begin{remark}\label{rmk:BifE.6}
{\rm (i) When $N=1$, $V=W_0^{m,2}(\Omega)$, ${u}=\theta$ and $\mathfrak{F}$
also satisfies
\begin{eqnarray}\label{e:BifE.7}
(\mathfrak{F}'({u}),{u})_{m,2}\ge c\|u\|^2_{m,2}
\end{eqnarray}
for some $c>0$ and all sufficiently small $\|{u}\|_{m,2}$,
if $\lambda^\ast$ is  an eigenvalue of (\ref{e:BifE.3}) with ${u}=\theta$,
it was proved in \cite[Chap.1, Theorem~3.5]{Skr2} that
$(\lambda,\theta)$ is a bifurcation point of (\ref{e:BifE.1}).
Since $\mathfrak{F}(\theta)=\theta$, it is clear that (\ref{e:BifE.7})
implies $\mathfrak{F}''(\theta)$ to be positive definite. Hence
Theorem~\ref{th:BifE.5} contains \cite[Chap.1, Theorem~3.5]{Skr2}
as a special example.\\
(ii) When $N=1$, $n\ge 3$, $V=H^1_0(\Omega)$, $\textsf{G}(x,\xi_0,\cdots,\xi_n)=
\frac{1}{2}\xi_0^2$
and
\begin{eqnarray}\label{e:BifE.8}
F(x,\xi_0,\cdots,\xi_n)=\frac{1}{2}\sum^n_{i,j=1}a_{ij}(x,\xi_0)\xi_i\xi_j-\int^{\xi_0}_0g(x,t)dt,
\end{eqnarray}
Canino \cite[Theorem~1.3]{Can} obtained a corresponding result
provided that functions $a_{ij}=a_{ji}, g:\Omega\times\mathbb{R}\to\mathbb{R}$
satisfy the following assumptions:\\
a.0) $a_{ij}$ is of class $C^1$, and is of class $C^2$ in $\xi_0$ for $a.e.\;x\in\Omega$;\\
a.1) there exists $C>0$ such that for $a.e.\;x\in\Omega$, for all $\xi_0\in\mathbb{R}$ and
for all $i,j,k$,
\begin{eqnarray*}
&&|a_{ij}(x,\xi_0)|\le C,\quad |D_{\xi_0}a_{ij}(x,\xi_0)|\le C,\\
&&|D_{x_k}a_{ij}(x,\xi_0)|\le C,\quad |D^2_{\xi_0\xi_0}a_{ij}(x,\xi_0)|\le C;
\end{eqnarray*}
a.2) there exists $\nu>0$ such that for $a.e.\;x\in\Omega$, for all $\xi_i\in\mathbb{R}$,
$i=0,\cdots,n$,
$$
\sum^n_{i,j=1}a_{ij}(x,\xi_0)\xi_i\xi_j\ge\nu\sum^n_{\i=1}\xi_i^2;
$$
a.3) for $a.e.\;x\in\Omega$, for all $\xi_i\in\mathbb{R}$,
$i=0,\cdots,n$,
$$
\sum^n_{i,j=1}sD_{\xi_0}a_{ij}(x,\xi_0)\xi_i\xi_j\ge 0;
$$
g) for every $\xi_0\in\mathbb{R}$, $g(x,\xi_0)$ is measurable with respect to $x$,
for a.e. $x\in\Omega$, $g(x,\xi_0)$ is of class $C^1$ with respect to $s$,
 $g(x,0) = 0$; moreover,  there exist $b\in\mathbb{R}$ and $0 < p <
4/(n-2)$ such that, for a.e. $x\in\Omega$ and all $s\in\mathbb{R}$,
$$
|D_{\xi_0}g(x,\xi_0)| \le b(1 + |\xi_0|^p ).
$$
It is easily checked that $F$ in (\ref{e:BifE.8}) satisfies \textsf{Hypothesis} $\mathfrak{F}_{2,1}$.
Thus \cite[Theorem~1.3]{Can} is implied in Theorem~\ref{th:BifE.5} with $N=1$ and $\vec{u}=\theta$.
}
\end{remark}


By Theorems~\ref{th:Bi.3.1},~\ref{th:Bi.3.2} we deduce

\begin{theorem}\label{th:BifE.7}
Under Hypothesis~\ref{hyp:BifE.1} with $p=2$, let $G$ be a compact Lie group acting on $V$ in a $C^3$-smooth
and isometric (and so orthogonal) way.  Suppose that both $\mathfrak{F}$ and $\mathfrak{G}$  are $G$-invariant, and that $\vec{u}_0\in {\rm Fix}(G)$ satisfies $\mathfrak{F}'(\vec{u}_0)=0$ and $\mathfrak{G}'(\vec{u}_0)=0$. For an eigenvalue $\lambda_k$
of (\ref{e:BifE.3}) as above, assume that one of the three conditions (a),(b) and (c) in
Theorem~\ref{th:BifE.5} holds. Then
$(\lambda_k,\vec{u}_0)\in\mathbb{R}\times V$ is a bifurcation point  for the equation
(\ref{e:Bi.2.7.3}),  and if $\dim V_k\ge 2$ and the unit sphere in
$V_k$ is not a $G$-orbit we must get one of the following alternatives:
\begin{description}
\item[(i)] $(\lambda_k, \vec{u}_0)$ is not an isolated solution of (\ref{e:BifE.1}) in
 $\{\lambda_k\}\times V$;

\item[(ii)] there exists a sequence $\{\kappa_j\}_{j\ge 1}\subset\mathbb{R}\setminus\{\lambda_k\}$
such that $\kappa_j\to\lambda_k$ and that for each $\kappa_j$ the problem
(\ref{e:BifE.1}) with $\lambda=\kappa_j$ has infinitely many $G$-orbits of solutions converging to
$\vec{u}_0\in V$;

\item[(iii)]  for every $\lambda$ in a small neighborhood of $\lambda_k$ there is a nontrivial solution $\vec{u}_\lambda$ of (\ref{e:BifE.1}) converging to $\vec{u}_0$ as $\lambda\to\lambda_k$;

\item[(iv)] there is a one-sided $\Lambda$ neighborhood of $\lambda_k$ such that
for any $\lambda\in\Lambda\setminus\{\lambda_k\}$,
(\ref{e:BifE.1}) has at least two nontrivial critical orbits converging to
$\vec{u}_0$ as $\lambda\to\lambda_k$.
\end{description}
Furthermore, if the Lie group  $G$ is equal to $\mathbb{Z}_2$ or $S^1$, then
the above (iii)-(iv) can be replaced by the following
\begin{description}
\item[(iii')] there exist left and right  neighborhoods $\Lambda^-$ and $\Lambda^+$ of $\lambda_k$ in $\mathbb{R}$
and integers $n^+, n^-\ge 0$, such that $n^++n^-\ge\dim V$
and for $\lambda\in\Lambda^-\setminus\{\lambda^\ast\}$ (resp. $\lambda\in\Lambda^+\setminus\{\lambda^\ast\}$),
(\ref{e:BifE.1}) has at least $n^-$ (resp. $n^+$) distinct critical
$G$-orbits different from $\vec{u}_0$, which converge to
 $\vec{u}_0$ as $\lambda\to\lambda_k$.
\end{description}
The corresponding claims to 3) of Theorem~\ref{th:Bi.3.1} can be easily written.
\end{theorem}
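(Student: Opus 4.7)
The plan is to reduce Theorem~\ref{th:BifE.7} to the abstract equivariant bifurcation theorems, Theorem~\ref{th:Bi.3.1} and Theorem~\ref{th:Bi.3.2}, via Theorem~\ref{th:3.2} and a translation of the base point. First, I would set $H=V$ with the inner product $(\cdot,\cdot)_{m,2}$, replace $\vec{u}$ by $\vec{u}_0+\vec{v}$, and introduce
\[
\mathcal{F}(\vec{v}):=\mathfrak{F}(\vec{u}_0+\vec{v})-\mathfrak{F}(\vec{u}_0),\qquad
\mathcal{G}(\vec{v}):=\mathfrak{G}(\vec{u}_0+\vec{v})-\mathfrak{G}(\vec{u}_0).
\]
Since $\vec{u}_0\in\mathrm{Fix}(G)$ and $\mathfrak{F},\mathfrak{G}$ are $G$-invariant, both $\mathcal{F}$ and $\mathcal{G}$ are $G$-invariant, and $\mathcal{F}'(\theta)=\mathcal{G}'(\theta)=\theta$. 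The generalized solutions of (\ref{e:BifE.1}) in a neighborhood of $\vec{u}_0$ then correspond to the critical points of $\mathcal{F}-\lambda\mathcal{G}$ near $\theta\in H$, and $\lambda_k$ remains an eigenvalue of (\ref{e:BifE.3}) if and only if it is an eigenvalue of $\mathcal{F}''(\theta)v=\lambda\mathcal{G}''(\theta)v$ on $H$.

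The second step is to verify that the pair $(\mathcal{F},\mathcal{G})$ falls under Hypothesis~\ref{hyp:Bi.3.5} (and hence under the assumptions of Theorems~\ref{th:Bi.2.4}, \ref{th:Bi.3.1} and~\ref{th:Bi.3.2}). By Theorem~\ref{th:3.2}, $\mathcal{F}$ is $C^1$ on a neighborhood of $\theta\in H$, its gradient has a G\^ateaux derivative at every point, and the derivative decomposes as $P+Q$ with $P$ locally uniformly positive definite and $Q$ compact; in particular $\mathcal{F}$ satisfies Hypothesis~\ref{hyp:1.1} with $X=H$. For the perturbation $\mathcal{G}$, Theorem~\ref{th:3.2} applied to $\mathfrak{G}$ (which involves only derivatives up to order $m-1$) combined with the compact embedding $W^{m,2}(\Omega,\mathbb{R}^N)\hookrightarrow W^{m-1,2}(\Omega,\mathbb{R}^N)$ guarantees that $\mathcal{G}''(\theta)\in\mathscr{L}_s(H)$ is compact and that $V\ni\vec{v}\mapsto\mathcal{G}''(\vec{v})\in\mathscr{L}_s(H)$ is continuous (this is exactly (iii) of part~\textbf{B)} of Theorem~\ref{th:3.1}, extended to the vector case). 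Hypothesis~\ref{hyp:BifE.4} (absence of nontrivial solutions of (\ref{e:BifE.4})) says precisely that $\mathcal{F}''(\theta)$ is invertible, so that $L:=[\mathcal{F}''(\theta)]^{-1}\mathcal{G}''(\theta)$ is a compact self-adjoint operator with eigenvalues $\{1/\lambda_j\}$, and the eigenspaces $H_j$ of $L$ coincide with the spaces $V_j$ of the eigenvalue problem (\ref{e:BifE.3}).

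The third step is simply to transfer the hypotheses (a), (b), (c) into the form required by Theorem~\ref{th:Bi.3.1}. Conditions (a) and (b) of Theorem~\ref{th:BifE.7} are literally the assertions that $\mathcal{F}''(\theta)$ is positive or negative definite on $H$; condition (c) says that each $V_j=H_j$ is $\mathcal{F}''(\theta)$-invariant and that $\mathcal{F}''(\theta)$ has a definite sign on $V_k=H^0$, which matches the third alternative in Theorem~\ref{th:Bi.3.1}. With all assumptions checked, Theorem~\ref{th:Bi.3.1} yields the bifurcation statement together with alternatives (i)--(iv), and a translation back to $\mathfrak{F}-\lambda\mathfrak{G}$ gives the conclusion of Theorem~\ref{th:BifE.7}. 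For the last assertion, specializing to $G=\mathbb{Z}_2$ or $G=S^1$ and invoking Theorem~\ref{th:Bi.3.2} in place of Theorem~\ref{th:Bi.3.1} produces the stronger left/right counting conclusion (iii'); the parenthetical ``corresponding claims to 3) of Theorem~\ref{th:Bi.3.1}'' are read off verbatim from that result once the identification $H^0=V_k$ with $H^0\cap\mathrm{Fix}(G)=\{\theta\}$ is in place. The main obstacle I foresee is purely bookkeeping: checking the compactness and continuity of $\mathcal{G}''$ in the vector-valued setting with non-quadratic lower-order nonlinearities, i.e.\ writing the analogue of part~\textbf{D)} of Theorem~\ref{th:3.1} for $\mathfrak{G}$ of the form (\ref{e:BifE.2}) to confirm that the lower-order cross terms really do give rise to a compact Hessian on $W^{m,2}(\Omega,\mathbb{R}^N)$ through the Rellich--Kondrachov theorem; once this is in hand, the remainder of the argument is a direct citation of the abstract machinery.
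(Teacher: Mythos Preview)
Your proposal is correct and follows exactly the route the paper takes: the paper's proof of this theorem is literally the single sentence ``By Theorems~\ref{th:Bi.3.1},~\ref{th:Bi.3.2} we deduce'', and your write-up simply unpacks that citation---translating $\vec u_0$ to the origin, invoking Theorem~\ref{th:3.2} to put $\mathfrak{F}$ and $\mathfrak{G}$ into the abstract framework (Hypothesis~\ref{hyp:1.1} for $\mathcal{F}$, compactness and continuity of $\mathcal{G}''$ via part~\textbf{B)}(iii)), and then reading off the conclusions from the abstract equivariant bifurcation theorems. Your remark that the only genuine verification needed is the compactness of $\mathcal{G}''$ through the lower-order structure and Rellich--Kondrachov is exactly right, and this is precisely what (iii) of \textbf{B)} in Theorem~\ref{th:3.1} (extended to systems in Theorem~\ref{th:3.2}) provides.
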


If $\vec{u}_0\notin {\rm Fix}(G)$,  Theorem~\ref{th:Bi.3.20} may yield a result.
If $n=\dim\Omega=1$, by Theorems~\ref{th:Bi.3.14},~\ref{th:Bi.3.16},\ref{th:Bi.3.17}
we can also obtain more results. They will be given in \cite{Lu7,Lu9}.

\subsection*{Example}\label{ex:BifE.7}

 For $0<T_i<\infty$, $i=1,\cdots,n$, and $\Omega:=\prod^n_{i=1}(0, T_i)$ let
$C^{m}(\prod^n_{i=1}\mathbb{R}/(T_i\mathbb{Z}),\mathbb{R}^N)$
be the set of all $\vec{u}\in C^m(\mathbb{R}^n,\mathbb{R}^N)$ which are $T_i$-periodic
with respect to the $i$-th variable, $i=1,\cdots,n$. It may be viewed as a subspace
of $W^{m,2}(\Omega,\mathbb{R}^N)$.   Denote by
$W^{m,2}(\prod^n_{i=1}\mathbb{R}/(T_i\mathbb{Z}),\mathbb{R}^N)$
the closure of $C^{m}(\prod^n_{i=1}\mathbb{R}/(T_i\mathbb{Z}),\mathbb{R}^N)$ in
$W^{m,2}(\Omega,\mathbb{R}^N)$. Clearly, $W^{m,2}_0(\Omega, \mathbb{R}^N)$ is contained
in $W^{m,2}(\prod^n_{i=1}\mathbb{R}/(T_i\mathbb{Z}),\mathbb{R}^N)$.
The compact connected Lie group $G=\prod^n_{i=1}\mathbb{R}/(T_i\mathbb{Z})$,
which is isomorphic to $\mathbb{T}^n=\mathbb{R}^n/\mathbb{Z}^n$, acts on
$W^{m,2}(\prod^n_{i=1}\mathbb{R}/(T_i\mathbb{Z}),\mathbb{R}^N)$ via the following
isometric linear representation:
\begin{equation}\label{e:T^n-action}
([t_1,\cdots,t_n]\cdot\vec{u})(x_1,\cdots,x_n)=(u_1(x_1+t_1),\cdots, u_n(x_n+t_n))
\end{equation}
for $[t_1,\cdots,t_n]\in G$ and $\vec{u}=(u_1,\cdots,u_n)\in W^{m,2}(\prod^n_{i=1}\mathbb{R}/(T_i\mathbb{Z}),\mathbb{R}^N)$.
The set of fixed points of this action,
${\rm Fix}(G)$, consist of all constant vector functions from $\overline{\Omega}$ to $\mathbb{R}^N$.
Under Hypothesis~\ref{hyp:BifE.1} with $\Omega=\prod^n_{i=1}(0, T_i)\subset\mathbb{R}^n$,  assume also
that  $F(x, \xi)$ and $\textsf{G}(x, \xi)$ satisfy
\begin{eqnarray*}
&&F(x_1,\cdots,x_{\check{i}},\cdots, x_n, \xi)=F(x_1,\cdots,x_{\hat{i}},\cdots, x_n,\xi),\\
&&\textsf{G}(x_1,\cdots,x_{\check{i}},\cdots, x_n, \xi)=\textsf{G}(x_1,\cdots,x_{\hat{i}},\cdots, x_n,\xi),
\end{eqnarray*}
where $x_{\check{i}}=0$ and $x_{\hat{i}}=T_i$, $i=1,\cdots,n$. In other words, $F$ and $\textsf{G}$
may be viewed as functions on $\mathbb{R}^n\times\prod^m_{k=0}\mathbb{R}^{N\times M_0(k)}$ with
period $T_i$ in variables $x_i$, $i=1,\cdots,n$. Then the functionals $\mathfrak{F}$ and $\mathfrak{G}$ are $G$-invariant,
and every critical orbit different from points in ${\rm Fix}(G)$ must be homeomorphic to some $T^s$, $1\le s\le n$.
Clearly, if some $\vec{u}\in W^{m,2}(\prod^n_{i=1}\mathbb{R}/(T_i\mathbb{Z}),\mathbb{R}^N)$
are constant with respect to variables $x_{i_r}$, $r=1,\cdots,k<n$, but not
with respect to any other variable $x_i$, then the orbit
$G(\vec{u})$ of $\vec{u}$ is homeomorphic to some $T^k$.
However, it is possible that the orbit of $\vec{u}$ is of $T^k$-type even if
$\vec{u}$ is not constant with respect to each variable, see
the proof of \cite[Proposition~3.4]{Van}.

Clearly,  Hypothesis~\ref{hyp:BifE.4} is satisfied for each constant map
$\vec{u}_0: \overline{\Omega}\to\mathbb{R}^N$. So Theorem~\ref{th:BifE.5},
and Theorem~\ref{th:BifE.7} with $G=T^n$ can be directly applied.
By Theorem~\ref{th:Bi.3.1} we also get

\begin{theorem}\label{th:BifE.8}
Let $\vec{u}_0: \overline{\Omega}\to\mathbb{R}^N$ be a constant map. Assume  that $F$ and $\textsf{G}$ satisfy
the conditions of Theorem~\ref{th:BifE.5} with $V=W^{m,2}(\prod^n_{i=1}\mathbb{R}/(T_i\mathbb{Z}),\mathbb{R}^N)$.
Then $(\lambda_k,\vec{u}_0)\in\mathbb{R}\times V$ is a bifurcation point  for the equation
(\ref{e:BifE.1}). Moreover,  if  $V_k$ (the eigensubspace of  (\ref{e:BifE.3}) corresponding to
the eigenvalue $\lambda_k$) satisfies one of the following assumptions: A)
 ${\rm Fix}(T^n)\cap  V_k=\{\theta\}$, B) every orbit in $V_k$ is homeomorphic to some $T^s$ for $s\ge 2$;
then either one of the above (i)-(iii) in Theorem~\ref{th:BifE.7} with $G=T^n$ or the following hold:\\
{\bf (iv)'} there is a one-sided $\Lambda$ neighborhood of $\lambda_k$ such that
for any $\lambda\in\Lambda\setminus\{\lambda_k\}$,
the problem (\ref{e:BifE.1}) with $\lambda=\lambda_k$  has at least $\dim V_k$ (resp. $2\dim V_k$) nontrivial critical orbits
in the case A) (resp. B)), where every orbit is counted with
its multiplicity (defined by \cite[Definition~1.3]{Wa1}).
\end{theorem}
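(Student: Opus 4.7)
The plan is to reduce Theorem~\ref{th:BifE.8} to a direct application of part~3) of Theorem~\ref{th:Bi.3.1}, using the same machinery that already produces Theorem~\ref{th:BifE.7}. First I would translate the problem so the prospective bifurcation point sits at the origin: setting $\tilde{\mathfrak{F}}(\vec{u})=\mathfrak{F}(\vec{u}+\vec{u}_0)$ and $\tilde{\mathfrak{G}}(\vec{u})=\mathfrak{G}(\vec{u}+\vec{u}_0)$ on $V=W^{m,2}(\prod^n_{i=1}\mathbb{R}/(T_i\mathbb{Z}),\mathbb{R}^N)$ gives $\tilde{\mathfrak{F}}'(\theta)=\tilde{\mathfrak{G}}'(\theta)=\theta$. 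Because $\vec{u}_0$ is a constant map and hence belongs to ${\rm Fix}(T^n)$ for the action (\ref{e:T^n-action}), the translation $\vec{u}\mapsto\vec{u}+\vec{u}_0$ is $T^n$-equivariant, so $\tilde{\mathfrak{F}},\tilde{\mathfrak{G}}$ remain $T^n$-invariant and the eigenvalue problem (\ref{e:BifE.3}) at $\vec{u}_0$ corresponds exactly to the linearization of $\tilde{\mathfrak{F}}-\lambda\tilde{\mathfrak{G}}$ at $\theta$.

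Next I would check the abstract hypotheses of part~3) of Theorem~\ref{th:Bi.3.1} with $G=T^n$, $H=V$, $X=H$, $H^0=V_k$ and $\lambda^\ast=\lambda_k$. By Theorem~\ref{th:3.2} and Lemma~\ref{lem:S.2.4}, under \textsf{Hypothesis} $\mathfrak{F}_{2,N}$ the functional $\tilde{\mathfrak{F}}$ satisfies Hypothesis~\ref{hyp:1.1} with $X=H$ near $\theta$ on the closed subspace $V$, while the analogous conclusions of Theorem~\ref{th:3.2} applied to $\textsf{G}$ yield that $\nabla\tilde{\mathfrak{G}}$ has a G\^{a}teaux derivative $\tilde{\mathfrak{G}}''(\vec{u})$ which is compact and continuous at $\theta$ (this uses that $\textsf{G}$ depends only on derivatives of order $\le m-1$, so the associated operator factors through the compact embedding $W^{m,2}\hookrightarrow W^{m-1,2}$, as in part (iii) of {\bf D)} in Theorem~\ref{th:3.1}). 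Hypothesis~\ref{hyp:BifE.4} guarantees that $\mathfrak{F}''(\vec{u}_0)$ is invertible, so $\lambda_k$ is an isolated eigenvalue of finite multiplicity. The $T^n$-action (\ref{e:T^n-action}) is orthogonal on $V$, and since $\tilde{\mathfrak{F}}$ and $\tilde{\mathfrak{G}}$ are $T^n$-invariant the operators $\tilde{\mathfrak{F}}''(\theta)$ and $\tilde{\mathfrak{G}}''(\theta)$ commute with the $T^n$-action, whence $V_k$ is a $T^n$-invariant subspace.

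With these structural checks in place, hypotheses (a), (b) or (c) inherited from Theorem~\ref{th:BifE.5} become the three alternatives in the positivity/negativity condition on $\mathcal{F}''(\theta)$ (or on each $H_n$ eigenspace) demanded by Theorem~\ref{th:Bi.2.4}, and hence Theorem~\ref{th:Bi.3.1} applies. Its conclusions 1) and 2) reproduce the conclusions of Theorem~\ref{th:BifE.7}, which gives the bifurcation statement together with alternatives (i)--(iv). The additional hypotheses A) and B) of Theorem~\ref{th:BifE.8} are verbatim the conditions 3.a) and 3.b) of Theorem~\ref{th:Bi.3.1} with $H_{n_0}=V_k$; therefore part~3) yields at least $\dim V_k$ (respectively $2\dim V_k$) nontrivial critical $T^n$-orbits of $\tilde{\mathfrak{F}}-\lambda\tilde{\mathfrak{G}}$ in a one-sided neighborhood of $\lambda_k$, counted with the multiplicity of \cite[Definition~1.3]{Wa1}, provided none of alternatives (i)--(iii) occurs. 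Translating back by $-\vec{u}_0$ (which is $T^n$-equivariant) gives the same count of nontrivial solution orbits of (\ref{e:BifE.1}) converging to $\vec{u}_0$, as claimed.

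The only point needing more than bookkeeping is the verification that the induced $T^n$-action on $V$ meets the smoothness requirements of Hypothesis~\ref{hyp:S.6.2} in a neighborhood of $\vec{u}_0$; this is the main obstacle, since translation on $W^{m,2}(\mathbb{T}^n)$ is only strongly continuous, not norm-differentiable. However, all that is needed in the present argument is (i) orthogonality of the action, which is immediate, (ii) invariance of $\tilde{\mathfrak{F}},\tilde{\mathfrak{G}}$, and (iii) smoothness of the orbit through $\vec{u}_0$, which is trivial because that orbit is the single point $\vec{u}_0$. The abstract orbit-counting results in 3) of Theorem~\ref{th:Bi.3.1} are applied on the finite-dimensional reduced space $V_k$, where the $T^n$-action is automatically $C^\infty$, so no further regularity of the action on all of $V$ is required.
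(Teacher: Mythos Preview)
Your proposal is correct and follows exactly the route the paper takes: the paper states Theorem~\ref{th:BifE.8} immediately after the sentence ``By Theorem~\ref{th:Bi.3.1} we also get,'' with no further proof, so everything amounts to checking that part~3) of Theorem~\ref{th:Bi.3.1} applies with $G=T^n$ and $H_{n_0}=V_k$. Your detailed verification of the hypotheses, and in particular your observation that the smoothness issue for the $T^n$-action on $V$ is harmless because the orbit-counting argument takes place on the finite-dimensional reduction $V_k$, fills in exactly the bookkeeping the paper omits.
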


The theory in this paper provides necessary tools for generalizing \cite{Lu1, Van}
to the functional considered in the above example. They will be investigated
in the latter paper.

Finally, we present a result associated with Theorems~5.4.2,~5.7.4 in \cite{EKBB}.

\begin{theorem}\label{th:BifE.9}
 Let $\Omega\subset\R^n$ be a bounded  Sobolev domain, $N\in\mathbb{N}$. Suppose that
 $$
\overline\Omega\times\prod^m_{k=0}\mathbb{R}^{N\times M_0(k)}\times [0, 1]\ni (x,
\xi,\lambda)\mapsto F(x,\xi;\lambda)\in\R
$$
is differentiable with respect to $\lambda$, and satisfies the following conditions:
 \begin{description}
 \item[(i)] All $F(\cdot;\lambda)$ satisfy Hypothesis~$\mathfrak{F}_{2,N}$ uniformly with respect to $\lambda\in [0,1]$, i.e., the inequalities (\ref{e:1.1}) and (\ref{e:1.2}) are uniformly satisfied for all $\lambda\in [0,1]$.
 \item[(ii)] If $q_\alpha=1$ for $|\alpha|<m-n/2$, and
 $q_\alpha=2_\alpha/(2_\alpha-1)$ for $m-n/2\le|\alpha|\le m$,
 then
 $$
\sup_{|\alpha|\le m}\sup_{1\le i\le N}\sup_\lambda\int_\Omega \left[|D_\lambda F(x,0;\lambda)|+ |D_\lambda F^i_{\alpha}(x, 0;\lambda)|^{q_\alpha}\right]dx<\infty.
$$
 \item[(iii)] For all $i=1,\cdots,N$ and $|\alpha|\le m$,
 \begin{eqnarray*}
&&|D_\lambda F^i_\alpha(x,\xi;\lambda)|\le|D_\lambda F^i_\alpha(x,0;\lambda)|\\
&&+ \mathfrak{g}(\sum^N_{k=1}|\xi_0^k|)\sum_{|\beta|<m-n/2}\bigg(1+
\sum^N_{k=1}\sum_{m-n/2\le |\gamma|\le
m}|\xi^k_\gamma|^{2_\gamma }\bigg)^{2_{\alpha\beta}}\nonumber\\
&&+\mathfrak{g}(\sum^N_{k=1}|\xi^k_0|)\sum^N_{l=1}\sum_{m-n/2\le |\beta|\le m} \bigg(1+
\sum^N_{k=1}\sum_{m-n/2\le |\gamma|\le m}|\xi^k_\gamma|^{2_\gamma }\bigg)^{2_{\alpha\beta}}|\xi^l_\beta|;
\end{eqnarray*}
 where $\mathfrak{g}:[0,\infty)\to\mathbb{R}$ is a continuous, positive, nondecreasing function.
 \end{description}
  Let $V$ be a closed subspace of $W^{m,2}(\Omega,\mathbb{R}^N)$, and for each $\lambda\in [0,1]$
  let $\vec{u}_\lambda$ be a critical point of the functional
$$
\mathfrak{F}_\lambda(\vec{u})=\int_\Omega F(x, \vec{u},\cdots, D^m\vec{u};\lambda)dx
$$
on $V$. Suppose that $[0, 1]\ni\lambda\mapsto \vec{u}_\lambda\in V$ is continuous. Then
one of the following alternatives occurs:
\begin{description}
\item[(I)] There exists certain $\lambda_0\in [0,1]$ such that $(\lambda_0, \vec{u}_{\lambda_0})$
           is a bifurcation point of $\nabla\mathfrak{F}_\lambda(\vec{u})=0$.
\item[(II)] Each $\vec{u}_\lambda$ is an isolated critical point of $\mathfrak{F}_\lambda$ and
$C_\ast(\mathfrak{F}_\lambda, \vec{u}_\lambda;{\bf K})=C_\ast(\mathfrak{F}_0, \vec{u}_0;{\bf K})$
for all $\lambda\in [0,1]$; moreover $\vec{u}_\lambda$ is a local minimizer of $\mathfrak{F}_\lambda$ if and only if $\vec{u}_0$ is a local minimizer of $\mathfrak{F}_0$.
 \end{description}
\end{theorem}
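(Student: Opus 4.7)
\noindent\textbf{Proof proposal for Theorem~\ref{th:BifE.9}.}
The plan is to argue by contradiction: assuming (I) fails, I shall establish (II) via a continuation argument for critical groups along the parameter. The skeleton has three stages: (a) upgrade (ii) and (iii) to joint continuity of $\lambda\mapsto\mathfrak{F}_\lambda$ in the $C^1$ topology on bounded subsets of $V$; (b) combine this with Theorem~\ref{th:3.2} and the stability theorem for critical groups to prove local constancy of $C_\ast(\mathfrak{F}_\lambda,\vec{u}_\lambda;{\bf K})$; and (c) deduce the minimizer characterization from the critical group equality.

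For stage (a), note that by (iii), for any bounded $B\subset V$ and any $\vec{\varphi}\in V$ with $\|\vec{\varphi}\|_{m,2}\le 1$, applying the mean value theorem in $\lambda$ gives
\[
\bigl|\langle\mathfrak{F}'_{\lambda_1}(\vec{u})-\mathfrak{F}'_{\lambda_2}(\vec{u}),\vec{\varphi}\rangle\bigr|
\le |\lambda_1-\lambda_2|\sum_{i,\alpha}\int_\Omega \sup_{\lambda\in[0,1]}|D_\lambda F^i_\alpha(x,\vec{u},\ldots,D^m\vec{u};\lambda)|\,|D^\alpha\varphi^i|\,dx,
\]
and the growth condition (iii) together with (ii) allows one to bound the right-hand side by $C_B|\lambda_1-\lambda_2|$ uniformly for $\vec{u}\in B$, using exactly the H\"older-type case analysis of Step~1 in Section~\ref{sec:Funct.3} (the exponents $2_{\alpha\beta}$ are the same as in the proof of Theorem~\ref{th:3.2}). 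Thus $\lambda\mapsto\mathfrak{F}'_\lambda$ is uniformly Lipschitz on each $B$, and a similar argument treats $\mathfrak{F}_\lambda$ itself. Simultaneously, the uniformity in (i) shows that the decomposition $B_\lambda=P_\lambda+Q_\lambda$ produced by Theorem~\ref{th:3.2}(D) satisfies the constants of Hypothesis~\ref{hyp:1.1} uniformly in $\lambda$ on any bounded subset of $V$; in particular $\nabla\mathfrak{F}_\lambda$ is of class $(S)_+$ uniformly in $\lambda$.

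For stage (b), fix any $\lambda_0\in[0,1]$. Since (I) fails, $\vec{u}_{\lambda_0}$ is isolated for $\mathfrak{F}_{\lambda_0}$; using the uniform $(S)_+$ property and the continuity of $\lambda\mapsto\vec{u}_\lambda$, one finds a bounded open neighborhood $W_{\lambda_0}\subset V$ of $\vec{u}_{\lambda_0}$ and an $\epsilon(\lambda_0)>0$ such that for every $\lambda$ with $|\lambda-\lambda_0|<\epsilon(\lambda_0)$, $\vec{u}_\lambda$ is the \emph{only} critical point of $\mathfrak{F}_\lambda$ in $\overline{W_{\lambda_0}}$ (otherwise one could extract a sequence $(\lambda_k,\vec{w}_k)\to(\lambda_0,\vec{w}_0)$ with $\vec{w}_k\ne\vec{u}_{\lambda_k}$ and $\mathfrak{F}'_{\lambda_k}(\vec{w}_k)=0$, producing a bifurcation point and contradicting our standing assumption). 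The family $\{\mathfrak{F}_\lambda\}$ satisfies the $(PS)$ condition uniformly on $\overline{W_{\lambda_0}}$ by the $(S)_+$ argument used for Theorem~\ref{th:3.7}, so the stability theorem for critical groups (\cite[Theorem~III.4]{ChGh} or \cite[Theorem~5.1]{CorH}, in the $C^1$-continuous-perturbation form already invoked in the proof of Theorem~\ref{th:Bi.1.1}(I)) yields
\[
C_\ast(\mathfrak{F}_\lambda,\vec{u}_\lambda;{\bf K})=C_\ast(\mathfrak{F}_{\lambda_0},\vec{u}_{\lambda_0};{\bf K})\qquad\text{for }|\lambda-\lambda_0|<\epsilon(\lambda_0).
\]
Since $[0,1]$ is connected and the relation ``has the same critical groups as $(\mathfrak{F}_0,\vec{u}_0)$'' is both open and closed in $\lambda$, it must hold for all $\lambda\in[0,1]$.

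Finally for stage (c), recall from the discussion immediately after Theorem~\ref{th:S.1.4} that under Hypothesis~\ref{hyp:1.1} a critical point is a local minimizer if and only if $C_0\ne 0$; this combines the Shifting Theorem (Theorem~\ref{th:S.1.3}) with \cite[Proposition~6.95]{MoMoPa} applied to the finite-dimensional reduction $\mathcal{L}^\circ$. Applying this equivalence at $\lambda=0$ and $\lambda$ together with the critical group equality finishes (II). The main obstacle I expect is stage (a): although condition (iii) is tailored to mimic (ii) of Hypothesis~$\mathfrak{F}_{2,N}$ with respect to the $\lambda$-derivative, one must rerun each of the five H\"older-type cases (between $|\alpha|,|\beta|<m-n/2$, $=m$, and mixed) from the proof of Theorem~\ref{th:3.2}(B) to verify uniform $C^1$-convergence on bounded sets, not merely G\^ateaux-continuity in $\lambda$; this is the technical bottleneck on which the subsequent stability argument rests.
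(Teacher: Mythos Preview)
Your proposal is correct and follows essentially the same route as the paper's proof: both assume (I) fails, establish $C^1$-continuity of $\lambda\mapsto\mathfrak{F}_\lambda$ on bounded sets via the mean value theorem in $\lambda$ together with conditions (ii)--(iii), invoke the stability theorem for critical groups (\cite{ChGh,CorH}) exactly as in Theorem~\ref{th:Bi.1.1}(I), and then deduce the minimizer characterization from the shifting/splitting machinery. The only cosmetic differences are that the paper chooses a single bounded open set $\mathscr{O}$ containing the whole curve $\{\vec{u}_\lambda\}$ rather than your local--plus--connectedness patching, and that the paper spells out stage (c) more explicitly: from $C_q(\mathfrak{F}_0,\vec{u}_0;{\bf K})=\delta_{q0}{\bf K}$ it first uses Theorem~\ref{th:S.1.3} to force $\mu=0$, then applies the finite-dimensional criterion to $\mathfrak{F}^\circ_0$, and finally reads off that $\vec{u}_0$ minimizes $\mathfrak{F}_0$ from the splitting Theorem~\ref{th:S.1.2}; your compressed ``$C_0\neq 0$ iff local minimizer under Hypothesis~\ref{hyp:1.1}'' is exactly this chain.
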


If $D_\lambda F(\cdot;\lambda)$ uniformly satisfy  the inequalities (\ref{e:1.1}) and (\ref{e:1.2})  for all $\lambda\in [0,1]$. Then these and (ii) can yield (iii).

\begin{proof} Suppose that (I) does not hold. Then each $\vec{u}_\lambda$ is an isolated critical point of $\mathfrak{F}_\lambda$. Since $[0, 1]\ni\lambda\mapsto \vec{u}_\lambda\in V$ is continuous,
we may find a bounded open subset $\mathscr{O}$ in $V$ such that
$\vec{u}_\lambda$ is a unique  critical point of $\mathfrak{F}_\lambda$
contained in the closure $\overline{\mathscr{O}}$ of $\mathscr{O}$.
Take $R>0$ such that $\overline{\mathscr{O}}\subset B_V(\theta, R)$.

As in the proof of (\ref{e:3.7}) we may derive from (iii) that with $q_\alpha$ in (ii),
\begin{eqnarray*}
&&|D_\lambda F(x,\xi;\lambda)|\le |D_\lambda F(x,0;\lambda)|+
\Big(\sum^N_{k=1}|\xi^k_0|\Big)\sum^N_{i=1}\sum_{|\alpha|<m-n/2}|D_\lambda F^i_{\alpha}(x, 0;\lambda)|\\
&&+\sum^N_{i=1}\sum_{m-n/2\le|\alpha|\le m}|D_\lambda F^i_{\alpha}(x, 0;\lambda)|^{q_\alpha}
+ \widehat{\mathfrak{g}}(\sum^N_{k=1}|\xi^k_0|)\bigg(1+\sum^N_{l=1}\sum_{m-n/2\le|\alpha|\le m}|\xi^l_\alpha|^{2_\alpha}\bigg)
\end{eqnarray*}
for all $(x,\xi,\lambda)$ and some  continuous, positive, nondecreasing function
$\widehat{\mathfrak{g}}:[0,\infty)\to\mathbb{R}$. As before we have a constant
$C=C(m,n,N, R)>0$ such that
$$
\sup\bigg\{\sum_{|\alpha|<m-n/2}|D^\alpha\vec{u}(x)|\,|\, x\in\Omega\bigg\}<C\quad
\hbox{for all $\vec{u}\in W^{m,2}(\Omega,\mathbb{R}^N)$ with $\|\vec{u}\|_{m,2}\le R$}.
$$
It follows that for any $\lambda_i\in [0,1]$, $i=1,2$,
\begin{eqnarray*}
&&|\mathfrak{F}_{\lambda_1}(\vec{u})- \mathfrak{F}_{\lambda_2}(\vec{u})|\le |\lambda_2-\lambda_1|\int_\Omega\sup_\lambda
|D_\lambda F(x, \vec{u},\cdots, D^m\vec{u};\lambda)|dx\\
&&\le |\lambda_2-\lambda_1|\biggl[\sup_\lambda\int_\Omega |D_\lambda F(x,0;\lambda)|dx +
C\sum^N_{i=1}\sum_{|\alpha|<m-n/2}\sup_\lambda\int_\Omega |D_\lambda F^i_{\alpha}(x, 0;\lambda)|dx\\
&&+\sum^N_{i=1}\sum_{m-n/2\le|\alpha|\le m}\sup_\lambda\int_\Omega
|D_\lambda F^i_{\alpha}(x, 0;\lambda)|^{q_\alpha}dx\\
&&+ \widehat{\mathfrak{g}}(C)\int_\Omega\bigg(1+\sum^N_{l=1}\sum_{m-n/2\le|\alpha|\le m}|D^\alpha u^l|^{2_\alpha}\bigg)dx
\biggr].
\end{eqnarray*}
This implies that $[0,1]\ni\lambda\mapsto \mathfrak{F}_\lambda$
is continuous in $C^0(\bar{B}_V(\theta, R))$. Similarly,
 (ii)--(iii) yield the continuity of the map $[0,1]\ni\lambda\mapsto \nabla\mathfrak{F}_\lambda$
 in $C^0(\bar{B}_V(\theta, R), V)$. Hence the map
 $[0,1]\ni\lambda\mapsto \mathfrak{F}_\lambda$
is continuous in $C^1(\bar{B}_V(\theta, R))$.
As in Step~1 of the proof of Theorem~\ref{th:Bi.1.1},
the stability of critical groups (cf.  \cite[Theorem~III.4]{ChGh} and \cite[Theorem~5.1]{CorH})
leads to the first claim in (II).

 For the second claim, it suffices to prove that
 $\vec{u}_0$ is a local minimizer of  $\mathfrak{F}_0$
 provided $\vec{u}_\lambda$ is a local minimizer of $\mathfrak{F}_\lambda$.
 Since  $\vec{u}_\lambda$ is an isolated critical point of $\mathfrak{F}_\lambda$,
 by Example~1 in \cite[page 33]{Ch} we have $C_q(\mathfrak{F}_\lambda, \vec{u}_\lambda;{\bf K})=\delta_{q0}{\bf K}$
 for $q=0,1,\cdots$.   It follows that
 $C_q(\mathfrak{F}_0, \vec{u}_0;{\bf K})=\delta_{q0}{\bf K}$ for $q=0,1,\cdots$.
  By Theorem~\ref{th:S.1.3}, this means that the Morse index of $\mathfrak{F}_0$ at $\vec{u}_0$
 must be zero. We can assume $\vec{u}_0=\theta$ after replaceing $\mathfrak{F}_0$
 by $\mathfrak{F}_0(\vec{u}_0+\cdot)$.  So $C_q(\mathfrak{F}^\circ_0, \theta;{\bf K})=\delta_{q0}{\bf K}$
 for $q=0,1,\cdots$. Then $\theta$ is a local minimizer of
 $\mathfrak{F}^\circ_0$  by Example~4 in \cite[page 43]{Ch}.
 It follows from Theorem~\ref{th:S.1.2} (or Theorem~\ref{th:Morse.1} with $\mathcal{O}=\theta$) that
 $\vec{u}_0=\theta$ must be  a local minimizer of  $\mathfrak{F}_0$.
 \end{proof}

\section{Concluding remarks}\label{sec:CR}
\setcounter{equation}{0}

 In Section~\ref{sec:Bi} we only generalize some bifurcation theorems
 for potential operators with the splitting theorem obtained in this paper.
 Once some splitting theorems are proved,  the same ideas can be used
to generalize some past  bifurcation theorems.
 For example, we may obtain corresponding extended versions of \cite{BaCl,Ba1}
in the variational frames of  \cite{Lu1,Lu2} and \cite{BoBu, JM}.
 These and applications will be given in \cite{Lu7}. 

We here do not consider easy generalizations of the contents in  Part~\ref{part:2} to
a larger framework as in  \cite{PaSm,Sma,Pa2} because they are developed in a more general setting as in \cite{KoKu1, KoKu2},
see \cite{Lu8}.  Moreover, both the theory in Part I and that of \cite{Lu1,Lu2} are  applicable
to one-dimensional variational problem of higher order, see \cite{Lu9}.


\appendix
\section{Proof of Proposition~\ref{prop:3.4}}\label{app:A}\setcounter{equation}{0}

Recall that we have written  $\xi\in\R^{M(m)}$ as  $\xi=\{\xi_\alpha:\,|\alpha|\le m\}$
and denote by $\xi_\circ=\{\xi_\alpha:\,|\alpha|<m-n/p\}$.
By the mean value theorem and (\ref{e:1.5}) we get
a collect of numbers $\{t_\beta\in (0,1):\,|\beta|\le m\}$ such that
\begin{eqnarray*}
&&|f_\alpha(x,\xi)|-|f_\alpha(x,0)|\le \sum_{|\beta|\le m}|f_{\alpha\beta}(x, t_\beta\xi)|\cdot|\xi_\beta|\\
&\le& \sum_{|\beta|\le m} \mathfrak{g}_1(|t_\beta\xi_\circ|)\Bigg(1+
\sum_{m-n/p\le |\gamma|\le
m}|t_\beta\xi_\gamma|^{p_\gamma}\Bigg)^{p_{\alpha\beta}}|\xi_\beta|\\
&\le& \sum_{|\beta|\le m} \mathfrak{g}_1(|\xi_\circ|)\Bigg(1+
\sum_{m-n/p\le |\gamma|\le
m}|\xi_\gamma|^{p_\gamma}\Bigg)^{p_{\alpha\beta}}|\xi_\beta|\\
&\le& \sum_{|\beta|\le m} \mathfrak{g}_1(|\xi_\circ|)\Bigg(1+
\sum_{m-n/p\le |\gamma|\le
m}|\xi_\gamma|^{p_\gamma }\Bigg)^{p_{\alpha\beta}}|\xi_\beta|
\end{eqnarray*}
\begin{eqnarray*}
&=&\sum_{|\beta|<m-n/p} \mathfrak{g}_1(|\xi_\circ|)\Bigg(1+
\sum_{m-n/p\le |\gamma|\le
m}|\xi_\gamma|^{p_\gamma }\Bigg)^{p_{\alpha\beta}}|\xi_\beta|\\
&&+\sum_{m-n/p\le |\beta|\le m} \mathfrak{g}_1(|\xi_\circ|)\Bigg(1+
\sum_{m-n/p\le |\gamma|\le
m}|\xi_\gamma|^{p_\gamma }\Bigg)^{p_{\alpha\beta}}|\xi_\beta|\\
&\le& \mathfrak{g}_1(|\xi_\circ|)|\xi_\circ|\sum_{|\beta|<m-n/p}\Bigg(1+
\sum_{m-n/p\le |\gamma|\le
m}|\xi_\gamma|^{p_\gamma }\Bigg)^{p_{\alpha\beta}}\\
&&+\mathfrak{g}_1(|\xi_\circ|)\sum_{m-n/p\le |\beta|\le m} \Bigg(1+
\sum_{m-n/p\le |\gamma|\le
m}|\xi_\gamma|^{p_\gamma }\Bigg)^{p_{\alpha\beta}}|\xi_\beta|.
\end{eqnarray*}
It follows that
\begin{eqnarray}\label{e:A.1}
|f_\alpha(x,\xi)|&\le&|f_\alpha(x,0)|
+ \mathfrak{g}_1(|\xi_\circ|)|\xi_\circ|\sum_{|\beta|<m-n/p}\Bigg(1+
\sum_{m-n/p\le |\gamma|\le
m}|\xi_\gamma|^{p_\gamma }\Bigg)^{p_{\alpha\beta}}\nonumber\\
&&+\mathfrak{g}_1(|\xi_\circ|)\sum_{m-n/p\le |\beta|\le m} \Bigg(1+
\sum_{m-n/p\le |\gamma|\le
m}|\xi_\gamma|^{p_\gamma }\Bigg)^{p_{\alpha\beta}}|\xi_\beta|,
\end{eqnarray}
which lead to (\ref{e:3.8}) with $\mathfrak{g}_4(|\xi_\circ|):=\mathfrak{g}_1(|\xi_\circ|)|\xi_\circ|+\mathfrak{g}_1(|\xi_\circ|)$.

Suppose $|\alpha|<m-n/p$. Then $p_{\alpha\beta}=1-1/p_\beta=1/q_\beta$
if $m-n/p\le |\beta|\le m$, and hence the second and third terms in (\ref{e:A.1}), respectively, becomes
\begin{eqnarray}
&& \mathfrak{g}_1(|\xi_\circ|)|\xi_\circ|\sum_{|\beta|<m-n/p}\Bigg(1+
\sum_{m-n/p\le |\gamma|\le
m}|\xi_\gamma|^{p_\gamma }\Bigg)^{1/q_{\beta}}\nonumber\\
&\le&\mathfrak{g}_1(|\xi_\circ|)|\xi_\circ|M(m)\Bigg(1+
\sum_{m-n/p\le |\gamma|\le
m}|\xi_\gamma|^{p_\gamma }\Bigg),\label{e:A.2}\\
&& \mathfrak{g}_1(|\xi_\circ|)\sum_{|\beta|<m-n/p}\Bigg(1+
\sum_{m-n/p\le |\gamma|\le
m}|\xi_\gamma|^{p_\gamma }\Bigg)^{1/q_{\beta}}|\xi_\beta|\nonumber\\
&\le&\mathfrak{g}_1(|\xi_\circ|)\sum_{m-n/p\le |\beta|\le m}\Bigg[ \bigg(1+
\sum_{m-n/p\le |\gamma|\le
m}|\xi_\gamma|^{p_\gamma }\bigg)+ |\xi_\beta|^{p_{\beta}}\Biggr]\nonumber\\
&\le&\mathfrak{g}_1(|\xi_\circ|)(M(m)+1) \Bigg(1+
\sum_{m-n/p\le |\gamma|\le
m}|\xi_\gamma|^{p_\gamma }\Bigg).\label{e:A.3}
\end{eqnarray}
 These and (\ref{e:A.1})-(\ref{e:A.2}) give rise to
(\ref{e:3.9}) with  $\mathfrak{g}_5(|\xi_0|):=(M(m)+1)\mathfrak{g}_1(|\xi_\circ|)(|\xi_\circ|+1)$.

Suppose $m-n/p\le|\alpha|\le m$. Then
$0<p_{\alpha\beta}\le 1-1/p_\alpha-1/p_\beta$
if $m-n/p\le |\beta|\le m$.
In this case the second and third terms in (\ref{e:A.1}), respectively, becomes
\begin{eqnarray}\label{e:A.4}
&& \mathfrak{g}_1(|\xi_\circ|)|\xi_\circ|\sum_{|\beta|<m-n/p}\Bigg(1+
\sum_{m-n/p\le |\gamma|\le
m}|\xi_\gamma|^{p_\gamma }\Bigg)^{p_{\alpha\beta}}\nonumber\\
&\le& \mathfrak{g}_1(|\xi_\circ|)|\xi_\circ|\sum_{|\beta|<m-n/p}\Bigg(1+
\sum_{m-n/p\le |\gamma|\le
m}|\xi_\gamma|^{p_\gamma }\Bigg)^{1/q_{\alpha}}\nonumber\\
&\le& \mathfrak{g}_1(|\xi_\circ|)|\xi_\circ|M(m)\Bigg(1+
\sum_{m-n/p\le |\gamma|\le
m}|\xi_\gamma|^{p_\gamma/q_{\alpha}}\Bigg)
\end{eqnarray}
and
\begin{eqnarray}\label{e:A.5}
&& \mathfrak{g}_1(|\xi_\circ|)\sum_{|\beta|<m-n/p}\Bigg(1+
\sum_{m-n/p\le |\gamma|\le
m}|\xi_\gamma|^{p_\gamma }\Bigg)^{p_{\alpha\beta}}|\xi_\beta|\nonumber\\
&\le& \mathfrak{g}_1(|\xi_\circ|)\sum_{|\beta|<m-n/p}\Bigg(1+
\sum_{m-n/p\le |\gamma|\le
m}|\xi_\gamma|^{p_\gamma }\Bigg)^{1/q_{\alpha}-1/p_{\beta}}|\xi_\beta|\nonumber\\
&\le& \mathfrak{g}_1(|\xi_\circ|)\sum_{|\beta|<m-n/p}\Biggl[\Bigg(\bigg(1+
\sum_{m-n/p\le |\gamma|\le
m}|\xi_\gamma|^{p_\gamma }\bigg)^{1/q_{\alpha}-1/p_{\beta}}\Bigg)^{p_\beta/(p_\beta-q_\alpha)}
+|\xi_\beta|^{p_\beta/q_\alpha}\Biggr]\nonumber\\
&\le& \mathfrak{g}_1(|\xi_\circ|)\sum_{|\beta|<m-n/p}\Biggl[\bigg(1+
\sum_{m-n/p\le |\gamma|\le
m}|\xi_\gamma|^{p_\gamma }\bigg)^{1/q_{\alpha}}
+|\xi_\beta|^{p_\beta/q_\alpha}\Biggr]\nonumber\\
&\le& \mathfrak{g}_1(|\xi_\circ|)(M(m)+1)\Bigg(1+
\sum_{m-n/p\le |\gamma|\le
m}|\xi_\gamma|^{p_\gamma }\Bigg)^{1/q_{\alpha}}.
\end{eqnarray}
 These lead to (\ref{e:3.10}).

With a similar argument to (\ref{e:A.1}) we obtain
\begin{eqnarray*}
&&|f(x,\xi)|-|f(x,0)|\le \sum_{|\alpha|\le m}|f_{\alpha}(x, s_\alpha\xi)|\cdot|\xi_\alpha|
\le\sum_{|\alpha|\le m}|f_{\alpha}(x, 0)|\cdot|\xi_\alpha|\\
&&+ \sum_{|\alpha|\le m}\mathfrak{g}_1(|s_\alpha\xi_\circ|)|s_\alpha\xi_\circ| \sum_{|\beta|<m-n/p}
\Bigg(1+\sum_{m-n/p\le |\gamma|\le m}|s_\alpha\xi_\gamma|^{p_\gamma }\Bigg)^{p_{\alpha\beta}}|\xi_\alpha|\\
&&+\sum_{|\alpha|\le m}\mathfrak{g}_1(|s_\alpha\xi_\circ|) \sum_{m-n/p\le |\beta|\le m}
\Bigg(1+\sum_{m-n/p\le |\gamma|\le m}|s_\alpha\xi_\gamma|^{p_\gamma }\Bigg)^{p_{\alpha\beta}}|s_\alpha\xi_\beta|\cdot|\xi_\alpha|\\
&\le&\sum_{|\alpha|\le m}|f_{\alpha}(x, 0)|\cdot|\xi_\alpha|\\
&&+ \mathfrak{g}_1(|\xi_\circ|)|\xi_\circ| \sum_{|\alpha|\le m}\sum_{|\beta|<m-n/p}
\Bigg(1+\sum_{m-n/p\le |\gamma|\le
m}|\xi_\gamma|^{p_\gamma }\Bigg)^{p_{\alpha\beta}}|\xi_\alpha|\\
&&+\mathfrak{g}_1(|\xi_\circ|) \sum_{|\alpha|\le m}\sum_{m-n/p\le |\beta|\le m}
\Bigg(1+\sum_{m-n/p\le |\gamma|\le
m}|\xi_\gamma|^{p_\gamma }\Bigg)^{p_{\alpha\beta}}|\xi_\beta|\cdot|\xi_\alpha|\\
&=&T_1+ T_2+ T_3.
\end{eqnarray*}
We can estimate these three terms as follows.
\begin{eqnarray*}
T_1&=&\sum_{|\alpha|<m-n/p}|f_{\alpha}(x, 0)|\cdot|\xi_\alpha|+\sum_{m-n/p\le|\alpha|\le m}|f_{\alpha}(x, 0)|\cdot|\xi_\alpha|\\
&=&|\xi_\circ|\sum_{|\alpha|<m-n/p}|f_{\alpha}(x, 0)|+\sum_{m-n/p\le|\alpha|\le m}|f_{\alpha}(x, 0)|\cdot|\xi_\alpha|\\
&\le&|\xi_\circ|\sum_{|\alpha|<m-n/p}|f_{\alpha}(x, 0)|+\sum_{m-n/p\le|\alpha|\le m}|f_{\alpha}(x, 0)|^{q_\alpha}
+\sum_{m-n/p\le|\alpha|\le m}|\xi_\alpha|^{p_\alpha};
\end{eqnarray*}
\begin{eqnarray*}
T_2&=&\mathfrak{g}_1(|\xi_0|)|\xi_\circ| \sum_{|\alpha|<m-n/p}\sum_{|\beta|<m-n/p}\Bigg(1+
\sum_{m-n/p\le |\gamma|\le
m}|\xi_\gamma|^{p_\gamma }\Bigg)^{p_{\alpha\beta}}|\xi_\alpha|\\
&&+\mathfrak{g}_1(|\xi_\circ|)|\xi_\circ| \sum_{m-n/p\le |\alpha|\le m}\sum_{|\beta|<m-n/p}
\Bigg(1+\sum_{m-n/p\le |\gamma|\le
m}|\xi_\gamma|^{p_\gamma }\Bigg)^{p_{\alpha\beta}}|\xi_\alpha|\\
&\le&\mathfrak{g}_1(|\xi_\circ|)|\xi_\circ|^2 M(m-n/p+1)\Bigg(1+
\sum_{m-n/p\le |\gamma|\le m}|\xi_\gamma|^{p_\gamma }\Bigg)\\
&&+\mathfrak{g}_1(|\xi_\circ|)|\xi_\circ|M(m-n/p+1)\sum_{m-n/p\le |\alpha|\le m}
\Bigg(1+\sum_{m-n/p\le |\gamma|\le m}|\xi_\gamma|^{p_\gamma }\Bigg)^{1/q_{\alpha}}|\xi_\alpha|\\
&\le&\mathfrak{g}_1(|\xi_\circ|)|\xi_\circ|^2 M(m-n/p+1)
\Bigg(1+ \sum_{m-n/p\le |\gamma|\le m}|\xi_\gamma|^{p_\gamma }\Bigg)\\
&&+\mathfrak{g}_1(|\xi_\circ|)|\xi_\circ|M(m-n/p+1)\sum_{m-n/p\le |\alpha|\le m}
\Bigg[\bigg(1+ \sum_{m-n/p\le |\gamma|\le m}|\xi_\gamma|^{p_\gamma }\bigg)+ |\xi_\alpha|^{p_{\alpha}}\Bigg]\\
&\le&\mathfrak{g}_1(|\xi_\circ|)|\xi_\circ|^2 M(m)
\Bigg(1+ \sum_{m-n/p\le |\gamma|\le m}|\xi_\gamma|^{p_\gamma }\Bigg)\\
&&+\mathfrak{g}_1(|\xi_\circ|)|\xi_\circ|(M(m)+1)^2
\Bigg(1+ \sum_{m-n/p\le |\gamma|\le m}|\xi_\gamma|^{p_\gamma }\Bigg)\\
&=&\mathfrak{g}_1(|\xi_\circ|)\big[|\xi_\circ|^2 M(m)+|\xi_\circ|(M(m)+1)^2\big]
\Bigg(1+ \sum_{m-n/p\le |\gamma|\le m}|\xi_\gamma|^{p_\gamma }\Bigg);
\end{eqnarray*}
\begin{eqnarray*}
T_3&=&\mathfrak{g}_1(|\xi_\circ|) \sum_{|\alpha|<m-n/p}\sum_{m-n/p\le |\beta|\le m}
\Bigg(1+\sum_{m-n/p\le |\gamma|\le m}|\xi_\gamma|^{p_\gamma }\Bigg)^{p_{\alpha\beta}}|\xi_\beta|\cdot|\xi_\alpha|\\
&+&\mathfrak{g}_1(|\xi_\circ|) \sum_{m-n/p\le|\alpha|\le m}\sum_{m-n/p\le |\beta|\le m}
\Bigg(1+\sum_{m-n/p\le |\gamma|\le m}|\xi_\gamma|^{p_\gamma }\Bigg)^{p_{\alpha\beta}}|\xi_\beta|\cdot|\xi_\alpha|
\end{eqnarray*}
\begin{eqnarray*}
&\le&\mathfrak{g}_1(|\xi_\circ|)|\xi_\circ| \sum_{m-n/p\le |\beta|\le m}
\Bigg(1+\sum_{m-n/p\le |\gamma|\le m}|\xi_\gamma|^{p_\gamma }\Bigg)^{1/q_{\beta}}|\xi_\beta|\\
&+&\mathfrak{g}_1(|\xi_\circ|) \sum_{m-n/p\le|\alpha|\le m}\sum_{m-n/p\le |\beta|\le m}
 \Bigg(1+ \sum_{m-n/p\le |\gamma|\le m}|\xi_\gamma|^{p_\gamma }\Bigg)^{p_{\alpha\beta}}|\xi_\beta|\cdot|\xi_\alpha|\\
&\le&\mathfrak{g}_1(|\xi_\circ|)|\xi_\circ|\sum_{m-n/p\le |\beta|\le m}
\Bigg[\Bigg(1+ \sum_{m-n/p\le |\gamma|\le m}|\xi_\gamma|^{p_\gamma }\Bigg)+
 |\xi_\beta|^{p_{\beta}}\Bigg]\\
&+&\mathfrak{g}_1(|\xi_\circ|) \sum_{m-n/p\le|\alpha|\le m}\sum_{m-n/p\le |\beta|\le m}
\Bigg[|\xi_\beta|^{p_\beta}+|\xi_\alpha|^{p_\alpha}\\
&&\hspace{30mm}+ \Bigg(1+\sum_{m-n/p\le |\gamma|\le
m}|\xi_\gamma|^{p_\gamma }\Bigg)^{p_{\alpha\beta}(1-p_\alpha^{-1}-p_\beta^{-1})}\Biggr]\\
&\le&\mathfrak{g}_1(|\xi_\circ|)|\xi_\circ|(M(m)+1)\Bigg(1+
\sum_{m-n/p\le |\gamma|\le m}|\xi_\gamma|^{p_\gamma }\Bigg)\\
&&+\mathfrak{g}_1(|\xi_\circ|)(M(m)+1)^2\Bigg(1+
\sum_{m-n/p\le |\gamma|\le m}|\xi_\gamma|^{p_\gamma }\Bigg)
\end{eqnarray*}
because $0<p_{\alpha\beta}<1-\frac{1}{p_\alpha}-\frac{1}{p_\beta}$ for
$m-n/p\le|\alpha|\le m$ and $m-n/p\le |\beta|\le m$.

In summary we get
\begin{eqnarray*}
|f(x,\xi)|&\le& |f(x,0)|+
|\xi_\circ|\sum_{|\alpha|<m-n/p}|f_{\alpha}(x, 0)|+\sum_{m-n/p\le|\alpha|\le m}|f_{\alpha}(x, 0)|^{q_\alpha}\\
&&+\sum_{m-n/p\le|\alpha|\le m}|\xi_\alpha|^{p_\alpha}\\
&+&\mathfrak{g}_1(|\xi_\circ|)[|\xi_\circ|^2 M(m)+|\xi_\circ|(M(m)+1)^2]\Bigg(1+
\sum_{m-n/p\le |\gamma|\le m}|\xi_\gamma|^{p_\gamma }\Bigg)\\
&+&\mathfrak{g}_1(|\xi_\circ|)|\xi_\circ|(M(m)+1)\Bigg(1+
\sum_{m-n/p\le |\gamma|\le m}|\xi_\gamma|^{p_\gamma }\Bigg)\\
&&+\mathfrak{g}_1(|\xi_\circ|)(M(m)+1)^2\Bigg(1+
\sum_{m-n/p\le |\gamma|\le m}|\xi_\gamma|^{p_\gamma }\Bigg)\\
&\le& |f(x,0)|+
|\xi_\circ|\sum_{|\alpha|<m-n/p}|f_{\alpha}(x, 0)|+\sum_{m-n/p\le|\alpha|\le m}|f_{\alpha}(x, 0)|^{q_\alpha}\\
&&+ \mathfrak{g}_3(|\xi_\circ|)\Bigg(1+\sum_{m-n/p\le|\alpha|\le m}|\xi_\alpha|^{p_\alpha}\Bigg),
\end{eqnarray*}
where
\begin{eqnarray*}
\mathfrak{g}_3(|\xi_\circ|)&=&1+ \mathfrak{g}_1(|\xi_\circ|)[|\xi_\circ|^2 M(m)+|\xi_\circ|(M(m)+1)^2]\\
&&+\mathfrak{g}_1(|\xi_\circ|)|\xi_\circ|(M(m)+1)+\mathfrak{g}_1(|\xi_\circ|)(M(m)+1)^2.
\end{eqnarray*}
\hfill$\Box$\vspace{2mm}




\medskip
\quad
\medskip

\end{document}